\newtheorem{thm}{Theorem} [section]
\newtheorem{cor}[thm]{Corollary}
\newtheorem{lem}[thm]{Lemma}
\newtheorem{prop}[thm]{Proposition}
\theoremstyle{definition}
\newtheorem{definition}[thm]{Definition}
\newtheorem{example}[thm]{Example}
\theoremstyle{remark}
\newtheorem{rem}[thm]{Remark}
\numberwithin{equation}{section}
\begin{document}

\newcommand{\thmref}[1]{Theorem~\ref{#1}}
\newcommand{\secref}[1]{Section~\ref{#1}}
\newcommand{\lemref}[1]{Lemma~\ref{#1}}
\newcommand{\propref}[1]{Proposition~\ref{#1}}
\newcommand{\corref}[1]{Corollary~\ref{#1}}
\newcommand{\remref}[1]{Remark~\ref{#1}}
\newcommand{\eqnref}[1]{(\ref{#1})}

\newcommand{\exref}[1]{Example~\ref{#1}}

\newtheorem{innercustomthm}{{\bf Theorem}}
\newenvironment{customthm}[1]
  {\renewcommand\theinnercustomthm{#1}\innercustomthm}
  {\endinnercustomthm}
  
  \newtheorem{innercustomcor}{{\bf Corollary}}
\newenvironment{customcor}[1]
  {\renewcommand\theinnercustomcor{#1}\innercustomcor}
  {\endinnercustomthm}
  
  \newtheorem{innercustomprop}{{\bf Proposition}}
\newenvironment{customprop}[1]
  {\renewcommand\theinnercustomprop{#1}\innercustomprop}
  {\endinnercustomthm}

\newcommand{\nc}{\newcommand}
 \nc{\Z}{{\mathbb Z}}
 \nc{\C}{{\mathbb C}}
 \nc{\N}{{\mathbb N}}
 \nc{\F}{{\mf F}}
 \nc{\Q}{\mathbb{Q}}
 \nc{\la}{\lambda}
 \nc{\ep}{\epsilon}
 \nc{\h}{\mathfrak h}
 \nc{\n}{\mf n}
 \nc{\G}{{\mathfrak g}}
 \nc{\DG}{\widetilde{\mathfrak g}}
 \nc{\SG}{\breve{\mathfrak g}}
 \nc{\is}{{\mathbf i}}
 \nc{\V}{\mf V}
 \nc{\bi}{\bibitem}
 \nc{\E}{\mc E}
 \nc{\ba}{\tilde{\pa}}
 \nc{\half}{\frac{1}{2}}
 \nc{\hgt}{\text{ht}}
 \nc{\mc}{\mathcal}
 \nc{\mf}{\mathfrak} 
 \nc{\hf}{\frac{1}{2}}
\nc{\ov}{\overline}
\nc{\ul}{\underline}
\nc{\I}{\mathbb{I}}
\nc{\aaa}{{\mf A}}
\nc{\xx}{{\mf x}}
\nc{\id}{\text{id}}
\nc{\one}{\bold{1}}
\nc{\Qq}{\Q(q)}
\nc{\ua}{\mf{u}}
\nc{\nb}{u}
\nc{\inv}{\theta}
\nc{\mA}{\mathcal{A}}
\newcommand{\tK}{\widetilde{K}}

\nc{\U}{\bold{U}}
\nc{\Udot}{\dot{\U}}
\nc{\f}{\bold{f}}
\nc{\fprime}{\bold{'f}}
\nc{\B}{\bold{B}}
\nc{\Bdot}{\dot{\B}}
\nc{\Dupsilon}{\Upsilon^{\vartriangle}}
\newcommand{\T}{\texttt T}
\newcommand{\vs}{\varsigma}
\newcommand{\Pa}{{\bf{P}}}
\newcommand{\Padot}{\dot{\bf{P}}}

\nc{\ipsi}{\psi_{\imath}}
\nc{\Ui}{{\bold{U}^{\imath}}}
\nc{\Uidot}{\dot{\bold{U}}^{\imath}}
 \nc{\be}{e}
 \nc{\bff}{f}
 \nc{\bk}{k}
 \nc{\bt}{t}
 \nc{\BLambda}{{\Lambda_{\inv}}}
\nc{\Ktilde}{\widetilde{K}}
\nc{\bktilde}{\widetilde{k}}
\nc{\Yi}{Y^{w_0}}
\nc{\bunlambda}{\Lambda^\imath}
\newcommand{\Iwhite}{\I_{\circ}}
\nc{\ile}{\le_\imath}
\nc{\il}{<_{\imath}}

\newcommand{\ff}{B}

\nc{\qq}{(q_i^{-1}-q_i)}
\nc{\qqq}{(1-q_i^{-2})^{-1}}
\nc{\qqqj}{(1-q_j^{-2})^{-1}}

\nc{\etab}{\eta^{\bullet}}
\newcommand{\Iblack}{\I_{\bullet}}
\newcommand{\wb}{w_\bullet}
\newcommand{\UIblack}{\U_{\Iblack}}

\newcommand{\blue}[1]{{\color{blue}#1}}
\newcommand{\red}[1]{{\color{red}#1}}
\newcommand{\green}[1]{{\color{green}#1}}
\newcommand{\white}[1]{{\color{white}#1}}

\newcommand{\huanchentodo}{\todo[inline,color=orange!20, caption={}]}
\newcommand{\wtodo}{\todo[inline,color=green!20, caption={}]}

\title[Canonical bases arising from quantum symmetric pairs]
{Canonical bases arising from quantum \\
symmetric pairs}
 
 \author[Huanchen Bao]{Huanchen Bao}
\address{Department of Mathematics, University of Maryland, College Park, MD 20742.}
\email{huanchen@math.umd.edu}

\author[Weiqiang Wang]{Weiqiang Wang}
\address{Department of Mathematics, University of Virginia, Charlottesville, VA 22904}
\email{ww9c@virginia.edu}

\begin{abstract}
We develop a general theory of canonical bases for quantum symmetric pairs $(\bold{U}, \bold{U}^\imath)$ with parameters of arbitrary finite type. 
We construct new canonical bases for the finite-dimensional simple $\bold U$-modules and their tensor products regarded as $\bold{U}^\imath$-modules. 
We also construct a canonical basis for the modified form of the $\imath$quantum group $\bold{U}^\imath$.
To that end, we establish several new structural results on quantum symmetric pairs,
such as bilinear forms, braid group actions, integral forms, Levi subalgebras (of real rank one),  and integrality of the intertwiners. 
\end{abstract}

\maketitle

\let\thefootnote\relax\footnotetext{{\em 2010 Mathematics Subject Classification.} Primary 17B10.}

\tableofcontents


\section{Introduction}

\subsection{Background}

Let $\U = \U_q(\mathfrak{g})$ be the Drinfeld-Jimbo quantum group with triangular decomposition $\U=\U^-\U^0\U^+$. 
Lusztig \cite{Lu90, Lu91} constructed the canonical basis on an integral $\mA$-form ${}_\mA\U^-$ of $\U^-$ and 
compatible canonical bases on finite-dimensional simple $\U$-modules $L(\lambda)$ (using perverse sheaves for general type, or via PBW basis in finite type as well), 
where $\mA =\Z[q,q^{-1}]$. 
Kashiwara \cite{Ka91} gave a different algebraic construction of the canonical bases by globalizing the crystal bases at $q=0$. 

In \cite{Lu92}, Lusztig constructed the canonical bases on the tensor product of a lowest weight module and a highest weight module 
$^\omega L(\lambda) \otimes L(\mu)$, for dominant integral weights $\la, \mu \in X^+$.
Based on various compatibilities of the canonical bases as $\la, \mu$ vary, he further constructed the canonical bases on the modified form $\Udot$. 
All these constructions fit in the notion of based modules (see \cite{Lu94} for finite type and \cite{BW16} for general type).

Given an involution $\theta$ on a complex simple Lie algebra $\mathfrak{g}$, 
we obtain a symmetric pair $(\mathfrak{g}, \mathfrak{g}^\theta)$, or a pair of enveloping algebras $(\U(\mathfrak{g}), \U(\mathfrak{g}^\theta))$,
where $\mathfrak{g}^\theta$ denotes the fixed point subalgebra. 
The classification of symmetric pairs of finite type is equivalent to the classification of real simple Lie algebras, which goes back to \'Elie~Cartan, cf. \cite{OV};
these classifications are often described in terms of the Satake diagrams; see \cite{Araki}. 
Recall a Satake diagram consists of a partition of the nodes of a Dynkin diagram, $\I = \Iblack \cup \Iwhite$, 
and a (possibly trivial) Dynkin diagram involution $\tau$; see Table~\ref{table:Satake}. 

As a quantization of $(\U(\mathfrak{g}), \U(\mathfrak{g}^\theta))$, 
a theory of quantum symmetric pairs $(\U, \Ui)$ of finite type was systematically developed by Letzter \cite{Le99, Le02}.
Such a $\Ui$ of finite type is constructed from the Satake diagrams. 
In this theory, $\Ui$ is a coideal subalgebra of $\U$ with parameters
(i.e., the comultiplication $\Delta$ on $\U$ satisfies $\Delta: \Ui \rightarrow \Ui \otimes \U$) 
but not a Hopf subalgebra of $\U$, and $\theta$ is quantized as an automorphism (but not of order 2) of $\U$.
The algebra  $\Ui$ has a complicated presentation including nonhomogeneous Serre type relations.
The quantum symmetric pairs (QSP for short) have been further studied and generalized to the Kac-Moody setting by Kolb \cite{Ko14}. 
The algebra $\Ui$ on its own will be also referred to as an $\imath$quantum group.

\subsection{The goal} 
The goal of this paper is to develop systematically a theory of canonical basis for quantum symmetric pairs of arbitrary finite type.
Actually several main constructions of this paper work in the Kac-Moody generality, 
though we shall assume the QSPs are of finite type throughout the paper unless otherwise specified.
We shall construct a new canonical basis (called $\imath$-canonical basis) on the modified form $\Uidot$ of the $\imath$quantum group $\Ui$ 
as well as $\imath$-canonical bases on based $\U$-modules, including simple finite-dimensional $\U$-modules and their tensor products. 

It is instructive for us to view various original constructions of canonical bases (see \cite{Lu94}) 
as constructions for the QSPs of diagonal type $(\U\otimes \U, \U)$ or for the degenerate QSP with $\Ui=\U$ (i.e., $\Iblack =\I$). 

Canonical bases have numerous applications including category $\mc{O}$, algebraic combinatorics, 
total positivity, cluster algebras, categorification, geometric and modular representation theory. 
It is  our hope that the theory of canonical bases arising from QSPs can be further developed,
and it will lead to new advances in some of these areas. 

\subsection{What was known?}

Let us recall the early effort 
toward the constructions of canonical bases for a very special case of quantum symmetric pairs,
which is of type AIII/AIV with $\Iblack =\emptyset$.  
In \cite{BW13}, the authors constructed the intertwiner $\Upsilon$ 
(an analogue of the quasi-$\mathcal R$-matrix in the QSP setting), proved $\Upsilon$ is integral, and used it to define a new bar involution
$\ipsi =\Upsilon \circ \psi$ 
 on any based $\U$-module with a bar involution $\psi$ to obtain a based $\Ui$-module. 
In particular, we obtain the $\imath$-canonical basis on finite-dimensional simple $\U$-modules and their tensor products. 
An application of such an $\imath$-canonical basis (with a particular choice of parameters) may be found in  \cite{BW13}. There the authors formulated a Kazhdan-Lusztig theory for super type B, which was an open problem for decades.


For type AIII/AIV with $\Iblack =\emptyset$, the modified $\imath$quantum group $\Uidot$ 
and its $\imath$-canonical basis have been obtained in  \cite{BKLW, LW15} 
using flag varieties of type $B/C$, 
generalizing the geometric realization of $\Udot$ by Beilinson-Lusztig-MacPherson \cite{BLM}. 
The $\imath$-canonical basis of $\Uidot$ admits positivity with respect to multiplication and comultiplication \cite{LW15, FL15}. 
The $\imath$-canonical basis of the quantum symmetric pair $(\U, \Ui)$ with a different choice of parameters 
was used in \cite{Bao17} to formulate the Kazhdan-Lusztig theory for category $\mathcal O$ of (super) type D.
(Connection between quantum symmetric pair $(\U, \Ui)$ and the type D category $\mc{O}$ was observed in \cite{ES13} independently from \cite{BW13}.)


\subsection{Main results} Let us provide a detailed description of the main results. 

\subsubsection{} 

The $\imath$quantum groups with parameters (and the quantum groups $\U$)
given in \cite{Le99, Ko14} are defined over a field $\mathbb K(q^{1/d})$ for a field $\mathbb K\supset \Q$ containing some suitable roots of 1 and $d>1$. 
Confirming an expectation stated in \cite{BW13}, 
Balagovic and Kolb \cite{BK14} showed the existence of a bar involution $\psi_{\imath}$ of the $\imath$quantum groups and determined the constraints on parameters. 
But for a canonical basis theory, it is more natural to work with algebras over the field $\Qq$.
In this paper we give a definition of the $\imath$quantum group $\Ui$ over the field $\Qq$ with slightly modified parameters; see Definition~\ref{def:Ui} (also compare \cite{BK15}). 
We further explain (see Lemma~\ref{lem:parameter vs}) the parameters can actually be chosen to be in $\mA =\Z[q,q^{-1}]$ 
and the bar map $\ipsi$ makes sense on the $\Qq$-form $\Ui$, as a prerequisite for the theory of $\imath$-canonical bases. 
Denote by $\psi$ the bar involution on $\U$. 

Observe that the inclusion map $\Ui \to \U$ is not compatible with the two bar maps on $\Ui$ and $\U$.  
A basic ingredient which we shall need for $\imath$-canonical basis is the intertwiner $\Upsilon$ for the quantum symmetric pair $(\U,\Ui)$, which lies in (a completion of) $\U^+$; cf. \cite[Theorem~2.10]{BW13} and \cite[Theorem~6.10]{BK15} (for a precise formulation see Theorem~\ref{thm:Upsilon} and Remark~\ref{rem:history}). The intertwiner $\Upsilon$ can be thought as the analog of Lusztig's quasi-$\mc{R}$-matrix $\Theta$, which intertwines the bar involution on $\U$ and the bar involution on $\U \otimes \U$. A twisted version of $\Theta$ is indeed the intertwiner for the QSP of diagonal type $(\U \otimes \U, \U)$, where $\U$ is realized as the subalgebra of $\U \otimes \U$ via the coproduct; see Remark~\ref{rem:QSPLu}. 


%
\subsubsection{}
Recall Lusztig \cite{Lu90, Lu94} constructed braid group operators $\T''_{w,e}$ and $\T'_{w,e}$ on $\U$. 
The braid group action  is used in the definition of $\Ui$ when $\Iblack \neq \emptyset$. 

\begin{customthm}{{\bf A}}  [Theorem~\ref{thm:braidX}] 
  \label{TA}
For any $i\in \Iblack$ and $e =\pm 1$, 
the braid group operators $\T'_{i,e}$ and $\T''_{i,e}$ restrict to  automorphisms of $\Ui$.  
\end{customthm}
In Theorem~\ref{thm:braidX} one finds explicit formulas for the actions of $\T'_{i, e}$ and $\T''_{i, e}$ on the generators of $\Ui$. Let $W$ and $W_{\Iblack}$ be the Weyl groups associated to $\I$ and $\Iblack$,  
and let $w_0$ and $w_\bullet$ denote the longest elements in $W$ and $W_{\Iblack}$, respectively. 
Different braid group action for some class of $\Ui$ has been constructed in the literature (see \cite{KP11} and references therein). Theorem~\ref{TA} verifies a conjecture of \cite{KP11} on the braid group action associated to $W_{\Iblack}$  on $\Ui$ (this conjecture was established for $\Ui$ of type AII therein).

Recall there is a well-known anti-involution $\wp$ on $\U$ \cite{Ka91, Lu94}  (see Proposition~\ref{prop:invol}), which induces a non-degenerate symmetric bilinear form on each finite-dimensional simple $\U$-module $L(\lambda)$, for $\la \in X^+$.

\begin{customprop} {\bf B} [Proposition~\ref{prop:rho}]
   \label{PB}
The anti-involution $\wp$ on $\U$ restricts to an anti-involution on $\U^\imath$.
\end{customprop}
In Proposition~\ref{prop:rho}, whose proof relies crucially on Theorem~\ref{TA}, one further finds explicit formulas for the actions of $\wp$ on the generators of $\Ui$. This result allows us to naturally use the same bilinear form on $L(\lambda)$ viewed as a $\Ui$-module. We shall see this bilinear form plays a basic role in formulating a non-degenerate bilinear form on the modified form $\Uidot$ and Theorem~\ref{TH} below.

\subsubsection{}

Recall the subspace $\U^+(w) =\U^+(w,1)$, for $w\in W$, is defined in \cite[40.2]{Lu94}. The following proposition imposes more constraint on $\Upsilon$ which will be useful later on. 

\begin{customprop} {\bf C}  [Proposition~\ref{prop:wcirc}]
\label{PC}
The intertwiner $\Upsilon$ lies in (a completion of) the subspace $\U^+(w_0 \wb)$. 
\end{customprop}

The following is one of the key properties of $\Upsilon$ in our approach toward the canonical bases for quantum symmetric pairs of finite type. 

\begin{customthm}{{\bf D}}[Theorem~\ref{thm:intUpsilon}]
 \label{TD}
The intertwiner $\Upsilon$ is integral, that is, we have $\Upsilon \in {}_\mA\widehat{\U}^+$.
\end{customthm}
Theorem~\ref{TD} is a generalization of the integrality of the quasi-$\mathcal R$-matrix for quantum groups of finite type \cite[24.1.6]{Lu94},
and in the special case of QSP of type AIII/AIV with $\Iblack =\emptyset$ it was proved in \cite{BW13}. 
The general case here takes much effort to establish. 

\subsubsection{}

The notion of a based $\U$-module $(M, B)$ with a $\U$-compatible bar involution $\psi$ is formulated in \cite[Chapter~27]{Lu94}. We are interested in considering $M$ as a $\Ui$-module by restriction, with a new bar involution $\psi_\imath := \Upsilon \circ \psi$ (see Proposition~\ref{prop:compatibleBbar}) compatible with the bar map $\ipsi$ on $\Ui$. Theorem~\ref{TD} implies that $\psi_\imath$ preserves the $\mA$-form ${}_\mA M$ of the module $M$. 

\begin{customthm}{\bf E}[Theorem~\ref{thm:iCBbased}]
 \label{TE}
The based $\U$-module $(M,B)$ admits a $\psi_\imath$-invariant basis $\{b^\imath \vert b \in B\}$,
whose transition matrix with respect to the basis $B$ is uni-triangular with off-diagonal entries in $q^{-1}\Z[q^{-1}]$. 
(We call $\{b^\imath \vert b \in B\}$ the $\imath$-canonical basis of $M$.)
\end{customthm}

By the fundamental work of Lusztig and Kashiwara \cite{Lu90, Ka91}, every finite-dimensional simple $\U$-module 
admits a canonical basis and hence is a based module. 
Similarly, by \cite{Lu92}, any tensor product of several finite-dimensional simple modules (over $\U$ of finite type) with its canonical basis is a based module. 
Hence we obtain the following corollary to Theorem~\ref{TE}. 

\begin{customcor}{\bf F}  [Theorems~\ref{thm:iCBonL} and \ref{thm:iCBtensor}]
\label{CF}
Finite-dimensional simple $\U$-modules and their tensor products admit $\imath$-canonical bases.
\end{customcor}
For type AIII/AIV with $\Iblack=\emptyset$, Theorem~\ref{TE} and Corollary~\ref{CF} were established in \cite{BW13}.
The $\imath$-canonical bases in $\mathbb V^{\otimes m} \otimes \mathbb V^{*\otimes n}$, 
where $\mathbb V$ is the natural representation of $\U$,
were used to define the Kazhdan-Lusztig polynomials for Lie superalgebras $\mathfrak{osp}$ in \cite{BW13, Bao17}. 

\subsubsection{}

In contrast to $\U$, the $\imath$quantum group $\Ui$ does not admit obvious triangular decomposition. 
So the familiar approach toward canonical bases of quantum groups, starting with $\U^-$ , is not available in the QSP setting. 
Besides, there is no obvious integral $\mA$-form of $\Ui$ in general. 

We study the modified form $\Uidot$ of $\Ui$,  similar to the modified form $\Udot$ of $\U$. 
The bar involution $\ipsi$ on $\Ui$ extends to a bar involution, again denoted by $\ipsi$, on $\Uidot$. 
Even though $\Uidot$ is not a subalgebra of $\Udot$, we can still view $\Udot$ as a  (left) $\Uidot$-module naturally. 
We define ${}_{\mA}\Uidot$ as the maximal $\mA$-subalgebra of $\Uidot$ that preserves the integral form ${}_\mA\Udot$ through the natural action. 
It is not clear at all from the definition but will be proved in the end that ${}_{\mA}\Uidot$ is a free $\mA$-module.


Recall \cite[Chapter~25]{Lu94} Lusztig's construction of canonical basis on $\Udot$ relies essentially on a projective system of based modules of the form
$^{\omega}L(\la+\nu) \otimes L(\mu+\nu)$, for $\la, \mu, \nu\in X^+$ as $\nu$ varies.  We shall formulate a generalization of such projective systems in the QSP setting. 

We denote by $\Pa$ the parabolic subalgebra of $\U$ associated with  $\Iblack$ which contains $\U^-$, and by $\Padot$ the modified form of $\Pa$. 
The intersection of the canonical basis on $\Udot$ with $\Padot$ forms the canonical basis of $\Padot$; cf. \cite{Ka94}. 
We establish a $\Qq$-linear isomorphism $\Uidot \one_{\overline{\lambda}} \cong \Padot \one_{\lambda}$ 
(where $\overline{\la}$ is an $\imath$-weight associated to $\la$;
see  \S\ref{subsec:irootdatum}), which allows one to regard $\Padot \one_{\lambda}$ as an associated graded of $\Uidot\one_{\overline{\lambda}}$.

Denote by $\eta_\la$ and $\eta_{\wb \lambda}$ the highest weight vector and the unique canonical basis element of weight $\wb \lambda$ in 
a finite-dimensional simple module $L(\lambda)$, respectively. For $\la, \mu \in X^+$, 
consider the $\U$-submodule generated by $\eta_{\wb \lambda} \otimes \eta_{\mu}$ in the tensor product $\U$-module
$L(\lambda) \otimes L(\mu)$ 
(which can be shown is the same as $\Ui$- and $\Pa$-submodule generated by $\eta_{\wb \lambda} \otimes \eta_{\mu}$): 
\[
L^\imath(\lambda,\mu)= \U(\eta_{\wb \lambda} \otimes \eta_{\mu})= \Pa(\eta_{\wb \lambda} \otimes \eta_{\mu})= \Ui(\eta_{\wb \lambda} \otimes \eta_{\mu}).
\] 
Recall $\tau$ is the Dynkin diagram involution for a Satake diagram and we set $\nu^\tau =\tau(\nu)$.
The significance of the modules $L^\imath(\lambda,\mu)$ is that 
 
 \begin{equation}
 \label{Intr:PUi}
 \Padot \one_{\wb\lambda+\mu} \cong \Uidot \one_{\overline{\wb\lambda+\mu}} 
 \cong \lim\limits_{\substack{\longleftarrow \\ \nu}} L^\imath(\lambda +\nu^\tau,\mu+\nu),
 \end{equation}
where the inverse limit is understood as $\nu \mapsto \infty$.

One observes that $L^\imath(\lambda,\mu)$ is a based $\U$-module. 
Kashiwara \cite{Ka94} further established the compatibility between the canonical basis on $\Udot$ with the canonical basis on $L^\imath(\lambda,\mu)$ 
under the obvious action, which allows a uniform parametrization of the canonical, and hence the $\imath$-canonical, basis on $L^\imath(\lambda,\mu)$. 

Toward the construction of the universal $\mc{K}$-matrix $\mc K$ for general QSP (which is an analog of Drinfeld's universal $\mathcal{R}$-matrix for $\U$), a $\Ui$-module isomorphism $\mc K'$ was defined in \cite{BK15}. 
(In cases when $\Iblack=\emptyset$ this was constructed by the authors, as it is a straightforward generalization of the construction of an isomorphism $\mathcal T$ in type AIII/AIV in \cite{BW13}.) Even though the setting for \cite{BK15} is mostly over a larger field $\mathbb K(q^{1/d})$, some flexibility in constructing $\mc K'$ allows one to choose a version, denoted by $\mc T$ in \S\ref{subsec:T}, which is defined over the field $\Qq$. Here we keep the notation $\mc{T}$, since this paper follows closely \cite{BW13}. With the help of $\mc T$, we construct in Proposition~\ref{prop:def:contraction} a unique $\Ui$-homomorphism (for $\nu \in X^+$)
\begin{equation*}
\pi =\pi_{\la, \mu, \nu}: L^{\imath} (\lambda+\nu^\tau, \mu+\nu) \longrightarrow L^{\imath} (\lambda, \mu),
 \qquad \pi(\etab_{\lambda+\nu^\tau} \otimes \eta_{\mu+\nu}) = \etab_{\lambda} \otimes \eta_{\mu}.
\end{equation*}
Hence we have constructed a projective system of $\Ui$-modules $\{ L^{\imath} (\lambda+\nu^\tau, \mu+\nu) \}_{\nu \in X^+}$. 
Lusztig's original construction is recovered in the degenerate case when $\I =\Iblack$ and $\Ui=\U$. 

However in contrast to Lusztig's results in the quantum group setting
we cannot claim  the strong form of compatibility of $\imath$-canonical bases in the sense that the projection 
$\pi_{\la, \mu, \nu}$ is a based $\Ui$-module map.
An interesting new phenomenon has already been observed in \cite[\S4.2]{BW13} for QSP 
(where two $\imath$-canonical basis elements can be mapped to the same nonzero $\imath$-canonical basis element). 

In this paper we prove an asymptotic compatibility of $\imath$-canonical bases in the projective system, that is, 
$\imath$-canonical basis elements 
are mapped to $\imath$-canonical basis elements  with the same labels (thanks to \cite{Ka94})
through the projection $\pi_{\la, \mu, \nu}$ when $\nu \to \infty$.
Together with \eqref{Intr:PUi}, this suffices to construct the $\imath$-canonical basis on $\Uidot \one_{\overline{\wb\lambda+\mu}}$. 
Actually, the $\imath$-canonical basis of $\Uidot \one_{\overline{\wb\lambda+\mu}}$ is parameterized by
the canonical basis of $\Padot \one_{\wb\lambda+\mu}$.

\begin{customthm}{\bf G}	 [Theorem~\ref{thm:iCBUi}, Corollary~\ref{cor:generator}] 
 \label{TG}
The algebra $\Uidot$ admits a unique $\imath$-canonical basis $\Bdot^\imath$, 
which is asymptotically compatible with the $\imath$-canonical basis on $L^\imath(\lambda,\mu)$, for $\la, \mu \in X^+$. 
Moreover, the basis $\Bdot^\imath$  is $\psi_\imath$-invariant, 
and  ${}_\mA \Uidot$ is a free $\mA$-module with basis $\dot{\B}^\imath$.
\end{customthm}

\subsubsection{Bilinear forms}

Recall there is a non-degenerate symmetric bilinear form \cite{Ka91, Lu94} on each 
finite-dimensional simple module $L(\la)$ defined via the anti-involution $\wp$ on $\U$, with respect to which the canonical basis is almost orthonormal. It follows by Corollary~\ref{CF} and Lusztig's results \cite[IV]{Lu94} that
the $\imath$-canonical basis on $L(\lambda) \otimes L(\mu)$ is almost orthonormal with respect to the 
tensor product bilinear form $(\cdot ,\cdot )_{\lambda,\mu}$.
We prove in Lemma~\ref{lem:bilinearform} that the bilinear form $(\cdot, \cdot)_{\lambda+\nu^\tau,\mu+\nu}$ 
converges as $\nu$ goes to $\infty$ through the projective system $\{L^\imath(\lambda+\nu^{\tau},\mu+\nu)\}_{\nu\in X^+}$, and hence the limit defines a symmetric bilinear form $(\cdot, \cdot)$ on $\Uidot$; see Definition~\ref{def:Uibilinear}. 
This form is compatible with the anti-involution $\wp$ on $\Ui$ established in Proposition~\ref{PB}; see Corollary~\ref{cor:formUi}. 

The following theorem is a generalization of a similar characterization of the signed canonical basis for
modified quantum groups \cite[Chapter~26]{Lu94}. 

\begin{customthm}{\bf H}   [Theorems~\ref{thm:orth} and \ref{thm:orth2}]
 \label{TH}
The $\imath$-canonical basis $\Bdot^\imath$ is almost orthonormal with respect to  
the symmetric bilinear form $(\cdot, \cdot)$ on $\Uidot$. 
Moreover, the signed $\imath$-canonical basis $(- \Bdot^\imath) \cup \Bdot^\imath$ is characterized by the almost orthonormality, integrality, and $\psi_\imath$-invariance. 
\end{customthm}

\subsection{Strategy of proofs}  

Recall $\U_q(\mathfrak{sl}_2)$ plays a fundamental role in the crystal and canonical basis theory of Lusztig and Kashiwara. 
To study the general quantum symmetric pairs we need to study first in depth the quantum symmetric pair of {\em real rank one} (also cf. \cite[Section 4]{Le04}). 
There are 8 different types of real rank one QSP; see Table~\ref{table:localSatake}. 
We formulate a notion of Levi subalgebras of $\Ui$ (which are $\imath$quantum groups by themselves). 
In particular, a general $\imath$quantum group  is generated by its Levi subalgebras of real and compact rank one. 
(Here a Levi subalgebra of compact rank one is a copy of $\U_q(\mathfrak{sl}_2)$ associated to any $i \in \Iblack$.)

Note that for QSP of real rank one, $\U^+(w_0 \wb)$ is a relatively small subspace of $\U^+$. 
Proposition~\ref{PC}  makes it feasible for us to prove Theorem~\ref{TD} for QSP of real rank one through brute force case-by-case computation. 
Actually we essentially obtain inductive formulas for $\Upsilon$ in the real rank one cases; see Appendix~\ref{sec:Upsilonrank1}.

The proofs of some main theorems are proceeded in the following steps:
\begin{enumerate}
	\item		 prove  Theorem~\ref{TD} for QSP of real rank one via case-by-case computations;
	\item		prove   Theorem~\ref{TE} and then Theorem~\ref{TG} for QSP of real rank one;
	\item		prove Theorem \ref{TD} for QSP of arbitrary finite type using Theorem \ref{TE} and Theorem \ref{TG} for QSP of real rank one;
	\item		prove  Theorem \ref{TE} and then Theorem \ref{TG} for QSP of arbitrary finite type; 
	\item		prove Theorem~\ref{TH} for QSP of arbitrary finite type. 
\end{enumerate}

%
\subsection{The organization}

The paper is organized as follows. 
In Section~\ref{sec:prelim}, we review various basic constructions for the quantum group $\U$. 
We study the based submodule $L^\imath (\lambda,\mu)$ of the tensor product $L(\lambda) \otimes L(\mu)$ and the parabolic subalgebra $\Pa$ of $\U$. 
We establish the compatibility between the canonical basis on the modified form $\Padot$ and the canonical basis on $L^\imath (\lambda,\mu)$.

In Section~\ref{sec:QSP}, we introduce the $\imath$-root datum associated with a Satake diagram 
and define the corresponding coideal $\Qq$-subalgebra $\Ui$ of $\U$ with parameters. 
We also define the modified form $\Uidot$ and an $\mA$-subalgebra ${}_\mA\Uidot$. 

In Section~\ref{sec:braid}, we prove the braid group operators $\T''_{w,e}$  for $w \in W_{\Iblack}$ and the anti-involution $\wp$
on $\U$ restrict to automorphisms and an anti-involution, respectively,  of $\Ui$. 
We show that $\Upsilon$ lies in (the completion of) the  subspace $\U^+ (w_0w_\bullet)$. 

In Section~\ref{sec:integral}, we prove the integrality of the intertwiner $\Upsilon$. 
The long computational proof for real rank one is given in Appendix~\ref{sec:Upsilonrank1}. 
We then establish the $\imath$-canonical bases on based $\U$-modules. 
We comment on the validity of several constructions for quantum symmetric pairs of Kac-Moody type. 
We plan to return in a future work to the construction of $\imath$-canonical bases arising from QSP of Kac-Moody type.

In Section~\ref{sec:iCBU}, 
we construct the projective system of $\Ui$-modules $\{L^\imath(\lambda +\nu^{\tau}, \mu+\nu) \}_{\nu\in X^+}$,
 prove the $\imath$-canonical basis elements stabilize when $\nu \to \infty$,
and construct the canonical basis on $\Uidot$ .  
As a consequence, we show that ${}_\mA\Uidot$ is generated by the canonical basis elements of its real and compact rank one subalgebras.
We also construct a non-degenerate symmetric bilinear form on $\Uidot$, with respect to which the signed canonical basis is 
shown to be almost orthonormal.

\vspace{.2cm}
\noindent {\bf Acknowledgements.} 
HB is partially supported by an AMS-Simons travel grant, and   
WW is partially supported by an NSF grant. 
We thank the following institutions whose support and hospitality helped to greatly facilitate the progress of this project:
East China Normal University, Institute for Advanced Study, Institute of Mathematics at Academia Sinica, and Max-Planck Institute for Mathematics.
We would like to thank Jeffrey Adams, Xuhua He, Stefan Kolb, and George Lusztig for helpful discussions, comments and their interest.
An earlier version on the $\imath$-canonical basis construction in the special cases when $\Iblack=\emptyset$ (for finite and affine types) was completed in Spring 2015. 
Balagovic and Kolb's work (in their goal of showing the universal $\mc{K}$-matrix provides solutions to the reflection equation)
helped to address several foundational issues on QSP raised in our 2013 announcement on the program of canonical basis for general QSP, 
and we in turn use their results in the current version. We warmly thank them for their valuable contributions.
We are grateful to a referee for careful readings and numerous suggestions and corrections.

\section{Quantum groups and canonical bases}
  \label{sec:prelim}

In this preliminary section, we review the basics and set up notations for quantum groups and their modified forms,  braid group actions, and canonical bases. 
We follow closely the book of Lusztig \cite{Lu94}. We also review the less familiar construction of  parabolic subalgebras and their canonical bases, following \cite{Ka94}. 
Theorem~\ref{thm:Betabullet} is new in this generality. 

\subsection{The algebras  $\f$ and $\U$}
  \label{subsec:f}
Let $(Y, X, \langle \cdot, \cdot \rangle, \cdots)$ be a root datum of finite type $(\I, \cdot)$ \cite[1.1.1, 2.2.1]{Lu94}. We have  a symmetric bilinear form
$\nu, \nu' \mapsto \nu \cdot \nu'$ on $\Z[\I]$. 
For $\mu= \sum_{i \in \I} \mu_i i \in \Z[\I]$, we let $\rm{ht}(\mu) = \sum_{i \in \I} \mu_i$.
We have an embedding $\I \subset X$ ($i \mapsto i'$), an embedding $I \subset Y$ ($i \mapsto i$) 
and a perfect pairing $\langle \cdot, \cdot \rangle: Y \times X \rightarrow \Z$ such that $\langle i, j' \rangle = \frac{2 i \cdot j}{i \cdot i}$, for $i$, $j \in \I$. 
The matrix $(\langle i, j'\rangle)= (a_{ij})$ is  
the Cartan matrix. 
We define a partial order $\leq$ on the weight lattice $X$ as follows: for $\la, \la' \in X$, 
 \begin{equation}
   \label{eq:leq}
  \lambda \le \lambda' \text{ if and only if } \lambda' -\lambda \in \N[\I]. 
 \end{equation}
Let $W$ be the corresponding Weyl group generated by the simple reflections $s_i$, for $i \in \I$. It naturally acts on $Y$ and $X$. Let $R^{\vee} \subset Y$ be the set of coroots. 
We denote by $\rho^\vee \in Y$ the half sum of all positive coroots.  Let $R  \subset X$ be the set of  roots. We denote by $\rho  \in X$ the half sum of all positive roots. We denote the longest element of $W$  by $w_0$.

Let $q$ be an indeterminate. For any $i \in \I$, we set $q_{i} = q^{\frac{i \cdot i}{2}}$. Consider a free $\Qq$-algebra $'\f$ generated by $\theta_{i}$ for ${i \in \I}$ associated with the 
Cartan datum of type $(\I, \cdot)$. As a $\Qq$-vector space, $'\f$ has a weight space decomposition as 
$'\f = \bigoplus_{\mu \in {\N}[\I]}~ '\f_{\mu},$ 
where $\theta_{i}$ has weight $i$ for all $i \in \I$.
For any $x \in \fprime_\mu$, we set $|x| = \mu$.

For each $i \in \I$, we define $r_{i}, {}_i r: \fprime \rightarrow \fprime$ to be the unique $\Q(q)$-linear maps  such that 
\begin{align}  \label{eq:rr}
\begin{split}
r_{i}(1) = 0, \quad r_{i}(\theta_{j}) = \delta_{ij},
\quad r_{i}(xx') = xr_{i}(x') + q^{i \cdot \mu'}r_{i}(x)x',
 \\
{}_{i}r(1) = 0, \quad {}_{i}r(\theta_{j}) = \delta_{ij},
\quad {}_{i}r(xx') =q^{i \cdot \mu }x \,_{i}r(x') +{ _{i}r(x)x'},
\end{split}
\end{align}
for all $x \in \fprime_{\mu}$ and $x' \in \fprime_{\mu'}$.


Let $(\cdot, \cdot)$ be the symmetric bilinear form on $\fprime$ defined in  \cite[1.2.3]{Lu94}.
Let ${\bf I}$ be the radical of the symmetric bilinear form $(\cdot,\cdot)$ on $\fprime$.
 For $i\in \I, n \in \Z$ and $s \in \N$, we define 
\[
[n]_i = \frac{q_i^n-q_i^{-n}}{q_i - q^{-1}_i} \quad  \text { and } \quad [s]^!_i = \prod^s_{j=1} [j]_i.
\] 
We shall also use the notation 
\[
\begin{bmatrix}
n\\
s
\end{bmatrix}_i
=
\frac{[n]^!_i}{[s]^!_i [n-s]^!_i}, \quad \text{ for } 0 \le s \le n.
\]
It is known \cite{Lu94} that ${\bf I}$ is 
generated by the quantum Serre relators $S(\theta_i, \theta_j)$, for $ i \neq  j \in \I$, where
\begin{equation}  \label{eq:Sij}
S(\theta_i, \theta_j) =  \sum^{1-a_{ij}}_{s=0}(-1)^{s} \begin{bmatrix}1-a_{ij}\\s \end{bmatrix}_i\theta_{i}^s \theta_{j} \theta_{i}^{1-a_{ij}-s}. 
\end{equation}
Let $\f =\fprime/\bf{I}$.
We have 
$r_{\ell}\big(S(\theta_i, \theta_j) \big) =\,_{\ell}r \big(S(\theta_i, \theta_j) \big) =0,$ for all $\ell, i , j \in \I \; (i\neq j).$ 
Hence $r_{\ell}$ and $_{\ell}r$ descend to well-defined $\Qq$-linear maps on $\f$.

We introduce the divided power $\theta^{(a)}_{i} = \theta^a_{i}/[a]_i^!$ for  $a \ge 0$. Let $\mA =\Z[q,q^{-1}]$.
Let $_\mA\f$ be the $\mA$-subalgebra of $\f$ 
generated by $\theta^{(a)}_{i}$ for various $a \ge 0$ and $i \in \I$.

Let $\U$ be the quantum group associated with the root datum $(Y, X, \langle \cdot, \cdot \rangle, \dots)$ of type $(\I, \cdot)$. The quantum group $\U$ is the associative $\Qq$-algebra generated by $E_{i}$, $F_{i}$ for $i \in \I$ and $K_{\mu}$ for $\mu \in Y$, subject to the following relations:
\begin{align*}
K_{0} =1, \qquad &K_{\mu} K_{\mu'} = K_{\mu +\mu'}, \text{ for all } \mu, \mu' \in Y,
  \\
 K_{\mu} E_{j} = q^{\langle \mu, j' \rangle} E_{j} K_{\mu}, &  \qquad 
 K_{\mu} F_{j} = q^{-\langle \mu, j' \rangle} F_{j} K_{\mu}  , \\
 E_{i} F_{j} -F_{j} E_{i} &= \delta_{i,j} \frac{\Ktilde_{i}
 -\Ktilde_{-i}}{q_i-q_i^{-1}}, \\
S(F_{i}, F_{j}) &= S(E_{i}, E_{j}) = 0, \text{ for all } i \neq j \in \I,
\end{align*}
where $\Ktilde_{\pm i} = K_{\pm \frac{i \cdot i}{2} i}$ and   
$S(E_{i}, E_{j})$ are defined as in \eqref{eq:Sij}.

Let $\U^+$, $\U^0$ and $\U^-$ be the $\Qq$-subalgebra of $\U$ generated by $E_{i} (i \in \I)$, $K_{\mu} (\mu \in Y)$, 
and $F_{i}  (i \in \I)$  respectively.
We identify $\f \cong \U^-$ by matching the 
generators $\theta_{i}$ with $F_{i}$. This identification induces
a bilinear form $(\cdot, \cdot)$ on $\U^{-}$ and $\Qq$-linear maps $r_i, {}_i r$ $(i\in \I)$ on $\U^-$.
Under this identification, we let $\U_{-\mu}^-$ be the image of $\f_\mu$.
Similarly we have $\f \cong \U^+$ by identifying $\theta_{i}$ with $E_{i}$. 
We let $_\mA\U^-$ (respectively, $_\mA\U^+$) denote the image of $_\mA\f$ under this isomorphism, 
which is generated by all divided powers $F^{(a)}_{i} =F_{i}^a/[a]_i^!$ (respectively, $E^{(a)}_{i} =E_{i}^a/[a]_i^!$). 
The coproduct $\Delta: \U \rightarrow \U \otimes \U$ is defined as follows (for $i \in \I, \mu \in Y$): 
\begin{equation}\label{eq:Delta}
\Delta(E_i)  = E_i \otimes 1 + \widetilde{K}_i \otimes E_i, \quad \Delta(F_i) = 1 \otimes F_i + F_i \otimes \widetilde{K}_{-i}, \quad \Delta(K_{\mu}) = K_{\mu} \otimes K_{\mu}.
\end{equation}
The following proposition follows by checking the generating relations, which can also be found in \cite[3.1.3,. 3.1.12, 19.1.1]{Lu94}.

\begin{prop} \label{prop:invol}
{\quad}
\begin{enumerate}
\item There is an  involution $\omega$ of the $\Qq$-algebra $\U$ such that 
$\omega(E_{i}) =F_{i}$, $\omega(F_{i}) =E_{i}$,  and $\omega(K_{\mu}) = K_{-\mu}$ for all $i \in \I$ and $\mu \in Y$.
%

\item There is an anti-involution $\wp$ of the $\Qq$-algebra $\U$ such that 
$\wp(E_{i}) = q^{-1}_i F_{i} \Ktilde_{i}$, $\wp(F_{i}) = q^{-1}_i E_{i} \Ktilde_i^{-1}$  and $\wp(K_{\mu}) = K_{\mu}$ for all $i \in \I$ and $\mu \in Y$.
%

\item	There is an anti-involution $\sigma$ of the $\Qq$-algebra $\U$ such that 
$\sigma(E_{i}) = E_{i}$, $\sigma(F_{i}) = F_{i}$  and $\sigma(K_{\mu}) = K_{-\mu}$ for all $i \in \I$ and $\mu \in Y$.
%

\item There is a bar involution $\overline{\phantom{x}}$ of the $\Q$-algebra $\U$ such that 
$q \mapsto q^{-1}$, $\ov{E}_{i}= E_{i}$, $\ov{F}_{i}=F_{i}$,  and $\ov{K}_{\mu}=K_{-\mu}$ for all $i \in \I$ and $\mu \in Y$.
%
(Sometimes we denote the bar involution on $\U$ by $\psi$.)

\end{enumerate}
\end{prop}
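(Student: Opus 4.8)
The plan is to exploit the presentation of $\U$ by the generators $E_i, F_i$ ($i \in \I$), $K_\mu$ ($\mu \in Y$) and the relations listed above: in each of the four cases it suffices to define the map on generators by the given formulas, to check that the defining relations are respected---so that, by the universal property of the presentation, the map extends to a $\Qq$-algebra homomorphism (in (1)), to a $\Qq$-algebra anti-homomorphism (in (2)--(3)), and to a $\Q$-algebra homomorphism (in (4))---and finally to verify on generators that the square of the map is the identity. The only relations requiring any care are the cross relations $K_\mu E_j = q^{\langle\mu,j'\rangle}E_j K_\mu$, $K_\mu F_j = q^{-\langle\mu,j'\rangle}F_j K_\mu$ and the relation $E_i F_j - F_j E_i = \delta_{i,j}(\widetilde{K}_i - \widetilde{K}_{-i})/(q_i - q_i^{-1})$; the two Serre relations behave well because the Gaussian binomials $\begin{bmatrix}1-a_{ij}\\ s\end{bmatrix}_i$ are symmetric under $s \mapsto 1-a_{ij}-s$ and are bar-invariant.

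For $\omega$ in (1): applying $\omega$ to the cross relations yields the same relations with $\mu$ replaced by $-\mu$ and $E$ interchanged with $F$ (legitimate since $\langle -\mu, j'\rangle = -\langle \mu, j'\rangle$), while the $E$-$F$ relation is sent to $F_i E_j - E_j F_i = \delta_{i,j}(\widetilde{K}_{-i} - \widetilde{K}_i)/(q_i-q_i^{-1})$, i.e.\ the negative of the original relation with $i, j$ swapped, and the two Serre relations are simply interchanged. Since $\omega$ fixes the set of generators up to the obvious involutive relabeling, $\omega^2 = \id$ on generators, so $\omega$ is an involution.

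For the anti-involutions $\wp$ and $\sigma$ in (2)--(3) one must in addition reverse every product when transporting a relation. For $\sigma$ the cross relations are handled as for $\omega$ (the reversal being compensated by $\sigma(K_\mu) = K_{-\mu}$), the $E$-$F$ relation is sent to its negative with $i, j$ swapped, and in the Serre relation $\sigma$ fixes each $E_i$ but sends the monomial $E_i^s E_j E_i^{1-a_{ij}-s} \mapsto E_i^{1-a_{ij}-s}E_j E_i^s$, so after reindexing $s \mapsto 1-a_{ij}-s$ and using the binomial symmetry one recovers $\pm S(E_i, E_j) = 0$. For $\wp$ the key computation is the $E$-$F$ relation at $i = j$: writing $\wp(E_i) = q_i^{-1}F_i\widetilde{K}_i$, $\wp(F_i) = q_i^{-1}E_i\widetilde{K}_i^{-1}$ and using $\widetilde{K}_i E_i = q_i^2 E_i \widetilde{K}_i$, $\widetilde{K}_i F_i = q_i^{-2}F_i\widetilde{K}_i$ (which follow from the cross relations with $\mu = \frac{i\cdot i}{2}i$, since $\langle \frac{i\cdot i}{2}i, i'\rangle = i\cdot i$), the expression $\wp(F_i)\wp(E_i) - \wp(E_i)\wp(F_i)$ collapses to $E_i F_i - F_i E_i$, matching $\wp$ of the right-hand side because $\wp$ fixes every $K_\mu$; the cross relations and the (easier) Serre relations are checked the same way. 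Finally $\sigma^2 = \id$ and $\wp^2 = \id$ on generators, the latter again using $\widetilde{K}_i E_i\widetilde{K}_i^{-1} = q_i^2 E_i$.

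For the bar involution in (4), $\psi$ is only $\Q$-linear and conjugates $q$, so when transporting a relation every scalar must also be barred. The cross relations survive because $\overline{q^{\langle\mu,j'\rangle}} = q^{-\langle\mu,j'\rangle}$ exactly matches the change $\mu \mapsto -\mu$; the $E$-$F$ relation survives because barring both $\widetilde{K}_{\pm i}$ and the denominator $q_i - q_i^{-1}$ introduces two sign changes that cancel; and the Serre relations survive because each $[n]_i$, hence each Gaussian binomial, is bar-invariant. As $\psi$ fixes $E_i, F_i$, sends $K_\mu \mapsto K_{-\mu} \mapsto K_\mu$, and squares to the identity on $q$, it is an involution. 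Throughout, the only genuinely delicate point---and the one I would expect to occupy essentially all of the work---is tracking the cross relations and the $E$-$F$ relation under the order-reversing maps $\wp, \sigma$ and under the $q$-conjugating map $\psi$; everything else is immediate from the presentation.
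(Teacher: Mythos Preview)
Your proposal is correct and follows exactly the approach the paper indicates: the paper simply states that the proposition ``follows by checking the generating relations'' and cites \cite[3.1.3, 3.1.12, 19.1.1]{Lu94}, while you have carried out those checks explicitly. There is no alternative argument to compare.
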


\subsection{Braid group actions and PBW basis}
\label{subsec:PBW}

Recall  from \cite[5.2.1]{Lu94} that for each $i \in \I$, $e \in \{\pm 1\}$, and each finite-dimensional $\U$-module $M$, linear isomorphisms 
$\T'_{i, e}$ and $\T''_{i, e}$ on $M$ are defined:  for $\lambda \in X$ and $m \in M_\lambda$, we set
\begin{align*}
\T''_{i,e}(m) &= \sum_{a, b, c \ge 0; -a+b-c
=\langle i, \lambda \rangle}(-1)^b q_i^{e(b-ac)}E^{(a)}_{i}F^{(b)}_{i}E^{(c)}_{i} m,\\
\T'_{i, e}(m) &= \sum_{a, b, c \ge 0; a-b+c =\langle i, \lambda \rangle}(-1)^b q_i^{e(b-ac)}F^{(a)}_{i}E^{(b)}_{i}F^{(c)}_{i} m.
\end{align*}
These $\T''_{i, e}$ and $\T'_{i, e}$ induce automorphisms of $\U$ in the same notations  such that, for all $u \in \U, m \in M$, we have
$
\T''_{i,e}(um)  = \T''_{i,e}(u)\T''_{i,e}(m),$ and $\T'_{i,e}(um)  = \T'_{i,e}(u)\T'_{i,e}(m).
$ 
More precisely, we have the following formulas for the actions the automorphisms $\T''_{i, e}$, $\T'_{i, e} : \U \rightarrow \U$ on generators ($i, j \in \I$, $\mu \in Y$): 
\begin{equation}\label{eq:braidgroup}
\begin{split}
&\T'_{i, e} (E_i) = - \widetilde{K}_{ei} F_i, \quad \T'_{i, e} (F_i) = -  E_i\widetilde{K}_{-ei},  \quad \T'_{i, e} (K_\mu) = K_{s_i(\mu)}; \\
&\T'_{i, e} (E_j) =  \sum_{r+s = - \langle i, j'\rangle} (-1)^r q^{er}_{i} E^{(r)}_i E_j E^{(s)}_i \quad \text{ for } j\neq i;\\ 
&\T'_{i, e} (F_j) =  \sum_{r+s = - \langle i, j'\rangle} (-1)^r q^{-er}_{i} F^{(s)}_i F_j F^{(r)}_i \quad \text{ for } j\neq i;\\ 
&\T''_{i, -e} (E_i) = -F_i \widetilde{K}_{-ei} , \quad \T''_{i, -e} (F_i) = -  \widetilde{K}_{ei}E_i,  \quad \T''_{i, -e} (K_\mu) = K_{s_i(\mu)}; \\
&\T''_{i, -e} (E_j) =  \sum_{r+s = - \langle i, j'\rangle} (-1)^r q^{er}_{i} E^{(s)}_i E_j E^{(r)}_i \quad \text{ for } j\neq i;\\ 
&\T''_{i, -e} (F_j) =  \sum_{r+s = - \langle i, j'\rangle} (-1)^r q^{-er}_{i} F^{(r)}_i F_j F^{(s)}_i \quad \text{ for } j\neq i.
\end{split}
\end{equation}

As automorphisms on $\U$ and as $\Qq$-linear isomorphisms on $M$,
these $\T''_{i, e}$ and $\T'_{i, e}$ satisfy the braid group relations (\cite[Theorem 39.4.3]{Lu94}) of type $(\I,\cdot)$.
Hence for each $w \in W$, both $\T''_{w, e}$ and $\T'_{w, e}$ can be defined independent of the choices of reduced expressions of $w$. The following proposition can be found in \cite[37.2.4]{Lu94}.

\begin{prop}\label{sec1:prop:braid} 
The following relations among $\T''_{i, e}$ and $\T'_{i, e}$ hold (for $i \in \I, e=\pm 1$): 
\begin{enumerate}
\item	 
$\omega \T'_{i, e} \omega = \T''_{i, e}  \quad \text{ and } \quad \sigma \T'_{i, e} \sigma = \T''_{i, -e} $, as automorphisms on $\U,$
\item	 
$
\overline{\T'_{i, e}(u)} = \T'_{i, -e}(\overline{u}) \quad \text{ and } \quad \overline{\T''_{i, e}(u)} = \T''_{i, -e}(\overline{u})$, for $u \in \U$. 
\end{enumerate}
\end{prop}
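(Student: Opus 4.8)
The plan is to reduce both statements to straightforward verifications on the algebra generators $E_i, F_i$ ($i \in \I$) and $K_\mu$ ($\mu \in Y$), using the explicit formulas \eqref{eq:braidgroup} together with the descriptions of $\omega$, $\sigma$ and the bar involution in Proposition~\ref{prop:invol}. The point that legitimizes this reduction is that all operators in sight are (anti-)automorphisms of the $\Qq$- or $\Q$-algebra $\U$: since $\omega$ is an involution and $\T'_{i,e}$ an automorphism, $\omega\T'_{i,e}\omega$ is an algebra automorphism, to be compared with the algebra automorphism $\T''_{i,e}$; since $\sigma$ is an anti-involution, $\sigma\T'_{i,e}\sigma$ is again an algebra automorphism (composition of two anti-automorphisms and one automorphism), to be compared with $\T''_{i,-e}$; and since the bar involution is a ring automorphism of the $\Q$-algebra $\U$ with $q\mapsto q^{-1}$ and $\T'_{i,e}$ is $\Qq$-linear, the assignment $u \mapsto \overline{\T'_{i,e}(\overline{u})}$ is a $\Qq$-algebra automorphism, to be compared with $\T'_{i,-e}$ (and likewise for $\T''$). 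In each case two such maps that agree on the generators of $\U$ coincide, so it suffices to check each identity on $E_i, F_i, K_\mu$.

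For part (1) I would first dispose of $K_\mu$: each of $\omega$, $\sigma$, $\T'_{i,e}$ acts on $K_\mu$ through $\mu\mapsto -\mu$ or $\mu \mapsto s_i(\mu)$, and since $s_i$ commutes with $\mu\mapsto -\mu$, both identities hold trivially on $K_\mu$. For $E_i$ and $F_i$ the verification is a one-line computation from \eqref{eq:braidgroup}: e.g.\ $\omega\T'_{i,e}\omega(E_i) = \omega\T'_{i,e}(F_i) = \omega(-E_i\widetilde{K}_{-ei}) = -F_i\widetilde{K}_{ei}$, which is exactly $\T''_{i,e}(E_i)$ as read off by substituting $-e$ for $e$ in the displayed formula for $\T''_{i,-e}(E_i)$; similarly $\sigma\T'_{i,e}\sigma(E_i) = \sigma(-\widetilde{K}_{ei}F_i) = -F_i\widetilde{K}_{-ei} = \T''_{i,-e}(E_i)$, and the cases of $F_i$ are symmetric. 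For $j\neq i$ I would apply the same bookkeeping to the sums $\sum_{r+s=-\langle i,j'\rangle}(-1)^r q_i^{\pm er} E_i^{(r)} E_j E_i^{(s)}$, using that $\omega$ fixes the divided powers up to interchanging $E$ and $F$ while $\sigma$ fixes them outright, and that passing from $\T'$ to $\T''$ reverses the order of the two outer divided powers — which is precisely the discrepancy between the two families of formulas in \eqref{eq:braidgroup}.

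For part (2) the key observation is that $[n]_i$, hence all divided powers $E_i^{(r)}$ and $F_i^{(r)}$, together with the Weyl group action on the $K_\mu$, are bar-invariant. Consequently, applying the bar involution to the right-hand sides of \eqref{eq:braidgroup} has the sole effect of sending each $q_i^{er}$ to $q_i^{-er}$, i.e.\ of replacing $e$ by $-e$. Comparing the result with the formulas for $\T'_{i,-e}$ and $\T''_{i,-e}$ then gives $\overline{\T'_{i,e}(u)} = \T'_{i,-e}(\overline{u})$ and $\overline{\T''_{i,e}(u)} = \T''_{i,-e}(\overline{u})$ on generators, hence on all of $\U$.

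I do not anticipate a genuine obstacle: the entire argument is routine once the ``(anti-)automorphism $\Rightarrow$ check on generators'' reduction is in place, and the only care needed is in tracking signs and the $e\mapsto -e$ substitutions across the $\T'$/$\T''$ formulas. This is in essence the computation of \cite[37.2.4]{Lu94}.
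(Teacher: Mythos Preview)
Your proposal is correct and is exactly the standard verification-on-generators argument. The paper does not give its own proof of this proposition; it simply records the statement and cites \cite[37.2.4]{Lu94}, where precisely this computation is carried out, so your approach coincides with the one the paper is invoking.
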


We shall focus on the automorphisms $\T''_{i, +1}$ and $\T''_{w, +1}$. Hence to simplify the notation, throughout the paper we shall often write 
\[
\T_i = \T''_{i, +1}, \quad \text{ and } \quad \T_w = \T''_{w, +1}.
\]

\subsection{Canonical bases}
  \label{subsec:Udot}

Let $M(\lambda)$ be the Verma module of $\U$ with highest weight $\lambda\in X$ and
with a highest weight vector denoted by $\eta$ or $\eta_{\lambda}$.  
We define a lowest weight $\U$-module $^\omega M(\lambda)$, which 
has the same underlying vector space as $M(\lambda)$ but 
with the action twisted by the involution $\omega$ given in Proposition~\ref{prop:invol}.
When considering $\eta_{\lambda}$ as a vector in $^\omega M(\lambda)$, 
we shall denote it by $\xi$ or $\xi_{-\lambda}$. 

Let 
$$X^+ = \big\{\lambda \in X \mid \langle i, \lambda \rangle \in {\N}, \forall i \in \I \big \}$$ 
 be the set of dominant integral weights. 
 By $\la \gg 0$ we shall mean that the integers $\langle i, \lambda \rangle$ for all $i$ are sufficiently large (in particular, we have $\la \in X^+$).
The Verma module $M(\lambda)$ associated to $\la \in X^+$ has a unique finite-dimensional 
simple quotient $\U$-module, denoted by $L(\lambda)$. 
Similarly we define the $\U$-module $^\omega L(\lambda)$ of lowest weight $-\la$. We have $^\omega L(-w_0 \lambda) =  L(\lambda)$.
For $\la \in X^+$, we let $_\mA L(\lambda) ={_\mA\U^-}  \eta$ and $^\omega _\mA L(\lambda) ={_\mA\U^+ } \xi$ 
be the $\mA$-submodules of $L(\lambda)$ and $^\omega L(\lambda) $, respectively. 

In \cite{Lu90} and \cite{Ka91}, the canonical basis $\bold{B}$ of ${}_\mA\f$ is constructed. 
Recall that we can identify $\f$ with both $\U^-$ and $\U^+$. 
For any element $b \in \B$, when considered as an element in $\U^-$ or $\U^+$ under such identifications, 
we shall denote it by $b^-$ or $b^+$,  respectively. 
Subsets $\B(\lambda)$ of $\B$ are also constructed for each $\lambda \in X^+$
such that $b \mapsto b^-\eta_{\lambda}$ is a bijection from $\B(\lambda)$ to the set of canonical basis of $_\mA L(\lambda)$; 
similarly $\{b^+ \xi_{-\lambda} \mid b \in \B(\lambda)\}$ gives the canonical basis of ${^{\omega}L(\lambda)}$.  
We denote by $\mc{L}(\lambda)$ the $\Z[q^{-1}]$-submodule of $L(\lambda)$ spanned by $\{b^-\eta_{\lambda} \mid b \in \B(\lambda)\}$. 
Similarly we denote by ${}^\omega \mc{L}(\lambda)$ the $\Z[q^{-1}]$-submodule of ${}^\omega L(\lambda)$ spanned by $\{b^+ \xi_{-\lambda} \mid b \in \B(\lambda)\}$.

%
%
Recall \cite[\S4.1]{Lu94} the quasi-$\mc{R}$-matrix $\Theta : = \sum_{\mu \in \N[\I]} \Theta_{\mu}$ is defined in 
   a suitable completion of $\U^- \otimes \U^+$. For any finite-dimensional $\U$-modules $M$ and $M'$, $\Theta$ is a well-defined operator on $M \otimes M'$, such that 
    \begin{equation}\label{eq:propThetaA}
    \Delta(u) \Theta (m \otimes m') 
    = \Theta \ov{\Delta ( \ov {u})} (m \otimes m') \quad  \text{ and } \quad \Theta \ov{\Theta} (m \otimes m') = m \otimes m',
    \end{equation}
    for all $m \in M$, $m' \in M'$, and $u \in \U$.

   Lusztig developed the theory of based $\U$-modules in \cite[Chapter~27]{Lu94}. The finite-dimensional simple $\U$-modules $L(\lambda)$ and ${}^\omega L(\lambda)$ are both based $\U$-modules.  Given any based $\U$-modules $M$ and $M'$, their tensor product is also a based $\U$-module with the bar involution $\psi = \Theta \circ (\psi \otimes \psi)$. 
In particular, the tensor product $L(\lambda) \otimes L(\mu) = {}^\omega L(-w_0\lambda) \otimes L(\mu)$ is a based $\U$-module for $\lambda, \mu \in X^+$ with basis $\B(\lambda,\mu)$. Elements in $\B(\lambda,\mu)$ are $\psi$-invariant and of the form 
\[
b^-_1 \eta_{\lambda} \diamondsuit b^-_{2} \eta_{\mu} \in b^-_1 \eta_{\lambda} \otimes b^-_{2} \eta_{\mu} 
+\!\!\!\!\!\! \sum_{|b'_1| \ge |b_1|, |b'_2| \le |b_2|} \!\!\! q^{-1}\Z[q^{-1}] b'^-_1 \eta_{\lambda} \otimes b'^-_2 \eta_{\mu}, b_1 \in \B(\lambda), b_2 \in \B(\mu).
\]
For convenience, we shall declare that $b^-_1 \eta_{\lambda} \diamondsuit b^-_{2} \eta_{\mu} = 0 $ whenever either $b_1 \not\in \B(\lambda)$ or $b_2 \not\in \B(\mu)$, 
or equivalently whenever either $b^-_1 \eta_{\lambda}=0$ or $b^-_{2} \eta_{\mu} = 0$ for $(b_1, b_2 )\in \B\times \B$.

For $\lambda, \mu \in X^+$, we denote by  $\mc{L}_{\lambda,\mu}$ the $\Z[q^{-1}]$-submodule of $L(\lambda) \otimes L(\mu) $ 
spanned by $b^-_1\eta_{\lambda} \otimes b^-_2 \eta_{\mu}$ for all $(b_1, b_2) \in \B(\lambda) \times \B(\mu)$. 
Let ${}_\mA \mc{L}_{\lambda,\mu} = \mA \otimes_{\Z[q^{-1}]} \mc{L}_{\lambda,\mu}$.
%
%

%
%
%
%
Recall \cite[Chapter~ 23]{Lu94} that the modified (or idempotented) form
\[
\Udot = \bigoplus_{\lambda', \lambda'' \in X} {{_{\lambda'}}\U_{\lambda''}}
\]
is naturally an associative algebra (without unit) where $\one_{\lambda} \one_{\lambda'} =\delta_{\la, \la'} \one_{\lambda}$. 
The algebra $\Udot$ admits a $(\U, \U)$-bimodule structure as well. 
Moreover, any weight $\U$-module (i.e., a $\U$-module with a direct sum decomposition into weight spaces, cf. \cite[\S3.4.1]{Lu94}) can naturally be regarded as a $\Udot$-module by \cite[\S23.1.4]{Lu94}. 
Denote by ${_{\mA}\Udot}$ the $\mA$-subalgebra of $\Udot$ generated by ${_\mA\U^{-}} \one_{\lambda} {_\mA\U^{+}}$ for various $\lambda \in X$.

For any $w \in W$ and $\lambda \in X^+$, we denote by 
$$
\eta_{w\lambda} = \xi_{w w_{0} (w_0 \lambda)} \in L(\lambda) = {}^\omega L(-w_0 \lambda),
$$ 
the unique canonical basis element of weight $w \lambda$.  We shall use later (in Proposition~\ref{prop:def:contraction}) the following slight 
generalization of \cite[27.1.7]{Lu94}. 

\begin{lem}\label{lem:chi}
 There exists a unique homomorphism of $\U$-modules 
\[
\chi: L(\lambda+\mu)  \longrightarrow L(\lambda) \otimes L(\mu)
\]
such that $\chi (\eta_{w (\lambda+\mu)} ) = \eta_{w \lambda} \otimes \eta_{w \mu}$ for any (or all) $w \in W$.
\end{lem}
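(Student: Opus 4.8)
The strategy is to reduce this to the known statement \cite[27.1.7]{Lu94}, which is exactly the case $w = 1$ (or $w = w_0$), and to bootstrap from there using the uniqueness of highest weight vectors. First I would establish existence: since $\eta_{\lambda+\mu}$ is a highest weight vector in $L(\lambda+\mu)$ and $\eta_\lambda \otimes \eta_\mu$ is a highest weight vector of the same weight $\lambda+\mu$ in $L(\lambda) \otimes L(\mu)$, and $\lambda+\mu \in X^+$, the universal property of the Verma module $M(\lambda+\mu)$ together with the fact that $L(\lambda+\mu)$ is its simple quotient produces a unique $\U$-module homomorphism $\chi$ with $\chi(\eta_{\lambda+\mu}) = \eta_\lambda \otimes \eta_\mu$; this is precisely \cite[27.1.7]{Lu94}. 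Uniqueness of $\chi$ subject to $\chi(\eta_{\lambda+\mu}) = \eta_\lambda \otimes \eta_\mu$ is immediate since $\eta_{\lambda+\mu}$ generates $L(\lambda+\mu)$ as a $\U$-module.

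The remaining point is to verify that this same $\chi$ satisfies $\chi(\eta_{w(\lambda+\mu)}) = \eta_{w\lambda} \otimes \eta_{w\mu}$ for all $w \in W$, and that conversely any $\U$-module map satisfying the condition for a single $w$ is forced to be this $\chi$. For the first part I would argue by induction on the length of $w$, writing $w = s_i w'$ with $\ell(w) = \ell(w') + 1$. The key input is that $\eta_{w\lambda}$ is obtained from $\eta_{w'\lambda}$ by applying the braid group operator $\T_i = \T''_{i,+1}$ (this is built into the definition $\eta_{w\lambda} = \xi_{ww_0(w_0\lambda)}$ and Lusztig's description of canonical basis elements of extremal weights via braid operators, cf.\ \cite[\S5.2]{Lu94} and the definition recalled just before the lemma). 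Since $\T_i$ acts compatibly on tensor products of finite-dimensional modules — more precisely $\T_i(m \otimes m') = \T_i(m) \otimes \T_i(m')$ up to the correction governed by the quasi-$\mathcal R$-matrix, but on extremal weight vectors $\xi_{v\lambda} \otimes \xi_{v\mu}$ with $v$ chosen so that both factors are already extremal in the relevant direction the correction vanishes — and since $\chi$ intertwines the $\U$-actions hence also the braid operators, we get $\chi(\eta_{w\lambda}) = \chi(\T_i \eta_{w'\lambda}) = \T_i \chi(\eta_{w'\lambda}) = \T_i(\eta_{w'\lambda} \otimes \eta_{w'\mu}) = \eta_{w\lambda} \otimes \eta_{w\mu}$, completing the induction. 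For the converse direction (that the condition for one fixed $w$ determines $\chi$), run the same braid-operator argument backwards to recover the value on $\eta_{\lambda+\mu}$ from the value on $\eta_{w(\lambda+\mu)}$, then invoke the $w=1$ uniqueness.

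The main obstacle I anticipate is the bookkeeping in the claim that $\T_i$ applied to an extremal tensor $\eta_{w'\lambda} \otimes \eta_{w'\mu}$ gives exactly $\eta_{w\lambda} \otimes \eta_{w\mu}$ with no quasi-$\mathcal R$-matrix correction term: one must choose the sign convention and the direction ($\T'$ versus $\T''$, and $e = \pm 1$) so that each tensor factor sits in the correct one-dimensional extremal weight space and the operator acts "diagonally." Concretely, for $v\lambda$ to pass to $s_i v \lambda$ by a single $\T$ one needs $\langle i, v\lambda \rangle$ to have a definite sign, and one should use the variant of $\T$ that, on a vector annihilated by $E_i$ (resp.\ $F_i$), reduces to a single divided-power $F_i^{(n)}$ (resp.\ $E_i^{(n)}$); then the corresponding statement on the tensor product follows because $\Delta(F_i^{(n)})$ or $\Delta(E_i^{(n)})$ acting on such a vector has only the "leading" term surviving. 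This is essentially the content of \cite[\S5.2, 27.1.7]{Lu94}, so the proof is short once the conventions are pinned down, and I would phrase the final write-up as a direct appeal to those results together with the inductive braid-operator step.
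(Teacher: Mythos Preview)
Your approach is sound and will work, but it differs from the paper's argument, and the hurdle you correctly flag is real and not fully dispatched in your write-up.

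The paper's proof is shorter and more conceptual: it defines $\chi$ by $\eta_{\lambda+\mu}\mapsto\eta_\lambda\otimes\eta_\mu$, invokes \cite[27.1.7]{Lu94} to conclude that $\chi$ is a homomorphism of \emph{based} modules (so it sends canonical basis elements to canonical basis elements or zero), and then observes that the weight space $(L(\lambda)\otimes L(\mu))_{w(\lambda+\mu)}$ is one-dimensional. Hence the unique canonical basis element of that weight is $\eta_{w\lambda}\otimes\eta_{w\mu}$, and $\chi(\eta_{w(\lambda+\mu)})$ is forced to equal it. Uniqueness follows because each $\eta_{w(\lambda+\mu)}$ is cyclic. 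No braid operators, no induction on $\ell(w)$, no $q$-power bookkeeping.

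Your route---induction on $\ell(w)$ via braid operators or divided powers---is more elementary in that it avoids the based-module machinery on tensor products. The key computation you need is that if $E_i v=E_i w=0$ with $\langle i,\mathrm{wt}(v)\rangle=n$ and $\langle i,\mathrm{wt}(w)\rangle=m$, then $F_i^{(n+m)}(v\otimes w)=(F_i^{(n)}v)\otimes(F_i^{(m)}w)$ exactly, with no extraneous $q$-power. This is true (expand $\Delta(F_i)^{n+m}$ using the $q$-binomial theorem for the $q_i^2$-commuting pieces $1\otimes F_i$ and $F_i\otimes\tilde K_{-i}$; only the $(n,m)$ term survives and the coefficient is $1$), but you should actually carry it out rather than gesture at it. Your alternative phrasing in terms of $\T_i$ acting ``diagonally'' on the tensor is less safe: the operator $\T_i$ on $M\otimes M'$ is \emph{not} $\T_i\otimes\T_i$, and the quasi-$\mathcal R$-matrix correction does not obviously vanish on extremal tensors without essentially redoing the divided-power computation. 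I would drop the braid-operator formulation and go straight through $F_i^{(n+m)}$; then your inductive argument is clean and complete.
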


\begin{proof}
We define $\chi : L(\lambda+\mu)  \longrightarrow L(\lambda) \otimes L(\mu)$ to be the 
homomorphism of $\U$-modules such that $\chi (\eta_{\lambda+\mu}) =  \eta_\lambda \otimes \eta_\mu$. 
Thanks to \cite[27.1.7]{Lu94}, $\chi$ is an homomorphism of based modules. 
For any $w \in W$,  $\eta_{w\lambda} \otimes \eta_{w\mu}$ is the unique canonical basis element 
in $L(\lambda) \otimes L(\mu)$ of weight $w (\lambda + \mu)$, since the weight subspace $\big( L(\lambda) \otimes L(\mu) \big)_{w (\lambda + \mu)}$ is one-dimensional. 

The uniqueness follows from the fact that $\eta_{w (\lambda+\mu)}$ is a cyclic vector of the $\U$-module $L(\lambda+\mu)$.
\end{proof}

We collect the following results from \cite[Chapter~25]{Lu94} in Propositions~\ref{prop:Lucontraction}-\ref{prop:CBUdot} below. 
A slight improvement here is that we can use the same $\chi$ uniformly thanks to Lemma~\ref{lem:chi}.

\begin{prop}
 \label{prop:Lucontraction}
Let $\lambda, \mu \in X^+$. 
The $\U$-module homomorphism $\chi : L(\lambda+\mu) \rightarrow L(\lambda) \otimes L(\mu)$, 
$\eta_{\lambda+\mu} \mapsto \eta_{\lambda} \otimes \eta_\mu$, satisfies the following properties: 
\begin{enumerate} 
	\item	
	Let $b \in \B(\lambda+\mu)$. We have 
$\chi (b^- \eta_{\lambda +\mu}) = \sum_{b_1, b_2} f(b; b_1, b_2) b_1^- \eta_{\lambda} \otimes b_2^- \eta_{\mu}$, 
summed over $b_1 \in \B(\lambda)$, $b_2 \in \B(\mu)$, with $f(b;b_1,b_2) \in \Z[q^{-1}]$. 
If $b^- \eta_{\mu} \neq 0$, then $f(b; 1,b) =1$ and $f(b;1,b_2) =0$ for any $b_2 \neq b$. If $b^- \eta_{\mu} = 0$, then $f(b;1,b_2) =0$ for any $b_2$;

	\item	
	Let $b \in \B(-w_0 (\lambda+\mu))$. We have 
$\chi (b^+ \xi_{w_0(\lambda +\mu)}) = \sum_{b_1, b_2} f(b; b_1, b_2) b_2^+ \xi_{w_0\lambda} \otimes b_1^+ \xi_{w_0\mu}$, 
summed over $b_1 \in \B(-w_0\lambda)$, $b_2 \in \B(-w_0\mu)$, with $f(b;b_1,b_2) \in \Z[q^{-1}]$. If $b^+ \xi_{w_0\lambda} \neq 0 $, 
then $f(b; 1, b) = 1$ and $f(b; 1, b_2) =0$ for any $b_2 \neq b$.   If $b^+ \xi_{w_0\lambda} = 0 $, then $f(b; 1, b_2) =0$ for any $b_2$.   
\end{enumerate} 
\end{prop}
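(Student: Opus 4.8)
The plan is to obtain the integrality and the ``diagonal'' normalizations from the fact that $\chi$ is a homomorphism of based modules, and then to determine the coefficients $f(b;1,b_2)$ by a direct computation with the coproduct. Let $\chi\colon L(\lambda+\mu)\to L(\lambda)\otimes L(\mu)$ be the $\U$-module map of Lemma~\ref{lem:chi}; by the argument in its proof (using \cite[27.1.7]{Lu94}), $\chi$ is a homomorphism of based $\U$-modules. Hence $\chi$ carries each canonical basis element $b^-\eta_{\lambda+\mu}$ of $L(\lambda+\mu)$ either to $0$ or to an element of $\B(\lambda,\mu)$, and expanding the latter in the ``naive'' basis $\{b_1^-\eta_\lambda\otimes b_2^-\eta_\mu\}$ by means of the triangularity of $\B(\lambda,\mu)$ recalled above yields (1) with all $f(b;b_1,b_2)\in\Z[q^{-1}]$ and with leading coefficient $1$. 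The same reasoning gives the corresponding assertions in (2): here we use that, by Lemma~\ref{lem:chi} applied with $w=w_0$, the \emph{same} map $\chi$ satisfies $\chi(\xi_{w_0(\lambda+\mu)})=\xi_{w_0\lambda}\otimes\xi_{w_0\mu}$ (this uniformity of $\chi$ is the improvement over \cite{Lu94}), together with the observation that $\{b_2^+\xi_{w_0\lambda}\otimes b_1^+\xi_{w_0\mu}\}$ is, after relabelling both tensor factors, the same basis of $L(\lambda)\otimes L(\mu)$ as $\{b_1^-\eta_\lambda\otimes b_2^-\eta_\mu\}$, so the triangularity applies verbatim.

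It remains to compute $f(b;1,b_2)$ exactly in (1). Let $\mathrm{pr}_\lambda\colon L(\lambda)\to L(\lambda)_\lambda\cong\Qq$ be the projection onto the highest weight line, normalized by $\mathrm{pr}_\lambda(\eta_\lambda)=1$. For $u\in\U^-$ we expand $\Delta(u)$ via $\Delta(F_i)=1\otimes F_i+F_i\otimes\Ktilde_{-i}$: every summand other than $1\otimes u$ has left tensor factor in $\U^-$ of strictly negative weight, hence sends $\eta_\lambda$ into a weight space of weight $<\lambda$, which $\mathrm{pr}_\lambda$ annihilates. Therefore
\[
(\mathrm{pr}_\lambda\otimes\id)\,\chi(u\,\eta_{\lambda+\mu})=(\mathrm{pr}_\lambda\otimes\id)\,\Delta(u)(\eta_\lambda\otimes\eta_\mu)=u\,\eta_\mu ,\qquad u\in\U^- .
\]
Taking $u=b^-$ and noting that $\mathrm{pr}_\lambda(b_1^-\eta_\lambda)\ne 0$ forces $b_1=1$, the left-hand side is $\sum_{b_2}f(b;1,b_2)\,b_2^-\eta_\mu$ while the right-hand side is the canonical basis element $b^-\eta_\mu$ of $L(\mu)$ when $b\in\B(\mu)$ and is $0$ when $b\notin\B(\mu)$. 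Since $\{b_2^-\eta_\mu\mid b_2\in\B(\mu)\}$ is a basis of $L(\mu)$, comparison of coefficients gives the asserted values of $f(b;1,b_2)$.

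Part (2) is proved by the mirror computation. Let $\mathrm{pr}'_\mu\colon L(\mu)\to L(\mu)_{w_0\mu}\cong\Qq$ be the projection onto the lowest weight line, normalized by $\mathrm{pr}'_\mu(\xi_{w_0\mu})=1$. For $u\in\U^+$ we expand $\Delta(u)$ via $\Delta(E_i)=E_i\otimes 1+\Ktilde_i\otimes E_i$: every summand other than $u\otimes 1$ has right tensor factor in $\U^+$ of strictly positive weight, hence sends $\xi_{w_0\mu}$ into a weight space of weight $>w_0\mu$, which $\mathrm{pr}'_\mu$ annihilates. Thus $(\id\otimes\mathrm{pr}'_\mu)\,\chi(u\,\xi_{w_0(\lambda+\mu)})=u\,\xi_{w_0\lambda}$ for $u\in\U^+$; specializing $u=b^+$ and comparing with $\sum_{b_2}f(b;1,b_2)\,b_2^+\xi_{w_0\lambda}$ in the basis $\{b_2^+\xi_{w_0\lambda}\mid b_2\in\B(-w_0\lambda)\}$ of $L(\lambda)$ yields the claim. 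I do not expect a conceptual obstacle: the only real content is the two coproduct expansions that extract the ``extreme'' tensor factor and identify the result with the module action of $\U^-$ on $L(\mu)$, respectively of $\U^+$ on $L(\lambda)$. The point requiring care is the swap of the two tensor factors in the statement of (2) and the bookkeeping between the $\eta$- and $\xi$-labellings, all of which is consistent with \cite[Chapters~25 and~27]{Lu94}.
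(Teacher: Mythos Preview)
The paper does not supply its own proof of this proposition: it is stated as a result collected from \cite[Chapter~25]{Lu94}, with the remark that the uniformity of $\chi$ (the ``slight improvement'') comes from Lemma~\ref{lem:chi}. Your argument is correct and is essentially the standard one: the integrality $f(b;b_1,b_2)\in\Z[q^{-1}]$ follows because $\chi$ is a morphism of based modules \cite[27.1.7]{Lu94}, and the exact values $f(b;1,b_2)$ drop out of the coproduct formula $\Delta(F_i)=1\otimes F_i+F_i\otimes\widetilde K_{-i}$ after projecting to the highest weight line in the first factor (and dually for (2) with $\Delta(E_i)$ and the lowest weight line in the second factor). This is precisely the mechanism behind Lusztig's proof in \cite[25.1]{Lu94}, so there is nothing to compare beyond noting that you have reproduced it.
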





\begin{prop}
 \label{prop:CBUdot}
Let $\zeta \in X$ and $b_1, b_2 \in \B$. 
	\begin{enumerate}
		\item	There exists a unique element $b_1 \diamondsuit_\zeta b_2  \in {}_\mA \Udot \one_\zeta$ such that 
		\[
			b_1 \diamondsuit_\zeta b_2 (\xi_{w_0\lambda} \otimes \eta_\mu) = (b_1^+ \xi_{w_0\lambda} \diamondsuit b_2^- \eta_{\mu}) 
		\]
		for any $\lambda, \mu \in X^+$ such that $b_1 \in \B(-w_0\lambda)$, $b_2 \in \B(\mu)$ and $w_0\lambda + \mu =\zeta$.
		
		\item	
		We have $\overline{b_1 \diamondsuit_\zeta b_2 } = b_1 \diamondsuit_\zeta b_2 $.
		
		\item	
		The set $\Bdot = \{b_1 \diamondsuit_\zeta b_2  \vert \zeta \in X, (b_1, b_2) \in \B \times \B\}$ forms a (canonical) $\Qq$-basis of $\Udot$ and a $\mA$-basis of ${}_\mA \Udot$.
	\end{enumerate}
\end{prop}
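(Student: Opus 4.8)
This statement packages \cite[Chapters~23 and 25]{Lu94}, and I would follow Lusztig, with the minor simplification that \lemref{lem:chi} lets one use a single map $\chi$ uniformly throughout, while keeping track of $\mA$-forms. Fix $\zeta\in X$ and consider the directed set of pairs $(\lambda,\mu)\in X^+\times X^+$ with $w_0\lambda+\mu=\zeta$, ordered by $\le$ in each coordinate; for any $\nu\in X^+$ the pair $(\lambda+\nu,\,\mu-w_0\nu)$ is again of this form and dominates $(\lambda,\mu)$, so one can let $\lambda,\mu\to\infty$ inside the system. The plan is to realize $\Udot\one_\zeta$ as the inverse limit of the based tensor-product modules $L(\lambda)\otimes L(\mu)$ along this system and to transport their canonical bases. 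First I would recall the $\U$-module transition maps $L(\lambda')\otimes L(\mu')\longrightarrow L(\lambda)\otimes L(\mu)$ (for $(\lambda,\mu)\le(\lambda',\mu')$), built from $\chi$ (\lemref{lem:chi}) applied to each tensor factor together with evaluation pairings $L(\sigma)\otimes L(-w_0\sigma)\to\Qq$, normalized so that $\xi_{w_0\lambda'}\otimes\eta_{\mu'}\mapsto\xi_{w_0\lambda}\otimes\eta_\mu$; these are compatible, and by \cite[23.3]{Lu94} the assignment $u\mapsto\bigl(u(\xi_{w_0\lambda}\otimes\eta_\mu)\bigr)_{(\lambda,\mu)}$ is an isomorphism $\Udot\one_\zeta\xrightarrow{\ \sim\ }\varprojlim\bigl(L(\lambda)\otimes L(\mu)\bigr)$, restricting to ${}_\mA\Udot\one_\zeta\xrightarrow{\ \sim\ }\varprojlim{}_\mA\mc L_{\lambda,\mu}$ integrally.

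\textbf{Part (1).} Fix $b_1,b_2\in\B$. Using \propref{prop:Lucontraction} — part~(1) for the factor $L(\mu')$ and part~(2) for the factor $L(\lambda')$, where $\chi$ peels off $L(-w_0\sigma)$ resp.\ $L(\sigma)$ — together with the behaviour of canonical basis vectors under the evaluation pairings, I would show: for $(\lambda,\mu)\gg0$ one has $b_1^+\xi_{w_0\lambda}\neq0$, $b_2^-\eta_\mu\neq0$, and the transition map carries the canonical basis element $b_1^+\xi_{w_0\lambda'}\diamondsuit b_2^-\eta_{\mu'}$ of $L(\lambda')\otimes L(\mu')$ exactly to $b_1^+\xi_{w_0\lambda}\diamondsuit b_2^-\eta_\mu$ for every $(\lambda',\mu')\ge(\lambda,\mu)$ — that is, canonical basis elements with a fixed label are eventually mapped to one another with no lower-order corrections, once the dominant weights are large compared with $|b_1|+|b_2|$. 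Hence $\bigl(b_1^+\xi_{w_0\lambda}\diamondsuit b_2^-\eta_\mu\bigr)_{(\lambda,\mu)\gg0}$ extends uniquely to a compatible family in the inverse system, and this defines an element $b_1\diamondsuit_\zeta b_2\in{}_\mA\Udot\one_\zeta$ acting on each $\xi_{w_0\lambda}\otimes\eta_\mu$ as prescribed (integrality since each term lies in ${}_\mA\mc L$). Uniqueness is immediate because the assignment $u\mapsto\bigl(u(\xi_{w_0\lambda}\otimes\eta_\mu)\bigr)_{(\lambda,\mu)}$ of the first paragraph is injective.

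\textbf{Parts (2) and (3).} For (2): the vector $\xi_{w_0\lambda}\otimes\eta_\mu$ equals $1^+\xi_{w_0\lambda}\diamondsuit1^-\eta_\mu$ (a weight count shows there are no lower-order terms), hence is a $\psi$-invariant canonical basis element of the based module $L(\lambda)\otimes L(\mu)$; since the bar involution on $\Udot$ extending that of $\U$ (\propref{prop:invol}(4)) is compatible with the bar involutions of based weight modules and each $b_1^+\xi\diamondsuit b_2^-\eta$ is $\psi$-invariant, for $(\lambda,\mu)\gg0$ I would compute
\[
\overline{b_1\diamondsuit_\zeta b_2}\,(\xi_{w_0\lambda}\otimes\eta_\mu)=\overline{(b_1\diamondsuit_\zeta b_2)(\xi_{w_0\lambda}\otimes\eta_\mu)}=\overline{b_1^+\xi_{w_0\lambda}\diamondsuit b_2^-\eta_\mu}=b_1^+\xi_{w_0\lambda}\diamondsuit b_2^-\eta_\mu ,
\]
and conclude $\overline{b_1\diamondsuit_\zeta b_2}=b_1\diamondsuit_\zeta b_2$ by the uniqueness in~(1). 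For (3): linear independence of $\Bdot$ follows by evaluating a finite relation $\sum c_{b_1,b_2}\,b_1\diamondsuit_\zeta b_2=0$ at $\xi_{w_0\lambda}\otimes\eta_\mu$ with $(\lambda,\mu)$ large enough (relative to the finitely many indices occurring) that all the $b_1^+\xi\diamondsuit b_2^-\eta$ involved are nonzero, turning it into a relation among distinct canonical basis elements of $L(\lambda)\otimes L(\mu)$ and forcing all $c_{b_1,b_2}=0$. Spanning follows from the inverse-limit description: for $x\in\Udot\one_\zeta$, each $x_{\lambda,\mu}:=x(\xi_{w_0\lambda}\otimes\eta_\mu)$ is a finite combination of canonical basis elements $b_1^+\xi\diamondsuit b_2^-\eta$ whose number of summands stays bounded as $(\lambda,\mu)\to\infty$ (because $x$ is a fixed finite combination of monomials in $\Udot$), so passing to the limit and using the stabilization above writes $x$ uniquely as a finite $\Qq$-combination of the $b_1\diamondsuit_\zeta b_2$, with coefficients in $\mA$ whenever $x\in{}_\mA\Udot\one_\zeta$ (since then $x_{\lambda,\mu}\in{}_\mA\mc L_{\lambda,\mu}$). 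Summing over $\zeta\in X$ yields (3).

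\textbf{Main obstacle.} The crux is the asymptotic stabilization used in~(1): that, once the dominant weights are large relative to $|b_1|+|b_2|$, the transition maps carry each canonical basis element to the one with the same label \emph{on the nose}. This is not formal; it must be extracted from the precise shape of the coefficients $f(b;b_1,b_2)$ in \propref{prop:Lucontraction} (in particular the vanishing $f(b;1,b_2)=\delta_{b,b_2}$ and its $\omega$-counterpart), propagated through both tensor slots and the evaluation pairings defining the transition maps, together with careful bookkeeping of the weight inequalities that determine the threshold ``$\gg0$''. This is exactly the content of \cite[25.2]{Lu94}; \lemref{lem:chi} serves only to make the map $\chi$ uniform across the system.
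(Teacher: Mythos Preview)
The paper does not give its own proof of this proposition; it is stated as a summary of results from \cite[Chapter~25]{Lu94}, with only the remark that \lemref{lem:chi} allows the map $\chi$ to be used uniformly throughout. Your proposal correctly identifies this source and faithfully outlines Lusztig's argument---the projective system of based modules $L(\lambda)\otimes L(\mu)$, the stabilization of canonical basis elements under the transition maps (the ``crux'' you flag is exactly \cite[25.2]{Lu94}), and the bar-invariance and spanning/independence arguments---so there is nothing to correct.
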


\subsection{A based submodule}
  \label{subset:parabolic}
  
 The following  is a generalization of Kashiwara's result in \cite[Lemma~8.2.1]{Ka94}.
  
\begin{thm}
  \label{thm:Betabullet}
Let $w \in W$, and $\mu, \lambda \in X^+$. For any $b \in \Bdot$, we have 
\[
b (\eta_{w\lambda} \otimes \eta_\mu)  \in \B( \lambda,\mu) \cup \{ 0\}.
\]
\end{thm}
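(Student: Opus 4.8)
The plan is to reduce the statement to Kashiwara's original result \cite[Lemma~8.2.1]{Ka94} (the case $w=1$) by an induction on the length of $w$, using the braid group operators $\T''_{i,+1}$ to move from $w$ to $s_i w$ (with $\ell(s_i w) = \ell(w)+1$). The key point is that for $\la \in X^+$ and $i \in \I$ with $\langle i, w\la\rangle \le 0$, one has a nice description of how $\T_i$ (or rather a normalized divided-power version of $F_i$ or $E_i$) carries $\eta_{w\la}$ to $\eta_{s_i w \la}$ inside $L(\la)$, and similarly how $\Bdot$ behaves under the induced automorphism $\T_i$ of $\Udot$. First I would set up the base case: for $w=1$ the claim $b(\eta_\la \otimes \eta_\mu) \in \B(\la,\mu)\cup\{0\}$ is exactly \cite[Lemma~8.2.1]{Ka94}, after unwinding that $\B(\la,\mu)$ here is the canonical basis of the based tensor product $L(\la)\otimes L(\mu)$ and $\Bdot$ is Lusztig's canonical basis of $\Udot$ from Proposition~\ref{prop:CBUdot}.

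For the inductive step, suppose the statement holds for $w$ and let $i \in \I$ with $\ell(s_i w) > \ell(w)$. Since $w\la$ is a weight of $L(\la)$ with $\langle i, w\la \rangle = \langle w^{-1}i, \la\rangle \ge 0$ (because $w^{-1}$ sends the simple root $i$ to a positive root when $\ell(s_iw)>\ell(w)$, hence $w^{-1}i$ is a positive coroot paired against the dominant weight $\la$... more precisely I need $\langle i, w\la\rangle \ge 0$), the $i$-string through $\eta_{w\la}$ reaches down to $\eta_{s_i w\la}$, and in fact $F_i^{(\langle i, w\la\rangle)} \eta_{w\la} = \eta_{s_i w\la}$ up to the relevant normalization; this is the standard behavior of canonical basis elements at the edge of a weight string, and it is encoded in the braid operator via $\T_i(\eta_{w\la}) = \pm \eta_{s_i w\la}$ on the module $L(\la)$ (and similarly on $L(\mu)$). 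I would then apply $\T_i$ to the identity $b(\eta_{w\la}\otimes\eta_\mu)$: using that $\T_i$ is compatible with the $\U$-action, $\T_i\big( b(\eta_{w\la}\otimes\eta_\mu)\big) = \T_i(b)\,\big(\T_i(\eta_{w\la})\otimes \T_i(\eta_\mu)\big) = \pm \T_i(b)(\eta_{s_iw\la}\otimes\eta_{s_i\mu})$. Two auxiliary facts then finish the argument: (1) $\T_i$ restricted to $\Udot$ permutes the canonical basis $\Bdot$ up to sign — this is \cite[Chapter~41]{Lu94} (the braid group action on $\Udot$ and its compatibility with canonical bases), so $\T_i(b) = \pm b'$ for some $b' \in \Bdot$; and (2) $\eta_{s_i\mu}$ is itself the canonical basis element $\eta_{s_i\mu}$ of $L(\mu)$, so $\eta_{s_iw\la}\otimes \eta_{s_i\mu}$ is again of the form covered by the inductive hypothesis after possibly iterating (or one applies the $w=1$-type argument in the $s_i$-twisted picture). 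Combining, $b'(\eta_{s_iw\la}\otimes \eta_{s_i\mu}) \in \B(\la,\mu)\cup\{0\}$, and since $\T_i$ is a bijection preserving $\B(\la,\mu)\cup\{0\}$ up to sign and the whole expression is bar-invariant (forcing the sign to be $+$), we get $b(\eta_{s_iw\la}\otimes\eta_\mu) \in \B(\la,\mu)\cup\{0\}$, completing the induction. Actually a cleaner route: apply $\T_i^{-1}$ to pass from $s_iw$ back to $w$, so from the hypothesis for $w$ we deduce it for $s_iw$; I would choose whichever direction makes the weight inequalities work out, i.e. induct on $\ell(w)$ decreasing from $w_0$ or increasing from $1$ depending on signs.

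The main obstacle I anticipate is handling the signs and the precise normalization when passing through the braid operators: $\T_i$ preserves $\Bdot$ only up to a sign and likewise acts on $L(\la)\otimes L(\mu)$ preserving $\B(\la,\mu)$ only up to a sign, so one must check these signs cancel. The clean way to kill the sign ambiguity is to invoke bar-invariance throughout: $b(\eta_{w\la}\otimes\eta_\mu)$ is $\psi$-invariant (as $b \in \Bdot$ is bar-invariant by Proposition~\ref{prop:CBUdot}(2) and the cyclic vectors $\eta_{w\la}\otimes\eta_\mu$ are bar-invariant — this needs a small check, since $\eta_{w\la}$ for $w\ne 1$ is bar-invariant as a canonical basis element but the tensor $\eta_{w\la}\otimes\eta_\mu$ being $\psi$-invariant in the based module $L(\la)\otimes L(\mu)$ requires that it lies in the $\Z[q^{-1}]$-lattice appropriately — in fact $\eta_{w\la}\otimes\eta_\mu$ is a canonical basis element of the tensor product, so this is fine). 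Any element of $\B(\la,\mu)\cup\{0\}$ up to sign which is genuinely $\psi$-invariant and lies in the right lattice must be in $\B(\la,\mu)\cup\{0\}$ (not its negative), by the uniqueness characterization of canonical bases. A secondary technical point is verifying that $\eta_{w\la}\otimes\eta_\mu$ is indeed a canonical basis element of $L(\la)\otimes L(\mu)$ (equivalently lies in $\B(\la,\mu)$) — this should follow since its weight space is one-dimensional, as in the proof of Lemma~\ref{lem:chi} — which also gives the cleanest base case: take $b = \eta_{w\la}\diamondsuit_\zeta 1$-type elements and bootstrap, but I'd rather keep the braid-operator induction as the backbone and only use the one-dimensionality of extremal weight spaces as a lemma.
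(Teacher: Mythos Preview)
Your inductive step has a genuine gap. The claim that $\T_i$ permutes the canonical basis $\Bdot$ up to sign is false: already for $\U_q(\mathfrak{sl}_2)$ one computes $\T_i(E_i\one_n) = -F_i\tK_i\one_{-n} = -q^{-n}F_i\one_{-n}$, so the defect is a power of $q$, not merely a sign. Braid operators do not preserve canonical bases in general---this is precisely why PBW bases and canonical bases differ---and there is no result in \cite[Chapter~41]{Lu94} to this effect. A second difficulty is that $\T_i$ does not act as $\T_i\otimes\T_i$ on a tensor product of modules (there is a correction term, cf.\ \cite[\S5.3, \S37.3]{Lu94}), so the identity $\T_i(\eta_{w\la}\otimes\eta_\mu) = \T_i(\eta_{w\la})\otimes\T_i(\eta_\mu)$ you rely on is unavailable. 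Even if these issues were repaired, you land on $\eta_{s_i\mu}$ in the second factor rather than the highest-weight vector $\eta_\mu$, so the statement of the theorem no longer matches and the induction does not close.

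The paper's argument avoids braid operators entirely. It inducts on $\ell(w_0)-\ell(w)$, with base case $w=w_0$ (Lusztig's result, Proposition~\ref{prop:CBUdot}). For the step one writes $\eta_{w\la}\otimes\eta_\mu = E_{i_1}^{(a_1)}\cdots E_{i_k}^{(a_k)}(\eta_{w_0\la}\otimes\eta_\mu)$ along a reduced word $ww_0 = s_{i_1}\cdots s_{i_k}$; the $E_i$'s act only on the first factor since $\eta_\mu$ is highest weight, so the second factor never moves. One then invokes a crystal-theoretic dichotomy adapted from \cite[Lemma~8.2.1]{Ka94} (governed by Kashiwara's $\varepsilon^*_{i_1}(b)$): either $b\in\Udot E_{i_1}$, whence $b(\eta_{w\la}\otimes\eta_\mu)=0$; or $bE_{i_1}^{(a_1)} \equiv b'$ modulo $\Udot E_{i_1}^{(a_1+1)} + I_N$ for some $b'\in\Bdot$ (here $I_N = \sum_i \Udot E_i^N + \sum_i \Udot F_i^N$), giving $b(\eta_{w\la}\otimes\eta_\mu) = b'(\eta_{s_{i_1}w\la}\otimes\eta_\mu)$. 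The point is that \emph{right multiplication} by a divided power $E_i^{(a)}$ interacts well with $\Bdot$, whereas conjugation by $\T_i$ does not.
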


\begin{proof}
We prove by induction on $k_w = \ell(w_0)-\ell(w) =\ell(ww_0)$.
When $k_w=0$, we have $w=w_0$, this is Lusztig's theorem (see Proposition~\ref{prop:CBUdot}). 

Assume $k_w=k>0$, and let $ww_0=s_{i_1} s_{i_2}\cdots s_{i_k}$ be a reduced expression.
We have 
\[
\eta_{w\lambda} \otimes \eta_\mu = E^{(a_1)}_{i_1} E^{(a_2)}_{i_2} \cdots E^{(a_k)}_{i_k} (\eta_{w_0\lambda} \otimes \eta_\mu),
\]
for some $a_i\ge 0$ (uniquely determined by $\la$ and $w$); moreover, we have $E_{i_1} (\eta_{w\lambda} \otimes \eta_\mu )=0$.

We define
\[
I_N = \sum_{i \in \I} \Udot E_i^N + \sum_{i \in \I} \Udot F_i^N, \quad \text{ for } N \in \N.
\]
Note that for $ x \in L_{\wb}(\lambda, \mu)$, we have $I_{N} (x) =0$ for $N \gg 0$.
Then  adapting \cite[Lemma~8.2.1]{Ka94} to our setting, we have the following two possibilities (depending on $\varepsilon^*_{i_1}(b)$ therein)
\[
b \in \Udot E_{i_1} \quad \text{ or } \quad b E^{(a_1)}_{i_1} \in b' +  \Udot E^{(a_1+1)}_{i_1} + I_N, \text{ for any } N \text{ and some } b' \in \Bdot.
\]

If $b \in \Udot E_{i_1} $, we clearly have $b (\eta_{w\lambda} \otimes \eta_\mu) =0$. 
If $b E^{(a_1)}_{i_1} \in b' +  \Udot E^{(a_1+1)}_{i_1} + I_N$, then we have (by taking $N \gg 0$)
\[
b (\eta_{w\lambda} \otimes \eta_\mu) = b' E^{(a_2)}_{i_2} \cdots E^{(a_k)}_{i_k} (\eta_{w_0\lambda} \otimes \eta_\mu)
=b'  (\eta_{s_{i_1}w} \otimes \eta_\mu).
\]
Since $\ell(s_{i_1}w w_0) =k-1$, the inductive assumption gives us $b'  (\eta_{s_{i_1}w} \otimes \eta_\mu)  \in \B( \lambda,\mu) \cup \{ 0\}.$
This proves the theorem. 
\end{proof}


Let $W_{\Iblack} = \langle s_i \in W \vert i \in \Iblack \rangle $ be the  
Weyl group associated with a subset $\Iblack \subset \I$. Let $\wb$ be the longest element in $W_{\Iblack}$. For $\lambda, \mu \in X^+$, 
we introduce the following $\U$-submodule of $L(\lambda) \otimes L(\mu)$: 
\begin{equation}
  \label{eq:Lbullet}
L^\imath(\lambda, \mu) = \U (\eta_{\wb\lambda} \otimes \eta_{\mu}).
\end{equation}
In case when $\Iblack = \I$, we have $L^\imath(\lambda, \mu) = {}^\omega L(\lambda) \otimes L(\mu)$.

\begin{cor}
 \label{cor:based}
Let $\Iblack \subset \I$ and $\lambda, \mu \in X^+$. Then the $\U$-submodule $L^\imath(\lambda, \mu)$   
is a based submodule of $L(\lambda) \otimes L(\mu)$ with canonical basis $\B(\lambda,\mu) \cap L^\imath(\lambda,\mu)$.
\end{cor}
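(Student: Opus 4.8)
The plan is to deduce Corollary~\ref{cor:based} directly from Theorem~\ref{thm:Betabullet} by exhibiting $L^\imath(\lambda,\mu)$ as a span of canonical basis elements of the ambient based module $L(\lambda)\otimes L(\mu)$. First I would recall that, by definition \eqref{eq:Lbullet}, $L^\imath(\lambda,\mu)=\U(\eta_{\wb\lambda}\otimes\eta_\mu)=\Udot(\eta_{\wb\lambda}\otimes\eta_\mu)$, since any weight $\U$-module is naturally a $\Udot$-module and $\eta_{\wb\lambda}\otimes\eta_\mu$ is a weight vector. Applying Theorem~\ref{thm:Betabullet} with $w=\wb$ shows that for every $b\in\Bdot$ we have $b(\eta_{\wb\lambda}\otimes\eta_\mu)\in\B(\lambda,\mu)\cup\{0\}$. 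Hence $L^\imath(\lambda,\mu)=\mathrm{span}_{\Qq}\{b(\eta_{\wb\lambda}\otimes\eta_\mu)\mid b\in\Bdot\}$ is spanned by the subset $\mathcal{S}:=\{b(\eta_{\wb\lambda}\otimes\eta_\mu)\mid b\in\Bdot\}\setminus\{0\}\subseteq\B(\lambda,\mu)$.

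Next I would argue that $\mathcal{S}=\B(\lambda,\mu)\cap L^\imath(\lambda,\mu)$. The inclusion $\mathcal{S}\subseteq\B(\lambda,\mu)\cap L^\imath(\lambda,\mu)$ is clear since each element of $\mathcal{S}$ lies in $\B(\lambda,\mu)$ and in $L^\imath(\lambda,\mu)=\Udot(\eta_{\wb\lambda}\otimes\eta_\mu)$. For the reverse inclusion: $\mathcal{S}$ is a linearly independent subset of the basis $\B(\lambda,\mu)$ spanning $L^\imath(\lambda,\mu)$, so $\mathcal{S}$ is a basis of $L^\imath(\lambda,\mu)$; since $\B(\lambda,\mu)$ is a basis of $L(\lambda)\otimes L(\mu)$ and $L^\imath(\lambda,\mu)$ is a subspace spanned by a subset of it, no element of $\B(\lambda,\mu)\setminus\mathcal{S}$ can lie in $L^\imath(\lambda,\mu)$ (otherwise it would be a linear combination of the basis vectors in $\mathcal{S}$, contradicting linear independence of $\B(\lambda,\mu)$). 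Therefore $\B(\lambda,\mu)\cap L^\imath(\lambda,\mu)=\mathcal{S}$, which gives a basis of $L^\imath(\lambda,\mu)$ consisting of canonical basis elements of the ambient module.

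It remains to check the defining properties of a based submodule in the sense of \cite[Chapter~27]{Lu94}: namely that $L^\imath(\lambda,\mu)$ is $\psi$-stable, that $\mathcal{S}$ is a $\Qq$-basis of $L^\imath(\lambda,\mu)$, that the $\mA$-span of $\mathcal{S}$ is ${}_\mA L^\imath(\lambda,\mu):=L^\imath(\lambda,\mu)\cap{}_\mA(L(\lambda)\otimes L(\mu))$, and that the $\Z[q^{-1}]$-span of $\mathcal{S}$ together with the images in the $q^{-1}\to 0$ limit behave compatibly. The $\psi$-invariance follows because $\eta_{\wb\lambda}\otimes\eta_\mu$ is $\psi$-invariant (being a canonical basis element of $L(\lambda)\otimes L(\mu)$, as the relevant weight space is one-dimensional — here one uses that $\eta_{\wb\lambda}$ is the canonical basis element of weight $\wb\lambda$) and $\psi$ is semilinear with $\psi(u\cdot v)=\ov{u}\cdot\psi(v)$, so $\psi$ maps the spanning set $\{u(\eta_{\wb\lambda}\otimes\eta_\mu)\}$ to itself; alternatively, $\psi$ permutes $\B(\lambda,\mu)$ trivially (fixes each element) and preserves $\mathcal{S}$. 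The integral and $\Z[q^{-1}]$-lattice compatibilities are inherited from those of $L(\lambda)\otimes L(\mu)$ once we know $\mathcal{S}\subseteq\B(\lambda,\mu)$: intersecting the ambient $\mA$-form and $\Z[q^{-1}]$-form with the subspace $L^\imath(\lambda,\mu)$ yields precisely the $\mA$- and $\Z[q^{-1}]$-spans of $\mathcal{S}$, because $\mathcal{S}$ extends to the basis $\B(\lambda,\mu)$. I expect the only point requiring care — and the main (though minor) obstacle — is the verification that $\eta_{\wb\lambda}\otimes\eta_\mu$ really is a canonical basis element of $L(\lambda)\otimes L(\mu)$ and hence $\psi$-invariant, together with the bookkeeping that a subset of a based-module basis spanning a $\U$-stable subspace automatically satisfies Lusztig's axioms; both are routine given Theorem~\ref{thm:Betabullet}, so no serious difficulty is anticipated.
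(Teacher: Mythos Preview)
Your proposal is correct and is precisely the argument the paper intends: Corollary~\ref{cor:based} is stated without proof as an immediate consequence of Theorem~\ref{thm:Betabullet} applied with $w=\wb$, and the remark following it notes an alternative route via the truncation $\big(L(\lambda)\otimes L(\mu)\big)[\ge w_\bullet(\lambda+\mu)]$ in the sense of \cite[\S27.1.2]{Lu94}. Your verification that $\mathcal S=\B(\lambda,\mu)\cap L^\imath(\lambda,\mu)$ and the routine check of the based-submodule axioms are exactly what is implicit in the paper; the $\psi$-stability is immediate since $\mathcal S\subset\B(\lambda,\mu)$ consists of $\psi$-fixed elements, so you need not separately verify that $\eta_{\wb\lambda}\otimes\eta_\mu$ is canonical (though that is indeed true and is recorded later as Lemma~\ref{lem:etabotimeseta}).
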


\begin{rem}
The fact that $L^\imath(\lambda, \mu)$  
is a based submodule of $L(\lambda) \otimes L(\mu)$  can also be proved by observing that 
$$
L^\imath(\lambda,\mu) = \big( L(\lambda) \otimes L(\mu) \big) [\ge w_{\bullet}(\lambda+\mu)]
$$ 
in the spirit of \cite[\S27.1.2]{Lu94}.
\end{rem}

We denote by $\mc{L}^\imath(\lambda,\mu)$ the $\Z[q^{-1}]$-lattice of $L^\imath(\lambda,\mu)$ spanned by $\B(\lambda,\mu) \cap L^\imath(\lambda,\mu)$.

\subsection{The parabolic subalgebra $\Pa$}

For any $\Iblack \subset \I$, let $\U_{\Iblack}$ be the $\Qq$-subalgebra of $\U$ generated by $F_{i} (i \in \Iblack)$, $E_{i} (i \in \Iblack)$ and $K_{i} ( i \in \Iblack)$.  
Let $\B_{\I_\bullet}$ be the canonical basis of $\f_{\Iblack}$ (here $\f_{\Iblack}$ is simply a version of $\f$ associated to $\Iblack$). 
Then we have natural identifications $\U^{-}_{\I_\bullet} \cong \U^{+}_{\I_\bullet} \cong \f_{\Iblack}$. 
As usual, for $b \in \B_{\I_\bullet}$, we shall denote by $b^-$ the corresponding element in $\U^{-}_{\I_\bullet}$  
and denote by $b^+$ the corresponding element in $\U^{+}_{\I_\bullet}$ under such  identifications. 

Let $\Pa= \Pa_{\I_\bullet}$ be the $\Qq$-subalgebra of $\U$ generated by $\U_{\Iblack}$ and $\U^{-}$. For $\lambda \in X^+$, 
we denote by  ${}^\omega L^\bullet(\lambda)$ the $\Pa$-submodule of ${}^\omega L(\lambda)$ generated by $\xi_{-\lambda}$. 
Clearly ${}^\omega L^\bullet(\lambda)$ restricts to a simple $\U_{\Iblack}$-module with lowest weight $-\lambda$, 
and ${}^\omega L^\bullet(\lambda)$ admits a canonical basis 
$\B_{\I_\bullet}(\lambda) = \{b \in \B(\lambda) \vert b^+ \xi_{-\lambda} \neq 0\} = \B_{\I_\bullet} \cap \B(\lambda)$.

We introduce the following subalgebra of $\Udot$:
$$\Padot = \bigoplus_{\lambda \in X} \Pa \one_{\lambda}.
$$
We further set ${}_\mA \Padot = \Padot \cap {}_\mA \Udot$.

Recall the canonical basis element $b_1 \diamondsuit_{\zeta} b_2 \in \dot{\B}$ from Proposition~\ref{prop:CBUdot}. 
\begin{prop}
  \label{prop:PCB}
\cite[Theorem~3.2.1]{Ka94}
The set $\Padot \cap \dot{\B}$ forms a $\Qq$-basis of $\Padot$ and an $\mA$-basis of ${}_\mA \Padot$. 
Moreover, we have $\Padot \cap \dot{\B} =\{  b_1 \diamondsuit_{\zeta} b_2  \vert (b_1, b_2) \in \B_{\I_\bullet} \times \B, \zeta \in X \}$.
\end{prop}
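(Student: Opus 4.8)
The plan is to establish the identification $\Padot \cap \dot{\B} = \{ b_1 \diamondsuit_\zeta b_2 \mid (b_1, b_2) \in \B_{\Iblack} \times \B, \zeta \in X\}$ first, and then deduce the basis statements from it. The starting point is Proposition~\ref{prop:CBUdot}(3): $\dot{\B}$ is already a $\Qq$-basis of $\Udot$ and an $\mA$-basis of ${}_\mA\Udot$, so it suffices to show that the subset of $\dot{\B}$ consisting of those $b_1 \diamondsuit_\zeta b_2$ with $b_1 \in \B_{\Iblack}$ spans $\Padot$ over $\Qq$ (respectively spans ${}_\mA\Padot$ over $\mA$), and conversely that every such element actually lies in $\Padot$. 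The ``easy direction'' is showing $b_1 \diamondsuit_\zeta b_2 \in \Padot$ when $b_1 \in \B_{\Iblack}$: by the defining property in Proposition~\ref{prop:CBUdot}(1), for suitable dominant $\lambda, \mu$ with $w_0\lambda + \mu = \zeta$ we have $b_1 \diamondsuit_\zeta b_2 (\xi_{w_0\lambda} \otimes \eta_\mu) = b_1^+ \xi_{w_0\lambda} \diamondsuit b_2^- \eta_\mu$, and since $b_1 \in \B_{\Iblack}$ the element $b_1^+\xi_{w_0\lambda}$ lies in ${}^\omega L^\bullet(\lambda) = \Pa\,\xi_{-\lambda}$; combined with the triangularity of $\diamondsuit$ this should place $b_1 \diamondsuit_\zeta b_2$ in $\Padot \one_\zeta = \Pa\one_\zeta$, using that $\Pa\one_\zeta$ acts faithfully enough on the relevant modules (or, more cleanly, using the characterization $L^\imath$-type based submodules of Corollary~\ref{cor:based} and the fact that $\Pa(\xi_{-\lambda}\otimes\eta_\mu)$ is the $L^\imath(-w_0\lambda,\mu)$-module in disguise).

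For the reverse inclusion and the spanning statement, I would argue that $\Padot$ is spanned by elements of the form $b_1^- \one_\zeta b_2^+$ for $b_1 \in \B_{\Iblack}$ (coming from $\U^-_{\Iblack}$ inside $\U_{\Iblack}$), $b_2 \in \B$ (from $\U^-$, which we identify with $\f$), together with the $K_\mu$ — wait, more carefully: $\Pa$ is generated by $\U_{\Iblack}$ and $\U^-$, so $\Padot$ is spanned over $\Qq$ by products $u\,\one_\zeta\,u'$ with $u \in {}_\mA\U_{\Iblack}$-monomials and $u' \in {}_\mA\U^-$-monomials, and expanding each factor in the canonical bases $\B_{\Iblack}$ and $\B$ one reduces to showing each $b_1^+ \one_\zeta b_2^-$ (with $b_1 \in \B_{\Iblack}$, $b_2 \in \B$) lies in the $\mA$-span of $\{b_1' \diamondsuit_\zeta b_2' \mid b_1' \in \B_{\Iblack}, b_2' \in \B\}$. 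This follows by downward induction using the triangularity in Proposition~\ref{prop:CBUdot}(1)–(2): $b_1 \diamondsuit_\zeta b_2 = b_1^+ \one_\zeta b_2^- + (\text{lower terms})$, where by the diamond triangularity the lower terms are again of the form $(b_1')^+\one_\zeta (b_2')^-$ with $|b_1'| \ge |b_1|$ and $b_1' \in \B_{\Iblack}$ (the key point being that the diamond correction terms only increase the $\Iblack$-part, which must stay inside $\f_{\Iblack}$). A Noetherian induction on the weight $|b_1|$ then inverts this, showing the $b_1 \diamondsuit_\zeta b_2$ span and, since $\dot\B$ is $\mA$-linearly independent, that they form the $\mA$-basis of ${}_\mA\Padot$ and the $\Qq$-basis of $\Padot$.

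The main obstacle I expect is pinning down precisely which lower-order terms appear when expanding $b_1^+\one_\zeta b_2^-$ versus $b_1\diamondsuit_\zeta b_2$, i.e. verifying that the ``parabolic'' property $b_1 \in \B_{\Iblack}$ is preserved by the triangular corrections. This is really where Kashiwara's argument in \cite[Theorem~3.2.1]{Ka94} does its work, and the honest thing is to invoke it: since Proposition~\ref{prop:PCB} is explicitly attributed to \cite[Theorem~3.2.1]{Ka94}, the cleanest route is to recall that the based submodule picture of Corollary~\ref{cor:based} (with $\Iblack$ in the role of the black nodes and $\lambda = -w_0\mu_0$-type dominant weights) identifies $\Pa(\xi_{w_0\lambda}\otimes\eta_\mu)$ with the based submodule ${}^\omega L^\bullet(\lambda)\otimes L(\mu)$-type object inside $L(\mu_0)\otimes L(\mu)$, and then Lusztig's compatibility of $\dot\B$ with based modules (Proposition~\ref{prop:CBUdot}(1)) transports the canonical basis of this based submodule, as $\lambda,\mu$ vary, back to the asserted subset of $\dot\B$. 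So in the write-up I would give the short conceptual argument via based submodules for the inclusion $\{b_1\diamondsuit_\zeta b_2 : b_1 \in \B_{\Iblack}\} \subset \Padot \cap \dot\B$, give the inductive-triangularity argument for the spanning, and cite \cite[Theorem~3.2.1]{Ka94} for the statement as a whole, noting that our formulation follows from loc.\ cit.\ together with the uniform choice of $\chi$ afforded by Lemma~\ref{lem:chi}.
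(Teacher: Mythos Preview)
The paper provides no proof of this proposition at all: it is stated with the citation \cite[Theorem~3.2.1]{Ka94} and then simply used. So your ultimate plan---cite Kashiwara---is exactly what the paper does, and nothing more is expected here.

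Your sketch goes further than the paper by trying to outline why the result holds. The spanning direction via triangularity is on the right track, but the obstacle you flag is real and is precisely the content of Kashiwara's argument: showing that when you expand $b_1^+ \one_\zeta b_2^-$ in the canonical basis $\dot\B$, the $b_1'$ appearing in the correction terms remain in $\B_{\Iblack}$. This is not automatic from the triangularity in Proposition~\ref{prop:CBUdot} alone (which only controls weights, not membership in $\f_{\Iblack}$), and requires Kashiwara's analysis of the crystal structure and the compatibility of $\dot\B$ with the parabolic decomposition. Your ``easy direction'' is also not quite complete as written: knowing that $b_1 \diamondsuit_\zeta b_2$ acts on $\xi_{w_0\lambda} \otimes \eta_\mu$ by an element of $\Pa(\xi_{w_0\lambda} \otimes \eta_\mu)$ does not by itself pin $b_1 \diamondsuit_\zeta b_2$ down as an element of $\Padot$, since the action is not faithful on a single module. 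One needs the limiting argument over all $\lambda, \mu$ (as in Lusztig's construction of $\dot\B$) together with Kashiwara's compatibility results. Since the paper is content to cite \cite{Ka94}, you should do the same.
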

We shall denote 
$$
\dot{\B}_{\Pa} =\Padot \cap \dot{\B} = \big \{  b_1 \diamondsuit_{\zeta} b_2  \vert (b_1, b_2) \in \B_{\I_\bullet} \times \B, \zeta \in X  \big \} 
$$ 
and refer to it as the canonical basis of $\Padot$. It follows by construction that
$$
\dot{\B}_{\Pa \one_\zeta} := 
\dot{\B}_{\Pa} \one_\zeta = 
\{  b_1 \diamondsuit_{\zeta} b_2  \vert (b_1, b_2) \in \B_{\I_\bullet} \times \B \}
$$ 
forms a (canonical) basis of $\Pa\one_\zeta$, for $\zeta \in X$. 

Let $\zeta', \zeta \in X$ be such that $\langle i , \zeta \rangle = \langle i, \zeta' \rangle$ for all $i \in \Iblack$. We have the following isomorphism of (left) $\Pa$-modules:
\begin{equation}
  \label{eq:pzz}
p=p_{\zeta, \zeta'}: \Pa \one_{\zeta} \longrightarrow \Pa \one_{\zeta'}
\end{equation}
such that $p ( \one_{\zeta}) =  \one_{\zeta'}$. The following proposition will be used later on. 

\begin{prop}
    \label{prop:PaCB}
Let $\zeta', \zeta \in X$ such that $\langle i , \zeta \rangle = \langle i, \zeta' \rangle$ for all $i \in \Iblack$. Then the isomorphism $p: \Pa \one_{\zeta} \rightarrow \Pa \one_{\zeta'}$ in \eqref{eq:pzz}
preserves the canonical bases. More precisely, for any $(b_1, b_2) \in \B_{\I_\bullet} \times \B$, we have 
$
 p (b_1 \diamondsuit_{\zeta} b_2) = b_1 \diamondsuit_{\zeta'} b_2.
$
\end{prop}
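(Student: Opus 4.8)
\textbf{Proof proposal for Proposition~\ref{prop:PaCB}.}

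The plan is to reduce the statement to the already-established characterization of the canonical basis on $\Pa\one_\zeta$ furnished by Proposition~\ref{prop:PCB}, using the fact that the map $p$ is, essentially by construction, compatible with all the structure that pins down $\dot\B_{\Pa}$. First I would recall that $p=p_{\zeta,\zeta'}$ is a map of left $\Pa$-modules sending $\one_\zeta\mapsto\one_{\zeta'}$, so it is automatically an isomorphism of $\Pa$-modules because both $\Pa\one_\zeta$ and $\Pa\one_{\zeta'}$ are cyclic with cyclic vector the respective idempotent and the only relations come from $K_i\one_\zeta=q^{\langle i,\zeta\rangle}\one_\zeta$ for $i\in\Iblack$ (and, from the $\U^-$-part acting on the right, the analogous constraints); the hypothesis $\langle i,\zeta\rangle=\langle i,\zeta'\rangle$ for all $i\in\Iblack$ is exactly what makes these relations match up, so $p$ is well defined both ways and a bijection. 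Next I would check that $p$ intertwines the bar involutions: the bar involution on $\Udot$ restricts to $\Padot$, fixes each idempotent $\one_\zeta$, and is determined on $\Pa\one_\zeta=\Pa\one_\zeta$ by $\psi$ on $\Pa$; since $p$ is a left $\Pa$-module map sending $\one_\zeta$ to $\one_{\zeta'}$, we get $p(\overline{x\one_\zeta})=p(\bar x\one_\zeta)=\bar x\one_{\zeta'}=\overline{p(x\one_\zeta)}$.

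With these two facts in hand, the proof is a uni-triangularity/uniqueness argument. I would introduce on each of $\Pa\one_\zeta$ and $\Pa\one_{\zeta'}$ the $\Z[q^{-1}]$-lattice and the basis $\{b_1^-\one_\bullet b_2^+\}$ (or whatever monomial basis underlies the construction of $b_1\diamondsuit_\zeta b_2$ in Proposition~\ref{prop:CBUdot}); observe that $p$ carries the monomial $\Z[q^{-1}]$-lattice of $\Pa\one_\zeta$ onto that of $\Pa\one_{\zeta'}$, because $p(x\one_\zeta)=x\one_{\zeta'}$ for $x\in\U^-_{\Iblack}\U^+_{\Iblack}\U^-$ and these monomials match up under $p$ by the weight-matching hypothesis on $\Iblack$. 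Therefore $p(b_1\diamondsuit_\zeta b_2)$ is a $\psi_{}$-invariant element of $\Pa\one_{\zeta'}$ which is congruent to the corresponding monomial modulo $q^{-1}$ times the lattice, i.e. it satisfies exactly the defining properties of $b_1\diamondsuit_{\zeta'}b_2$. By the uniqueness part of Proposition~\ref{prop:CBUdot}(1) (equivalently, by the usual fact that a bar-invariant element lying in a fixed coset of $q^{-1}\mathcal L$ is unique), we conclude $p(b_1\diamondsuit_\zeta b_2)=b_1\diamondsuit_{\zeta'}b_2$, which also shows $p$ maps $\dot\B_{\Pa\one_\zeta}$ bijectively onto $\dot\B_{\Pa\one_{\zeta'}}$.

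The only genuinely delicate point — and the step I expect to be the main obstacle — is verifying that $p$ really does send the integral $\Z[q^{-1}]$-lattice (and its monomial $\mA$-structure) defining the canonical basis of $\Pa\one_\zeta$ onto that of $\Pa\one_{\zeta'}$, rather than merely being a $\Qq$-linear isomorphism; one has to be careful that when a monomial $x\one_\zeta$ is rewritten in terms of the standard PBW-type monomials, the coefficients do not depend on $\zeta$ beyond the values $\langle i,\zeta\rangle$, $i\in\Iblack$. This is built into Kashiwara's construction in \cite{Ka94} of $\dot\B_{\Pa}$ via the projective/inductive system of $\Pa$-modules $^\omega L^\bullet(\lambda)\otimes L(\mu)$, so the cleanest route is probably to realize $b_1\diamondsuit_\zeta b_2$ through its action on $\xi_{w_0\lambda}\otimes\eta_\mu$ as in Proposition~\ref{prop:CBUdot}(1) and check directly that $p$ corresponds to changing $(\lambda,\mu)$ by a weight trivial on $\Iblack$, under which the based $\Pa$-module $^\omega L^\bullet(\lambda)\otimes L(\mu)$ and its canonical basis are unchanged in the relevant degrees; the weight hypothesis on $\Iblack$ guarantees the $\U_{\Iblack}$-module structure, hence the relevant truncation, matches. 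Once that compatibility is pinned down, everything else is the standard uniqueness-of-bar-invariant-lift boilerplate.
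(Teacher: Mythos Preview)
Your bar-compatibility argument for $p$ on $\Padot$ is correct, and you correctly locate the real obstacle in the lattice step. However, the direct approach of your second paragraph cannot be made to work as stated: there is no readily available $\Z[q^{-1}]$-lattice in $\Pa\one_\zeta$ (monomial or otherwise) with respect to which $b_1\diamondsuit_\zeta b_2$ is characterized intrinsically by bar-invariance plus a congruence condition. Lusztig's element $b_1\diamondsuit_\zeta b_2$ is \emph{defined} through its action on ${}^\omega L(\lambda)\otimes L(\mu)$ (Proposition~\ref{prop:CBUdot}), not via a lattice in $\Udot$, so one cannot bypass the modules.

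Your third paragraph is the right repair and is exactly the paper's argument. Concretely: choose $\lambda,\mu\gg0$ with $\mu-\lambda=\zeta$, set $\lambda'=\lambda+\zeta-\zeta'$ (so $\mu-\lambda'=\zeta'$), and use the $\Pa$-module isomorphism $\gamma:{}^\omega L^\bullet(\lambda)\to{}^\omega L^\bullet(\lambda')$, which exists precisely because $\langle i,\lambda\rangle=\langle i,\lambda'\rangle$ for $i\in\Iblack$ and which matches canonical bases. The one technical point you should make explicit, and which is the heart of the paper's proof, is why $\gamma\otimes\id$ intertwines the bar involutions on the tensor products: restricted to ${}^\omega L^\bullet(\lambda)\otimes L(\mu)$, the bar map $\Theta\circ(\psi\otimes\psi)$ reduces to $\big(\sum_{\nu\in\N[\Iblack]}\Theta^-_\nu\otimes\Theta^+_\nu\big)\circ(\psi\otimes\psi)$, since any $\Theta^+_\nu$ with $\nu\notin\N[\Iblack]$ annihilates ${}^\omega L^\bullet(\lambda)$. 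These surviving $\Theta^\pm_\nu$ lie in $\Pa$, so the $\Pa$-map $\gamma\otimes\id$ commutes with them and with $\psi\otimes\psi$. Now lattice preservation is automatic on the module side, uniqueness gives that $\gamma\otimes\id$ sends canonical basis to canonical basis, and Proposition~\ref{prop:CBUdot} transports this back to $p(b_1\diamondsuit_\zeta b_2)=b_1\diamondsuit_{\zeta'}b_2$.
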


\begin{proof}
Let $\mu, \lambda \in X^+$ be such that $\mu -\lambda = \zeta$. Let $\lambda'=\lambda + \zeta -\zeta'$. 
Clearly by taking $\lambda \gg 0$, we can have $\lambda' \in X^+$. We shall assume $\lambda \gg 0$ and $\lambda' \in X^+$ below. 

We have a natural isomorphism of $\Pa$-modules $\gamma: {}^\omega L^\bullet(\lambda) \stackrel{\cong}{\longrightarrow}  {}^{\omega} L^\bullet(\lambda')$, 
which maps the canonical basis of ${}^\omega L^\bullet(\lambda)$ 
to the canonical basis of ${}^{\omega} L^\bullet(\lambda')$. Let us consider the induced isomorphism $\gamma \otimes \id$ on the tensor product: 
\[
\xymatrix{
{}^\omega L^\bullet(\lambda) \otimes L(\mu) \ar@{^{(}->}[d] \ar[r]^-{\gamma \otimes \id} &{}^\omega L^\bullet(\lambda') \otimes L(\mu) \ar@{^{(}->}[d] \\
{}^\omega L(\lambda) \otimes L(\mu)  & {}^\omega L(\lambda') \otimes L(\mu)
}
\]
Notice that the bar involution, $\widetilde \psi$, on ${}^\omega L(\lambda) \otimes L(\mu)  $ is defined as $\widetilde \psi = \Theta \circ (\psi \otimes \psi)$, 
where $\Theta =\sum_{\mu \in \N [\I]}\Theta^-_{\mu} \otimes \Theta^+_{\mu}$. The subspace ${}^\omega L^\bullet(\lambda) \otimes L(\mu)$ is stable 
under the bar involution $\widetilde \psi$ and clearly admits a canonical basis $\B(-w_0\lambda,\mu) \cap ({}^\omega L^\bullet(\lambda) \otimes L(\mu))$. 
Similarly  ${}^\omega L^{\bullet} (\lambda') \otimes L(\mu)  $ admits a canonical basis $\B(-w_0\lambda',\mu) \cap ({}^\omega L^\bullet(\lambda') \otimes L(\mu))$.

The actions of $\widetilde \psi$ on ${}^\omega L^\bullet(\lambda) \otimes L(\mu)$ and ${}^\omega L^\bullet(\lambda') \otimes L(\mu)$ are given by the composition
$ (\sum_{\mu \in \N [\Iblack]}\Theta^-_{\mu} \otimes \Theta^+_{\mu}) \circ (\psi \otimes \psi )$. 
Note that $\gamma \otimes \id$ intertwines the actions of $\psi  \otimes \psi $ on ${}^\omega L^\bullet(\lambda) \otimes L(\mu)$ 
and  ${}^\omega L^\bullet(\lambda') \otimes L(\mu)$. 
The map $\gamma \otimes \id$ also commutes with the operator $\sum_{\mu \in \N [\Iblack]}\Theta^-_{\mu} \otimes \Theta^+_{\mu}$, 
since we have $\Theta^{\pm}_{\mu} \in \Pa$ for $\mu \in \N[\Iblack]$. Hence the map $\gamma \otimes \id $  intertwines with the bar involutions
$\widetilde\psi$ on ${}^\omega L^\bullet(\lambda) \otimes L(\mu)$ and ${}^\omega L^\bullet(\lambda') \otimes L(\mu)$. 
It follows by the uniqueness of canonical basis that 
$\gamma \otimes \id$ maps the canonical basis on ${}^\omega L^\bullet(\lambda) \otimes L(\mu)$ to the canonical basis on ${}^\omega L^\bullet(\lambda') \otimes L(\mu)$.

The construction above works for all $\lambda, \mu \gg 0$ with $\mu -\lambda = \zeta$, and hence the proposition follows from Lusztig's construction of 
canonical basis on $\Udot$ in \cite[Theorem~25.2.1]{Lu94} as well as Proposition~\ref{prop:PCB}. 
\end{proof}


\section{Quantum symmetric pairs: definitions and first properties}
 \label{sec:QSP}

In this section we formulate quantum symmetric pairs $(\U, \Ui)$ over $\Q(q)$ and a modified form of the algebra $\Ui$. 
The quantum symmetric pairs of finite type are constructed in terms of Satake diagrams,  which specify a subset $\Iblack \subset \I$ of black nodes and a diagram involution. 
We introduce admissible subdiagrams of Satake diagrams (of real rank one) and the corresponding Levi subalgebras of $\Ui$ (of real rank one).
We formulate a connection between $\Ui$ and the parabolic subalgebra of $\U$ associated to $\Iblack$.

\subsection{The $\imath$-root datum}\label{subsec:irootdatum}

Let $(Y,X, \langle\cdot,\cdot\rangle, \cdots)$ be a root datum of type $(\I, \cdot)$.  We call a permutation $\tau$ of the set $\I$ an involution 
of the Cartan datum $(\I, \cdot)$ if $\tau^2 =\id$ and $\tau(i) \cdot \tau(j) = i \cdot j$ for $i$, $j \in \I$.  Note we allow $\tau =1$.
 We further assume that $\tau$ extends to  an involution on $X$ and an involution on $Y$, respectively, such that the perfect bilinear pairing is invariant under the involution $\tau$. 
Such involutions $\tau$ on $X$ and $Y$ exist and are unique for the simply connected or adjoint simple root datum.

For a subset $\I_{\bullet} \subset \I$, let $W_{\I_\bullet}$ be the parabolic subgroup of $W$ generated by simple reflections $s_i$ with $i \in \I_{\bullet}$. 
Let $w_{\bullet}$ be the longest element in $W_{\I_\bullet}$. 
Let $R^\vee_\bullet$ denote the set of coroots associated to the simple coroots $\I_{\bullet} \hookrightarrow Y$, and
let $R_\bullet$ denote the set of roots associated to the simple roots $\I_{\bullet} \hookrightarrow X$. 
Let $\rho^\vee_{\bullet}$ be the half sum of all positive coroots in the set $R^{\vee}_{\bullet}$, 
and let $\rho_{\bullet}$ be the half sum of all positive coroots in the set $R _{\bullet}$. 
We shall write 
\begin{equation}
\label{eq:white}
 \I_{\circ} = \I \backslash \I_{\bullet}.
\end{equation}

We recall the following definition of an admissible pair  $(\I_{\bullet}, \tau)$ (cf. \cite[Definition~2.3]{Ko14}).
\begin{definition}
\label{def:AP}
A pair $(\I_{\bullet}, \tau)$ consisting of a subset $\I_{\bullet} \subset \I$ and an involution $\tau$ of the Cartan datum  $(\I, \cdot)$
is called admissible if the following conditions are satisfied:
\begin{itemize}
	\item	[(1)]	$\tau (\I_{\bullet}) = \I_{\bullet}$; 
	\item	[(2)]	The action of $\tau$ on $\I_{\bullet}$ coincides with the action of $-w_{\bullet}$; 
	\item	[(3)]	If $j \in \I_{\circ}$ and $\tau(j) = j$, then $\langle \rho^\vee_{\bullet}, j' \rangle \in \Z$.
\end{itemize}
\end{definition}

Note that
\begin{equation}
  \label{eq:th}
 \inv =  -w_{\bullet} \circ \tau
\end{equation}
is an involution of $X$, as well as an involution of $Y$, thanks to $\tau\circ w_{\bullet} \circ \tau = w_{\bullet}$. 
(Note our convention on $\theta$ in \cite{BW13} differs by a sign from here.)

We introduce
\begin{align}
  \label{XY}
 \begin{split}
X_{{\imath}} = X /  \breve{X}, & \quad \text{ where } \; \breve{X}  = \{ \la - \inv(\la) \vert \la \in X\},
 \\
Y^{\imath} &= \{\mu \in Y \big \vert \inv(\mu) =\mu \}.
\end{split}
\end{align}

We shall call $X_{\imath}$ the $\imath$-weight lattice (even though $X_{\imath}$ is {\em not} always a lattice), 
and call $Y^{\imath}$ the $\imath$-root lattice, respectively.  For any $\la \in X$ denote its image in $X_{\imath}$ by $\overline{\la}$.
There is a well-defined bilinear pairing (denoted by $\langle \cdot, \cdot \rangle$ again, by abuse of notations)
$$ 
\langle \cdot, \cdot \rangle: 
Y^{\imath}  \times X_{\imath} \longrightarrow \Z$$
defined by $\langle \mu, \overline{\la} \rangle  : = \langle \mu, \la \rangle$, 
where $\la \in X$ is any preimage of $\overline{\la}$ and $\mu \in Y^{\imath}$.

\subsection{Satake subdiagrams of real rank one}\label{subset:rankone}

According to Kolb \cite{Ko14}, the admissible pairs of finite type (excluding the trivial case when $\I =\Iblack$) are in bijection with the 
Satake diagrams \cite{Araki} arising from classification of real simple Lie algebras. 
(Beware that there is a hidden involution on the black dots when the number of black nodes is odd for type DI/DII.)
These Satake diagrams,  consist of black and white nodes with arrows;
the set of black nodes corresponds to $\I_{\bullet}$ and the involution $\tau$ is expressed in terms of $2$-sided arrows on white nodes. 
We reproduce from {\em loc. cit.} the Satake diagrams in Table~\ref{table:Satake} 
at the end of this paper  for the reader's convenience.  
For the rest of this section, we consider root data $(Y,X, \langle\cdot,\cdot\rangle, \ldots)$ of finite type $(\I, \cdot)$ and an admissible pair $(\I_{\bullet}, \tau)$. 
The number of $\tau$-orbits of white nodes in a Satake diagram is called its {\em real rank}. 

\begin{definition} 
 \label{def:rank1}
Let $D$ be a Satake diagram with a set $\I_\circ$ of white nodes. 
Given a $\langle\tau\rangle$-orbit $\mathbf o$ of white nodes
in $D$, the removal of  all white nodes in $\I_\circ \backslash \mathbf o$ and their adjacent edges in $D$ produces a diagram $D_{\mathbf o}$.
The connected subdiagram of $D_{\mathbf o}$ containing $\mathbf o$ is call a subdiagram of {\em real rank one} (associated to $\mathbf o$). 
\end{definition}
By definition, subdiagrams of real rank one of a Satake diagram $D$ are parametrized by the $\langle\tau\rangle$-orbits of white nodes of $D$.

\begin{example}
The Satake diagram of type $EIII$
$
\begin{tikzpicture}[baseline = 8, scale =1.0]
		\node at (-1,0.5) {$\circ$};
		\draw (-0.9,0.5) to (-0.6,0.5);
		\node at (-0.5,0.5) {$\bullet$};
		\draw (-0.4,0.5) to (-0.1,0.5);
		\node at (0,0.5) {$\bullet$};
		\draw (0.1,0.5) to (0.4,0.5);
		\node at (0.5,0.5) {$\bullet$};
		\draw (0.6,0.5) to (0.9,0.5);
		\node at (1,0.5) {$\circ$};
		\draw (0,0.4) to (0,0.1);
		\node at (0,-0) {$\circ$};
		\draw[<->] (-0.9, 0.6) to[out=45, in=180] (-0.5, 0.8) to (0.5, 0.8) to[out=0, in=135] (0.9, 0.6);
	\end{tikzpicture}
$
has 2 subdiagrams of real rank one:
$
\begin{tikzpicture}[baseline = 8, scale =0.8]
		\node at (-1,0.4) {$\circ$};
		\draw (-0.9,0.4) to (-0.6,0.4);
		\node at (-0.5,0.4) {$\bullet$};
		\draw (-0.4,0.4) to (-0.1,0.4);
		\node at (0,0.4) {$\bullet$};
		\draw (0.1,0.4) to (0.4,0.4);
		\node at (0.5,0.4) {$\bullet$};
		\draw (0.6,0.4) to (0.9,0.4);
		\node at (1,0.4) {$\circ$};
		\draw[<->] (-0.9, 0.5) to[out=45, in=180] (-0.5, 0.7) to (0.5, 0.7) to[out=0, in=135] (0.9, 0.5);
	\end{tikzpicture}
$
of type AIII,
and 
$\begin{tikzpicture}[baseline = 6, scale =0.8]
		\node at (-0.5,0.5) {$\bullet$};
		\draw (-0.4,0.5) to (-0.1,0.5);
		\node at (0,0.5) {$\bullet$};
		\draw (0.1,0.5) to (0.4,0.5);
		\node at (0.5,0.5) {$\bullet$};
		\draw (0,0.4) to (0,0.1);
		\node at (0, 0) {$\circ$};
	\end{tikzpicture}
$
of type DI.
\end{example}

By inspection, there are eight types of local configurations of Satake diagrams of real rank one as listed in Table~\ref{table:localSatake}. 
Note there are no black nodes and the two white nodes are connected in the type AIV for $n=2$, and so it differs from type AIII$_{11}$.

In Table~\ref{table:decomposition} (where SP stands for symmetric pairs), 
for each Satake diagram we list the possible Satake subdiargrams of real rank one in the sense of Definition~\ref{def:rank1}.
For several crucial arguments in this paper we shall reduce to the types of real rank one and do case-by-case analysis.

\begin{table}[h]
\caption{Satake diagrams of  
 symmetric pairs of real rank one}
\label{table:localSatake}
\begin{tabular}{| c | c || c | c |}
\hline
\begin{tikzpicture}[baseline=0]
\node at (0, -0.15) {AI$_1$};
\end{tikzpicture} 
& 
$	\begin{tikzpicture}[baseline=0]
		\node  at (0,0) {$\circ$};
		\node  at (0,-.3) {\small 1};
	\end{tikzpicture}
$ 
&
\begin{tikzpicture}[baseline=0]
\node at (0, -0.15) {AII$_3$};
\end{tikzpicture}
&
$	\begin{tikzpicture}[baseline=0]
		\node at (0,0) {$\bullet$};
		\draw (0.1, 0) to (0.4,0);
		\node  at (0.5,0) {$\circ$};
		\draw (0.6, 0) to (0.9,0);
		\node at (1,0) {$\bullet$};
		\node at (0,-.3) {\small 1};
		\node  at (0.5,-.3) {\small 2};
		\node at (1,-.3) {\small 3};
			\end{tikzpicture}	
$

\\
\hline
\begin{tikzpicture}[baseline=0]
\node at (0, -0.2) {AIII$_{11}$};
\end{tikzpicture}
 &
\begin{tikzpicture}[baseline = 6] 
		\node at (-0.5,0) {$\circ$};
		\node at (0.5,0) {$\circ$};
		\draw[<->] (-0.5, 0.2) to[out=45, in=180] (-0.15, 0.35) to (0.15, 0.35) to[out=0, in=135] (0.5, 0.2);
		\node at (-0.5,-0.3) {\small 1};
		\node at (0.5,-0.3) {\small 2};
	\end{tikzpicture}
	&
\begin{tikzpicture}[baseline=0]
\node at (0, -0.2) {AIV, n$\ge$2};
\end{tikzpicture} &
\begin{tikzpicture}	[baseline=6]
		\node at (-0.5,0) {$\circ$};
		\draw[-] (-0.4,0) to (-0.1, 0);
		\node  at (0,0) {$\bullet$};
		\node at (2,0) {$\bullet$};
		\node at (2.5,0) {$\circ$};
		\draw[-] (0.1, 0) to (0.5,0);
		\draw[dashed] (0.5,0) to (1.4,0);
		\draw[-] (1.6,0)  to (1.9,0);
		\draw[-] (2.1,0) to (2.4,0);
		\draw[<->] (-0.5, 0.2) to[out=45, in=180] (0, 0.35) to (2, 0.35) to[out=0, in=135] (2.5, 0.2);
		\node at (-0.5,-.3) {\small 1};
		\node  at (0,-.3) {\small 2};
		\node at (2.5,-.3) {\small n};
	\end{tikzpicture}
\\
\hline
\begin{tikzpicture}[baseline=0]
\node at (0, -0.2) {BII, n$\ge$ 2};
\end{tikzpicture} & 
    	\begin{tikzpicture}[baseline=0, scale=1.5]
		\node at (1.05,0) {$\circ$};
		\node at (1.5,0) {$\bullet$};
		\draw[-] (1.1,0)  to (1.4,0);
		\draw[-] (1.4,0) to (1.9, 0);
		\draw[dashed] (1.9,0) to (2.7,0);
		\draw[-] (2.7,0) to (2.9, 0);
		\node at (3,0) {$\bullet$};
		\draw[-implies, double equal sign distance]  (3.1, 0) to (3.7, 0);
		\node at (3.8,0) {$\bullet$};
		\node at (1,-.2) {\small 1};
		\node at (1.5,-.2) {\small 2};
		\node at (3.8,-.2) {\small n};
	\end{tikzpicture}	
&
\begin{tikzpicture}[baseline=0]
\node at (0, -0.15) {CII, n$\ge$3};
\end{tikzpicture}  
& 
		\begin{tikzpicture}[baseline=6]
		\draw (0.6, 0.15) to (0.9, 0.15);
		\node  at (0.5,0.15) {$\bullet$};
		\node at (1,0.15) {$\circ$};
		\node at (1.5,0.15) {$\bullet$};
		\draw[-] (1.1,0.15)  to (1.4,0.15);
		\draw[-] (1.4,0.15) to (1.9, 0.15);
		\draw (1.9, 0.15) to (2.1, 0.15);
		\draw[dashed] (2.1,0.15) to (2.7,0.15);
		\draw[-] (2.7,0.15) to (2.9, 0.15);
		\node at (3,0.15) {$\bullet$};
		\draw[implies-, double equal sign distance]  (3.1, 0.15) to (3.7, 0.15);
		\node at (3.8,0.15) {$\bullet$};
		\node  at (0.5,-0.15) {\small 1};
		\node at (1,-0.15) {\small 2};
		\node at (3.8,-0.15) {\small n};
	\end{tikzpicture}		
\\
\hline
\begin{tikzpicture}[baseline=0]
\node at (0, -0.05) {DII, n$\ge$4};
\end{tikzpicture}&
	\begin{tikzpicture}[baseline=0]
		\node at (1,0) {$\circ$};
		\node at (1.5,0) {$\bullet$};
		\draw[-] (1.1,0)  to (1.4,0);
		\draw[-] (1.4,0) to (1.9, 0);
		\draw[dashed] (1.9,0) to (2.7,0);
		\draw[-] (2.7,0) to (2.9, 0);
		\node at (3,0) {$\bullet$};
		\node at (3.5, 0.4) {$\bullet$};
		\node at (3.5, -0.4) {$\bullet$};
		\draw (3.1, 0.1) to (3.4, 0.35);
		\draw (3.1, -0.1) to (3.4, -0.35);
		\node at (1,-.3) {\small 1};
		\node at (1.5,-.3) {\small 2};
		\node at (3.5, 0.7) {\small n-1};
		\node at (3.5, -0.6) {\small n};
	\end{tikzpicture}		
&
\begin{tikzpicture}[baseline=0]
\node at (0, -0.2) {FII};
\end{tikzpicture}&
\begin{tikzpicture}[baseline=0][scale=1.5]
	\node at (0,0) {$\bullet$};
	\draw (0.1, 0) to (0.4,0);
	\node at (0.5,0) {$\bullet$};
	\draw[-implies, double equal sign distance]  (0.6, 0) to (1.2,0);
	\node at (1.3,0) {$\bullet$};
	\draw (1.4, 0) to (1.7,0);
	\node at (1.8,0) {$\circ$};
	\node at (0,-.3) {\small 1};
	\node at (0.5,-.3) {\small 2};
	\node at (1.3,-.3) {\small 3};
	\node at (1.8,-.3) {\small 4};
\end{tikzpicture}
\\
\hline
\end{tabular}
\newline
\smallskip
\end{table}

\vspace{.6cm}

\begin{table}[h]
\caption{Subdiagrams of real rank one in Satake diagrams}
\label{table:decomposition}
\begin{tabular}{| c || c | c | c | c |c | c | }
\hline
\text{\blue{SP Type}} & \blue{AI}  & \blue{AII} & \blue{AIII} & \blue{AIV} & \blue{BI} & \blue{BII}  
\\
\hline
\text{Local} & AI$_1$  & AII$_3$ & AI$_1$, AIII$_{11}$, AIV  & AIV &AI$_1$, BII  & BII  
\\
\hline\hline
\text{\blue{SP Type}} & \blue{CI} & \blue{CII} & \blue{DI} & \blue{DII} & \blue{DIII} & \blue{EI}  
\\
\hline
\text{Local} & AI$_1$ & AII$_3$,  BII 
, CII &AI$_1$, AIII$_{11}$, DII  & DII & AI$_1$, AII$_3$ &AI$_1$   
\\
\hline\hline
\text{\blue{SP Type}} & \blue{EII} & \blue{EIII} & \blue{EIV} & \blue{EV} & \blue{EVI} & \blue{EVII}  

\\
\hline
 \text{Local}& AI$_1$, AIII$_{11}$  &AI$_1$, AIV & DII 
  &AI$_1$  & AI$_1$,  AII$_3$ &  AI$_1$, DII 
  \\
\hline\hline
\text{\blue{SP Type}}  & \blue{EVIII} & \blue{EIX} & \blue{FI} & \blue{FII} & \blue{G} &
\\
\hline
 \text{Local}   &  AI$_1$ &  AI$_1$, DII 
 & AI$_1$ & FII & AI$_1$ &

\\
\hline

\end{tabular}
\end{table}

In analogy with subdiagrams of real rank one, we call a single black node of a Satake diagram a subdiagram of {\em compact rank one}.

\begin{definition}  
 \label{def:AdmSub}
An {\em admissible subdiagram} of a Satake diagram $D$ is a full subdiagram whose vertex set 
is the union of subdiagrams of compact rank one and subdiagrams of real rank one of $D$. 

(Hence there are two kinds of {\em minimal} admissible subdiagrams of a Satake diagram: (i)~ subdiagrams of real rank one,  and (ii)~ subdiagrams of compact rank one.)
\end{definition}


\subsection{The $\imath$quantum group $\Ui$ over $\Qq$}
The permutation $\tau$ of $\I$ induces an isomorphism of $\U$, denoted also by $\tau$,
which sends $E_i \mapsto E_{\tau i}, F_i \mapsto F_{\tau i}$, and $K_\mu \mapsto K_{\tau \mu}$. 
Let 
$$
\theta :=  \T_{w_{\bullet}} \circ \tau \circ \omega
$$ 
be an automorphism of $\U$. 
(As it will not cause confusion, here we  abuse notation to use the same $\theta$ as in \eqref{eq:th}, which is actually a shadow of the current involution on the (co)weight level.)

\begin{definition}\label{def:Ui}
The algebra $\Ui$, with parameters 
\begin{equation}
  \label{parameters}
\vs_{i} \in \pm q^\Z, \quad \kappa_i \in \Z[q,q^{-1}], \qquad \text{ for }  i \in \I_{\circ},
\end{equation}
is the $\Qq$-subalgebra of $\U$ generated by the following elements:
\begin{align*}
F_{i}  &+ \vs_i \T_{w_{\bullet}} (E_{\tau i}) \widetilde{K}^{-1}_i 
+ \kappa_i \widetilde{K}^{-1}_{i} \, (i \in \I_{\circ}), 
 \\
& \quad K_{\mu} \,(\mu \in Y^{\imath}), \quad F_i \,(i \in \I_{\bullet}), \quad E_{i} \,(i \in \I_{\bullet}).
\end{align*}
The parameters are required to satisfy Conditions \eqref{kappa}-\eqref{vs2}: 
\begin{align}
 \label{kappa}
 \begin{split}
\kappa_i &=0 \; \text{ unless } \tau(i) =i, \langle i, j' \rangle = 0 \; \forall j \in \Iblack,
\\
&  
\qquad\quad
\text{ and } \langle k,i' \rangle \in 2\Z \; \forall k = \tau(k) \in \Iwhite \text{ such that } \langle k, j' \rangle = 0 \; \text{ for all } j \in \Iblack;
\end{split}
\\
\overline{\kappa_i} &= \kappa_i;   \label{kappa2}   
\\
\vs_{i} & =\vs_{{\tau i}} \text{ if }    i \cdot \theta (i) =0;
\label{vs=}
\\
\vs_{i} \cdot  \vs_{{\tau i}} &= (-1)^{ \langle 2\rho^\vee_{\bullet},  i' \rangle } q_i^{-\langle i, 2\rho_{\bullet}+\wb\tau i ' \rangle}. 
  \label{vs2}
\end{align}
\end{definition}

By definition, $\Ui$ contains $\U_{\I_\bullet}$ as a subalgebra. We have 
$$
\Delta: \Ui \longrightarrow \Ui \otimes \U,
$$
that is, $\Ui$ is a (right) coideal subalgebra of $\U$. 
The pair $(\U, \Ui)$ is called a {\em quantum symmetric pair}, as its $q\mapsto 1$ limit is the classical symmetric pair (cf. \cite{Araki, OV} and references therein). 
The algebra $\Ui$ on its own will be also referred to as the {\em $\imath$quantum group}.

\begin{rem}
The foundation of quantum symmetric pairs was established by G.~Letzter \cite{Le02, Le03} and Kolb  \cite{Ko14} (also see \cite{BK15}). 
We refer to these papers and the references therein for more original motivations and historical remarks. 
In the literature, the $\imath$quantum group $\Ui$ was defined over some field $\mathbb K(q^{\frac1d})$ with 
$d>1$ and a field $\mathbb K \supset \mathbb Q$ containing some roots of 1 
of characteristic zero (see \cite[Remark~2.3]{BK15}). To develop a theory of canonical basis, 
it is natural to formulate the algebra $\Ui$ over the field $\Q(q)$ 
as we did in \cite{BW13}; this is made possible by \cite[Remark~5.2 and its preceding paragraph, \S5.4]{BK15}. 
\end{rem}

\begin{rem}
 \label{rem:BKvsBW}
 The precise relations between constraints for parameters for $\Ui$ in this paper and in \cite{BK15} are as follows. 
Our notations correspond to those in \cite[\S5]{BK15} in the following way: $\kappa_i \leftrightarrow s_i$,  $\vs_i \leftrightarrow - c_i s(\tau(i))$. 
Our Condition~\eqref{kappa} is \cite[(5.7)]{BK15}, 
Condition~\eqref{kappa2} is \cite[(5.15)]{BK15}, and Condition~\eqref{vs=} is \cite[(5.6)]{BK15} where we use \cite[(5.2)]{BK15}. 
In particular, our formulation does not use their parameters $c_i, s(\tau(i))$ separately. 

Condition~\eqref{vs2} (in the presence of \eqref{parameters}) implies  (but is {\em inequivalent} to) \cite[(5.16)]{BK15}, if we take into account \cite[(5.1)-(5.2), Remark~5.2]{BK15}. 
More precisely, our condition follows from theirs by imposing the additional condition that their $c_i$ (or our $\vs_i$) is a monomial in $q$ and hence $\overline{c_i} =c_i^{-1}$. 
Our stronger Condition~\eqref{parameters} is needed for the integral form of $\Uidot$ and the validity of Proposition~\ref{prop:rho} below (Remark~\ref{rem:characterization}). 
Lemma~\ref{lem:parameter vs} below 
computes the values 
of the parameters $\vs_i$ for $i\in \Iwhite$ satisfying \eqref{parameters} and \eqref{vs=}--\eqref{vs2}. 
\end{rem}

We shall write 
\begin{equation}\label{eq:def:ff}
\ff_i = 
\begin{cases}
F_{i} + \vs_i \T_{w_{\bullet}} (E_{\tau i}) \widetilde{K}^{-1}_i + \kappa_i \widetilde{K}^{-1}_{i} & \text{ if } i \in \I_{\circ};
\\
 F_{i} & \text{ if } i \in \I_{\bullet}.
 \end{cases}
\end{equation}

As can be found in \cite[(7.3)]{Ko14},
it follows by Definition~\ref{def:Ui} that, for $i \in \Iblack$ and $j \in \I$, 
\begin{equation}
 \label{eq;Eifj}
E_i \ff_j  - \ff_j E_i = \delta_{i,j} \frac{\tK_{i} - \tK_{-i}}{q_i -q^{-1}_i}.
\end{equation}

\begin{rem}
A presentation of the algebra $\Ui$ with generators $\ff_i (i \in \I)$, $K_{\mu} \,(\mu \in Y^{\imath})$ and $E_{i} \,(i \in \I_{\bullet})$ 
has been obtained in \cite{Le02, Ko14} (see \cite[Section~3.2]{BK14}).  
\end{rem}

Let $\Ui^{-}$ be the subalgebra of $\Ui$ generated by $\ff_i$ for $i \in \I$. Let $\Ui^0$ be the subalgebra of $\Ui$ generated by $K_{\mu} \,(\mu \in Y^{\imath})$. Let $\Ui^+$ be the subalgebra of $\Ui$ generated by $E_i$ for $i \in \I_{\bullet}$. 

\begin{rem}
The following multiplication map is only surjective, but not an isomorphism: 
\[
m : \Ui^- \otimes \Ui^0 \otimes \Ui^+ \longrightarrow \Ui.
\]
(Note that $\Ui^+ = \U_{\I_\bullet}^+$.)
It is possible to replace $\Ui^-$ by certain subspace of $\Ui^-$, such that the above map becomes an isomorphism following \cite[Proposition~6.1\&6.2]{Ko14}.
 \end{rem}
 
 We call a $\Ui$-module $M$ a weight $\Ui$-module, if $M$ admits a direct sum decomposition $M = \oplus_{\lambda \in X_\imath} M_{\lambda}$ such that, for any $\mu \in Y^\imath$, $\lambda \in X_\imath$, $m \in M_{\lambda}$, we have $K_{\mu} m = q^{\langle \mu, \lambda \rangle} m$. We shall only consider weight $\Ui$-modules in this paper.
 
\subsection{Parameters}

By Remark~\ref{rem:BKvsBW}, our parameters satisfy stronger constraints than those in \cite{BK15}, and in next lemma we ensure  
the existence of solutions of $\vs_{i} \in \pm q^{\Z}$ ($i \in \I_{\circ}$) which satisfy  Conditions \eqref{vs=}--\eqref{vs2}. As these conditions are local,
it suffices to consider the Satake (sub)diagrams of real rank one  in Table~\ref{table:values}.

\begin{lem}
 \label{lem:parameter vs}
The values of $\vs_i$ for  quantum symmetric pairs of real rank one are  given in Table~\ref{table:values}.
\begin{table}[h]
\caption{Values of $\vs_i$ $(i \in \Iwhite)$ for quantum symmetric pairs of  real rank one}
\label{table:values}
\begin{tabular}{| c | c | c | c | c | c | c | c |}
\hline
\begin{tikzpicture}[baseline=0]
\node at (0, 0.2) {AI$_1$};
\end{tikzpicture} 
&
\begin{tikzpicture}[baseline=0]
\node at (0, 0.2) {AII$_3$};
\end{tikzpicture}
&
\begin{tikzpicture}[baseline=0]
\node at (0, 0.2) {AIII$_{11}$};
\end{tikzpicture}
	&
\begin{tikzpicture}[baseline=0]
\node at (0, 0.2) {AIV, n$\ge$2};
\end{tikzpicture} 
&
\begin{tikzpicture}[baseline=0]
\node at (0, 0.2) {BII, n$\ge$ 2};
\end{tikzpicture} 
& 
\begin{tikzpicture}[baseline=0]
\node at (0, 0.2) {CII, n$\ge$3};
\end{tikzpicture}  
& 
\begin{tikzpicture}[baseline=0]
\node at (0, 0.2) {DII, n$\ge$4};
\end{tikzpicture}
&
\begin{tikzpicture}[baseline=0]
\node at (0, 0.2) {FII};
\end{tikzpicture}
\\
\hline
$\pm q_1^{-1}$
&
$\pm q$
 &
\eqref{eq:vs12}
 &
 \eqref{eq:vs1n}
 & 
    	$\pm q^{2n-3}$
&
$\pm q^{n-1}$
 &
$\pm q^{n-2}$	
&
$\pm q^5$
\\
\hline
\end{tabular}
\newline
\end{table}
\end{lem}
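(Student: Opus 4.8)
The plan is to turn the lemma into a finite case analysis. Conditions \eqref{vs=} and \eqref{vs2} on the parameter $\vs_i$, $i\in\Iwhite$, refer only to the Cartan integers near $i$, the longest element $\wb$, the half-sums $\rho_{\bullet}$ and $\rho^\vee_{\bullet}$, and the involution $\tau$ restricted near $i$; all of these are read off from the connected Satake subdiagram of real rank one carrying the $\langle\tau\rangle$-orbit of $i$ (Definition~\ref{def:rank1}). Since the lemma concerns only quantum symmetric pairs of real rank one, it therefore suffices to solve \eqref{vs=}--\eqref{vs2} in each of the eight local types of Table~\ref{table:localSatake} and to check in each case that a solution exists with $\vs_i\in\pm q^{\Z}$.

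First I would handle the six types whose white node is $\tau$-fixed: AI$_1$, AII$_3$, BII, CII, DII, FII. There the lone white node $i$ satisfies $\tau i=i$, so \eqref{vs=} is vacuous and \eqref{vs2} becomes the squaring relation
\[
\vs_i^{\,2} = (-1)^{\langle 2\rho^\vee_{\bullet},\, i'\rangle}\; q_i^{-\langle i,\, 2\rho_{\bullet}+\wb\tau i'\rangle}.
\]
Admissibility enters exactly here: since $\tau i=i$, Definition~\ref{def:AP}(3) gives $\langle\rho^\vee_{\bullet},i'\rangle\in\Z$, whence the sign on the right is $+1$; the remaining computation of $\langle 2\rho^\vee_{\bullet},i'\rangle$, $\langle i,2\rho_{\bullet}\rangle$ and $\wb\tau i'=\wb(i')$ is routine from the explicit black subdiagram (disjoint type-$A$ pieces in AII$_3$; a type-$A$, $B$, $C$ or $D$ tail in BII, CII, DII; a $B_3$ piece inside $F_4$ in FII), and in each case produces an even power of $q$ whose square root is the table entry up to a sign. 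For instance, in AI$_1$ one has $\wb=1$ and $\rho_{\bullet}=\rho^\vee_{\bullet}=0$, so $\vs_1^{\,2}=q_1^{-\langle 1,1'\rangle}=q_1^{-2}$ and $\vs_1=\pm q_1^{-1}$; the other five are similar bookkeeping.

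The substantive part — and the only place a genuine alternative arises — is the two types AIII$_{11}$ and AIV ($n\ge 2$), where the white orbit $\{i,\tau i\}$ has two elements, so \eqref{vs2} constrains only the product $\vs_i\vs_{\tau i}$ and one must decide whether \eqref{vs=} is in force, i.e.\ whether $i\cdot\theta(i)=0$ for the involution $\theta=-\wb\circ\tau$ of \eqref{eq:th}. Since $\theta(i)=-\wb\tau(i)$, this is a one-line pairing check. In AIII$_{11}$ the two white nodes are non-adjacent with $\Iblack=\emptyset$, so $\theta(i)=-\tau(i)$ is orthogonal to $i$ and \eqref{vs=} forces $\vs_i=\vs_{\tau i}$; together with the value $\vs_i\vs_{\tau i}=1$ of the right-hand side of \eqref{vs2} this determines $\vs_i=\vs_{\tau i}$ up to a sign, which is the content of \eqref{eq:vs12}. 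In AIV one finds $i\cdot\theta(i)\ne 0$, so \eqref{vs=} does not apply; evaluating the right-hand side of \eqref{vs2} with $\wb\tau i'=\wb(\alpha_n)=\alpha_2+\cdots+\alpha_n$ for the $A_n$ black chain leaves $\vs_{\tau i}$ determined by $\vs_i$ through the one-parameter relation recorded as \eqref{eq:vs1n}. I expect the only real labor to be the correct computation of $\rho_{\bullet}$, $\rho^\vee_{\bullet}$, and the $\wb$-images in these small root systems; the sole conceptual ingredient is the observation that Definition~\ref{def:AP}(3) is precisely what makes the square root in the $\tau i=i$ cases land in $\pm q^{\Z}$.
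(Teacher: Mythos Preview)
Your approach matches the paper's: both solve \eqref{vs=}--\eqref{vs2} type by type across the eight local cases of Table~\ref{table:localSatake}, computing $\rho_\bullet$, $\rho^\vee_\bullet$, and $\wb\tau i'$ explicitly in each. Your observation that Definition~\ref{def:AP}(3) forces $\langle 2\rho^\vee_\bullet,i'\rangle\in 2\Z$ and hence $(-1)^{\langle 2\rho^\vee_\bullet,i'\rangle}=+1$ whenever $\tau i=i$ is a pleasant uniform point the paper does not isolate; just be careful with your closing sentence --- admissibility controls only the sign, not the parity of the $q$-exponent, so the fact that $\vs_i^{\,2}$ is an \emph{even} power of $q$ (hence $\vs_i\in\pm q^{\Z}$ rather than $\pm q^{\Z/2}$) remains a per-case check, as BII already shows ($\vs_1^{\,2}=q_1^{2n-3}$ with odd exponent in $q_1$, rescued only because $q_1=q^2$).
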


\begin{proof}
We shall compute $(-1)^{ \langle 2\rho^\vee_{\bullet},  i \rangle } q_i^{-\langle i, 2\rho_{\bullet}+\wb\tau i \rangle}$ in \eqref{vs2}
case by case following Table~\ref{table:localSatake} and the labeling therein. 
For $i \in \I$, we sometimes use the notation $\alpha_i$ (instead of $i'$)  for the corresponding element  in $\I \subset X$ 
and use the notation $\alpha^\vee_i$ (instead of $i$)  for the corresponding element in $\I \subset Y$. 
\begin{enumerate}
  \item[(AI$_1$)]		
  We have $(-1)^{ \langle 2\rho^\vee_{\bullet},  i \rangle } q_i^{-\langle i, 2\rho_{\bullet}+\wb\tau i \rangle} = q_1^{-2}$. 
  Then we can clearly take $\vs_1 = \pm q_1^{-1}$.
  
 \item[(AII$_3$)]	
 We have $\rho^\vee_\bullet = \hf \alpha^\vee_1 + \hf \alpha^\vee_3$, $\rho_\bullet = \hf \alpha_1 + \hf \alpha_3$ 
 and $\wb\tau (\alpha_2) = \alpha_1 +\alpha_2+ \alpha_3$. Therefore we have $\langle 2\rho^\vee_{\bullet},  \alpha_2 \rangle = -2$ 
 and $-\langle \alpha^\vee_2, 2\rho_{\bullet}+\wb\tau  \alpha_2 \rangle = 2$. Hence \eqref{vs2} becomes $\vs_2^2 =q_2^2$
 and we can take $\vs_2 = \pm q$ (by noting $q_2=q$).
 
 \item[(AIII$_{11}$)]  
 Note $\tau (\alpha_1 ) = \alpha_2$, $q_1=q$. Condition~ \eqref{vs=} applies in this case and gives us $\vs_1 =\vs_2$. Also it follows by \eqref{vs2} that 
$ \vs_1 \vs_2 = 1. 
$ 
Hence \begin{equation}
 \label{eq:vs12}
 \vs_1 = \vs_2 =\pm 1.
 \end{equation} 
 
  \item[(AIV)]  
 Note $\tau (\alpha_1 ) = \alpha_n$,  $q_1=q$.  We have $\langle 2\rho^\vee_{\bullet},  \alpha_1 \rangle = \langle \alpha_1^\vee, 2\rho_{\bullet} \rangle =2-n$, and
 $\langle \alpha_1^\vee, \wb\alpha_n \rangle =-1$.  Condition~ \eqref{vs=} does not apply. It  follows by \eqref{vs2} that 
$
  \vs_1 \vs_n  
= (-1)^{n} q^{n-1}. 
$ 
Hence we can choose 
\begin{equation}\label{eq:vs1n} 
\vs_1 = (-1)^{a} q^{b}, \quad \vs_n = (-1)^{n-a}q^{n-1-b}, \quad \text{ for any }a, b \in \Z.
\end{equation} 
  
 \item[(BII)]		
 Let $n\ge 2$. We have
		\begin{align*}
			\rho^\vee_{\bullet} &=  \sum^{n-1}_{i=2} \frac{(2n-i)(i-1)}{2}\alpha^\vee_{i}+  \frac{n(n-1)}{4} \alpha^\vee_n,\\
			\rho _{\bullet} & = \sum^{n-1}_{i=2} \frac{(2n-i-1)(i-1)}{2}\alpha_i + \frac{(n-1)^2}{2} \alpha_n,\\
			\wb\tau(\alpha_1) &= \alpha_1 +  2\alpha_2  + \cdots +  2\alpha_{n-1} + 2\alpha_n.
		\end{align*}
     Therefore we have $\langle 2\rho^\vee_{\bullet},  \alpha_1 \rangle = - (2n-2)$, $-\langle \alpha^\vee_1, 2\rho_{\bullet}+\wb\tau  \alpha_1 \rangle =  2n-3$, 
     and \eqref{vs2} becomes $\vs_1^2 =q_1^{2n-3}$. 
     Since $q_1 = q^2$, we can take $\vs_1 =  \pm q^{2n-3}$.
	
  \item[(CII)] 		
  Let $n\ge 3$. We have
	              %
				\begin{align*}
					\rho^\vee_{\bullet} &=\hf \alpha^\vee_1 + \sum^{n-1}_{i=3} \frac{(2n-i-2)(i-2)}{2}\alpha^\vee_i + \frac{(n-2)^2}{2} \alpha^\vee_n,\\
					\rho _{\bullet} & = \hf \alpha_1 +  \sum^{n-1}_{i=3} \frac{(2n-i-1)(i-2)}{2}\alpha_{i}+  \frac{(n-1)(n-2)}{4} \alpha_n,,\\
					\wb\tau(\alpha_2) &=  \alpha_1 +\alpha_2+  2\alpha_3  + \cdots +  2\alpha_{n-1} + 2\alpha_n.
				\end{align*}
		 Therefore we have $\langle 2\rho^\vee_{\bullet},  \alpha_2 \rangle = -2n+4$ and $-\langle \alpha^\vee_2, 2\rho_{\bullet}+\wb\tau  \alpha_2 \rangle =  2n-2$.    
		 So \eqref{vs2} becomes $\vs_2^2 =q_2^{2n-2} =q^{2n-2}$, and we can take $\vs_2 = \pm q^{n-1}$.
		
  \item[(DII)]		
  Let $n\ge 4$. We have 
		\begin{align*}
			\rho^\vee_\bullet &= (n-2) \alpha^\vee_2  + \cdots + \frac{(n-1)(n-2)}{2} 
			\alpha^\vee_{n-2} + \frac{n(n-1)}{4} \alpha^\vee_{n-1} + \frac{n(n-1)}{4} \alpha^\vee_{n},\\
			\rho_\bullet &= (n-2) \alpha_2  + \cdots + \frac{(n-1)(n-2)}{2} \alpha_{n-2} + \frac{n(n-1)}{4} \alpha_{n-1} + \frac{n(n-1)}{4} \alpha_{n},\\
			\wb\tau (\alpha_1) &= \alpha_1 + 2\alpha_2 + \cdots 2 \alpha_{n-2} + \alpha_{n-1} +\alpha_{n}.
		\end{align*}
		Therefore we have $\langle 2\rho^\vee_\bullet, \alpha_1 \rangle = -(n-2)$ and $-\langle \alpha^\vee_1, 2\rho_{\bullet}+\wb\tau  \alpha_1 \rangle = 2n-4 $. 
		So \eqref{vs2} becomes $\vs_1^2 =q^{2n-4}$, and we can take $\vs_1 = \pm q^{n-2} $.
		
\item[(FII)] We have 
		\begin{align*}
			\rho^\vee_{\bullet} &= 3 \alpha^\vee_1 + 5\alpha^\vee_2    +3\alpha^\vee_3,\\
			\rho _{\bullet} & = \frac{5}{2} \alpha_1 + 4 \alpha_2 + \frac{9}{2} \alpha_3,\\
			\wb\tau(\alpha_4) &= \alpha_1 +  2\alpha_2  + 3\alpha_{3} + \alpha_4.
		\end{align*}
Therefore we have $\langle 2\rho^\vee_\bullet, \alpha_4 \rangle = -6$ and $-\langle \alpha^\vee_4, 2\rho_{\bullet}+\wb\tau  \alpha_1 \rangle = 10$. 
So \eqref{vs2} becomes $\vs_4^2 =q_4^{10} =q^{10}$, and we can take $\vs_4 =\pm q^{5} $.
		\end{enumerate}
		This finishes the proof.
\end{proof}

\begin{rem}
  \label{lem:nonzero}
For $\Ui$ of finite type associated to the Satake diagrams in Table~\ref{table:Satake}, we have $\kappa_i=0$ $(i \in \Iwhite)$ except in four cases (cf. \cite{Le03}):
(1) type AIII with $\Iblack =\emptyset$ and with $i$ being the middle node fixed by $\tau$ (we regard AI$_1$ as a special case here); 
(2) type CI with $i$ being the long simple root; (3) type DIII of even rank ($\tau = \id$) with $i$ being the rightmost white node; (4) type EVII with $i$ being the leftmost node.
\end{rem}

\begin{rem}
Lemma~\ref{lem:parameter vs} also follows from \cite[Remark~3.14]{BK14}, though they do not provide the precise values as in Table~\ref{table:values}.
Their approach has the advantage of being applicable in Kac-Moody case (under the assumption ``$\nu_i=1$ for all $i \in \Iwhite$"). 
\end{rem}

\subsection{Levi subalgebras}

Note an admissible subdiagram of a Satake diagram (see Definition~\ref{def:AdmSub}) is a Satake diagram itself. 
We sometimes denote the quantum symmetric pair $(\U, \Ui)$ associated to a Satake diagram $D$ with a root datum $\I$
by $(\U_{\I}, \U^\imath_{\I})$.
Let $A$ be an admissible subdiagram (with root datum $\mathbb J$) of the Satake diagram $D$. 
Then we have a quantum symmetric pair $(\U_{\mathbb J},  \U^\imath_{\mathbb J})$. 

\begin{lem}  
  \label{lem:Levi}
The coideal subalgebra $\U^\imath_{\mathbb J}$ of $\U_{\mathbb J}$ (whose parameter set is the restriction  from the parameter set of $\Ui$)
is naturally a subalgebra of $\Ui$.
\end{lem}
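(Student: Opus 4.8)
The plan is to verify that each algebra generator of $\U^\imath_{\mathbb J}$, as listed in Definition~\ref{def:Ui}, lands inside $\Ui$ when we view $\U_{\mathbb J}$ as the subalgebra of $\U_\I$ generated by $E_i, F_i$ ($i\in\mathbb J$) and the appropriate torus. The only nonobvious generators are the elements $\ff_j = F_j + \vs_j\,\T_{w_{\bullet,\mathbb J}}(E_{\tau j})\widetilde K_j^{-1} + \kappa_j\widetilde K_j^{-1}$ for $j$ a white node of $A$; the elements $F_i, E_i$ for $i$ a black node of $A$ obviously lie in $\U_{\I_\bullet}\subset\Ui$, and for the torus one checks $Y^{\imath}_{\mathbb J}\subset Y^{\imath}_\I$, which follows because $\theta_{\mathbb J} = -w_{\bullet,\mathbb J}\circ\tau$ agrees with $\theta_\I$ on the sublattice spanned by $\mathbb J$ (since $A$ is a full admissible subdiagram, $\tau$ restricts to $A$ and $w_{\bullet,\mathbb J}$ is the longest element of the parabolic $W_{\I_\bullet\cap\mathbb J}$, which acts on $\Z[\mathbb J]$ as the restriction of $w_\bullet$).

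The heart of the argument is therefore the identity $\T_{w_{\bullet,\mathbb J}}(E_{\tau j}) = \T_{w_{\bullet}}(E_{\tau j})$ for $j$ a white node of $A$, where on the left $\tau$ and $w_{\bullet,\mathbb J} = w_{\bullet}(\I_\bullet\cap\mathbb J)$ are computed inside $\mathbb J$ and on the right inside $\I$. First I would observe that by the structure of admissible subdiagrams (Definition~\ref{def:AdmSub}), the black nodes of $A$ adjacent to or in the same $W_{\I_\bullet}$-component as the white orbit containing $j$ form a full subdiagram of $\I_\bullet$, so $w_{\bullet,\mathbb J}$ differs from $w_\bullet$ only by a factor commuting with $E_{\tau j}$ in the relevant sense. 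More precisely, write $w_\bullet = w_\bullet' \cdot w_{\bullet,\mathbb J}''$ where $w_{\bullet,\mathbb J}''$ is the longest element of the parabolic generated by those black simple reflections lying in the connected component of $\tau(j)$ in the Dynkin diagram on $\I_\bullet\cup\{\tau j\}$, and $w_\bullet'$ involves only black reflections $s_k$ with $k$ not connected to $\tau(j)$ through $\I_\bullet$. Then $\T_{w_\bullet'}$ fixes $E_{\tau j}$ and also fixes the image $\T_{w_{\bullet,\mathbb J}''}(E_{\tau j})$ (since that image lies in the subalgebra generated by $E$'s and $K$'s indexed by nodes disjoint from $\mathrm{supp}(w_\bullet')$), giving $\T_{w_\bullet}(E_{\tau j}) = \T_{w_\bullet'}\T_{w_{\bullet,\mathbb J}''}(E_{\tau j}) = \T_{w_{\bullet,\mathbb J}''}(E_{\tau j})$; and $w_{\bullet,\mathbb J}'' = w_{\bullet,\mathbb J}$ because by admissibility the component of $\tau(j)$ inside $\I_\bullet\cap\mathbb J$ already accounts for all black nodes of $A$ that meet that component. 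I would then check that the parameters $\vs_j, \kappa_j$ assigned to $j$ by the restriction satisfy, relative to $\mathbb J$, exactly the constraints \eqref{kappa}--\eqref{vs2}: Conditions \eqref{parameters}, \eqref{kappa2}, \eqref{vs=} are literally inherited, and for \eqref{vs2} one uses that $\rho^\vee_{\bullet,\mathbb J}$, $\rho_{\bullet,\mathbb J}$ and $w_{\bullet,\mathbb J}\tau(j)'$ pair with $j$ the same way their $\I$-counterparts do — again a consequence of the admissibility/fullness of $A$, since the black reflections outside the component of $\tau(j)$ contribute $0$ to $\langle j, 2\rho_\bullet + w_\bullet\tau j'\rangle$ and to $\langle 2\rho_\bullet^\vee, j'\rangle \bmod 2$. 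Here is where I would invoke, if needed, the explicit real-rank-one data gathered in the proof of Lemma~\ref{lem:parameter vs} and in Table~\ref{table:decomposition}, reducing to a finite check.

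Once the generator identity is in hand, the lemma follows: each generator of $\U^\imath_{\mathbb J}$ equals the corresponding element $\ff_j$ (resp. $F_i$, $E_i$, $K_\mu$) of $\Ui$, so the subalgebra they generate is contained in $\Ui$, and it is naturally identified with $\U^\imath_{\mathbb J}$ because the defining relations of $\U^\imath_{\mathbb J}$ (the presentation recalled after Definition~\ref{def:Ui}, from \cite{Le02, Ko14}) hold among these elements inside $\Ui\subset\U_\I$ — indeed they hold inside $\U_{\mathbb J}\subset\U_\I$ already. The main obstacle I anticipate is bookkeeping: making the decomposition $w_\bullet = w_\bullet'\cdot w_{\bullet,\mathbb J}''$ precise and checking that $\T_{w_\bullet'}$ genuinely fixes $\T_{w_{\bullet,\mathbb J}''}(E_{\tau j})$ requires knowing that $\T_{w_{\bullet,\mathbb J}''}(E_{\tau j})$ is supported (in the PBW sense, using $\U^+(w)$ from \S\ref{subsec:PBW} and Proposition~\ref{PC}'s circle of ideas) on roots orthogonal to the support of $w_\bullet'$; this is true by admissibility but demands a careful argument, and is most safely settled by the case-by-case inspection of Satake subdiagrams of real rank one in Table~\ref{table:localSatake} together with compact-rank-one pieces, since those are the minimal building blocks of any admissible subdiagram.
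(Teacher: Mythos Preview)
Your approach is essentially the paper's: the key identity $\T_{w_{\bullet}}(E_{\tau j}) = \T_{w_{\bullet,\mathbb J}}(E_{\tau j})$ for white $j\in\mathbb J$ is exactly what the paper isolates, and the paper's proof is a single sentence declaring this identity a ``direct consequence of the definitions of subdiagrams of real rank one and admissible subdiagrams.'' Your factorization of $w_\bullet$ into the longest elements of the connected components of $\I_\bullet$, observing that only those components adjacent to $\tau j$ act nontrivially on $E_{\tau j}$ and that these are automatically contained in the real-rank-one piece of $\mathbb J$, is a correct unpacking of that sentence. The extra case-by-case fallback you contemplate is not needed: once you note that $\I_\bullet$ splits into Dynkin-connected components with commuting longest elements, and that a component not adjacent to $\tau j$ fixes $E_{\tau j}$ under $\T$, the identity is immediate and uniform. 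Your verification that the parameter constraints \eqref{kappa}--\eqref{vs2} are inherited is likewise correct and reduces to the same observation (the quantities $\langle 2\rho^\vee_\bullet, j'\rangle$, $\langle j, 2\rho_\bullet + w_\bullet\tau j'\rangle$ depend only on the components of $\I_\bullet$ adjacent to $j$ and $\tau j$); the paper leaves this implicit.
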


\begin{proof}
Let us use notation $w_\bullet^{\mathbb J}$ to indicate we are talking about the algebra $\U^\imath_{\mathbb J}$ 
associated to the root datum $\mathbb J$.
Let $i \in \I_{\circ} \cap \mathbb J$. Recall from \eqref{eq:def:ff} that 
$\ff_i = F_{i} + \vs_i \T_{w_{\bullet}} (E_{\tau i}) \widetilde{K}^{-1}_i  + \kappa_i \widetilde{K}^{-1}_{i}$ in $\Ui$, 
and 
$\ff_i = F_{i} + \vs_i \T_{w_{\bullet}^{\mathbb J}} (E_{\tau i}) \widetilde{K}^{-1}_i  + \kappa_i \widetilde{K}^{-1}_{i}$ in $\U^\imath_{\mathbb J}$. 
A simple key observation here is that $\T_{w_{\bullet}} (E_{\tau i}) = \T_{w_{\bullet}^{\mathbb J}}(E_{\tau i})$, which 
is a direct consequence of the definitions of subdiagrams of real rank one and admissible subdiagrams.
The lemma follows. 
\end{proof}

In light of Lemma~\ref{lem:Levi}, we make the following definition.
\begin{definition}  
  \label{def:Levi}
 A subalgebra of $\Ui$ of the form $\U^\imath_{\mathbb J}$ associated to some admissible subdiagram of $D$  is called a {\em Levi subalgebra}.
 (Some readers might prefer to call the subalgebra $\U^\imath_{\mathbb J} \Ui^0 \subseteq \Ui$ a Levi subalgebra of $\Ui$.)
Associated to a subdiagram of  
 $D$ of real (respectively, compact) rank one, $\U^\imath_{\mathbb J}$ is called a Levi subalgebra of $\Ui$ of {\em real (respectively, compact) rank one}.
 \end{definition}
 A Levi subalgebra of $\Ui$ of  compact rank one is very simple as it is always isomorphic to $\U_q(\mathfrak{sl}_2)$; it is a basic building block here as for quantum groups. 
Levi subalgebras of $\Ui$ of  real rank one, or $\imath$quantum groups of real rank one, are new (rich and 
sophisticated) basic building blocks for the theory of quantum symmetric pairs.

\subsection{The bar involution}

The following (or rather its variant on the modified $\imath$quantum group below) plays a fundamental role in the theory of $\imath$-canonical basis.

\begin{lem} \cite{BK14}
\label{lem:bar}
There is a unique anti-linear bar involution of the $\Q$-algebra $\Ui$, denoted by $\bar{\,}$ or $\psi_{\imath}$, such that 
\begin{align*}
\psi_{\imath} (q) =q^{-1}, \quad
\psi_{\imath} (\ff_i) = \ff_i \; (i \in \I), \quad \ipsi(E_i) = E_i  \;(i \in \I_{\bullet}), \quad \ipsi (K_\mu) = K_{-\mu} \; (\mu \in Y^{\imath}).
\end{align*}
\end{lem}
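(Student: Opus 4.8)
The statement asserts the existence and uniqueness of an anti-linear algebra involution $\psi_\imath$ on $\Ui$ fixing the generators $\ff_i$ ($i\in\I$) and $E_i$ ($i\in\Iblack$), and sending $K_\mu\mapsto K_{-\mu}$, $q\mapsto q^{-1}$. Since $\Ui$ is a subalgebra of $\U$, the natural strategy is to exhibit $\psi_\imath$ as the restriction to $\Ui$ of a suitable anti-linear automorphism of (a completion of) $\U$. Concretely, the plan is to compare the bar involution $\psi$ of $\U$ from Proposition~\ref{prop:invol}(4) with the desired map on the generators $\ff_i$: one computes $\psi(\ff_i)$ for $i\in\Iwhite$ using $\psi(F_i)=F_i$, $\psi(\widetilde K_i^{-1})=\widetilde K_i$, the commutation relation $\psi\circ\T_{w_\bullet}=\T''_{w_\bullet,-1}\circ\psi$ (Proposition~\ref{sec1:prop:braid}(2)), and $\psi(E_{\tau i})=E_{\tau i}$. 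This expresses $\psi(\ff_i)$ as $F_i$ plus bar-twisted correction terms; the point is that $\psi(\ff_i)\ne\ff_i$ in general, so $\psi$ itself does not restrict to $\Ui$, and one must correct it.

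\textbf{Key steps.} First I would recall the intertwiner $\Upsilon\in\widehat{\U}^+$ from Theorem~\ref{thm:Upsilon} (equivalently Remark~\ref{rem:history}), which by construction satisfies the intertwining property $\Upsilon\,\psi(u)=\psi_\imath(u)\,\Upsilon$ in the completion $\widehat{\U}$ for $u\in\Ui$, where a priori $\psi_\imath$ is only the conjectured map. The cleaner route, however, following \cite{BK14}, is to verify directly that the assignment on generators respects all the defining relations of $\Ui$ in the presentation of Letzter--Kolb (see \cite{Le02, Ko14, BK14}, referenced in the Remark after \eqref{eq;Eifj}). So the second step is: take that presentation with generators $\ff_i\ (i\in\I)$, $E_i\ (i\in\Iblack)$, $K_\mu\ (\mu\in Y^\imath)$, and relations consisting of (a) the $\U_{\Iblack}$-relations among the $E_i, F_i=\ff_i\ (i\in\Iblack)$ and $K_\mu$; (b) the $q$-Serre-type relations among the $\ff_i$ with $i\in\Iwhite$ and their deformed analogues coupling $\Iwhite$ and $\Iblack$ nodes; (c) the relations \eqref{eq;Eifj} between $E_i\ (i\in\Iblack)$ and $\ff_j$; (d) the $K_\mu$-conjugation relations. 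Then one checks that the anti-linear algebra map defined on these generators by $\ff_i\mapsto\ff_i$, $E_i\mapsto E_i$, $K_\mu\mapsto K_{-\mu}$, $q\mapsto q^{-1}$ sends each relation to a valid relation. For relations of type (a), (c), (d) this is routine since they have the same shape as the corresponding $\U$-relations and the structure constants involved are bar-invariant (this is exactly where Condition~\eqref{kappa2}, $\overline{\kappa_i}=\kappa_i$, and Condition~\eqref{parameters}, $\vs_i\in\pm q^\Z$ together with the bar-invariance forced by \eqref{vs=}--\eqref{vs2}, enter). The substantive check is type (b): the coefficients appearing in the inhomogeneous Serre relations for $\Ui$ are rational functions of $q$, and one must verify they are fixed under $q\mapsto q^{-1}$; this is precisely the parameter constraint analysis of \cite{BK14}. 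Having checked all relations, the map extends to an anti-linear algebra endomorphism $\psi_\imath$ of $\Ui$; applying it twice fixes all generators, so $\psi_\imath^2=\id$, giving an involution. Uniqueness is immediate since the listed elements generate $\Ui$.

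\textbf{Main obstacle.} The genuine difficulty is step (b): verifying that the structure constants in the nonhomogeneous quantum Serre relations defining $\Ui$ are bar-invariant under the choice of parameters fixed in Definition~\ref{def:Ui}. This is not a formal consequence of the $\U$-relations because those relations are genuinely deformed, with correction terms whose coefficients involve the parameters $\vs_i,\kappa_i$ and the Cartan integers in a nontrivial way; the work of \cite{BK14} was precisely to pin down when such bar-invariance holds, and our Conditions~\eqref{kappa}--\eqref{vs2} (in particular \eqref{kappa2} and the requirement \eqref{parameters} that $\vs_i$ be a signed monomial in $q$, so $\overline{\vs_i}=\vs_i^{-1}$ can be controlled via \eqref{vs2}) are exactly tailored to it. Since this verification is carried out in \cite{BK14} and our parameters satisfy their constraints (Remark~\ref{rem:BKvsBW}), I would invoke \emph{loc.\ cit.}\ for this step rather than redo the computation, and simply record how our slightly stronger Condition~\eqref{parameters} (needed later for integral forms) is compatible with, and in fact implies, theirs.
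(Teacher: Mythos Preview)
Your proposal is correct and, once you abandon the $\Upsilon$-based idea, takes essentially the same approach as the paper: both invoke \cite{BK14} for the verification that the defining relations of $\Ui$ (in the Letzter--Kolb presentation) are preserved under the candidate bar map, and both observe that the parameter constraints in Definition~\ref{def:Ui} (together with Lemma~\ref{lem:parameter vs} / Remark~\ref{rem:BKvsBW}) ensure those results apply over $\Q(q)$. Your write-up is simply a more detailed unpacking of what the citation to \cite{BK14} actually delivers.

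One caution worth flagging: your opening mention of the intertwiner $\Upsilon$ from Theorem~\ref{thm:Upsilon} as a possible route should be dropped entirely, not just sidelined. The very statement of Theorem~\ref{thm:Upsilon} presupposes that $\psi_\imath$ already exists on $\Ui$ (the defining equation \eqref{eq:Upsilon} is $\psi_\imath(u)\Upsilon=\Upsilon\psi(u)$), so invoking it here would be circular. You recognize this and pivot to the presentation-based argument, but in a final write-up the $\Upsilon$ paragraph should simply be deleted to avoid any suggestion of a logical loop.
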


\begin{proof}
A complete proof for Lemma~\ref{lem:bar} was presented in \cite{BK14} over $\mathbb K (q^{\frac1{d}})$ for certain field $\mathbb{K}$ containing roots of 1, 
where they determined the precise constraints on the parameters. Now the existence of the bar involution of the $\Q(q)$-algebra $\Ui$ follows 
 from  our further restrictions on the parameters in Definition~\ref{def:Ui} and Lemma~\ref{lem:parameter vs}.
\end{proof}

\begin{rem}
The  bar involution for $\Ui$ in the special case of type AIII/AIV (with $\Iblack =\emptyset$) was proved in \cite{BW13}
and \cite{ES13} independently.  
The existence of the bar involutions on a general $\imath$quantum group $ \U^\imath$ 
was stated in \cite[\S0.5]{BW13} and verified by the authors for numerous examples, 
as it is a prerequisite for the theory of the $\imath$canonical bases announced therein. 

\end{rem}

\subsection{The modified $\imath$quantum group}
  \label{sec:modified-i}
  
Following the by now standard construction in quantum groups \cite[IV]{Lu94}, 
we can define a modified version of the $\imath$quantum groups (this was first considered in \cite{BKLW} in a special case of type AIII/AIV). 
Let $\lambda'$, $\lambda'' \in X_{\imath}$, we set 
\[
{{_{\lambda'}}\Ui_{\lambda''}} 
= \Ui \Big/ \Big(\sum_{\mu \in Y^{\imath}}(K_{\mu} - q^{\langle \mu, \lambda' \rangle}) \Ui 
 + \sum_{\mu \in Y^{\imath}} \Ui (K_{\mu} - q^{\langle \mu, \lambda'' \rangle}) \Big).
\] 
Let $\pi_{\lambda',\lambda''}: \Ui \rightarrow {{_{\lambda'}}\Ui_{\lambda''}}$ be the canonical projection. 
Write $\one_{\lambda'} = \pi_{\lambda',\lambda'}(1)$. Let 
\[
\Uidot = \bigoplus_{\lambda', \lambda'' \in X_{\imath}} {{_{\lambda'}}\Ui_{\lambda''}}.
\]
Then $\Uidot$ is naturally an associative algebra (without unit). 
The algebra $\Uidot$ admits a $(\Ui, \Ui)$-bimodule structure as well. Moreover, any weight 
(left/right) $\Ui$-module can naturally be regarded as a (left/right) $\Uidot$-module. 
In particular, the modified algebra $\Udot$ is a $(\Ui, \Ui)$-bimodule, where the bimodule structure is induced 
by the natural  embedding $\imath: \Ui \rightarrow \U$ and the quotient map $X \rightarrow X_\imath$. 
For any $\one_{\lambda} \in \Udot$ and $u \in \Uidot$ (or $\Ui$), we shall denote by $u \one_\lambda \in \Udot$ the action of $u$ on $\one_\lambda$. 
The first part of the following proposition follows from \eqref{eq;Eifj} , and the second part follows from Lemma~\ref{lem:bar}.

\begin{prop}
The following identities hold:
$$
\Uidot =\bigoplus_{\zeta \in X_\imath} \Ui^-\Ui^+ \one_\zeta 
= \bigoplus_{\zeta \in X_\imath} \Ui^-  \one_\zeta \Ui^+
= \bigoplus_{\zeta \in X_\imath} \Ui^+\Ui^- \one_\zeta. 
$$
There is a bar involution $\ipsi$ on the $\Q$-algebra $\Uidot$ such that $\psi_{\imath} (q) =q^{-1}$ and
\begin{align*}
\psi_{\imath} (\ff_i \one_\zeta) = \ff_i \one_\zeta \; (i \in \I), \quad \ipsi(E_i \one_\zeta) 
= E_i \one_\zeta \;(i \in \I_{\bullet}), \quad \ipsi (\one_\zeta) = \one_{\zeta} \; (\zeta \in X_{\imath}).
\end{align*}

\end{prop}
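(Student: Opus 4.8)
The plan is to verify both statements by reducing them to facts already established about $\Ui$ and $\Udot$. For the triangular-type decompositions, I would first recall from Definition~\ref{def:Ui} that $\Ui$ is generated by $\ff_i\ (i\in\I)$, $E_i\ (i\in\Iblack)$ and $K_\mu\ (\mu\in Y^\imath)$, so $\Uidot$ is spanned by monomials in these generators applied to the idempotents $\one_\zeta$. Using the commutation relation \eqref{eq;Eifj}, namely $E_i\ff_j-\ff_jE_i=\delta_{ij}(\tK_i-\tK_{-i})/(q_i-q_i^{-1})$ for $i\in\Iblack$, together with the evident relations of $K_\mu$ with $\ff_j$ and $E_i$ (which follow from the corresponding relations in $\U$, since $K_\mu$ for $\mu\in Y^\imath$ acts as a scalar on each $\one_\zeta$), one can push all $E_i$'s to one side past the $\ff_j$'s at the cost of lower-order terms and scalars. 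This is the standard straightening argument: I would prove by induction on the total degree of a monomial that it lies in $\Ui^-\Ui^+\one_\zeta$ (and symmetrically in $\Ui^+\Ui^-\one_\zeta$), so each of the three sums on the right exhausts $\Uidot$; the sum being direct is immediate from the weight grading by $X_\imath$. The middle decomposition $\bigoplus_\zeta \Ui^-\one_\zeta\Ui^+$ follows similarly, noting that right multiplication by $E_i$ can be converted to left multiplication on a shifted idempotent using the same commutation relation.

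For the existence of the bar involution $\ipsi$ on $\Uidot$, I would extend the bar involution on $\Ui$ from Lemma~\ref{lem:bar}. The map $\ipsi$ on $\Ui$ satisfies $\ipsi(q)=q^{-1}$, $\ipsi(\ff_i)=\ff_i$, $\ipsi(E_i)=E_i$ for $i\in\Iblack$, and $\ipsi(K_\mu)=K_{-\mu}$. Since $\ipsi(K_\mu)=K_{-\mu}$, the involution $\ipsi$ sends the left ideal $\sum_\mu(K_\mu-q^{\langle\mu,\lambda'\rangle})\Ui$ to $\sum_\mu(K_{-\mu}-q^{-\langle\mu,\lambda'\rangle})\Ui = \sum_\mu q^{-\langle\mu,\lambda'\rangle}(q^{\langle\mu,\lambda'\rangle}-K_{\mu})(-1)\cdot(\text{unit})\cdot\Ui$, which equals the same ideal after reindexing $\mu\mapsto-\mu$; likewise for the right ideal. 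Hence $\ipsi$ descends to an anti-linear map on each quotient ${}_{\lambda'}\Ui_{\lambda''}$ and assembles into an anti-linear map on $\Uidot$, necessarily an involution since it is one on $\Ui$, with $\ipsi(\one_\zeta)=\one_\zeta$, $\ipsi(\ff_i\one_\zeta)=\ff_i\one_\zeta$, and $\ipsi(E_i\one_\zeta)=E_i\one_\zeta$. Uniqueness is clear because these elements generate $\Uidot$ as a (non-unital) algebra.

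The main obstacle, modest as it is, lies in the straightening argument for the first assertion: one must check that the commutation of $E_i$ past $\ff_j$ genuinely terminates and produces only elements of the claimed form. The term $(\tK_i-\tK_{-i})/(q_i-q_i^{-1})$ produced by \eqref{eq;Eifj} acts as a scalar on $\one_\zeta$ (since $i\in\Iblack$ means $\alpha_i^\vee$ pairs well against $X_\imath$, or more precisely $\tK_{\pm i}$ can be expressed via $K_\mu$ with $\mu\in Y^\imath$ up to scalars on weight spaces — one should invoke the admissibility condition to ensure this is consistent), so the induction closes. An alternative, cleaner route is to cite \cite[Proposition~6.1 \& 6.2]{Ko14} for a PBW-type basis of $\Ui$ refining the multiplication map $\Ui^-\otimes\Ui^0\otimes\Ui^+\to\Ui$, from which all three decompositions of $\Uidot$ follow formally after modifying; I would mention this as the conceptual reason the statement holds. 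Everything else is routine verification.
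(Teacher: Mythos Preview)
Your proposal is correct and follows exactly the approach indicated by the paper: the paper states (without further detail) that the first part follows from \eqref{eq;Eifj} and the second part from Lemma~\ref{lem:bar}, and your argument is precisely the standard straightening via \eqref{eq;Eifj} together with the descent of $\ipsi$ from $\Ui$ to the quotients defining $\Uidot$. Your concern about $\tK_{\pm i}$ acting as a scalar on $\one_\zeta$ for $i\in\Iblack$ is easily resolved, since $\Z[\Iblack]\subset Y^\imath$ (this is used elsewhere in the paper, e.g.\ in the proof of Lemma~\ref{lem:contraction}).
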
  

\begin{rem}
It is possible to consider $\Uidot$ as a subalgebra of a certain completion of $\Udot$. 
But since we only consider weight $\U$-modules (i.e., unital modules in the sense of \cite[\S23.1.4]{Lu94}) as $\Ui$-modules, 
we prefer to regard $\Udot$ as a $(\Uidot, \Uidot)$-module.
\end{rem}

\begin{definition}
  \label{def:mAUidot}
 We define $_{\mA} \Uidot$ to be the set of elements $u \in \Uidot$, such that $ u \cdot m \in {}_\mA\Udot$ for all $m \in {}_\mA\Udot$. 
 Then $_{\mA} \Uidot$  is clearly a $\mA$-subalgebra of $\Uidot$ which contains all the idempotents $\one_\zeta$ $(\zeta \in X_\imath)$,
 and $_{\mA} \Uidot = \bigoplus_{\zeta \in X_\imath}\,  {}_{\mA} \Uidot \one_{\zeta}$.  
 \end{definition}
 Later  we shall show that ${}_\mA\Uidot$ is a free $\mA$-module such that $\Uidot \cong \Q(q) \otimes_{\mA} {}_\mA\Uidot$; 
 see Theorem~\ref{thm:iCBUi}(3). 

\begin{lem}\label{lem:Uiintegral}
Let $u \in \Uidot$. Then we have $u \in {}_{\mA}\Uidot$ if and only if $u \cdot \one_{\lambda} \in {}_\mA \Udot $ for each $\lambda \in X$.
\end{lem}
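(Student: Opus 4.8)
The plan is to prove both directions of the equivalence, the nontrivial one being that the pointwise condition $u \cdot \one_\lambda \in {}_\mA\Udot$ for all $\lambda \in X$ implies $u \in {}_\mA\Uidot$, i.e., $u \cdot m \in {}_\mA\Udot$ for all $m \in {}_\mA\Udot$. The forward direction is immediate: if $u \in {}_\mA\Uidot$, then since $\one_\lambda \in {}_\mA\Udot$ for every $\lambda \in X$, Definition~\ref{def:mAUidot} gives $u \cdot \one_\lambda \in {}_\mA\Udot$.

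For the converse, the key point is that ${}_\mA\Udot$ is generated as an $\mA$-module by elements of the form $x^- \one_\lambda y^+$ with $x^- \in {}_\mA\U^-$, $y^+ \in {}_\mA\U^+$, $\lambda \in X$ (this is Lusztig's integral form, \cite[Chapter~23]{Lu94}). So it suffices to check $u \cdot (x^- \one_\lambda y^+) \in {}_\mA\Udot$ for all such generators. Now here one uses that the right multiplication by $y^+ \in {}_\mA\U^+$ on $\Udot$ preserves ${}_\mA\Udot$ (again by definition of the integral form), so it is enough to show $u \cdot (x^- \one_\lambda) \in {}_\mA\Udot$. Writing $x^- \one_\lambda = \one_{\lambda'} x^- \one_\lambda$ for the appropriate $\lambda'$, and decomposing $\Udot = \bigoplus_\zeta {}_\mA\Udot \one_\zeta$, we are reduced to showing that $u$ applied to $\one_{\lambda'}(x^-\one_\lambda) \in {}_\mA\Udot\one_\lambda$ lands in ${}_\mA\Udot$. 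Since $x^- \one_\lambda \in {}_\mA\U^- \one_\lambda = {}_\mA(\Udot\one_\lambda)$ and $u$ acts $\Ui$-linearly (indeed $\Uidot$ acts on the left, commuting with the right $\U$-action — but more relevantly here, $u \in \Uidot$ acts by \emph{left} multiplication), I need $u \cdot {}_\mA(\Udot\one_\lambda) \subseteq {}_\mA\Udot$ given only $u \cdot \one_\lambda \in {}_\mA\Udot$. This follows because ${}_\mA(\Udot \one_\lambda) = {}_\mA\U^- \one_\lambda$ (the integral form of the weight-$\lambda$ block is generated over $\mA$ by ${}_\mA\U^-$ acting on $\one_\lambda$, as $\U^+\one_\lambda$ contributes nothing new on a cyclic highest-weight-type block — more precisely, ${}_\mA\Udot\one_\lambda = {}_\mA\U^-\one_\lambda{}_\mA\U^+$ reduces on the left factor), so every element of ${}_\mA\Udot\one_\lambda$ is an $\mA$-combination of $F_{i_1}^{(a_1)}\cdots F_{i_r}^{(a_r)}\one_\lambda$, and then
\[
u \cdot \big(F_{i_1}^{(a_1)}\cdots F_{i_r}^{(a_r)}\one_\lambda\big) = F_{i_1}^{(a_1)}\cdots F_{i_r}^{(a_r)} \cdot (u' \one_\lambda)
\]
is false in general since $u$ does not commute with left $F$-multiplication; instead one argues directly that left multiplication by divided powers $F_i^{(a)} \in {}_\mA\U^-$ preserves ${}_\mA\Udot$, hence $F_{i_1}^{(a_1)}\cdots F_{i_r}^{(a_r)} \cdot (u\one_\lambda) \in {}_\mA\Udot$ whenever $u\one_\lambda \in {}_\mA\Udot$; but this is not what we want either. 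The correct reduction is: the $(\Ui,\Ui)$-bimodule structure, or rather the fact that $\Udot$ is a left $\Uidot$-module and a right $\Udot$-module with commuting actions, lets us write $u \cdot (x^-\one_\lambda) = (u\one_{\lambda'}) \cdot_{\text{right}}(x^-\one_\lambda)$ where now $x^-\one_\lambda \in {}_\mA\Udot$ acts on the right on $u\one_{\lambda'} \in {}_\mA\Udot$, and right multiplication by the integral form ${}_\mA\Udot$ preserves ${}_\mA\Udot$. So the essential input is: $u\one_{\lambda'} \in {}_\mA\Udot$ (given), and ${}_\mA\Udot \cdot {}_\mA\Udot \subseteq {}_\mA\Udot$ (it is a subalgebra). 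Then $u \cdot m \in {}_\mA\Udot$ for all $m \in {}_\mA\Udot$.

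Let me state the clean argument: given $u\one_\lambda \in {}_\mA\Udot$ for all $\lambda \in X$, and given $m \in {}_\mA\Udot$, write $m = \sum_\lambda \one_\lambda m$ (finite sum, since $m$ has finitely many nonzero blocks on the left). Wait — I should decompose $m$ on the \emph{right}: for the left $\Uidot$-action, $u\cdot m = u \cdot \sum_{\lambda} m\one_\lambda = \sum_\lambda (u m)\one_\lambda$, and it suffices that $u \cdot (m\one_\lambda) \in {}_\mA\Udot$ for each $\lambda$. Now $m\one_\lambda \in {}_\mA\Udot\one_\lambda = {}_\mA\U^-\one_\lambda \, {}_\mA\U^+ \subseteq ({}_\mA\U^-\one_\lambda)\,{}_\mA\U^+$; writing $m\one_\lambda = \sum_j (n_j \one_\lambda) p_j$ with $n_j\one_\lambda \in {}_\mA\U^-\one_\lambda$ and $p_j \in {}_\mA\U^+$ acting on the right, and using that right multiplication by ${}_\mA\U^+$ preserves ${}_\mA\Udot$, it reduces to $u\cdot(n_j\one_\lambda)$ with $n_j \in {}_\mA\U^-$. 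But $n_j\one_\lambda = \one_{\lambda'} n_j$ for suitable $\lambda'$ — no. The honest final step: $u\cdot(n\one_\lambda)$ where $n = F_{i_1}^{(a_1)}\cdots$. One checks left multiplication by $F_i^{(a)}$ on $\Udot$ preserves ${}_\mA\Udot$ (standard), so $F_{i_1}^{(a_1)}\cdots F_{i_r}^{(a_r)} v \in {}_\mA\Udot$ for any $v \in {}_\mA\Udot$; but $u\cdot(n\one_\lambda)$ has $u$ on the \emph{left} of $n$, not the right. So the genuinely correct move, which I will carry out, is to use instead the \emph{right} $\Udot$-module structure on $\Udot$: $u\cdot(n\one_\lambda) = $ the result of acting by $u \in \Uidot$ (left) on $\one_{\lambda'}$, then right-multiplying by $n\one_\lambda \in {}_\mA\Udot$; since $u\one_{\lambda'} \in {}_\mA\Udot$ by hypothesis and ${}_\mA\Udot$ is closed under right multiplication by ${}_\mA\Udot$, we get $u\cdot(n\one_\lambda) = (u\one_{\lambda'})\cdot(n\one_\lambda) \in {}_\mA\Udot$. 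I expect the main obstacle to be this bookkeeping — correctly invoking the commuting left-$\Uidot$/right-$\Udot$ actions on $\Udot$ and the fact that ${}_\mA\Udot$ is both a subalgebra and the $\mA$-span of $\one_\lambda$'s and their ${}_\mA\U^\pm$-translates — rather than any deep new input; once the module structures are pinned down, the proof is a short formal manipulation.

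\begin{proof}
The ``only if'' part is immediate from Definition~\ref{def:mAUidot}, since $\one_\lambda \in {}_\mA\Udot$ for every $\lambda \in X$.

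For the ``if'' part, suppose $u \cdot \one_\lambda \in {}_\mA\Udot$ for all $\lambda \in X$. Recall that $\Udot$ is a left $\Uidot$-module via the embedding $\imath : \Ui \to \U$ and the projection $X \to X_\imath$, and it is also a right $\Udot$-module; these two actions commute, and ${}_\mA\Udot$ is stable under both the left $\Uidot$-action restricted to the idempotents and the right action of ${}_\mA\Udot$ (as ${}_\mA\Udot$ is an $\mA$-subalgebra of $\Udot$). Let $m \in {}_\mA\Udot$; it suffices to show $u \cdot (m \one_\lambda) \in {}_\mA\Udot$ for each $\lambda \in X$, since $m = \sum_\lambda m\one_\lambda$ is a finite sum. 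By \cite[Chapter~23]{Lu94}, ${}_\mA\Udot\one_\lambda$ is spanned over $\mA$ by elements of the form $(n\one_{\lambda'})\cdot (x \one_\lambda)$ with $n \in {}_\mA\U^-$, $x \in {}_\mA\U^+$, where the right $\Udot$-action of $x\one_\lambda \in {}_\mA\Udot$ is used; so we may assume $m\one_\lambda = (n\one_{\lambda'})\cdot(x\one_\lambda)$. Then
\[
u \cdot (m\one_\lambda) = u \cdot \big( (n\one_{\lambda'}) \cdot (x\one_\lambda) \big)
= \big( u \cdot (n \one_{\lambda'}) \big) \cdot (x\one_\lambda),
\]
using that the left $\Uidot$-action and right $\Udot$-action on $\Udot$ commute. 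Writing $n\one_{\lambda'} = (u' \one_{\lambda''}) \cdot (n'\one_{\lambda'})$ is not needed; instead note $n\one_{\lambda'} \in {}_\mA\Udot$, so $u \cdot (n\one_{\lambda'}) = (u\one_{\lambda''})\cdot(n\one_{\lambda'})$ for the unique $\lambda''$ with $\one_{\lambda''} u \one_{\lambda'} = u\one_{\lambda'}$ (after decomposing $u$ into its finitely many left-idempotent components, which we may do separately). By hypothesis $u\one_{\lambda''} \in {}_\mA\Udot$, and $n\one_{\lambda'} \in {}_\mA\Udot$, so $u\cdot(n\one_{\lambda'}) = (u\one_{\lambda''})\cdot(n\one_{\lambda'}) \in {}_\mA\Udot$ because ${}_\mA\Udot$ is closed under multiplication. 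Finally $x\one_\lambda \in {}_\mA\Udot$, so $\big(u\cdot(n\one_{\lambda'})\big)\cdot(x\one_\lambda) \in {}_\mA\Udot$ for the same reason. Hence $u\cdot(m\one_\lambda) \in {}_\mA\Udot$, and therefore $u \cdot m \in {}_\mA\Udot$ for all $m \in {}_\mA\Udot$, i.e., $u \in {}_\mA\Uidot$.
\end{proof}
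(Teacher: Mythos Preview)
Your proof is correct and uses the same essential idea as the paper --- namely, that $u\cdot(\one_{\lambda}m) = (u\cdot\one_{\lambda})\,m$ (associativity of the left $\Ui$-action with right multiplication in $\Udot$) together with ${}_\mA\Udot$ being an $\mA$-subalgebra --- but you reach it by a circuitous route. You decompose $m$ on the \emph{right} as $\sum_\lambda m\one_\lambda$, then factor $m\one_\lambda = (n\one_{\lambda'})(x\one_\lambda)$, and only at the end pass to the left idempotent $\one_{\lambda''}$ satisfying $n\one_{\lambda'} = \one_{\lambda''}n\one_{\lambda'}$, at which point you apply the key identity. The paper's proof simply decomposes on the \emph{left} from the start: any $m \in {}_\mA\Udot$ is a finite sum of elements in $\one_\lambda({}_\mA\Udot)$, and for such an $m = \one_\lambda m$ one has $u\cdot m = (u\cdot\one_\lambda)\,m \in {}_\mA\Udot$ in one line. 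Your intermediate factorization into ${}_\mA\U^-$ and ${}_\mA\U^+$ pieces is not needed.
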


\begin{proof}
If remains to prove the ``if" direction.  
Take $m \in \one_\lambda ({}_\mA\Udot)$, for some $\lambda \in X$. By assumption, we have $u \cdot \one_{\lambda} \in {}_\mA \Udot $. 
Thus we have $u\cdot m =u \cdot (\one_\lambda m) =(u \cdot \one_\lambda) m  \in {}_{\mA}\Udot$, and so by definition, $u \in {}_{\mA} \Uidot$.
\end{proof}

\begin{cor}
 \label{cor:AA}
Let $u \in \Uidot$. Then we have $u \in {}_\mA \Uidot$ if and only if $ u \big({}_\mA L(\lambda) \big) \subset {}_\mA L(\lambda) $ for all $\lambda \in X^+$.
\end{cor}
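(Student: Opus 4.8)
The plan is to run both implications through \lemref{lem:Uiintegral}, which already reduces membership in ${}_\mA\Uidot$ to the conditions $u\,\one_\lambda\in{}_\mA\Udot$ for all $\lambda\in X$, and to combine this with the known characterization of the $\mA$-form ${}_\mA\Udot$ in terms of its action on the integral forms ${}_\mA L(\mu)$ of the finite-dimensional simple modules, together with the elementary fact that the idempotents $\one_\lambda\in\Udot$ respect weight-graded lattices.

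First I would dispose of the (easy) ``only if'' direction. For $\lambda\in X^+$ consider the map $\phi_\lambda\colon\Udot\to L(\lambda)$, $y\mapsto y\,\eta_\lambda$. Since $\eta_\lambda$ has weight $\lambda$, we have $\phi_\lambda(y)=\phi_\lambda(y\,\one_\lambda)$, and $\phi_\lambda$ is a homomorphism of left $\Udot$-modules, hence of left $\Uidot$-modules once $\Udot$ is viewed as a $\Uidot$-module through the inclusion $\imath\colon\Ui\hookrightarrow\U$; moreover $\phi_\lambda\big({}_\mA\Udot\,\one_\lambda\big)={}_\mA L(\lambda)$ (the inclusion ${}_\mA\U^-\eta_\lambda\subseteq\phi_\lambda({}_\mA\Udot\,\one_\lambda)$ is clear, and the reverse uses that ${}_\mA\Udot$ preserves ${}_\mA L(\lambda)$). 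If $u\in{}_\mA\Uidot$ then $u\cdot{}_\mA\Udot\subseteq{}_\mA\Udot$ by Definition~\ref{def:mAUidot}, and since the left action does not change the right weight we get $u\cdot{}_\mA\Udot\,\one_\lambda\subseteq{}_\mA\Udot\,\one_\lambda$; applying $\phi_\lambda$ yields $u\big({}_\mA L(\lambda)\big)\subseteq{}_\mA L(\lambda)$.

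For the ``if'' direction, assume $u\big({}_\mA L(\mu)\big)\subseteq{}_\mA L(\mu)$ for all $\mu\in X^+$. By \lemref{lem:Uiintegral} it suffices to show $u\,\one_\lambda\in{}_\mA\Udot$ for every $\lambda\in X$. On each $L(\mu)$ the idempotent $\one_\lambda$ acts as the projection onto the weight space $L(\mu)_\lambda$, and ${}_\mA L(\mu)={}_\mA\U^-\eta_\mu$ is the direct sum of its weight components since ${}_\mA\U^-$ is weight graded; hence $\one_\lambda$ preserves ${}_\mA L(\mu)$, and therefore so does $u\,\one_\lambda$ by hypothesis. It remains to invoke the characterization that an element $y\in\Udot$ lies in ${}_\mA\Udot$ if and only if $y\big({}_\mA L(\mu)\big)\subseteq{}_\mA L(\mu)$ for all $\mu\in X^+$, which follows from Lusztig's construction of the canonical basis $\Bdot$ of $\Udot$ in \cite{Lu94} and the almost unitriangular behaviour of the action of $\Bdot$ on the canonical bases of the $L(\mu)$ for $\mu\gg 0$; applying it to $y=u\,\one_\lambda$ gives $u\,\one_\lambda\in{}_\mA\Udot$, and \lemref{lem:Uiintegral} then yields $u\in{}_\mA\Uidot$.

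The genuinely substantial input is the characterization of ${}_\mA\Udot$ used in the last step; everything else is formal bookkeeping with weight decompositions, the module map $\phi_\lambda$, and \lemref{lem:Uiintegral}. I would therefore isolate that characterization as a recollection from \cite{Lu94} (alternatively re-derive it by expanding $u\,\one_\lambda$ in the basis $\Bdot$ and detecting its coefficients via \propref{prop:CBUdot}, \propref{prop:Lucontraction} and \thmref{thm:Betabullet} applied to the relevant based modules), after which the corollary is immediate.
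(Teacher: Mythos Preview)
Your ``only if'' direction is fine. The ``if'' direction, however, rests on the claim that an element $y\in\Udot$ lies in ${}_\mA\Udot$ if and only if $y$ preserves ${}_\mA L(\mu)$ for every $\mu\in X^+$, and this characterization is \emph{false}. For $\U=\U_q(\mathfrak{sl}_2)$ take
\[
y=\frac{1}{[2]}\,EF\,\one_0\in{}_0\Udot_0.
\]
Here $EF\one_0=FE\one_0$ is precisely the canonical basis element $\theta^{(1)}\diamondsuit_0\theta^{(1)}$ (cf.\ \cite[25.3]{Lu94}), so $y\notin{}_\mA\Udot$. On the other hand $y$ kills $L(\mu)_0=0$ for $\mu$ odd, while for $\mu=2m$ it sends the canonical basis vector $v_m=F^{(m)}\eta_\mu$ to $\frac{[m][m+1]}{[2]}\,v_m$; since one of $m,m+1$ is even and $[2k]/[2]=[k]_{q^2}\in\mA$, this coefficient lies in $\mA$. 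Thus $y$ preserves every ${}_\mA L(\mu)$ yet $y\notin{}_\mA\Udot$. The failure has a structural explanation you implicitly relied on: the $\mA$-form ${}_\mA L(\lambda_1)\otimes_\mA{}_\mA L(\lambda_2)$ is \emph{not} the direct sum of the ${}_\mA L(\nu)$ sitting inside it (already for $L(1)\otimes L(1)$ the singular vector involves a denominator $[2]$), so preserving the simple lattices does not force preservation of ${}_\mA\Udot$.

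Consequently your reduction via \lemref{lem:Uiintegral} breaks at exactly the step you flagged as ``the genuinely substantial input''; neither a citation to \cite{Lu94} nor the alternative via \propref{prop:CBUdot}, \propref{prop:Lucontraction}, \thmref{thm:Betabullet} supplies it, since those results detect $u\,\one_\lambda$ through its action on $\xi_{w_0\lambda'}\otimes\eta_{\mu'}$ in a \emph{tensor product}, not in a single simple. In the degenerate case $\Iblack=\I$ (so $\Uidot=\Udot$, ${}_\mA\Uidot={}_\mA\Udot$) the very statement of the corollary fails by the example above. The paper itself offers no proof and, as far as I can see, only ever invokes the ``only if'' direction (in the proof of \thmref{thm:intUpsilon}); so your argument for that half suffices for the paper's applications, but the biconditional as written needs either an additional hypothesis or a genuinely different argument for the converse.
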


\subsection{Relation with a parabolic subalgebra }

Let $w =s_{i_1} \cdots s_{i_l}$ be a reduced expression of an element  $w \in W$. 
Then the following elements (for various $c_{i_j} \in  \N$) 
\begin{equation}
  \label{PBW}
E^{(c_{i_1})}_{i_1} \cdot \T_{i_1}(E^{(c_{i_2})}_{i_2})\cdots \T_{i_1} \cdots \T_{i_{l-1}} (E^{(c_{i_l})}_{i_l})
\end{equation}
form a $\Qq$-basis of a subspace $\U^+(w)$ of $\U^+$ and an $\mA$-basis of an $\mA$-submodule ${}_\mA \U^+(w)$ of ${}_\mA \U^+$. 
The sets $\U^+(w)$ and ${}_\mA \U^+(w)$ depend only on $w$ but not on the choices of 
reduced expressions of $w$; our subspace $\U^+(w)$ here is denoted by $\U^+(w,1)$ in \cite[40.2]{Lu94}.
In particular, we have $\U^+(w_0) = \U^+$ and ${}_\mA \U^+(w_0) = {}_\mA \U^+$.  

Let $w^\bullet = w_0 \wb$. 
We can identify $\Padot$ with the quotient as $\Qq$-spaces
\[
\Padot \cong  \Udot \big/ \Udot \U^+(w^\bullet)_{>}
\]
where
$\U^+(w^\bullet)_{>} = \oplus_{\mu \in \N [\I] \backslash \{0\}}  \U^+(w^\bullet)_{\mu}$; 
Note $\Udot \big/ \Udot \U^+(w^\bullet)_{>} = \Udot \Big/ \big(\sum_{x, \lambda \in X} \Udot x \one_\lambda \big)$, where the sum is taken over all homogeneous $x \in \U^+$ whose weights are of the form 
$|x| = \sum_{\i \in \I} a_i i$ with $a_i \neq 0$ for some $i \in \Iwhite$. Thus we can define a left $\Udot$ action, 
which induces a left $\Uidot$ action on $\Padot$. 
For $\lambda \in X$, denote by $p_\imath= p_{\imath,\lambda}$ the composition map
\begin{equation}
  \label{eq:comp3}
\Uidot \one_{\overline{\lambda}} \longrightarrow \Udot \one_{\lambda} 
 \longrightarrow  \Udot \one_\la\big/ \Udot   \U^+(w^\bullet)_{>} \one_\la \longrightarrow \Padot \one_{\lambda}.
\end{equation}

\begin{lem}
  \label{lem:pimath}
Let $\lambda \in X$. The map $p_\imath= p_{\imath,\lambda}: \Uidot 1_{\overline{\lambda}} \rightarrow \Padot 1_{\lambda}$ is an isomorphism of left $\Uidot$-modules. Moreover $p_{\imath}$ maps ${}_\mA \Uidot \one_{\overline{\lambda}}$ injectively to ${}_\mA\Padot \one_{\lambda}$.
\end{lem}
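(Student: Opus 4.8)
The plan is to prove \lemref{lem:pimath} by exhibiting an explicit inverse to $p_\imath$, using the PBW-type decomposition of $\U^+$ relative to the element $w^\bullet = w_0\wb$ and the triangular-type decomposition $\Uidot = \bigoplus_\zeta \Ui^-\Ui^+\one_\zeta$ noted above. The key structural input is the factorization $\U^+ = \U^+(w^\bullet) \cdot \U^+_{\Iblack}$ (as $\Qq$-spaces, and compatibly over $\mA$), which follows from the PBW basis \eqref{PBW} read along a reduced expression of $w_0$ obtained by appending a reduced expression of $\wb$ to one of $w^\bullet$; concretely, $\U^+(w^\bullet)_{>}\U^0\U^- + \U^0\U^- = \U^+\U^0\U^-$ modulo the defining ideal, so that the composite $\Udot \to \Padot\one_\la$ is well-defined and surjective, with kernel exactly $\Udot\,\U^+(w^\bullet)_{>}\one_\la$.

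\textbf{Step 1: Surjectivity.} First I would observe that $p_\imath$ is surjective: the image of $\Ui \one_{\overline\la} \to \Udot\one_\la$ contains $\ff_i\one_\la$ for $i \in \I$ and $E_i\one_\la, F_i\one_\la$ for $i \in \Iblack$, together with the idempotent $\one_\la$. For $i \in \Iwhite$, $\ff_i\one_\la = F_i\one_\la + \vs_i \T_{w_\bullet}(E_{\tau i})\widetilde K_i^{-1}\one_\la + \kappa_i\widetilde K_i^{-1}\one_\la$, and the term $\T_{w_\bullet}(E_{\tau i}) \in \U^+(w^\bullet)_{>}\cdot\U^+_{\Iblack}$ — more precisely in $\U^+(w^\bullet)_{>}$ when $\tau i \in \Iwhite$ — so it maps to $0$ in $\Padot\one_\la$. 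Hence $p_\imath(\ff_i\one_{\overline\la}) = F_i\one_\la \bmod \kappa_i$-term, and since $\kappa_i\widetilde K_i^{-1}\one_\la = \kappa_i q^{-\langle \cdots\rangle}\one_\la$ is a scalar multiple of $\one_\la$, the images of the generators of $\Ui$ generate $\Padot\one_\la = \Pa\one_\la$ as a left module over themselves. Thus $p_\imath$ is onto.

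\textbf{Step 2: Injectivity via a filtration/leading-term argument.} The main work is injectivity. I would filter $\Ui$ (hence $\Uidot\one_{\overline\la}$) by the $\N[\I]$-degree coming from the $F$-part: each $\ff_i$ has ``leading term'' $F_i$ plus higher-order corrections living in $\U^+\widetilde K^{-1}$, which maps to strictly lower filtration degree in $\Padot\one_\la$ after killing $\U^+(w^\bullet)_{>}$. Using the surjection $m\colon \Ui^-\otimes\Ui^0\otimes\Ui^+\to\Ui$ and the known basis of a suitable PBW-type subspace of $\Ui^-$ (citing \cite[Prop.~6.1\&6.2]{Ko14} as the excerpt does), one gets that $\Uidot\one_{\overline\la}$ is spanned by monomials whose images under $p_\imath$ have leading terms forming exactly the canonical-basis-compatible spanning set $\{b_1\diamondsuit_\zeta b_2 : (b_1,b_2)\in\B_{\Iblack}\times\B\}$ of $\Padot\one_\la$ from \propref{prop:PCB}. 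A dimension/rank count in each weight space then forces $p_\imath$ to be a bijection. Equivalently — and this is cleaner — I would build the inverse directly: send a canonical basis element $b_1\diamondsuit_\zeta b_2 \in \dot\B_{\Pa\one_\la}$ (with $b_1\in\B_{\Iblack}$, $b_2\in\B$) to the element of $\Uidot\one_{\overline\la}$ obtained by replacing $F_i$'s in $b_2^-$ by $\ff_i$'s and $E_i$'s in $b_1^+$ by themselves; the correction terms are upper/lower triangular with respect to the weight filtration, so this assignment is well-defined and two-sided inverse to $p_\imath$.

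\textbf{Step 3: Integrality.} Once $p_\imath$ is known to be an isomorphism of $\Qq$-spaces (and clearly of left $\Uidot$-modules, since the $\Uidot$-action on $\Padot\one_\la$ was defined through exactly this quotient construction), the integral statement follows by definition: ${}_\mA\Uidot\one_{\overline\la}$ is, by \remref{cor:AA} / Definition~\ref{def:mAUidot}, the set of $u\one_{\overline\la}$ with $u\cdot{}_\mA\Udot\subseteq{}_\mA\Udot$; applying $u$ to $\one_\la\in{}_\mA\Udot$ and pushing to $\Padot\one_\la$ shows $p_\imath(u\one_{\overline\la})\in{}_\mA\Padot\one_\la$ because ${}_\mA\Udot \twoheadrightarrow {}_\mA\Padot\one_\la$ (the integral form of $\Padot$ is the image of ${}_\mA\Udot$, using ${}_\mA\U^+ = {}_\mA\U^+(w^\bullet)\cdot{}_\mA\U^+_{\Iblack}$). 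Injectivity of $p_\imath|_{{}_\mA\Uidot\one_{\overline\la}}$ is immediate from injectivity of $p_\imath$ itself.

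\textbf{Main obstacle.} I expect the crux to be Step 2, specifically pinning down a PBW-type basis of $\Ui^-$ whose image under $p_\imath$ is triangular with respect to the images of the canonical basis of $\Padot\one_\la$. The correction terms $\vs_i\T_{w_\bullet}(E_{\tau i})\widetilde K_i^{-1}$ are genuinely there and one must verify that, after quotienting by $\U^+(w^\bullet)_{>}$, no cancellation among leading terms occurs — this is where \eqref{eq;Eifj}, the coideal property $\Delta\colon\Ui\to\Ui\otimes\U$, and the identification $\Padot\cong\Udot/\Udot\,\U^+(w^\bullet)_{>}$ all have to be used in concert. I would handle it by an induction on $\N[\I]$-weight (equivalently $\operatorname{ht}$), the base case being the identity on $\one_{\overline\la}$, and the inductive step comparing $\ff_i\cdot(\text{lower monomial})$ against $F_i\cdot(\text{its image})$ modulo strictly lower terms.
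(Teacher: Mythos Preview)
Your proposal is correct and follows essentially the same route as the paper: surjectivity via the observation that $p_\imath(\ff_{i_1}^{a_1}\cdots\ff_{i_s}^{a_s}b^+\one_{\overline\lambda}) = F_{i_1}^{a_1}\cdots F_{i_s}^{a_s}b^+\one_\lambda + (\text{lower terms})$ and induction on height, and injectivity via the PBW-type basis of $\Ui^-$ from \cite[Propositions~6.1--6.2]{Ko14} together with the same leading-term comparison. The only caveat is that your ``alternative'' in Step~2 of replacing $F_i$'s by $\ff_i$'s in a canonical basis element $b_2^-$ is not literally meaningful (canonical basis elements are not monomials), but your primary filtration argument is exactly what the paper does and needs no such device.
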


Later in Corollary~\ref{cor:pimath}
we shall see that $p_{\imath}: {}_\mA \Uidot \one_{\overline{\lambda}} \longrightarrow {}_\mA\Padot \one_{\lambda}$ is an isomorphism. 

\begin{proof}
It is clear by definition that $p_\imath$ is a homomorphism of $\Uidot$-modules
and that $p_{\imath}$ maps ${}_\mA \Uidot \one_{\overline{\lambda}}$ to ${}_\mA\Padot \one_{\lambda}$ through the composition
${}_\mA \Uidot \one_{\overline{\lambda}} \rightarrow {}_\mA \Udot \one_{\lambda} \rightarrow {}_\mA\Padot \one_{\lambda}.$

It remains to show that $p_\imath : \Uidot 1_{\overline{\lambda}} \rightarrow \Padot 1_{\lambda}$ is both surjective and injective.
Let us first prove the surjectivity. We know that $\Padot \one_\lambda$ is spanned by elements of the form 
$F^{a_1}_{i_1} F^{a_2}_{i_2} \cdots F^{a_s}_{i_s} b^+ \one_{\lambda}$ with $b \in \B_{\Iblack}$. 
We shall proceed by induction on the sum $a = \sum a_i$ to prove that all such elements are in the image of $p_\imath$. 
The base case $a=0$ follows from the fact that $\Ui^+ = \U_{\Iblack}^+$. 
To show any $F^{a_1}_{i_1} F^{a_2}_{i_2} \cdots F^{a_s}_{i_s} b^+ \one_{\lambda}$ is in the image of $p_\imath$, we consider  
\begin{equation}\label{eq:pimath}
p_\imath ( \ff^{a_1}_{i_1} \ff^{a_2}_{i_2} \cdots \ff^{a_s}_{i_s} b^+ \one_{\overline{\lambda}} ) = F^{a_1}_{i_1} F^{a_2}_{i_2} \cdots F^{a_s}_{i_s} b^+ \one_{\lambda} + \text{ lower terms}.
\end{equation}
Now by definition \eqref{eq:def:ff}, we see that the ``lower terms" are linear combinations of  elements of the form 
$F^{a'_1}_{i'_1} F^{a'_2}_{i'_2} \cdots F^{a'_{s'}}_{i'_{s'}} b'^+ \one_{\lambda},$ 
where $\sum a'_1 < a$ and $b' \in \B_{\Iblack}$. The surjectivity follows by induction. 

Now we prove the injectivity of $p_\imath$. Recall the following multiplication maps
\[\Ui^- \otimes \Ui^0 \otimes \Ui^+ \longrightarrow \Ui \quad \text{ and } \quad  \U^- \otimes \U_{\Iblack}^0 \otimes \U^+_{\Iblack} \longrightarrow \Pa.
\]
We can find a subset of $\mc{J} \subset \cup^\infty_{n=0} \I^n$ such that the set $M= \{ F_{i_1}  F_{i_2} \cdots   \vert (i_1, i_2, \dots) \in \mc{J}\}$ form $\Qq$-bases of $\U^-$. Let $M^\imath = \{ \ff_{i_1}  \ff_{i_2} \cdots   \vert (i_1, i_2, \dots) \in \mc{J}\}$ in $\Ui^-$.
Thanks to \cite[Propositions~6.1, 6.2]{Ko14}, the set $\{y b^+ \one_{\overline{\lambda}} \vert y \in M^\imath, b \in \B_{\Iblack}\}$ 
forms a $\Qq$-basis of $\Ui\one_{\overline{\lambda}}$. Moreover by examining the leading terms as in \eqref{eq:pimath}, 
we see that the set $\{p_\imath  (yb^+ \one_{\overline{\lambda}}) \vert y \in M^\imath, b \in \B_{\Iblack}\}$ forms a $\Qq$-basis of  $\Padot 1_{\lambda}$,
whence the injectivity of $p_\imath$. 
\end{proof}

\begin{rem}
There are further intimate connections between the parabolic subalgebra $\Pa$ and the algebra $\Ui$.
Lemma~\ref{lem:Uietabotimeseta} below is another such example. 
Moreover, the $\imath$-canonical basis for $\Uidot \one_{\overline{\la}}$ is parametrized by the canonical basis for $\Padot \one_{\la}$ (see Theorem~\ref{thm:iCBUi}).
\end{rem}

\section{Symmetries of quantum symmetric pairs}
  \label{sec:braid}

In this section, we show that Lusztig's braid group operators $\T'_{i,e}$ and $\T''_{i,e}$, for $i \in \Iblack$, restrict to automorphisms of $\Ui$,
and the anti-involution $\wp$ on $\U$ restricts to an anti-involution of $\Ui$. 
We then prove that the intertwiner $\Upsilon$ is fixed by the actions of
$\T'_{i,e}$ and $\T''_{i,e}$, for $i \in \Iblack$, and this further implies that $\Upsilon_\mu \in \U^+(w_{\bullet}w_0)$.
We formulate an $\Ui$-module isomorphism $\mc T$ of any finite-dimensional $\U$-module over $\Qq$, following \cite{BK15} and generalizing \cite{BW13}.

\subsection{Braid group actions on $\Ui$}
Recall we have $-\wb \circ \tau= \id$ as permutations of the set $\Iblack$.
Recall the braid operators $\T_i$ and $\T_w$ (for $i \in \I$, $w\in W$) on the algebra $\U$. 

\begin{lem}
\label{lem:braid}
We have $$\T_i \T_{\wb} =\T_{\wb} \T_{\tau i},  \quad \forall i \in \Iblack.$$
In particular $\T_{\wb}^2$ commutes with $\T_i$ for any $i\in\Iblack$. 
\end{lem}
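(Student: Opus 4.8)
The plan is to reduce the identity $\T_i \T_{\wb} = \T_{\wb} \T_{\tau i}$ to a statement about reduced expressions in the Weyl group $W_{\Iblack}$, and then invoke the fact that the braid operators $\T_w$ are well-defined (independent of reduced expression) for $w \in W$, which is recorded in the excerpt following \cite[Theorem~39.4.3]{Lu94}. Concretely, since $\tau$ acts on $\Iblack$ as $-\wb$, for any $i \in \Iblack$ we have $\wb s_{\tau i} \wb^{-1} = s_{-\wb \tau i \cdot (\text{sign})}$; more precisely, $\wb s_{\tau(i)} = s_i \wb$ as elements of $W_{\Iblack}$, because conjugation by $\wb$ sends the simple reflection $s_{\tau(i)}$ to $s_{-\wb\tau(i)} = s_i$ (the map $j \mapsto -\wb \tau(j) = \theta|_{\Iblack}(j)$ is the identity on $\Iblack$, so $-\wb(\tau(i)) = i$ after identifying roots with simple roots). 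Hence $s_i \wb$ and $\wb s_{\tau i}$ are two expressions for the same element $w$ of $W_{\Iblack}$, and since $\ell(s_i \wb) = \ell(\wb) - 1 < \ell(\wb)$... wait — I need to be careful: $s_i\wb$ has length $\ell(\wb)-1$ while $\wb s_{\tau i}$ also has length $\ell(\wb)-1$, and they are equal as group elements, so this is genuinely an equality of the same $w$ with $\ell(w) = \ell(\wb)-1$.

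First I would spell out the key combinatorial claim: for $i \in \Iblack$, one has $s_i \wb = \wb s_{\tau(i)}$ in $W_{\Iblack}$. This follows from $\wb s_j \wb^{-1} = s_{-\wb(j)}$ for simple $j \in \Iblack$ together with $-\wb \circ \tau = \mathrm{id}$ on $\Iblack$ (equivalently $-\wb(\tau(i)) = i$), both of which are noted in the excerpt. Then I would take a reduced expression $\wb = s_{j_1}\cdots s_{j_N}$ and observe that $s_i \wb = \wb s_{\tau(i)}$ gives two reduced expressions of a common element $w$; actually it is cleaner to start from a reduced expression of $\wb$ beginning with $s_i$, say $\wb = s_i s_{j_2} \cdots s_{j_N}$ (possible since $\ell(s_i \wb) < \ell(\wb)$ means $s_i$ can be taken as the first letter of a reduced word for $\wb$ — hmm, that is backwards; rather $\ell(s_i\wb)<\ell(\wb)$ means $s_i\wb$ is obtained by deleting... let me just use: $\wb$ has a reduced expression ending in $s_{\tau(i)}$, or beginning with $s_i$). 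Using $\wb = s_i u$ with $u = s_i \wb$ reduced and also $\wb = u' s_{\tau(i)}$ with $u' = \wb s_{\tau(i)} = s_i \wb = u$ reduced, we get $\T_{\wb} = \T_i \T_u = \T_u \T_{\tau(i)}$ by the multiplicativity of $\T_w$ along reduced expressions (again from \cite[Theorem~39.4.3]{Lu94}). Therefore $\T_i \T_{\wb} = \T_i \T_i \T_u$ — no: from $\T_{\wb} = \T_i \T_u$ I get $\T_i^{-1}\T_{\wb} = \T_u$, and from $\T_{\wb} = \T_u \T_{\tau(i)}$ I get $\T_u = \T_{\wb}\T_{\tau(i)}^{-1}$; combining, $\T_i^{-1}\T_{\wb} = \T_{\wb}\T_{\tau(i)}^{-1}$, i.e. $\T_{\wb}\T_{\tau(i)} = \T_i \T_{\wb}$, which is exactly the claim.

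For the ``in particular'' statement I would apply the identity twice: $\T_i \T_{\wb}^2 = (\T_i\T_{\wb})\T_{\wb} = \T_{\wb}\T_{\tau(i)}\T_{\wb} = \T_{\wb}(\T_{\tau(i)}\T_{\wb}) = \T_{\wb}\T_{\wb}\T_{\tau(\tau(i))} = \T_{\wb}^2 \T_i$, using $\tau^2 = \mathrm{id}$. So $\T_{\wb}^2$ commutes with every $\T_i$, $i \in \Iblack$.

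The main obstacle I anticipate is purely bookkeeping: making sure the identifications between the action of $\tau$ on $\Iblack$, the action of $-\wb$, and the conjugation formula $\wb s_j \wb^{-1} = s_{-\wb(j)}$ are all compatible with the sign/orientation conventions in play (the excerpt warns of a ``hidden involution on the black dots'' in type DI/DII with an odd number of black nodes, and of a sign discrepancy with \cite{BW13}), and that one indeed has $-\wb(\tau(j)) = j$ rather than something requiring an extra diagram automorphism. Once that single combinatorial fact $s_i \wb = \wb s_{\tau(i)}$ is nailed down, the rest is a formal manipulation with the well-definedness and multiplicativity of Lusztig's braid operators, with no representation-theoretic input needed.
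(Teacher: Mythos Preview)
Your argument is correct and is essentially the paper's proof: both reduce to the Weyl-group identity $s_i\wb = \wb s_{\tau i}$ (of length $\ell(\wb)-1$), then use multiplicativity of Lusztig's braid operators along reduced expressions to obtain $\T_i\T_u = \T_{\wb} = \T_u\T_{\tau i}$ with $u = s_i\wb$, and the ``in particular'' follows by applying the identity twice with $\tau^2=\id$. The only cosmetic difference is that the paper writes the final step as $\T_i\T_{\wb} = \T_i\T_{s_i\wb}\T_{\tau i} = \T_{\wb}\T_{\tau i}$ rather than passing through inverses.
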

 
\begin{proof}
Since $s_i\wb =\wb s_{\tau i}  \in W_\bullet$ has length $\ell(\wb)-1$,
we have $\T_i \T_{s_i\wb} =\T_{\wb} =\T_{s_i\wb} \T_{\tau i}$, for all $i \in \Iblack$.
Hence $\T_i \T_{\wb}  =  \T_i \T_{s_i\wb} \T_{\tau i}=\T_{\wb} \T_{\tau i}$. 
Also we have 
$$
\T_i \T_{\wb}^2 =\T_{\wb} \T_{\tau i} \T_{\wb} = \T_{\wb}^2 \T_{\tau^2 i} =\T_{\wb}^2 \T_{i}.
$$  
The lemma is proved. 
\end{proof}
Let us record the following formulas for future use   
(cf. \cite[Lemma~3.4]{Ko14} \cite[Lemma~1.4]{BW13}): for any $i\in \Iblack$, recalling Definition~\ref{def:AP}(2), we have
\begin{align}
 \label{eq:Tw0}
 \begin{split}
\T^{-1}_{\wb} (E_i)  &= - \tK_{-\tau i} F_{\tau i}, \quad \T^{-1}_{\wb}(F_{ i}) = - E_{\tau i} \tK_{\tau i},
\\
\T_{\wb} (E_i) & = -   F_{\tau i} \tK_{\tau i}, \quad \T _{\wb}(F_{ i}) = - \tK_{-\tau i}E_{\tau i}.  
\end{split}
\end{align}
Theorem~\ref{thm:braidX} below confirms the conjecture in \cite{KP11} on the existence of a braid group action on $\Ui$ associated to $W_{\Iblack}$.

\begin{thm}
\label{thm:braidX}
For any $i\in \Iblack$ and $e =\pm 1$, 
the braid group operators $\T'_{i,e}$ and $\T''_{i,e}$ restrict to  isomorphisms of $\Ui$. More explicitly, we have, for $j\neq i$,
\begin{align*}
\T'_{i, e} (\ff_j) &=  \sum_{r+s = - \langle i, j'\rangle} (-1)^r q^{-er}_{i} \ff^{(s)}_i \ff_j \ff^{(r)}_i,  \\
\T''_{i, -e} (\ff_j) &=  \sum_{r+s = - \langle i, j'\rangle} (-1)^r q^{-er}_{i} \ff^{(r)}_i \ff_j \ff^{(s)}_i.
\end{align*}
\end{thm}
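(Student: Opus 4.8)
The plan is to establish \thmref{thm:braidX} in two stages: first for $j \in \Iblack$, where one can work entirely inside $\U_{\Iblack}$, and then for $j \in \Iwhite$, where the definition \eqref{eq:def:ff} of $\ff_j$ must be unwound. For $j \in \Iblack$ the claim reduces to Lusztig's formulas \eqref{eq:braidgroup} for $\T'_{i,e}$ and $\T''_{i,-e}$ on the generators $F_j$ of $\U$, since $\ff_j = F_j$ and the divided powers $\ff_i^{(r)} = F_i^{(r)}$ all lie in $\U_{\Iblack} \subseteq \Ui$; thus the stated identities are literally Lusztig's, and what remains is to see that $\T'_{i,e}$, $\T''_{i,e}$ preserve $\Ui$ as a subalgebra, i.e.\ send the remaining generators $\ff_j$ ($j \in \Iwhite$), $E_j, F_j$ ($j \in \Iblack$), and $K_\mu$ ($\mu \in Y^\imath$) back into $\Ui$. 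The $\U_{\Iblack}$-generators are handled by the $\mathfrak{sl}_2$-theory (Lusztig's braid relations within $\U_{\Iblack}$), and $\T'_{i,e}(K_\mu) = K_{s_i(\mu)}$, with $s_i(\mu) \in Y^\imath$ whenever $\mu \in Y^\imath$ because $s_i$ commutes with $\theta = -w_\bullet\circ\tau$ on $Y$ for $i \in \Iblack$ (as $s_i \in W_\bullet$ and $-w_\bullet\circ\tau$ normalizes $W_\bullet$); so these present no difficulty.

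The substantive case is $j \in \Iwhite$, $j \neq i$. Here I would apply $\T'_{i,e}$ (resp.\ $\T''_{i,-e}$) to $\ff_j = F_j + \vs_j\,\T_{w_\bullet}(E_{\tau j})\widetilde K_j^{-1} + \kappa_j \widetilde K_j^{-1}$ term by term. The first term transforms by Lusztig's formula \eqref{eq:braidgroup} into $\sum_{r+s=-\langle i,j'\rangle}(-1)^r q_i^{-er} F_i^{(s)} F_j F_i^{(r)}$. For the second term, the key point is that $\T'_{i,e}$ commutes with $\T_{w_\bullet}$ up to a controlled index shift: since $i \in \Iblack$, \lemref{lem:braid} (and its proof, using $s_i w_\bullet = w_\bullet s_{\tau i}$) gives $\T_i \T_{w_\bullet} = \T_{w_\bullet}\T_{\tau i}$, and one needs the analogous statement for $\T'_{i,e}$; combined with $\T'_{i,e}(\widetilde K_j^{-1}) = \widetilde K_{s_i(j)}^{-1}$ and the commutation of $\T_{w_\bullet}$ with multiplication by the relevant $\widetilde K$'s, the middle term should reassemble into $\sum_{r+s}(-1)^r q_i^{-er} F_i^{(s)}\bigl(\vs_j \T_{w_\bullet}(E_{\tau j})\widetilde K_j^{-1}\bigr) F_i^{(r)}$ — the point being that $\T'_{i,e}$ acts on $E_{\tau j}$, pushed through $\T_{w_\bullet}$, by the same combinatorial formula as on $F_j$ since $\langle i, (\tau j)'\rangle = \langle \tau i, j'\rangle$ and the Cartan integers controlling the sum are $\tau$-invariant and unchanged by $w_\bullet$ on $j$ (here one uses that $i \in \Iblack$ is not in the support affecting the $\tau$-orbit bookkeeping). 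The $\kappa_j$-term, being a scalar times $\widetilde K_j^{-1}$, transforms to $\kappa_j \widetilde K_{s_i(j)}^{-1}$ and is absorbed similarly since $\sum_{r+s}(-1)^r q_i^{-er} F_i^{(s)} \widetilde K_j^{-1} F_i^{(r)}$ matches after using the commutation of $\widetilde K_j$ past $F_i$.

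Assembling the three pieces gives $\T'_{i,e}(\ff_j) = \sum_{r+s=-\langle i,j'\rangle}(-1)^r q_i^{-er} F_i^{(s)} \ff_j F_i^{(r)} = \sum_{r+s}(-1)^r q_i^{-er}\ff_i^{(s)}\ff_j\ff_i^{(r)}$ (the last equality since $\ff_i = F_i$ for $i \in \Iblack$), which is the asserted formula and simultaneously shows $\T'_{i,e}(\ff_j) \in \Ui$; the computation for $\T''_{i,-e}$ is parallel with the order of $\ff_i^{(r)}, \ff_i^{(s)}$ swapped, as in \eqref{eq:braidgroup}. Invertibility of the restrictions follows since $\T'_{i,-e}$ (resp.\ $\T''_{i,e}$) is the inverse of $\T'_{i,e}$ (resp.\ $\T''_{i,-e}$) on $\U$ and also preserves $\Ui$ by the same argument. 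I expect the main obstacle to be the bookkeeping in the middle term: verifying precisely that $\T'_{i,e}$ intertwines with $\T_{w_\bullet}$ on $E_{\tau j}$ in a way compatible with the divided-power expansion, and checking that all the $\widetilde K$-conjugations and the parameter $\vs_j$ (which depends only on the $\tau$-orbit of $j$, unaffected by $s_i$ with $i \in \Iblack$) come out exactly right so that the sum repackages into $F_i^{(s)}\ff_j F_i^{(r)}$ with no leftover terms. A clean way to organize this is to first prove the operator identity $\T'_{i,e}\circ\T_{w_\bullet} = \T_{w_\bullet}\circ\T'_{\tau i, e}$ on $\U$ (an immediate consequence of the braid relations since $s_i w_\bullet = w_\bullet s_{\tau i}$), reducing everything to a single application of Lusztig's rank-one formula.
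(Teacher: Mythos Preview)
Your approach is essentially the same as the paper's: reduce to the case $j \in \Iwhite$, apply the braid operator term by term to $\ff_j = F_j + \vs_j \T_{w_\bullet}(E_{\tau j})\widetilde K_j^{-1} + \kappa_j \widetilde K_j^{-1}$, use the commutation identity of \lemref{lem:braid} to pass the braid operator through $\T_{w_\bullet}$, and reassemble into the stated divided-power expression. The paper carries this out explicitly for $\T_i = \T''_{i,+1}$ and declares the other three cases similar.

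There is, however, a genuine gap in your handling of the $\kappa_j$-term. Your claim that $\sum_{r+s = -\langle i,j'\rangle}(-1)^r q_i^{-er} F_i^{(s)} \widetilde K_j^{-1} F_i^{(r)}$ matches $\widetilde K_{s_i(j)}^{-1}$ ``after using the commutation of $\widetilde K_j$ past $F_i$'' is false when $\langle i, j'\rangle \neq 0$: already for $\langle i,j'\rangle = -1$ the sum is $(q_i^{-1} - q_i^{-e}) F_i \widetilde K_j^{-1}$, which is not a Cartan element. The correct argument, used in the paper, is to invoke the parameter constraint \eqref{kappa}: whenever $\kappa_j \neq 0$ one has $\langle i, j'\rangle = 0$ for \emph{every} $i \in \Iblack$, so the sum degenerates to the single term $r=s=0$ and $\T_i(\ff_j) = \ff_j$ trivially. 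Thus the $\kappa_j$-term never meets a nontrivial braid action, and the substantive computation concerns only the first two summands of $\ff_j$ in the regime $\kappa_j = 0$.

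A second, more minor point: your proposed identity $\T'_{i,e}\circ\T_{w_\bullet} = \T_{w_\bullet}\circ\T'_{\tau i, e}$ is not literally ``an immediate consequence of the braid relations,'' because $\T_{w_\bullet} = \T''_{w_\bullet,+1}$ lives in the $\T''_{\cdot,+1}$ family while $\T'_{i,e}$ lives in a different one, and the braid relations are formulated within a single family. The paper avoids this by computing with $\T''_{i,+1}$ throughout (so \lemref{lem:braid} applies directly) and treating the remaining three operators by parallel computations; alternatively one can recover $\T'_{i,-e}$ from $(\T''_{i,e})^{-1}$ once the latter is known to preserve $\Ui$.
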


\begin{proof}
Let  $i \in \Iblack$. We shall only prove the theorem for $\T_i =\T''_{i,+1}$, as the other cases are proved by similar computations.

Since  $s_i$ preserves $Y^\imath$ in \eqref{XY},  $\T_i$ preserves the subalgebras $\U_{\bullet}$ and $\Ui^0$ of $\Ui$. 
Then it remains to check the action of $\T_i$ on the generators $\ff_j$ for $j \in \Iwhite$. 
Recall $ \ff_j  = F_{j } + \vs_j \T_{\wb} (E_{\tau j})\tK^{-1}_{j} + \kappa_j \tK^{-1}_j$. If $\langle i, j' \rangle = \langle \tau i, \tau j' \rangle = 0$, 
we have $\T_{i} (\ff_j)  = \ff_j$ by Lemma~\ref{lem:braid}.  In particular, if $\kappa_j \neq 0$ 
(and hence $\langle i, j' \rangle = 0$ $\forall i \in \Iblack$), then $\T_{i}  (\ff_j)  = \ff_j$, $\forall i \in \Iblack$.

It remains to consider the case where $i \in \Iblack$, $j \in \Iwhite$ such that $\langle i, j \rangle \neq 0$. Recall by \eqref{eq:braidgroup} that
\[
\T_{i } (F_j) =\sum_{r+s = -\langle i , j' \rangle} (-1)^r q_i^{r} F_i^{(r)} F_j F^{(s)}_{i},
\quad  \T_{\tau i } (E_{\tau j}) =\sum_{r+s = -\langle i , j' \rangle} (-1)^r q_i^{-r} E_{\tau i}^{(s)} E_{\tau j} E^{(r)}_{\tau i} .
\]
We shall also use the identity
\begin{align*}
(F _{ i} \tK_i)^{s} =q_i^{-s(s-1)} F _{ i} ^s\tK_i^{s}. 
\end{align*}
By Lemma~\ref{lem:braid} and \eqref{eq:Tw0}, we have
\begin{align*}
\T_{ i }(\ff_j) 
&=  \T_{ i } (F_j) + \vs_j  \T_{\wb} \T_{\tau i } (E_{\tau j})  \T_{i} ( \tK^{-1} _{ j } )\\
	=& \sum_{r+s = -\langle i , j' \rangle} (-1)^r q_i^{r} F_i^{(r)} F_j F^{(s)}_{i} + \vs_j  \T_{\wb}  \sum_{r+s = -\langle i , j' \rangle} (-1)^r q_i^{-r} E_{\tau i}^{(s)} E_{\tau j} E^{(r)}_{\tau i}   \cdot \T_{i} ( \tK^{-1} _{ j } )
	\displaybreak[0]\\
	=&  \sum_{r+s = -\langle i , j' \rangle} (-1)^r q_i^{r} F_i^{(r)} F_j F^{(s)}_{i}  + \\
	& \qquad \vs_j     \sum_{r+s = -\langle i , j' \rangle} (-1)^r q_i^{-r} (-F _{ i} \tK_i)^{(s)} \T_{\wb}( E_{\tau j})  (-F _{ i} \tK_i)^{(r)}   \cdot  \T_{i} ( \tK^{-1} _{ j } ) \displaybreak[0]\\
	=&  \sum_{r+s = -\langle i , j' \rangle} (-1)^r q_i^{r} F_i^{(r)} F_j F^{(s)}_{i}  + 
	\\ & \qquad\vs_j     \sum_{r+s = -\langle i , j' \rangle} (-1)^s q_i^{-r}  (F _{ i} \tK_i)^{(s)} \T_{\wb}( E_{\tau j})  (F _{ i} \tK_i)^{(r)}   \cdot  \T_{i} ( \tK^{-1} _{ j } ) \displaybreak[0]\\
	=& \sum_{r+s = -\langle i , j' \rangle} (-1)^r q_i^{r} F_i^{(r)} F_j F^{(s)}_{i}  + \\
	&   
\qquad \vs_j     \sum_{r+s = -\langle i , j' \rangle} (-1)^s q_i^{-r -(r+s)(r+s-1)-s \langle i, j' \rangle} 
 F _{ i}^{(s)} \T_{\wb}( E_{\tau j}) F _{ i}^{(r)}   \cdot   \tK_i^{-\langle i , j' \rangle} \T_{i} ( \tK^{-1} _{ j } )\displaybreak[0]\\
=& \sum_{r+s = -\langle i , j' \rangle} (-1)^r q_i^{r} F_i^{(r)} F_j F^{(s)}_{i}   +  
\\ &
 \qquad \vs_j     \sum_{r+s = -\langle i , j' \rangle} (-1)^s q_i^{-r + (r-1)\langle i, j' \rangle} F _{ i}^{(s)} \T_{\wb}( E_{\tau j}) F _{ i}^{(r)}   \cdot   \tK^{-1} _{ j }\displaybreak[0]\\
	=&  \sum_{r+s = -\langle i , j' \rangle} (-1)^r q_i^{r} F_i^{(r)} F_j F^{(s)}_{i}   +  
 \vs_j     \sum_{r+s = -\langle i , j' \rangle} (-1)^s q_i^{s} F _{ i}^{(s)} \T_{\wb}( E_{\tau j}) \tK^{-1} _{ j }  F _{ i}^{(r)}    \\
	=& \sum_{r+s = -\langle i , j' \rangle} (-1)^r q_i^{r} F_i^{(r)} \ff_j F^{(s)}_{i} \\
	=& \sum_{r+s = -\langle i , j' \rangle} (-1)^r q_i^{r} \ff_i^{(r)} \ff_j \ff^{(s)}_{i} \in  \Ui.
\end{align*}
The theorem follows.
\end{proof}

\begin{cor}\label{cor:Tpsi}
For any $u \in \Ui$, $e=\pm1$, and $i \in \Iblack$, we have 
$$\psi_{\imath}(\T''_{i, e}(u)) = \T''_{i, -e} (\psi_{\imath} (u)).
$$
\end{cor}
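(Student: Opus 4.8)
The plan is to verify the identity $\psi_\imath(\T''_{i,e}(u)) = \T''_{i,-e}(\psi_\imath(u))$ directly on algebra generators of $\Ui$ and then extend by the fact that both sides are (anti-linear in $q$ but) multiplicative in the appropriate sense. More precisely, observe that $\psi_\imath$ is an anti-linear algebra automorphism of $\Ui$ (Lemma~\ref{lem:bar}) and $\T''_{i,e}$, $\T''_{i,-e}$ are $\Qq$-algebra automorphisms of $\Ui$ (Theorem~\ref{thm:braidX}); hence both $u \mapsto \psi_\imath(\T''_{i,e}(u))$ and $u \mapsto \T''_{i,-e}(\psi_\imath(u))$ are anti-linear algebra homomorphisms $\Ui \to \Ui$ sending $q \mapsto q^{-1}$. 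Two such maps agree as soon as they agree on the generators $\ff_j$ ($j \in \I$), $E_j$ ($j \in \Iblack$), and $K_\mu$ ($\mu \in Y^\imath$).

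First I would dispose of the easy generators. On $K_\mu$ we have $\T''_{i,e}(K_\mu) = K_{s_i(\mu)}$ and $\psi_\imath(K_\mu) = K_{-\mu}$, so both sides equal $K_{-s_i(\mu)}$ (using $s_i$ commutes with $\mu \mapsto -\mu$). On $E_j$ and $\ff_j$ for $j \in \Iblack$ the statement reduces to the corresponding identity for $\U$, namely Proposition~\ref{sec1:prop:braid}(2), which gives $\overline{\T''_{i,e}(u)} = \T''_{i,-e}(\overline{u})$ for $u \in \U$; since for $j \in \Iblack$ we have $\psi_\imath = \psi$ restricted to $\U_\Iblack$ and the braid operators restrict compatibly, these cases follow immediately. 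The remaining, and only substantive, case is $\ff_j$ for $j \in \Iwhite$.

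For $j \in \Iwhite$ with $\langle i, j'\rangle = 0$ we showed in the proof of Theorem~\ref{thm:braidX} that $\T''_{i,e}(\ff_j) = \ff_j$ (and likewise $\T''_{i,-e}(\ff_j) = \ff_j$), while $\psi_\imath(\ff_j) = \ff_j$, so the identity is trivial. For $j \in \Iwhite$ with $\langle i,j'\rangle \neq 0$, I would use the explicit formula from Theorem~\ref{thm:braidX}:
\[
\T''_{i,-e}(\ff_j) = \sum_{r+s = -\langle i,j'\rangle} (-1)^r q_i^{-er} \ff_i^{(r)} \ff_j \ff_i^{(s)},
\]
and the analogous one for $\T''_{i,e}(\ff_j)$ with $q_i^{er}$ (equivalently, with the roles of $r$ and $s$ swapped). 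Applying $\psi_\imath$ to the formula for $\T''_{i,e}(\ff_j)$ — using that $\psi_\imath$ fixes $\ff_i$ and $\ff_j$, reverses the order of products (it is an algebra automorphism, so it does not reverse order; rather one uses $\psi_\imath(\ff_i^{(r)}) = \ff_i^{(r)}$ since divided powers are $\psi_\imath$-stable as $q \mapsto q^{-1}$ fixes $[r]_i^!\cdot$ up to the standard sign-free identity), and sends $q_i^{er} \mapsto q_i^{-er}$ — one obtains exactly the sum displayed above for $\T''_{i,-e}(\ff_j)$. The main obstacle, such as it is, is bookkeeping: one must check that $\psi_\imath(\ff_i^{(r)}) = \ff_i^{(r)}$ (which holds because $\ff_i = F_i$ for $i \in \Iblack$ and $\overline{F_i^{(r)}} = F_i^{(r)}$ in $\U$) and track the $(-1)^r$ and $q_i$-power factors carefully through the anti-linear map; there is no conceptual difficulty once Theorem~\ref{thm:braidX} is in hand.
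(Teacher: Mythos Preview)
Your proposal is correct and follows essentially the same approach as the paper: reduce to generators, handle the $\U_{\Iblack}$ and $\Ui^0$ generators via the known identity for $\psi$ on $\U$ (Proposition~\ref{sec1:prop:braid}(2), which is Lusztig's \S37.2.4), and handle $\ff_j$ for $j\in\Iwhite$ via the explicit formulas in Theorem~\ref{thm:braidX} together with $\psi_\imath(\ff_j)=\ff_j$. Your write-up merely spells out the short computation that the paper leaves implicit; the only cosmetic issue is the self-correcting parenthetical about order reversal, which you might clean up.
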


\begin{proof}
As $\psi_{\imath}$ and $\T''_{i, e}$ are algebra isomorphisms, it suffices to check the identity for $u$ being generators of $\Ui$. 
For generators in $\U_{\Iblack}$ or in $\Ui^0$, this follows from \cite[\S37.2.4]{Lu94}. For $u = \ff_{i}$  with $i \in \Iwhite$, the identity follows from the formulas in Theorem~\ref{thm:braidX} and that $\psi_{\imath} (\ff_i) = \ff_i$ from Lemma~\ref{lem:bar}.
\end{proof}

\subsection{Anti-involution $\wp$ on $\Ui$}

We study the restriction to $\Ui$ of the anti-involution $\wp: \U \rightarrow \U$ in Proposition~\ref{prop:invol}. 

\begin{lem}\label{lem:rho}
For all  $i, j \in \I$, the following identities hold on $\U$: 
\begin{align*}
\wp  (\T''_{i,e} (E_{j})) &= (-q_i)^{e \langle i, j' \rangle} \T'_{i,-e} (\wp (E_j)),
\\
\wp  (\T'_{i,e} (E_{j})) &= (-q_i)^{-e \langle i, j' \rangle} \T''_{i,-e} (\wp (E_j)).
\end{align*}
\end{lem}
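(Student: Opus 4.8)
The plan is to verify both identities on the algebra generators $E_j$ ($j\in\I$) directly, splitting into the cases $j=i$ and $j\neq i$, and then extend to all of $\U^+$ by a multiplicativity argument. Since $\wp$ is an anti-involution while $\T''_{i,e}$, $\T'_{i,-e}$ are algebra automorphisms, and the scalar $(-q_i)^{e\langle i,j'\rangle}$ is multiplicative in the weight, the identity $\wp\circ\T''_{i,e}=(-q_i)^{e\langle i,\cdot\rangle}\,\T'_{i,-e}\circ\wp$ (interpreted on weight-homogeneous elements, where $\langle i,\cdot\rangle$ means $\langle i,|x|\rangle$) is compatible with products: if it holds on $x$ of weight $\nu$ and on $x'$ of weight $\nu'$, then applying $\wp$ to $\T''_{i,e}(xx')=\T''_{i,e}(x)\T''_{i,e}(x')$ and reversing the order, the two scalars combine to $(-q_i)^{e\langle i,\nu+\nu'\rangle}$, which is exactly the scalar attached to weight $\nu+\nu'$. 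So it suffices to check the generators. The second identity follows from the first by applying $\wp$ to both sides (using $\wp^2=\id$) and relabeling, or by the parallel direct computation; I would present one and note the other is analogous.

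For the generator computations I would use the explicit formulas. For $j\neq i$: by \eqref{eq:braidgroup}, $\T''_{i,-e}(E_j)=\sum_{r+s=-\langle i,j'\rangle}(-1)^rq_i^{er}E_i^{(s)}E_jE_i^{(r)}$, and $\wp(E_j)=q_j^{-1}F_j\tK_j$, $\wp(E_i)=q_i^{-1}F_i\tK_i$. Applying $\wp$ term by term (it reverses order and is multiplicative) and then comparing with $\T'_{i,e}(\wp(E_j))$ computed via the $F$-formulas in \eqref{eq:braidgroup} together with the commutation relations $\tK_iF_j=q_i^{-\langle i,j'\rangle}F_j\tK_i$ needed to move the $\tK$-factors past the $F$'s, the powers of $q_i$ and the sign will match up to the claimed scalar $(-q_i)^{-e\langle i,j'\rangle}$ (the exponent $-\langle i,j'\rangle$ being the total number of $E_i$'s, i.e.\ $r+s$). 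For $j=i$: from \eqref{eq:braidgroup}, $\T''_{i,-e}(E_i)=-F_i\tK_{-ei}$ (with the indexing convention there), and one checks $\wp(-F_i\tK_{-ei})$ against $\T'_{i,e}(\wp(E_i))=\T'_{i,e}(q_i^{-1}F_i\tK_i)=q_i^{-1}\T'_{i,e}(F_i)\T'_{i,e}(K_i)=q_i^{-1}(-E_i\tK_{-ei})K_{s_i(i\cdot i/2\,i)}$; again expanding and simplifying with $K_{s_i(\mu)}$ and the defining relations of $\U$ gives the scalar $(-q_i)^{2e}$ matching $\langle i,i'\rangle=2$. I would carry both computations out with care about the sign conventions in \eqref{eq:braidgroup} (the distinction between $\T''_{i,e}$ and $\T''_{i,-e}$) but not belabor the bookkeeping in the writeup.

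The main obstacle I anticipate is purely the sign-and-power bookkeeping: keeping straight the four operators $\T'_{i,\pm e}$, $\T''_{i,\pm e}$, the convention in \eqref{eq:braidgroup} that indexes $\T''_{i,-e}$ rather than $\T''_{i,e}$, and the interaction of $\wp$ (which introduces the factors $q_i^{-1}\tK_i^{\pm1}$) with the braid operators, making sure the $\tK$-contributions produce exactly the asserted power $(-q_i)^{\pm e\langle i,j'\rangle}$ and no stray factor. A secondary point to get right is that $\wp(K_\mu)=K_\mu$ while $\T'_{i,e}(K_\mu)=K_{s_i(\mu)}$, so the $\tK$-parts of $\wp(E_j)$ transform nontrivially and this is where the extra power of $q_i$ originates. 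Once the generator cases are pinned down, the extension to $\U^+$ via the multiplicativity observation above is routine and needs only a remark.
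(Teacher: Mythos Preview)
Your approach is essentially the same as the paper's: a direct verification using the explicit formulas in \eqref{eq:braidgroup}, split into the cases $j=i$ and $j\neq i$ (with the $\langle i,j'\rangle=0$ case being trivial), and the second identity handled by symmetry. One remark: the lemma is stated only for the generators $E_j$, so the multiplicativity extension to all of $\U^+$ that you outline is not needed here (though a version of that reasoning does appear in the proof of the subsequent Corollary~\ref{cor:rhoTwb}, where one iterates the lemma along a reduced word for $\wb$).
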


\begin{proof}
We shall prove the first identity only, as the second one is similar. 
When $\langle i , j' \rangle =0$, the first identity is trivial. 

For $i =j$, we have 
\begin{align*}
\wp (\T''_{i,e} (E_{i})) =  \wp (-F_i \tK_{ei}) = -q^{-1}_i \tK_{ei} E_i \tK_{-i} = -q^{-1 + e \langle i, i' \rangle}_i  E_i \tK_{ei} \tK_{-i} .
\end{align*}
On the other hand, we have 
\[
\T'_{i,-e}(  \wp (E_i)) =\T'_{i,-e} (q^{-1}_i F_i \tK_i) = -q^{-1}_i E_i \tK_{ei}  \tK_{-i}.
\]
Hence the first identity for $i = j$ holds.

For $i \neq j$ with $\langle i , j' \rangle \neq 0$, we have 
\begin{align*}
\displaybreak
\wp (\T''_{i,e} (E_{j})) =&  \wp \Big( \sum_{r+s = - \langle i , j' \rangle} (-1) ^r q^{-er}_i  E^{(s)}_i\ E_j E^{(r)}_i \Big)\\
 =& \sum_{r+s = - \langle i , j' \rangle} (-1)^r q^{-er}_i  (q^{-1}_i F_i \tK_i )^{(r)} (q^{-1}_j F_j \tK_j) (q^{-1}_i F_i \tK_i )^{(s)}\\
  =& \sum_{r+s = - \langle i , j' \rangle} (-1)^r q^{-er}_i  q^{-1}_j F_i^{(r)} F_j F_i ^{(s)}    \tK^{-\langle i , j'\rangle}_i \tK_{j}     \\
 =&  \sum_{r+s = - \langle i , j' \rangle} (-1)^s q^{-1}_j  (-q_i)^{e \langle i, j' \rangle}q^{ e s}_i  F_i^{(r)} F_j  F_i^{(s)} \tK^{-\langle i , j'\rangle}_i \tK_{j}\\
 = & q^{-1}_j  (-q_i)^{e \langle i, j' \rangle} \sum_{r+s = - \langle i , j' \rangle} (-1)^s q^{e s}_i  F_i^{(r)} F_j  F_i^{(s)} \T_{i} ( \tK _{ j } ).
\end{align*}
On the other hand, we have 
\begin{align*}
\T'_{i,-e} (  \wp (E_j)) &= \T'_{i,-e}( q^{-1}_j F_j \tK_j) \\
=& q^{-1}_j   \sum_{r+s = - \langle i , j' \rangle} (-1)^r q^{er}_i  F_i^{(s)} F_j  F_i^{(r)}  \T_{i} ( \tK _{ j } ).
\end{align*}
This completes the proof of the first identity and the lemma.
\end{proof}

\begin{cor}
  \label{cor:rhoTwb}
  For $i \in \I_{\circ}$ and $e = \pm 1$, we have 
\begin{align*}
\wp  (\T''_{\wb, e} (E_{i}))  &= (-1)^{\langle 2\rho^\vee_{\bullet}, i' \rangle} q_i^{e \langle i, 2 \rho_{\bullet} \rangle} \T'_{\wb, -e} (\wp(E_{i})),
\\
\wp  (\T'_{\wb, e} (E_{i}))  &= (-1)^{\langle 2\rho^\vee_{\bullet}, i' \rangle} q_i^{-e \langle i, 2 \rho_{\bullet} \rangle} \T''_{\wb, -e} (\wp(E_{i})).
\end{align*}
\end{cor}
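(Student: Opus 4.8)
The plan is to deduce Corollary~\ref{cor:rhoTwb} from Lemma~\ref{lem:rho} by iterating the single-node identity along a reduced expression of $\wb$. First I would fix a reduced expression $\wb = s_{i_1} s_{i_2} \cdots s_{i_N}$ with each $i_k \in \Iblack$, so that $\T'_{\wb,e} = \T'_{i_1,e} \T'_{i_2,e} \cdots \T'_{i_N,e}$ and likewise for $\T''$. The idea is to push $\wp$ through the composition $\T''_{i_1,e} \cdots \T''_{i_N,e}(E_i)$ one factor at a time: applying Lemma~\ref{lem:rho} repeatedly turns $\T''_{i_k,e}$ into $\T'_{i_k,-e}$ at the cost of a scalar $(-q_{i_k})^{e\langle i_k, \beta_k'\rangle}$, where $\beta_k$ is the weight of the element that $\T''_{i_{k+1},e}\cdots\T''_{i_N,e}$ produces from $E_i$. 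Since the braid operators act on the weight lattice by the corresponding Weyl group elements, the weight fed into $\T''_{i_k,e}$ is $s_{i_{k+1}}\cdots s_{i_N}(i')$, so after going through all $N$ factors the accumulated scalar is $\prod_{k=1}^{N} (-q_{i_k})^{e \langle i_k,\, s_{i_{k+1}}\cdots s_{i_N}(i')\rangle}$.

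The second step is to evaluate this product. Writing $q_{i_k} = q^{i_k\cdot i_k/2}$ and $\langle i_k, \mu\rangle = \frac{2 i_k\cdot \mu}{i_k\cdot i_k}$ for $\mu = s_{i_{k+1}}\cdots s_{i_N}(i') \in X$, the $q$-exponent telescopes: $\sum_k \frac{i_k\cdot i_k}{2}\cdot \frac{2 i_k \cdot s_{i_{k+1}}\cdots s_{i_N}(i')}{i_k\cdot i_k} = \sum_k i_k \cdot s_{i_{k+1}}\cdots s_{i_N}(i')$. Using the standard identity $\wb(i') - i' = -\sum_{k} \langle i_k, s_{i_{k+1}}\cdots s_{i_N}(i')\rangle\, \alpha_{i_k}$ (the expansion of $\wb(i') - i'$ in simple roots of $\Iblack$ along the reduced word) together with $\wb(i') = -\tau(i') = -(\tau i)'$ is not quite what is needed; rather I would pair both sides of the telescoping with appropriate coweights, or more cleanly note that $\sum_k i_k \cdot s_{i_{k+1}}\cdots s_{i_N}(i') = \langle \text{(sum of positive coroots of } \Iblack\text{ made negative by } \wb^{-1}), i'\rangle$ suitably normalized. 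The cleanest route: the exponent $\sum_k i_k\cdot s_{i_{k+1}}\cdots s_{i_N}(i')$ equals $(\rho_\bullet^\vee \text{-type sum})$ applied appropriately, and a short computation with $\rho_\bullet$, $\rho_\bullet^\vee$ identifies it with $\langle i, 2\rho_\bullet\rangle$ up to the normalization $q_i$ versus $q$; similarly the sign $\prod_k(-1)^{e\langle i_k, \cdots\rangle}$ collapses to $(-1)^{\langle 2\rho_\bullet^\vee, i'\rangle}$ since $e = \pm 1$ makes $(-1)^{e m} = (-1)^m$, and $\sum_k \langle i_k, s_{i_{k+1}}\cdots s_{i_N}(i')\rangle$ counts (with the right bookkeeping) $\langle 2\rho_\bullet^\vee, i'\rangle$.

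To make the bookkeeping rigorous I would invoke the well-known fact that for $w\in W$ with reduced word $s_{j_1}\cdots s_{j_N}$ one has $\rho^\vee - w^{-1}\rho^\vee = \sum_{k} (s_{j_N}\cdots s_{j_{k+1}})(\alpha_{j_k}^\vee)$ over positive coroots, and dually for $\rho$; applied inside $W_\bullet$ with $w = \wb = \wb^{-1}$ this converts the telescoped sum into $\langle 2\rho_\bullet^\vee, i'\rangle$ for the sign and $\langle i', 2\rho_\bullet\rangle$ (equivalently $\langle i, 2\rho_\bullet\rangle$ after the $q$ vs $q_i$ normalization, using $i\cdot i = i'\cdot i'$ under the pairing) for the power of $q_i$. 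The second displayed identity follows from the first (or is proved identically) by swapping the roles of $\T'$ and $\T''$ and replacing $e$ by $-e$, which only flips the sign of the $q$-exponent, consistent with the stated formula. The main obstacle I anticipate is purely combinatorial: correctly tracking the weights $s_{i_{k+1}}\cdots s_{i_N}(i')$ through the telescoping and matching the resulting sum to $\langle i, 2\rho_\bullet\rangle$ and $\langle 2\rho_\bullet^\vee, i'\rangle$ with the correct normalization between $q$ and $q_i$; once that identity is pinned down, everything else is a mechanical induction on $N = \ell(\wb)$.
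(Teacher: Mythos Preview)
Your approach is correct and essentially identical to the paper's: fix a reduced expression $\wb = s_{i_1}\cdots s_{i_l}$, apply Lemma~\ref{lem:rho} iteratively (using that each partial braid image $\T''_{s_{i_k}\cdots s_{i_l},e}(E_i)$ lies in $\U^+$ since $i\in\Iwhite$), and then identify the accumulated scalar $\prod_k(-q_{i_k})^{e\langle i_k,\, s_{i_{k+1}}\cdots s_{i_l}(i')\rangle}$ with $(-1)^{\langle 2\rho^\vee_\bullet, i'\rangle} q_i^{e\langle i, 2\rho_\bullet\rangle}$ via the fact that $\{(s_{i_{k+1}}\cdots s_{i_l})^{-1}(\alpha_{i_k})\}_k$ enumerates the positive roots of $\Iblack$ together with the $W$-invariance of the pairing. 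The paper carries out exactly this telescoping computation (its equation \eqref{eq:sumrho}), so your anticipated ``main obstacle'' is precisely the identity the paper writes down and verifies.
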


\begin{proof}
We shall only prove the first identity, as the proof of the second one is similar. 
Let $\wb = s_{i_1}  s_{i_2} \cdots s_{i_l}$ be a reduced expression of $\wb$. 
Write $w_k = s_{i_k}  s_{i_{k+1}} \cdots s_{i_l}$ for $1 \le k \le l$. Note that $\T_{w_k} (E_{i})\in \U^{+}$. 
Thus applying Lemma~\ref{lem:rho} repeatedly, we have 
\begin{align*}
\wp  (\T''_{\wb, e} (E_{i})) 
&= (-q_{i_1})^{e \langle i_1, w_2(i')\rangle}  (-q_{i_2})^{ e \langle i_2, w_3(i')\rangle} \cdots  (-q_{i_l})^{e \langle i_l, i' \rangle}\T'_{\wb, -e}  (\wp (E_j))\\
&= (-q_{i_1})^{ e \langle w_2^{-1}( i_1), i ' \rangle}  (-q_{i_2})^{e \langle w^{-1}_3 (i_2), i' \rangle} \cdots  (-q_{i_l})^{e \langle i_l, i' \rangle} \T'_{\wb, -e}  (\wp (E_j))\\
&\stackrel{\spadesuit}{=} (-1)^{\langle 2\rho^\vee_{\bullet}, i \rangle} q_i^{e \langle i, 2 \rho_{\bullet} \rangle}\T'_{\wb, -e}  (\wp (E_j)), 
\end{align*}
where the identity $\spadesuit$ follows from the fact that $\{w^{-1}_{i+1} (\alpha_i) \vert 1 \le i \le l\}$ consists of all positive roots in $Y_{\bullet}$ and the equality 
\begin{equation}
  \label{eq:sumrho}
\sum^l_{k=1} \frac{i_k \cdot i_k}{2} \big \langle w^{-1}_{k+1} (i_k), i' \big\rangle 
= \frac{i \cdot i}{2} \langle i, 2 \rho_\bullet \rangle. 
\end{equation}
The equation \eqref{eq:sumrho} can be verified as follows: 
\begin{align*}
 \sum^l_{k=1} \frac{i_k \cdot i_k}{2} \langle w^{-1}_{k+1} (i_k), i' \rangle  
= & \sum^l_{k=1} \frac{i_k \cdot i_k}{2} \langle  i_k, w_{k+1} (i') \rangle \\
= &\sum^l_{k=1}   i_k \cdot w_{k+1} (i)  \stackrel{\heartsuit}{=} \sum^l_{k=1}  w^{-1}_{k+1}( i_k) \cdot  i  \\
= &\sum^l_{k=1}  \frac{i \cdot i}{2}  \langle i , w^{-1}_{k+1}( i_k) \rangle  =  \frac{i \cdot i}{2} \langle i, 2 \rho_\bullet \rangle,
\end{align*}
where the identity $\heartsuit$ follows from the $W$-invariance of the bilinear pairing $ \cdot : \Z[\I] \times \Z[\I] \rightarrow \Z$.
\end{proof}


Recall from Lemma~\ref{lem:parameter vs} that $\vs_1 = \pm q_1^{-1}$ in type AI$_1$, and by Remark~\ref{lem:nonzero}, this is the only local rank one case which allows the parameter $\kappa_1 \neq 0$. 

\begin{prop}
\label{prop:rho}
Assume that $\vs_i = q_i^{-1} \; \text{if } \kappa_i \neq 0$.
The anti-involution $\wp$ on $\U$ restricts to an anti-involution $\wp$ on $\U^\imath$ such that 
\begin{align}
\wp(E_{i}) &= q^{-1}_i F_{i} \Ktilde_{i}, \quad \wp(F_{i}) = q^{-1}_i E_{i} \Ktilde_i^{-1}, \quad \wp(K_{\mu}) = K_{\mu}, \quad \forall i \in \Iblack; 
\label{eq:wpblack}
 \\
\wp (\ff_i ) &=  q_i^{-1} \vs_{\tau i}^{-1} \T_{\wb}^{-1} (\ff_{\tau i} ) \cdot \tK_{\wb \tau i} \tK_i^{-1},\quad \forall i \in \Iwhite.
\label{eq:rhoB}
\end{align}
\end{prop}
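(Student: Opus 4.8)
The strategy is to verify that $\wp$ sends each generator of $\Ui$ into $\Ui$ and that the resulting map, being the restriction of an anti-involution, is an anti-involution of $\Ui$; the formulas \eqref{eq:wpblack} and \eqref{eq:rhoB} then pin down its effect on generators. For the generators $K_\mu$ with $\mu \in Y^\imath$, $F_i$ and $E_i$ with $i \in \Iblack$, this is immediate: these lie in $\U_{\Iblack} \subseteq \Ui$, and Proposition~\ref{prop:invol}(2) shows $\wp$ preserves $\U_{\Iblack}$, giving \eqref{eq:wpblack} directly. So the whole content is to compute $\wp(\ff_i)$ for $i \in \Iwhite$ and check it lies in $\Ui$.

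First I would expand, using the definition \eqref{eq:def:ff} and the formula for $\wp$ on $\U$:
\[
\wp(\ff_i) = \wp\big(F_i\big) + \vs_i\, \wp\big(\tK_i^{-1}\big)\,\wp\big(\T_{\wb}(E_{\tau i})\big) + \kappa_i\, \wp\big(\tK_i^{-1}\big),
\]
so $\wp(\ff_i) = q_i^{-1} E_i \tK_i^{-1} + \vs_i \tK_i^{-1} \wp(\T_{\wb}(E_{\tau i})) + \kappa_i \tK_i^{-1}$, where the middle term is the one requiring work. The key input is Corollary~\ref{cor:rhoTwb}, which gives $\wp(\T_{\wb}(E_{\tau i})) = \wp(\T''_{\wb,+1}(E_{\tau i})) = (-1)^{\langle 2\rho^\vee_\bullet, (\tau i)'\rangle} q_{\tau i}^{\langle \tau i, 2\rho_\bullet\rangle} \T'_{\wb,-1}(\wp(E_{\tau i}))$. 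Now $\wp(E_{\tau i}) = q_{\tau i}^{-1} F_{\tau i}\tK_{\tau i}$, and one must rewrite $\T'_{\wb,-1}(F_{\tau i}\tK_{\tau i})$ in terms of $\T_{\wb}^{-1}$; here I would use the standard relations between the four braid operators (Proposition~\ref{sec1:prop:braid}, and the conversion $\T'_{w,-1} = \sigma \T''_{w,+1}\sigma$ type identities, or directly that $\T'_{i,-1}$ and $\T''_{i,+1}$ are related by an explicit sign/power) together with the formulas \eqref{eq:Tw0} for $\T_{\wb}^{\pm 1}$ on Chevalley generators, to convert everything into $\T_{\wb}^{-1}(\ff_{\tau i})$ up to explicit powers of $q$ and a power of $\tK$. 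The whole computation is then a matter of assembling the powers of $q_i$ and the $K$-factors, and checking they combine — via the parameter constraint \eqref{vs2} (and \eqref{vs=} when $i\cdot\theta(i)=0$, and the hypothesis $\vs_i = q_i^{-1}$ when $\kappa_i\neq 0$) — into exactly $q_i^{-1}\vs_{\tau i}^{-1}\T_{\wb}^{-1}(\ff_{\tau i})\cdot \tK_{\wb\tau i}\tK_i^{-1}$. Since $\T_{\wb}^{-1}$ is an automorphism of $\Ui$ by Theorem~\ref{thm:braidX} (Theorem~A), $\T_{\wb}^{-1}(\ff_{\tau i}) \in \Ui$, and the $K$-factor $\tK_{\wb\tau i}\tK_i^{-1}$ lies in $\Ui^0$ because $\wb\tau i + \ldots$ — more precisely $\frac{i\cdot i}{2}(\wb\tau i - i)^\vee$, or rather one checks the relevant coweight is $\theta$-fixed using $\theta(i) = -\wb\tau i$ at the coweight level \eqref{eq:th} — so $\wp(\ff_i)\in\Ui$, as desired.

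Once $\wp(\Ui)\subseteq \Ui$ is established, the anti-involution property is inherited: $\wp^2 = \id$ on $\U$ forces $\wp^2 = \id$ on $\Ui$, and $\wp$ is anti-multiplicative on all of $\U$. This completes the proof.

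\textbf{Main obstacle.} The one genuinely delicate point is the bookkeeping in converting $\wp(\T_{\wb}(E_{\tau i}))$ into a multiple of $\T_{\wb}^{-1}(\ff_{\tau i})$: one must correctly handle the switch between the primed and double-primed braid operators and the sign $e \mapsto -e$, keep track of the $\tK$-prefactors produced by \eqref{eq:Tw0}, and then see that the accumulated scalar collapses precisely against \eqref{vs2}. The case $\kappa_i \neq 0$ (only type AI$_1$ locally, by Remark~\ref{lem:nonzero}) requires the extra normalization hypothesis $\vs_i = q_i^{-1}$ to make the $\kappa_i\tK_i^{-1}$ term transform compatibly, so that branch needs to be checked separately; it is short but must not be overlooked.
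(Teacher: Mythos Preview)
Your proposal is correct and follows essentially the same route as the paper: compute $\wp(\ff_i)$ using Corollary~\ref{cor:rhoTwb}, separately expand $\T_{\wb}^{-1}(\ff_{\tau i})$, match the two expressions via the parameter constraint~\eqref{vs2} (and the hypothesis $\vs_i=q_i^{-1}$ when $\kappa_i\neq 0$), and conclude that $\wp(\ff_i)\in\Ui$ using Theorem~\ref{thm:braidX} together with $\tK_{\wb\tau i}\tK_i^{-1}\in\Ui^0$. The only minor simplification relative to your outline is that no elaborate conversion between primed and double-primed braid operators is needed: $\T'_{\wb,-1}$ is simply $\T_{\wb}^{-1}$, so after applying Corollary~\ref{cor:rhoTwb} one lands directly on $\T_{\wb}^{-1}(q_{\tau i}^{-1}F_{\tau i}\tK_{\tau i})$ and the remaining work is straightforward bookkeeping.
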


\begin{proof}
Equation~\eqref{eq:wpblack} follows from  the formula for $\wp$ on $\U$ in Proposition~\ref{prop:invol}.

Let us prove \eqref{eq:rhoB}. Recall 
$
\ff_i =F_i  +\varsigma_i  \T_{w_\bullet} (E_{\tau i}) \tK_i^{-1} +\kappa_i \tK_i^{-1}$ for $i \in \Iwhite.$
Note $q_{\tau i} =q_i$. 
Then since $\wp$ is an anti-isomorphism on $\U$, applying Propsition~\ref{prop:invol}(2), Corollary~\ref{cor:rhoTwb} and then \eqref{vs2}, we have
\begin{align}
\label{rhoiB}
\begin{split}
\wp(\ff_i) &=q_i^{-1} E_i\tK_i^{-1} + \vs_i   \tK_i^{-1}\wp(\T_{\wb} (E_{\tau i}))  + \kappa_i \tK_i^{-1}
\\
&=q_i^{-1} \vs_{\tau i}^{-1} \Big(\vs_{\tau i}E_i + q_i  \vs_{\tau i} \vs_i \tK_i^{-1}  \wp  (\T_{\wb} (E_{\tau i})) \tK_i  + q_i \vs_{\tau i} \kappa_i \Big)    \tK_i^{-1}
\\
&=q_i^{-1} \vs_{\tau i}^{-1} 
\Big(\vs_{\tau i}E_i 
+ q_i  \vs_{\tau i} \vs_i \tK_i^{-1}  (-1)^{\langle 2\rho^\vee_{\bullet}, i' \rangle} q_i^{ \langle i, 2 \rho_{\bullet} \rangle} q_{\tau i}^{-1} \T_{\wb}^{-1} (F_{\tau i}) \tK_{\wb \tau i} \tK_i  
+ q_i \vs_{\tau i} \kappa_i \Big)  \tK_i^{-1}
\\
&=q_i^{-1} \vs_{\tau i}^{-1} 
\Big(\vs_{\tau i}E_i 
+ \T_{\wb}^{-1} (F_{\tau i}) \tK_{\wb \tau i}  
+ q_i \vs_{\tau i} \kappa_i \Big) \tK_i^{-1}.
\end{split}
\end{align}
On the other hand, we have 
\begin{align}
\label{TB}
\begin{split}
\T_{\wb}^{-1} (\ff_{\tau i} ) &= \T_{\wb}^{-1}  \Big( F_{\tau i} + \vs_{\tau i} \T_{\wb} (E_i) \tK_{\tau i}^{-1} + \kappa_{\tau i} \tK_{\tau i}^{-1} \Big)
\\
&= \Big( \vs_{\tau i}E_i + \T_{\wb}^{-1} (F_{\tau i})  \tK_{\wb \tau i} +  \kappa_{\tau i} \Big) \tK_{\wb \tau i}^{-1}. 
\end{split}
\end{align}
Recall the assumption  that $\vs_i = q_i^{-1} \; \text{if } \kappa_i \neq 0$, and note that $\tau i=i$ if $\kappa_i \neq 0$. 
The formula \eqref{eq:rhoB}  follows now by a comparison of \eqref{rhoiB}--\eqref{TB}. 

Note $\tK_{\wb \tau i} \tK_{i}^{-1} =\theta (\tK_i)^{-1} \tK_{i}^{-1} \in \Ui$. It now follows by Theorem~\ref{thm:braidX} that  
\[
\wp(\ff_i) =q_i^{-1} \vs_{\tau i}^{-1} \T_{\wb}^{-1} (\ff_{\tau i} ) \cdot  \tK_{\wb \tau i} \tK_i^{-1} \in \Ui.
\]
Hence $\wp$ maps every generator of $\Ui$ to elements in $\Ui$, and so  it is an anti-involution on $\Ui$. 
\end{proof}

\begin{rem}
 \label{rem:characterization}
A more careful analysis of the proof of Proposition~\ref{prop:rho} shows that the anti-involution $\wp: \U \rightarrow \U$ restricts to an anti-involution on $\U^\imath$ if and only if 
the following conditions \eqref{rhobil}-\eqref{rhobil2} hold:
\begin{align}
\vs_i &= q_i^{-1} \; (\text{if } \kappa_i \neq 0), 
  \label{rhobil} \\
\wp  (\T_{\wb} (E_{i}))  &= q_i^{-\langle i, \wb \tau i' \rangle}   \vs_{\tau i}^{-1} \vs_i^{-1} \T_{\wb}^{-1} (\wp(E_{i})),
\quad \forall i \in \Iwhite.
\label{rhobil2}
\end{align}
Together with Corollary~\ref{cor:rhoTwb}, we see that the anti-involution $\wp$ on $\Ui$ requires Condition~\eqref{vs2}  
for the parameters $\vs_i$ $(i \in \Iwhite)$ and the additional stronger assumption in Proposition~\ref{prop:rho}.  
\end{rem}

{\it For the remainder of the paper, we shall assume that on top of Conditions \eqref{kappa}-\eqref{vs2} on the parameters  the additional Condition~\ref{rhobil} holds. }


\subsection{The intertwiner}
Let $\widehat{\U}$ be the completion of the $\Qq$-vector space $\U$ 
with respect to the descending sequence of subspaces 
$\U^- \U^0 \big(\sum_{\hgt(\mu) \geq N}\U_{\mu}^+ \big)$,   for $N \ge 1.$
Then we have the obvious embedding of $\U$ into $\widehat{\U}$. 
We let $\widehat{\U}^+$ be the closure of $\U^+$ in $\widehat{\U}$, and so $\widehat{\U}^+ \subseteq \widehat{\U}$. By continuity the $\Q(q)$-algebra structure 
on $\U$ extends to a $\Q(q)$-algebra structure on $ \widehat{\U}$.  The bar involution $\psi$  on $\U$ extends 
by continuity to an anti-linear involution $\psi$ on $\widehat{\U}$.

Note the inclusion map $\Ui \to \U$ is not compatible with the two bar maps on $\Ui$ and $\U$.  Recall the $\imath$-weight lattice $X_\imath$ from \eqref{XY}. The following theorem, which has appeared in the literature recently, is one key ingredient for the current work. 

\begin{thm}  (cf. \cite[Theorem~6.10]{BK15})
   \label{thm:Upsilon}
There exists a unique family of elements $\Upsilon_{\mu} \in \U^+_{\mu}$, 
such that $\Upsilon_0 =1$ and $\Upsilon = \sum_{\mu} \Upsilon_{\mu}$ satisfies the following identity (in $\widehat{\U}$):
\begin{equation}\label{eq:Upsilon}
 \ipsi(u) \Upsilon = \Upsilon \psi(u), \qquad \text{for all } u \in \Ui.
\end{equation}
Moreover, $\Upsilon_{\mu} =0$ unless ${\mu^\inv} = - \mu \in X$. 
\end{thm}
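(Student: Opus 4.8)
The plan is to construct the family $\{\Upsilon_\mu\}$ recursively by induction on $\hgt(\mu)$, using the defining identity \eqref{eq:Upsilon} applied to the generators of $\Ui$. Since $\Ui$ is generated by $\ff_i$ $(i \in \I)$, $E_i$ $(i \in \Iblack)$ and $K_\mu$ $(\mu \in Y^\imath)$, the identity $\ipsi(u)\Upsilon = \Upsilon\psi(u)$ for all $u \in \Ui$ is equivalent to the system of equations obtained by letting $u$ run over these generators. For $u = K_\mu$ with $\mu \in Y^\imath$: since $\ipsi(K_\mu) = K_{-\mu} = \psi(K_\mu)$, and $K_\mu$ commutes with $\U^+_\nu$ up to the scalar $q^{\langle \mu, \nu'\rangle}$, the equation $K_{-\mu}\Upsilon = \Upsilon K_{-\mu}$ forces each $\Upsilon_\nu$ to satisfy $q^{\langle\mu,\nu'\rangle} = 1$ for all $\mu \in Y^\imath$; this is precisely the constraint $\nu^\inv = -\nu$ (equivalently $\nu \in \breve X$ pairs trivially with $Y^\imath$), which gives the last assertion of the theorem. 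For $u = E_i$, $i \in \Iblack$: both bar maps fix $E_i$, so the equation reads $E_i\Upsilon = \Upsilon E_i$, i.e. $\Upsilon$ commutes with $E_i$ for $i \in \Iblack$ — an extra symmetry one records (and which is compatible with Proposition~\ref{PC}). The real content is the equation coming from $u = \ff_i$ with $i \in \Iwhite$.

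For $i \in \Iwhite$, write $\ff_i = F_i + \vs_i \T_{\wb}(E_{\tau i})\tK_i^{-1} + \kappa_i \tK_i^{-1}$. Applying $\ipsi(\ff_i) = \ff_i$ and $\psi(\ff_i) = \ov{\ff_i}$ (the bar involution on $\U$ sends $\ff_i$ to an explicit element since $\psi$ fixes $F_i$, $E_{\tau i}$, and sends $\tK_i^{-1}$ to $\tK_i$, and $\T_\wb$ transforms under bar by Proposition~\ref{sec1:prop:braid}(2)), the identity $\ff_i\Upsilon = \Upsilon\,\ov{\ff_i}$ becomes, after multiplying out degree by degree, a recursion expressing $[\ff_i, \Upsilon_{\mu+i}]$-type terms in terms of lower $\Upsilon_\nu$'s. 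Concretely, using the commutator formulas — $F_i$ acting on $\U^+$ via $r_i, {}_i r$ as in \eqref{eq:rr}, and $E_{\tau i}$ (inside $\T_\wb(E_{\tau i})$) acting on $\U^+_\mu$ producing elements of weight $\mu - \tau i$ — one isolates $r_i(\Upsilon_{\mu+i})$ or $\,_i r(\Upsilon_{\mu+i})$ on the left side and a known expression on the right. Since the maps $(r_i)_{i\in\I}$ are jointly "nondegenerate" on $\U^+$ (an element $x \in \U^+_\mu$ with $\mu \neq 0$ is determined by all $r_i(x)$, by \cite[1.2.15]{Lu94}), this determines $\Upsilon_{\mu+i}$ uniquely from the $\Upsilon_\nu$ with $\hgt(\nu) \le \hgt(\mu)$, provided the candidate values are \emph{consistent}, i.e. the expressions for $r_i(\Upsilon_{\mu+i})$ coming from different $i$ glue to a genuine element of $\U^+_{\mu+i}$.

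The main obstacle is exactly this consistency: one must show the recursively-defined right-hand sides satisfy the compatibility relations $r_j(r_i(\Upsilon_{\mu+i})) = r_i(r_j(\Upsilon_{\mu+j}))$ (and the analogous mixed $r_i/{}_j r$ identities forced by the Serre relations defining $\f \cong \U^+$), so that $\Upsilon_{\mu+i}$ actually exists in $\U^+$. The standard way to handle this — following \cite[Ch.~4]{Lu94} for the quasi-$\mc{R}$-matrix and its adaptation in \cite[\S6]{BK15} — is to encode the whole problem as a fixed-point/closedness statement: define an anti-linear map on $\U^+$ (or on a suitable module) whose failure to be an algebra map is measured by $\Upsilon$, and verify the cocycle condition by checking it on generators. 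I would either cite \cite[Theorem~6.10]{BK15} directly for existence and uniqueness over the larger field and then descend to $\Qq$ using that all parameters and structure constants lie in $\Qq$ (indeed in $\mA$ after Lemma~\ref{lem:parameter vs}), or reprove it along the lines above; in the write-up the cleanest route is to invoke \cite{BK15} for \eqref{eq:Upsilon} and devote the argument to the weight constraint $\mu^\inv = -\mu$, which is the new precise statement and follows cleanly from the $u = K_\mu$ case together with the observation that $\Upsilon_\mu \in \U^+_\mu$ lies in the $\breve X$-graded piece annihilated by the pairing with $Y^\imath$.
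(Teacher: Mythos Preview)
Your proposal is correct and aligns with the paper's treatment: the paper does not give an independent proof but simply cites \cite[Theorem~6.10]{BK15} (with historical context in Remark~\ref{rem:history} crediting \cite{BW13} for the special case and \cite{BK15} for the general one), which is exactly the ``cleanest route'' you recommend in your final paragraph. Your sketch of the recursive construction via $r_i$, ${}_i r$ and your derivation of the weight constraint from the $K_\mu$-commutation are both along the lines of the actual arguments in those references.
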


\begin{rem}
  \label{rem:history}
This theorem  was first formulated and established in the special case of type AIII/AIV (with $\Iblack =\emptyset$) in  \cite[Theorem~2.10]{BW13}, generalizing Lusztig's  quasi-$R$-matrix; note that $\Upsilon$ therein lies in a completion of $\U^-$ (not $\U^+$) due to a different convention on the comultiplication $\Delta$. The theorem in general was expected by the authors as one of the main building blocks in a program of $\imath$-canonical bases arising from general quantum symmetric pairs announced in  \cite[\S0.5]{BW13}, since it leads to a new bar involution $\ipsi$ on based $\U$-modules (see Proposition~\ref{prop:compatibleBbar}); we verified the theorem in the cases when $\Iblack = \emptyset$. In the meantime, this theorem has appeared with a complete proof in full generality in Balagovi\'c and Kolb \cite{BK15} (where $\Upsilon$ was denoted by $\mathfrak X$ and called a quasi-$\mc K$-matrix). 
\end{rem}

\begin{rem}\label{rem:QSPLu}
It is instructive to view the following familiar cases as two extreme cases of quantum symmetric pairs.

\begin{enumerate}
\item 
When $\I =\I_{\bullet}$ (and recall $\tau =- w_\bullet$ on $\Iblack$), we have $\Ui =\U_{\I_{\bullet}} = \U$, the usual Drinfeld-Jimbo quantum group.
In this case, the intertwiner $\Upsilon$ is  the identity.

\item
Consider the  algebra imbedding $\phi =(\omega \otimes 1) \circ \Delta: \U \rightarrow \U \otimes \U$. 
One checks that $\phi(\U)$ is a coideal subalgebra of $\U\otimes\U$, and 
hence we have a quantum symmetric pair of  {\em diagonal type} $(\U \otimes \U, \U)$. Then the intertwiner in this case is Lusztig's  
quasi-$R$-matrix twisted by $\omega \otimes 1$. Hence, Lustig's construction of the bar involution and the canonical basis on the tensor product modules 
fits well with our general construction. 
\end{enumerate}
\end{rem}

Since $ \ipsi(u) = \psi(u)$, for $ u \in \U_{\I_{\bullet}}$, the identity \eqref{eq:Upsilon} implies that  
\begin{equation}\label{eq:Upsilonblack}
 u \Upsilon = \Upsilon u, \qquad \text{ for } u \in \U_{\I_{\bullet}}.
\end{equation}

The following corollary is the same as for \cite[Corollary~2.13]{BW13}, which follows by the uniqueness of $\Upsilon$.

\begin{cor}\label{cor:barUpsilon}
We have $\psi(\Upsilon) = \Upsilon^{-1}$.
\end{cor}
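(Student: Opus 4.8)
The plan is to invoke the uniqueness clause of Theorem~\ref{thm:Upsilon}: I will show that $\psi(\Upsilon)^{-1}$ satisfies the very same intertwining identity \eqref{eq:Upsilon} and carries the same normalization, and therefore must coincide with $\Upsilon$.

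First I would apply the anti-linear $\Q$-algebra automorphism $\psi$ of $\widehat{\U}$ to both sides of \eqref{eq:Upsilon}. Since $\psi$ is an algebra homomorphism on $\widehat{\U}$ with $\psi^2=\id$, this gives $\psi(\ipsi(u))\,\psi(\Upsilon) = \psi(\Upsilon)\,u$ for all $u\in\Ui$. Next, using that $\ipsi\colon\Ui\to\Ui$ is a bijection (it is an involution, by Lemma~\ref{lem:bar}), I substitute $u=\ipsi(v)$ for arbitrary $v\in\Ui$ and obtain $\psi(v)\,\psi(\Upsilon) = \psi(\Upsilon)\,\ipsi(v)$ for all $v\in\Ui$.

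Now observe that $\psi(\Upsilon)$ is invertible in $\widehat{\U}^+$: writing $\psi(\Upsilon)=\sum_\mu \psi(\Upsilon_\mu)$, each $\psi(\Upsilon_\mu)$ lies in $\U^+_\mu$ (because $\psi$ fixes the $E_i$ and preserves weight spaces) and $\psi(\Upsilon_0)=\psi(1)=1$, so $\psi(\Upsilon)^{-1}$ exists in $\widehat{\U}^+$ and again has the form $\sum_\mu X_\mu$ with $X_\mu\in\U^+_\mu$ and $X_0=1$. Conjugating the identity $\psi(v)\psi(\Upsilon)=\psi(\Upsilon)\ipsi(v)$ by $\psi(\Upsilon)^{-1}$ yields $\ipsi(v)\,\psi(\Upsilon)^{-1} = \psi(\Upsilon)^{-1}\,\psi(v)$ for all $v\in\Ui$, which is precisely \eqref{eq:Upsilon} with $\psi(\Upsilon)^{-1}$ in place of $\Upsilon$. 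By the uniqueness assertion of Theorem~\ref{thm:Upsilon}, $\psi(\Upsilon)^{-1}=\Upsilon$, i.e.\ $\psi(\Upsilon)=\Upsilon^{-1}$.

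There is no serious obstacle here; the only point deserving (minor) care is checking that $\psi(\Upsilon)^{-1}$ genuinely lies in $\widehat{\U}^+$ and inherits the normalized weight decomposition required to apply the uniqueness statement — and this is immediate from $\psi$ preserving each $\U^+_\mu$ and fixing $1$. One could alternatively argue that $\psi(\Upsilon)$ itself satisfies an "opposite" intertwining relation and appeal to a uniqueness statement for that; routing through the inverse has the advantage of staying exactly within the hypotheses of Theorem~\ref{thm:Upsilon} as stated.
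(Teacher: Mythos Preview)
Your argument is correct and is exactly the approach the paper indicates: the corollary follows from the uniqueness clause of Theorem~\ref{thm:Upsilon}, by verifying that $\psi(\Upsilon)^{-1}$ satisfies the same intertwining identity \eqref{eq:Upsilon} with the same normalization. The paper itself gives no further detail beyond citing uniqueness (and \cite[Corollary~2.13]{BW13}), so your write-up is a faithful expansion of that one-line justification.
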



\subsection{Properties of the intertwiner}

We now establish several new properties of the intertwiner $\Upsilon$ in connection with braid group action on $\Ui$.
They will be used later on to establish the integrality of $\Upsilon$.

\begin{lem}
  \label{lem:TUpsilon}
For $\mu \in \N [\I], i \in \Iblack$ and $e =\pm 1$, we have 
  \begin{enumerate}
  \item
  $r_{i} (\Upsilon_{\mu}) =  {_i}r (\Upsilon_{\mu}) =0$;
  
\item
$\T''_{i, e} (\Upsilon_{\mu}) \in \U^{+}$ and $\T'_{i, e} (\Upsilon_{\mu}) \in \U^{+}$. 
\end{enumerate}
\end{lem}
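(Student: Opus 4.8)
The plan is to prove both parts by working directly with the defining identity \eqref{eq:Upsilon} and the known behavior of the twisted Serre-type derivations $r_i$, ${}_ir$ and the braid operators on $\U^+$. Part (1) should come first, since part (2) will follow from it by a standard fact relating the operators $\T_{i,e}$ to the condition $r_i(x) = {}_ir(x) = 0$ on homogeneous elements of $\U^+$.

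For part (1), the idea is that $\Upsilon$ commutes with $\U_{\Iblack}$ by \eqref{eq:Upsilonblack}, and $r_i$, ${}_ir$ measure the failure of an element of $\U^+$ to commute appropriately with $F_i$ (equivalently with $E_i$ under the usual adjoint-action formulas in \cite[3.1.6]{Lu94}). Concretely, for $x \in \U^+_\mu$ one has an identity of the form $E_i x - x E_i = (\text{something involving } r_i(x), {}_ir(x)$ and $\tK_{\pm i})$. Applying this with $x = \Upsilon_\mu$ and summing over $\mu$, the relation $E_i \Upsilon = \Upsilon E_i$ for $i \in \Iblack$ (which is \eqref{eq:Upsilonblack} applied to $u = E_i$) forces the coefficients of $\tK_i$ and $\tK_{-i}$ separately to vanish in each weight, giving $r_i(\Upsilon_\mu) = {}_ir(\Upsilon_\mu) = 0$. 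I would carry this out weight by weight: since $\U^+$ is weight-graded and $\tK_i\neq\tK_{-i}$ act by distinct scalars on distinct weight spaces, the two terms cannot cancel, so each must be zero. A symmetric argument using $F_i \Upsilon = \Upsilon F_i$ handles the other derivation if needed, though one of the two commutation relations together with the standard formula should already give both $r_i(\Upsilon_\mu) = 0$ and ${}_ir(\Upsilon_\mu) = 0$.

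For part (2), I would invoke the characterization (see \cite[37.1.2, 37.2]{Lu94}, or the discussion around \cite[Proposition~38.1.6]{Lu94}) that an element $x \in \U^+_\mu$ satisfies $r_i(x) = {}_ir(x) = 0$ if and only if $\T'_{i,e}(x) \in \U^+$ (equivalently $\T''_{i,e}(x) \in \U^+$), i.e. $x$ lies in the subspace on which the braid operators preserve $\U^+$ rather than producing terms with negative $E_i$-powers. Given part (1), this immediately yields $\T''_{i,e}(\Upsilon_\mu) \in \U^+$ and $\T'_{i,e}(\Upsilon_\mu) \in \U^+$ for all $i \in \Iblack$, $e = \pm 1$, $\mu \in \N[\I]$. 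I would spell out the reference precisely and note that the hypotheses $\mu \in \N[\I]$, $i \in \Iblack$ are exactly what is needed so that part (1) applies.

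The main obstacle I anticipate is getting the bookkeeping in part (1) exactly right: pinning down the precise form of the commutator identity $E_i x - x E_i = \dots$ in terms of $r_i$, ${}_ir$ (there are sign and $q$-power subtleties, and one must be careful whether it is $E_i$ or $F_i$ whose commutator with $\U^+$ is controlled by these maps), and then confirming that the two $\tK$-terms genuinely cannot interfere across the weight grading. Once that identity is correctly in hand, the rest is formal. A secondary point to verify is that the sum $\Upsilon = \sum_\mu \Upsilon_\mu$ and the completion $\widehat{\U}$ do not cause trouble — but since $r_i$, ${}_ir$ and the braid operators are weight-graded, everything reduces to the finite-weight statement for each $\Upsilon_\mu$, so convergence issues do not arise.
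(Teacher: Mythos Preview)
Your approach is essentially the paper's. Two small clarifications: for part (1), it is the commutator with $F_i$ (not $E_i$) that is controlled by $r_i$ and ${}_ir$ on $\U^+$ --- the paper uses $F_i\Upsilon_\mu = \Upsilon_\mu F_i$ together with \cite[3.1.6]{Lu94} in the form $F_i x - x F_i = (q_i-q_i^{-1})^{-1}\big(\tK_{-i}\,{}_ir(x) - r_i(x)\tK_i\big)$ for $x\in\U^+$, and the two $\tK$-terms separate by weight as you anticipated. For part (2), note that \cite[Proposition~38.1.6]{Lu94} directly gives only $\T''_{i,1}(\Upsilon_\mu)\in\U^+$ and $\T'_{i,-1}(\Upsilon_\mu)\in\U^+$; the paper obtains the remaining two cases via the identity $\T''_{i,e}(\Upsilon_\mu) = (-q_i)^{e\langle i,\mu\rangle}\,\T'_{i,e}(\Upsilon_\mu)$, valid for homogeneous elements, so your ``immediately yields all four'' needs this extra line.
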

\begin{proof}
Note  by Equation \eqref{eq:Upsilonblack} and by \cite[3.1.6]{Lu94}
that, for $i \in \Iblack$, 
\begin{align*}
F_{i} \Upsilon_\mu - \Upsilon_\mu F_{i}& =0, 
\\
F_{i} \Upsilon_\mu - \Upsilon_\mu F_{i} &=\frac{\tK_{-i} \,  {}_ir(\Upsilon_\mu) - r_i (\Upsilon_\mu) \tK_i  }{q_i -q_i^{-1} }. 
\end{align*}
Hence it follows that $r_{i} (\Upsilon_{\mu}) =  {_i}r (\Upsilon_{\mu}) =0$. 
Therefore we have $\T''_{i, 1} (\Upsilon_{\mu}) \in \U^{+}$ and $\T'_{i,-1} (\Upsilon_{\mu}) \in \U^{+}$ by \cite[Proposition~38.1.6]{Lu94}. 
On the other hand, since $\Upsilon_\mu$ is homogeneous, we have $\T''_{i,e} (\Upsilon_\mu) = (-q_i)^{e \langle i, \mu \rangle } \T'_{i,e} ( \Upsilon_\mu )$. The lemma follows.
\end{proof}

The symmetries $\T'_{i,e}$ and $\T''_{i,e}$ extend by continuity to symmetries of $\widehat{\U}$.

\begin{prop}\label{prop:TUpsilon}
We have $\T''_{i, e} (\Upsilon) = \Upsilon$ and $\T'_{i, e} (\Upsilon) = \Upsilon$ for  all $i \in \Iblack, e =\pm 1$.
\end{prop}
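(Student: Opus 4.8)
The strategy is to show that $\T''_{i,e}(\Upsilon)$ satisfies the same defining property \eqref{eq:Upsilon} as $\Upsilon$ itself, and then invoke the uniqueness part of Theorem~\ref{thm:Upsilon} to conclude $\T''_{i,e}(\Upsilon) = \Upsilon$; the statement for $\T'_{i,e}$ then follows from the homogeneity relation $\T''_{i,e}(\Upsilon_\mu) = (-q_i)^{e\langle i,\mu\rangle}\T'_{i,e}(\Upsilon_\mu)$ used in the proof of Lemma~\ref{lem:TUpsilon}, combined with the fact (Proposition~\ref{prop:wcirc}, or a direct weight argument using $\mu^\theta = -\mu$ and $i \in \Iblack$) that the relevant weights pair trivially, or else one simply repeats the argument for $\T'_{i,e}$ directly. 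First I would record that by Lemma~\ref{lem:TUpsilon}(2) the element $\T''_{i,e}(\Upsilon)$ is a well-defined element of $\widehat{\U}^+$ (the completion is needed since $\Upsilon$ is an infinite sum, and $\T''_{i,e}$ extends by continuity as noted just before the proposition), with constant term $\T''_{i,e}(\Upsilon_0) = \T''_{i,e}(1) = 1$.

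\textbf{The key computation.} Apply the automorphism $\T''_{i,e}$ of $\U$ to both sides of the intertwiner equation $\ipsi(u)\Upsilon = \Upsilon\psi(u)$ for $u \in \Ui$. Since $\T''_{i,e}$ is an algebra automorphism of $\widehat\U$, this gives
\[
\T''_{i,e}(\ipsi(u))\,\T''_{i,e}(\Upsilon) = \T''_{i,e}(\Upsilon)\,\T''_{i,e}(\psi(u)).
\]
Now I would use two compatibility facts. On the $\Ui$ side: by Theorem~\ref{thm:braidX}, $\T''_{i,e}$ restricts to an automorphism of $\Ui$ for $i \in \Iblack$, and by Corollary~\ref{cor:Tpsi} we have $\ipsi(\T''_{i,e}(v)) = \T''_{i,-e}(\ipsi(v))$ for $v \in \Ui$; equivalently $\T''_{i,e}(\ipsi(u)) = \ipsi(\T''_{i,-e}(u))$ applied with $u$ replaced appropriately. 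On the $\U$ side: by Proposition~\ref{sec1:prop:braid}(2), $\overline{\T''_{i,e}(u)} = \T''_{i,-e}(\overline{u})$, i.e. $\T''_{i,e}(\psi(u)) = \psi(\T''_{i,-e}(u))$. Substituting $u \mapsto \T''_{i,e}(u')$ (which still ranges over all of $\Ui$ as $u'$ does, since $\T''_{i,e}$ is an automorphism of $\Ui$), the displayed equation becomes
\[
\ipsi(u')\,\T''_{i,e}(\Upsilon) = \T''_{i,e}(\Upsilon)\,\psi(u'), \qquad \text{for all } u' \in \Ui.
\]
Thus $\T''_{i,e}(\Upsilon)$ is an element of $\widehat\U^+$ with constant term $1$ satisfying the defining identity \eqref{eq:Upsilon}. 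By the uniqueness in Theorem~\ref{thm:Upsilon}, $\T''_{i,e}(\Upsilon) = \Upsilon$.

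\textbf{The $\T'$ case and the main obstacle.} For $\T'_{i,e}$, the cleanest route is to note that on each homogeneous piece $\Upsilon_\mu$ one has $\T''_{i,e}(\Upsilon_\mu) = (-q_i)^{e\langle i,\mu\rangle}\T'_{i,e}(\Upsilon_\mu)$ (from the proof of Lemma~\ref{lem:TUpsilon}), so once $\T''_{i,e}(\Upsilon) = \Upsilon$ is known, we get $\T'_{i,e}(\Upsilon_\mu) = (-q_i)^{-e\langle i,\mu\rangle}\Upsilon_\mu$. To conclude $\T'_{i,e}(\Upsilon) = \Upsilon$ it then suffices to check $\langle i,\mu\rangle = 0$ whenever $\Upsilon_\mu \neq 0$ and $i \in \Iblack$: this follows because $\Upsilon_\mu$ lies in $\U^+(w_0\wb)$ by Proposition~\ref{prop:wcirc} (so its weight $\mu$ is supported on roots made positive... ), or more elementarily from $\mu^\theta = -\mu$ with $\theta = -\wb\circ\tau\circ(\text{sign on roots})$ forcing the $W_\bullet$-part of $\mu$ to behave symmetrically; alternatively, repeat the uniqueness argument verbatim with $\T'_{i,e}$ in place of $\T''_{i,e}$, using the $\T'$-versions of Theorem~\ref{thm:braidX}, Corollary~\ref{cor:Tpsi} (whose proof clearly adapts), and Proposition~\ref{sec1:prop:braid}. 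I expect the main subtlety to be purely bookkeeping: making sure the substitution $u \mapsto \T''_{i,e}(u')$ is applied with the correct sign $\pm e$ in each of the two conjugation identities so that the $e$'s match up on the nose, and confirming that the continuity extensions of $\psi$, $\ipsi$, $\T'_{i,e}$, $\T''_{i,e}$ to the completions are mutually compatible so that all the identities above make sense in $\widehat\U$ rather than just formally term-by-term.
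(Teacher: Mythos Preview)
Your approach is essentially identical to the paper's: apply $\T''_{i,e}$ to the intertwiner identity, use Corollary~\ref{cor:Tpsi} and Proposition~\ref{sec1:prop:braid}(2) to rewrite both sides, observe that the resulting equation is again the defining identity for $\Upsilon$, and invoke uniqueness. The paper then simply says ``the other identity is entirely similar,'' which is your route~2.

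Two small corrections. First, your substitution $u \mapsto \T''_{i,e}(u')$ is the bookkeeping slip you yourself anticipated: after rewriting, the equation reads $\ipsi(\T''_{i,-e}(u))\,\T''_{i,e}(\Upsilon) = \T''_{i,e}(\Upsilon)\,\psi(\T''_{i,-e}(u))$, so to land on $\ipsi(u')\cdots = \cdots\psi(u')$ you need $u' = \T''_{i,-e}(u)$, i.e.\ $u = \T'_{i,e}(u')$ (recall $\T''_{i,-e}$ and $\T''_{i,e}$ are \emph{not} mutual inverses; rather $(\T''_{i,-e})^{-1} = \T'_{i,e}$). The paper simply says ``$\T''_{i,-e}(u)$ can be any element of $\Ui$,'' which is cleaner. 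Second, your route~1 for $\T'_{i,e}$ via Proposition~\ref{prop:wcirc} is circular, since the proof of Proposition~\ref{prop:wcirc} uses Proposition~\ref{prop:TUpsilon}. Your ``more elementary'' alternative does work, though: if $\Upsilon_\mu \neq 0$ then $\wb\tau(\mu)=\mu$ by Theorem~\ref{thm:Upsilon}, and for $i \in \Iblack$ one computes $\langle i,\mu\rangle = \langle i,\wb\tau\mu\rangle = \langle \wb i,\tau\mu\rangle = -\langle \tau i,\tau\mu\rangle = -\langle i,\mu\rangle$, forcing $\langle i,\mu\rangle = 0$.
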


\begin{proof}
For any $u \in \Ui$,  applying $\T''_{i, e}$ to Equation~\eqref{eq:Upsilon} gives us 
\begin{equation}
 \label{eq:TUp}
\T''_{i, e} (\psi_{\imath}(u) ) \T''_{i, e} (\Upsilon) = \T''_{i, e} (\Upsilon) \T''_{i, e} ( \psi(u) ). 
\end{equation}
By Corollary~\ref{cor:Tpsi}  
and by Proposition~\ref{sec1:prop:braid}(2), we rewrite \eqref{eq:TUp} as 
\begin{equation}
\psi_{\imath}(\T''_{i, -e} (u)) \T''_{i, e} (\Upsilon) =\T''_{i, e} (\Upsilon)  \psi(\T''_{i, -e} (u)).
\end{equation}
Since $\T''_{i, -e}$ restricts to an isomorphism  of $\Ui$ by Theorem~\ref{thm:braidX}, $\T''_{i, -e} (u)$ can be any element in $\Ui$. Therefore we have 
\[
\psi_{\imath} (x) \T''_{i, e} (\Upsilon) = \T''_{i, e} (\Upsilon) \psi(x), \quad \text{ for all } x \in \Ui. 
\]
It is clear that $\T''_{i, e} (\Upsilon_0) = \Upsilon_0 =1$. Thanks to Lemma~\ref{lem:TUpsilon} and the uniqueness of the intertwiner, we have $\Upsilon = \T''_{i, e} (\Upsilon)$. The other identity is entirely similar. 
\end{proof}

\begin{rem}
Proposition~\ref{prop:TUpsilon} also follows from \cite[\S3.1]{BK15} if one considers $T_i$ as an element in the algebra of natural transformations of the forgetful functor.
\end{rem}

Recall from \eqref{PBW} the subspace $\U^+(w)$ of $\U^+$, for $w \in W$.
Note $\ell(w_{\bullet} w_0) =\ell(w_0) - \ell(w_\bullet)$. 

\begin{prop}\label{prop:wcirc}
We have $\Upsilon_\mu \in \U^+(w_{\bullet} w_0)$ for any $\mu \in \N [\I]$. 
\end{prop}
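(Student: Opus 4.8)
The plan is to reduce the claim $\Upsilon_\mu \in \U^+(w_\bullet w_0)$ to a statement about how each homogeneous component sits inside $\U^+$ relative to the PBW-type subspaces indexed by Weyl group elements. Recall from \cite[40.2]{Lu94} (and the characterization reproduced around \eqref{PBW}) that $\U^+(w)$ can be described intrinsically: writing $\U^+[\geq w]$ for the appropriate span, one has, for a simple reflection $s_i$ with $\ell(s_i w) < \ell(w)$, that an element $x \in \U^+$ lies in $\U^+(w)$ if and only if it lies in $\U^+(s_i w)$ after applying $\T_i^{-1}$ in the suitable sense, equivalently $x$ is built only from the ``letters'' $E_i, \T_i(E_{i_2}), \dots$ associated to a reduced word for $w$. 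Since $w_\bullet w_0 = w_\bullet \cdot (w_\bullet^{-1} w_\bullet w_0)$ and $\ell(w_\bullet w_0) = \ell(w_0) - \ell(w_\bullet)$, a reduced word for $w_\bullet w_0$ is obtained by deleting an initial reduced word for $w_\bullet$ from a reduced word for $w_0$; so intuitively I want to say that $\Upsilon_\mu$, viewed inside $\U^+ = \U^+(w_0)$, uses none of the ``black letters'' corresponding to the $W_\bullet$-part.

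The key mechanism is Lemma~\ref{lem:TUpsilon} and Proposition~\ref{prop:TUpsilon}: we know $r_i(\Upsilon_\mu) = {}_i r(\Upsilon_\mu) = 0$ for all $i \in \Iblack$, and moreover $\T_i(\Upsilon) = \Upsilon$ for all $i \in \Iblack$. First I would invoke \cite[38.1.6]{Lu94} (or the analogous result) which says that for a fixed $i$, the condition $r_i(x) = 0$ (equivalently ${}_i r(x) = 0$ after the appropriate twist) is precisely what characterizes membership of $x$ in $\T_i(\U^+) \cap \U^+$, i.e. in the subspace $\U^+(s_i \cdot w)$ for any $w$ with $s_i w > w$ — more concretely it characterizes the image $\T_i(\U^+)\cap \U^+$, which equals $\U^+ \cap \prescript{}{i}{\U^+}$ in Lusztig's notation. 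Iterating this over the letters of a reduced word $w_\bullet = s_{i_1} \cdots s_{i_r}$ for $\wb$: since $r_{i_1}(\Upsilon_\mu) = 0$, we get $\Upsilon_\mu \in \T_{i_1}(\U^+)$; then $\T_{i_1}^{-1}(\Upsilon_\mu) \in \U^+$ and, using $\T_{i_1}(\Upsilon) = \Upsilon$ together with the vanishing of $r_{i_2}$ on $\Upsilon$, push the argument one step further; after $r$ steps one lands in the subspace $\U^+(w_\bullet w_0)$, because the iterated condition is exactly the defining property of that subspace. I would set this up carefully with the correct sign/twist conventions between $\T'_{i,e}$ and $\T''_{i,e}$, using Lemma~\ref{lem:TUpsilon}(2) to stay inside $\U^+$ at every stage, and the homogeneity relation $\T''_{i,e}(\Upsilon_\mu) = (-q_i)^{e\langle i,\mu\rangle}\T'_{i,e}(\Upsilon_\mu)$ to switch freely between the two braid operators.

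Concretely, the cleanest route is probably an induction on $\ell(\wb)$ (equivalently on $r$). For the base case $\wb = e$ we have $w_\bullet w_0 = w_0$ and $\U^+(w_0) = \U^+$, so there is nothing to prove. For the inductive step, pick $i_1$ with $s_{i_1}\wb < \wb$; then $s_{i_1}\wb w_0 > \wb w_0$, and a reduced word for $w_\bullet w_0$ begins (after the deletion above) in a controlled way. From $r_{i_1}(\Upsilon_\mu) = 0$ together with \cite[Prop.~38.1.6]{Lu94} I get $\Upsilon_\mu \in \U^+(s_{i_1})\cdot(\text{something})$; more precisely $\Upsilon_\mu \in \T_{i_1}(\U^+)$, so $\Upsilon'_\mu := \T_{i_1}^{-1}(\Upsilon_\mu) \in \U^+$. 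Now $\Upsilon' := \T_{i_1}^{-1}(\Upsilon) = \Upsilon$ by Proposition~\ref{prop:TUpsilon}, and it still satisfies $r_{i_k}(\Upsilon'_\mu) = 0$ for $k \geq 2$ since $r_{i_k}(\Upsilon_\mu)=0$ for \emph{all} black $i_k$ and the braid operator $\T_{i_1}$ fixes $\Upsilon$; hence by the inductive hypothesis applied to the root datum restricted to the parabolic generated by $s_{i_2},\dots$ (or simply by re-running the characterization), $\Upsilon_\mu \in \T_{i_1}\big(\U^+((s_{i_1}\wb) w_0)\big)$, which is exactly $\U^+(\wb w_0)$ by the concatenation property of the spaces $\U^+(w)$ (if $\ell(s_i w') = \ell(w')+1$ then $\T_i(\U^+(w')) \cap \U^+ = \U^+(s_i w')$, applied with $w' = (s_{i_1}\wb)w_0$ and $i = i_1$, using $\ell(s_{i_1}\cdot (s_{i_1}\wb) w_0) = \ell((s_{i_1}\wb)w_0) + 1$).

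\textbf{Main obstacle.} I expect the delicate point to be pinning down the exact form of Lusztig's characterization of $\U^+(w)$ via the operators $r_i$: namely the precise statement that for $\ell(s_i w') = \ell(w') + 1$ one has $\T_i\big(\U^+(w')\big) = \{x \in \U^+ : r_i(x) = 0\} \cap \T_i(\U^+)$ and that this equals $\U^+(s_i w')$, together with getting all the $\T'$ vs.\ $\T''$ and $e = \pm 1$ bookkeeping right so that the homogeneous component $\Upsilon_\mu$ genuinely lands in $\U^+$ (not merely $\widehat{\U}^+$) at each step — here Lemma~\ref{lem:TUpsilon}(2) is what saves us. Everything else is a formal induction once that dictionary between ``$r_i$ annihilates $x$'' and ``$x$ avoids the $i$-th PBW letter'' is in place.
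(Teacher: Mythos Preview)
Your proposal is correct and follows essentially the same route as the paper's proof: both arguments fix a reduced word for $w_0$ starting with a reduced word for $\wb$, use $r_{i_1}(\Upsilon_\mu)=0$ together with \cite[Proposition~38.1.6]{Lu94} to kill the first PBW letter, then invoke the braid invariance $\T_{i_1}^{-1}(\Upsilon)=\Upsilon$ from Proposition~\ref{prop:TUpsilon} to shift the PBW frame and iterate. The paper presents this as an explicit $\ell(\wb)$-step iteration on the PBW expansion rather than as a formal induction, but your ``or simply by re-running the characterization'' is exactly that; the abstract identity $\T_{i}(\U^+(w'))\cap \U^+ = \U^+(s_i w')$ you isolate is just a repackaging of the same step.
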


\begin{proof}
If $\Iblack = \emptyset$, the statement follows by Theorem~\ref{thm:Upsilon}. 
So let us assume $\Iblack \neq \emptyset$. Now choose a reduced expression $s_{i_1} \cdots s_{i_l}$ of   $w_0$ 
such that $s_{i_1} \cdots s_{i_k} = \wb$ (in particular $i_1 \in \Iblack$) and $s_{k+1} \cdots s_{i_l} = w_{\bullet} w_0$.
We have a PBW basis \eqref{PBW} for  $\U^+$ associated to this reduced expression of $w_0$.  
We have $r_{i_1} (\Upsilon_{\mu}) =0$ by Lemma~\ref{lem:TUpsilon}(1), and thus by \cite[Proposition~38.1.6]{Lu94} we  can write 
\[
\Upsilon_{\mu} = \sum c(c_{i_2}, \dots, c_{i_l}) \, \T_{i_1}(E^{(c_{i_2})}_{i_2})\cdots \big( \T_{i_1} \ldots \T_{i_{l-1}} (E^{(c_{i_l})}_{i_l}) \big),
\]
for scalars $c_{i_a} \in \Qq$. If $\ell (w_\bullet) =1$, we are done. 

If $\ell (w_\bullet) >1$, then $i_2 \in \Iblack$.
By Proposition~\ref{prop:TUpsilon}, we have 
\[
\Upsilon_{\mu}  = \T^{-1}_{i_1}(\Upsilon_{\mu})
= \sum c(c_{i_2}, \dots, c_{i_l}) \,E^{(c_{i_2})}_{i_2} \, \T_{i_2}(E^{(c_{i_3})}_{i_3}) \cdots \big( \T_{i_2} \ldots \T_{i_{l-1}} (E^{(c_{i_l})}_{i_l}) \big),
\]
which, by Lemma~\ref{lem:TUpsilon}(1) and \cite[Proposition~38.1.6]{Lu94}, is of the form
\[
\Upsilon_{\mu}   
= \sum c(0, c_{i_3}, \dots, c_{i_l})   \T_{i_2}(E^{(c_{i_3})}_{i_3}) \cdots  \big( \T_{i_2} \ldots \T_{i_{l-1}} (E^{(c_{i_l})}_{i_l}) \big).
\]
Repeating  the process $\ell(w_{\bullet}) =k$ times, we obtain
\[
\Upsilon_{\mu}   
= \sum c(0, \cdots, 0, c_{i_{k+1}}, \dots, c_{i_l})   \T_{i_k}(E^{(c_{i_{k+1}})}_{i_{k+1}}) \cdots  \big( \T_{i_k} \ldots \T_{i_{l-1}} (E^{(c_{i_l})}_{i_l}) \big). 
\] 
This shows that $\Upsilon_{\mu} \in \U^+(w_{\bullet} w_0)$.
\end{proof}

The strong constraint on $\Upsilon$ proved in 
Proposition~\ref{prop:wcirc} shall allow us to compute the intertwiner $\Upsilon$ (almost) explicitly and to establish the integrality of $\Upsilon$ in all real rank one cases
(as listed in Table~\ref{table:localSatake}). See the Appendix for the detailed computation.

\subsection{The isomorphism $\mc{T}$}
 \label{subsec:T}

Consider the automorphism obtained by the composition 
$$
\vartheta =  \sigma \circ \wp \circ \tau : \U \longrightarrow \U, 
$$
which sends
\begin{equation}\label{eq:Ttauoemga}
\vartheta (E_i) = q_{\tau i}F_{\tau i} \tK_{- \tau i} , \quad  \vartheta (F_i) = q_{\tau i}  E_{\tau i} \tK_{\tau i},  \quad \vartheta (K_{\mu}) = K_{-\tau \mu}.
\end{equation}

For any finite-dimensional $\U$-module $M$, we define a new $\U$-module ${}^\vartheta M $ as follows: 
${}^\vartheta M$ has the same underlying $\Qq$-vector space as $M$ but we shall denote a vector in ${}^\vartheta M$ by ${}^\vartheta m$ for $m\in M$, and
the action of $u\in \U$ on ${}^\vartheta M$ is now given by $u \; {}^\vartheta m = {}^\vartheta (\vartheta^{-1} (u)m)$. 

Hence we have 
\begin{equation}
  \label{twisted}
\vartheta  (u) \; {}^\vartheta m = {}^\vartheta ( u m), \qquad \text{ for } u \in \U, m \in M.
\end{equation}
As ${}^\vartheta M$ is simple if the $\U$-module $M$ is simple, one checks by definition that
\[
{}^\vartheta L(\lambda) \cong {}^{\omega}L(\lambda^{\tau}).
\]

Let 
\begin{equation}
 \label{eq:g}
g : X \longrightarrow  \Qq
\end{equation}
be a function such that for all $\mu \in X$, we have 
\begin{align}
  \label{eq:def:zeta1}{g} (\mu) & = {g} (\mu-i' ) \vs_i  (-1)^{\langle 2\rho^\vee_{\bullet}, i' \rangle} q_i^{\langle i, 2 \rho_{\bullet} \rangle} q_i q^{\langle -i, \mu \rangle }_{i} q_{ \tau i}^{\langle \tau i, \wb\mu \rangle}, &\quad \text{for } i \in \Iwhite, \\
 \label{eq:def:zeta2} {g}(\mu) &=   -q^{-1- 2\langle i , \mu - i' \rangle}_{ i}  {g} (\mu -i') = -q^{3- 2\langle i , \mu \rangle}_{ i}  {g} (\mu -i'), &\quad \text{for } i \in \Iblack.
\end{align}
Such a function $g$ exists. (Actually we can construct such a $g$ taking values in $\mA$.)

\begin{lem}\label{lem:zeta}
For any $\mu \in X$, we have
\begin{align*}
 {g}(\mu) &={g}(\mu - \wb i')  \vs^{-1}_{\tau i} (-1)^{ \langle 2\rho^\vee_{\bullet},  {\tau i'} \rangle } q_{\tau i}^{-\langle {\tau i}, 2\rho_{\bullet} \rangle} q_{\tau i}^{\langle {\tau i}, \mu- \wb  i' \rangle} q^{}_{i} q_{i}^{ -\langle i, \wb\mu  \rangle}, &\quad \text{for } i \in \Iwhite, \\
 {g}(\mu) &=-q^{1+ 2\langle i , \mu \rangle}_{ i} {g} (\mu + i'), &\quad \text{for } i \in \Iblack.
\end{align*}
\end{lem}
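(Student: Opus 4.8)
The plan is to derive Lemma~\ref{lem:zeta} from the defining recursions \eqref{eq:def:zeta1}--\eqref{eq:def:zeta2} for $g$ by purely formal manipulations; no new input beyond those two recursions and the relations on the parameters $\vs_i$ from \eqref{vs=}--\eqref{vs2} should be needed. First I would handle the black-node identity, which is the easier of the two. The recursion \eqref{eq:def:zeta2} reads $g(\mu) = -q_i^{3-2\langle i,\mu\rangle} g(\mu-i')$ for $i\in\Iblack$; substituting $\mu$ by $\mu+i'$ gives $g(\mu+i') = -q_i^{3-2\langle i,\mu+i'\rangle} g(\mu)$, and since $\langle i, i'\rangle = a_{ii} = 2$ we get $g(\mu+i') = -q_i^{-1-2\langle i,\mu\rangle} g(\mu)$, i.e. $g(\mu) = -q_i^{1+2\langle i,\mu\rangle} g(\mu+i')$, which is exactly the claimed formula. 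This is just a change of variable in the recursion.

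For the white-node identity the idea is to invert \eqref{eq:def:zeta1} and then rewrite the resulting expression using $\tau$. Starting from $g(\mu) = g(\mu-i')\,\vs_i (-1)^{\langle 2\rho^\vee_\bullet, i'\rangle} q_i^{\langle i, 2\rho_\bullet\rangle} q_i q_i^{\langle -i,\mu\rangle} q_{\tau i}^{\langle \tau i, \wb\mu\rangle}$, I would solve for $g(\mu-i')$ in terms of $g(\mu)$, then replace $i$ by $\tau i$ throughout and shift the weight argument appropriately so that the shifted variable becomes $\mu - \wb i'$ (using $-\wb\tau = \theta$ and the fact that $\wb$ permutes $R_\bullet$, so $\wb i'$ for $i\in\Iwhite$ is a genuine weight, cf. the computations in the proof of Lemma~\ref{lem:parameter vs}). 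The Cartan-type quantities appearing, $\langle i, 2\rho_\bullet\rangle$, $\langle 2\rho^\vee_\bullet, i'\rangle$, and the pairings $\langle \tau i, \wb\mu\rangle$ versus $\langle i, \wb\mu\rangle$, are related by $\tau$-invariance of the bilinear form and by Definition~\ref{def:AP}(2), which says $\tau$ and $-\wb$ agree on $\Iblack$; this is what lets me trade $\rho_\bullet$ for $\tau(\rho_\bullet) = \rho_\bullet$ and reconcile signs and powers of $q$. Finally the factor $\vs_i$ gets replaced by $\vs_{\tau i}^{-1}$ at the cost of the scalar $(-1)^{\langle 2\rho^\vee_\bullet, i'\rangle} q_i^{-\langle i, 2\rho_\bullet + \wb\tau i'\rangle}$ via Condition~\eqref{vs2}, i.e. $\vs_i = \vs_{\tau i}^{-1}(-1)^{\langle 2\rho^\vee_\bullet, i'\rangle} q_i^{-\langle i, 2\rho_\bullet+\wb\tau i'\rangle}$, and collecting the remaining powers of $q_i = q_{\tau i}$ yields the stated right-hand side.

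The main obstacle I anticipate is purely bookkeeping: matching the exponents of $q_i$ (equivalently $q_{\tau i}$, since $i\cdot i = \tau i\cdot\tau i$) on both sides after the substitution $i\mapsto\tau i$ and the weight shift $\mu\mapsto\mu-\wb i'$. One must carefully track that $\langle i,\wb\mu\rangle = \langle \wb i, \mu\rangle$ (by $W$-invariance, since $\wb^2=1$ and $\wb$ is an isometry), that $\langle i, \wb\tau i'\rangle$ is the exponent supplied by \eqref{vs2}, and that the "extra" pairing terms like $q_{\tau i}^{\langle\tau i,\mu-\wb i'\rangle}$ and $q_i^{-\langle i,\wb\mu\rangle}$ in the target formula are precisely what remains after cancelling. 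Because \eqref{vs2} is symmetric under $i\leftrightarrow\tau i$ only up to the explicit scalar, one must be careful about which of $\vs_i,\vs_{\tau i}$ is being eliminated. I would organize the computation as a single chain of equalities in a displayed \texttt{align*}, introducing the substitution and then simplifying in two or three steps, and would double-check the special case $\tau i = i$ separately since then \eqref{vs=} forces $\vs_i = \vs_{\tau i}$ and several terms coincide. No genuinely hard step is expected; it is a verification that the two recursion systems for $g$ are mutually consistent.
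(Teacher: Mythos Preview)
Your treatment of the black-node identity is correct and matches the paper. For the white-node identity, however, there is a genuine gap in your plan: inverting \eqref{eq:def:zeta1} and substituting $i\mapsto\tau i$ only produces a relation between $g(\mu)$ and $g(\mu-(\tau i)')$, where $(\tau i)'$ is a \emph{simple} root. No weight shift in that single relation can land you at $g(\mu-\wb i')$, because when $\Iblack\neq\emptyset$ the element $\wb i'$ differs from $i'$ (and from $(\tau i)'$) by a nontrivial element of $\N[\Iblack]$. Your outline for the white case never invokes the black recursion \eqref{eq:def:zeta2}, and without it there is simply no path from $g(\mu-i')$ to $g(\mu-\wb i')$.

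The paper supplies exactly this missing step: it iterates \eqref{eq:def:zeta2} along a reduced expression of $\wb$ to pass from $g(\mu-i')$ to $g(\mu-\wb i')$, and evaluates the accumulated scalar by the same product-over-positive-roots computation as in Corollary~\ref{cor:rhoTwb}, obtaining
\[
g(\mu-i') \;=\; g(\mu-\wb i')\,(-1)^{\langle 2\rho^\vee_\bullet, i'\rangle}\, q_i^{-\langle i, 2\rho_\bullet\rangle}\, q_i^{\,2\langle i-\wb i,\mu\rangle}.
\]
Only after this does one combine with \eqref{eq:def:zeta1} and then eliminate $\vs_i$ in favour of $\vs_{\tau i}^{-1}$ via \eqref{vs2} (using also the observation $i-\wb i=\tau i-\wb\tau i$, which follows from Definition~\ref{def:AP}(2)). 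So your final use of \eqref{vs2} is correct in spirit, but it must be preceded by the iterated black-node step; the factors $(-1)^{\langle 2\rho^\vee_\bullet,\tau i'\rangle} q_{\tau i}^{-\langle\tau i,2\rho_\bullet\rangle}$ in the target formula come from that iteration, not from a single application of \eqref{eq:def:zeta1}.
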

\begin{proof}
The second identity follows from \eqref{eq:def:zeta2} directly. 
We shall prove now  the first one. Let $i \in \Iwhite$. For any $j \in \Iblack$, by definition \eqref{eq:def:zeta2}, we have
\begin{align*}
{g}(\mu - \nu) &=  {g}(\mu - \nu + \langle j, \nu \rangle j') (-1)^{ \langle j, \nu \rangle} q_j^{2 \langle j, \nu \rangle^2- \langle j, \nu \rangle } q_j^{2 \langle j, \nu \rangle  \langle j , \mu-\nu\rangle} \\
& =  {g}(\mu - s_j(\nu)) (-1)^{ \langle j, \nu \rangle} q_j^{- \langle j, \nu \rangle } q_j^{2   \langle j, \nu \rangle\langle j , \mu\rangle}.
 \end{align*}
 Then a similar computation as Corollary~\ref{cor:rhoTwb} shows that
 \begin{equation}\label{eq:zeta1}
 {g}(\mu - i') =  {g}(\mu - \wb i') (-1)^{ \langle 2\rho^\vee_{\bullet},  { i'} \rangle } q_{ i}^{-\langle { i}, 2\rho_{\bullet} \rangle} q_{i}^{ 2 \langle i -\wb i, \mu  \rangle}.
 \end{equation}
Note that since $\tau i -\tau \wb i \in \Z[\Iblack] \subset Y$, we have $\tau (\tau i - \tau \wb i ) = -\wb (\tau i - \tau \wb i )$. Hence we have $i - \wb i = \tau i - \wb \tau i \in Y$. Then using 
\eqref{eq:def:zeta1}, we see Equation \eqref{eq:zeta1} can be written as 
\[
{g}(\mu) = {g}(\mu - \wb i')  \vs_{i} (-1)^{ \langle 2\rho^\vee_{\bullet},  i' \rangle } q_i^{\langle i, 2\rho_{\bullet} \rangle} q_{i}  q_i^{\langle i, \wb\tau i' \rangle}  (-1)^{ \langle 2\rho^\vee_{\bullet},  {\tau i'} \rangle } q_{\tau i}^{-\langle {\tau i}, 2\rho_{\bullet} \rangle} q_{\tau i}^{\langle {\tau i}, \mu- \wb  i' \rangle} q_{i}^{ -\langle i, \wb\mu  \rangle}.
\]
Now the desired equation follows from the constraint \eqref{vs2} on the parameters $\vs_i \vs_{\tau i}$. 
\end{proof}

The function $g$ induces a $\Qq$-linear map from any finite-dimensional $\U$-module $M$ to itself: 
\[
\widetilde{g}: M\longrightarrow M, 
\qquad 
\widetilde{g}(m) = g (\mu) m, \quad \text{ for } m \in M_{\mu}.
\]

The following lemma is similar to  Corollary~\ref{cor:rhoTwb}, and it can also be read off from the proof of \cite[Lemma~2.9]{BK14}.
\begin{lem}
  \label{lem:barTwb}
For $i \in \I_{\circ}$, we have 
\begin{align*}
\overline {\T_{\wb} (E_{i})}  &= (-1)^{\langle 2\rho^\vee_{\bullet}, i' \rangle} q_i^{-\langle i, 2 \rho_{\bullet} \rangle} \T_{\wb}^{-1} ({E_{i}}).
\end{align*}
\end{lem}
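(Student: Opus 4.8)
\textbf{Proof strategy for Lemma~\ref{lem:barTwb}.}

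The plan is to compare the two sides by tracking how the bar involution $\psi$ on $\U$ interacts with the braid operator $\T_{\wb} = \T''_{\wb,+1}$ on the positive part $\U^+$. The key input is Proposition~\ref{sec1:prop:braid}(2), which gives $\overline{\T''_{j,e}(u)} = \T''_{j,-e}(\overline{u})$ for each simple reflection $s_j$; iterating along a reduced expression $\wb = s_{i_1}\cdots s_{i_l}$ will produce $\overline{\T_{\wb}(E_i)}$ in terms of $\T''_{\wb,-1}(\overline{E_i}) = \T''_{\wb,-1}(E_i)$, up to a product of sign/power-of-$q$ scalars coming from the homogeneity of the intermediate elements $\T_{i_{k+1}}\cdots(E_i)$. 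Specifically, since $\T''_{j,e}(x) = (-q_j)^{e\langle j, |x|'\rangle}\,\T''_{j,-e}(x)$ when $x$ is homogeneous (the same identity already invoked at the end of the proof of Lemma~\ref{lem:TUpsilon}), one can convert $\T''_{\wb,-1}$ back to $\T''_{\wb,+1} = \T_{\wb}$; combined with $\T''_{\wb,+1}\T''_{\wb,-1} = \T_{\wb}^2$ being an operator whose inverse relates to $\T_{\wb}^{-1}$, this should land us on $\T_{\wb}^{-1}(E_i)$.

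The steps I would carry out, in order: (1)~Fix a reduced expression $\wb = s_{i_1}\cdots s_{i_l}$ and write $w_k = s_{i_k}\cdots s_{i_l}$, so $\T_{\wb}(E_i) = \T_{i_1}\cdots\T_{i_{l-1}}(\T_{i_l}(E_i))$ and each partial application $\T_{i_{k}}\cdots\T_{i_{l-1}}(\T_{i_l}(E_i))$ is a homogeneous element of $\U^+$ of weight $w_{k+1}(i')$ (using that $w_{k+1}(\alpha_i)$ stays a positive root, exactly as in the proof of Corollary~\ref{cor:rhoTwb}). (2)~Apply $\psi$ and push it inward using Proposition~\ref{sec1:prop:braid}(2) one reflection at a time, each time picking up a switch $\T''_{i_k,+1}\rightsquigarrow\T''_{i_k,-1}$; then convert each $\T''_{i_k,-1}$ back to $\T''_{i_k,+1}$ at the cost of a scalar $(-q_{i_k})^{\langle i_k,\, w_{k+1}(i')\rangle} = (-q_{i_k})^{\langle w_{k+1}^{-1}(i_k),\, i'\rangle}$. (3)~Collect the scalars: the product is $(-1)^{\sum_k \langle w_{k+1}^{-1}(i_k), i'\rangle}\,\prod_k q_{i_k}^{\langle w_{k+1}^{-1}(i_k),\, i'\rangle}$; the exponent of $(-1)$ is $\langle 2\rho^\vee_\bullet, i'\rangle$ since $\{w_{k+1}^{-1}(\alpha^\vee_{i_k})\}$ runs over the positive coroots of $\Iblack$, and the $q$-exponent collects to $-\langle i, 2\rho_\bullet\rangle$ via the identity~\eqref{eq:sumrho} together with the $W$-invariance of the bilinear form, precisely the computation already done inside the proof of Corollary~\ref{cor:rhoTwb}. (4)~Finally identify $\T''_{\wb,+1}$ applied after this manipulation with $\T_{\wb}^{-1}$; this is where I would use that the procedure above, run in full, effectively replaces $\T_{\wb}$ by the operator obtained from the reverse reduced word, which on the homogeneous element $E_i$ (together with the fact $\overline{E_i}=E_i$) yields $\T_{\wb}^{-1}(E_i)$ — alternatively, deduce the clean statement from Corollary~\ref{cor:rhoTwb} applied to $\wp$ together with the compatibility $\psi = \omega\circ\wp\circ(\text{twist})$, or simply read it off from \cite[Lemma~2.9]{BK14} as the statement permits.

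The main obstacle I anticipate is bookkeeping the scalar in step~(3) without sign or exponent errors: one must be careful that the weights $w_{k+1}(i')$ are the correct ones (the braid word is applied with $\T_{i_l}$ \emph{innermost}), and that the rewriting $\langle i_k, w_{k+1}(i')\rangle = \langle w_{k+1}^{-1}(i_k), i'\rangle$ uses $W$-invariance of $\langle\cdot,\cdot\rangle$ correctly given the asymmetry between $Y$ and $X$. Since this is exactly parallel to the already-completed computation in Corollary~\ref{cor:rhoTwb} (with $\wp$ replaced by $\psi$ and the role of $\T'$ versus $\T''$ adjusted), I expect no genuinely new difficulty, and the cleanest write-up is probably to invoke that corollary and Proposition~\ref{prop:invol}(4) directly rather than repeat the induction.
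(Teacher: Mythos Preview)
Your overall strategy is exactly the paper's: apply Proposition~\ref{sec1:prop:braid}(2) to get $\overline{\T_{\wb}(E_i)}=\T''_{\wb,-1}(E_i)$, then convert to $\T_{\wb}^{-1}(E_i)$ by a scalar and collect exponents via the computation of Corollary~\ref{cor:rhoTwb}. However, there is a concrete error in the identity you invoke. The relation used at the end of the proof of Lemma~\ref{lem:TUpsilon} is
\[
\T''_{j,e}(x) \;=\; (-q_j)^{\,e\langle j,\,|x|\rangle}\,\T'_{j,e}(x)
\]
for homogeneous $x$ --- it relates $\T''$ to $\T'$ with the \emph{same} sign $e$, not $\T''_{j,e}$ to $\T''_{j,-e}$. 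Your version would yield $\T''_{\wb,+1}(E_i)=\T_{\wb}(E_i)$ times a scalar, which is why your step~(4) becomes murky and you reach for ``reverse reduced word'' or alternative routes.

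With the correct identity, step~(2) converts each $\T''_{i_k,-1}$ to $\T'_{i_k,-1}$, producing $\T'_{\wb,-1}(E_i)$ times the scalar $\prod_k(-q_{i_k})^{-\langle i_k,\,w_{k+1}(i')\rangle}$, and step~(4) is then immediate: since $(\T'_{i,-1})=(\T''_{i,+1})^{-1}=\T_i^{-1}$ and $\wb^{-1}=\wb$, one has $\T'_{\wb,-1}=(\T''_{\wb,+1})^{-1}=\T_{\wb}^{-1}$. The scalar collection in your step~(3) then goes through verbatim via \eqref{eq:sumrho} (with the corrected sign in the exponent). This is precisely what the paper does in its one displayed line before deferring to Corollary~\ref{cor:rhoTwb}.
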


\begin{proof}
Thanks to \cite[\S37.2.4]{Lu94}, we have 
\[
\overline{\T_j (E_i)} = \overline{\T''_{j, +1} (E_i)} = \T''_{j, -1} (\overline{E_i}) = (-q_i)^{-\langle j, i' \rangle}  \T'_{j, -1} ({E_i}) = (-q_i)^{-\langle j, i'\rangle}  \T^{-1}_j ({E_i}).
\]
The rest of the proof is essentially the same as of the proof of Corollary~\ref{cor:rhoTwb}, and will be skipped.
\end{proof}

Recall we denote by $\eta = \eta_{\lambda}$ the highest weight vector in $L(\lambda)$.
 Let $\eta^{\bullet}= \eta^{\bullet}_{\lambda}$ be the unique canonical basis element in $L(\lambda)$ of weight $\wb \lambda$. 

\begin{thm}  (cf. \cite[Theorem~7.5]{BK15}) 
   \label{thm:mcT}
For any finite-dimensional $\U$-module $M$, we have the following isomorphism of $\Ui$-modules 
\[
\mc{T} := \Upsilon \circ \widetilde{g} \circ  \T^{-1}_{\wb} : M \longrightarrow {}^\vartheta M.
\]

In particular, we have the isomorphism of $\Ui$-modules
\[
\mc{T} : L(\lambda) \longrightarrow {}^\omega L(\lambda^{\tau}),
\qquad \etab_{\lambda} \mapsto \xi_{-\lambda^{\tau}}.
\]
Moreover, we can choose a function ${g}$ such that $\mc{T}$ is an isomorphism of the $\mA$-form ${}_\mA L(\lambda) \longrightarrow {}^\omega_\mA L(\lambda^{\tau})$ [once we establish Theorem~\ref{thm:intUpsilon}].
\end{thm}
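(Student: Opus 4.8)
\textbf{Proof proposal for Theorem~\ref{thm:mcT}.}
The plan is to verify that the composite $\mc{T} = \Upsilon \circ \widetilde{g} \circ \T^{-1}_{\wb}$ intertwines the $\Ui$-action on $M$ with the $\Ui$-action on ${}^\vartheta M$, by checking it on the generators $\ff_i$ $(i \in \I)$, $K_\mu$ $(\mu \in Y^\imath)$, and $E_i$ $(i \in \Iblack)$ of $\Ui$ listed in Definition~\ref{def:Ui}. Concretely, using \eqref{twisted} I must show
$\vartheta(u) \circ \mc{T} = \mc{T} \circ u$ as operators on $M$, for $u$ running over these generators. The automorphism $\vartheta = \sigma\circ\wp\circ\tau$ has the explicit form \eqref{eq:Ttauoemga}, so the intertwining identities become concrete operator equations. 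The composite $\mc{T}$ is built from three pieces whose conjugation behaviour is already recorded: $\T^{-1}_{\wb}$ conjugates Chevalley generators via the braid group formulas \eqref{eq:braidgroup} and \eqref{eq:Tw0}; $\widetilde{g}$ is the scalar $g(\mu)$ on the $\mu$-weight space, with the two recursions \eqref{eq:def:zeta1}--\eqref{eq:def:zeta2} (and their repackaged forms in Lemma~\ref{lem:zeta}) describing how it interacts with $E_i, F_i$; and $\Upsilon$ satisfies the defining intertwining relation \eqref{eq:Upsilon}, i.e.\ $\ipsi(u)\Upsilon = \Upsilon\psi(u)$ for $u\in\Ui$, together with $u\Upsilon = \Upsilon u$ for $u\in\U_{\Iblack}$ from \eqref{eq:Upsilonblack}.

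First I would treat the easy generators. For $K_\mu$ with $\mu\in Y^\imath$ (so $\inv(\mu)=\mu$), all three factors act diagonally or semi-diagonally, and the identity $\vartheta(K_\mu) = K_{-\tau\mu}$ combined with $\wb\tau=\inv$ on weights reduces to a weight-bookkeeping check. For $E_i$ and $F_i$ with $i\in\Iblack$, one uses $\T^{-1}_{\wb}$-formulas \eqref{eq:Tw0}, the recursion \eqref{eq:def:zeta2} for $g$ on black simple roots, and the centrality $E_i\Upsilon = \Upsilon E_i$, $F_i\Upsilon=\Upsilon F_i$ from \eqref{eq:Upsilonblack}; the scalar factors produced by $\widetilde{g}$ must match the powers of $q_i$ appearing in $\vartheta(E_i) = q_{\tau i}F_{\tau i}\tK_{-\tau i}$, which is precisely what the exponents in \eqref{eq:def:zeta2} and Lemma~\ref{lem:zeta} are designed to produce. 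The substantive case is $\ff_i$ for $i\in\Iwhite$: here I expand $\ff_i = F_i + \vs_i\T_{\wb}(E_{\tau i})\tK^{-1}_i + \kappa_i\tK^{-1}_i$, push it through $\T^{-1}_{\wb}$ (using Lemma~\ref{lem:barTwb} and the fact that $\T_{\wb}^{-1}\T_{\wb}=\id$), then through $\widetilde{g}$ (using the recursion \eqref{eq:def:zeta1}, whose exponents were reverse-engineered from Corollary~\ref{cor:rhoTwb} and the parameter constraint \eqref{vs2}), and finally invoke \eqref{eq:Upsilon} to move $\Upsilon$ past the resulting element of $\Ui$. The target $\vartheta(\ff_i)$ should, after the dust settles, equal $\wp$-applied-and-$\tau$-twisted version of $\ff_i$, which by Proposition~\ref{prop:rho} (equation \eqref{eq:rhoB}) is again an element of $\Ui$ of the expected shape. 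Once the intertwining is established, surjectivity (hence bijectivity, both sides being finite-dimensional of equal dimension) follows since $\mc{T}$ is a composite of isomorphisms; and the computation of $\mc{T}$ on the specified vector, $\mc{T}(\etab_\lambda) = \xi_{-\lambda^\tau}$, is a direct check: $\T^{-1}_{\wb}(\etab_\lambda)$ is proportional to $\eta_\lambda$ (as $\etab_\lambda$ has weight $\wb\lambda$ and is the relevant canonical basis vector), $\widetilde{g}$ scales it, and $\Upsilon$ acting on a highest-weight vector gives back $\Upsilon_0 = 1$ times it; choosing $g(\lambda)$ appropriately normalizes the image to $\xi_{-\lambda^\tau}$ in ${}^\omega L(\lambda^\tau) \cong {}^\vartheta L(\lambda)$.

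For the integral statement, I would argue that $\widetilde{g}$ preserves ${}_\mA L(\lambda)$ once $g$ is chosen to take values in $\mA$ (possible by the remark after \eqref{eq:def:zeta2}, since the exponents and the parameters $\vs_i\in\pm q^{\Z}$, $\kappa_i\in\mA$ are all integral), that $\T^{-1}_{\wb}$ preserves ${}_\mA L(\lambda)$ (Lusztig, as $\T''_{w}$ and their inverses are defined over $\mA$ on finite-dimensional modules via divided powers), and that $\Upsilon$ preserves ${}_\mA L(\lambda)$ by the integrality Theorem~\ref{thm:intUpsilon}; composing, $\mc{T}$ maps ${}_\mA L(\lambda)$ into ${}^\omega_\mA L(\lambda^\tau)$, and applying the same reasoning to $\mc{T}^{-1} = \T_{\wb}\circ\widetilde{g}^{-1}\circ\Upsilon^{-1} = \T_{\wb}\circ\widetilde{g}^{-1}\circ\psi(\Upsilon)$ (using Corollary~\ref{cor:barUpsilon}) gives the reverse inclusion, hence an $\mA$-isomorphism. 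The main obstacle I anticipate is the $\ff_i$ computation for $i\in\Iwhite$: matching the accumulated scalar from $\widetilde{g}$ against the product of $q$-powers and signs coming from $\vartheta$, $\wp\circ\T_{\wb}$ (Corollary~\ref{cor:rhoTwb}), and Lemma~\ref{lem:barTwb}, all constrained through \eqref{vs2} and the additional Condition~\eqref{rhobil}, is a delicate bookkeeping exercise where a single misplaced exponent breaks the intertwining; this is exactly why the recursions \eqref{eq:def:zeta1}--\eqref{eq:def:zeta2} are stated with their particular exponents, and the proof amounts to checking these were chosen correctly.
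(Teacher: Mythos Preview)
Your overall plan matches the paper's: verify the intertwining on generators, dispose of $K_\mu$ and the black $E_j,F_j$ via the $g$-recursions \eqref{eq:def:zeta2} and centrality \eqref{eq:Upsilonblack}, and carry out the main case $\ff_i$ $(i\in\Iwhite)$ by comparing scalars produced by $\widetilde g$ against the ones coming from $\vartheta$, Corollary~\ref{cor:rhoTwb}, and Lemma~\ref{lem:barTwb}. The integral argument is also as in the paper.

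There is, however, a genuine directional error. Unpacking \eqref{twisted}, a $\Ui$-homomorphism $\mc T:M\to{}^\vartheta M$ means, on underlying vector spaces, $\mc T(\vartheta(u)m)=u\cdot\mc T(m)$ (the paper's $(\clubsuit)$), equivalently $\vartheta^{-1}(u)\circ\mc T=\mc T\circ u$. You wrote $\vartheta(u)\circ\mc T=\mc T\circ u$, and since $\vartheta$ is \emph{not} an involution (e.g.\ $\vartheta^2(E_i)=q_i^2E_i\tK_i^2$ from \eqref{eq:Ttauoemga}), this is a different identity. Your computational plan inherits this error: you propose to push $\ff_i$ through $\T^{-1}_{\wb}$, $\widetilde g$, and finally $\Upsilon$, hoping to emerge with $\vartheta(\ff_i)$ on the outside. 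But to move something past $\Upsilon$ via \eqref{eq:Upsilon} you need it to equal $\psi(v)$ for some $v\in\Ui$; tracking the computation through, the element that appears is $\vartheta^{-1}(\ff_i)$, not $\vartheta(\ff_i)$, so the scalars will not close up as you expect.

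The paper organizes the computation the other way round, which is cleaner: on the right-hand side it computes $\ff_i\cdot\mc T(m)=\Upsilon\,\psi(\ff_i)\,\widetilde g\,\T^{-1}_{\wb}(m)$ directly from $\ipsi(\ff_i)=\ff_i$ and \eqref{eq:Upsilon}, then uses Lemma~\ref{lem:barTwb} to write $\psi(\ff_i)$ explicitly; on the left-hand side it computes $\T^{-1}_{\wb}\circ\vartheta(\ff_i)$ (where Corollary~\ref{cor:rhoTwb} is used to simplify $\vartheta(\ff_i)=\sigma\wp\tau(\ff_i)$), then applies $\widetilde g$ termwise and compares coefficients using \eqref{eq:def:zeta1} and Lemma~\ref{lem:zeta}. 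Once you correct the direction of the intertwining, your proof would become essentially this.
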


\begin{proof}
It is clear that $\mc{T}$ is a $\Qq$-linear isomorphism. Thus it suffices to verify that 
$\mc{T}$ defines a homomorphism of $\Ui$-modules. We shall prove $
{}^\vartheta( \mc{T} (\vartheta(u) \cdot m)) = \vartheta(u)  \cdot {}^\vartheta( \mc{T} (  m)) = {}^\vartheta(u   \cdot\mc{T}  (  m))$, or equivalently, 
\[
\tag{$\clubsuit$} \mc{T} (\vartheta(u) \cdot m) = u \cdot \mc{T}  ( m), \quad \text{ for } u \in \Ui, \quad m\in M_\mu.
\]

The identity $(\clubsuit)$ is clear for $ u = K_{\nu}$  $(\nu \in Y^{\imath})$, and we verify it on generators $u = B_i$ $(i \in \Iwhite)$, $u = F_j, E_j$ $(j \in \Iblack)$ in (1)--(3) below.

Case (1). We first check this identity $(\clubsuit)$ for $u = B_i$ with $i \in \Iwhite$.
Recall $\ff_i = F_i + \vs_i \T_{\wb} (E_{\tau i}) \tK^{-1}_i + \kappa_i \tK^{-1}_{i}$. By Corollary~\ref{cor:rhoTwb} we have
\begin{align*}
\vartheta (\ff_i ) &= q_{\tau i}  E_{\tau i} \tK_{\tau i} + \vs_i   \sigma \circ \wp \circ \tau \circ \T_{\wb} (E_{\tau i}) \tK_{\tau i} +  \kappa_i \tK_{\tau i} \\
& = q_{\tau i}  E_{\tau i} \tK_{\tau i} + \vs_i   (-1)^{\langle 2\rho^\vee_{\bullet}, i' \rangle} q_i^{\langle i, 2 \rho_{\bullet} \rangle}  \T_{\wb} (\sigma \circ \wp  (E_{ i})) \tK_{\tau i} +  \kappa_i \tK_{\tau i} \\
%
%
& = q_{\tau i}  E_{\tau i} \tK_{\tau i} + \vs_i  (-1)^{\langle 2\rho^\vee_{\bullet}, i' \rangle} q_i^{\langle i, 2 \rho_{\bullet} \rangle}  q_i \T_{\wb} ( F_i)  \T_{\wb}(\tK_{-i}) \tK_{\tau i} +  \kappa_i \tK_{\tau i}.
\end{align*}
Therefore we have 
\begin{align*}
\T^{-1}_{\wb} \circ \vartheta (\ff_i ) =  &q_{\tau i}  \T^{-1}_{\wb}  (E_{\tau i}) \T^{-1}_{\wb}(\tK_{\tau i}) \\
 &+  \vs_i   (-1)^{\langle 2\rho^\vee_{\bullet}, i' \rangle} q_i^{\langle i, 2 \rho_{\bullet} \rangle} q_i F_i  \tK_{ -i} \T^{-1}_{\wb}(\tK_{\tau i}) +  \kappa_i \T^{-1}_{\wb}(\tK_{\tau i}).
\end{align*}
On the other hand, thanks to Lemma~\ref{lem:barTwb}, we have 
\begin{align*}
\psi (\ff_i) &= F_i + \vs^{-1}_i \overline{\T_{\wb} (E_{\tau i})} \tK _i + \kappa_i \tK _{i} \\
&= F_i + \vs^{-1}_i (-1)^{ \langle 2\rho^\vee_{\bullet}, \tau i' \rangle } q_{\tau i}^{-\langle \tau i, 2\rho_{\bullet} \rangle} \T^{-1}_{\wb} (E_{\tau i}) \tK _i + \kappa_i \tK _{i}.
\end{align*}
The left-hand side of $(\clubsuit)$ can now be computed as follows: 
\begin{align*}
&\mc{T} ( \vartheta (B_i) m )= \Upsilon \circ \widetilde{g} \circ\T^{-1}_{\wb} \big( \vartheta (B_i) m  \big)
=  \Upsilon \circ \widetilde{g}  \Big(\big(\T^{-1}_{\wb} \circ \vartheta (B_i) \big(  \T^{-1}_{\wb} ( m) \big)  \big) \Big) \\
= &\Upsilon \circ \tilde{g} \Big(  q_{\tau i}  \T^{-1}_{\wb}  (E_{\tau i}) \T^{-1}_{\wb}(\tK_{\tau i}) \T^{-1}_{\wb} ( m) + \kappa_i \T^{-1}_{\wb}(\tK_{\tau i})  \T^{-1}_{\wb} ( m) \\
&\phantom{=}+ \vs_i   (-1)^{\langle 2\rho^\vee_{\bullet}, i' \rangle} q_i^{\langle i, 2 \rho_{\bullet} \rangle} q_i F_i  \tK_{ -i} \T^{-1}_{\wb}(\tK_{\tau i})  \T^{-1}_{\wb} ( m)   \Big)\\
= &\Upsilon  \Big(   g(w_{\bullet}\mu + w_{\bullet}\tau i') q_{\tau i}  \T^{-1}_{\wb}  (E_{\tau i}) \T^{-1}_{\wb}(\tK_{\tau i}) \T^{-1}_{\wb} ( m) + g(w_{\bullet}\mu) \kappa_i \T^{-1}_{\wb}(\tK_{\tau i})  \T^{-1}_{\wb} ( m) \\
&\phantom{=}+  g(w_{\bullet}\mu - i')  \vs_i   (-1)^{\langle 2\rho^\vee_{\bullet}, i' \rangle} q_i^{\langle i, 2 \rho_{\bullet} \rangle} q_i F_i  \tK_{ -i} \T^{-1}_{\wb}(\tK_{\tau i})  \T^{-1}_{\wb} ( m)  ) \Big)\\
= &\Upsilon  \Big(   g(w_{\bullet}\mu + w_{\bullet}\tau i') q_{\tau i}  q_{\tau i}^{\langle \tau i, \mu \rangle} \T^{-1}_{\wb}  (E_{\tau i})  \T^{-1}_{\wb} ( m) + g(w_{\bullet}\mu) q_{\tau i}^{\langle \tau i, \mu \rangle} \kappa_i \T^{-1}_{\wb} ( m) \\
&\phantom{=}+  g(w_{\bullet}\mu - i')  \vs_i   (-1)^{\langle 2\rho^\vee_{\bullet}, i' \rangle} q_i^{\langle i, 2 \rho_{\bullet} \rangle} q_i q_{\tau i}^{\langle \tau i, \mu \rangle} q_{i}^{-\langle i, w_\bullet \mu \rangle} F_i  \T^{-1}_{\wb} ( m)  ) \Big).
\end{align*} 
The right-hand side of $(\clubsuit)$ is given by
\begin{align*} 
B_i \cdot \mc{T} (m)  
=& B_i \Big( \Upsilon \circ  \widetilde{g} \circ\T^{-1}_{\wb} ( m) \Big)
= \Upsilon \Big(\psi(B_i) \big(  \widetilde{g} \circ\T^{-1}_{\wb} ( m) \big) \Big)\\
=& \Upsilon  \Big( g(w_\bullet \mu )F_i \T^{-1}_{\wb} (m)+  g(w_\bullet \mu )  \kappa_i q_i^{\langle i, w_\bullet \mu \rangle} \T^{-1}_{\wb} (m)\\
&+  g(w_\bullet \mu )  \vs^{-1}_i (-1)^{ \langle 2\rho^\vee_{\bullet}, \tau i' \rangle } q_{\tau i}^{-\langle \tau i, 2\rho_{\bullet} \rangle} q_i^{\langle i, w_\bullet \mu \rangle}\T^{-1}_{\wb} (E_{\tau i})  \T^{-1}_{\wb} (m)\Big).
\end{align*} 
Now the identity $\mc{T} (\vartheta(B_i) \cdot m) = B_i \cdot \mc{T}  ( m)$ follows by comparing the coefficients using \eqref{eq:def:zeta1} and Lemma~\ref{lem:zeta}. Note that if $\kappa_i \neq 0$, we have $q_i^{\langle \tau i, \mu\rangle} = q_i^{\langle i, w_\bullet \mu \rangle}$.

Case (2). 
Let $u = F_j$ $(j \in \Iblack)$. We have $\T^{-1}_{\wb} \circ \vartheta (F_i) = -q^{3}_{\tau i} F_i \tK^{-2}_{i}$. So the left-hand side  of $(\clubsuit)$ can be computed as follows: 
\begin{align*}
\mc{T} (\vartheta (F_j) m) 
=& \Upsilon \circ \widetilde{g} \circ \T^{-1}_{\wb} (\vartheta(F_j) m) 
\\
=& \Upsilon \circ \widetilde{g} \Big( -q^{3}_{\tau i} F_i \tK^{-2}_{i}\T^{-1}_{\wb} (m) \Big) \\
= &\Upsilon   \Big( - g (\wb\mu -i') q^{3}_{\tau i} q_i^{-2\langle i, \wb\mu \rangle }F_i \T^{-1}_{\wb} (m) \Big).
\end{align*}
The right-hand side  of $(\clubsuit)$ reads
\begin{align*}
F_j \cdot \mc{T} (m) = F_j \cdot \Upsilon \circ \widetilde{g} \circ \T^{-1}_{\wb} (m) = \Upsilon \big( g(\wb\mu)F_j  \T^{-1}_{\wb} (m) \big).
\end{align*}
Now the desired identity  $(\clubsuit)$ for $u = F_j$ $(j \in \Iblack)$ follows from \eqref{eq:def:zeta2}.

Case (3). 
Let $u = E_j$ $(j \in \Iblack)$. We have $\T^{-1}_{\wb} \circ \vartheta (E_i) = -q_{\tau i} E_i \tK^{2}_{i}$. So the left-hand side of  $(\clubsuit)$ can be computed as follows:  
\begin{align*}
\mc{T} (\vartheta (E_j) m) =& \Upsilon \circ \widetilde{g} \circ \T^{-1}_{\wb} (\vartheta(E_j) m) 
\\
=& \Upsilon \circ \widetilde{g} \Big( -q_{\tau i} E_i \tK^{2}_{i}  \T^{-1}_{\wb} (m) \Big) \\
= &\Upsilon   \Big( - g (\wb\mu + i') q_{\tau i} q_i^{2\langle i, \wb\mu \rangle }E_i \T^{-1}_{\wb} (m) \Big).
\end{align*}
The right-hand side  of $(\clubsuit)$ reads 
\[
E_j \cdot \mc{T} (m) =E_j \cdot \Upsilon \circ \widetilde{g} \circ \T^{-1}_{\wb} (m) = \Upsilon \big( g(\wb\mu)E_j  \T^{-1}_{\wb} (m) \big).
\]
Now the desired identity  $(\clubsuit)$ for $u =E_j$ $(j \in \Iblack)$ follows from Lemma~\ref{lem:zeta}.

%

The statement on the $\mA$-form follows from the definition of the function $g$ now.
This finishes the proof.
%
%
\end{proof}

\begin{rem}
The $\Ui$-module isomorphism $\mc T$ was first constructed in the special case 
of quantum symmetric pairs of type AIII/AIV with $\Iblack =\emptyset$ \cite[\S2.5, \S6.2]{BW13}, and readily generalized by the authors to quantum symmetric pairs with $\Iblack =\emptyset$ (before the posting of \cite{BK15} in arXiv). The first statement in Theorem~\ref{thm:mcT} in this generality is due to \cite[Theorem~7.5]{BK15} (in which the notation $\mc K'$ in place of $\mc T$ is used). Our construction here uses a $\Qq$-valued function $g$ and a twisting by $\vartheta$,  slightly different from the twisting $\tau \tau_0$ therein, so we can take advantage of earlier results (mainly Corollary~\ref{cor:rhoTwb}). It is further shown in \cite{BK15}  that $\mc K'$ leads to a universal $K$-matrix $\mc K$, which provides solutions to the quantum reflection equation (just like Drinfeld's universal ${R}$-matrix provides solutions to Yang-Baxter equation). 

Our second statement on the isomorphism of $\Ui$-modules
$\mc{T} : L(\lambda) \rightarrow {}^\omega L(\lambda^{\tau})$ will be used later on.  
Later as a consequence of Theorem~\ref{thm:intUpsilon} we see that $\mc T$ preserves the $\mA$-forms, i.e., 
$\mc{T} : {}_\mA L(\lambda) \rightarrow {}^\omega_\mA L(\lambda^{\tau}).$
\end{rem}

\section{Integrality of the intertwiner and $\imath$-canonical bases for modules}
  \label{sec:integral}

In this section, we establish the integrality of the intertwiner $\Upsilon$. 
This is first carried out by a case-by-case computation in the real rank one case.
In the real rank one case, the integrality of $\Upsilon$ eventually leads to the existence of $\imath$-canonical basis of $\Uidot$, which ensures
the existence of (integral) $\imath$-divided powers for all $i \in \I$. The existence of $\imath$-divided powers is then used here to complete the proof
of integrality of $\Upsilon$ in the general finite type case. 
We then construct an $\imath$-canonical basis on any finite-dimensional simple $\U$-module $L(\la)$
as well as on the tensor products of several such simple modules.

\subsection{Bar involution $\ipsi$ on modules} 
Recall \cite[Chapter 27]{Lu94} has developed a theory of finite-dimensional based $\U$-modules $(M,B)$.
The basis $B$ generates a $\Z[q^{-1}]$-submodule $\mc{M}$ and an $\mA$-submodule ${}_\mA M$ of $M$.

Recall from Lemma~\ref{lem:bar} the bar involution $\ipsi$ on $\Ui$. 
Recall \cite[Definition~3.9]{BW13} that a $\Ui$-module $M$ equipped with an anti-linear involution $\ipsi$ 
is called {\em involutive} (or {\em $\imath$-involutive}) if
$$
\ipsi(u m) = \ipsi(u) \ipsi(m),\quad \forall u \in \Ui, m \in M.
$$
The following proposition is \cite[Proposition~3.10]{BW13} verbatim in our more general setting, and we repeat its short proof for the sake of completeness. 

\begin{prop}
  \label{prop:compatibleBbar}
Let $M$ be a based $\U$-module with bar involution $\psi$. Then $M$ is an $\imath$-involutive  
$\Ui$-module with involution
\begin{equation}
\label{ibar}
\ipsi := \Upsilon \circ \psi.
\end{equation}
\end{prop}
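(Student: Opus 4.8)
The plan is to verify the defining identity $\ipsi(um) = \ipsi(u)\ipsi(m)$ for all $u \in \Ui$ and $m \in M$, using the intertwiner property \eqref{eq:Upsilon} together with the fact that $\psi$ is a bar involution compatible with the $\U$-module structure on $M$. First I would recall that since $(M,B)$ is a based $\U$-module, $\psi$ satisfies $\psi(xm) = \psi(x)\psi(m)$ for all $x \in \U$, $m \in M$, where $\psi$ on the left is the $\U$-module involution and $\psi$ on the right is the bar involution of $\U$ from Proposition~\ref{prop:invol}(4); moreover $\psi^2 = \id$ on $M$. I would also note that $\Upsilon$, as an element of the completion $\widehat{\U}^+$, acts as a well-defined operator on the finite-dimensional module $M$.

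The main computation is then short. For $u \in \Ui$ and $m \in M$, compute
\begin{align*}
\ipsi(um) &= \Upsilon \psi(um) = \Upsilon \psi(u) \psi(m) = \ipsi(u) \Upsilon \psi(m) = \ipsi(u)\, \ipsi(m),
\end{align*}
where the third equality uses the intertwiner identity \eqref{eq:Upsilon}, namely $\ipsi(u)\Upsilon = \Upsilon\psi(u)$ as operators (valid on $M$ since both sides are well-defined on the finite-dimensional module), and where $\ipsi(u)$ on the right denotes the image under the bar involution of $\Ui$ from Lemma~\ref{lem:bar}. This shows $\ipsi$ is compatible with the $\Ui$-action. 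It remains to check that $\ipsi = \Upsilon\circ\psi$ is an anti-linear involution on $M$, i.e.\ $\ipsi^2 = \id$: using Corollary~\ref{cor:barUpsilon} ($\psi(\Upsilon) = \Upsilon^{-1}$) and $\psi^2 = \id$, one gets $\ipsi^2 = \Upsilon\psi\Upsilon\psi = \Upsilon\,\psi(\Upsilon)\,\psi^2 = \Upsilon\Upsilon^{-1} = \id$. Anti-linearity over $\Q$ is inherited from $\psi$ since $\Upsilon$ acts $\Q(q)$-linearly after applying $\psi$ (more precisely, $\ipsi(fm) = \Upsilon\psi(f)\psi(m) = \bar f\,\Upsilon\psi(m) = \bar f\,\ipsi(m)$ for $f \in \Q(q)$).

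I do not expect any serious obstacle here; the statement is essentially formal once Theorem~\ref{thm:Upsilon} and Corollary~\ref{cor:barUpsilon} are in hand. The only point requiring a word of care is the justification that $\Upsilon$ and the identity \eqref{eq:Upsilon} make sense as operator identities on $M$: this is exactly the content of the discussion following \eqref{eq:propThetaA} and the setup of $\widehat{\U}$, namely that elements of $\widehat{\U}^+$ act on finite-dimensional $\U$-modules because $\U^+_\mu$ acts as zero on any given vector for $\mu$ of sufficiently large height. I would state this explicitly at the start of the proof and then the rest is the three-line display above.
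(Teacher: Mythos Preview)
Your proof is correct and follows essentially the same approach as the paper's own proof: the paper gives exactly the same chain $\ipsi(um) = \Upsilon\psi(um) = \Upsilon\psi(u)\psi(m) = \ipsi(u)\Upsilon\psi(m) = \ipsi(u)\ipsi(m)$ using Theorem~\ref{thm:Upsilon}, and then verifies $\ipsi^2 = \id$ via Corollary~\ref{cor:barUpsilon} just as you do. Your additional remarks on anti-linearity and on why $\Upsilon$ acts as a well-defined operator on $M$ are sound clarifications that the paper leaves implicit.
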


\begin{proof}
It follows by Theorem \ref{thm:Upsilon} and \eqref{ibar} that
$
\ipsi(u m) = \Upsilon \psi( u m) = \Upsilon \psi(u) \psi(m) =  \ipsi (u)  \Upsilon \psi (m) = \ipsi (u) \ipsi(m),
$ 
for all $ u \in \Ui$  and $m \in M$. Hence $M$ is $\imath$-involutive. 
By Corollary~\ref{cor:barUpsilon},  we have
$
\ipsi(\ipsi(m)) = \Upsilon \psi(\Upsilon \psi (m)) = \Upsilon \ov{\Upsilon} \psi (\psi(m)) = \Upsilon \ov{\Upsilon} m = m.
$ 
 Hence $\ipsi$ is an involution.
\end{proof}

Recall Lusztig defined a bar involution $\psi$
on (tensor products of) finite-dimensional simple $\U$-modules (via quasi-$\mc R$-matrix); cf. \cite[27.3]{Lu94}.
 
\begin{cor}
  \label{cor:ipsi-lambda}
There is a bar involution $\ipsi =\Upsilon \psi$ 
on the $\U$-module $L(\la)$ 
and  on the tensor product $\U$-modules $L(\la_1)  \otimes \cdots \otimes L(\la_r)$,
for $\la, \la_1, \ldots, \la_r \in X^+$, and $r\ge 1$. 
\end{cor}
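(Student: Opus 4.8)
\textbf{Proof plan for Corollary~\ref{cor:ipsi-lambda}.}
The statement is an immediate specialization of Proposition~\ref{prop:compatibleBbar} once we know that the relevant $\U$-modules are based $\U$-modules in the sense of \cite[Chapter~27]{Lu94}, equipped with their bar involutions $\psi$. The plan is therefore to assemble the pieces already recalled in the excerpt. First I would invoke the results of Lusztig and Kashiwara \cite{Lu90, Ka91}: each finite-dimensional simple module $L(\la)$ with $\la \in X^+$ carries the canonical basis $\{b^-\eta_\la \mid b \in \B(\la)\}$, making $(L(\la),\B(\la))$ a based $\U$-module, and in particular $L(\la)$ comes with the $\U$-compatible bar involution $\psi$ fixing $\eta_\la$ (this is the specialization to a single tensor factor).

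Next, for the tensor products, I would recall from \cite{Lu92} (see also \cite[27.3]{Lu94}) that given based $\U$-modules $M_1,\dots,M_r$ their tensor product $M_1 \otimes \cdots \otimes M_r$ is again a based $\U$-module, with bar involution built inductively from the quasi-$\mc{R}$-matrix $\Theta$ via $\psi = \Theta \circ (\psi \otimes \psi)$ as recorded after \eqref{eq:propThetaA} in Section~\ref{sec:prelim}. Applying this with $M_k = L(\la_k)$ yields that $L(\la_1) \otimes \cdots \otimes L(\la_r)$ is a based $\U$-module with a well-defined bar involution $\psi$. (For $r=1$ this is the previous paragraph, and for $r=2$ this is exactly the module $L(\la_1,\la_2)$ with basis $\B(\la_1,\la_2)$ discussed in Section~\ref{sec:prelim}.)

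Finally, I would apply Proposition~\ref{prop:compatibleBbar} to each of these based $\U$-modules: since each is a based $\U$-module with bar involution $\psi$, it becomes an $\imath$-involutive $\Ui$-module under the new bar involution $\ipsi := \Upsilon \circ \psi$, where $\Upsilon$ is the intertwiner of Theorem~\ref{thm:Upsilon}. This gives precisely the asserted bar involution $\ipsi$ on $L(\la)$ and on $L(\la_1)\otimes\cdots\otimes L(\la_r)$. There is essentially no obstacle here — the corollary is a bookkeeping consequence of Proposition~\ref{prop:compatibleBbar} together with the standard fact that simple modules and their tensor products are based modules; the only point worth stating explicitly is that all these bar involutions are the restrictions of the module bar involutions $\psi$ already in place, so that $\ipsi$ is canonically determined. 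One could also remark that well-definedness of $\ipsi$ on the infinite-dimensional completions is not an issue since each $M$ is finite-dimensional and $\Upsilon$ acts on it as a finite sum of the $\Upsilon_\mu$.
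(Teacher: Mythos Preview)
Your proposal is correct and matches the paper's approach exactly: the paper states this as an immediate corollary of Proposition~\ref{prop:compatibleBbar}, preceded only by the sentence recalling that Lusztig defined a bar involution $\psi$ on (tensor products of) finite-dimensional simple $\U$-modules via the quasi-$\mc{R}$-matrix \cite[27.3]{Lu94}. Your write-up simply makes explicit the two ingredients (based module structure on $L(\la)$ and on tensor products) that the paper leaves implicit.
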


\subsection{Integrality of the intertwiner}

In this section we prove the integrality of the intertwiner for an arbitrary finite type. 

\begin{thm}
\label{thm:intUpsilon}
{\quad}
\begin{enumerate}
	\item For quantum symmetric pair $(\U, \Ui)$ of real rank one,  the intertwiner $\Upsilon$ is integral; that is, $\Upsilon = \sum_{\mu} \Upsilon_{\mu}$ with $\Upsilon_\mu \in {}_\mA \U^+$ for all $\mu$.
	\item 
	Let $(\U, \Ui)$ be any quantum symmetric pair of finite type. 
	Under the assumption of the validity of Theorem~\ref{thm:iCBUi}  for all quantum symmetric pairs of real rank one,  the intertwiner $\Upsilon$ for $(\U, \Ui)$ 
	is integral; that is, $\Upsilon = \sum_{\mu} \Upsilon_{\mu}$ with $\Upsilon_\mu \in {}_\mA \U^+$ for all $\mu$.
\end{enumerate}
\end{thm}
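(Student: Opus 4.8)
\textbf{Proof plan for Theorem~\ref{thm:intUpsilon}.}

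\medskip

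\textbf{Part (1): real rank one.} The plan is to establish integrality of $\Upsilon$ by explicit computation in each of the eight real rank one types listed in Table~\ref{table:localSatake}. The essential leverage is Proposition~\ref{prop:wcirc}: we know $\Upsilon_\mu \in \U^+(w_\bullet w_0)$, which in the real rank one setting is a comparatively small subspace of $\U^+$ (for instance, of rank one in the AI$_1$ case, where it is spanned by divided powers of a single $\T_{w_\bullet}$-twisted root vector). I would combine this with the weight constraint $\mu^\theta = -\mu$ from Theorem~\ref{thm:Upsilon} and the defining intertwining relation $\ipsi(\ff_i)\Upsilon = \Upsilon\psi(\ff_i)$ for $i \in \Iwhite$ to pin down, recursively on $\hgt(\mu)$, the coefficients of $\Upsilon_\mu$ in the PBW-type basis \eqref{PBW} of $\U^+(w_\bullet w_0)$. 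Matching leading terms (those not killed by $r_i$ or ${}_ir$ for $i\in\Iblack$) against the $F_i$-part of $\ff_i$ produces inductive formulas for $\Upsilon_\mu$. One then reads off from these recursions that the coefficients lie in $\mA = \Z[q,q^{-1}]$; the detailed case-by-case computations are deferred to Appendix~\ref{sec:Upsilonrank1}. The key intermediate identities needed here — the bar-behavior of $\T_{\wb}(E_i)$ (Lemma~\ref{lem:barTwb}), the action of $\wp$ and $\vartheta$ on twisted root vectors (Corollary~\ref{cor:rhoTwb}), and the explicit parameter values of Lemma~\ref{lem:parameter vs} — are all in place.

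\medskip

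\textbf{Part (2): arbitrary finite type, assuming Theorem~\ref{thm:iCBUi} in real rank one.} The plan is to bootstrap from the rank one case using the Levi subalgebra structure (Lemma~\ref{lem:Levi}, Definition~\ref{def:Levi}). First I would express $\Upsilon$ in terms of the intertwiners of the real and compact rank one Levi subalgebras: intuitively, the defining relation \eqref{eq:Upsilon} for $\Upsilon$ restricted to the generators $\ff_i$ of a real rank one Levi $\U^\imath_{\mathbb J}$ forces the ``$\mathbb J$-part'' of $\Upsilon$ to agree with the rank one intertwiner $\Upsilon^{\mathbb J}$, which is integral by Part (1). The point is that $\Upsilon$ should factor (in the appropriate completion, and after reordering) as a product over a suitable sequence of positive-root contributions, each governed by a rank one Levi. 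To control the cross terms one needs the existence of the integral $\imath$-divided powers: here is precisely where Theorem~\ref{thm:iCBUi} for real rank one enters, since the $\imath$-canonical basis of $\dot{\B}^\imath$ for each rank one Levi furnishes the integral structure $\ff_i^{(n)}$ needed to make sense of ``$\Upsilon \in {}_\mA\widehat{\U}^+$'' compatibly across the root datum. Concretely, I would argue that ${}_\mA\Uidot$ acts on ${}_\mA\Udot$, that $\Upsilon$ intertwines the two bar maps, and that applying $\Upsilon$ to the highest weight vector $\eta_\lambda$ of ${}_\mA L(\lambda)$ (using Corollary~\ref{cor:AA}) together with the rank one integrality shows $\Upsilon_\mu(\eta_\lambda) \in {}_\mA L(\lambda)$ for all $\lambda \gg 0$; since $\U^+_\mu$ embeds integrally for $\lambda \gg 0$, this yields $\Upsilon_\mu \in {}_\mA\U^+$.

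\medskip

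\textbf{Main obstacle.} I expect the hard part of Part (2) to be controlling the interaction between Levi subalgebras of real rank one that overlap or are joined by black nodes — that is, showing that the rank one integrality results glue consistently rather than merely holding ``one root at a time.'' The logical subtlety is also that Part (2) is stated conditionally on Theorem~\ref{thm:iCBUi} in real rank one, and the overall argument of the paper is an induction interleaving \ref{thm:intUpsilon}, \ref{thm:iCBbased}/\ref{thm:iCBonL}, and \ref{thm:iCBUi} (as spelled out in the ``Strategy of proofs''); so care is needed that the use of Theorem~\ref{thm:iCBUi} here is genuinely only for rank one and does not create circularity. The cleanest route is probably to phrase the gluing step module-theoretically: fix $\lambda \gg 0$, use the based $\U$-module structure on $L(\lambda)$ and the rank one $\imath$-canonical bases to show $\ipsi = \Upsilon \circ \psi$ preserves ${}_\mA L(\lambda)$, and then extract integrality of each $\Upsilon_\mu$ by letting $\lambda$ vary and invoking the injectivity of ${}_\mA\U^+_\mu \hookrightarrow {}_\mA L(\lambda)$ for $\lambda$ large.
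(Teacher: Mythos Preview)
Your plan for Part (1) is right and matches the paper: Proposition~\ref{prop:wcirc} confines $\Upsilon_\mu$ to $\U^+(w_\bullet w_0)$, the intertwining relation \eqref{eq:Upsilon} yields recursions (via \eqref{eq:rank1Upsilon:1}--\eqref{eq:rank1Upsilon:3} in the appendix), and one checks integrality case by case.

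For Part (2), your ``cleanest route'' in the Main Obstacle paragraph is exactly the paper's argument, but the earlier suggestion to factorize $\Upsilon$ as a product over rank one contributions is a false start --- drop it; no such factorization is used or needed. The precise mechanism, which you gesture at but do not articulate, is the following. From the assumed rank one canonical basis (Theorem~\ref{thm:iCBUi}) one extracts, for each $i\in\Iwhite$, $\imath$-divided powers $\ff_i^{(a)} := 1\diamondsuit^\imath_\zeta F_i^{(a)} \in {}_\mA\Uidot$. Let ${}_\mA'\Uidot$ be the $\mA$-subalgebra of $\Uidot$ generated by these together with $E_j^{(a)}\one_\zeta, F_j^{(a)}\one_\zeta$ for $j\in\Iblack$. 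The key step you are missing is the claim
\[
{}_\mA'\Uidot\,\eta_\lambda = {}_\mA L(\lambda),
\]
proved by induction on height: given $x=F_{i_1}^{(a_1)}\cdots F_{i_s}^{(a_s)}\eta$, compare it to $\ff_{i_1}^{(a_1)}x'$ where $x'$ has smaller height; the difference $(\ff_{i_1}^{(a_1)}-F_{i_1}^{(a_1)})x'$ also has smaller height and lies in ${}_\mA L(\lambda)$. Since ${}_\mA'\Uidot$ is $\ipsi$-stable and $\ipsi(\eta_\lambda)=\eta_\lambda$, this claim gives $\ipsi$-stability of ${}_\mA L(\lambda)$, hence $\Upsilon=\ipsi\circ\psi$ preserves ${}_\mA L(\lambda)$.

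One concrete slip: you say to apply $\Upsilon$ to the highest weight vector $\eta_\lambda$, but $\Upsilon\eta_\lambda=\eta_\lambda$ trivially (each $\Upsilon_\mu\in\U^+_\mu$ kills $\eta_\lambda$ for $\mu\neq 0$). You must apply $\Upsilon$ to the \emph{lowest} weight vector $\eta_{w_0\lambda}$; then $\Upsilon\eta_{w_0\lambda}\in{}_\mA L(\lambda)$ and letting $\lambda\to\infty$ extracts $\Upsilon_\mu\in{}_\mA\U^+$.
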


\begin{proof}

Part (1) is proved by a tedious though straightforward case-by-case computation in Appendix~\ref{sec:Upsilonrank1}.

Let us prove (2). (The reader is supposed to know Theorem~\ref{thm:iCBUi} in the special case of real rank one, which will be established using Part (1) only.)
Fix an $i \in \Iwhite$. There exists a Levi subalgebra $\U^\imath_i$ of $\Ui$ containing $\ff_i$ of real rank one (see Table~\ref{table:decomposition}). 
Consider ${}_\mA \Uidot_i = \sum_{\zeta \in X_\imath} {}_\mA \U^\imath_i \one_\zeta$, and 
the canonical basis elements 
$\ff_i^{(a)} := (1 \diamondsuit^\imath_\zeta F^{(a)}_{i})\in {}_\mA \Uidot_i$ (see Theorem~\ref{thm:iCBUi}),
for $a \ge 0$ and $\zeta \in X_\imath$. 
We have a natural embedding ${}_\mA \Uidot_i \hookrightarrow {}_\mA \Uidot$. 
By abuse of notation we shall denote by the same notation $\ff_i^{(a)}$ for the image of $\ff_i^{(a)}$ in $\Uidot$. 
It follows by Theorem~\ref{thm:iCBUi} (for $\Uidot_i$) that $\ff_i^{(a)} \in {}_\mA \Uidot$. 

Denote by ${}_\mA'\Uidot$ the $\mA$-subalgebra of $\Uidot$ generated by $\ff_i^{(a)}$ for $i \in \Iwhite$,
and $F_j^{(a)} \one_\zeta, E_j^{(a)} \one_\zeta$ for $j \in \Iblack$,  for all $a \ge 0$ and $\zeta \in X_\imath$. 
By Corollary~\ref{cor:AA},   $\ff_i^{(a)}$ in ${}_\mA \Uidot$ preserves ${}_\mA L(\lambda)$, for all $\lambda \in X^+.$ 
As the other generators of ${}_\mA'\Uidot$ clearly preserve  ${}_\mA L(\lambda)$,
we have ${}_\mA'\Uidot {}_\mA L(\lambda) \subseteq {}_\mA L(\lambda)$. Actually a stronger statement holds as follows.

{\bf Claim ($\star$).} 
We have ${}_\mA'\Uidot \eta_\lambda = {}_\mA L(\lambda), \quad \text{ for } \lambda \in X^+.$

A spanning set for ${}_\mA L(\lambda)$ is given by $F_{i_1}^{(a_1)} F_{i_2}^{(a_2)} \cdots F_{i_s}^{(a_s)} \eta$ for various $s\geq 0$, $i_j \in \I$ and $a_j \geq 0$. 
We shall argue  that $x= F_{i_1}^{(a_1)} F_{i_2}^{(a_2)} \cdots F_{i_s}^{(a_s)} \eta \in {}_\mA'\Uidot \eta_\lambda$,
by induction on  the height $\text{ht}(x) =\sum_{j=1}^s a_j$. 
We can assume without loss of generality that $a_1>0$, and so $x' :=F_{i_2}^{(a_2)} \cdots F_{i_s}^{(a_s)} \eta$ 
lies in ${}_\mA'\Uidot \eta_\lambda$ by the inductive assumption.
If $i_1 \in \Iblack$, then $F_{i_1}^{(a_1)}  \in {}_\mA'\Uidot$ and $x= F_{i_1}^{(a_1)} x'  \in {}_\mA'\Uidot x' \in {}_\mA'\Uidot \eta_\lambda$.
Assume now $i_1 \in \Iwhite$. Define $y =\ff_{i_1}^{(a_1)} x' \in {}_\mA'\Uidot \eta_\lambda$. 
As $x-y =(\ff_{i_1}^{(a_1)}  -F_{i_1}^{(a_1)} ) x'\in {}_\mA L(\lambda)$ has height less than the 
height of $x$,  we have $x-y \in  {}_\mA'\Uidot \eta_\lambda$ by the inductive assumption, and so we also have $x=y+(x-y) \in {}_\mA'\Uidot \eta_\lambda$. 
This proves Claim~($\star$). 
 
The $\mA$-algebra ${}_\mA'\Uidot$ is clearly stable under the bar map $\ipsi$, and recall $\ipsi(\eta) =\eta$. 
It follows by Claim ($\star$) and Proposition~\ref{prop:compatibleBbar} that ${}_\mA L(\lambda)$ is $\ipsi$-invariant.  
Hence ${}_\mA L(\lambda)$ is stable under the action of $\Upsilon=\ipsi \circ \psi$.  In particular, we have (recall $w_0$ is the longest element in $W$)
$
\Upsilon \eta_{w_0\lambda} \in {}_\mA L(\lambda),$ for $\lambda \in X^+.$
By taking $\lambda \gg 0$, we conclude that $\Upsilon_\mu \in {}_\mA \U^+$, for each $\mu$.  
\end{proof}

\begin{rem}
In the proof above, we only need to assume the validity of Theorem~\ref{thm:iCBUi} for all the Levi subalgebras of $\Ui$ of 
real rank one (see Table~\ref{table:decomposition}).  For example the integrality of $\Upsilon$ for type FII is not used 
in the proof of any other quantum symmetric pairs.
\end{rem}

\begin{rem}
 \label{rem:logic}
Logically, the reader should read through the remainder of the paper under the additional assumption of real rank one so Theorem~\ref{thm:iCBUi} for $\Ui$ of all
real rank one (which is the assumption of Theorem~\ref{thm:intUpsilon}(2)) is established fully.
Then the integrality of $\Upsilon$ for $\Ui$ of any finite type follows by Theorem~\ref{thm:intUpsilon}(2).
\end{rem}

{\em In the remainder of the paper, we shall use the integrality of $\Upsilon$ for any finite type freely without mentioning further the assumption in Theorem~\ref{thm:intUpsilon}(2), thanks to Remark~\ref{rem:logic}.
}

\subsection{$\imath$-Canonical bases on based $\U$-modules}\label{subsec:imathpartial}

Recall the quotient map $X \rightarrow X_\imath$, $\mu \mapsto \ov{\mu}$. 
We define a partial ordering $\ile$ on $X$ by letting, for $\mu', \mu \in X$, 
\begin{equation}
 \label{eq:iorder}
 \mu' \ile \mu \quad  \Leftrightarrow   \quad  
  \overline{\mu'} = \overline{\mu},  
   \text{ and }  \mu' -\mu \in \N[\I] \cap \N[\wb \I]. 
\end{equation}
Note that if $\overline{\mu'} = \overline{\mu}$ 
then $\overline{ \wb\tau \mu'} = \overline{ \wb\tau \mu}$ by definition of $X_\imath$. We also define $\mu' \il \mu$ if $\mu' \ile \mu$ and $\mu' \neq \mu$. 

We formally extend the partial ordering $\ile$ to any set $S$ with a natural weight function $|\cdot |: S\rightarrow X$ (such as $\B(\la)$ for $\la \in X^+$, 
or any basis $B$ in a based $\U$-module below), by declaring that 
\begin{equation*}
 b' \ile b \quad  \Leftrightarrow   \quad  |b'| \ile |b|, \qquad \text{ for all } b', b \in S.
\end{equation*}

%

\begin{lem}\label{lem:ipsipartialodering}
Let $(M,B)$ be a finite-dimensional based $\U$-module (cf. \cite[Definition~27.1.2]{Lu94}).
Then we have 
\[
\ipsi (b) = b + \sum_{b' \il b} f(b;b') b', \quad \text{ for } b' \in \B,   f(b;b') \in \mA.
\]
\end{lem}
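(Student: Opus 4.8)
The plan is to unwind the definition $\psi_\imath = \Upsilon \circ \psi$ on $M$ and track weights carefully. Write $b \in B$ of weight $|b| = \mu \in X$. Since $(M,B)$ is a based $\U$-module, $\psi(b) = b$, so $\psi_\imath(b) = \Upsilon(b) = \sum_{\nu \in \N[\I]} \Upsilon_\nu(b)$, where $\Upsilon_\nu \in \U^+_\nu$. Each term $\Upsilon_\nu(b)$ lies in the weight space $M_{\mu + \nu}$. By Theorem~\ref{thm:Upsilon}, $\Upsilon_\nu = 0$ unless $\nu^\inv = -\nu$, i.e.\ $-w_\bullet\tau(\nu) = -\nu$, which forces $\nu \in \N[\I]$ to satisfy $\wb\tau(\nu) = \nu$; in particular $\overline{\mu+\nu} = \overline{\mu}$ in $X_\imath$ since $\nu \in \breve X$ is killed in the quotient (as $\nu = \tfrac12(\nu - \inv(\nu))\cdot 2$, or more precisely $\mu+\nu \equiv \mu$ because $\nu - \inv(\nu) = 2\nu$... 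I should phrase this as: $\inv(\nu) = -\nu$ gives $\nu - \inv(\nu) = 2\nu$, hence $2\nu \in \breve X$; combined with $\nu \in \N[\I]$, I get the weight condition needed). The cleaner route: by Proposition~\ref{prop:wcirc}, $\Upsilon_\nu \in \U^+(w_\bullet w_0)$, and the weights appearing in $\U^+(w_\bullet w_0)$ lie in $\N[\I_\circ']$-shifts... actually the key point I want is simply $\nu \in \N[\I] \cap \N[\wb\I]$ and $\overline{\mu+\nu} = \overline{\mu}$, which is exactly $\mu + \nu \ile \mu$ in the sense of \eqref{eq:iorder} (with the roles of $\mu',\mu$ as in that display, reversed since $\nu$ is added).

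First I would record the weight bookkeeping: $\Upsilon_\nu(b) \in M_{\mu+\nu}$, and expand $\Upsilon_\nu(b) = \sum_{b' \in B,\ |b'| = \mu+\nu} c_{b'} b'$ using that $B$ is a basis of $M$ compatible with the weight decomposition. So $\psi_\imath(b) = b + \sum_{b' : |b'| \il |b|} c_{b'} b'$ once I verify (i) the $\nu = 0$ term is $\Upsilon_0(b) = b$ (immediate since $\Upsilon_0 = 1$), (ii) every $\nu$ with $\Upsilon_\nu \neq 0$ satisfies $\mu + \nu \il \mu$, which is the conjunction $\overline{\mu+\nu}=\overline\mu$ (from $\inv(\nu) = -\nu$) and $\nu \in \N[\I]\cap\N[\wb\I]$ with $\nu \neq 0$ (from $\nu \in \N[\I]$ together with $\wb\tau\nu = \nu \Rightarrow \nu = \wb(\tau\nu) \in \N[\wb\I]$, using that $\tau$ permutes $\I$ so $\tau\nu \in \N[\I]$), and (iii) the partial order $\il$ on $B$ is induced from that on $X$ via $|\cdot|$, as set up just before the lemma. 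Then (iii) converts $\mu+\nu \il \mu$ into $b' \il b$ for each $b'$ with $|b'| = \mu+\nu$.

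Finally I would argue integrality of the coefficients: $f(b;b') \in \mA$. This is where Theorem~\ref{thm:intUpsilon} (integrality of $\Upsilon$, $\Upsilon_\nu \in {}_\mA\U^+$) enters. Since $(M,B)$ is a based module, $B$ is an $\mA$-basis of ${}_\mA M$, and ${}_\mA\U^+$ preserves ${}_\mA M$; hence $\Upsilon_\nu(b) \in {}_\mA M$ and its coordinates $c_{b'}$ in the $\mA$-basis $B$ lie in $\mA$. This gives $f(b;b') = c_{b'} \in \mA$ and completes the proof. The main obstacle is purely the weight/order bookkeeping in step (ii) — making sure that $\nu^\inv = -\nu$ really does yield both $\overline{\mu+\nu} = \overline\mu$ and $\nu \in \N[\wb\I]$ simultaneously — but this is a short computation using the definitions of $X_\imath$, $\inv = -w_\bullet\tau$, and the admissibility of $(\I_\bullet,\tau)$; everything else is routine once the integrality theorem is in hand.
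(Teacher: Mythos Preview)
Your approach is essentially the paper's: reduce to $\ipsi(b)=\Upsilon(b)$ via $\psi(b)=b$, invoke Theorem~\ref{thm:Upsilon} (and Proposition~\ref{prop:wcirc}) to get the $\il$-triangularity of $\Upsilon$ acting on weight spaces, and then apply Theorem~\ref{thm:intUpsilon} together with the fact that $B$ is an $\mA$-basis of ${}_\mA M$ to obtain $f(b;b')\in\mA$. Your derivation of $\nu\in\N[\wb\I]$ from $\nu=\wb(\tau\nu)$ is correct, and the one step you flag as delicate---passing from $\theta(\nu)=-\nu$ to $\overline{\mu+\nu}=\overline{\mu}$---is handled no more explicitly in the paper's two-line proof.
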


\begin{proof}
Since $\psi(b)=b$ for $b\in B$ and $\ipsi =\Upsilon \psi$, we have by Theorem~\ref{thm:Upsilon} and Proposition~\ref{prop:wcirc} that
\[
\ipsi (b) = \Upsilon(b) = b + \sum_{b'\in B, b' \il b} f(b;b') b',\quad \text{ for }  f(b;b') \in \Qq.
\]
Note that $ f(b;b')  \in \mA$ by Theorem~\ref{thm:intUpsilon}. 
\end{proof}

By applying a standard procedure (cf. \cite[Lemma~24.2.1]{Lu94}) to the involution $\ipsi$
with the help of Lemma~\ref{lem:ipsipartialodering}, we have proved the following.

\begin{thm}   \label{thm:iCBbased}
Let $(M,B)$ be a finite-dimensional based $\U$-module. 
 
\begin{enumerate}
\item
The $\Ui$-module $M$ admits a unique basis (called $\imath$-canonical basis)
$
B^\imath  := \{b^\imath \mid b \in B \}
$
which is $\ipsi$-invariant and of the form
\begin{equation} \label{iCB}
b^\imath = b +\sum_{b' \in B, b' <_\imath b}
t_{b;b'} b',
\quad \text{ for }\;  t_{b;b'} \in q^{-1}\Z[q^{-1}].
\end{equation}

\item
$B^\imath$ forms an $\mA$-basis for the $\mA$-lattice ${}_\mA M$ (generated by $B$), and
forms a $\Z[q^{-1}]$-basis for the $\Z[q^{-1}]$-lattice $\mc{M}$ (generated by $B$).
\end{enumerate}
\end{thm}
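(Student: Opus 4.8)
The plan is to derive Theorem~\ref{thm:iCBbased} from the general ``uni-triangularity implies existence and uniqueness of a bar-invariant basis'' mechanism of Lusztig, exactly as in \cite[Lemma~24.2.1]{Lu94}, once the partial order $\ile$ on $B$ has been checked to interact well with $\ipsi$. The two inputs needed are already in hand: Lemma~\ref{lem:ipsipartialodering}, which says $\ipsi$ is uni-triangular with respect to $\ile$ with coefficients in $\mA$; and the fact that $\ipsi$ is an anti-linear involution on $M$ (Proposition~\ref{prop:compatibleBbar}). So the first step is to record that $(B, \ile)$ is an \emph{interval-finite} poset: since $M$ is finite dimensional, $B$ is a finite set, so this is automatic, and the hypotheses of the abstract lemma are met.

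The heart of part~(1) is then the standard inductive construction. One orders $B$ compatibly with $\ile$ and, proceeding from minimal elements upward, sets $b^\imath = b$ when $b$ is $\ile$-minimal, and in general defines $b^\imath = \ipsi(b^\imath)$-correction: having constructed $b'^\imath$ for all $b' \il b$, one writes $\ipsi(b) = b + \sum_{b'\il b} f(b;b') b'$, re-expresses the right-hand side in the already-constructed basis $\{b'^\imath : b' \il b\}$, obtains an element of the form $b + \sum_{b'\il b} g(b;b') b'^\imath$ with $g(b;b')\in\mA$ satisfying $g(b;b') + \overline{g(b;b')}$ equal to the coefficient forced by $\ipsi(\ipsi(b))=b$, i.e. $g + \bar g = 0$ after the bookkeeping, and then uses the (standard) fact that for each $a \in \mA$ with $a + \bar a = 0$ there is a unique $t \in q^{-1}\Z[q^{-1}]$ with $t - \bar t = a$. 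Setting $b^\imath = b + \sum_{b'\il b} t_{b;b'} b'^\imath$ with these $t$'s and then re-expanding in $B$ gives \eqref{iCB}; bar-invariance and the shape of the coefficients follow by construction, and uniqueness follows because any two such bases would differ by a bar-invariant element of $\bigoplus q^{-1}\Z[q^{-1}]\, b'$, which must be $0$. This is verbatim \cite[Lemma~24.2.1]{Lu94}, so I would simply cite it after verifying its hypotheses, rather than reproducing the argument.

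For part~(2): the transition matrix between $B$ and $B^\imath$ is, by \eqref{iCB}, uni-triangular with entries in $q^{-1}\Z[q^{-1}] \subset \mA$, and its inverse is again uni-triangular with entries in $q^{-1}\Z[q^{-1}]$ (invert a uni-triangular matrix over the ring $\Z[q^{-1}]$ against the interval-finite order). Hence $B^\imath$ and $B$ span the same $\mA$-submodule of $M$ and the same $\Z[q^{-1}]$-submodule of $M$; since ${}_\mA M$ is by definition the $\mA$-span of $B$ and $\mc M$ the $\Z[q^{-1}]$-span of $B$, the claim follows, and $B^\imath$ is an $\mA$-basis (resp. $\Z[q^{-1}]$-basis) because $B$ is.

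I do not expect a genuine obstacle here: both nontrivial ingredients (Lemma~\ref{lem:ipsipartialodering}, resting on Proposition~\ref{prop:wcirc} and Theorem~\ref{thm:intUpsilon}, and the involutivity from Proposition~\ref{prop:compatibleBbar}) are already established, and the remaining content is the purely formal Lusztig lemma. If anything requires a moment of care it is making sure the partial order $\ile$ used in Lemma~\ref{lem:ipsipartialodering} is the \emph{same} order that controls the recursion (so that ``$b' \il b$'' is interval-finite on $B$ and the induction is well-founded) — this is immediate from finite-dimensionality of $M$ — and checking that the $\mA$-integrality in Lemma~\ref{lem:ipsipartialodering} survives the change of basis into $\{b'^\imath\}$, which it does because that change of basis is itself uni-triangular over $\mA$.
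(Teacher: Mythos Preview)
Your proposal is correct and matches the paper's own proof essentially verbatim: the paper simply states that Theorem~\ref{thm:iCBbased} follows by applying the standard procedure of \cite[Lemma~24.2.1]{Lu94} to the involution $\ipsi$, using Lemma~\ref{lem:ipsipartialodering} as input. Your write-up spells out that standard procedure in more detail (and correctly identifies Proposition~\ref{prop:compatibleBbar} and finite-dimensionality as the remaining hypotheses to check), but the approach is identical.
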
   

\begin{rem}
When $\I =\Iblack$, we have $- \I = \wb \I$, $X = X_\imath$, and hence $b \ile b'$ actually means $|b| = |b'|$.
Therefore, in this case the $\imath$-canonical basis reduces to the usual canonical basis. 
\end{rem}

\begin{rem}
Similar to Lusztig's canonical basis, the $\imath$-canonical bases are computable algorithmically. 
The $\imath$-canonical basis  is not homogenous in terms of the weight lattice $X$,
though it is homogenous in  $X_\imath$.  
\end{rem}

Recall for $\lambda \in X^+$, we denote by ${}_\mA L(\lambda)$ (respectively, $\mc{L}(\lambda)$)  the $\mA$-lattice 
(respectively, the $\Z[q^{-1}]$-lattice) spanned  by $\{b^- \eta \vert b \in \B(\lambda)\}$. 
The following theorem is an important special case of Theorem~\ref{thm:iCBbased}, since $L(\la)$ is 
well known \cite{Lu90, Ka91} to be a based $\U$-module.

\begin{thm}\label{thm:iCBonL}
\begin{enumerate}
	\item	 
	For any $b \in \B$, there is a unique element $(b^-\eta)^\imath \in L(\lambda)$ 
	which is $\ipsi$-invariant and of the form 
	\[
	(b^-\eta)^\imath \in b^-\eta + \sum_{b' \il b} q^{-1} \Z[q^{-1}] b'^- \eta;
	\]
	
	\item	
	The set $\{ (b^-\eta)^\imath \vert b \in \B(\lambda) \}$ forms 
	a $\Qq$-basis of $L(\lambda)$, an $\mA$-basis of ${}_\mA L(\lambda)$, and a $\Z[q^{-1}]$-basis of $\mc{L}(\lambda)$
	(called the $\imath$-canonical basis).
\end{enumerate}
\end{thm}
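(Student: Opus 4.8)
The plan is to deduce Theorem~\ref{thm:iCBonL} directly from Theorem~\ref{thm:iCBbased} by observing that $L(\lambda)$, together with its canonical basis $\B(\lambda)$ (or rather $\{b^-\eta \mid b \in \B(\lambda)\}$), is a finite-dimensional based $\U$-module in the sense of \cite[Definition~27.1.2]{Lu94}. This is exactly the content recalled just before the statement: by \cite{Lu90, Ka91} the set $\{b^-\eta \mid b \in \B(\lambda)\}$ is an $\mA$-basis of ${}_\mA L(\lambda)$, it is $\psi$-invariant, and the $\Z[q^{-1}]$-lattice $\mc{L}(\lambda)$ it spans is preserved by the relevant operators, so the based-module axioms hold. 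The weight function on this basis is $b \mapsto |b^-\eta| = \lambda - |b|$, and the $\imath$-order $\ile$ restricted to this basis is governed by $|b|$: writing things out, $b' \ile b$ (in the sense of \eqref{eq:iorder} applied to the weights $\lambda-|b'|, \lambda-|b|$) reduces precisely to $b' \il b$ in the notation of the theorem, i.e.\ $|b'|-|b| \in \N[\I]\cap \N[\wb\I]$ together with the $X_\imath$-weight matching condition.

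Concretely, the steps I would carry out are as follows. First I would invoke Theorem~\ref{thm:iCBbased}(1) with $M = L(\lambda)$ and $B = \{b^-\eta \mid b \in \B(\lambda)\}$: this produces a unique $\ipsi$-invariant basis $\{(b^-\eta)^\imath \mid b \in \B(\lambda)\}$ with the unitriangularity \eqref{iCB}, where the off-diagonal coefficients $t_{b;b'}$ lie in $q^{-1}\Z[q^{-1}]$ and the sum ranges over $b' <_\imath b$. This is exactly part (1) of Theorem~\ref{thm:iCBonL}, once one notes that $<_\imath$ on the basis $B$ translates into $\il$ on the index set $\B(\lambda)$ as above; I would spell out this translation of partial orders carefully since it is the only nontrivial bookkeeping. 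Second, for part (2), I would apply Theorem~\ref{thm:iCBbased}(2): the set $\{(b^-\eta)^\imath\}$ is simultaneously a $\Qq$-basis of $L(\lambda)$, an $\mA$-basis of the $\mA$-lattice ${}_\mA L(\lambda)$ generated by $B$, and a $\Z[q^{-1}]$-basis of the $\Z[q^{-1}]$-lattice $\mc{L}(\lambda)$ generated by $B$. These two lattices are precisely the ${}_\mA L(\lambda)$ and $\mc{L}(\lambda)$ named in the theorem, by the definitions recalled in \S\ref{subsec:Udot} and just before the statement, so the proof is complete.

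The only genuine point of content—and the step I expect to require the most care—is checking that $L(\lambda)$ with $\{b^-\eta\}$ genuinely satisfies all axioms of a finite-dimensional based $\U$-module, and that the generic partial order $\ile$ on its underlying basis matches the order $\il$ used in the present statement. Everything else is a direct citation of Theorem~\ref{thm:iCBbased}. Since the excerpt explicitly asserts that $L(\lambda)$ is "well known \cite{Lu90, Ka91} to be a based $\U$-module", the based-module verification can be cited rather than reproved, so in the end the proof is essentially a one-line appeal to Theorem~\ref{thm:iCBbased} together with the identification of lattices and orders. I would therefore write the proof as: apply Theorem~\ref{thm:iCBbased} to the based module $(L(\lambda), \{b^-\eta \mid b\in\B(\lambda)\})$, note that $\ipsi$-invariance and the form \eqref{iCB} give (1), and note that the statements about ${}_\mA L(\lambda)$ and $\mc{L}(\lambda)$ in Theorem~\ref{thm:iCBbased}(2) give (2).
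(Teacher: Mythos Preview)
Your proposal is correct and matches the paper's approach exactly: the paper states Theorem~\ref{thm:iCBonL} without proof, simply remarking that it is ``an important special case of Theorem~\ref{thm:iCBbased}, since $L(\lambda)$ is well known \cite{Lu90, Ka91} to be a based $\U$-module.'' Your additional care in unwinding the partial order $\ile$ on the basis is harmless bookkeeping that the paper leaves implicit.
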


Recall that a tensor product of several finite-dimensional simple $\U$-modules is a based $\U$-module
by \cite[Theorem~27.3.2, \S27.3.6]{Lu94}. Theorem~\ref{thm:iCBbased} also implies the following. 

\begin{cor}  \label{thm:iCBtensor}
Let $\la_1, \ldots, \la_r \in  X^+$. The tensor product of finite-dimensional simple $\U$-modules
$L (\la_1) 
\otimes \ldots \otimes L (\la_r)$
admits a unique $\ipsi$-invariant basis of the form \eqref{iCB}, where $B$ is understood as 
Lusztig's canonical basis on the tensor product.
\end{cor}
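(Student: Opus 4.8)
\textbf{Proof proposal for Corollary~\ref{thm:iCBtensor}.}
The plan is to recognize that this corollary is an immediate application of Theorem~\ref{thm:iCBbased} together with the already-established fact that a tensor product of finite-dimensional simple $\U$-modules carries a canonical-basis structure. First I would recall from \cite[Theorem~27.3.2, \S27.3.6]{Lu94} that $M := L(\la_1) \otimes \cdots \otimes L(\la_r)$ is a finite-dimensional based $\U$-module: it comes equipped with the bar involution $\psi = \Theta \circ (\psi \otimes \cdots \otimes \psi)$ (built by iterating the two-factor construction via the quasi-$\mc R$-matrix) and with Lusztig's canonical basis $B$, whose elements are $\psi$-invariant and lie in a suitable weight-compatible completion of the tensor product of the individual canonical bases $\B(\la_j)$. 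Each $b \in B$ has a well-defined weight $|b| \in X$, so the partial order $\ile$ of \eqref{eq:iorder} is defined on $B$.

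Next I would invoke Proposition~\ref{prop:compatibleBbar} to equip $M$, viewed as a $\Ui$-module by restriction, with the anti-linear involution $\ipsi = \Upsilon \circ \psi$, which is $\imath$-involutive and (by Corollary~\ref{cor:barUpsilon}) squares to the identity. By Theorem~\ref{thm:intUpsilon} the intertwiner $\Upsilon$ is integral, so $\ipsi$ preserves the $\mA$-lattice ${}_\mA M$ generated by $B$; by Proposition~\ref{prop:wcirc} and Lemma~\ref{lem:ipsipartialodering}, applied verbatim to $(M,B)$, we have $\ipsi(b) = b + \sum_{b' \il b} f(b;b') b'$ with $f(b;b') \in \mA$. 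This is precisely the input required by Theorem~\ref{thm:iCBbased}: applying that theorem to the based $\U$-module $(M,B)$ yields the unique $\ipsi$-invariant basis $\{b^\imath \mid b \in B\}$ of the form \eqref{iCB} with off-diagonal coefficients in $q^{-1}\Z[q^{-1}]$, which simultaneously gives an $\mA$-basis of ${}_\mA M$ and a $\Z[q^{-1}]$-basis of $\mc M$.

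There is essentially no obstacle here beyond bookkeeping: the only points to check are that the notion of based $\U$-module in \cite[Definition~27.1.2]{Lu94} genuinely applies to iterated tensor products (which is \cite[\S27.3]{Lu94}) and that the weight function needed to define $\ile$ is available on $B$ (it is, since the canonical basis on a tensor product is weight-homogeneous in $X$). Thus the proof is one sentence: $L(\la_1) \otimes \cdots \otimes L(\la_r)$ with its canonical basis $B$ is a finite-dimensional based $\U$-module by \cite[Theorem~27.3.2, \S27.3.6]{Lu94}, so the claim follows directly from Theorem~\ref{thm:iCBbased}. If desired one could also record the parametrization of $B$ in terms of the individual $\B(\la_j)$ and note that $\ipsi$ restricts well to each based $\U$-submodule, but this is not needed for the stated conclusion.
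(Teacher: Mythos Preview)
Your proposal is correct and matches the paper's approach exactly: the paper simply notes that $L(\la_1)\otimes\cdots\otimes L(\la_r)$ with Lusztig's canonical basis is a based $\U$-module by \cite[Theorem~27.3.2, \S27.3.6]{Lu94}, and then the corollary follows immediately from Theorem~\ref{thm:iCBbased}. Your additional unpacking of Proposition~\ref{prop:compatibleBbar}, Theorem~\ref{thm:intUpsilon}, and Lemma~\ref{lem:ipsipartialodering} is fine but redundant, since those ingredients are already absorbed into the statement and proof of Theorem~\ref{thm:iCBbased}.
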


Recall we write $\eta^\bullet_{\lambda}$ or simply $\eta^\bullet$ for the unique canonical basis element in $L(\lambda)$ of weight $\wb \lambda$. 
Moreover, by \cite[Lemma~39.1.2]{Lu94}, we have 
\begin{equation}
  \label{eq:eta-b}
  \eta^\bullet_{\lambda} = \T^{-1}_{\wb} (\eta_\lambda).
\end{equation}

Some $\imath$-canonical basis elements are easy to identify as follows (even though the $\imath$-canonical basis differs from canonical basis in general, for example, already in the natural $\mathfrak{sl}_n$-module; cf. \cite[Remark~5.10]{BW13}).

\begin{cor}
 \label{cor:etab}
For any $b \in \B_{\I_\bullet} (\lambda)$, the element $b^- \eta \in L(\lambda)$ is an $\imath$-canonical basis element.
In particular, $\eta^{\bullet}  \in L(\lambda)$ is an $\imath$-canonical basis element.
\end{cor}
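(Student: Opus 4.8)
\textbf{Proof proposal for Corollary~\ref{cor:etab}.}
The plan is to show that for $b \in \B_{\I_\bullet}(\lambda)$, the vector $b^-\eta$ is already $\ipsi$-invariant, so by the uniqueness in Theorem~\ref{thm:iCBonL} it must coincide with $(b^-\eta)^\imath$. The key point is that $b^-\eta$ lies in the $\U_{\I_\bullet}$-submodule ${}^\bullet L(\lambda) := \U_{\I_\bullet}\eta$ of $L(\lambda)$ generated by the highest weight vector; indeed $b \in \B_{\I_\bullet}(\lambda)$ means precisely that $b^-\eta$ is a canonical basis element of this $\U_{\I_\bullet}$-submodule. First I would recall that $\ipsi = \Upsilon \circ \psi$ on $L(\lambda)$, and that $\psi(b^-\eta) = b^-\eta$ since $b^-\eta$ is a canonical (hence $\psi$-invariant) basis vector of $L(\lambda)$. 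So it remains to see $\Upsilon$ acts as the identity on $b^-\eta$.

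The main step is the following: since $\Upsilon = \sum_\mu \Upsilon_\mu$ with $\Upsilon_\mu \in \U^+_\mu$ and $\Upsilon_0 = 1$, for any $\mu \neq 0$ the weight of $\Upsilon_\mu(b^-\eta)$ is $|b^-\eta| + \mu$, which is strictly larger than $|b^-\eta| = -\nu$ where $\nu = |b| \in \N[\I_\bullet]$. But by Theorem~\ref{thm:Upsilon}, $\Upsilon_\mu = 0$ unless $\mu^\inv = -\mu$, i.e. $-\wb\tau\mu = -\mu$, which forces $\mu \in \N[\I_\bullet]$ (this is where the real-rank-one/$\I_\bullet$-structure enters). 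Now I would invoke the fact that $b^-\eta \in {}^\bullet L(\lambda)$, which is the finite-dimensional simple $\U_{\I_\bullet}$-module of highest weight $\lambda$; its top weight (in the $\N[\I_\bullet]$-grading from $\eta$) is $0$, attained only by $\eta$ itself up to scalar. Since $\mu \in \N[\I_\bullet]$ and $\mu \neq 0$, the weight $-\nu + \mu$ either lies outside the weights of ${}^\bullet L(\lambda)$, or — and this is the crux — we use that $\Upsilon_\mu$ for $\mu \in \N[\I_\bullet]$ acts trivially: by \eqref{eq:Upsilonblack}, $u\Upsilon = \Upsilon u$ for all $u \in \U_{\I_\bullet}$, and since ${}^\bullet L(\lambda)$ is generated by $\eta$ over $\U_{\I_\bullet}$ with $\Upsilon\eta = \eta$ (as $\Upsilon_\mu \eta = 0$ for $\mu \neq 0$, the weight $\mu$ being nonnegative and $\eta$ having the top weight), we get $\Upsilon(u\eta) = u\Upsilon(\eta) = u\eta$ for all $u \in \U_{\I_\bullet}$. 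Hence $\Upsilon$ fixes all of ${}^\bullet L(\lambda)$, in particular $b^-\eta$.

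Therefore $\ipsi(b^-\eta) = \Upsilon(\psi(b^-\eta)) = \Upsilon(b^-\eta) = b^-\eta$, so $b^-\eta$ is $\ipsi$-invariant and of the form $b^-\eta + (\text{lower terms})$ with no lower terms; by the uniqueness part of Theorem~\ref{thm:iCBonL}(1), $(b^-\eta)^\imath = b^-\eta$, i.e. $b^-\eta$ is an $\imath$-canonical basis element. For the ``in particular'' statement, recall from \eqref{eq:eta-b} that $\eta^\bullet = \T^{-1}_{\wb}(\eta)$, and it is a standard fact (cf. \cite[Lemma~39.1.2]{Lu94}) that $\eta^\bullet$ is the canonical basis element of the $\U_{\I_\bullet}$-submodule ${}^\bullet L(\lambda)$ of lowest weight $\wb\lambda$; in particular $\eta^\bullet = b^-\eta$ for the unique $b \in \B_{\I_\bullet}(\lambda)$ with $b^-\eta$ of weight $\wb\lambda$, so the general case applies.

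The step I expect to be the main obstacle is pinning down precisely that $\Upsilon$ acts as the identity on the $\U_{\I_\bullet}$-submodule ${}^\bullet L(\lambda)$ — one must be careful that the commutation relation \eqref{eq:Upsilonblack} together with $\Upsilon\eta = \eta$ genuinely forces triviality on the whole submodule, using that $\eta$ generates it and that the grading shift of each $\Upsilon_\mu$ (with $\mu \neq 0$) moves $\eta$ out of its own weight space. This is essentially a weight-counting argument once the relevant structural inputs (Theorem~\ref{thm:Upsilon}, \eqref{eq:Upsilonblack}, and the identification of $\B_{\I_\bullet}(\lambda)$ with the canonical basis of ${}^\bullet L(\lambda)$) are in place.
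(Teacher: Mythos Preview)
Your core argument is correct and in fact simpler than the paper's: using the commutation $u\Upsilon = \Upsilon u$ for $u \in \U_{\I_\bullet}$ from \eqref{eq:Upsilonblack}, together with $\Upsilon\eta = \eta$ (which holds because $\eta$ is a highest weight vector and each $\Upsilon_\mu \in \U^+_\mu$ with $\mu \ge 0$), you get $\Upsilon(b^-\eta) = b^-\Upsilon(\eta) = b^-\eta$ directly, and the rest follows by uniqueness as you say. This argument is self-contained and does not require Proposition~\ref{prop:wcirc}.

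However, your intermediate claim that ``$\mu^\theta = -\mu$ forces $\mu \in \N[\I_\bullet]$'' is false. The condition $\mu^\theta = -\mu$ reads $w_\bullet\tau(\mu) = \mu$; already for $\I_\bullet = \emptyset$, $\tau = \id$ this is vacuous, and in general $\mu = i' + \wb\tau i'$ for $i \in \I_\circ$ satisfies it without lying in $\N[\I_\bullet]$. You never actually use this claim --- your decisive argument goes through \eqref{eq:Upsilonblack} instead --- so you should simply delete that passage and the surrounding ``either/or'' scaffolding.

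By comparison, the paper argues that $\Upsilon_\mu(b^-\eta) = 0$ for $\mu \neq 0$ directly ``for weight reason, by Proposition~\ref{prop:wcirc}'': since $\Upsilon_\mu \in \U^+(w_\bullet w_0)$, any nonzero such $\mu$ has a positive component along some $i \in \I_\circ$, so $\lambda - |b| + \mu \not\le \lambda$ and the term vanishes in $L(\lambda)$. Your commutation approach avoids the PBW input of Proposition~\ref{prop:wcirc}; the paper's weight approach, on the other hand, generalizes immediately to the tensor-product setting (cf.\ the proof of Lemma~\ref{lem:etabotimeseta}), which is why it is phrased that way there.
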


\begin{proof}
We already know that $\psi (b^- \eta) = b^- \eta$ by definition. On the other hand, we 
have $\Upsilon (b^- \eta) = b^- \eta$ for weight reason, by Proposition~\ref{prop:wcirc}. 
So $\ipsi (b^- \eta) =\Upsilon \psi(b^- \eta) = b^- \eta.$ Note $\eta^{\bullet}$ is equal to $b^- \eta \in L(\lambda)$ for some particular $b$.
By the uniqueness, a canonical basis element which is $\ipsi$-invariant must be $\imath$-canonical. 
The corollary follows.
\end{proof}


\subsection{On QSP of Kac-Moody type} 
  \label{sec:KM}
  
  The theory of quantum symmetric pairs of Kac-Moody (KM) type is developed in Kolb \cite{Ko14}. 
As we follow Lusztig's book on quantum groups, we shall assume that the root datum is $X$-regular and $Y$-regular in the sense of \cite{Lu94}. 
We briefly comment on the extensions of  results in Section~\ref{sec:QSP}--\ref{sec:integral}  to QSPs of KM type. 

An $\imath$quantum group of KM type is called {\em locally finite} if all of its Levi subalgebras of real rank one 
have Satake diagrams listed in Table~\ref{table:localSatake}.

On Section~\ref{sec:QSP}. 
All constructions therein make sense for the quantum symmetric pairs $(\U, \Ui)$ of KM type 
(under the assumption ``$\nu_i=1$ for all $i \in \Iwhite$", which is conjectured to always be true in \cite[Conjecture~2.7]{BK14}). 
The conjecture ``$\nu_i=1$ for all $i \in \Iwhite$" holds for $\Ui$ of locally finite KM type, and the values $\vs_i$ are computed 
as in Table~\ref{table:values}. 
The study of $\imath$-canonical basis for general KM type leads to the question of 
studying in depth $\Ui$ of KM type of real rank one.

On Section~\ref{sec:braid}. 
The statements on braid group action
for $\Ui$ make sense for the quantum symmetric pairs $(\U, \Ui)$ of KM type and in more general parameters as in \cite{BK14} 
(under the assumption ``$\nu_i=1$ for all $i \in \Iwhite$" as above).  
However, the statement on the anti-involution $\wp$ for $\Ui$ of KM type 
requires the stronger Condition~\eqref{vs2} on parameters as explained in Remark~\ref{rem:characterization}. We refer to Remark~\ref{rem:BKvsBW} 
for the comparison of parameters and their constraints used in this paper and in \cite{BK14}. 

On Section~\ref{sec:integral}. The construction of $\imath$-canonical bases on based $\U$-modules 
can be extended to some class of QSP of KM type. We shall return to study the $\imath$-canonical bases arising from QSP of KM type in a separate work, as additional new ideas are needed to develop a comprehensive theory of $\imath$-canonical bases  in the KM case.


\section{Canonical basis for the modified $\imath$quantum group $\Uidot$}
  \label{sec:iCBU}
 
In this section we shall formulate and study a projective system of $\Ui$-modules $\big \{ L^\imath(\lambda+\nu^\tau, \mu+\nu)  \big \}_{\nu \in X^+}$, and establish the asymptotic compatibility of $\imath$-canonical bases between these modules. 
Then we construct the $\imath$-canonical basis on the modified $\imath$quantum group $\Uidot$
(Theorem~\ref{thm:iCBUi}).

\subsection{Based modules $L^{\imath} (\lambda, \mu)$}

In this section we shall consider the based submodule $L^\imath(\lambda,\mu) = \U(\eta_{\lambda}^{\bullet} \otimes \eta_{\mu})$
of $L(\lambda) \otimes L(\mu)$ introduced in \eqref{eq:Lbullet}, for $\lambda, \mu \in X^+$. 
Thanks to Corollary~\ref{cor:based} and Theorem~\ref{thm:iCBbased}, we already know 
the existence of the $\imath$-canonical basis on $L^\imath(\lambda,\mu)$. 
The main goal of this subsection is to improve the partial ordering $\le_\imath$ in Theorem~\ref{thm:iCBbased} (1) for the based module $L^\imath(\lambda,\mu)$. 

\begin{rem}
(1) When $\Iblack = \emptyset$, we have $L^{\imath} (\lambda, \mu) \cong L (\lambda+ \mu)$ canonically. 

(2) When $\Iblack = \I$, we have $L^{\imath} (\lambda, \mu) = {}^\omega L(-w_0 \lambda) \otimes L(\mu)$ 
since $\eta_{\lambda}^{\bullet} \otimes \eta_{\mu} = \xi_{w_0\lambda} \otimes \eta_{\mu}$. Then we are back to Lusztig's setting \cite[Chapter~25]{Lu94}. 
\end{rem}

 \begin{lem}
  \label{lem:Uietabotimeseta}
Let $\lambda, \mu \in X^+$. We have
\begin{enumerate}
\item
  $\Ui \etab_\la = L(\lambda)$ and $\Ui \eta_\la = L(\lambda)$;

\item 
$L^{\imath} (\lambda, \mu) = \U (\etab_{\lambda} \otimes \eta_{\mu}) = \Pa(\etab_{\lambda} \otimes \eta_{\mu}) = \Ui (\etab_{\lambda} \otimes \eta_{\mu}).$
\end{enumerate}
\end{lem}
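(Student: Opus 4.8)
\textbf{Proof proposal for Lemma~\ref{lem:Uietabotimeseta}.}

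The plan is to prove part (1) first and then deduce part (2) from it, using the chain of inclusions $\Ui(\etab_\lambda \otimes \eta_\mu) \subseteq \Pa(\etab_\lambda \otimes \eta_\mu) \subseteq \U(\etab_\lambda \otimes \eta_\mu)$ together with the definition $L^\imath(\lambda,\mu) = \U(\etab_\lambda \otimes \eta_\mu)$. For part (1), the statement $\Ui\eta_\lambda = L(\lambda)$ should follow from the surjectivity of the multiplication map $m: \Ui^-\otimes\Ui^0\otimes\Ui^+ \to \Ui$ recorded earlier, because then $\Ui\eta_\lambda = \Ui^-\Ui^+\eta_\lambda = \Ui^-\U_{\Iblack}^+\eta_\lambda = \Ui^-\eta_\lambda$ (using $\U_{\Iblack}^+\eta_\lambda = \Qq\eta_\lambda$ since $\eta_\lambda$ is the highest weight vector), and $\Ui^-\eta_\lambda = L(\lambda)$ is precisely what Claim~($\star$) in the proof of Theorem~\ref{thm:intUpsilon} establishes via the height induction replacing $F_i$ by $\ff_i$. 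For the first assertion $\Ui\etab_\lambda = L(\lambda)$, I would invoke the $\Ui$-module isomorphism $\mc T : L(\lambda) \to {}^\omega L(\lambda^\tau)$ of Theorem~\ref{thm:mcT}, which sends $\etab_\lambda \mapsto \xi_{-\lambda^\tau}$; since $\xi_{-\lambda^\tau}$ is the lowest weight vector of ${}^\omega L(\lambda^\tau)$ and hence a cyclic vector for the $\U$-module (a fortiori, it suffices to show it generates as a $\Ui$-module), one reduces to showing $\Ui\,\xi_{-\lambda^\tau} = {}^\omega L(\lambda^\tau)$, which is the ``$\omega$-twisted'' mirror of $\Ui\eta_{\lambda^\tau}=L(\lambda^\tau)$ — alternatively, just note $\etab_\lambda = \T_{w_\bullet}^{-1}(\eta_\lambda)$ by \eqref{eq:eta-b}, that $\U_{\Iblack}\eta_\lambda$ is the simple $\U_{\Iblack}$-submodule with extremal vectors $\eta_\lambda, \etab_\lambda$, so $\eta_\lambda \in \U_{\Iblack}\etab_\lambda \subseteq \Ui\etab_\lambda$, whence $\Ui\etab_\lambda \supseteq \Ui\eta_\lambda = L(\lambda)$.

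For part (2), the outer equality $L^\imath(\lambda,\mu) = \U(\etab_\lambda\otimes\eta_\mu)$ is the definition \eqref{eq:Lbullet}. It remains to show $\Ui(\etab_\lambda\otimes\eta_\mu) = \U(\etab_\lambda\otimes\eta_\mu)$ and that $\Pa(\etab_\lambda\otimes\eta_\mu)$ is sandwiched between them; since $\Ui \subseteq \Pa \subseteq \U$, it is enough to prove $\Ui(\etab_\lambda\otimes\eta_\mu) = \U(\etab_\lambda\otimes\eta_\mu)$. Here I would use that $\Delta : \Ui \to \Ui\otimes\U$ is a coideal, so $\Ui$ acts on the tensor product, and that $\U = \Ui\,\U^+$ as a left $\Ui$-module (a consequence of the triangular-type decomposition of $\U$ and the fact that $\ff_i = F_i + (\text{terms in }\U^{\geq 0})$, combined with $\U^0 \subseteq \Ui\cdot(\text{units})$ after passing to weight spaces); then $\U(\etab_\lambda\otimes\eta_\mu) = \Ui\,\U^+(\etab_\lambda\otimes\eta_\mu)$ and one checks $\U^+(\etab_\lambda\otimes\eta_\mu) \subseteq \Ui(\etab_\lambda\otimes\eta_\mu)$. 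A cleaner route: by the Remark following Corollary~\ref{cor:based}, $L^\imath(\lambda,\mu) = (L(\lambda)\otimes L(\mu))[\geq w_\bullet(\lambda+\mu)]$, and since $\etab_\lambda\otimes\eta_\mu$ has weight $w_\bullet\lambda+\mu$ which is the unique maximal weight in this based submodule, it suffices to show the $\Ui$-submodule it generates already contains a spanning set; one argues by downward induction on weight, at each step using the action of the $\ff_i$ and $E_j$ ($j\in\Iblack$) together with part (1) applied to the $\U_{\Iblack}$-isotypic structure.

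The main obstacle I anticipate is the step $\U = \Ui\,\U^+$ (equivalently, $\U(\etab_\lambda\otimes\eta_\mu) = \Ui(\etab_\lambda\otimes\eta_\mu)$): a priori $\Ui$ is only a coideal subalgebra with a complicated (non-triangular) structure, and $\etab_\lambda\otimes\eta_\mu$ is not a highest weight vector for $\U$, so one cannot simply quote the PBW argument used for $L(\lambda)$. The resolution should be to lift Claim~($\star$) from $L(\lambda)$ to the tensor product: express a general $F$-monomial applied to $\etab_\lambda\otimes\eta_\mu$, and rewrite each $F_{i}$ with $i\in\Iwhite$ as $\ff_i$ minus higher-$\U^{\geq 0}$-weight corrections using \eqref{eq:def:ff}, noting that $\Delta(\ff_i)$ acts with leading term $F_i$ on the left tensor factor and that the correction terms $\vs_i\T_{w_\bullet}(E_{\tau i})\tK_i^{-1} + \kappa_i\tK_i^{-1}$ lower height; a height induction identical in spirit to Claim~($\star$), but now tracking the $\U_{\Iblack}\otimes 1$-action on $\etab_\lambda$ (which is where $\etab_\lambda$ rather than $\xi_{w_0\lambda}$ matters), then closes the argument. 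The coideal property $\Delta(\Ui)\subseteq\Ui\otimes\U$ is what makes the tensor-product action well-defined throughout; no result beyond those stated in the excerpt (Theorem~\ref{thm:mcT}, Claim~($\star$), \cite[Propositions~6.1,~6.2]{Ko14}, \eqref{eq:eta-b}, Corollary~\ref{cor:based}) is needed.
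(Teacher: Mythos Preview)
Your proposal contains a genuine error and misses the key tool that makes the paper's argument short.

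First, the inclusion $\Ui \subseteq \Pa$ that you use to set up the sandwich $\Ui(\etab_\lambda\otimes\eta_\mu)\subseteq\Pa(\etab_\lambda\otimes\eta_\mu)\subseteq\U(\etab_\lambda\otimes\eta_\mu)$ is \emph{false}. For $i\in\Iwhite$ the generator $\ff_i = F_i + \vs_i\T_{w_\bullet}(E_{\tau i})\tK_i^{-1}+\kappa_i\tK_i^{-1}$ contains the term $\T_{w_\bullet}(E_{\tau i})$, which lies in $\U^+$ but has weight $w_\bullet(\tau i')\notin\N[\Iblack]$ and therefore is not in $\Pa$. So $\Ui\not\subseteq\Pa$, and the chain collapses. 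Consequently the decomposition ``$\U=\Ui\,\U^+$'' that you flag as the main obstacle is not the right way to reach $\Pa(\etab_\lambda\otimes\eta_\mu)=\Ui(\etab_\lambda\otimes\eta_\mu)$ either.

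The paper instead proves (2) directly and obtains (1) as the special case $\lambda=0$ or $\mu=0$. Two ingredients make (2) immediate. For $\U(\etab_\lambda\otimes\eta_\mu)=\Pa(\etab_\lambda\otimes\eta_\mu)$ one simply observes that $E_i(\etab_\lambda\otimes\eta_\mu)=0$ for every $i\in\Iwhite$: indeed $E_i\eta_\mu=0$, and $E_i\etab_\lambda=0$ because $w_\bullet\lambda+i'\not\le\lambda$ (the difference $\lambda-w_\bullet\lambda$ lies in $\N[\Iblack]$). Hence only $E_j$ with $j\in\Iblack$ act nontrivially, and the $\U$-cyclic module equals the $\Pa$-cyclic one. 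For $\Pa(\etab_\lambda\otimes\eta_\mu)=\Ui(\etab_\lambda\otimes\eta_\mu)$ the paper invokes Lemma~\ref{lem:pimath}: the action of $\Uidot\one_{\overline{w_\bullet\lambda+\mu}}$ on $\etab_\lambda\otimes\eta_\mu$ factors through the quotient $\Udot\one_\zeta/\Udot\,\U^+(w^\bullet)_{>}\one_\zeta\cong\Padot\one_\zeta$ in \eqref{eq:comp3} (precisely because $E_i(\etab_\lambda\otimes\eta_\mu)=0$ for $i\in\Iwhite$), and Lemma~\ref{lem:pimath} says $p_\imath:\Uidot\one_{\overline\zeta}\to\Padot\one_\zeta$ is an isomorphism. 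This is the step your height-induction sketch is trying to rediscover by hand; Lemma~\ref{lem:pimath} already encodes it, and its proof is exactly the ``replace $F_i$ by $\ff_i$ plus lower terms'' induction you outline.

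Your arguments for part (1) are fine on their own (in particular, the observation that $\eta_\lambda\in\U_{\Iblack}\etab_\lambda\subseteq\Ui\etab_\lambda$ reducing $\Ui\etab_\lambda=L(\lambda)$ to $\Ui\eta_\lambda=L(\lambda)$ is correct), but they are unnecessary once (2) is established.
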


\begin{proof}
Part (1) is a special case of Part (2) by taking $\lambda =0$ or $\mu =0$, and so let us prove (2).
It follow by definition that $L^{\imath} (\lambda, \mu) = \U (\etab_{\lambda} \otimes \eta_{\mu})$, which is equal to
$\Pa(\etab_{\lambda} \otimes \eta_{\mu})$ thanks to $E_i (\etab_{\lambda} \otimes \eta_{\mu}) =0$ for $i\in \Iwhite$.
Since the action of $\Uidot  \one_{\overline{\wb \lambda+\mu}}$ on $\etab_{\lambda} \otimes \eta_{\mu}$ factors through the projection \eqref{eq:comp3}, 
it follows by Lemma~\ref{lem:pimath} that 
$\Uidot (\etab_{\lambda} \otimes \eta_{\mu}) =\Pa(\etab_{\lambda} \otimes \eta_{\mu})$. The lemma is proved. 
%
%
%
%
%
%
\end{proof}

\begin{lem}
   \label{lem:etabotimeseta}
 Let $\lambda, \mu \in X^+$. 
For any $b \in \B_{ \I_\bullet} (\lambda)$, the element $(b^- \eta_\lambda) \otimes \eta_\mu \in L(\lambda) \otimes L(\mu)$
is an $\imath$-canonical basis element. In particular, $\etab_{\lambda} \otimes \eta_\mu = \T^{-1}_{\wb} (\eta_{\lambda}) \otimes \eta_{\mu}$ is an $\imath$-canonical basis element. 
\end{lem}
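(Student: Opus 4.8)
The statement asserts that for $b\in\B_{\I_\bullet}(\lambda)$, the vector $(b^-\eta_\lambda)\otimes\eta_\mu$ is an $\imath$-canonical basis element of the based $\U$-module $L(\lambda)\otimes L(\mu)$; as a special case (taking $b$ to be the element with $b^-\eta_\lambda=\etab_\lambda$, cf. \eqref{eq:eta-b}), $\etab_\lambda\otimes\eta_\mu=\T^{-1}_{\wb}(\eta_\lambda)\otimes\eta_\mu$ is $\imath$-canonical. By the uniqueness in Theorem~\ref{thm:iCBbased}, it suffices to show two things: that $(b^-\eta_\lambda)\otimes\eta_\mu$ already lies in Lusztig's canonical basis $\B(\lambda,\mu)$, and that it is $\psi_\imath$-invariant.

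First I would recall from \cite[27.3]{Lu94} and the description of $\B(\lambda,\mu)$ in Section~\ref{sec:prelim} that for $b_1\in\B(\lambda)$, $b_2\in\B(\mu)$, the canonical basis element $b_1^-\eta_\lambda\diamondsuit b_2^-\eta_\mu$ equals $b_1^-\eta_\lambda\otimes b_2^-\eta_\mu$ plus a $q^{-1}\Z[q^{-1}]$-combination of terms $b_1'^-\eta_\lambda\otimes b_2'^-\eta_\mu$ with $|b_1'|\ge|b_1|$ and $|b_2'|\le|b_2|$. Taking $b_1=b$ and $b_2=1$ (so $b_2^-\eta_\mu=\eta_\mu$ has the highest possible weight $\mu$), the constraint $|b_2'|\le|b_2|=0$ forces $b_2'=1$, and then $|b_1'|\ge|b_1|=|b|$ together with the weight constraint forces $b_1'=b$. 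Hence the correction terms all vanish and $b^-\eta_\lambda\diamondsuit\eta_\mu=b^-\eta_\lambda\otimes\eta_\mu$, so $(b^-\eta_\lambda)\otimes\eta_\mu\in\B(\lambda,\mu)$; in particular it is $\psi$-invariant.

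Next I would verify $\psi_\imath$-invariance. Since $\psi_\imath=\Upsilon\circ\psi$ and $\psi(b^-\eta_\lambda\otimes\eta_\mu)=b^-\eta_\lambda\otimes\eta_\mu$, it remains to show $\Upsilon$ fixes this vector. By Theorem~\ref{thm:Upsilon}, $\Upsilon=\sum_\mu\Upsilon_\mu$ with $\Upsilon_\mu\in\U^+_\mu$ and $\Upsilon_0=1$; moreover by Proposition~\ref{prop:wcirc}, $\Upsilon_\mu\in\U^+(w_\bullet w_0)$. The key point is a weight argument: $b^-\eta_\lambda\otimes\eta_\mu$ has $X$-weight $|b|_\lambda+\mu$ where $|b|_\lambda$ is a $W_{\I_\bullet}$-translate of $\lambda$ (since $b\in\B_{\I_\bullet}(\lambda)$, the vector $b^-\eta_\lambda$ lies in the $\U_{\I_\bullet}$-submodule ${}^\omega L^\bullet$-type piece, with weight $\ge\wb\lambda$ in the $\I_\bullet$-directions). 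Acting by $\Upsilon_\mu$ for $\mu\ne0$ with $\mu\in\N[\I]$ would raise the weight; but since $E_i$ annihilates $b^-\eta_\lambda\otimes\eta_\mu$ for all $i\in\Iwhite$ (because $\etab_\lambda\otimes\eta_\mu$ and more generally $b^-\eta_\lambda\otimes\eta_\mu$ with $b\in\B_{\I_\bullet}$ is a highest weight vector for $\Iwhite$, cf. the argument in Lemma~\ref{lem:Uietabotimeseta}), and since $\Upsilon_\mu\in\U^+(w_\bullet w_0)$ involves only PBW monomials with at least one factor corresponding to a root outside $R_\bullet$ (i.e. involving some $E_i$, $i\in\Iwhite$) when $\mu\ne0$, we get $\Upsilon_\mu(b^-\eta_\lambda\otimes\eta_\mu)=0$ for $\mu\ne 0$. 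Hence $\Upsilon(b^-\eta_\lambda\otimes\eta_\mu)=b^-\eta_\lambda\otimes\eta_\mu$, so $\psi_\imath$ fixes it, and by uniqueness (Theorem~\ref{thm:iCBbased}, noting the transition matrix condition \eqref{iCB} is trivially satisfied) it is the $\imath$-canonical basis element labelled by itself.

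\textbf{Main obstacle.} The delicate point is the weight/annihilation argument showing $\Upsilon_\mu$ kills $b^-\eta_\lambda\otimes\eta_\mu$ for $\mu\ne 0$. One must argue carefully that every PBW monomial appearing in $\Upsilon_\mu\in\U^+(w_\bullet w_0)$, when $\mu\ne0$, ends in (i.e. has a rightmost factor) an operator that raises weight in a direction not available from $b^-\eta_\lambda\otimes\eta_\mu$ — this uses both Proposition~\ref{prop:wcirc} (the support of $\Upsilon_\mu$) and the fact that $E_i(b^-\eta_\lambda\otimes\eta_\mu)=0$ for $i\in\Iwhite$, which itself should be extracted from the proof of Lemma~\ref{lem:Uietabotimeseta} or checked directly using $\Delta(E_i)=E_i\otimes1+\tK_i\otimes E_i$ together with $E_i(b^-\eta_\lambda)=0$ and $E_i\eta_\mu=0$ for $i\in\Iwhite$. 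Everything else is bookkeeping with the partial order $\le_\imath$ and the uniqueness statement.
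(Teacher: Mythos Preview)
Your overall strategy matches the paper's: show $(b^-\eta_\lambda)\otimes\eta_\mu$ is already in Lusztig's canonical basis $\B(\lambda,\mu)$ and is $\psi_\imath$-invariant. Your first step (via the partial-order description of $\B(\lambda,\mu)$) is fine; the paper instead observes $\Theta((b^-\eta_\lambda)\otimes\eta_\mu)=(b^-\eta_\lambda)\otimes\eta_\mu$ directly (since $\Theta_\nu^+\eta_\mu=0$ for $\nu\ne0$), but either works.

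The gap is in your treatment of $\Upsilon$-invariance. Your ``rightmost factor'' argument is not correct as stated: the PBW monomials in $\U^+(w_\bullet w_0)$ are products of root vectors $\T_{i_1\cdots i_{j-1}}(E_{i_j})$, not of Chevalley generators, so knowing $E_i(b^-\eta_\lambda\otimes\eta_\mu)=0$ for $i\in\Iwhite$ does not immediately let you conclude that each such monomial annihilates the vector. Moreover a pure weight argument on the tensor product is awkward, since $L(\lambda)\otimes L(\mu)$ has many weights above $\wb\lambda+\mu$.

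The paper's fix is a one-line reduction you overlooked: since $\Upsilon\in\widehat{\U}^+$ and $\eta_\mu$ is a highest weight vector, the coproduct \eqref{eq:Delta} gives $x(v\otimes\eta_\mu)=(xv)\otimes\eta_\mu$ for any $x\in\U^+$, hence
\[
\Upsilon\big((b^-\eta_\lambda)\otimes\eta_\mu\big)=(\Upsilon\, b^-\eta_\lambda)\otimes\eta_\mu.
\]
Now the weight argument works cleanly on $L(\lambda)$ alone: $\Upsilon_\nu\in\U^+(w_\bullet w_0)$ for $\nu\ne0$ has weight $\nu$ with a nonzero $\Iwhite$-component (Proposition~\ref{prop:wcirc}), while $b^-\eta_\lambda$ has weight $\lambda-|b|$ with $|b|\in\N[\Iblack]$, so $\lambda-|b|+\nu\not\le\lambda$ and $\Upsilon_\nu b^-\eta_\lambda=0$. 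This is exactly Corollary~\ref{cor:etab}, which the paper invokes implicitly.
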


\begin{proof}
To prove the first statement, it suffices to check that $(b^- \eta_\lambda) \otimes \eta_\mu$ is a Lusztig canonical basis element and $\ipsi$-invariant. 
Indeed, since
$\psi ((b^- \eta_\lambda) \otimes \eta_\mu ) =\Theta ((b^- \eta_\lambda) \otimes \eta_\mu ) =
(b^- \eta_\lambda) \otimes \eta_\mu$, $(b^- \eta_\lambda) \otimes \eta_\mu$ is a Lusztig canonical basis element. 
On the other hand, we have $\ipsi ( (b^- \eta_\lambda) \otimes \eta_\mu) =
\Upsilon ((b^- \eta_\lambda) \otimes \eta_\mu)  = (\Upsilon b^- \eta_\lambda) \otimes \eta_\mu  =(b^- \eta_\lambda) \otimes \eta_\mu$ 
for weight reason. 
Also recall from \eqref{eq:eta-b}
that $\eta^\bullet_\la = \T^{-1}_{\wb} (\eta_\lambda)$ is a canonical basis element of $L(\la)$. The lemma follows. 
\end{proof}

For $N\ge 0$, let $P(N)$ be the $\Qq$-subspace of $\Udot$ spanned by elements of the form 
$b^+_1 b^-_2 \one_{\zeta}$ for $\zeta \in X, (b_1, b_2) \in \B_{\Iblack}   \times \B$ such that $\rm{ht} (|b_2|) \le N$. It is clear that $P(N)\cap \Bdot$ is a 
canonical basis of $P(N)$.

\begin{lem}\label{lem:P(N)}
For $\lambda,\mu \in X^+$, the subspace $P(N)  (\eta^\bullet_\lambda \otimes \eta_\mu)$ of
$ L(\lambda) \otimes L(\mu)$ is stable under the action of $\ipsi$.
\end{lem}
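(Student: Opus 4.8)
The plan is to reduce the stability of $P(N)(\eta^\bullet_\lambda \otimes \eta_\mu)$ under $\ipsi$ to the stability of the tensor-product $\mA$-lattice under $\ipsi$ together with a weight/triangularity estimate. Recall that $\ipsi = \Upsilon \circ \psi$, where $\psi$ is Lusztig's bar involution on $L(\lambda) \otimes L(\mu)$, and $\Upsilon \in {}_\mA \widehat{\U}^+$ by Theorem~\ref{thm:intUpsilon}. First I would observe that $P(N)(\eta^\bullet_\lambda \otimes \eta_\mu)$ has an explicit spanning set: by definition $P(N)$ is spanned by $b_1^+ b_2^- \one_\zeta$ with $b_1 \in \B_{\Iblack}$, $b_2 \in \B$ and $\hgt(|b_2|) \le N$, so applying these to $\eta^\bullet_\lambda \otimes \eta_\mu$ and using that $\eta^\bullet_\lambda = \T^{-1}_{\wb}(\eta_\lambda)$ is itself a canonical (indeed $\imath$-canonical) basis vector by Corollary~\ref{cor:etab} and Lemma~\ref{lem:etabotimeseta}, one sees that $P(N)(\eta^\bullet_\lambda \otimes \eta_\mu)$ is the span of a subset of the canonical basis $\B(\lambda,\mu)$ of $L(\lambda) \otimes L(\mu)$ — namely those $b_1^- \eta_\lambda \diamondsuit b_2^- \eta_\mu$ (with the convention $\eta^\bullet_\lambda$ replacing the role normally played by $\xi_{w_0\lambda}$, cf. Proposition~\ref{prop:CBUdot}) whose ``$\U^-$-degree'' on the $L(\mu)$-factor, measured by $\hgt$, is at most $N$.

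The key point is then that this subset of $\B(\lambda,\mu)$ is an ideal under the partial order $\le_\imath$ in the sense that $\ipsi$ maps it into its own span. Concretely: since $\Upsilon = \sum_\mu \Upsilon_\mu$ with $\Upsilon_\mu \in \U^+_\mu$ and $\Upsilon_\mu = 0$ unless $\mu^\inv = -\mu$, acting by $\Upsilon$ can only raise a weight by an element of $\N[\I]$ that lies in the ``$\imath$-null'' directions; combined with Proposition~\ref{prop:wcirc} ($\Upsilon_\mu \in \U^+(w_\bullet w_0)$) this constrains which canonical basis vectors appear in $\ipsi(b_1^- \eta_\lambda \diamondsuit b_2^- \eta_\mu)$. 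The crucial estimate I would isolate is that the $\hgt$ of the $L(\mu)$-component does not increase: acting by $\U^+(w_\bullet w_0)$ on the $\eta_\mu$-slot of the tensor product, after commuting $E_i$'s past via the coproduct formula \eqref{eq:Delta}, either annihilates or leaves the $\U^-$-filtration degree on $L(\mu)$ unchanged or decreased, because $\psi$ itself is triangular with respect to the canonical basis (Lemma~\ref{lem:ipsipartialodering}) and the matrix coefficients of $\ipsi$ are governed by $\le_\imath$, under which $\mu' \le_\imath \mu$ forces $\mu - \mu' \in \N[\I] \cap \N[\wb\I]$ — a condition incompatible with increasing the $L(\mu)$-weight drop past $N$. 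Thus every canonical basis vector appearing in $\ipsi(x)$ for $x$ a spanning vector of $P(N)(\eta^\bullet_\lambda \otimes \eta_\mu)$ again lies in $P(N)(\eta^\bullet_\lambda \otimes \eta_\mu)$.

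The main obstacle I anticipate is making the $\hgt$-monotonicity precise: one must verify that the action of $\Upsilon$ (which lives in a completion of $\U^+$) on $\eta^\bullet_\lambda \otimes \eta_\mu$, expanded via $\Delta$, never produces a summand whose $L(\mu)$-factor sits in a lower weight space (larger $\hgt$ of the $\U^-$-degree) than $N$. I would handle this by writing, for $b_2 \in \B$ with $\hgt(|b_2|) \le N$, the vector $b_2^- \eta_\mu$ and tracking how $\Upsilon_\nu \cdot (\text{canonical vector of } L(\lambda) \otimes L(\mu))$ decomposes: since $\Upsilon_\nu$ only involves $E_i$ with $i$ ranging so that $\nu \in \N[\wb\I]$-type directions (by the $\mu^\inv = -\mu$ constraint and Proposition~\ref{prop:wcirc}), and since $E_i$ acting through $\widetilde{K}_i \otimes E_i + E_i \otimes 1$ either stays in $L^\imath(\lambda,\mu)$ or lowers the relevant filtration, the filtration degree on the $L(\mu)$-slot cannot exceed $N$. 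Combining this with the triangularity of $\psi$ with respect to $\B(\lambda,\mu)$, the conclusion follows. I would present this as a short lemma on the $\U^+(w_\bullet w_0)$-action preserving the $P(N)$-filtration, then deduce the stated stability.
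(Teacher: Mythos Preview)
Your approach has a genuine gap, and in fact the logic is close to circular relative to the paper's development. You want to argue that $P(N)(\eta^\bullet_\lambda\otimes\eta_\mu)$ is the span of a set of canonical basis elements that is downward-closed for the partial order governing $\ipsi$, and then invoke Lemma~\ref{lem:ipsipartialodering}. But the order in Lemma~\ref{lem:ipsipartialodering} is the \emph{coarse} order $\le_\imath$ on $X$ from \eqref{eq:iorder}, which only constrains the \emph{total} weight $|b_1|-|b_2|$; it says nothing about $|b_2|$ alone. Concretely, a term with large $|b_1'|\in\N[\Iblack]$ and large $|b_2'|$ can have the same (or higher) total weight as one with small $|b_1|,|b_2|$, so downward-closure for $\le_\imath$ does \emph{not} imply $\hgt(|b_2'|)\le N$. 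The statement that $\ipsi$ is triangular for the \emph{refined} order of Definition~\ref{def:partialorderingBtimesB} (which does include the condition $|b_2'|\le|b_2|$) is exactly Lemma~\ref{lem:iCBtensorproductrefinement}, and that lemma is proved \emph{using} Lemma~\ref{lem:P(N)}. So appealing to it here would be circular. Your alternative heuristic---that $\Upsilon\in\widehat{\U^+}$ and $\Theta\in\widehat{\U^-\otimes\U^+}$ can only raise the weight on the $L(\mu)$-tensorand---is a correct observation, but you never establish that $P(N)(\eta^\bullet_\lambda\otimes\eta_\mu)$ coincides with the subspace of $L^\imath(\lambda,\mu)$ cut out by an $L(\mu)$-weight bound; without that identification the observation does not close the argument.

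The paper's proof avoids all of this by a direct induction on $N$ that never mentions $\Upsilon$, canonical bases, or weight filtrations on tensor factors. The two reductions are: (i) since $b_1^+\in\U^+_{\Iblack}\subset\Ui$ is $\ipsi$-fixed, one may assume $b_1=1$; (ii) for $u=F_{i_1}^{a_1}\cdots F_{i_k}^{a_k}$ with $\sum a_j=N$, compare with the $\ipsi$-fixed vector $\ff_{i_1}^{a_1}\cdots\ff_{i_k}^{a_k}(\eta^\bullet_\lambda\otimes\eta_\mu)$. By \eqref{eq:def:ff} each $\ff_i-F_i$ lies in $\U^+\U^0$, so expanding the product shows the difference lands in $P(N-1)(\eta^\bullet_\lambda\otimes\eta_\mu)$, and the inductive hypothesis finishes. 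This is both shorter and logically prior to the refined triangularity you were trying to invoke.
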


\begin{proof}
Observe by Lemma~\ref{lem:etabotimeseta} 
that $\ipsi(b_1^+b^-(\eta^\bullet_\lambda \otimes \eta_\mu) ) =b_1^+\ipsi(b^-(\eta^\bullet_\lambda \otimes \eta_\mu) )$ for $(b_1, b) \in \B_{\Iblack}   \times \B$.
Hence it suffices to show that 
$$\ipsi(u (\eta^\bullet_\lambda \otimes \eta_\mu) ) \in P(N)  (\eta^\bullet_\lambda \otimes \eta_\mu),
\qquad \text{ for } u  \in \U^-_\nu \text{ with } \rm{ht} (\nu) \le N.
$$

We prove this by induction on $N$. When $N=0$, the statement is trivial.
Let us prove for $u  = F^{a_1}_{i_1} F^{a_2}_{i_2} \cdots F^{a_k}_{i_k}$ with 
$\sum^k_{i=1}{a_i} =N$ that $\ipsi \big(u (\eta^\bullet_\lambda \otimes \eta_\mu) \big) \in  P(N)  ( \eta^\bullet_\lambda \otimes \eta_\mu)$. 
We shall compare $\ipsi\big(u (\eta^\bullet_\lambda \otimes \eta_\mu) \big)$ with $\ff^{a_1}_{i_1} \ff^{a_2}_{i_2} \cdots \ff^{a_k}_{i_k} (\eta^\bullet_\lambda \otimes \eta_\mu)$. 
Since $\ff_{i}$'s are $\ipsi$-invariant, it follows by \eqref{eq:def:ff} that 
\[
	\ipsi \big(u (\eta^\bullet_\lambda \otimes \eta_\mu) \big) \subset 
	\ff^{a_1}_{i_1} \ff^{a_2}_{i_2} \cdots \ff^{a_k}_{i_k} (\eta^\bullet_\lambda \otimes \eta_\mu) + \ipsi \big(P(N-1) (\eta^\bullet_\lambda \otimes \eta_\mu) \big).
\]
Hence the lemma follows by induction.
\end{proof}

Recall the partial order $\leq$ on $X$ from \eqref{eq:leq}.
\begin{definition}
  \label{def:partialorderingBtimesB}
We define a partial ordering $\ile$ on $\B \times \B$,
and hence on $\B_{\Iblack} \times \B$ as well by restriction,
by letting $(b'_1, b'_2) \ile (b_1, b_2) $, for $b, b' \in \B$, if and only if Conditions~(1)--(3) hold:
\begin{enumerate}
	\item	$\overline{|b'_1|  - | b'_2|} = \overline{|b_1|  - | b_2|}$ (in $X_\imath$); 
	\item	 $( |b'_1 | - |b'_2| )- (|b_1 | - |  b_2|) \in \N[\I] \cap \N[\wb \I]$;
	\item	$|b'_2| \le |b_2|$.
 \end{enumerate}
\end{definition}
We say $(b'_1, b'_2) <_\imath (b_1, b_2)$ if $(b'_1, b'_2) \ile (b_1, b_2)$ and $(b'_1, b'_2) \neq (b_1, b_2)$.
This partial ordering is compatible with the partial ordering $\ile$ on $\B$ (by taking $b_1 = b_1' =1$ above). 

\begin{lem}\label{lem:lefinite}
The partial ordering $\le_\imath$ on $\B_{\Iblack} \times \B$ is downward finite, that is, 
for any fixed $(b_1, b_2) \in \B_{\Iblack} \times \B$, there are only finitely many $(b_1', b_2' ) \in \B_{\Iblack} \times \B$ such that $(b_1', b_2') \le_\imath (b_1, b_2)$.
\end{lem}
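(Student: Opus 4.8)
\textbf{Proof proposal for Lemma~\ref{lem:lefinite}.} The plan is to unpack Definition~\ref{def:partialorderingBtimesB} and show that each of the three conditions constrains $(b_1', b_2')$ to lie in a finite set once $(b_1, b_2)$ is fixed. The key observation is that the relevant data attached to $b_1' \in \B_{\Iblack}$ and $b_2' \in \B$ are their weights $|b_1'| \in \N[\Iblack]$ and $|b_2'| \in \N[\I]$, and that the weight spaces of $\f$ (and of $\f_{\Iblack}$) are finite-dimensional, so only finitely many elements of $\B$ (resp.\ $\B_{\Iblack}$) share a given weight. Thus it suffices to show that there are only finitely many possible pairs of weights $(|b_1'|, |b_2'|)$ with $(b_1', b_2') \le_\imath (b_1, b_2)$.

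First I would use Condition~(3): $|b_2'| \le |b_2|$ means $|b_2| - |b_2'| \in \N[\I]$, and since $|b_2'| \in \N[\I]$ as well, there are only finitely many choices for $|b_2'|$ (the interval $[0, |b_2|]$ in $\N[\I]$ is finite). So I may fix $|b_2'|$ among finitely many values. Next, having fixed $|b_2'|$, I would use Condition~(2): $(|b_1'| - |b_2'|) - (|b_1| - |b_2|) \in \N[\I] \cap \N[\wb\I]$. In particular $|b_1'| - |b_2'| - |b_1| + |b_2| \in \N[\I]$, i.e. $|b_1'| \ge |b_1| + |b_2'| - |b_2|$ in the partial order on $X$; combined with the intersection condition $\in \N[\wb\I]$, one also gets an upper bound. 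Concretely, writing $\mu = |b_1| - |b_2|$ and $\mu' = |b_1'| - |b_2'|$, Condition~(2) forces $\mu' - \mu \in \N[\I] \cap \N[\wb\I]$. The point is that $\N[\I] \cap \N[\wb\I]$ is a finitely generated submonoid contained in $\N[\I]$, but to get finiteness I need a bound on the height of $\mu' - \mu$. This comes from Condition~(2) read the other way: the element $\mu' - \mu$ lies in $\N[\wb\I]$, and since $\wb \I = \{\wb(i') : i \in \I\}$ consists of negatives of positive roots (as $\wb$ is the longest element of $W_{\Iblack}$ acting within $\Z[\Iblack]$ only... ) — here I should be careful. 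Let me instead argue via Condition~(1) plus a degree count.

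The cleanest route is the following. By Condition~(2), $\mu' - \mu \in \N[\I]$, so $\mathrm{ht}(\mu') \ge \mathrm{ht}(\mu)$; and also $\mu' - \mu \in \N[\wb\I]$. Since $\mathrm{ht}$ is additive and the elements $\wb(i')$ for $i \in \Iwhite$ have $\mathrm{ht}$ bounded in absolute value (they are $\tau$-images of roots up to $\N[\Iblack]$, by Definition~\ref{def:AP}, so in fact $\mathrm{ht}(\wb(i')) = \mathrm{ht}(i') + \mathrm{ht}(\text{something in }\N[\Iblack]) > 0$), writing $\mu' - \mu = \sum_{i} c_i \wb(i')$ with $c_i \in \N$ forces $\sum_i c_i \mathrm{ht}(\wb(i')) = \mathrm{ht}(\mu') - \mathrm{ht}(\mu)$. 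Now $\mathrm{ht}(\mu') = \mathrm{ht}(|b_1'|) - \mathrm{ht}(|b_2'|)$ with $|b_2'|$ already fixed, and $\mathrm{ht}(|b_1'|) \le \mathrm{ht}(|b_1'|)$; but I still need $|b_1'|$ bounded. For this I use that $\mu' - \mu \in \N[\I]$ reads $|b_1'| + (|b_2| - |b_2'|) = |b_1| + (\mu' - \mu)$, no—better: $\mu' \le' \mu'$... Rather than belabor the bookkeeping, the real content is: Condition~(2) says $\mu'$ lies in the set $\mu + (\N[\I] \cap \N[\wb\I])$, and independently $|b_1'| = \mu' + |b_2'|$ with $|b_2'|$ fixed, so it is enough to bound $\mathrm{ht}(\mu')$; this bound follows because $\N[\I] \cap \N[\wb\I]$, being a submonoid of $\N[\I]$ that also injects (via $\mathrm{ht}$) with strictly positive generators, combined with the constraint that $\mu' + |b_2'| \in \N[\Iblack]$ must also hold (since $|b_1'| \in \N[\Iblack]$), pins $\mu'$ into a bounded region. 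I expect the main obstacle to be exactly this last step — verifying that the simultaneous constraints ``$\mu' - \mu \in \N[\wb\I]$'' and ``$\mu' + |b_2'| \in \N[\Iblack]$'' leave only finitely many $\mu'$ — which should follow from a short linear-algebra argument on heights together with the structure of admissible pairs, but requires a careful choice of which positivity to exploit. Once all three weight-data $(|b_1'|, |b_2'|)$ are confined to finite sets, finite-dimensionality of weight spaces of $\f_{\Iblack}$ and $\f$ finishes the proof.
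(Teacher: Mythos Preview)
Your first step, bounding $|b_2'|$ via Condition~(3), is correct and matches the paper. The difficulty is entirely in bounding $|b_1'|$, and here you try to extract a bound from Condition~(2) together with the constraint $|b_1'| \in \N[\Iblack]$. As you yourself flag, you do not close this argument; the passage beginning ``Rather than belabor the bookkeeping'' ends with an admission that the finiteness of the set of possible $\mu'$ has not been verified. So the proposal has a genuine gap at exactly the point you identify.

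The paper's argument is much shorter because it uses Condition~(1) rather than Condition~(2) to bound $|b_1'|$. Once $|b_2'|$ is fixed, Condition~(1) says $\overline{|b_1'|} = \overline{|b_1| - |b_2| + |b_2'|}$ in $X_\imath$, so the image of $|b_1'|$ in $X_\imath$ is determined. Now the key structural point (implicit in the reference to \eqref{XY}) is that the natural map $\Z[\Iblack] \to X_\imath$ is injective: for $i \in \Iblack$ one has $\theta(i') = i'$ by admissibility condition~(2) in Definition~\ref{def:AP}, so $\Z[\Iblack]$ lies in the $\theta$-fixed part of $X$, while $\breve{X} = \{\lambda - \theta(\lambda)\}$ lies in the $(-1)$-eigenspace of $\theta$; hence $\Z[\Iblack] \cap \breve{X} = 0$. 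Since $|b_1'| \in \N[\Iblack]$, it is therefore uniquely determined by its image in $X_\imath$, and the weight space of $\f_{\Iblack}$ is finite-dimensional.

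Your Condition~(2) route can in fact be completed: the $\Iwhite$-coefficients of $\mu' - \mu$ are forced by $|b_1'| \in \N[\Iblack]$ (they equal the $\Iwhite$-coefficients of $|b_2| - |b_2'|$), and then writing $\mu' - \mu \in \N[\wb\I]$ with those white coefficients fixed bounds $\mu' - \mu$ above by a fixed element of $\N[\I]$, since the $\Iblack$-part of the $\N[\wb\I]$-expansion contributes only in $-\N[\Iblack]$. But this is considerably more work than the one-line injectivity argument above.
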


\begin{proof}
Note that $|b_1'| \in \N[\Iblack]$ for any $b_1' \in \B_{\Iblack}$. 
Thanks to Definition~\ref{def:partialorderingBtimesB}(3), there are only finitely many $b'_2 \in \B$, such that $(b_1', b_2') \le_\imath (b_1, b_2)$. 
Now by Definition~\ref{def:partialorderingBtimesB}(1), we have $\overline{|b_1'|} = \overline{|b_1| - |b_2| + |b_2'|}$. 
There are only finitely many $b_1' \in \B_{\Iblack}$ satisfying this condition thanks to \eqref{XY}. 
\end{proof}
\begin{lem}
  \label{lem:iCBtensorproductrefinement}
Let $\lambda,\mu \in X^+$ and let $\zeta = \wb \lambda + \mu$. For any $(b_1, b_2) \in \B_{\Iblack} \times \B$, we have 
\[
\ipsi \Big((b_1 \diamondsuit_{\zeta} b_2 ) (\eta_\lambda^\bullet \otimes \eta_{\mu}) \Big) = (b_1 \diamondsuit_{\zeta} b_2 ) 
(\eta_\lambda^\bullet \otimes \eta_{\mu}) + \!\!\! \sum_{(b'_1, b'_2) <_\imath (b_1, b_2)} \!\!\!f(b_1, b_2 ; b'_1, b'_2)(b'_1 \diamondsuit_{\zeta} b'_2 ) 
(\eta_\lambda^\bullet \otimes \eta_{\mu}),
\]
where $f(b_1, b_2 ; b'_1, b'_2) \in  \mA$. 
\end{lem}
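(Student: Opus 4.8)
\textbf{Proof proposal for Lemma~\ref{lem:iCBtensorproductrefinement}.}

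The plan is to combine the structural facts already established: that $\eta_\lambda^\bullet \otimes \eta_\mu$ is an $\imath$-canonical basis element (Lemma~\ref{lem:etabotimeseta}), that the subspaces $P(N)(\eta_\lambda^\bullet \otimes \eta_\mu)$ are $\ipsi$-stable (Lemma~\ref{lem:P(N)}), that $\Upsilon$ lies in a completion of $\U^+(w_\bullet w_0)$ (Proposition~\ref{prop:wcirc}), and that $\Upsilon$ is integral (Theorem~\ref{thm:intUpsilon}). First I would record that, by Proposition~\ref{prop:CBUdot} and the definition of $b_1 \diamondsuit_\zeta b_2$, the element $(b_1 \diamondsuit_\zeta b_2)(\eta_\lambda^\bullet \otimes \eta_\mu)$ equals $b_1^+ \eta_\lambda^\bullet \diamondsuit b_2^- \eta_\mu$ inside $L(\lambda)\otimes L(\mu)$ (after choosing $\lambda,\mu$ compatibly with $\zeta$, which here they are since $\zeta = \wb\lambda+\mu$), hence lies in $L^\imath(\lambda,\mu)$ by Lemma~\ref{lem:Uietabotimeseta}(2); also it is a Lusztig canonical basis element of $L(\lambda)\otimes L(\mu)$, so it is $\psi$-invariant. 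Then $\ipsi$ applied to it equals $\Upsilon$ applied to it.

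Next I would expand $\Upsilon\big(b_1^+\eta_\lambda^\bullet \diamondsuit b_2^-\eta_\mu\big)$. Since $b_1 \in \B_{\Iblack}$, we have $b_1^+ \in \U_{\Iblack}^+$, so by the commutation relation \eqref{eq:Upsilonblack} we can pull $b_1^+$ past $\Upsilon$: $\Upsilon b_1^+ = b_1^+ \Upsilon$, reducing the problem to understanding $b_1^+ \Upsilon(\eta_\lambda^\bullet \diamondsuit b_2^-\eta_\mu)$. Writing $\Upsilon = \sum_\nu \Upsilon_\nu$ with $\Upsilon_\nu \in {}_\mA\U^+(w_\bullet w_0)$, I would expand $\Upsilon_\nu(\eta_\lambda^\bullet \diamondsuit b_2^-\eta_\mu)$ in the canonical basis of $L(\lambda)\otimes L(\mu)$. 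The weight of $\Upsilon_\nu$ lies in $\N[\I]\cap\N[\wb\I]$ (this is exactly what $\U^+(w_\bullet w_0)$ buys us, via the weight condition $\nu^\inv = -\nu$ from Theorem~\ref{thm:Upsilon} together with Proposition~\ref{prop:wcirc}), and acting by an element of $\U^+$ of weight $\nu$ raises the second-tensor-factor weight by $\nu$ in the sense that the canonical basis elements appearing have the form $b_1'^+\eta_\lambda^\bullet \diamondsuit b_2'^-\eta_\mu$ with $|b_2'| \le |b_2| - \nu \le |b_2|$; this uses the triangularity of the $\diamondsuit$-basis of the tensor product (the displayed formula for $\B(\lambda,\mu)$ in Section~\ref{sec:prelim}) and the action of $\U^+$. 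The $\imath$-weight $\overline{|b_1'|-|b_2'|}$ is preserved because $\Upsilon$ has trivial image in $X_\imath$. Collecting the conditions on the pairs $(b_1',b_2')$ that arise gives precisely the three conditions of Definition~\ref{def:partialorderingBtimesB}, and the $\Upsilon_0 = 1$ term reproduces the diagonal term $(b_1\diamondsuit_\zeta b_2)(\eta_\lambda^\bullet\otimes\eta_\mu)$ with coefficient $1$. Finiteness of the sum is guaranteed by Lemma~\ref{lem:lefinite}. Integrality of the coefficients $f(b_1,b_2;b_1',b_2') \in \mA$ follows from Theorem~\ref{thm:intUpsilon}, since $\Upsilon_\nu \in {}_\mA\U^+$, the canonical basis expansion of $\U^+$ acting on an $\mA$-form is integral, and ${}_\mA(L(\lambda)\otimes L(\mu))$ is spanned by the $\diamondsuit$-basis.

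The main obstacle I anticipate is not any single deep step but rather the careful bookkeeping of the weight and $\imath$-weight constraints to show exactly the partial ordering of Definition~\ref{def:partialorderingBtimesB} emerges — in particular verifying condition~(2), that $(|b_1'|-|b_2'|)-(|b_1|-|b_2|) \in \N[\I]\cap\N[\wb\I]$, which requires knowing both that the weight of $\Upsilon_\nu$ is $\tau$-$\wb$-symmetric (so lies in $\N[\wb\I]$ as well as $\N[\I]$) and that $b_1' \in \B_{\Iblack}$ forces $|b_1'| \in \N[\Iblack]$. One has to be slightly careful that $P(N)$-stability (Lemma~\ref{lem:P(N)}) is what controls condition~(3) uniformly as $b_1$ varies, so that the expansion stays within the span of $\diamondsuit$-elements with bounded second-factor height. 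I would therefore organize the argument by first proving the coefficient $f$ is supported on pairs satisfying (1) and (3) using Lemma~\ref{lem:P(N)} and the $\imath$-weight grading, and then upgrade to (2) using Proposition~\ref{prop:wcirc} and the $\tau$-symmetry of the weights of $\Upsilon$.
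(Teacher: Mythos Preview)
Your overall strategy is right---Lemma~\ref{lem:P(N)} for condition~(3), Theorem~\ref{thm:intUpsilon} for integrality, and the weight constraints on $\Upsilon$ for conditions~(1) and~(2)---and it matches the paper's ingredients. But the opening step contains a genuine error that undercuts the middle of your argument.

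You assert, citing Proposition~\ref{prop:CBUdot}, that $(b_1 \diamondsuit_\zeta b_2)(\eta_\lambda^\bullet \otimes \eta_\mu) = b_1^+ \eta_\lambda^\bullet \diamondsuit b_2^- \eta_\mu$. Proposition~\ref{prop:CBUdot} does not say this: it describes the action of $b_1 \diamondsuit_\zeta b_2$ on $\xi_{w_0\lambda}\otimes\eta_\mu$ with $w_0\lambda+\mu=\zeta$, not on $\eta_\lambda^\bullet\otimes\eta_\mu$ with $\wb\lambda+\mu=\zeta$. What is available here is Theorem~\ref{thm:Betabullet}, which only tells you that $(b_1 \diamondsuit_\zeta b_2)(\eta_\lambda^\bullet \otimes \eta_\mu)\in\B(\lambda,\mu)\cup\{0\}$, without identifying which canonical basis element it is. Your subsequent move---``pull $b_1^+$ past $\Upsilon$''---then conflates the $\diamondsuit$-label $b_1^+\eta_\lambda^\bullet \diamondsuit b_2^-\eta_\mu$ with an actual product $b_1^+\cdot(\eta_\lambda^\bullet\diamondsuit b_2^-\eta_\mu)$; these are not the same, so the commutation \eqref{eq:Upsilonblack} cannot be applied as written. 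The later claim that acting by $\Upsilon_\nu\in\U^+$ forces $|b_2'|\le|b_2|-\nu$ via ``triangularity of the $\diamondsuit$-basis'' is also not justified: the coproduct action of $\U^+$ mixes both tensor factors.

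The paper avoids all of this by never attempting to identify the specific canonical basis element. It simply observes that $(b_1 \diamondsuit_\zeta b_2)(\eta_\lambda^\bullet\otimes\eta_\mu)$ is a canonical basis element of the based $\U$-module $L^\imath(\lambda,\mu)$, of weight $|b_1|-|b_2|+\zeta$. Then Lemma~\ref{lem:ipsipartialodering} (applied to this based module, forgetting the tensor structure) gives the uni-triangular expansion of $\ipsi$ in the canonical basis with respect to the $\le_\imath$-order on weights; translating the weight condition back through $|b_1'|-|b_2'|+\zeta$ yields exactly conditions~(1) and~(2) of Definition~\ref{def:partialorderingBtimesB} and the leading coefficient $1$. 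Condition~(3) comes from Lemma~\ref{lem:P(N)} as you say, and integrality from Theorem~\ref{thm:intUpsilon}. So the fix for your argument is to drop the explicit identification and the commutation trick, and instead invoke Lemma~\ref{lem:ipsipartialodering} at the weight level.
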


\begin{proof}
Recall $\ipsi =\Upsilon \circ \psi$. By  Theorem~\ref{thm:intUpsilon} and Lemma~\ref{lem:P(N)}, we have 
\[
\ipsi \Big((b_1 \diamondsuit_{\zeta} b_2 ) (\eta_\lambda^\bullet \otimes \eta_{\mu}) \Big) = 
\sum_{|b'_2| \le |b_2|} f(b_1, b_2 ; b'_1, b'_2)(b'_1 \diamondsuit_{\zeta} b'_2 ) (\eta_\lambda^\bullet \otimes \eta_{\mu}),
\]
where $f(b_1, b_2 ; b'_1, b'_2) \in \mA$ and $(b'_1, b'_2) \in \B_{\Iblack} \times \B$. 

Note that we can first view $L^{\imath} (\lambda, \mu)$ as a based $\U$-module, ignoring the tensor product structure. 
Note that by definition we have $| (b_1 \diamondsuit_{\zeta} b_2 ) (\eta_\lambda^\bullet \otimes \eta_{\mu})  | = |b_1| - |b_2| + \zeta$. 
Hence conditions $(1)$ and $(2)$ can be translated to $ (b'_1 \diamondsuit_{\zeta} b'_2 ) (\eta_\lambda^\bullet \otimes \eta_{\mu})   \le_\imath  (b_1 \diamondsuit_{\zeta} b_2 ) (\eta_\lambda^\bullet \otimes \eta_{\mu}) $. Here $\le_\imath$ is the partial ordering on based $\U$-modules defined in Section~\ref{subsec:imathpartial}. We abuse the notation due to compatibility. 
Therefore thanks to Lemma~\ref{lem:ipsipartialodering}, we see that $f(b_1, b_2 ; b_1, b_2)=1$,
and $f(b_1, b_2 ; b'_1, b'_2) =0$ unless Conditions (1)--(2) in Definition~\ref{def:partialorderingBtimesB} hold. Condition (3) in Definition~\ref{def:partialorderingBtimesB} 
follows from Lemma~\ref{lem:P(N)}.
\end{proof}

Now we obtain the following refinement of Theorem~\ref{thm:iCBbased} for the based module $L^\imath(\lambda, \mu)$.
\begin{prop}
  \label{prop:iCBtensorproduct}
Let $\lambda, \mu \in X^+$ and let $\zeta = \wb \lambda +\mu$ and $\zeta_\imath = \overline{\zeta}$.
	\begin{enumerate}
	\item	For any $(b_1, b_2) \in \B_{\Iblack} \times \B$ , there is a unique element in $L^{\imath}(\lambda, \mu)$, denoted by
	$(b_1 \diamondsuit_{\zeta_\imath} b_2 )_{w_\bullet\la,\mu}^{\imath}$ (or by 
	$\big((b_1 \diamondsuit_{\zeta} b_2 ) (\eta_\lambda^\bullet \otimes \eta_{\mu}) \big)^{\imath}$ sometimes), 
	which is  $\ipsi$-invariant and of the form 
	\begin{equation}  \label{eq:order}
	 (b_1 \diamondsuit_{\zeta} b_2 ) (\eta_\lambda^\bullet \otimes \eta_{\mu})    +\!\!\! \sum_{(b'_1, b'_2) \ile (b_1, b_2)} \!\!\!q^{-1}\Z[q^{-1}] (b'_1 \diamondsuit_{\zeta} b'_2 ) (\eta_\lambda^\bullet \otimes \eta_{\mu}). 
	\end{equation}
	\item	The set $\B^\imath(\la, \mu) =\big\{ (b_1 \diamondsuit_{\zeta_\imath} b_2 )_{w_\bullet\la,\mu}^{\imath} \vert 
	(b_1, b_2) \in \B_{\Iblack} \times \B \big\} \backslash \{0\}$ forms
	an $\mA$-basis of $_{\mA}L^{\imath} (\lambda, \mu)$ and a $\Z[q^{-1}]$-basis of $\mc{L}^{\imath} (\lambda, \mu)$ (defined in \ref{subset:parabolic}) (called the $\imath$-canonical basis).
\end{enumerate}
\end{prop}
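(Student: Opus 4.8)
The plan is to mimic the standard construction of based modules (following \cite[24.2, 27.1]{Lu94}) but working with the finer partial ordering $\le_\imath$ on $\B_{\Iblack}\times\B$ from Definition~\ref{def:partialorderingBtimesB} instead of the weight ordering. The two essential inputs are already in place: first, Lemma~\ref{lem:iCBtensorproductrefinement}, which says that $\ipsi$ acts on the basis $\{(b_1\diamondsuit_{\zeta}b_2)(\eta_\lambda^\bullet\otimes\eta_\mu)\}$ of $L^\imath(\lambda,\mu)$ unitriangularly with respect to $\le_\imath$ and with coefficients in $\mA$; second, Lemma~\ref{lem:lefinite}, which guarantees $\le_\imath$ is downward finite, so the usual inductive solving procedure terminates. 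Note that by Corollary~\ref{cor:based} the set $\B(\lambda,\mu)\cap L^\imath(\lambda,\mu)$ is precisely $\{(b_1\diamondsuit_\zeta b_2)(\eta_\lambda^\bullet\otimes\eta_\mu)\mid (b_1,b_2)\in\B_{\Iblack}\times\B\}\setminus\{0\}$, which is why these elements form a legitimate basis of $L^\imath(\lambda,\mu)$ to begin with.

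For part (1), I would first set up notation: abbreviate $\beta_{b_1,b_2}:=(b_1\diamondsuit_{\zeta}b_2)(\eta_\lambda^\bullet\otimes\eta_\mu)$ and discard the pairs for which this vanishes (these are closed downward under $\le_\imath$ by the argument in the proof of Corollary~\ref{cor:based}, or one can simply note $\beta_{b_1,b_2}=0$ iff $b_1\notin\B(\lambda)$ or $b_2\notin\B(\mu)$ and appeal to Proposition~\ref{prop:Lucontraction}-type compatibility). Then apply the standard Lemma~\cite[24.2.1]{Lu94}: given a free $\mA$-module with basis indexed by a downward-finite poset and an anti-linear involution that is unitriangular over $\mA$ with respect to that poset and fixes $q$ to $q^{-1}$, there is a unique bar-invariant basis of the form ``basis element plus a $q^{-1}\Z[q^{-1}]$-combination of strictly smaller basis elements.'' Feeding in Lemma~\ref{lem:iCBtensorproductrefinement} and Lemma~\ref{lem:lefinite} produces the elements $(b_1\diamondsuit_{\zeta_\imath}b_2)_{w_\bullet\lambda,\mu}^\imath$ of the stated form \eqref{eq:order}. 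Uniqueness is part of the cited lemma. I should also remark that this refined basis coincides with the $\imath$-canonical basis $B^\imath$ produced by Theorem~\ref{thm:iCBbased} applied to $L^\imath(\lambda,\mu)$ as a based $\U$-module: both are bar-invariant and unitriangular with respect to orderings that are mutually compatible (the $\le_\imath$ on $\B_{\Iblack}\times\B$ refines the weight $\le_\imath$ via $|\beta_{b_1,b_2}|=|b_1|-|b_2|+\zeta$, as noted in the proof of Lemma~\ref{lem:iCBtensorproductrefinement}), and uniqueness of such a basis forces them to agree. This identification is what lets part (2) follow.

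For part (2): since $B^\imath$ of Theorem~\ref{thm:iCBbased}(2) is already an $\mA$-basis of ${}_\mA L^\imath(\lambda,\mu)$ and a $\Z[q^{-1}]$-basis of $\mc{L}^\imath(\lambda,\mu)$, and we have just identified $\B^\imath(\lambda,\mu)$ with $B^\imath$, the statement is immediate. (Here $\mc{L}^\imath(\lambda,\mu)$ is the $\Z[q^{-1}]$-lattice spanned by $\B(\lambda,\mu)\cap L^\imath(\lambda,\mu)$ as defined in \S\ref{subset:parabolic}, and ${}_\mA L^\imath(\lambda,\mu)$ is the $\mA$-lattice it generates, both compatible with the lattices generated by the $\beta_{b_1,b_2}$.) The main obstacle in this whole argument is not in the present proposition at all but upstream, in Lemma~\ref{lem:iCBtensorproductrefinement} --- specifically establishing that the off-diagonal coefficients lie in $\mA$ (which rests on Theorem~\ref{thm:intUpsilon}, the integrality of $\Upsilon$) and that the $|b_2'|\le|b_2|$ constraint holds (which rests on Lemma~\ref{lem:P(N)}, the $\ipsi$-stability of the subspaces $P(N)(\eta_\lambda^\bullet\otimes\eta_\mu)$). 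Granting those, the present proof is a routine application of the bar-involution machinery, and the only genuine content to check is the compatibility of the refined ordering with the weight ordering so that the identification with $B^\imath$ goes through.
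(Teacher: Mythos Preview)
Your proposal is correct and follows essentially the same approach as the paper: the paper states the proposition immediately after Lemma~\ref{lem:iCBtensorproductrefinement} with the phrase ``Now we obtain the following refinement of Theorem~\ref{thm:iCBbased},'' leaving the proof implicit precisely because it is the standard \cite[24.2.1]{Lu94} machinery applied with the refined ordering $\le_\imath$, using Lemma~\ref{lem:iCBtensorproductrefinement} for unitriangularity and Lemma~\ref{lem:lefinite} for downward finiteness, exactly as you outline. Your observation that the resulting basis coincides with the $B^\imath$ of Theorem~\ref{thm:iCBbased} (by uniqueness, since the two orderings are compatible) is also how the paper implicitly derives part~(2).
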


\begin{rem}  
  \label{rem:uniq}
By a standard argument (cf. \cite[proof of Proposition~25.1.10]{Lu94}), the uniqueness of the $\imath$-canonical basis elements $(b_1 \diamondsuit_{\zeta_\imath} b_2 )_{w_\bullet\la,\mu}^{\imath}$ does not require the partial ordering constraint in \eqref{eq:order}; that is, $(b_1 \diamondsuit_{\zeta_\imath} b_2 )_{w_\bullet\la,\mu}^{\imath}$ is characterized by being $\ipsi$-invariant and of the form 
	\begin{equation*}
(b_1 \diamondsuit_{\zeta_\imath} b_2 )_{w_\bullet\la,\mu}^{\imath} \in
	 (b_1 \diamondsuit_{\zeta} b_2 ) (\eta_\lambda^\bullet \otimes \eta_{\mu})    +\!\!\! \sum_{(b'_1, b'_2)} \!\!\!q^{-1}\Z[q^{-1}] (b'_1 \diamondsuit_{\zeta} b'_2 ) (\eta_\lambda^\bullet \otimes \eta_{\mu}). 
	\end{equation*}
\end{rem}

Now taking $\mu =0$, we obtain the following generalization of Theorem~\ref{thm:iCBonL}.

\begin{cor}
  \label{cor:iCBL2}
Let $\lambda \in X^+$. Then we have the $\imath$-canonical basis of $L(\lambda)$ which consists of the nonzero elements of the form 
$\big((b_1 \diamondsuit_{\zeta} b_2 ) \eta_\lambda^\bullet \big)^{\imath} 
$
for  $(b_1, b_2) \in \B_{\Iblack} \times \B$. 
\end{cor}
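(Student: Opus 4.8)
The plan is to derive Corollary~\ref{cor:iCBL2} as the special case $\mu = 0$ of Proposition~\ref{prop:iCBtensorproduct}. First I would observe that when $\mu = 0$ we have $L(0) = \Qq \eta_0$ a trivial module, so the tensor product $L(\lambda) \otimes L(0)$ is canonically identified with $L(\lambda)$ via $v \otimes \eta_0 \mapsto v$, and under this identification the based submodule $L^\imath(\lambda, 0) = \U(\eta^\bullet_\lambda \otimes \eta_0)$ is just $\U \eta^\bullet_\lambda = L(\lambda)$ (the last equality because $\eta^\bullet_\lambda = \T^{-1}_{\wb}(\eta_\lambda)$ is a canonical basis element of $L(\lambda)$ and $L(\lambda)$ is a cyclic $\U$-module generated by any nonzero weight vector of weight in the $W$-orbit of $\lambda$; alternatively $\eta_\lambda = \T_{\wb}(\eta^\bullet_\lambda) \in \U \eta^\bullet_\lambda$). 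Thus $L^\imath(\lambda, 0) = L(\lambda)$ as based $\U$-modules, with canonical basis $\{ b^-_1 \eta_\lambda \diamondsuit b^-_2 \eta_0 \} = \{ b^-_1 \eta_\lambda \diamondsuit_{\zeta} b^-_2 \}$ indexed appropriately.

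Next I would spell out the bookkeeping. With $\mu = 0$ we have $\B(0) = \{1\}$, so in the parametrization of Proposition~\ref{prop:iCBtensorproduct} the index $b_2$ ranges over $\B_{\Iblack} \times \B$ but the ``right factor'' canonical basis of $L(0)$ is trivial; concretely $\zeta = \wb \lambda$, $\zeta_\imath = \overline{\wb\lambda} = \overline{\lambda}$, and the canonical basis elements $b_1 \diamondsuit_{\zeta} b_2 \in \Udot \one_\zeta$ act on $\eta^\bullet_\lambda \otimes \eta_0 = \eta^\bullet_\lambda$ producing exactly the vectors $(b_1 \diamondsuit_{\zeta} b_2) \eta^\bullet_\lambda$. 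The statement of Proposition~\ref{prop:iCBtensorproduct}(1) then gives, for each $(b_1, b_2) \in \B_{\Iblack} \times \B$, a unique $\ipsi$-invariant element $\big((b_1 \diamondsuit_{\zeta} b_2)\eta^\bullet_\lambda\big)^\imath$ of the form $(b_1 \diamondsuit_{\zeta} b_2)\eta^\bullet_\lambda + \sum q^{-1}\Z[q^{-1}](\cdots)$, and part (2) says the nonzero such elements form the $\imath$-canonical basis, an $\mA$-basis of $_\mA L(\lambda)$ and a $\Z[q^{-1}]$-basis of $\mc{L}(\lambda)$. This is precisely the assertion of the corollary.

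The main (mild) obstacle is to make sure the identification $L^\imath(\lambda,0) = L(\lambda)$ is compatible with all the structures invoked: the bar involution $\ipsi = \Upsilon \circ \psi$ (here $\psi$ on $L(\lambda) \otimes L(0)$ is $\Theta \circ (\psi \otimes \psi)$, but since $L(0)$ is trivial $\Theta$ acts as the identity on $L(\lambda) \otimes L(0)$, so $\psi$ on the tensor product matches $\psi$ on $L(\lambda)$), the $\mA$-lattice and $\Z[q^{-1}]$-lattice (which match ${}_\mA L(\lambda)$ and $\mc{L}(\lambda)$ by construction of the tensor product lattices), and the canonical basis $\B(\lambda, 0)$ (which is identified with $\B(\lambda)$ since $b^-_1\eta_\lambda \diamondsuit b^-_2 \eta_0$ is nonzero iff $b_2 = 1$ and equals $b^-_1 \eta_\lambda$ in that case, there being no lower-order correction from a trivial second tensor factor). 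Once these compatibilities are noted, the corollary follows immediately; I would keep the proof to a couple of sentences pointing out $\mu = 0$ in Proposition~\ref{prop:iCBtensorproduct} together with $L^\imath(\lambda, 0) = L(\lambda)$ and the identification $\eta^\bullet_\lambda = \T^{-1}_{\wb}(\eta_\lambda)$ from \eqref{eq:eta-b}.
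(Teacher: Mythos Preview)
Your proposal is correct and takes essentially the same approach as the paper, which simply remarks that the corollary follows by taking $\mu = 0$ in Proposition~\ref{prop:iCBtensorproduct}. You have filled in the routine compatibility checks (that $L^\imath(\lambda,0) = L(\lambda)$ as based modules, that the bar involutions and lattices match under this identification) more explicitly than the paper does, but the underlying argument is identical.
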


%

\subsection{A projective system of $\Ui$-modules}

 We shall give a construction of a projective system of $\Ui$-modules $\big \{ L^\imath(\lambda+\nu^\tau, \mu+\nu)  \big \}_{\nu \in X^+}$, for fixed $\la, \mu \in X^+$. 
Our construction essentially reduces to \cite[25.1.4--5]{Lu94} in case that $\I =\Iblack$.

\begin{lem}\label{lem:deltaimath}
For any $\nu \in X^+$, there exists a homomorphism of $\Ui$-modules 
\[
\delta^\imath=\delta^\imath_{\nu} : L(\nu^\tau) \otimes L(\nu) \longrightarrow \Qq,
\]
such that $\delta^\imath_\nu (\eta^\bullet_{\nu^\tau} \otimes \eta_{\nu}) = 1$.
\end{lem}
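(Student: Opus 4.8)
The statement to prove is \lemref{lem:deltaimath}: the existence, for each $\nu \in X^+$, of a $\Ui$-module homomorphism $\delta^\imath_\nu : L(\nu^\tau) \otimes L(\nu) \to \Qq$ with $\delta^\imath_\nu(\eta^\bullet_{\nu^\tau} \otimes \eta_\nu) = 1$. The natural approach is to factor this map through the $\Ui$-module isomorphism $\mc T$ constructed in \thmref{thm:mcT}. Recall that $\mc T : L(\nu^\tau) \to {}^\omega L((\nu^\tau)^\tau) = {}^\omega L(\nu)$ is an isomorphism of $\Ui$-modules sending $\etab_{\nu^\tau} \mapsto \xi_{-\nu}$. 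Tensoring with the identity on $L(\nu)$ gives a $\Ui$-module isomorphism $\mc T \otimes \id : L(\nu^\tau) \otimes L(\nu) \to {}^\omega L(\nu) \otimes L(\nu)$ (here I must check that $\mc T \otimes \id$ is indeed $\Ui$-linear, using that $\Ui$ is a coideal subalgebra, $\Delta : \Ui \to \Ui \otimes \U$, together with the fact that $\id$ on the second tensor factor is $\U$-linear; this is the standard compatibility that underlies the whole based-module formalism).

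So the problem reduces to constructing a $\Ui$-module homomorphism ${}^\omega L(\nu) \otimes L(\nu) \to \Qq$ sending $\xi_{-\nu} \otimes \eta_\nu \mapsto 1$. But ${}^\omega L(\nu) \otimes L(\nu) = {}^\omega L(-w_0(-w_0\nu)) \otimes L(\nu)$, and by the standard theory of quantum groups (Lusztig \cite{Lu94}, and the discussion around \lemref{lem:chi} and Proposition~\ref{prop:Lucontraction}) there is a $\U$-module homomorphism, the "evaluation/counit" map, ${}^\omega L(\nu) \otimes L(\nu) \to L(0) = \Qq$ sending the extreme weight vector $\xi_{-\nu} \otimes \eta_\nu$ (which has weight $0$) to a nonzero scalar; normalize it so that $\xi_{-\nu}\otimes\eta_\nu \mapsto 1$. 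Concretely this is the composition ${}^\omega L(\nu)\otimes L(\nu) \cong \operatorname{Hom}(L(\nu), L(\nu)) \to \Qq$ given by a suitable trace, or equivalently the unique $\U$-homomorphism to the trivial module nonzero on the zero weight space; its existence follows because $L(0) = \Qq$ occurs in the socle of ${}^\omega L(\nu) \otimes L(\nu)$ with the generating vector in the $0$-weight space being (up to scalar) $\xi_{-\nu}\otimes \eta_\nu$. Any $\U$-module homomorphism is in particular a $\Ui$-module homomorphism by restriction. Composing, I set
\[
\delta^\imath_\nu := \big( {}^\omega L(\nu) \otimes L(\nu) \to \Qq \big) \circ (\mc T \otimes \id) : L(\nu^\tau) \otimes L(\nu) \longrightarrow \Qq,
\]
which is $\Ui$-linear and sends $\etab_{\nu^\tau} \otimes \eta_\nu \mapsto \xi_{-\nu} \otimes \eta_\nu \mapsto 1$.

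The main thing to verify carefully — and the only real obstacle — is the $\Ui$-linearity of $\mc T \otimes \id$, i.e. that tensoring a $\Ui$-module map on the left factor with a $\U$-module map (here $\id$) on the right factor yields a $\Ui$-module map. This is exactly where the coideal property $\Delta(\Ui) \subseteq \Ui \otimes \U$ is used: for $u \in \Ui$ write $\Delta(u) = \sum u_{(1)} \otimes u_{(2)}$ with $u_{(1)} \in \Ui$, $u_{(2)} \in \U$; then $(\mc T \otimes \id)\big(u \cdot (x \otimes y)\big) = \sum (\mc T \otimes \id)(u_{(1)} x \otimes u_{(2)} y) = \sum \mc T(u_{(1)} x) \otimes u_{(2)} y = \sum u_{(1)} \mc T(x) \otimes u_{(2)} y = u \cdot \big(\mc T(x) \otimes y\big)$, using $\Ui$-linearity of $\mc T$ for the third equality. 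With this in hand the rest is formal. (Alternatively, one can bypass $\mc T$ entirely and directly verify that the $\Qq$-linear functional picking out the coefficient of $\etab_{\nu^\tau} \otimes \eta_\nu$ in a suitable basis is annihilated by $\ff_i - $ (its action), $E_j$, $F_j$; but the route via $\mc T$ is cleaner since it reduces to the already-known quantum group statement.) Finally, the normalization $\delta^\imath_\nu(\etab_{\nu^\tau} \otimes \eta_\nu) = 1$ is immediate from the chosen normalizations of $\mc T$ and of the evaluation map; uniqueness of $\delta^\imath_\nu$, though not asserted in the statement, would follow since $\etab_{\nu^\tau} \otimes \eta_\nu$ generates $L^\imath(\nu^\tau,\nu)$ as a $\Ui$-module by \lemref{lem:Uietabotimeseta}, so a $\Ui$-homomorphism out of it is determined by its value there — this remark will be useful when assembling the projective system.
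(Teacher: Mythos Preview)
Your proof is correct and follows essentially the same approach as the paper: define $\delta^\imath = \delta \circ (\mc{T} \otimes \id)$, where $\delta: {}^\omega L(\nu) \otimes L(\nu) \to \Qq$ is Lusztig's $\U$-homomorphism from \cite[Proposition~25.1.4]{Lu94} and $\mc{T}$ is the $\Ui$-isomorphism from \thmref{thm:mcT}. Your explicit verification that $\mc{T}\otimes\id$ is $\Ui$-linear via the coideal property $\Delta(\Ui)\subseteq\Ui\otimes\U$ spells out a step the paper leaves implicit.
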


\begin{proof}
Recall the $\Ui$-isomorphism $\mc{T}: L(\nu^\tau) \rightarrow {}^\omega L(\nu)$ in Theorem~\ref{thm:mcT}. 
Define a $\Ui$-homomorphism $\delta^\imath = \delta \circ (\mc{T} \otimes \id)$, where $\delta: {}^\omega L(\nu) \otimes L(\nu) \rightarrow \Qq$ 
is the $\U$-homomorphism defined in  \cite[Proposition~25.1.4]{Lu94}.
Clearly $\delta^\imath_\nu (\eta^\bullet_{\nu^\tau} \otimes \eta_{\nu}) = 1$.
\end{proof}

\begin{prop}
  \label{prop:def:contraction}
For any $\lambda, \mu, \nu \in X^+$, there exists a $\Ui$-homomorphism 
\[
\pi =\pi_{\lambda, \mu, \nu}  :  L(\lambda+\nu^\tau) \otimes L(\mu+\nu) \longrightarrow L(\lambda) \otimes L(\mu)
\]
such that $\pi(\etab_{\lambda+\nu^\tau} \otimes \eta_{\mu+\nu}) = \etab_{\lambda} \otimes \eta_{\mu}$. Hence, 
we have a unique $\Ui$-homomorphism 
\begin{equation}
  \label{eq:LiLi}
\pi: L^{\imath} (\lambda+\nu^\tau, \mu+\nu) \longrightarrow L^{\imath} (\lambda, \mu)
\end{equation}
such that $\pi(\etab_{\lambda+\nu^\tau} \otimes \eta_{\mu+\nu}) = \etab_{\lambda} \otimes \eta_{\mu}$. 
\end{prop}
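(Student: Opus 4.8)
\textbf{Proof strategy for Proposition~\ref{prop:def:contraction}.}
The plan is to build $\pi$ out of the standard comultiplication-type maps together with the contraction $\delta^\imath_\nu$ from Lemma~\ref{lem:deltaimath}, mimicking Lusztig's construction in \cite[25.1.5]{Lu94} but now in the $\Ui$-equivariant setting. First I would recall the $\U$-module map $\chi$ of Lemma~\ref{lem:chi}: applied to the pair $(\nu^\tau, \lambda)$ and then to $(\mu, \nu)$ it gives $\U$-homomorphisms $L(\lambda+\nu^\tau) \to L(\nu^\tau) \otimes L(\lambda)$ and $L(\mu+\nu) \to L(\mu) \otimes L(\nu)$, sending highest weight vectors to tensor products of highest weight vectors; by the uniqueness statement in Lemma~\ref{lem:chi} these send $\eta_{w(\lambda+\nu^\tau)} \mapsto \eta_{w\nu^\tau} \otimes \eta_{w\lambda}$ etc., and in particular, taking $w = \wb$, they send $\etab_{\lambda+\nu^\tau} \mapsto \etab_{\nu^\tau}\otimes \etab_{\lambda}$ and $\etab_{\mu+\nu}\mapsto\etab_\mu\otimes\etab_\nu$ — wait, I need $\eta_{\mu+\nu}\mapsto\eta_\mu\otimes\eta_\nu$ on the second factor, which is the $w=1$ case, so there I just use $\chi$ with highest weight vectors. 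Then I would tensor these two maps together and rearrange the tensor factors (all maps here being $\U$-linear, hence $\Ui$-linear by restriction) to land in $L(\nu^\tau)\otimes L(\lambda)\otimes L(\mu)\otimes L(\nu)$, and finally contract the outer two factors $L(\nu^\tau)\otimes(\cdots)\otimes L(\nu)$ using $\delta^\imath_\nu$, which is only $\Ui$-linear (this is the one genuinely $\imath$ step). Since $\delta^\imath_\nu$ is a $\Ui$-module map to the trivial module, and because $\Ui$ is a coideal subalgebra so that $\Delta(\Ui)\subseteq\Ui\otimes\U$, applying $\delta^\imath_\nu$ to the first and last tensor slots of a $\Ui$-module produces again a $\Ui$-module map; this is exactly the mechanism Lusztig uses, and it carries over verbatim once one knows $\Ui$ is a coideal.

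The composite $\pi := (\delta^\imath_\nu \otimes \id \otimes \id)\circ(\text{rearrange})\circ(\chi\otimes\chi)$ is then a $\Ui$-homomorphism $L(\lambda+\nu^\tau)\otimes L(\mu+\nu) \to L(\lambda)\otimes L(\mu)$. I would then compute $\pi(\etab_{\lambda+\nu^\tau}\otimes\eta_{\mu+\nu})$: the first $\chi$ sends $\etab_{\lambda+\nu^\tau}$ to $\etab_{\nu^\tau}\otimes\etab_\lambda$ (using $w=\wb$ in Lemma~\ref{lem:chi}, since $\wb$ acts the same way on $\nu^\tau$ and on $\lambda$ — here one should double-check that $\wb(\lambda+\nu^\tau)$ decomposes as $\wb\lambda + \wb\nu^\tau$ compatibly, which holds because $\wb\in W$ acts linearly), the second $\chi$ sends $\eta_{\mu+\nu}$ to $\eta_\mu\otimes\eta_\nu$, rearranging gives $\etab_{\nu^\tau}\otimes\etab_\lambda\otimes\eta_\mu\otimes\eta_\nu$, and then $\delta^\imath_\nu(\etab_{\nu^\tau}\otimes - \otimes\eta_\nu)$ pairs off the outer factors: by Lemma~\ref{lem:deltaimath}, $\delta^\imath_\nu(\etab_{\nu^\tau}\otimes\eta_\nu)=1$, so the result is $\etab_\lambda\otimes\eta_\mu$, as desired. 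One subtlety: $\delta^\imath_\nu$ is defined on $L(\nu^\tau)\otimes L(\nu)$, and I am applying it to the first and \emph{last} of four factors, so I need to be slightly careful that the intermediate factors $\etab_\lambda\otimes\eta_\mu$ are simply carried along; this is fine because $\delta^\imath_\nu\otimes\id_{L(\lambda)\otimes L(\mu)}$ composed with the braiding that moves the $L(\nu)$ factor past $L(\lambda)\otimes L(\mu)$ is the relevant map, but to stay $\Ui$-linear one must instead insert the $\delta^\imath_\nu$ contraction directly on the outer slots without an auxiliary braiding — i.e., realize $L(\nu^\tau)\otimes L(\lambda)\otimes L(\mu)\otimes L(\nu)$ as $L(\nu^\tau)\otimes\big(L(\lambda)\otimes L(\mu)\big)\otimes L(\nu)$ and use that contracting the two ends of a threefold tensor product $V_1\otimes V_2\otimes V_3$ via a $\Ui$-map $V_1\otimes V_3\to\Qq$ requires knowing the coassociativity and coideal property are compatible with such a "trace over the ends" operation. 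In fact the cleanest route, which I would adopt, is to first apply $\chi$ only on the first factor to get into $L(\nu^\tau)\otimes L(\lambda)\otimes L(\mu+\nu)$, then apply $\chi$ on the \emph{last} factor to get $L(\nu^\tau)\otimes L(\lambda)\otimes L(\mu)\otimes L(\nu)$, and then use $\delta^\imath_\nu$ contracting factors $1$ and $4$; the $\Ui$-linearity of this last contraction follows because $\Delta$ is coassociative and $\Ui$ is a right coideal, so the action of $u\in\Ui$ on $V_1\otimes V_2\otimes V_3$ has "leg $1$" component lying in $\Ui$ (by $\Delta(\Ui)\subseteq\Ui\otimes\U$ applied twice) — this is the point to verify carefully.

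For the second assertion, I would restrict $\pi$ to the submodule $L^\imath(\lambda+\nu^\tau,\mu+\nu)=\Ui(\etab_{\lambda+\nu^\tau}\otimes\eta_{\mu+\nu})$, which by Lemma~\ref{lem:Uietabotimeseta}(2) equals $\U(\etab_{\lambda+\nu^\tau}\otimes\eta_{\mu+\nu})=\Pa(\etab_{\lambda+\nu^\tau}\otimes\eta_{\mu+\nu})$. Since $\pi$ is $\Ui$-linear and $\pi(\etab_{\lambda+\nu^\tau}\otimes\eta_{\mu+\nu})=\etab_\lambda\otimes\eta_\mu$, its image is $\Ui(\etab_\lambda\otimes\eta_\mu)=L^\imath(\lambda,\mu)$, so $\pi$ restricts to a surjection $L^\imath(\lambda+\nu^\tau,\mu+\nu)\to L^\imath(\lambda,\mu)$. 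Uniqueness is immediate: $\etab_{\lambda+\nu^\tau}\otimes\eta_{\mu+\nu}$ is a cyclic vector for $L^\imath(\lambda+\nu^\tau,\mu+\nu)$ as a $\Ui$-module, so any $\Ui$-homomorphism out of it is determined by the image of this vector. The main obstacle, as indicated above, is establishing $\Ui$-linearity of the end-contraction via $\delta^\imath_\nu$ cleanly — i.e., checking that "contracting the two outer legs of a triple tensor product by a $\Ui$-equivariant pairing" is again $\Ui$-equivariant; everything else is a bookkeeping exercise with $\chi$ and the already-established $\Ui$-isomorphism $\mc T$ hidden inside $\delta^\imath_\nu$. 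I expect this to occupy the bulk of the written proof, while the computation of $\pi$ on the distinguished vectors and the cyclicity/uniqueness argument are short.
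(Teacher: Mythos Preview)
Your overall plan---compose $\chi\otimes\chi$ with $\delta^\imath_\nu$---is exactly the paper's strategy, and your treatment of the second assertion (restriction plus cyclicity) is fine. The gap is precisely where you flag it: the ``end-contraction'' of legs $1$ and $4$ via $\delta^\imath_\nu$ is \emph{not} readily $\Ui$-linear, and your proposed verification from the coideal property alone will not go through. The problem is that on $V_1\otimes M\otimes V_4$ the iterated coproduct $\Delta^{(2)}(u)=\sum u_{(1)}\otimes u_{(2)}\otimes u_{(3)}$ places $u_{(1)}\in\Ui$ and $u_{(2)},u_{(3)}\in\U$, so the pair $(u_{(1)},u_{(3)})$ hitting $V_1\otimes V_4$ is \emph{not} of the form $\Delta(u')$ for some $u'\in\Ui$; thus $\delta^\imath_\nu(u_{(1)}v_1\otimes u_{(3)}v_4)$ cannot be simplified using the $\Ui$-equivariance of $\delta^\imath_\nu$. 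In the Hopf-algebra setting one would repair this with the antipode, but $\Ui$ is only a coideal subalgebra.

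The paper resolves this not by analyzing the end-contraction but by avoiding it: after $\chi\otimes\chi$ one lands in $L(\nu^\tau)\otimes L(\lambda)\otimes L(\nu)\otimes L(\mu)$ (note the ordering on the second factor), and then applies $1\otimes\mc R\otimes 1$, where $\mc R$ is Lusztig's $\mc R$-matrix from \cite[\S32.1]{Lu94}, suitably rescaled so that $\etab_\lambda\otimes\eta_\nu\mapsto\eta_\nu\otimes\etab_\lambda$. This swap is a $\U$-module isomorphism (hence $\Ui$-linear by restriction), and brings $L(\nu^\tau)$ and $L(\nu)$ into adjacent positions $1,2$. Now $\delta^\imath_\nu\otimes\id$ is manifestly $\Ui$-linear by the right-coideal property, since the first leg of $\Delta(u)$ lies in $\Ui$. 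You explicitly set aside the braiding option (``to stay $\Ui$-linear one must instead insert the $\delta^\imath_\nu$ contraction directly on the outer slots without an auxiliary braiding''), but that is backwards: the $\mc R$-matrix is $\U$-equivariant, so using it costs nothing and is exactly what makes the argument work.
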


\begin{proof}
Recall the $\U$-homomorphism $\chi$ from Lemma~\ref{lem:chi} and the $\mc{R}$-matrix $\mc R$ associated with $\U$ defined in \cite[\S32.1]{Lu94}. We shall rescale the $\mc{R}$-matrix accordingly.  
We define a $\U$-homomorphism as the composition $\chi'' = (1 \otimes \mc R \otimes 1) \circ (\chi \otimes \chi)$:
\begin{align*}
 \chi'': L(\lambda+\nu^\tau) \otimes L(\mu+\nu) 
 & \stackrel{\chi \otimes \chi}{\longrightarrow}  L(\nu^\tau) \otimes L(\lambda) \otimes  L(\nu) \otimes L(\mu) 
 \\
&\stackrel{1\otimes \mc{R} \otimes 1}{\longrightarrow}  L(\nu^\tau) \otimes L(\nu) \otimes L(\lambda) \otimes L(\mu)
\end{align*}
such that 
\[
\eta^{\bullet}_{\lambda+\nu^\tau} \otimes \eta_{\mu+\nu} \mapsto \eta_{\nu^\tau}^{\bullet} \otimes \eta_{\nu} \otimes \eta^{\bullet}_{\lambda} \otimes \eta_{\mu}.
\]
Then the $\Ui$-homomorphism $\pi := (\delta^\imath_{\nu} \otimes \id )\circ \chi''$ satisfies that 
$\pi(\etab_{\lambda+\nu^\tau} \otimes \eta_{\mu+\nu}) = \etab_{\lambda} \otimes \eta_{\mu}$. 
The restriction of $\pi$ provides a $\Ui$ -homomorphism \eqref{eq:LiLi}, and its uniqueness 
follows from Lemma~\ref{lem:Uietabotimeseta}.
\end{proof}

\begin{prop}
 \label{piipsi=ipsipi}
The homomorphism $\pi: L^{\imath} (\lambda+\nu^\tau, \mu+\nu) \longrightarrow L^{\imath} (\lambda, \mu)$ commutes with the bar involutions $\ipsi$; that is,
\[
\pi \circ \ipsi (m) = \ipsi \circ \pi (m), \qquad \text{ for all } m \in L^{\imath} (\lambda+\nu^\tau, \mu+\nu).
\]
\end{prop}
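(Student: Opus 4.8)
The statement asserts that the contraction map $\pi = \pi_{\lambda,\mu,\nu}$ intertwines the bar involutions $\ipsi$ on the two based $\U$-modules, where $\ipsi = \Upsilon \circ \psi$ in each case (Proposition~\ref{prop:compatibleBbar}). The first observation is that it suffices to check $\pi \circ \ipsi = \ipsi \circ \pi$ on a single cyclic generator, because both sides are additive over $\Q$ and $\Q(q)$-semilinear, and $L^\imath(\lambda+\nu^\tau,\mu+\nu)$ is generated over $\Ui$ by $\etab_{\lambda+\nu^\tau}\otimes\eta_{\mu+\nu}$ by Lemma~\ref{lem:Uietabotimeseta}. More precisely, if $m = u\cdot(\etab_{\lambda+\nu^\tau}\otimes\eta_{\mu+\nu})$ for $u \in \Ui$, then since $\pi$ is a $\Ui$-homomorphism and $\ipsi$ is $\imath$-involutive on both modules (Proposition~\ref{prop:compatibleBbar}), we have $\pi(\ipsi(m)) = \pi(\ipsi(u)\ipsi(\etab\otimes\eta)) = \ipsi(u)\,\pi(\ipsi(\etab\otimes\eta))$ and $\ipsi(\pi(m)) = \ipsi(u\cdot\pi(\etab\otimes\eta)) = \ipsi(u)\,\ipsi(\pi(\etab\otimes\eta))$, so the problem reduces to the single identity $\pi(\ipsi(\etab_{\lambda+\nu^\tau}\otimes\eta_{\mu+\nu})) = \ipsi(\etab_\lambda\otimes\eta_\mu)$.

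Next I would compute both sides explicitly. On the target side, by Lemma~\ref{lem:etabotimeseta} the element $\etab_\lambda\otimes\eta_\mu$ is itself an $\imath$-canonical basis element, hence $\ipsi$-invariant: $\ipsi(\etab_\lambda\otimes\eta_\mu) = \etab_\lambda\otimes\eta_\mu$. So the claim becomes $\pi(\ipsi(\etab_{\lambda+\nu^\tau}\otimes\eta_{\mu+\nu})) = \etab_\lambda\otimes\eta_\mu$. But $\etab_{\lambda+\nu^\tau}\otimes\eta_{\mu+\nu}$ is likewise $\ipsi$-invariant by the same Lemma~\ref{lem:etabotimeseta}, so $\ipsi(\etab_{\lambda+\nu^\tau}\otimes\eta_{\mu+\nu}) = \etab_{\lambda+\nu^\tau}\otimes\eta_{\mu+\nu}$, and what remains is exactly $\pi(\etab_{\lambda+\nu^\tau}\otimes\eta_{\mu+\nu}) = \etab_\lambda\otimes\eta_\mu$, which is the defining property of $\pi$ from Proposition~\ref{prop:def:contraction}. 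This makes the proof essentially a two-line argument once the reduction to the cyclic generator is in place.

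The one point that needs genuine care — and which I expect to be the main (if modest) obstacle — is the reduction step itself: one must make sure that the semilinearity bookkeeping is correct, i.e. that $\ipsi$ on $L^\imath(\lambda+\nu^\tau,\mu+\nu)$ really satisfies $\ipsi(u\cdot x) = \ipsi(u)\ipsi(x)$ for all $u\in\Ui$ (this is Proposition~\ref{prop:compatibleBbar}, valid because $L^\imath(\lambda+\nu^\tau,\mu+\nu)$ is a based $\U$-submodule of a tensor product by Corollary~\ref{cor:based}), and that $\pi$ being a $\Ui$-homomorphism over $\Q(q)$ commutes with the $\Q$-linear structure appropriately — there is no conflict since $\pi$ is $\Q(q)$-linear and $\ipsi$ is $\Q(q)$-semilinear, and a $\Q(q)$-linear map automatically commutes with the bar operation on scalars. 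I would also remark that an alternative, more structural route is available: $\pi$ is a homomorphism of \emph{based} $\U$-modules in the sense of Lusztig (it is built in Proposition~\ref{prop:def:contraction} from $\chi$, the $\mc R$-matrix, and $\delta$, all of which respect bar involutions on $\U$-modules up to the rescaling noted there), so $\pi$ already commutes with $\psi$; combined with $\pi$ being $\Ui$-linear and hence commuting with $\Upsilon$ acting diagonally, one gets $\pi\circ\ipsi = \pi\circ\Upsilon\circ\psi = \Upsilon\circ\pi\circ\psi = \Upsilon\circ\psi\circ\pi = \ipsi\circ\pi$. Either argument is short; I would present the cyclic-generator version as the main proof and mention the structural version as a remark.
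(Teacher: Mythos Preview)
Your cyclic-generator argument is correct and is exactly the paper's own proof: write $m = u(\etab_{\lambda+\nu^\tau}\otimes\eta_{\mu+\nu})$ via Lemma~\ref{lem:Uietabotimeseta}, use the $\imath$-involutivity from Proposition~\ref{prop:compatibleBbar} on both sides, invoke the $\ipsi$-invariance of the generators from Lemma~\ref{lem:etabotimeseta}, and finish with the defining property of $\pi$.

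One correction regarding your alternative ``structural'' route: it does not go through as stated. The map $\pi$ is built using $\delta^\imath_\nu = \delta\circ(\mc T\otimes\id)$, and $\mc T$ is only a $\Ui$-isomorphism (Theorem~\ref{thm:mcT}), not a $\U$-isomorphism; consequently $\pi$ is \emph{not} a $\U$-module homomorphism, and there is no a priori reason for it to commute with Lusztig's bar map $\psi$. Likewise, $\Upsilon$ lies in (a completion of) $\U^+$, not in $\Ui$, so $\Ui$-linearity of $\pi$ does not by itself yield $\pi\circ\Upsilon = \Upsilon\circ\pi$. The cyclic-generator argument sidesteps exactly these issues by never requiring either commutation separately.
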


\begin{proof}
Thanks to Lemma~\ref{lem:Uietabotimeseta}, we can write $m = u (\etab_{\lambda+\nu^\tau} \otimes \eta_{\mu+\nu})$ for $u \in \Ui$. 
Recall that $\ipsi (\etab_{\lambda+\nu^\tau} \otimes \eta_{\mu+\nu}) = (\etab_{\lambda+\nu^\tau} \otimes \eta_{\mu+\nu})$ by Lemma~\ref{lem:etabotimeseta}. 
We have
\[
\pi \circ \ipsi (m) = \pi \circ \ipsi (u (\etab_{\lambda+\nu^\tau} \otimes \eta_{\mu+\nu}) ) 
= \ipsi(u) \pi(\etab_{\lambda+\nu^\tau} \otimes \eta_{\mu+\nu}) =  \ipsi(u) (\etab_{\lambda} \otimes \eta_{\mu}).
\]
We also have
\[
 \ipsi \circ \pi (m) =  \ipsi \circ \pi (u (\etab_{\lambda+\nu^\tau} \otimes \eta_{\mu+\nu})) 
 = \ipsi(u) \ipsi \circ \pi(\etab_{\lambda+\nu^\tau} \otimes \eta_{\mu+\nu}) =  \ipsi(u) (\etab_{\lambda } \otimes \eta_{\mu}).
\]
The proposition follows.
\end{proof}

\subsection{A stabilization property} 

Denote by $\mc{L}^\imath(\lambda, \mu)$ the $\Z[q^{-1}]$-lattice spanned by the canonical basis 
(hence also the $\imath$-canonical basis) of $L^\imath(\lambda, \mu)$.
By $\la \gg 0$ (say, $\la$ is sufficiently large) 
we shall mean that the integers $\langle i, \lambda \rangle$ for all $i$ are sufficiently large (in particular, we have $\la \in X^+$).

Let us first explain the simple idea for this subsection before going to the technical details. We want to study the contraction map
$\pi $ in \eqref{eq:LiLi} 
``at the limit $\nu \mapsto \infty$" and ultimately prove Proposition~\ref{prop:iCBtoiCB}. 
But the map $\pi$ does not map $ \mc{L}^\imath(\lambda+ \nu^\tau, \mu+\nu)$ to $\mc{L}^\imath(\lambda, \mu)$ in general. 
(In our QSP setting, we do not have at our disposal the $\imath$-counterpart of \cite[25.1. 6]{Lu94} which relies on \cite[25.1.2(b), 25.1.4(b)]{Lu94}.)
We instead study the problem as to whether or not the image of
$
(b_1 \diamondsuit_{\zeta} b_2 ) (\eta_{\lambda+\nu^\tau}^\bullet \otimes \eta_{\mu+\nu}) 
$ under $\pi$ lies in $\mc{L}^\imath(\lambda, \mu)$, for {\em fixed} $(b_1, b_2) \in \B_{\Iblack} \times \B$. 
The idea is to show that
$ \pi \big(
(b_1 \diamondsuit_{\zeta} b_2 ) (\eta_{\lambda+\nu^\tau}^\bullet \otimes \eta_{\mu+\nu}) \big) \in \mc{L}^\imath(\lambda, \mu)$,
for $\lambda, \mu, \nu$ sufficiently large. 
 
In this section, we often require that $\lambda$, $\mu \gg 0$ while fixing $\zeta_\imath = \overline{\wb\lambda + \mu} \in X_\imath$. Note that this is always possible, since we have $\overline{\nu + \wb \tau (\nu)} = 0 \in X_\imath$ for any $\nu \in X$. In other words, we can always replace  $\lambda$ with $\lambda+\nu^\tau$ and $\mu$ with $\mu +\nu$, respectively, for $\nu \in X$. 
\begin{lem} \label{lem:contraction}
Let $\zeta_{\imath} \in X_{\imath}$, $\nu \in X^+$ and   $(b_1, b_2)  \in \B_{\Iblack} \times \B$ be fixed. 
Let $\lambda, \mu \in X^+$ be such that $\wb \lambda + \mu = \zeta$ for some $\zeta \in X$ with $\overline{\zeta}= \zeta_\imath$. 
Hence $\zeta' :=\wb(\lambda + \nu^\tau) + \mu +\nu$ satisfies that $\overline{\zeta'} = \zeta_\imath$.
Then, for $\lambda, \mu \gg 0$, the  map
$
\pi_{\lambda,\mu, \nu}: L^{\imath} (\lambda+\nu^\tau, \mu+\nu) \rightarrow L^{\imath} (\lambda, \mu)
$
satisfies 
\[
\pi_{\lambda,\mu, \nu} \big( (b_1 \diamondsuit_{\zeta'} b_2)  (\eta^{\bullet}_{\lambda+\nu^\tau} \otimes \eta_{\mu+\nu}) \big)
\equiv (b_1 \diamondsuit_{\zeta} b_2)  (\eta^{\bullet}_{\lambda } \otimes \eta_{\mu }), \quad \text{ mod }  q^{-1} \mc{L}^\imath(\lambda, \mu).
\]

\end{lem}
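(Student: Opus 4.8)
The strategy is to realize the map $\pi_{\lambda,\mu,\nu}$ concretely, via the chain of maps built in Proposition~\ref{prop:def:contraction}, and then to track a single canonical basis element $(b_1 \diamondsuit_{\zeta'} b_2)(\eta^\bullet_{\lambda+\nu^\tau} \otimes \eta_{\mu+\nu})$ through that chain modulo $q^{-1}$-lattices. Recall $\pi = (\delta^\imath_\nu \otimes \id) \circ \chi''$ with $\chi'' = (1 \otimes \mc R \otimes 1) \circ (\chi \otimes \chi)$. The key is that each factor is a homomorphism of based $\U$-modules (or close to one) in the sense of Lusztig's Chapter~27, so the behavior on canonical bases is governed by the standard facts: $\chi$ is a based module map (Lemma~\ref{lem:chi}, Proposition~\ref{prop:Lucontraction}), $\mc R$ acts as the identity modulo $q^{-1}$ on the relevant lattices, and $\delta$ (hence $\delta^\imath$ via the $\Ui$-isomorphism $\mc T$) descends to a pairing that behaves well at $q = \infty$. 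The point of taking $\lambda, \mu \gg 0$ is to push into the ``stable range'' where the coefficients appearing in $\chi(b^-\eta_{\lambda+\nu^\tau})$ etc.\ stabilize, exactly as in \cite[25.1.4--25.1.6]{Lu94}.

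\textbf{Key steps.} First I would fix the notation and expand $(b_1 \diamondsuit_{\zeta'} b_2)(\eta^\bullet_{\lambda+\nu^\tau} \otimes \eta_{\mu+\nu})$ in terms of $b_1^+ \xi \otimes b_2^- \eta$ using Proposition~\ref{prop:CBUdot}(1); here the $\diamondsuit$-construction of Lusztig expresses this as $b_1^+ \xi_{w_0(\lambda+\nu^\tau)} \diamondsuit b_2^- \eta_{\mu+\nu}$ inside ${}^\omega L(-w_0(\lambda+\nu^\tau)) \otimes L(\mu+\nu)$ after the identification $L^\imath = {}^\omega L \otimes L$-type based submodule (Corollary~\ref{cor:based}), or rather inside the based submodule $L^\imath(\lambda+\nu^\tau,\mu+\nu)$. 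Second, I would apply $\chi \otimes \chi$: by Proposition~\ref{prop:Lucontraction}, modulo $q^{-1}$-lattices, $\chi(b^- \eta_{\lambda+\mu})$ for $b \in \B(\lambda+\mu)$ projects to $1 \otimes b^-\eta_\mu + (\text{terms with nontrivial first factor})$ when $b^-\eta_\mu \neq 0$; the ``diagonal'' pieces $\eta^\bullet_{\nu^\tau}$, $\eta_\nu$ get split off cleanly. Third, insert $\mc R$ (suitably rescaled, as in \cite[\S32.1]{Lu94}): on the relevant $\Z[q^{-1}]$-lattices $\mc R$ acts as $\id + q^{-1}(\cdots)$, so modulo $q^{-1}$ it can be dropped. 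Fourth, apply $\delta^\imath_\nu \otimes \id = (\delta \circ (\mc T \otimes \id)) \otimes \id$: the isomorphism $\mc T: L(\nu^\tau) \to {}^\omega L(\nu)$ sends $\eta^\bullet_{\nu^\tau} \mapsto \xi_{-\nu}$ and (by Theorem~\ref{thm:mcT}, with the integral form assertion now available) is an $\mA$-lattice isomorphism, and $\delta$ pairs the canonical basis of ${}^\omega L(\nu) \otimes L(\nu)$ so that $\delta(\xi_{-\nu} \otimes \eta_\nu) = 1$ with all other ``leading'' contributions in $q^{-1}\Z[q^{-1}]$ — this is exactly Lusztig's \cite[25.1.4, 25.1.6]{Lu94} adapted through $\mc T$. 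Collecting, the surviving leading term is $(b_1 \diamondsuit_\zeta b_2)(\eta^\bullet_\lambda \otimes \eta_\mu)$, everything else lands in $q^{-1}\mc L^\imath(\lambda,\mu)$, provided $\lambda, \mu \gg 0$ so that the structure constants $f(b;b_1,b_2)$ have stabilized and the weight $|b_2|$ is bounded appropriately relative to $\lambda, \mu$.

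\textbf{The main obstacle.} The delicate point is that, unlike in Lusztig's quantum group setting, $\pi$ is \emph{not} a based module map and does not preserve the $\imath$-canonical lattices for finite $\nu$; so one cannot simply invoke compatibility of canonical bases. The real work is to show the ``error terms'' genuinely die modulo $q^{-1}$ \emph{in the limit} $\lambda, \mu \to \infty$, which requires a careful bookkeeping of which $b_2'$ (with $|b_2'| \le |b_2|$, by Lemma~\ref{lem:iCBtensorproductrefinement}) can contribute and with what $q$-degree. I expect one needs to combine: (i) the bound on first-factor terms in $\chi$ from Proposition~\ref{prop:Lucontraction}, which forces the first-tensor-factor weight to be $\gg 0$ hence outside the range where $\delta$ contributes a nonzero constant; and (ii) a uniform statement that the coefficients $f(b_1,b_2;b_1',b_2')$ in the $\imath$-bar-involution expansion stabilize as $\lambda,\mu$ grow (with $\zeta_\imath$ fixed), which follows from Theorem~\ref{thm:Betabullet} and the explicit form of $\Upsilon \in \U^+(w_\bullet w_0)$ (Proposition~\ref{prop:wcirc}) together with the integrality Theorem~\ref{thm:intUpsilon}. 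Once the leading-term computation and the stabilization are both in hand, the congruence in the statement follows, and the uniqueness of $\imath$-canonical basis elements (Remark~\ref{rem:uniq}) will then let one upgrade this to the identification of $\imath$-canonical basis elements in the subsequent Proposition~\ref{prop:iCBtoiCB}.
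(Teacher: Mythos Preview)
Your overall architecture (track a canonical basis element through $(\delta^\imath_\nu \otimes \id)\circ(1\otimes\mc R\otimes 1)\circ(\chi\otimes\chi)$) matches the paper, and your identification of the main obstacle is exactly right: $\pi$ is \emph{not} a based module map, so you cannot simply invoke Lusztig's lattice compatibility. However, your proposed resolution of this obstacle has a genuine gap.

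You write that $\delta^\imath_\nu=\delta\circ(\mc T\otimes\id)$ pairs canonical basis elements with values in $\delta_{b,b'}+q^{-1}\Z[q^{-1}]$, ``exactly Lusztig's \cite[25.1.4, 25.1.6]{Lu94} adapted through $\mc T$.'' This is precisely what fails. The isomorphism $\mc T=\Upsilon\circ\widetilde g\circ\T_{\wb}^{-1}$ preserves the $\mA$-form but has no reason to preserve the $\Z[q^{-1}]$-lattice (both $\Upsilon$ and $\widetilde g$ can introduce arbitrarily large positive $q$-powers). So $\delta^\imath_\nu$ applied to a canonical basis element of $L(\nu^\tau)\otimes L(\nu)$ can land in $q^{s_\nu}\Z[q^{-1}]$ for some $s_\nu>0$ you do not control. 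The paper says this explicitly just before the lemma: the $\imath$-counterpart of \cite[25.1.6]{Lu94} is unavailable. Your point~(ii), about stabilization of the $\ipsi$-expansion coefficients, is not used in this lemma at all; the bar involution plays no role here.

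The paper's actual mechanism is different and worth knowing. Write the comultiplication $\Delta(b^-)=1\otimes b^-+\sum_{b'\neq 1}a(b',b'')\,b'^-\otimes b''^-\widetilde K_{-|b'|}$, so that after $\chi''$ the error terms are $\delta^\imath_\nu\big(b'^-(\eta^\bullet_{\nu^\tau}\otimes\eta_\nu)\big)\cdot b''^-\widetilde K_{-|b'|}(\eta^\bullet_\lambda\otimes\eta_\mu)$. Now split on $b'$: if $b'\in\B_{\Iblack}\setminus\{1\}$ then $b'^-\in\Ui$, and since $\delta^\imath_\nu$ is a $\Ui$-homomorphism to $\Q(q)$ this term vanishes outright. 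If $b'\notin\B_{\Iblack}$ then $|b'|$ has a nonzero $\Iwhite$-component, and the scalar $\widetilde K_{-|b'|}(\eta^\bullet_\lambda\otimes\eta_\mu)=q^{-s_{b',\lambda,\mu}}(\eta^\bullet_\lambda\otimes\eta_\mu)$ satisfies $s_{b',\lambda,\mu}\to\infty$ as $\lambda,\mu\to\infty$ with $\overline{\wb\lambda+\mu}=\zeta_\imath$ fixed (because $\langle j,\mu\rangle$ and $\langle\wb j,\lambda\rangle$ are positive and unbounded for $j\in\Iwhite$, while the $\Iblack$-part of the pairing depends only on $\zeta_\imath$). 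This negative $q$-power on the $(\lambda,\mu)$ side eventually dominates the uncontrolled positive $q$-power $s_\nu$ coming from $\delta^\imath_\nu$ on the $\nu$ side and the bounded power from $a(b',b'')$. That is the source of the ``$\lambda,\mu\gg 0$'' hypothesis, not any stabilization of $\chi$-coefficients. The general $b_1$ case then reduces to $b_1=1$ by writing $b_1\diamondsuit_{\zeta'}b_2=\sum f({}'b_1,{}'b_2)\,{}'b_1^+\,{}'b_2^-\one_{\zeta'}$ and using that ${}'b_1^+\in\U_{\Iblack}^+$ commutes with the $\Ui$-map $\pi$.
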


\begin{proof} 
The symbol ``$\lambda\mapsto \infty$" below means that $\langle i, \lambda \rangle$ tends to $\infty$ for all $i \in \I$. 
In this proof we denote by $\lim_{\lambda,\mu \mapsto \infty}$ the limit where $\lambda, \mu \mapsto \infty$ while fixing $\overline {\wb\lambda+\mu} = \zeta_{\imath}$.
We shall proceed in two steps. 

(1) First we prove the lemma in the special case when $b_1 = 1$. 
We write $b=b_2$ to simplify the notation here.  Then we have $1 \diamondsuit_{\zeta'} b = b^- \one_{\zeta'}$. Let 
\[
\Delta (b^-) = 1 \otimes b + \sum_{b'\neq 1} a(b', b'') b'^- \otimes b''^- \widetilde{K}_{-|b'|}, \quad \text{ where } b', b'' \in \B.
\]
It follows by \eqref{eq:LiLi} that 
\begin{align*}
&\chi'' ( b^- (\eta^{\bullet}_{\lambda+\nu^\tau} \otimes \eta_{\mu+\nu})) \\
= &(\eta^{\bullet}_{\nu^\tau} \otimes \eta_{\nu} ) \otimes b (\eta^{\bullet}_{\lambda} \otimes \eta_{\mu} )+ \sum a(b', b'') b'^- (\eta^{\bullet}_{\nu^\tau} \otimes \eta_{\nu} )\otimes b''^- \widetilde{K}_{-|b'|}(\eta^{\bullet}_{\lambda} \otimes \eta_{\mu} ),
\end{align*}
and hence 
\begin{equation}\label{eq:contraction1}
\begin{split}
&\pi_{\lambda,\mu, \nu} (  b^- (\eta^{\bullet}_{\lambda+\nu^\tau} \otimes \eta_{\mu+\nu})) \\
 = &b^- (\eta^{\bullet}_{\lambda} \otimes \eta_{\mu} ) + \sum_{b' \neq 1} a(b', b'') \delta^\imath_{\nu}\Big(b'^- (\eta^{\bullet}_{\nu^\tau} \otimes \eta_{\nu} ) \Big) b''^- \widetilde{K}_{-|b'|}(\eta^{\bullet}_{\lambda} \otimes \eta_{\mu} ).
\end{split}
\end{equation}
By Theorem~\ref{thm:Betabullet}, we have $b''^- (\eta^{\bullet}_{\lambda} \otimes \eta_{\mu} ) \in \B(\lambda,\mu) \cup \{0\}$ for any $b'' \in \B$.

Now if $1\neq b'^- \in \U_{\Iblack}^-$, we have 
\[
\delta^\imath_{\nu}\Big(b'^- (\eta^{\bullet}_{\nu^\tau} \otimes \eta_{\nu} ) \Big) = b'^-\Big(  \delta^\imath_{\nu}(\eta^{\bullet}_{\nu^\tau} \otimes \eta_{\nu} ) \Big) = 0.
\]
Therefore we have (if $b'\neq 1$)
\begin{align}
  \label{eq:contraction2}
  \begin{split}
a(b', b'') \delta^\imath_{\nu}\big(b'^- (\eta^{\bullet}_{\nu^\tau} \otimes \eta_{\nu} ) \big) \widetilde{K}_{-|b'|}(\eta^{\bullet}_{\lambda} \otimes \eta_{\mu} ) & =0,
\\
a(b', b'') \delta^\imath_{\nu}\big(b'^- (\eta^{\bullet}_{\nu^\tau} \otimes \eta_{\nu} ) \big) b''^- \widetilde{K}_{-|b'|}(\eta^{\bullet}_{\lambda} \otimes \eta_{\mu} ) & =0.
\end{split}
\end{align}

If  $b'^- \not \in \U^-_{\Iblack}$, we write $|b'| = \sum_{i \in \Iblack} a_{i} i + \sum_{j \in \Iwhite} c_{j} j$ for $a_i, c_j \ge 0$, and some $c_j \neq 0$. Then we have 
\begin{align*}
\widetilde{K}_{-|b'|}(\eta^{\bullet}_{\lambda} \otimes \eta_{\mu} ) & = \prod_{i \in \Iblack}q_i^{-a_i \langle i, \wb\lambda+\mu   \rangle} \prod_{j \in \Iwhite}q_j^{-c_j \langle j, \wb\lambda+\mu   \rangle} \; \eta^{\bullet}_{\lambda} \otimes \eta_{\mu} \\
& = \underbrace{\prod_{i \in \Iblack}q_i^{-a_i \langle i, \wb\lambda+\mu   \rangle} \prod_{j \in \Iwhite}q_j^{-c_j \langle \wb j,\lambda \rangle}  \prod_{j \in \Iwhite}q_j^{-c_j \langle j, \mu   \rangle} }_{=:q^{-s_{b', \lambda,\mu}}}  \; \eta^{\bullet}_{\lambda} \otimes \eta_{\mu}.
\end{align*}
Since $\overline {\wb\lambda+\mu} = \zeta_{\imath} \in X_{\imath}$, $ \prod_{i \in \Iblack}q_i^{-a_i \langle i, \wb\lambda+\mu   \rangle}$ 
depends only on $\zeta_{\imath}$ (since $\Z[\Iblack] \subset Y^\imath$)
and hence is fixed. Note that $\wb j > 0$, for $j \in \Iwhite$. 
So we have 
\begin{equation}\label{eq:limit1}
\lim_{\lambda,\mu \mapsto \infty} -s_{b', \lambda,\mu} =- \infty. 
\end{equation}

On the other hand, since $\nu$ is fixed, we see that 
\[
\delta^\imath_{\nu}\Big(b'^- (\eta^{\bullet}_{\nu^\tau} \otimes \eta_{\nu} ) \Big)  \in q^{s_{\nu}} \Z[q^{-1}], \quad \text{ for some } s_{\nu} \in \N.
\]
We can choose $s_{\nu}$ to be independent of $b'^-$ thanks to Theorem~\ref{thm:Betabullet} and the finite-dimensionality of $L^\imath(\nu^\tau, \nu)$.

Note that $a(b', b'')$ depends only on $b$ and is independent of $\lambda, \mu$. 
Let $t_{b} \in \N$ such that $q^{-t_{b}} a(b',b'') \in \Z[q^{-1}]$ for all $b', b''$. 
Therefore we have 
\begin{equation} \label{eq:contraction3}
a(b', b'') \delta^\imath_{\nu}\Big(b'^- (\eta^{\bullet}_{\nu^\tau} \otimes \eta_{\nu} ) \Big)  \widetilde{K}_{-|b'|}(\eta^{\bullet}_{\lambda} \otimes \eta_{\mu} ) \in q^{-s_{b', \lambda,\mu} + t_{b}+s_{\nu}} \Z[q^{-1}] (\eta^{\bullet}_{\lambda} \otimes \eta_{\mu} ),
\end{equation}
and
\[
a(b', b'') \delta^\imath_{\nu}\Big(b'^- (\eta^{\bullet}_{\nu^\tau} \otimes \eta_{\nu} ) \Big) b''^- \widetilde{K}_{-|b'|}(\eta^{\bullet}_{\lambda} \otimes \eta_{\mu} ) \in q^{-s_{b', \lambda,\mu} + t_{b}+s_{\nu}} \mc{L}^\imath(\lambda, \mu). 
\]
Since there are only finitely many $b'\preceq b$ (and  $b'' \preceq b$), let $s_{b, \lambda,\mu} = \text{min}\{ s_{b', \lambda,\mu} \vert b' \preceq b\}$. Thus the combination of \eqref{eq:contraction2} and \eqref{eq:contraction3} implies (recall $b' \neq 1$)
\begin{equation}\label{eq:contraction4}
a(b', b'') \delta^\imath_{\nu}\Big(b'^- (\eta^{\bullet}_{\nu^\tau} \otimes \eta_{\nu} ) \Big)  \widetilde{K}_{-|b'|}(\eta^{\bullet}_{\lambda} \otimes \eta_{\mu} ) 
\in q^{-s_{b, \lambda,\mu} + t_{b}+s_{\nu}} \Z[q^{-1}] (\eta^{\bullet}_{\lambda} \otimes \eta_{\mu} ), \text{ for all } b' \preceq b. 
\end{equation}

Equation \eqref{eq:limit1} and the finiteness of the set $\{ s_{b', \lambda,\mu} \vert b' \preceq b\}$ imply that 
\begin{equation}\label{eq:limit2}
\lim_{\lambda,\mu \to \infty} (-s_{b, \lambda,\mu} + t_{b} + s_\nu) = \big(- \lim_{\lambda,\mu \to \infty} s_{b, \lambda,\mu}\big) + t_{b} + s_\nu =- \infty.
\end{equation}

Note that $ 1 \diamondsuit_{\zeta}b = b^{-} \one_{\zeta}$ and $ 1 \diamondsuit_{\zeta'}b = b^{-} \one_{\zeta'}$. 
Summarizing, for $\lambda, \mu \gg 0$, we have 
\[
\pi_{\lambda,\mu, \nu} \big(  (1 \diamondsuit_{\zeta'}b) (\eta^{\bullet}_{\lambda+\nu^\tau} \otimes \eta_{\mu+\nu}) \big) 
\equiv  b^- (\eta^{\bullet}_{\lambda} \otimes \eta_{\mu} ) 
\equiv  (1 \diamondsuit_{\zeta}b) (\eta^{\bullet}_{\lambda} \otimes \eta_{\mu} ), 
\text{ mod }  q^{-1}\mc{L}^\imath(\lambda, \mu).
\]
This finishes the proof in the case when $b_1 =1$. 

\vspace{.2cm}

(2) Now we deal with the general case, by letting $b_1 \in \B_{\Iblack}$ and $b_2 \in \B$ be arbitrarily fixed. We can write 
\[
b_1 \diamondsuit_{\zeta'} b_2 =  \sum_{|{}'b_1| \le |b_1|, |{}'b_2|  \le |b_2|} f({}'b_1, {}'b_2){}'b^+_1 {}'b^-_2 \one_{\zeta'}, \quad f({}'b_1, {}'b_2) \in \mA.
\]
Note that $f({}'b_1, {}'b_2) \in \mA$ depends only on ${}' b_1, {}' b_2$ and $\zeta_{\imath} = \overline{\zeta}$ (by Proposition~\ref{prop:PaCB}). 
Since the set $\{({}'b_1, {}'b_2) \vert |{}'b_1| \le |b_1|, |{}'b_2|  \le |b_2| \}$ is finite, there exits $s_{f} \in \N$ such that $f({}'b_1, {}'b_2) \in q^{s_f} \Z[q^{-1}]$,
for all ${}'b_1, {}'b_2$.

The elements ${}'b^+_1  \in \U_{\Iblack}$ commute with the $\U_{\Iblack}$-homomorphism $\pi_{\lambda,\mu,\nu}$. 
 Following Equation \eqref{eq:contraction1} and the notation therein, we have
\begin{align*}
&\pi_{\lambda,\mu, \nu} \Big({}'b_1^+ {}'b_2^- (\eta^{\bullet}_{\lambda+\nu^\tau} \otimes \eta_{\mu+\nu}) \Big) \\
 = &{}'b_1^+ {}'b_2^- (\eta^{\bullet}_{\lambda} \otimes \eta_{\mu} ) + \sum_{b'_2 \neq 1} a({}'b_2', {}'b_2'') 
 \delta^\imath_{\nu}\big({}'b_2'^- (\eta^{\bullet}_{\nu^\tau} \otimes \eta_{\nu} ) \big) {}'b_1^+ {}'b_2''^- \widetilde{K}_{-|{}'b_2'|}(\eta^{\bullet}_{\lambda} \otimes \eta_{\mu} ).
\end{align*}

Let ${s_{{}'b_1, {}'b''_2}} \in \N$ be such that ${}'b^+_1 {}'b''^-_2 \one_{\zeta} \in q^{s_{{}'b_1, {}'b''_2}} \Z[q^{-1}] \dot{\B} \one_{\zeta}$. 
By Proposition~\ref{prop:PaCB}, ${s_{{}'b_1, {}'b''_2}}$ depends only on $\zeta_\imath$, but not on $\zeta$, since $\langle i, \zeta_\imath \rangle = \langle i, \zeta \rangle$ for all $i \in \Iblack$.

Now for a fixed ${}'b_2 \in \B$, there are only finitely many $|{}'b''_{2}| \le |{}'b_2|$ (and $|{}'b'_{2}| \le |{}'b_2|$). 
Let $s_{{}'b_1, {}'b_2} = \max \big \{{s_{{}'b_1, {}'b''_2}}, \; \forall {} \, 'b''_{2} \text{ with } |{}'b''_{2}| \le |{}'b_2|\big \}$. Then we have 
\begin{equation*}
{}'b^+_1 {}'b''^-_2 \one_{\zeta} \in q^{s_{{}'b_1, {}'b_2}} \Z[q^{-1}] \dot{\B} \one_{\zeta}, \qquad \forall \,{}'b''_{2} \text{ with } |{}'b''_{2}| \le  |{}'b_2|.
\end{equation*}
Let $m_{b_1, b_2} = \max \big\{s_{{}'b_1, {}'b_2}, \; \forall {} \, {}'b_1, {}'b_2 \text{ with }  |{}'b_1| \le  |b_1|, |{}'b_2| \le  |b_2| \big\}$. Then we have 
\[
{}'b^+_1 {}'b''^-_2 \one_{\zeta} \in q^{m_{b_1, b_2}} \Z[q^{-1}] \dot{\B} \one_{\zeta}, 
\qquad \text{ for all } |{}'b''_{2}| \le  |{}'b_2|,  |{}'b_2| \le  |b_2|, |{}'b_1| \le |b_1|.
\]

Then thanks to Equation~ \eqref{eq:contraction4} and Theorem~\ref{thm:Betabullet}, we have, for all $|{}'b_2''| \le |{}'b_2|$ and ${}'b_2' \neq 1$, 
 \begin{align*}
 f({}'b_1, {}'b_2) a({}'b_2', {}'b_2'') \delta^\imath_{\nu}\Big({}'b_2'^- (\eta^{\bullet}_{\nu^\tau} \otimes \eta_{\nu} ) \Big) 
 &{}'b_1^+ {}'b_2''^- \widetilde{K}_{-|{}'b_2'|}(\eta^{\bullet}_{\lambda} \otimes \eta_{\mu} ) \\
 & \in q^{-s_{{}'b_2, \lambda,\mu} + t_{{}'b_2}+s_{\nu}+ m_{b_1, b_2}+s_f} \mc{L}^\imath(\lambda, \mu).
 \end{align*}
 We see $\lim_{\lambda,\mu \to \infty} (-s_{{}'b_2, \lambda,\mu} + t_{{}'b_2}+s_{\nu}+ m_{b_1, b_2}+s_f)= -\infty$, 
 for all $|{}'b_2| \le |b_2|$ and ${}'b_2' \neq 1$. Since there are only finitely many ${}'b_2 \preceq b_2$, 
 we can find $\lambda, \mu \gg0$ such that 
 \begin{equation}\label{eq:contraction5}
 \pi_{\lambda,\mu, \nu} \Big( f({}'b_1, {}'b_2)  {}'b_1^+ {}'b_2^- (\eta^{\bullet}_{\lambda+\nu^\tau} \otimes \eta_{\mu+\nu}) \Big) 
 \equiv f({}'b_1, {}'b_2) {}'b_1^+ {}'b_2^- (\eta^{\bullet}_{\lambda} \otimes \eta_{\mu} ) , \text{ mod }  q^{-1} \mc{L}^\imath(\lambda, \mu).
\end{equation} 
 
Now summarizing, we can find $\lambda, \mu \gg 0$ such that  
\begin{align}
& \pi_{\lambda,\mu, \nu} \Big( (b_1 \diamondsuit_{\zeta'} b_2)  (\eta^{\bullet}_{\lambda+\nu^\tau} \otimes \eta_{\mu+\nu}) \Big) \notag\\
= & \pi_{\lambda,\mu, \nu} \Big(\sum_{{}'b_1, {}'b_2} f({}'b_1, {}'b_2){}'b^+_1 {}'b^-_2  (\eta^{\bullet}_{\lambda+\nu^\tau} \otimes \eta_{\mu+\nu}) \Big) \notag\\
\equiv &\sum_{{}'b_1, {}'b_2} f({}'b_1, {}'b_2){}'b^+_1 {}'b^-_2(\eta^{\bullet}_{\lambda} \otimes \eta_{\mu} )  &\text{ mod }  q^{-1} \mc{L}^\imath(\lambda, \mu) \notag\\
\equiv & (b_1  \diamondsuit_{\zeta} b_2) (\eta^{\bullet}_{\lambda} \otimes \eta_{\mu} )    &\text{ mod }  q^{-1} \mc{L}^\imath(\lambda, \mu)\label{eq:contraction7}.
\end{align}
The last identity \eqref{eq:contraction7} follows from Proposition~\ref{prop:PaCB}. This finishes the proof. 
\end{proof}

Now we improve Lemma~\ref{lem:contraction} by letting $\nu \in X^+$ vary.

\begin{lem}
  \label{lem:contractionCB}
Let $\zeta_{\imath} \in X_{\imath}$ and   $(b_1, b_2)  \in \B_{\Iblack} \times \B$ be fixed. 
Then for all $\lambda, \mu \gg 0$ such that $\zeta :=\wb \lambda + \mu = \zeta$ satisfies $\overline{\zeta}= \zeta_\imath$
and for all $\nu \in X^+$, we have 
\[
\pi_{\lambda,\mu, \nu} \big( (b_1 \diamondsuit_{\zeta'} b_2)  (\eta^{\bullet}_{\lambda+\nu^\tau} \otimes \eta_{\mu+\nu}) \big) 
\equiv (b_1 \diamondsuit_{\zeta} b_2)  (\eta^{\bullet}_{\lambda } \otimes \eta_{\mu }), \quad \text{ mod }  q^{-1} \mc{L}^\imath(\lambda, \mu),
\]
where we have denoted  $\zeta' =\wb(\lambda + \nu^\tau) + \mu +\nu$.
\end{lem}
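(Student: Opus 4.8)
The plan is to bootstrap from Lemma~\ref{lem:contraction}, which already gives the congruence for a \emph{fixed} triple $(\lambda,\mu,\nu)$ (with $\lambda,\mu\gg0$ depending on $\nu$), to a statement uniform in $\nu$. The key structural input is that the projective system is \emph{transitive}: for $\nu,\nu'\in X^+$ the maps compose as $\pi_{\lambda,\mu,\nu}\circ\pi_{\lambda+\nu^\tau,\mu+\nu,\nu'}=\pi_{\lambda,\mu,\nu+\nu'}$, which follows from the uniqueness clause in Proposition~\ref{prop:def:contraction} (both sides are $\Ui$-homomorphisms sending $\etab_{\lambda+(\nu+\nu')^\tau}\otimes\eta_{\mu+\nu+\nu'}$ to $\etab_\lambda\otimes\eta_\mu$, using $(\nu+\nu')^\tau=\nu^\tau+\nu'^\tau$). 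So first I would record this transitivity as a preliminary observation.

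Next, fix $\zeta_\imath\in X_\imath$ and $(b_1,b_2)\in\B_{\Iblack}\times\B$. By Lemma~\ref{lem:contraction} applied with $\nu=0$, there is a bound $N_0$ such that whenever $\lambda,\mu\gg0$ (meaning $\langle i,\lambda\rangle,\langle i,\mu\rangle\ge N_0$ for all $i$) with $\overline{\wb\lambda+\mu}=\zeta_\imath$, we have $\pi_{\lambda,\mu,0}$ is the identity modulo $q^{-1}\mc L^\imath$ on the relevant $\imath$-canonical-basis vector — but that is trivial since $\pi_{\lambda,\mu,0}=\id$. The real point is to show the congruence holds for $\pi_{\lambda,\mu,\nu}$ for \emph{all} $\nu\in X^+$ simultaneously once $\lambda,\mu$ are large. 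I would argue as follows: given $\lambda,\mu$ with $\overline{\wb\lambda+\mu}=\zeta_\imath$, write $\lambda'=\lambda+\nu^\tau$, $\mu'=\mu+\nu$. Since $\wb j>0$ for $j\in\Iwhite$ and $\nu^\tau,\nu\in X^+$, the pair $(\lambda',\mu')$ is "at least as large" as $(\lambda,\mu)$ coordinatewise in the sense that matters. Then I would re-run the estimates in the proof of Lemma~\ref{lem:contraction} but track the dependence on $\nu$: the dangerous terms are the $q^{-s_{b',\lambda,\mu}}\widetilde K_{-|b'|}(\etab_\lambda\otimes\eta_\mu)$ contributions together with the factor $\delta^\imath_\nu(b'^-(\etab_{\nu^\tau}\otimes\eta_\nu))\in q^{s_\nu}\Z[q^{-1}]$, where $s_\nu$ now grows with $\nu$. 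The key observation rescuing uniformity is that $-s_{b',\lambda,\mu}$ also decreases as $\nu$ grows: replacing $(\lambda,\mu)$ by $(\lambda+\nu^\tau,\mu+\nu)$ shifts $s_{b',\lambda+\nu^\tau,\mu+\nu}=s_{b',\lambda,\mu}+\big(\sum_{j\in\Iwhite}c_j\langle\wb j,\nu^\tau\rangle+\sum_{j\in\Iwhite}c_j\langle j,\nu\rangle\big)$, which grows \emph{at least linearly} in the size of $\nu$ whenever some $c_j\ne0$ (i.e.\ $b'\notin\U^-_{\Iblack}$, which is exactly the case where $\delta^\imath_\nu$ can be nonzero). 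Meanwhile $s_\nu$ grows only polynomially but the comparison is in the exponent of $q$, so I need to check that the linear growth of $s_{b',\lambda+\nu^\tau,\mu+\nu}$ dominates $s_\nu$ for $\lambda,\mu\gg0$; this is where I would invoke the finite-dimensionality of $L^\imath(\nu^\tau,\nu)$ to bound $s_\nu$ in terms of $\nu$ explicitly (roughly $s_\nu=O(|\nu|)$ as well, but with a controllable constant coming from the highest weight $\nu^\tau+\nu$).

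Alternatively — and this is probably the cleaner route — I would avoid the explicit exponent bookkeeping by a telescoping argument. For \emph{each} $\nu$, Lemma~\ref{lem:contraction} gives a threshold $N(\nu)$ beyond which the congruence for $\pi_{\lambda,\mu,\nu}$ holds. I want a single $N$ working for all $\nu$. Using transitivity $\pi_{\lambda,\mu,\nu}=\pi_{\lambda,\mu,\nu_1}\circ\pi_{\lambda+\nu_1^\tau,\mu+\nu_1,\nu_2}$ for any decomposition $\nu=\nu_1+\nu_2$, and the fact that $\pi_{\lambda+\nu_1^\tau,\mu+\nu_1,\nu_2}$ automatically maps $\mc L^\imath(\lambda+\nu_1^\tau,\mu+\nu_1)$ into... — no, that last fact is precisely what fails in general, which is the whole difficulty. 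So transitivity alone does not suffice; the honest path really does require the growth estimate. Therefore I would: (i) from the proof of Lemma~\ref{lem:contraction}, extract the precise inequality $-s_{b',\lambda,\mu}+t_{b}+s_\nu+m_{b_1,b_2}+s_f<0$ that needs to hold, isolating the $\nu$-dependence of each summand; (ii) show $s_\nu\le C\cdot\max_i\langle i,\nu\rangle$ for a constant $C$ depending only on the Cartan datum (via the dimension/weight bound on $L^\imath(\nu^\tau,\nu)$); (iii) show $s_{b',\lambda+\nu^\tau,\mu+\nu}\ge s_{b',\lambda,\mu}+c'\cdot\max_i\langle i,\nu\rangle$ with $c'>0$ using that $b'\notin\U^-_{\Iblack}$ forces $\sum c_j\ge1$ and $\langle\wb j,\nu^\tau\rangle+\langle j,\nu\rangle$ grows; then for $\lambda,\mu$ large enough that $-s_{b',\lambda,\mu}+t_b+m_{b_1,b_2}+s_f<0$ with room $c'\ge C$, the bound persists for every $\nu$. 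The main obstacle is exactly step (iii): making precise and uniform the claim that enlarging $(\lambda,\mu)$ by $(\nu^\tau,\nu)$ improves the $q$-adic estimate by at least as much as $\delta^\imath_\nu$ worsens it — a careful but elementary weight computation that I expect to occupy the bulk of the written proof. Once the uniform threshold $N$ is in hand, the statement of Lemma~\ref{lem:contractionCB} is immediate: take $\lambda,\mu\gg0$ past $N$ and all the preceding congruences hold for every $\nu\in X^+$.
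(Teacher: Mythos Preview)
You dismissed the telescoping route too quickly: that is exactly what the paper does. The paper's proof applies Lemma~\ref{lem:contraction} only to the finitely many fundamental weights $\omega_i$ (one for each $i\in\I$), chooses a single threshold $N$ working for all of them, and then obtains the statement for arbitrary $\nu\in X^+$ by induction on $\mathrm{ht}(\nu)$ via the transitivity $\pi_{\lambda,\mu,\nu}=\pi_{\lambda,\mu,\omega_i}\circ\pi_{\lambda+\omega_i^\tau,\mu+\omega_i,\nu-\omega_i}$. Since $\lambda+\omega_i^\tau$ and $\mu+\omega_i$ still lie above the threshold $N$, both the inductive hypothesis and the $\omega_i$-case of Lemma~\ref{lem:contraction} apply again at the next stage.

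Your objection was that $\pi_{\lambda,\mu,\omega_i}$ is not known to send all of $q^{-1}\mc L^\imath(\lambda+\omega_i^\tau,\mu+\omega_i)$ into $q^{-1}\mc L^\imath(\lambda,\mu)$. That is true in general, but it is not needed. The error terms arising in the proof of Lemma~\ref{lem:contraction} for $(b_1\diamondsuit b_2)$ are $\mA$-combinations of elements $(c_1\diamondsuit_\zeta c_2)(\etab\otimes\eta)$ with $|c_2|$ strictly smaller than $|b_2|$ (from the coproduct, the piece ${}'b_2''$ has $|{}'b_2''|<|{}'b_2|\le|b_2|$, and commuting $E$'s from $\Iblack$ past $F$'s never increases the $\U^-$-degree). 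Iterating, the errors are confined to a \emph{fixed finite} set $S\subset\B_{\Iblack}\times\B$ depending only on $(b_1,b_2)$ and $\zeta_\imath$. One then chooses $N$ so that Lemma~\ref{lem:contraction} holds for every $(c_1,c_2)\in S$ and every $\omega_i$; with this $N$ the induction on $\mathrm{ht}(\nu)$ closes up. So the paper's argument is correct, if terse; it simply does not spell out this finiteness.

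Your alternative growth-estimate route also has a slip: in step~(iii) you replace $s_{b',\lambda,\mu}$ by $s_{b',\lambda+\nu^\tau,\mu+\nu}$, but in the estimate for $\pi_{\lambda,\mu,\nu}$ the exponent $s_{b',\lambda,\mu}$ comes from $\widetilde K_{-|b'|}$ acting at the \emph{target} $\etab_\lambda\otimes\eta_\mu$, so it does not change with $\nu$. Thus $s_\nu$ really can outgrow it for fixed $(\lambda,\mu)$, and the direct bound you sketch does not go through as written. The inductive reduction to fundamental weights is both the paper's approach and the cleaner one.
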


\begin{proof}
Let $\omega_{i} \in X$ such that $\langle  j, \omega_i \rangle = \delta_{i,j}$ for $i,j\in \I$. Now we can apply Lemma~\ref{lem:contraction} to 
$\nu =  \omega_i$ for each $i \in \I$. Since $\I$ is a finite set, clearly when $\lambda, \mu \gg0$, we have 
\[
\pi: L^{\imath} (\lambda+\tau(\omega_i), \mu+ \omega_i) \longrightarrow L^{\imath} (\lambda, \mu), \qquad \forall i\in \I,
\]
satisfying 
\[
\pi_{\lambda,\mu, \omega_i} \big( (b_1 \diamondsuit_{\zeta'} b_2) (\eta^{\bullet}_{\lambda+\omega_i^\tau} \otimes \eta_{\mu+\omega_i}) \big) 
\equiv (b_1 \diamondsuit_{\zeta} b_2)  (\eta^{\bullet}_{\lambda } \otimes \eta_{\mu }), \quad \text{ mod }  q^{-1} \mc{L}^\imath(\lambda, \mu).
\]
Now the lemma follows by induction on $\text{ht}(\nu)$. 
\end{proof}

Recall  from Proposition~\ref{prop:iCBtensorproduct}
the $\imath$-canonical basis $\big \{ \big((b_1 \diamondsuit_{\zeta} b_2 ) (\eta_\lambda^\bullet \otimes \eta_{\mu}) \big)^{\imath} =  (b_1 \diamondsuit_{\zeta_\imath} b_2)^\imath_{\wb\lambda, \mu}  \big\}$  
of $L^{\imath} (\lambda, \mu)$.

\begin{prop}\label{prop:iCBtoiCB}
Let $\zeta_{\imath} \in X_{\imath}$ and   $(b_1, b_2)  \in \B_{\Iblack} \times \B $. 
Then for all $\lambda, \mu \gg 0$ such that $\zeta:=\wb \lambda + \mu$ satisfies $\overline{\zeta}= \zeta_\imath$ and 
for all $\nu \in X^+$, we have 

\[
\pi_{\lambda,\mu,\nu} \Big(   (b_1 \diamondsuit_{\zeta_\imath} b_2)^\imath_{\wb\lambda+\wb\nu^\tau, \mu+\nu}  \Big) = (b_1 \diamondsuit_{\zeta_\imath} b_2)^\imath_{\wb\lambda, \mu}.
\]
\end{prop}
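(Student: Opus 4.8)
The plan is to combine the three ingredients already in place: the asymptotic statement at the level of $\Z[q^{-1}]$-lattices (Lemma~\ref{lem:contractionCB}), the compatibility of $\pi_{\lambda,\mu,\nu}$ with the bar involution $\ipsi$ (Proposition~\ref{piipsi=ipsipi}), and the uniqueness characterization of the $\imath$-canonical basis elements $(b_1 \diamondsuit_{\zeta_\imath} b_2)^\imath_{\wb\lambda,\mu}$ via $\ipsi$-invariance plus the triangularity \eqref{eq:order} (together with Remark~\ref{rem:uniq}, which frees us from tracking the partial order). Concretely, first I would fix $\zeta_\imath$ and $(b_1,b_2)$, and choose $\lambda,\mu \gg 0$ with $\overline{\wb\lambda+\mu}=\zeta_\imath$ large enough that the conclusion of Lemma~\ref{lem:contractionCB} holds simultaneously for $(b_1,b_2)$ and for all $(b_1',b_2') <_\imath (b_1,b_2)$; this is a finite set of constraints by Lemma~\ref{lem:lefinite}, so such $\lambda,\mu$ exist.

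The core computation is then the following. Write $\zeta' = \wb(\lambda+\nu^\tau)+\mu+\nu$, so $\overline{\zeta'}=\zeta_\imath$, and abbreviate $x := \pi_{\lambda,\mu,\nu}\big( (b_1\diamondsuit_{\zeta_\imath}b_2)^\imath_{\wb\lambda+\wb\nu^\tau,\mu+\nu}\big) \in L^\imath(\lambda,\mu)$. By Proposition~\ref{piipsi=ipsipi}, $\pi_{\lambda,\mu,\nu}$ intertwines the two bar involutions, and $(b_1\diamondsuit_{\zeta_\imath}b_2)^\imath_{\wb\lambda+\wb\nu^\tau,\mu+\nu}$ is $\ipsi$-invariant by Proposition~\ref{prop:iCBtensorproduct}(1); hence $\ipsi(x)=x$. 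Next, expand $(b_1\diamondsuit_{\zeta_\imath}b_2)^\imath_{\wb\lambda+\wb\nu^\tau,\mu+\nu}$ in the basis $\{(b_1'\diamondsuit_{\zeta'}b_2')(\eta^\bullet_{\lambda+\nu^\tau}\otimes\eta_{\mu+\nu})\}$ using \eqref{eq:order}: the leading term has coefficient $1$ and the remaining terms are indexed by $(b_1',b_2')<_\imath(b_1,b_2)$ with coefficients in $q^{-1}\Z[q^{-1}]$. Applying $\pi_{\lambda,\mu,\nu}$ and invoking Lemma~\ref{lem:contractionCB} for each index appearing (both $(b_1,b_2)$ and all $(b_1',b_2')<_\imath(b_1,b_2)$ were arranged to satisfy it), we get that each $\pi_{\lambda,\mu,\nu}\big((b_1'\diamondsuit_{\zeta'}b_2')(\eta^\bullet_{\lambda+\nu^\tau}\otimes\eta_{\mu+\nu})\big) \equiv (b_1'\diamondsuit_{\zeta}b_2')(\eta^\bullet_\lambda\otimes\eta_\mu) \bmod q^{-1}\mc{L}^\imath(\lambda,\mu)$. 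Since $q^{-1}\Z[q^{-1}]\cdot q^{-1}\mc{L}^\imath(\lambda,\mu)\subseteq q^{-1}\mc{L}^\imath(\lambda,\mu)$, we conclude
\[
x \in (b_1\diamondsuit_{\zeta}b_2)(\eta^\bullet_\lambda\otimes\eta_\mu) + \sum_{(b_1',b_2')} q^{-1}\Z[q^{-1}]\,(b_1'\diamondsuit_{\zeta}b_2')(\eta^\bullet_\lambda\otimes\eta_\mu),
\]
where the sum ranges over $(b_1',b_2')\in\B_{\Iblack}\times\B$ (here one uses that $\pi_{\lambda,\mu,\nu}$ sends $\mc{L}^\imath(\lambda+\nu^\tau,\mu+\nu)$-combinations of canonical basis elements into $\mc{L}^\imath(\lambda,\mu)$, which follows from Theorem~\ref{thm:Betabullet} together with $\pi$ being a based-$\U$-module-like map up to sign; more directly, the expansion already exhibits $x$ as lying in the $\Z[q^{-1}]$-span of the $(b_1'\diamondsuit_{\zeta}b_2')(\eta^\bullet_\lambda\otimes\eta_\mu)$ with the stated leading coefficient).

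By the uniqueness characterization in Remark~\ref{rem:uniq} applied to the based module $L^\imath(\lambda,\mu)$: an element of $L^\imath(\lambda,\mu)$ that is $\ipsi$-invariant and lies in $(b_1\diamondsuit_{\zeta}b_2)(\eta^\bullet_\lambda\otimes\eta_\mu) + \sum q^{-1}\Z[q^{-1}](b_1'\diamondsuit_{\zeta}b_2')(\eta^\bullet_\lambda\otimes\eta_\mu)$ is necessarily equal to $(b_1\diamondsuit_{\zeta_\imath}b_2)^\imath_{\wb\lambda,\mu}$. Since $x$ satisfies both conditions, $x = (b_1\diamondsuit_{\zeta_\imath}b_2)^\imath_{\wb\lambda,\mu}$, which is the assertion. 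I expect the main obstacle to be the bookkeeping in the last paragraph: one must be careful that the expansion of $x$ really has coefficients in $q^{-1}\Z[q^{-1}]$ (not merely in $\Z[q^{-1}]$) off the diagonal, which requires noting that the off-diagonal coefficients of $(b_1\diamondsuit_{\zeta_\imath}b_2)^\imath_{\wb\lambda+\wb\nu^\tau,\mu+\nu}$ already lie in $q^{-1}\Z[q^{-1}]$ and survive reduction after applying $\pi_{\lambda,\mu,\nu}$ — and also that no new diagonal contribution with a positive power of $q$ sneaks in, which is exactly what Lemma~\ref{lem:contractionCB} rules out by pinning the leading term modulo $q^{-1}\mc{L}^\imath(\lambda,\mu)$. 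A secondary point is to verify that the finitely many largeness conditions on $\lambda,\mu$ (one per element $\le_\imath (b_1,b_2)$) can indeed be met simultaneously, which is immediate from Lemma~\ref{lem:lefinite}.
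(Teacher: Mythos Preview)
Your proposal is correct and follows essentially the same approach as the paper: expand the $\imath$-canonical basis element at level $(\lambda+\nu^\tau,\mu+\nu)$ via \eqref{eq:order}, apply Lemma~\ref{lem:contractionCB} term by term (using Lemma~\ref{lem:lefinite} to ensure only finitely many largeness conditions are needed), observe the image is $\ipsi$-invariant by Proposition~\ref{piipsi=ipsipi}, and conclude by the uniqueness in Remark~\ref{rem:uniq}. The only cosmetic difference is that the paper phrases the outcome of the termwise application of Lemma~\ref{lem:contractionCB} as the congruence $\pi_{\lambda,\mu,\nu}\big((b_1 \diamondsuit_{\zeta_\imath} b_2)^\imath_{\wb\lambda+\wb\nu^\tau,\mu+\nu}\big) \equiv (b_1 \diamondsuit_{\zeta_\imath} b_2)^\imath_{\wb\lambda,\mu} \pmod{q^{-1}\mc{L}^\imath(\lambda,\mu)}$ directly, which is equivalent to (and slightly cleaner than) your formulation; your parenthetical worry about $\pi$ preserving lattices is unnecessary, since the termwise argument already gives you everything you need.
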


\begin{proof}
We write $\zeta' =\wb(\lambda + \nu^\tau) + \mu +\nu$.
Recall from Proposition~\ref{prop:iCBtensorproduct} that 
\begin{align*}
	\big((b_1 \diamondsuit_{\zeta} b_2 ) (\eta_{\lambda+\nu^\tau}^\bullet \otimes \eta_{\mu+\nu}) \big)^{\imath} 
	&\in  (b_1 \diamondsuit_{\zeta} b_2 ) (\eta_{\lambda+\nu^\tau}^\bullet \otimes \eta_{\mu+\nu})   \\ 
	&\quad + \sum_{(b'_1, b'_2) \ile (b_1, b_2)} q^{-1}\Z[q^{-1}] (b'_1 \diamondsuit_{\zeta} b'_2 ) (\eta_{\lambda+\nu^\tau}^\bullet \otimes \eta_{\mu+\nu}).
\end{align*}
Applying Lemma~\ref{lem:contractionCB} to all (finitely many in total thanks to Lemma~\ref{lem:lefinite}) $(b'_1, b'_2) \in \B_{\Iblack} \times \B$ such that $(b'_1, b'_2) \ile (b_1, b_2)$,
we have
\begin{equation*}
\pi_{\lambda,\mu, \nu} \big(b'_1 \diamondsuit_{\zeta'} b'_2  (\eta^{\bullet}_{\lambda+\nu^\tau} \otimes \eta_{\mu+\nu}) \big)
 \equiv b'_1 \diamondsuit_{\zeta} b'_2  (\eta^{\bullet}_{\lambda } \otimes \eta_{\mu }), \quad \text{ mod }  q^{-1} \mc{L}^\imath(\lambda, \mu),
\end{equation*}
for all $(b'_1, b'_2) \ile (b_1, b_2)$. Hence we have
\begin{equation}
  \label{eq:pibb}
\pi_{\lambda,\mu,\nu} \Big(   (b_1 \diamondsuit_{\zeta_\imath} b_2)^\imath_{\wb\lambda+\wb\nu^\tau, \mu+\nu}  \Big) = (b_1 \diamondsuit_{\zeta_\imath} b_2)^\imath_{\wb\lambda, \mu}, \quad \text{ mod }  q^{-1} \mc{L}^\imath(\lambda, \mu). 
\end{equation}
We know by Proposition~\ref{piipsi=ipsipi} that 
$\pi_{\lambda,\mu,\nu} \Big(   (b_1 \diamondsuit_{\zeta_\imath} b_2)^\imath_{\wb\lambda+\wb\nu^\tau, \mu+\nu}  \Big)$ is $\ipsi$-invariant. 
Therefore the proposition follows by \eqref{eq:pibb} and the characterization property in Proposition~\ref{prop:iCBtensorproduct}(1) and Remark~\ref{rem:uniq} of the $\imath$-canonical basis element $\big((b_1 \diamondsuit_{\zeta} b_2 ) (\eta_\lambda^\bullet \otimes \eta_{\mu}) \big)^{\imath}$. 
\end{proof}

\subsection{Canonical basis on $\Uidot$}

We are in a position to construct the $\imath$-canonical basis of $\Uidot$.
Recall the $\mA$-subalgebra ${}_\mA \Uidot$  of $\Uidot$ from Definition~\ref{def:mAUidot}. 

\begin{thm}
  \label{thm:iCBUi}
		Let ${\zeta_\imath} \in X_{\imath}$ and $(b_1, b_2) \in B_{\Iblack} \times B$. 
	\begin{enumerate}
		\item	
		There is a unique element $u =b_1 \diamondsuit^\imath_{\zeta_\imath} b_2  \in \Uidot$ such that 
			\[
				u (\eta^{\bullet}_\lambda \otimes \eta_\mu) 
				= (b_1 \diamondsuit_{\zeta_\imath} b_2)_{\wb\lambda,\mu}^\imath				\in L^{\imath} (\lambda, \mu), 
			\]
for all $\lambda, \mu \gg0$ with $\overline{\wb \lambda+\mu} = {\zeta_\imath}$.
		\item	The element $b_1 \diamondsuit^\imath_{\zeta_\imath} b_2$ is $\ipsi$-invariant.
		\item	The set $\Bdot^\imath =\{ b_1 \diamondsuit^\imath_{\zeta_\imath} b_2 \big \vert {\zeta_\imath} \in X_\imath, (b_1, b_2) \in B_{\Iblack} \times B \}$ 
		forms a $\Qq$-basis of $\Uidot$ and an $\mA$-basis of ${}_\mA\Uidot$.
	\end{enumerate}
\end{thm}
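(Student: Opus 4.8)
\textbf{Proof plan for Theorem~\ref{thm:iCBUi}.}
The strategy is to build the elements $b_1 \diamondsuit^\imath_{\zeta_\imath} b_2$ as compatible limits over the projective system $\{L^\imath(\lambda+\nu^\tau, \mu+\nu)\}_{\nu\in X^+}$, exactly mirroring Lusztig's construction of $\dot{\B}$ in \cite[Chapter~25]{Lu94} but using $\imath$-canonical bases in place of canonical bases. First I would fix $\zeta_\imath\in X_\imath$ and $(b_1,b_2)\in B_{\Iblack}\times B$, and invoke the isomorphism $p_\imath: \Uidot\one_{\overline\lambda}\xrightarrow{\sim}\Padot\one_\lambda$ from Lemma~\ref{lem:pimath} together with the $\Qq$-linear isomorphism $\Uidot\one_{\zeta_\imath}\cong \Padot\one_{\wb\lambda+\mu}$ implicit in \eqref{Intr:PUi}: this tells us that $\Uidot\one_{\zeta_\imath}$ has a $\Qq$-basis parametrized by $(b_1,b_2)\in B_{\Iblack}\times B$ (matching $\dot\B_{\Pa\one_\zeta}$), and that for $\lambda,\mu\gg0$ the action map $\Uidot\one_{\zeta_\imath}\to L^\imath(\lambda,\mu)$, $u\mapsto u(\eta^\bullet_\lambda\otimes\eta_\mu)$, is injective with image spanned by the $(b_1\diamondsuit_\zeta b_2)(\eta^\bullet_\lambda\otimes\eta_\mu)$ (this is essentially Lemma~\ref{lem:Uietabotimeseta} plus Proposition~\ref{prop:iCBtensorproduct} and Proposition~\ref{prop:PaCB}). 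Hence for each sufficiently large pair $(\lambda,\mu)$ with $\overline{\wb\lambda+\mu}=\zeta_\imath$ there is a \emph{unique} $u_{\lambda,\mu}\in\Uidot\one_{\zeta_\imath}$ with $u_{\lambda,\mu}(\eta^\bullet_\lambda\otimes\eta_\mu)=(b_1\diamondsuit_{\zeta_\imath}b_2)^\imath_{\wb\lambda,\mu}$.

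The crux is then to show $u_{\lambda,\mu}$ is independent of $(\lambda,\mu)$ once $\lambda,\mu\gg0$. Here I would use Proposition~\ref{prop:iCBtoiCB}: the contraction maps $\pi_{\lambda,\mu,\nu}:L^\imath(\lambda+\nu^\tau,\mu+\nu)\to L^\imath(\lambda,\mu)$ are $\Ui$-homomorphisms sending $\eta^\bullet_{\lambda+\nu^\tau}\otimes\eta_{\mu+\nu}\mapsto\eta^\bullet_\lambda\otimes\eta_\mu$ and, for $\lambda,\mu\gg0$, sending $\imath$-canonical basis elements to the $\imath$-canonical basis element with the same label. Since $\pi_{\lambda,\mu,\nu}$ intertwines the $\Ui$-actions and is compatible with the cyclic generators, we get $u_{\lambda+\nu^\tau,\mu+\nu}(\eta^\bullet_{\lambda+\nu^\tau}\otimes\eta_{\mu+\nu})\mapsto u_{\lambda,\mu}(\eta^\bullet_\lambda\otimes\eta_\mu)$ under $\pi$; combined with the injectivity of the action map and the fact that both elements lie in $\Uidot\one_{\zeta_\imath}$, a standard cofinality argument over the directed set $\{(\lambda,\mu):\lambda,\mu\gg0,\ \overline{\wb\lambda+\mu}=\zeta_\imath\}$ (every two are dominated by a common $(\lambda+\nu^\tau,\mu+\nu)$) forces all $u_{\lambda,\mu}$ to coincide. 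This common value is $b_1\diamondsuit^\imath_{\zeta_\imath}b_2$, proving (1) and its uniqueness.

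For (2), $\ipsi$-invariance: each $(b_1\diamondsuit_{\zeta_\imath}b_2)^\imath_{\wb\lambda,\mu}$ is $\ipsi$-invariant in $L^\imath(\lambda,\mu)$ by Proposition~\ref{prop:iCBtensorproduct}(1), and the bar involution on $\Uidot$ acts compatibly with the action on weight modules (the displayed proposition after Definition~\ref{def:mAUidot}: $\psi_\imath(u\,m)=\psi_\imath(u)\psi_\imath(m)$ once $L^\imath(\lambda,\mu)$ is made $\imath$-involutive via $\ipsi=\Upsilon\circ\psi$, Proposition~\ref{prop:compatibleBbar}), so $\psi_\imath(b_1\diamondsuit^\imath_{\zeta_\imath}b_2)(\eta^\bullet_\lambda\otimes\eta_\mu)=\psi_\imath\big((b_1\diamondsuit_{\zeta_\imath}b_2)^\imath_{\wb\lambda,\mu}\big)=(b_1\diamondsuit_{\zeta_\imath}b_2)^\imath_{\wb\lambda,\mu}$ for $\lambda,\mu\gg0$; injectivity of the action map then gives $\psi_\imath(b_1\diamondsuit^\imath_{\zeta_\imath}b_2)=b_1\diamondsuit^\imath_{\zeta_\imath}b_2$. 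For (3): that $\Bdot^\imath$ is a $\Qq$-basis of $\Uidot$ follows because, for fixed $\zeta_\imath$ and $\lambda,\mu\gg0$, the action map is a $\Qq$-linear isomorphism onto its image and sends $\Bdot^\imath\one_{\zeta_\imath}$ bijectively to the $\imath$-canonical basis $\B^\imath(\lambda+0,\mu)\cap$ (relevant weight spaces) — more precisely to $\{(b_1\diamondsuit_{\zeta_\imath}b_2)^\imath_{\wb\lambda,\mu}\}$, which is a basis of $L^\imath(\lambda,\mu)$ by Proposition~\ref{prop:iCBtensorproduct}(2) — and these match up across $\zeta_\imath$ with the basis $\dot\B$ of $\Padot$ via Lemma~\ref{lem:pimath} and Proposition~\ref{prop:PaCB}. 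Finally, the $\mA$-statement: $\Bdot^\imath\subset{}_\mA\Uidot$ because each element preserves ${}_\mA L(\lambda)$ for all $\lambda\in X^+$ (Corollary~\ref{cor:AA}, using that the $\imath$-canonical basis of $L^\imath(\lambda,\mu)$, hence of $L(\lambda)$ by setting $\mu=0$, lies in the integral form — Theorem~\ref{thm:iCBbased}(2)); and conversely ${}_\mA\Uidot$ is spanned over $\mA$ by $\Bdot^\imath$ because any $u\in{}_\mA\Uidot\one_{\zeta_\imath}$ acts on $\eta^\bullet_\lambda\otimes\eta_\mu$ ($\lambda,\mu\gg0$) to land in ${}_\mA L^\imath(\lambda,\mu)=\bigoplus\mA\cdot(b_1\diamondsuit_{\zeta_\imath}b_2)^\imath_{\wb\lambda,\mu}$, and pulling back the $\mA$-coefficients through the action isomorphism expresses $u$ as an $\mA$-combination of the $b_1\diamondsuit^\imath_{\zeta_\imath}b_2$. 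I expect the main obstacle to be bookkeeping the cofinality/stabilization argument cleanly — ensuring that ``$\lambda,\mu\gg0$'' can be chosen uniformly so that the $u_{\lambda,\mu}$ genuinely agree as elements of the fixed space $\Uidot\one_{\zeta_\imath}$ rather than merely having matching images — but all the analytic content is already packaged in Proposition~\ref{prop:iCBtoiCB} and Lemma~\ref{lem:contractionCB}, so this is a matter of organizing the limit correctly rather than proving anything new.
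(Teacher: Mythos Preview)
Your plan for (1) and (2) follows the paper's essentially, and for the spanning direction in (3) you are also on track. But there are two issues.

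First, a technical one you half-acknowledge: the action map $\Uidot\one_{\zeta_\imath}\to L^\imath(\lambda,\mu)$ cannot be injective, since the source is infinite-dimensional and the target is finite-dimensional. The paper handles this by working on finite filtration pieces $P^\imath(N_1,N_2)\subset\Uidot$ (spanned by words $\ff^{a_1}_{i_1}\cdots\ff^{a_s}_{i_s}b^+\one_\zeta$ of bounded length) which map injectively to $L^\imath(\lambda,\mu)$ once $\lambda,\mu$ are large relative to $N_1,N_2$. This is what makes the ``pull back the coefficients'' step in your argument rigorous; you should set it up this way rather than asserting global injectivity.

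Second, and more substantive: your argument that $\Bdot^\imath\subset{}_\mA\Uidot$ has a genuine gap. You propose to invoke Corollary~\ref{cor:AA} by ``setting $\mu=0$'' to see that $u=b_1\diamondsuit^\imath_{\zeta_\imath}b_2$ preserves ${}_\mA L(\lambda)$. But part~(1) only controls $u(\eta^\bullet_\lambda\otimes\eta_\mu)$ for $\lambda,\mu\gg0$; you have no information at $\mu=0$, and indeed the Remark following the theorem says the condition $\lambda,\mu\gg0$ cannot in general be dropped. The paper closes this gap differently: to check $u\cdot\one_\gamma\in{}_\mA\Udot$ for a given $\gamma\in X$ with $\overline\gamma=\zeta_\imath$ (Lemma~\ref{lem:Uiintegral}), it writes $\gamma=\mu-\lambda^\tau$ with $\lambda,\mu\gg0$ and then uses the $\Ui$-module isomorphism $\mc{T}\otimes\id:{}_\mA L^\imath(\lambda,\mu)\to{}^\omega_\mA L(\lambda^\tau)\otimes_\mA{}_\mA L(\mu)$ of Theorem~\ref{thm:mcT}, which sends $\eta^\bullet_\lambda\otimes\eta_\mu\mapsto\xi_{-\lambda^\tau}\otimes\eta_\mu$. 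Since $u(\eta^\bullet_\lambda\otimes\eta_\mu)$ lies in ${}_\mA L^\imath(\lambda,\mu)$, transporting through this integral isomorphism shows $u(\xi_{-\lambda^\tau}\otimes\eta_\mu)\in{}^\omega_\mA L(\lambda^\tau)\otimes_\mA{}_\mA L(\mu)$, and for $\lambda,\mu\gg0$ this forces $u\cdot\one_\gamma\in{}_\mA\Udot$ by Lusztig's theory. The map $\mc{T}$ is doing real work here; you will need it (or something equivalent) to bridge from the $\imath$-tensor modules $L^\imath(\lambda,\mu)$ back to $\Udot$.
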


\begin{proof}
(1) 
For $N_1, N_2 \ge 0$, let $P^\imath(N_1, N_2)$ be the subspace of $\Uidot$ spanned by elements of the form 
$ \ff^{a_1}_{i_1} \ff^{a_2}_{i_2} \cdots \ff^{a_s}_{i_s} b^+ \one_{\zeta}$ for various $\zeta \in X_\imath$ and 
$b \in \B_{\Iblack}$ such that $ \sum_{i=1}^s a_i \le N_1$ and  ${\rm ht} (|b|) \le N_2$. It is easy to see that $\ipsi \Big( P^\imath(N_1, N_2) \Big) = P^\imath(N_1, N_2)$. 

Let $P(N_1, N_2)$ be the $\Qq$-subspace of $\Padot$ spanned by elements of the form $ b^-_2 b^+_1 \one_{\nu}$  such that 
$b_1 \in \B_{\Iblack}$ with $\rm{ht} (|b_1|) \le N_1$ and $b_2 \in \B$ with  $\rm{ht} (|b_2|) \le N_2$, for various $\nu \in X$.

Recall the $\Uidot$-module homomorphism $p_{\imath,\nu}$ in Lemma~\ref{lem:pimath}. 
For any $N''_1, N''_2 \ge 0$ and $\nu \in X$, we can find (sufficiently large) $N_1, N_2$ and (larger) $N'_1, N'_2$ such that
\begin{equation}\label{eq:CB:1}
P(N''_1, N''_2) \one_{\nu} \subset   p_{\imath,\nu} (P^\imath(N_1, N_2)) \subset P(N'_1, N'_2) \one_\nu.
\end{equation}

Consider $\zeta =\wb \lambda+\mu$, for $\la, \mu \in X$ subject to the constraint $\overline{\zeta} =\zeta_\imath$. 
Assume that  $b'_1 \diamondsuit _{\zeta}b'_2 \in P(N''_1, N''_2)$ for all $(b'_1, b'_2) \le_\imath (b_1, b_2)$. 
We shall use below the alternative and more informative notation 
$\big( (b_1 \diamondsuit_{\zeta} b_2)( \eta^\bullet_\lambda \otimes \eta_\mu) \big)^\imath$ 
for $(b_1 \diamondsuit_{\zeta_\imath} b_2)_{\wb\lambda,\mu}^\imath$. 
Then by 
Proposition~\ref{prop:iCBtensorproduct}, we have 
\[
\big((b_1 \diamondsuit_{\zeta} b_2) (\eta^\bullet_\lambda \otimes \eta_\mu) \big)^\imath \in P(N''_1, N''_2) (\eta^\bullet_\lambda \otimes \eta_\mu).
\]
  Note that by the definition of $p_{\imath,\zeta}$ in Lemma~\ref{lem:pimath} we have 
\[
 u  (\eta^\bullet_\lambda \otimes \eta_\mu) = p_{\imath,\zeta} (u ) (\eta^\bullet_\lambda \otimes \eta_\mu), \quad \text{ for } u \in \Uidot \one_{\zeta},
\]
Therefore by \eqref{eq:CB:1} there exists an element $u \in P^\imath(N_1, N_2)\one_{\zeta}$ such that 
\[
 u (\eta^\bullet_\lambda \otimes \eta_\mu)  = \big((b_1 \diamondsuit_{\zeta} b_2) (\eta^\bullet_\lambda \otimes \eta_\mu) \big)^\imath.
 \]
Now take $\lambda, \mu \gg 0$ so that we have the linear isomorphism 
\[
P(N'_1, N'_2) \one_{\zeta} \cong P(N'_1, N'_2) (\eta^\bullet_\lambda \otimes \eta_\mu).
\] 
Hence it follows by \eqref{eq:CB:1} that such $u \in P^\imath(N_1, N_2)\one_{{\zeta_\imath}}$ is unique (which in particular does not depend on 
the choices of $N_1'', N_1, N_1', N_2'', N_2, N_2'$). 
We write $u = (b_1 \diamondsuit^\imath_{\zeta_\imath} b_2)_{\lambda,\mu}$. 

Now assume we can find another such element 
$(b_1 \diamondsuit^\imath_{\zeta_\imath} b_2)_{\lambda+\nu^\tau,\mu +\nu} \in P^\imath(N'_1, N'_2 ) \one_{{\zeta_\imath}}$ for $\nu \in X^+$ (we can always enlarge $N_1'', N_1, N_1', N_2'', N_2, N_2'$).  
Proposition~\ref{prop:iCBtoiCB} implies that (for $\lambda,\mu \gg 0$)
\begin{align*}
(b_1 \diamondsuit^\imath_{\zeta_\imath} b_2)_{\lambda+\nu^\tau,\mu +\nu}  (\eta^\bullet_\lambda \otimes \eta_\mu)  
= &\pi_{\lambda,\mu, \nu} 
\big( (b_1 \diamondsuit^\imath_\zeta b_2)_{\lambda+\nu^\tau,\mu +\nu} (\eta^\bullet_{\lambda + \nu^\tau}\otimes \eta_{\mu+\nu}) \big)  \\
= &\big((b_1 \diamondsuit_{\zeta} b_2) (\eta^\bullet_\lambda \otimes \eta_\mu) \big)^\imath.
\end{align*}
Therefore by the uniqueness of $(b_1 \diamondsuit^\imath_{\zeta_\imath} b_2)_{\lambda,\mu}$, we have $(b_1 \diamondsuit^\imath_{\zeta_\imath} b_2)_{\lambda,\mu} = (b_1 \diamondsuit^\imath_{\zeta_\imath} b_2)_{\lambda+\nu^\tau,\mu +\nu}$. Hence we can define
\[
b_1 \diamondsuit^\imath_{\zeta_\imath} b_2 = \lim_{\lambda, \mu \to \infty} (b_1 \diamondsuit^\imath_{\zeta_\imath} b_2)_{\lambda,\mu}.
\]
This proves (1). 

\vspace{.3cm}
(2)  Since by Lemma~\ref{lem:Uietabotimeseta} we have 
\[
\big( \ipsi (b_1 \diamondsuit^\imath_{\zeta_\imath} b_2) \big) 
(\eta^\bullet_\lambda \otimes \eta_\mu) 
= \ipsi \big((b_1 \diamondsuit^\imath_{\zeta_\imath} b_2) (\eta^\bullet_\lambda \otimes \eta_\mu) \big) 
= (b_1 \diamondsuit^\imath_{\zeta_\imath} b_2) (\eta^\bullet_\lambda \otimes \eta_\mu),
\]
for all $\lambda, \mu \gg 0$. Hence by the uniqueness from (1), we have 
$\ipsi (b_1 \diamondsuit^\imath_{\zeta_\imath} b_2) =b_1 \diamondsuit^\imath_{\zeta_\imath} b_2$. 
This proves (2). 

\vspace{.3cm}
(3) We first show that $b_1 \diamondsuit^\imath_{\zeta_\imath} b_2$ lies in ${}_\mA \Uidot$. Let $\gamma \in X$. Thanks to Lemma~\ref{lem:Uiintegral}, it suffices to prove that $b_1 \diamondsuit^\imath_{\zeta_\imath} b_2 \cdot \one_{\gamma} \in {}_\mA \Udot$. We can assume that $\overline{\gamma} = \zeta_\imath$, otherwise the claim is trivial.  

We can write $\gamma = \mu - \lambda^\tau$ for some $\mu, \lambda \gg 0$. Let $\zeta=\wb \lambda+\mu$. Note that $\overline{\zeta} =\zeta_{\imath}$. Now thanks to Theorem~\ref{thm:mcT}, we have the following isomorphism of ${}_\mA\Uidot$-modules
\[\begin{split}
\mc{T} \otimes \id: &{}_{\mA}L^\imath(\lambda, \mu) \longrightarrow {}^\omega_{\mA} L(\lambda^\tau) \otimes_{\mA} {}_{\mA}L(\mu),\\
	& \eta^\bullet_\lambda \otimes \eta_\mu \mapsto \xi_{-\lambda^\tau} \otimes \eta_{\mu}.
\end{split}
\]

Since $\mu, \lambda \gg 0$, we have $(b_1 \diamondsuit^\imath_{\zeta_\imath} b_2)(\eta^\bullet_\lambda \otimes \eta_\mu) = \big((b_1 \diamondsuit_{\zeta} b_2)(\eta^\bullet_\lambda \otimes \eta_\mu) \big) ^\imath \in {}_{\mA}L^\imath(\lambda, \mu) $, hence $(b_1 \diamondsuit^\imath_{\zeta_\imath} b_2) (\xi_{-\lambda^\tau} \otimes \eta_{\mu}) \in {}^\omega_{\mA} L(\lambda^\tau) \otimes_{\mA} {}_{\mA}L(\mu)$. Since we know $\mu, \lambda \gg 0$, we conclude that $b_1 \diamondsuit^\imath_{\zeta_\imath} b_2 \cdot \one_{\gamma} \in {}_\mA \Udot$.

Now let  $x$ be any element in  ${}_\mA \Uidot \one_{\zeta_\imath}$. We can assume $x \in P^\imath(N_1, N_2) \one_{\zeta_\imath}$.
Take $\la, \mu \in X$ such that $\zeta=\wb \lambda+\mu$ satisfies $\overline{\zeta} =\zeta_\imath$. 
Then by definition of ${}_\mA \Uidot$, we have 
\[
x (\eta^\bullet_\lambda \otimes \eta_\mu)= \sum_{b_1, b_2} f(b_1,b_2) 
\big ( (b_1 \diamondsuit_{\zeta} b_2)(\eta^\bullet_\lambda \otimes \eta_\mu) \big) ^\imath, \qquad \text{for } f(b_1,b_2) \in \mA.
\]
Thanks to the linear isomorphism $P(N'_1, N'_2) \one_{\zeta} \cong P(N'_1, N'_2) \one_{\zeta}  (\eta^\bullet_\lambda \otimes \eta_\mu)$, we have
\[
x = \sum_{b_1, b_2} f(b_1, b_2)  (b_1 \diamondsuit^\imath_{\zeta_\imath} b_2).
\] 
Hence $\big\{ b_1 \diamondsuit^\imath_{\zeta_\imath} b_2 \big \vert  (b_1, b_2) \in B_{\Iblack} \times B \big\}$ spans ${}_\mA \Uidot\one_{\zeta_\imath}$. 
The linear independence can be proved entirely similarly. 
\end{proof}

\begin{rem}
The condition ``$\lambda, \mu \gg0$" in Theorem~\ref{thm:iCBUi}(1) cannot be removed completely 
in general but can likely be much weakened, 
as suggested in the case of QSP of type AI$_1$ ($\I = \{i\}$ and $\Iblack =\emptyset$) with some particular choice of parameter $\kappa_i \neq 0$ (see \cite[\S4.2]{BW13} for the case $\kappa_i =1$). 
But  this condition can be removed for QSP of type AI$_1$  with $\kappa_i =0$. 
That is, for any $ \lambda, \mu, \nu \in X^+$ and $(b_1, b_2) \in \B_{\Iblack} \times \B$, the map
$\pi: L^\imath(\lambda+\nu^\tau, \mu+\nu) \longrightarrow L^\imath(\lambda,\mu)$
sends
$
(b_1 \diamondsuit_{\zeta_\imath} b_2)_{\wb( \lambda+\nu^\tau),\mu+\nu}^\imath 
\mapsto (b_1 \diamondsuit_{\zeta_\imath} b_2)_{\wb\lambda,\mu}^\imath$, where ${\zeta_\imath} =\overline{\wb \lambda+\mu}$.
The proof follows by the same computation as \cite[Lemma~4.8 and Proposition~4.9]{BW13}. 

We conjecture that $\pi$ maps an $\imath$-canonical basis element to  an $\imath$-canonical basis element or zero for general QSP,
and moreover, the strong compatibility could still hold when fixing the parameters properly. 
\end{rem}

Note $(E^{(a)}_{j} \diamondsuit_{{\zeta_\imath}}^{\imath} 1) = E^{(a)}_{j} \one_{{\zeta_\imath}}$, for $j \in \Iblack$.

\begin{cor}
   \label{cor:generator}
The $\mA$-algebra ${}_\mA \Uidot$ is generated by $1  \diamondsuit_{{\zeta_\imath}}^{\imath} F^{(a)}_{i}$ $(i \in \I)$ 
and $E^{(a)}_{j} \one_{{\zeta_\imath}}$ $(j \in \Iblack)$ for various ${\zeta_\imath} \in X_\imath$ and $a \ge 0$.
Moreover, ${}_\mA \Uidot$ is a free $\mA$-module such that $\Uidot =\Qq \otimes_{\mA} {}_\mA \Uidot$. 
\end{cor}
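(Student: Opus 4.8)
Looking at Corollary~\ref{cor:generator}, the statement has two parts: that ${}_\mA\Uidot$ is generated as an $\mA$-algebra by the listed divided-power type elements, and that ${}_\mA\Uidot$ is a free $\mA$-module with $\Uidot = \Qq\otimes_\mA {}_\mA\Uidot$. I would derive both as fairly direct consequences of Theorem~\ref{thm:iCBUi}.

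\textbf{Plan for the generation statement.} The first step is to observe that all the proposed generators do lie in ${}_\mA\Uidot$: the elements $1\diamondsuit^\imath_{\zeta_\imath} F^{(a)}_i$ are $\imath$-canonical basis elements attached to $b_1 = 1 \in \B_{\Iblack}$ and $b_2 = F^{(a)}_i$-type canonical basis elements, hence lie in ${}_\mA\Uidot$ by Theorem~\ref{thm:iCBUi}(3); the elements $E^{(a)}_j\one_{\zeta_\imath}$ for $j\in\Iblack$ lie in ${}_\mA\Uidot$ since they preserve ${}_\mA\Udot$ (as $E_j^{(a)}\in{}_\mA\U^+$). Let ${}_\mA'\Uidot$ denote the $\mA$-subalgebra they generate; the goal is ${}_\mA'\Uidot = {}_\mA\Uidot$. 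For the inclusion ${}_\mA'\Uidot \subseteq {}_\mA\Uidot$, I would invoke Corollary~\ref{cor:AA}: it suffices that ${}_\mA'\Uidot$ preserves ${}_\mA L(\lambda)$ for all $\lambda\in X^+$, which follows since each generator does (the $1\diamondsuit^\imath_{\zeta_\imath}F^{(a)}_i$ by Corollary~\ref{cor:AA} applied to the already-established integrality, and $E_j^{(a)}$, $F_j^{(a)}$ obviously). For the reverse inclusion, I would use Claim~($\star$) from the proof of Theorem~\ref{thm:intUpsilon}: there one shows ${}_\mA'\Uidot\,\eta_\lambda = {}_\mA L(\lambda)$ using exactly these generators together with $1\diamondsuit^\imath_{\zeta_\imath}F^{(a)}_j$ for $j\in\Iblack$ --- but these latter coincide with $F_j^{(a)}\one_{\zeta_\imath}$ by Corollary~\ref{cor:etab} / the weight-reasoning that $F_j^{(a)}\eta_\lambda$ is already $\ipsi$-invariant, so they are among our generators. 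Then any $\imath$-canonical basis element $b_1\diamondsuit^\imath_{\zeta_\imath}b_2$, being determined by its action on $\eta^\bullet_\lambda\otimes\eta_\mu$ for $\lambda,\mu\gg 0$, can be realized inside ${}_\mA'\Uidot$ by a downward induction on the partial order $\le_\imath$ (Lemma~\ref{lem:lefinite} guarantees downward finiteness), matching leading terms as in \eqref{eq:pimath}; since $\Bdot^\imath$ is an $\mA$-basis of ${}_\mA\Uidot$ by Theorem~\ref{thm:iCBUi}(3), this gives ${}_\mA\Uidot\subseteq{}_\mA'\Uidot$.

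\textbf{Plan for the freeness statement.} This is immediate from Theorem~\ref{thm:iCBUi}(3): the set $\Bdot^\imath$ is simultaneously a $\Qq$-basis of $\Uidot$ and an $\mA$-basis of ${}_\mA\Uidot$, so ${}_\mA\Uidot$ is a free $\mA$-module on $\Bdot^\imath$, and the natural map $\Qq\otimes_\mA{}_\mA\Uidot\to\Uidot$ carries the $\mA$-basis to the $\Qq$-basis, hence is an isomorphism. I would just record this one line.

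\textbf{Anticipated obstacle.} The freeness half is essentially free of content given Theorem~\ref{thm:iCBUi}. The real work is in the generation statement, and specifically in the induction showing every $\imath$-canonical basis element lies in the subalgebra generated by the divided powers. The subtle point is that a product like $\ff^{a_1}_{i_1}\cdots\ff^{a_s}_{i_s}b^+\one_{\zeta_\imath}$ of our generators differs from the target $\imath$-canonical basis element by a $\Z[q^{-1}]$-combination of strictly-lower $\imath$-canonical basis elements (in the $\le_\imath$ order), and one must check these correction terms are themselves products of generators --- this is where one re-uses the leading-term analysis of $p_\imath$ from Lemma~\ref{lem:pimath} together with the bar-invariance of the whole subalgebra. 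I expect this bookkeeping, rather than any conceptual difficulty, to be the main thing to get right; it closely parallels Claim~($\star$) in the proof of Theorem~\ref{thm:intUpsilon} and the standard argument for ${}_\mA\Udot$ in \cite[Chapter~25]{Lu94}.
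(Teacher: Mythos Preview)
Your proposal is correct and uses the same ingredients as the paper (Claim~($\star$) from the proof of Theorem~\ref{thm:intUpsilon}, the finite filtration $P^\imath(N_1,N_2)$, and the limit argument from Theorem~\ref{thm:iCBUi}), but the paper organizes the lifting step more directly than your downward induction on $\le_\imath$. Writing ${\bf V}$ for the $\mA$-subalgebra generated by the listed elements, the paper first runs the analogue of Claim~($\star$) on the tensor module to get ${\bf V}(\eta^\bullet_\lambda \otimes \eta_\mu) = {}_\mA L^\imath(\lambda,\mu)$; hence $(b_1\diamondsuit^\imath_{\zeta_\imath}b_2)(\eta^\bullet_\lambda\otimes\eta_\mu)$ lies in $({\bf V}\cap P^\imath(N_1,N_2))(\eta^\bullet_\lambda\otimes\eta_\mu)$ for some $N_1,N_2$ independent of $\lambda,\mu$; then the injectivity of $P^\imath(N_1,N_2)\one_{\zeta_\imath}\to P^\imath(N_1,N_2)(\eta^\bullet_\lambda\otimes\eta_\mu)$ for $\lambda,\mu\gg 0$ (established in the proof of Theorem~\ref{thm:iCBUi}) immediately gives $b_1\diamondsuit^\imath_{\zeta_\imath}b_2\in{\bf V}$. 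This sidesteps entirely the leading-term bookkeeping you flagged as the main obstacle: one never needs to produce an explicit product of generators whose top term is a prescribed $\imath$-canonical basis element. Your freeness argument is exactly the paper's.
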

(These generators of the $\mA$-algebra ${}_\mA \Uidot$ are called $\imath$-divided powers.)

\begin{proof}
Let us denote by ${\bf V}$ the $\mA$-subalgebra of ${}_\mA \Uidot$ generated by
$1  \diamondsuit_{{\zeta_\imath}}^{\imath} F^{(a)}_{i}$ $(i \in \I)$ 
and $E^{(a)}_{j} \one_{{\zeta_\imath}}$ $(j \in \Iblack)$. 
Take $\la, \mu \in X^+$ such that ${\zeta_\imath} = \overline{\wb \lambda+\mu}$. 
By an inductive argument entirely similar to the proof of Claim ($\star$) in the proof of Theorem~\ref{thm:intUpsilon}, 
 we have 
\[
{\bf V} (\etab_\lambda \otimes \eta_\mu) = {}_\mA \U  (\etab_\lambda \otimes \eta_\mu) =  {}_\mA \Pa (\etab_\lambda \otimes \eta_\mu) = {}_\mA {L}^\imath(\lambda,\mu).
\]
Therefore for any $(b_1, b_2) \in \B_{\Iblack} \times \B$, we have 
$
(b_1 \diamondsuit^\imath_{\zeta_\imath} b_2)   (\etab_\lambda \otimes \eta_\mu) \in {\bf V} (\etab_\lambda \otimes \eta_\mu).
$
Retaining the notation from the proof of Theorem~\ref{thm:iCBUi}, we further have 
\[
(b_1 \diamondsuit^\imath_{\zeta_\imath} b_2)   (\etab_\lambda \otimes \eta_\mu)
 \in \big({\bf V} \cap P^\imath(N_1, N_2) \big) (\etab_\lambda \otimes \eta_\mu)
\]
for some $N_1$ and $N_2$ independent of choices of $\lambda, \mu$ (such that ${\zeta_\imath} = \overline{\wb \lambda+\mu}$).
Now taking $\lambda ,\mu \to \infty$, following the same argument as in the proof of Theorem~\ref{thm:iCBUi}, 
we conclude that  
$
b_1 \diamondsuit^\imath_{\zeta_\imath} b_2  \in {\bf V}, 
$
that is, every canonical basis element of ${}_\mA \Uidot$ lies in ${\bf V}$. Hence we have ${\bf V} ={}_\mA \Uidot$, and the corollary follows. 
\end{proof}

We have the following improvement of Lemma~\ref{lem:pimath}.

\begin{cor}
  \label{cor:pimath}
For $\lambda \in X$, the map $p_{\imath,\lambda}: {}_\mA \Uidot \one_{\overline{\lambda}} \longrightarrow {}_\mA\Padot \one_{\lambda}$ 
is an isomorphism of (free) $\mc A$-modules. 
\end{cor}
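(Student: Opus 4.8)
\textbf{Proof proposal for Corollary~\ref{cor:pimath}.}
The plan is to combine the already-established map-theoretic facts with the freeness statement just proved in Theorem~\ref{thm:iCBUi}(3) and the parabolic canonical basis of Proposition~\ref{prop:PCB}. Recall from Lemma~\ref{lem:pimath} that $p_{\imath,\lambda}: \Uidot \one_{\overline{\lambda}} \to \Padot \one_{\lambda}$ is already an isomorphism of $\Qq$-vector spaces, and that it restricts to an \emph{injective} $\mA$-linear map ${}_\mA \Uidot \one_{\overline{\lambda}} \hookrightarrow {}_\mA\Padot \one_{\lambda}$. So the only thing left to prove is that this restriction is also surjective onto ${}_\mA\Padot \one_{\lambda}$.

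First I would recall that, by Theorem~\ref{thm:iCBUi}(3), the set $\{ b_1 \diamondsuit^\imath_{\overline{\lambda}} b_2 \mid (b_1,b_2) \in \B_{\Iblack}\times \B\}$ is an $\mA$-basis of ${}_\mA\Uidot \one_{\overline{\lambda}}$, while by Proposition~\ref{prop:PCB} (in the form $\dot{\B}_{\Pa\one_\lambda} = \{ b_1 \diamondsuit_{\lambda} b_2 \mid (b_1,b_2)\in \B_{\Iblack}\times\B\}$) the set $\{ b_1 \diamondsuit_{\lambda} b_2\}$ is an $\mA$-basis of ${}_\mA\Padot\one_\lambda$. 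Thus both sides are free $\mA$-modules with bases indexed by the \emph{same} set $\B_{\Iblack}\times \B$. Next I would pin down the transition matrix: from the leading-term analysis already carried out in the proof of Lemma~\ref{lem:pimath} (equation~\eqref{eq:pimath} and the discussion of ``lower terms''), the image $p_{\imath,\lambda}(b_1 \diamondsuit^\imath_{\overline{\lambda}} b_2)$ equals $b_1 \diamondsuit_\lambda b_2$ plus an $\mA$-combination of terms $b_1' \diamondsuit_\lambda b_2'$ that are strictly lower in the ordering $<_\imath$ of Definition~\ref{def:partialorderingBtimesB} — this uses that $p_{\imath,\lambda}$ sends $\ipsi$-invariant elements to $\psi$-invariant ones together with the defining uni-triangularity of the $\imath$-canonical basis on $L^\imath(\lambda,\mu)$ from Proposition~\ref{prop:iCBtensorproduct}, passed to the limit as in Theorem~\ref{thm:iCBUi}. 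Since $<_\imath$ is downward finite on $\B_{\Iblack}\times\B$ by Lemma~\ref{lem:lefinite}, the transition matrix from $\{p_{\imath,\lambda}(b_1\diamondsuit^\imath_{\overline\lambda}b_2)\}$ to $\{b_1\diamondsuit_\lambda b_2\}$ is ``column-finite'' uni-triangular with entries in $\mA$, hence invertible over $\mA$ (its inverse is again uni-triangular over $\mA$). Therefore $\{p_{\imath,\lambda}(b_1\diamondsuit^\imath_{\overline\lambda}b_2)\}$ is itself an $\mA$-basis of ${}_\mA\Padot\one_\lambda$, which shows $p_{\imath,\lambda}$ maps ${}_\mA\Uidot\one_{\overline\lambda}$ \emph{onto} ${}_\mA\Padot\one_\lambda$. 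Combined with the injectivity from Lemma~\ref{lem:pimath}, this gives the desired isomorphism of free $\mA$-modules.

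I expect the main obstacle to be the bookkeeping in the second step: verifying that the triangularity established fiberwise on the modules $L^\imath(\lambda,\mu)$ genuinely descends, after the $\lambda,\mu\to\infty$ limit of Theorem~\ref{thm:iCBUi}, to a clean uni-triangularity statement for $p_{\imath,\lambda}(b_1\diamondsuit^\imath_{\overline\lambda}b_2)$ in terms of the parabolic canonical basis $\{b_1\diamondsuit_\lambda b_2\}$ with coefficients \emph{in $\mA$} rather than merely in $\Qq$. The integrality of the coefficients is exactly what the freeness half of Theorem~\ref{thm:iCBUi}(3) (together with Corollary~\ref{cor:AA}) is designed to supply, so the argument should go through; but one must be a little careful that the same $N_1,N_2$ truncations used there control the parabolic side via the inclusion~\eqref{eq:CB:1}. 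Once that is in place, invertibility of a column-finite uni-triangular $\mA$-matrix is standard and the corollary follows.
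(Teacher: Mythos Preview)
Your overall strategy---show that $p_{\imath,\lambda}$ carries the $\mA$-basis $\{b_1\diamondsuit^\imath_{\overline\lambda}b_2\}$ of ${}_\mA\Uidot\one_{\overline\lambda}$ to the $\mA$-basis $\{b_1\diamondsuit_\lambda b_2\}$ of ${}_\mA\Padot\one_\lambda$ by a column-finite uni-triangular $\mA$-matrix---is sound and does yield the corollary. But your justification of the uni-triangularity step is not right as written. Equation~\eqref{eq:pimath} concerns monomials $\ff_{i_1}^{a_1}\cdots\ff_{i_s}^{a_s}b^+\one_{\overline\lambda}$, not $\imath$-canonical basis elements; and the assertion that $p_{\imath,\lambda}$ sends $\ipsi$-invariants to $\psi$-invariants is neither proved in the paper nor needed. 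The correct route is: for $\lambda',\mu'\gg 0$ with $\zeta=\wb\lambda'+\mu'$ and $\overline\zeta=\overline\lambda$, the faithfulness $P(N_1',N_2')\one_\zeta\cong P(N_1',N_2')(\eta^\bullet_{\lambda'}\otimes\eta_{\mu'})$ used inside the proof of Theorem~\ref{thm:iCBUi}, together with Proposition~\ref{prop:iCBtensorproduct}(1), gives
\[
p_{\imath,\zeta}(b_1\diamondsuit^\imath_{\overline\lambda}b_2)=b_1\diamondsuit_\zeta b_2+\sum_{(b_1',b_2')<_\imath(b_1,b_2)}t_{b_1',b_2'}\,b_1'\diamondsuit_\zeta b_2',\qquad t_{b_1',b_2'}\in q^{-1}\Z[q^{-1}].
\]
Then Proposition~\ref{prop:PaCB} (which applies since $\Z[\Iblack]\subset Y^\imath$ forces $\langle i,\zeta\rangle=\langle i,\lambda\rangle$ for $i\in\Iblack$) transports this to $p_{\imath,\lambda}$, and Lemma~\ref{lem:lefinite} lets you invert. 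With these substitutions your argument goes through.

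The paper takes a different, shorter path: rather than comparing canonical bases, it invokes Corollary~\ref{cor:generator} (the $\imath$-divided powers $1\diamondsuit^\imath_{\zeta_\imath}F_i^{(a)}$ and $E_j^{(a)}\one_{\zeta_\imath}$ generate ${}_\mA\Uidot$) and reruns the height-induction of Lemma~\ref{lem:pimath} over $\mA$: since $p_\imath$ of a product of $\imath$-divided powers equals the corresponding product of ordinary divided powers plus $\mA$-lower terms of smaller height, every $F_{i_1}^{(a_1)}\cdots F_{i_s}^{(a_s)}b^+\one_\lambda$ lies in the image. Your approach buys a sharper statement (the explicit uni-triangular relation between $\Bdot^\imath\one_{\overline\lambda}$ and $\dot\B_{\Pa\one_\lambda}$), while the paper's approach avoids the passage through Proposition~\ref{prop:PaCB} and the limit bookkeeping you flagged as the obstacle.
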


\begin{proof}
It remains to prove the surjectivity. 
We basically rerun the proof of Lemma~\ref{lem:pimath} with the help of Corollary~\ref{cor:generator}. 
Let $b \in \B_{\Iblack}$. 
As an analogue of \eqref{eq:pimath}, we consider  
\begin{equation}
   \label{eq:surj}
p_\imath \big( 
(1  \diamondsuit_{{\zeta_\imath}}^{\imath} F^{(a_1)}_{i_1})
(1  \diamondsuit_{{\zeta_\imath}}^{\imath} F^{(a_2)}_{i_2})
\cdots (1  \diamondsuit_{{\zeta_\imath}}^{\imath} F^{(a_s)}_{i_s})
\big) 
= F^{(a_1)}_{i_1} F^{(a_2)}_{i_2} \cdots F^{(a_s)}_{i_s} b^+ \one_{\lambda} + \text{ lower terms},
\end{equation}
where the lower terms are a $\mA$-linear combination of $F^{(a_1')}_{j_1}  \cdots F^{(a_t')}_{j_t} {b'}^+ \one_{\lambda}$
for various $b \in \B_{\Iblack}$ and $a_j'$ with $a_1' +\ldots + a_t' < a_1 +\ldots + a_s$. 
It follows by an induction on $a_1 +\ldots + a_s$ that 
$F^{(a_1)}_{i_1} F^{(a_2)}_{i_2} \cdots F^{(a_s)}_{i_s} b^+ \one_{\lambda} \in p_{\imath,\lambda} ( {}_\mA \Uidot \one_{\overline{\lambda}})$.  
The surjectivity now follows. 
\end{proof}

\subsection{Canonical bases for Levi subalgebras of $\Ui$} 

For an admissible subdiagram with root datum $\mathbb{J} \subset \I$, 
recall the Levi subalgebra $\U^\imath_{\mathbb{J}}$ of $\Ui$ from Definition~\ref{def:Levi}. 
Define 
\[
\Udot^\imath_{\mathbb{J}} = \bigoplus_{\mu \in X_\imath} \U^\imath_{\mathbb{J}} \one_\mu \subseteq \Uidot.
\]
As $(\U_{\mathbb{J}}, \U^\imath_{\mathbb{J}})$ forms a quantum symmetric pair of finite type, 
all constructions for $\imath$-canonical bases so far are applicable.  
For $\la, \mu \in X^+$, we have $\U^\imath_{\mathbb{J}}$-module $L^\imath_{\mathbb J} (\la, \mu)$ 
(which reduces to $L^\imath (\la, \mu)$ when $\mathbb J =\I$). 
By construction, we have $L^\imath_{\mathbb J} (\la, \mu) \subseteq L^\imath (\la, \mu)$. 
It follows by the uniqueness in Proposition~\ref{prop:iCBtensorproduct} (and its $\mathbb J$-variant) that 
$\B^\imath(\la,\mu) \cap L^\imath_{\mathbb J} (\la, \mu)$ is the $\imath$-canonical basis of $L^\imath_{\mathbb J} (\la, \mu)$. 
Recall from Theorem~\ref{thm:iCBUi} that $\Bdot^\imath$ is the $\imath$-canonical basis of $\Uidot$,
and $\Udot^\imath_{\mathbb{J}}$ admits an $\imath$-canonical basis. 
By the uniqueness in Theorem~\ref{thm:iCBUi}(1) (and its $\mathbb J$-variant),  the  
$\imath$-canonical basis of $\Udot^\imath_{\mathbb{J}}$ coincides with the subset $\Bdot^\imath \cap \Udot^\imath_{\mathbb{J}}$ of the 
$\imath$-canonical basis of $\Uidot$; this follows from 
a rerun of the proof of Theorem~\ref{thm:iCBUi}.  We summarize this as the following.

\begin{prop}
For an admissible subdiagram with root datum $\mathbb{J} \subset \I$, 
the set $\Bdot^\imath \cap \Udot^\imath_{\mathbb{J}}$  forms the 
$\imath$-canonical basis of $\Uidot_{\mathbb{J}}.$
\end{prop}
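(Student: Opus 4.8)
The statement asserts that for an admissible subdiagram with root datum $\mathbb{J}\subseteq \I$, the intersection $\Bdot^\imath \cap \Udot^\imath_{\mathbb{J}}$ is exactly the $\imath$-canonical basis of $\Uidot_{\mathbb{J}}$. The plan is to compare the two characterizations of $\imath$-canonical basis elements coming from Theorem~\ref{thm:iCBUi} applied to the quantum symmetric pair $(\U_{\mathbb J},\U^\imath_{\mathbb J})$ on one hand, and from Theorem~\ref{thm:iCBUi} applied to $(\U,\Ui)$ on the other, and show that the defining uniqueness property forces them to agree. Concretely, for each $\zeta_\imath\in X_\imath$ (I should be slightly careful here: the $\imath$-weight lattice of $\mathbb J$ embeds into $X_\imath$ in the natural way, so I only range over those $\zeta_\imath$ lying in the image) and each $(b_1,b_2)\in \B_{\I_\bullet\cap \mathbb J}\times \B_{\mathbb J}$, one has two candidate elements: $b_1\diamondsuit^\imath_{\zeta_\imath}b_2$ constructed inside $\Udot^\imath_{\mathbb J}$, and the element of the same label constructed inside $\Uidot$. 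I want to show the former equals the latter, which in particular shows it lies in $\Bdot^\imath$.

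First I would set up the module side. Fix $\la,\mu\in X^+$ (viewed as dominant weights for $\U_{\mathbb J}$ by restriction, and simultaneously for $\U$) with $\overline{\wb\la+\mu}=\zeta_\imath$ and $\la,\mu\gg 0$. By construction $L^\imath_{\mathbb J}(\la,\mu)=\U_{\mathbb J}(\eta^\bullet_\la\otimes\eta_\mu)\subseteq \U(\eta^\bullet_\la\otimes\eta_\mu)=L^\imath(\la,\mu)$, and since $\Theta^\pm_\nu\in\U_{\mathbb J}$ for $\nu\in\N[\Iblack\cap\mathbb J]$ (using that $\Iblack\cap\mathbb J$ is a full subset and the quasi-$\mc R$-matrix of $\U_{\mathbb J}$ is the restriction of that of $\U$, as in the proof of Proposition~\ref{prop:PaCB}), the subspace $L^\imath_{\mathbb J}(\la,\mu)$ is stable under $\psi$, hence under $\ipsi=\Upsilon\circ\psi$ once I note that $\Upsilon$ for $(\U_{\mathbb J},\U^\imath_{\mathbb J})$ is the restriction of $\Upsilon$ for $(\U,\Ui)$. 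This last point is exactly the key observation used in Lemma~\ref{lem:Levi}, namely $\T_{w_\bullet}(E_{\tau i})=\T_{w_\bullet^{\mathbb J}}(E_{\tau i})$, combined with the uniqueness in Theorem~\ref{thm:Upsilon} and Lemma~\ref{lem:TUpsilon}: the $\U_{\mathbb J}$-intertwiner satisfies the same defining identity when tested against $\U^\imath_{\mathbb J}\subseteq\Ui$, and its components lie in $\U^+_{\mathbb J}\subseteq\U^+$, so by uniqueness $\Upsilon^{\mathbb J}_\mu=\Upsilon_\mu$ for $\mu\in\N[\mathbb J]$ and $=0$ otherwise. Therefore the two bar involutions $\ipsi$ agree on $L^\imath_{\mathbb J}(\la,\mu)$, and the $\imath$-canonical basis $\B^\imath_{\mathbb J}(\la,\mu)$ of $L^\imath_{\mathbb J}(\la,\mu)$ consists precisely of those elements of $\B^\imath(\la,\mu)$ lying in the subspace $L^\imath_{\mathbb J}(\la,\mu)$; this is the uniqueness in Proposition~\ref{prop:iCBtensorproduct} applied to both modules, since the canonical basis of $L^\imath_{\mathbb J}(\la,\mu)$ is $\B(\la,\mu)\cap L^\imath_{\mathbb J}(\la,\mu)$ and the partial orders are compatible.

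Next I would transfer this to the modified algebras. By Theorem~\ref{thm:iCBUi}(1) (and its $\mathbb J$-variant), for $\la,\mu\gg 0$ the element $b_1\diamondsuit^\imath_{\zeta_\imath}b_2$ constructed in $\Udot^\imath_{\mathbb J}$ is the unique element of $\Udot^\imath_{\mathbb J}\one_{\zeta_\imath}$ acting on $\eta^\bullet_\la\otimes\eta_\mu$ as $(b_1\diamondsuit_{\zeta_\imath}b_2)^\imath_{\wb\la,\mu}\in L^\imath_{\mathbb J}(\la,\mu)$; but by the previous paragraph this $\imath$-canonical basis element of $L^\imath_{\mathbb J}(\la,\mu)$ equals the corresponding $\imath$-canonical basis element of $L^\imath(\la,\mu)$ with the same label. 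Since the element of $\Uidot$ with that label, namely the one produced by Theorem~\ref{thm:iCBUi} applied to $(\U,\Ui)$, also acts on $\eta^\bullet_\la\otimes\eta_\mu$ as that same vector, and since $\Udot^\imath_{\mathbb J}\subseteq\Uidot$, I can invoke the uniqueness clause of Theorem~\ref{thm:iCBUi}(1) for $(\U,\Ui)$ — which requires only that $\la,\mu\gg 0$, not membership in a smaller subalgebra — to conclude the two coincide as elements of $\Uidot$. Hence $b_1\diamondsuit^\imath_{\zeta_\imath}b_2\in\Bdot^\imath\cap\Udot^\imath_{\mathbb J}$. Conversely, any element of $\Bdot^\imath$ lying in $\Udot^\imath_{\mathbb J}$ is of the form $b_1\diamondsuit^\imath_{\zeta_\imath}b_2$ and, by examining its action on $\eta^\bullet_\la\otimes\eta_\mu$ together with the isomorphism $\Udot^\imath_{\mathbb J}\one_{\zeta_\imath}\xrightarrow{\sim}\Udot^\imath_{\mathbb J}\one_{\zeta_\imath}(\eta^\bullet_\la\otimes\eta_\mu)$ for $\la,\mu\gg 0$ (an instance of the linear isomorphism $P(N_1',N_2')\one_\zeta\cong P(N_1',N_2')(\eta^\bullet_\la\otimes\eta_\mu)$ from the proof of Theorem~\ref{thm:iCBUi}), its labels $(b_1,b_2)$ must lie in $\B_{\Iblack\cap\mathbb J}\times\B_{\mathbb J}$, so it is an $\imath$-canonical basis element of $\Udot^\imath_{\mathbb J}$.

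The step I expect to be the main obstacle is the bookkeeping in matching the $P^\imath(N_1,N_2)$-type filtrations and the choices of $\la,\mu\gg 0$ simultaneously for $(\U,\Ui)$ and for $(\U_{\mathbb J},\U^\imath_{\mathbb J})$ — one must be sure that the "sufficiently large" thresholds can be chosen compatibly so that both uniqueness statements apply to the same pair $(\la,\mu)$, and that the spanning/independence argument at the end of the proof of Theorem~\ref{thm:iCBUi} really does restrict cleanly to $\Udot^\imath_{\mathbb J}$. This is precisely what is meant by "a rerun of the proof of Theorem~\ref{thm:iCBUi}"; since all ingredients (Lemma~\ref{lem:pimath}, Proposition~\ref{prop:iCBtoiCB}, and the filtration estimates) are available verbatim for $(\U_{\mathbb J},\U^\imath_{\mathbb J})$, the rerun is routine, but it is the only place where genuine care is required rather than a direct citation.
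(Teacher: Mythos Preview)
Your proposal is correct and follows the same approach as the paper, which is extremely terse here: the paper simply asserts that the compatibility on the module level follows from the uniqueness in Proposition~\ref{prop:iCBtensorproduct} (and its $\mathbb J$-variant), and that the statement on $\Uidot_{\mathbb J}$ then follows from a rerun of the proof of Theorem~\ref{thm:iCBUi}. Your expansion of this sketch is accurate, and you correctly identify the one place requiring care, namely the compatible choice of $\la,\mu\gg 0$ and the filtration bookkeeping. One small remark: your justification that the two bar involutions agree on $L^\imath_{\mathbb J}(\la,\mu)$ via the identity $\Upsilon^{\mathbb J}_\mu=\Upsilon_\mu$ for $\mu\in\N[\mathbb J]$ is a bit delicate to make precise directly; a cleaner route is to observe that both $\ipsi$ and $\ipsi^{\mathbb J}$ are $\U^\imath_{\mathbb J}$-compatible anti-linear involutions fixing the cyclic generator $\eta^{\bullet,\mathbb J}_\la\otimes\eta_\mu$ (the latter by Lemma~\ref{lem:etabotimeseta}, since $\eta^{\bullet,\mathbb J}_\la = b^-\eta_\la$ for some $b\in\B_{\Iblack\cap\mathbb J}\subseteq\B_{\Iblack}$), hence agree on the cyclic module $L^\imath_{\mathbb J}(\la,\mu)$.
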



\subsection{Bilinear forms}
The results in this subsection generalize \cite[Chapter~26]{Lu94}.

Recall the anti-involution $\wp$ in Proposition~\ref{prop:invol}. For ${\mathbb J} \subset \I$, 
let $w_{\mathbb J}$ be the longest element in the parabolic Weyl group $W_{\mathbb J}$.  
Recall \cite[Chapter~19]{Lu94} there is a unique symmetric bilinear form $(\cdot, \cdot)= (\cdot, \cdot)_\lambda: L(\lambda) \times L(\lambda) \rightarrow \Qq$ such that
	\begin{enumerate}
		\item	$(\eta, \eta) =1$;
		\item	$(ux,y) = (x, \wp(u) y)$ for all $x,y \in L(\lambda)$ and $u \in \U$;
		\item	$(x,y) = 0$ for $x \in L(\lambda)_{\mu}$ and $y \in L(\lambda)_{\mu'}$ unless $\mu =\mu'$. 
	\end{enumerate}

\begin{lem}
  \label{lem:Ubilinearform}
For any ${\mathbb J} \subset  \I$, let $\eta^{\mathbb J} = \T^{-1}_{w_{\mathbb J}} (\eta) \in L(\lambda)$ for $\lambda \in X^+$. 
Then we have $(\eta^{{\mathbb J}}, \eta^{\mathbb J}) =1$.
\end{lem}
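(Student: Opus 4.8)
The plan is to prove this by induction on the length $\ell(w_{\mathbb J})$, using the fact that $\T^{-1}_{w_{\mathbb J}}$ can be built up one simple reflection at a time together with the compatibility of the bilinear form with $\wp$. Since $\eta^{\mathbb J} = \T^{-1}_{w_{\mathbb J}}(\eta)$ is (up to a known scalar) the extremal weight vector $\eta_{w_{\mathbb J}\lambda}$ of $L(\lambda)$, and since all such extremal vectors are canonical basis elements of a one-dimensional weight space, the real content is the normalization $(\eta^{\mathbb J},\eta^{\mathbb J})=1$. First I would record the base case: when $w_{\mathbb J}$ is a single simple reflection $s_i$ with $\langle i,\lambda\rangle = n$, we have $\eta^{\{i\}} = \T^{-1}_i(\eta)$, which by \cite[5.2.1]{Lu94} (or \eqref{eq:braidgroup} applied to the lowest-weight realization) equals $F_i^{(n)}\eta$ up to a sign, and a direct $\mathfrak{sl}_2$-computation using property (2) of the form, $(F_i^{(n)}\eta, F_i^{(n)}\eta) = (\eta, \wp(F_i^{(n)}) F_i^{(n)}\eta)$ together with $\wp(F_i) = q_i^{-1} E_i \tK_i^{-1}$, gives $1$.

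For the inductive step, write $w_{\mathbb J} = s_i w'$ with $\ell(w_{\mathbb J}) = \ell(w')+1$ and $i \in \mathbb J$, so that $\T^{-1}_{w_{\mathbb J}} = \T^{-1}_{w'}\T^{-1}_i$ (being careful with the order dictated by the braid relations). Set $\eta' = \T^{-1}_{w'}(\eta)$, which has weight $w'\lambda$; by induction $(\eta',\eta')=1$. Then $\eta^{\mathbb J}$ is obtained from $\eta'$ by applying $\T^{-1}_i$, and because $\langle i, w'\lambda\rangle =: m \ge 0$ (as $s_i w' = w_{\mathbb J}$ is still a reduced word inside $W_{\mathbb J}$ and $w'\lambda$ is dominant for $s_i$ in the appropriate sense), the action of $\T^{-1}_i$ on the weight-$w'\lambda$ line is again governed by the rank-one $\mathfrak{sl}_2$ formulas: $\eta^{\mathbb J} = \pm F_i^{(m)}\eta'$ up to sign. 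The key point is then that the bilinear form restricted to the $\langle F_i\rangle$-string through $\eta'$ is, by uniqueness, the standard $\mathfrak{sl}_2$-form, so $(F_i^{(m)}\eta', F_i^{(m)}\eta') = (\eta',\eta') = 1$. This last equality is the standard normalization fact for the bilinear form on a simple $\U_q(\mathfrak{sl}_2)$-module (highest and lowest weight vectors have equal norm), which follows from $(F_i^{(m)}\eta',F_i^{(m)}\eta') = (\eta', \wp(F_i^{(m)})F_i^{(m)}\eta')$ and an explicit evaluation of $\wp(F_i^{(m)})F_i^{(m)}$ on the highest weight vector of that string.

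The main obstacle I anticipate is bookkeeping rather than conceptual: one must track the signs and $q$-powers introduced by $\T^{-1}_i$ versus $F_i^{(m)}$ (the formula $\T'_{i,e}(F_i) = -E_i\tK_{-ei}$ etc. produces signs depending on parities of weight pairings), and one must verify that the relevant weight pairing $\langle i, w'\lambda\rangle$ is indeed non-negative so that the extremal vector is a \emph{positive} power of $F_i$ (equivalently, that the reduced word is compatible). Since everything happens inside a one-dimensional weight space and the form is weight-orthogonal, the signs square away and only the normalization $1$ survives; alternatively, one can bypass signs entirely by noting $\eta^{\mathbb J} = \eta_{w_{\mathbb J}\lambda}$ is a canonical basis element (by \cite[Lemma~39.1.2]{Lu94} and the discussion around $\eta_{w\lambda}$), invoking that canonical basis elements of $L(\lambda)$ are almost orthonormal with the diagonal entry $\equiv 1 \bmod q^{-1}\Z[q^{-1}]$, and then pinning down the exact value $1$ by a single $\mathfrak{sl}_2$-reduction along the last simple reflection. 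I would present the induction cleanly and relegate the sign/$q$-power verification to a one-line remark.
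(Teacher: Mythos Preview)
Your approach is the same as the paper's---reduce to an $\mathfrak{sl}_2$ computation along a reduced expression---but your induction as written has two bookkeeping slips. First, the inductive hypothesis is stated only for longest elements $w_{\mathbb J}$ of parabolic subgroups, yet you apply it to $w'$, an arbitrary subword of a reduced expression for $w_{\mathbb J}$, which is generally not of the form $w_{\mathbb K}$; you must either strengthen the claim to all $w\in W$ or fix a single reduced expression $w_{\mathbb J}=s_{i_1}\cdots s_{i_n}$ and induct along it, defining $\eta^{\mathbb J}_k=F^{(a_k)}_{i_k}\cdots F^{(a_n)}_{i_n}\eta$ (this is exactly what the paper does). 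Second, from $w_{\mathbb J}=s_i w'$ you correctly obtain $\T^{-1}_{w_{\mathbb J}}=\T^{-1}_{w'}\T^{-1}_i$, but this gives $\eta^{\mathbb J}=\T^{-1}_{w'}(\T^{-1}_i\eta)$, not $\T^{-1}_i(\eta')$ as you claim.

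What you actually need and then use is $\eta^{\mathbb J}=F_i^{(m)}\eta'$, and this is precisely \cite[39.1.2]{Lu94}: $\T^{-1}_{w_{\mathbb J}}(\eta)=F^{(a_1)}_{i_1}\cdots F^{(a_n)}_{i_n}\eta$ on the nose, with no signs or extra $q$-powers. Invoking that formula at the outset, as the paper does, dissolves both problems above and all of your sign concerns, and collapses the inductive step to the single line
\[
(F^{(a)}_{i}\eta_k,\,F^{(a)}_{i}\eta_k)=(\eta_k,\,\wp(F^{(a)}_{i})F^{(a)}_{i}\eta_k)=(\eta_k,\eta_k),
\]
using that $E_i\eta_k=0$ and the standard $\U_q(\mathfrak{sl}_2)$ identity on the highest weight line.
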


\begin{proof}
By \eqref{eq:eta-b} (or \cite[39.1.2]{Lu94}), for a reduced expression $w_{\mathbb J} =s_{i_1} s_{i_2} \cdots s_{i_n}$, 
we have $\eta^{\mathbb J} = F^{(a_1)}_{i_1} F^{(a_2)}_{i_2} \cdots F^{(a_n)}_{i_n} \eta$. 
For any $1 \le k \le n$, we write $\eta^{\mathbb J}_k = F^{(a_k)}_{i_k} F^{(a_{k+1})}_{i_{k+1}} \cdots F^{(a_n)}_{i_n} \eta$. By construction, we have $E_{k-1} \eta^{\mathbb J}_k = F_{k} \eta^{\mathbb J}_k=0$. Assuming $(\eta^{\mathbb J}_k, \eta^{\mathbb J}_k ) =1$ for $k\ge 2$, by an easy $\U_q(\mathfrak{sl}_2)$ computation we have 
\[
(\eta^{\mathbb J}_{k-1}, \eta^{\mathbb J}_{k-1}) = (F^{(a_{k-1})}_{i_{k-1}}\eta^{\mathbb J}_{k}, F^{(a_{k-1})}_{i_{k-1}}\eta^{\mathbb J}_{k}) =(\eta^{\mathbb J}_{k}, \wp(F^{(a_{k-1})}_{i_{k-1}})F^{(a_{k-1})}_{i_{k-1}}\eta^{\mathbb J}_{k})  = (\eta^{\mathbb J}_k, \eta^{\mathbb J}_k )  = 1.
\]
The lemma follows by downward induction on $k$.
\end{proof}

For $\lambda, \mu \in X^+$, we define a bilinear pairing 
$(\cdot, \cdot) = (\cdot, \cdot)_{\lambda,\mu}$ on $L(\lambda) \otimes L(\mu)$, 
and hence on the subspace $L^\imath(\la, \mu)$ by restriction, by letting $(x\otimes x', y\otimes y')_{\lambda,\mu} = (x,y)_\lambda  (x',y')_{\mu}$. 
The following is immediate from Lemma~\ref{lem:Ubilinearform}. 

\begin{cor} 
Let $\lambda, \mu \in X^+$. 
We have $(\eta^{\bullet}_\lambda \otimes \eta_{\mu}, \eta^{\bullet}_\lambda \otimes \eta_{\mu})_{\lambda,\mu} =1$.
\end{cor}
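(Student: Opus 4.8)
The statement to be proved is the final Corollary: for $\lambda, \mu \in X^+$, we have $(\eta^{\bullet}_\lambda \otimes \eta_{\mu}, \eta^{\bullet}_\lambda \otimes \eta_{\mu})_{\lambda,\mu} = 1$. The plan is to deduce this directly from Lemma~\ref{lem:Ubilinearform} together with the definition of the tensor product bilinear form. By definition, $(x \otimes x', y \otimes y')_{\lambda,\mu} = (x,y)_\lambda (x',y')_\mu$, so it suffices to compute $(\eta^{\bullet}_\lambda, \eta^{\bullet}_\lambda)_\lambda$ and $(\eta_\mu, \eta_\mu)_\mu$ separately and multiply.

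First I would observe that $(\eta_\mu, \eta_\mu)_\mu = 1$ is immediate from defining property (1) of the bilinear form on $L(\mu)$, since $\eta_\mu$ is the highest weight vector. Next, recall from \eqref{eq:eta-b} that $\eta^{\bullet}_\lambda = \T^{-1}_{w_\bullet}(\eta_\lambda)$, which is exactly the element $\eta^{\mathbb J}$ appearing in Lemma~\ref{lem:Ubilinearform} in the special case $\mathbb J = \Iblack$ (so that $w_{\mathbb J} = w_\bullet$). Therefore Lemma~\ref{lem:Ubilinearform} gives $(\eta^{\bullet}_\lambda, \eta^{\bullet}_\lambda)_\lambda = 1$ at once. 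Combining, $(\eta^{\bullet}_\lambda \otimes \eta_{\mu}, \eta^{\bullet}_\lambda \otimes \eta_{\mu})_{\lambda,\mu} = (\eta^{\bullet}_\lambda, \eta^{\bullet}_\lambda)_\lambda \cdot (\eta_\mu, \eta_\mu)_\mu = 1 \cdot 1 = 1$.

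There is essentially no obstacle here: the Corollary is a direct specialization of the Lemma just proved, and the only thing to verify carefully is the bookkeeping identification $\eta^{\bullet}_\lambda = \eta^{\Iblack}$ via \eqref{eq:eta-b}, together with the fact that the tensor product form factors as a product. The real content has already been absorbed into Lemma~\ref{lem:Ubilinearform}, whose proof is the inductive $\U_q(\mathfrak{sl}_2)$ computation along a reduced word for $w_\bullet$; given that, the Corollary follows in two lines. Accordingly, I would simply write: "This is immediate from Lemma~\ref{lem:Ubilinearform} with $\mathbb J = \Iblack$ (recalling $\eta^\bullet_\lambda = \T^{-1}_{w_\bullet}(\eta_\lambda)$ by \eqref{eq:eta-b}), the fact that $(\eta_\mu, \eta_\mu)_\mu = 1$, and the definition of $(\cdot,\cdot)_{\lambda,\mu}$."
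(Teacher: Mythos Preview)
Your proposal is correct and matches the paper's approach exactly: the paper simply declares the Corollary ``immediate from Lemma~\ref{lem:Ubilinearform}'' without further detail, and your two-line argument (apply the Lemma with $\mathbb J = \Iblack$ using \eqref{eq:eta-b}, use $(\eta_\mu,\eta_\mu)_\mu=1$, and factor the tensor form) is precisely the intended justification.
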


\begin{lem}
  \label{lem:bilinearform}
Let $x, y \in \Uidot \one_{{\zeta_\imath}}$. When  $\lambda, \mu$ tends to $\infty$ 
(with $\overline{\wb\lambda+\mu} \in X_{\imath}$ being fixed and equal to ${\zeta_\imath}$), 
$\big(x(\eta^{\bullet}_\lambda \otimes \eta_{\mu}) , y(\eta^{\bullet}_\lambda \otimes \eta_{\mu}) \big)_{\lambda,\mu} \in \Qq$ converges in $\Q((q^{-1}))$ to an element in $\Qq$.
\end{lem}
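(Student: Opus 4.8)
\textbf{Proof plan for Lemma~\ref{lem:bilinearform}.}
The strategy is to reduce the convergence of the bilinear form on $\Uidot\one_{\zeta_\imath}$ to a computation inside the tensor products $L(\lambda)\otimes L(\mu)$ via the $\Ui$-module isomorphism $\mc T\otimes\id$, and then exploit Lusztig's asymptotic results for the modified quantum group $\Udot$. First I would note that it suffices to prove the statement for $x = b_1\diamondsuit^\imath_{\zeta_\imath} b_2$ and $y = b_1'\diamondsuit^\imath_{\zeta_\imath} b_2'$ running over the $\imath$-canonical basis $\Bdot^\imath$, since these span $\Uidot\one_{\zeta_\imath}$ over $\Qq$ and convergence in $\Q((q^{-1}))$ is preserved by finite $\Qq$-linear combinations. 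By Theorem~\ref{thm:iCBUi}(1), for $\lambda,\mu\gg0$ with $\overline{\wb\lambda+\mu}=\zeta_\imath$ we have $x(\eta^\bullet_\lambda\otimes\eta_\mu) = (b_1\diamondsuit_{\zeta}b_2)^\imath_{\wb\lambda,\mu}$, an $\imath$-canonical basis element of $L^\imath(\lambda,\mu)$, and similarly for $y$.

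Next I would transport the computation to ${}^\omega L(\lambda^\tau)\otimes L(\mu)$ using the isomorphism $\mc T\otimes\id$ from Theorem~\ref{thm:mcT}, which sends $\eta^\bullet_\lambda\otimes\eta_\mu\mapsto\xi_{-\lambda^\tau}\otimes\eta_\mu$. Here the key point is compatibility of $\mc T$ with the bilinear forms: one checks (using $\vartheta=\sigma\circ\wp\circ\tau$ and the fact that $\mc T$ intertwines the $\Ui$-actions on the two sides) that $(\mc T m, \mc T m')_{\lambda^\tau}$ differs from $(m,m')_\lambda$ only by a scalar depending on the weight, or more precisely that the composition of $\mc T\otimes\id$ with the form on ${}^\omega L(\lambda^\tau)\otimes L(\mu)$ recovers the form $(\cdot,\cdot)_{\lambda,\mu}$ up to controllable factors; since $(\eta^\bullet_\lambda\otimes\eta_\mu,\eta^\bullet_\lambda\otimes\eta_\mu)_{\lambda,\mu}=1 = (\xi_{-\lambda^\tau}\otimes\eta_\mu,\xi_{-\lambda^\tau}\otimes\eta_\mu)_{\lambda^\tau,\mu}$ by the Corollary following Lemma~\ref{lem:Ubilinearform}, and both forms are $\wp$-contravariant and $\Ui$ acts cyclically on $\eta^\bullet_\lambda\otimes\eta_\mu$ (Lemma~\ref{lem:Uietabotimeseta}), this identification holds on the nose on $L^\imath(\lambda,\mu)$. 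Thus $\big(x(\eta^\bullet_\lambda\otimes\eta_\mu),y(\eta^\bullet_\lambda\otimes\eta_\mu)\big)_{\lambda,\mu}$ equals the value of the tensor product form on the images of these $\imath$-canonical basis elements inside ${}^\omega L(\lambda^\tau)\otimes L(\mu)\subseteq{}^\omega L(-w_0(w_0\lambda^\tau))\otimes L(\mu)$, i.e.\ inside a tensor product of two simple $\U$-modules of the type studied by Lusztig.

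Now I would invoke the asymptotic stabilization from the construction of the canonical basis on $\Udot$: by Lusztig \cite[Chapter~25, Chapter~26]{Lu94} (and as we used in Proposition~\ref{prop:iCBtoiCB} and Proposition~\ref{prop:PaCB}), the inner product of two fixed canonical basis elements $b^+_1\xi_{w_0\lambda'}\diamondsuit b^-_2\eta_\mu$ of ${}^\omega L(\lambda')\otimes L(\mu)$, expressed via $\diamondsuit_\zeta$, depends only on the pair of labels and on $\zeta\in X$ once $\lambda',\mu\gg0$, and in fact stabilizes to the value $(b_1\diamondsuit_\zeta b_2, b_1'\diamondsuit_\zeta b_2')$ of the bilinear form on $\Udot$. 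Applying this with $\lambda' = w_0\lambda^\tau$, together with Proposition~\ref{prop:iCBtoiCB} which controls how the $\imath$-canonical basis elements $(b_1\diamondsuit_{\zeta}b_2)^\imath_{\wb\lambda,\mu}$ themselves stabilize under the projections $\pi_{\lambda,\mu,\nu}$, shows that the sequence $\big((b_1\diamondsuit_{\zeta}b_2)^\imath_{\wb\lambda,\mu},(b_1'\diamondsuit_{\zeta}b_2')^\imath_{\wb\lambda,\mu}\big)_{\lambda,\mu}$ is eventually constant in each matrix coefficient modulo $q^{-N}\Z[q^{-1}]$ for every $N$, hence is Cauchy in $\Q((q^{-1}))$; since the partial orderings $\ile$ are downward finite (Lemma~\ref{lem:lefinite}), only finitely many terms contribute to each $q$-degree, so the limit lies in $\Qq$ rather than merely in $\Q((q^{-1}))$.

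\textbf{Main obstacle.} The delicate step is the precise comparison of bilinear forms under $\mc T\otimes\id$: a priori $\mc T$ is only known to be a $\Ui$-module isomorphism, not an isometry, so I must verify that the weight-dependent rescaling built into $\widetilde g$ in the definition $\mc T=\Upsilon\circ\widetilde g\circ\T^{-1}_{\wb}$ does not spoil the normalization, and that the factor $\Upsilon$ (which is not unitary but satisfies $\psi(\Upsilon)=\Upsilon^{-1}$ by Corollary~\ref{cor:barUpsilon}) interacts correctly with $\wp$-contravariance. The clean way around this is to avoid computing $\mc T$ explicitly and instead argue abstractly: both $(\cdot,\cdot)_{\lambda,\mu}$ restricted to $L^\imath(\lambda,\mu)$ and the pullback form via $\mc T\otimes\id$ are $\wp|_{\Ui}$-contravariant (using Proposition~\ref{prop:rho}) symmetric bilinear forms on the cyclic $\Ui$-module $L^\imath(\lambda,\mu)=\Ui(\eta^\bullet_\lambda\otimes\eta_\mu)$ taking value $1$ on the cyclic generator, hence they coincide by uniqueness. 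Once this identification is in hand, the remainder is a direct application of Lusztig's asymptotic results together with Propositions~\ref{prop:iCBtoiCB} and~\ref{prop:PaCB} already established above.
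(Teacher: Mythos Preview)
Your approach differs substantially from the paper's, and there is a genuine gap at the step you yourself flag as the main obstacle. The paper never transports via $\mc T\otimes\id$; instead it uses $\wp$-contravariance (Proposition~\ref{prop:rho}) to reduce to $x=\one_{\zeta_\imath}$, writes $y\one_{\wb\lambda+\mu}=\sum f(y;b_1,b_2,\wb\lambda+\mu)\,b_1^-b_2^+\one_{\wb\lambda+\mu}$ via the triangular decomposition of $\U$, observes that $(\eta^\bullet_\lambda\otimes\eta_\mu,\,b_1^-b_2^+(\eta^\bullet_\lambda\otimes\eta_\mu))$ vanishes unless $b_1,b_2\in\B_{\Iblack}$, and then shows separately that the structure constants $f(\cdots)$ converge (they arise from evaluating products of $K_\mu$'s on the cyclic vector) and that the remaining $\U_{\Iblack}$-pairings converge by Lusztig \cite[26.2.3]{Lu94} applied to $\Udot_{\Iblack}$.

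Your proposed resolution of the obstacle---uniqueness of a $\wp|_{\Ui}$-contravariant symmetric form on the cyclic $\Ui$-module $L^\imath(\lambda,\mu)$ normalized at $\eta^\bullet_\lambda\otimes\eta_\mu$---is not justified and is likely false as stated. Contravariance for $\Ui$ only forces orthogonality of distinct $X_\imath$-weight spaces (via $\wp(K_\mu)=K_\mu$ for $\mu\in Y^\imath$), not of distinct $X$-weight spaces. But the $X_\imath$-weight space of $\eta^\bullet_\lambda\otimes\eta_\mu$ in $L^\imath(\lambda,\mu)$ is typically much larger than one-dimensional: already when $\Iblack=\emptyset$ it contains all $X$-weight spaces congruent to $\lambda+\mu$ modulo $\breve X$, and when $\Iblack\neq\emptyset$ it contains for instance vectors of the shape $E_i\eta^\bullet_\lambda\otimes F_i\eta_\mu$ for $i\in\Iblack$. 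Hence the linear functional $m\mapsto(v_0,m)$ is not pinned down by $(v_0,v_0)=1$ alone, and two $\wp|_{\Ui}$-contravariant forms agreeing on the generator need not coincide. Since $\mc T=\Upsilon\circ\widetilde g\circ\T_{\wb}^{-1}$ does not preserve $X$-weights (the factor $\Upsilon$ mixes them), you cannot fall back on $X$-orthogonality for the pulled-back form either. Without this identification your reduction to Lusztig's asymptotics on ${}^\omega L(\lambda^\tau)\otimes L(\mu)$ does not go through, even though your observation that $\mu-\lambda^\tau$ stays fixed along the projective system $(\lambda,\mu)\mapsto(\lambda+\nu^\tau,\mu+\nu)$ is correct and would be the right ingredient on the other side.
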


\begin{proof}
As the bilinear form $(\cdot, \cdot)$ on $L^\imath(\la, \mu)$ is defined by restriction from the one on $L(\la) \otimes L(\mu)$,
we have by \cite[26.2.2]{Lu94} that, for $u, u' \in \Ui$, 
$$
\big( u(\eta^{\bullet}_\lambda \otimes \eta_{\mu}) , u'(\eta^{\bullet}_\lambda \otimes \eta_{\mu}) \big)_{\lambda,\mu} 
= \big(\eta^{\bullet}_\lambda \otimes \eta_{\mu} , \wp(u) u'(\eta^{\bullet}_\lambda \otimes \eta_{\mu}) \big)_{\lambda,\mu}. 
$$ 
Therefore it suffices to prove the lemma for $x = \one_{{\zeta_\imath}}$ thanks to Proposition~\ref{prop:rho}. Using the triangular decomposition of $\U$ we can write 
\[
y  \one_{\wb\lambda+\mu} = \sum_{b_1, b_2 \in \B}f(y;b_1,b_2, \wb\lambda+\mu ) \, b^-_1 b^+_2 \one_{\wb\lambda+\mu},  
\]
with only finitely many $f(y;b_1,b_2, \wb\lambda+\mu )   \in \Qq$ being nonzero. We have 
\[
\big(\eta^{\bullet}_\lambda \otimes \eta_{\mu}, y(\eta^{\bullet}_\lambda \otimes \eta_{\mu}) \big)_{\lambda,\mu} 
=  \sum_{(b_2, b_1) \in \, \B_{\Iblack} \times \B } f(y;b_1,b_2, \wb\lambda+\mu) \,
\big(\eta^{\bullet}_\lambda \otimes \eta_{\mu}, b^-_1 b^+_2 (\eta^{\bullet}_\lambda \otimes \eta_{\mu}) \big)_{\lambda,\mu}.
\]

Recall the triangular decomposition $\U = \U^- \U^0 \U^+$. Following from the embedding $\imath : \Ui \rightarrow \U$ and 
a straightforward computation on the generators,   
 the coefficient $f(y;b_1,b_2, \wb\lambda+\mu )$  results from applying $u  \one_{\wb\lambda+\mu}$ to 
$\big(\eta^{\bullet}_\lambda \otimes \eta_{\mu}, b^-_1 b^+_2 (\eta^{\bullet}_\lambda \otimes \eta_{\mu}) \big)_{\lambda,\mu}$, 
 for  some $u \in \Big \langle \widetilde{K}_{-i} \widetilde{K}_{-\tau i} (i \in \Iwhite), \widetilde{K}_{-i} (\kappa_i \neq 0),K_{\alpha} (\alpha \in Y^\imath) \Big \rangle$. 
  Therefore $f(y;b_1,b_2, \wb\lambda+\mu )$ converges in $\Q((q^{-1}))$ to an element in $\Qq$. 
Moreover, we have 
\[
\big( \eta^{\bullet}_\lambda \otimes \eta_{\mu}, b^-_1 b^+_2 (\eta^{\bullet}_\lambda \otimes \eta_{\mu}) \big)_{\lambda,\mu} 
= \big( \wp(b_1^-)(\eta^{\bullet}_\lambda \otimes \eta_{\mu}), b^+_2 (\eta^{\bullet}_\lambda \otimes \eta_{\mu}) \big)_{\lambda,\mu} =  0, \; \text{ unless } b_1, b_2 \in \B_{\I_\bullet}.
\]
For $b _1, b_2 \in \B_{\I_\bullet}$, the bilinear pairing $\big(\eta^{\bullet}_\lambda \otimes \eta_{\mu}, b^-_1 b^+_2 (\eta^{\bullet}_\lambda \otimes \eta_{\mu}) \big)_{\lambda,\mu} $ 
converges in $\Q((q^{-1}))$ to an element in $\Qq$ since we can regard this bilinear pairing as for $L_{\Iblack}^\imath(\lambda,\mu)$, the $\U_{\Iblack}$-submodule of 
$L^\imath(\lambda,\mu)$ generated by $\eta^{\bullet}_\lambda \otimes \eta_{\mu}$, and then apply \cite[26.2.3]{Lu94} to $\Udot_{\Iblack}$. The lemma follows. 
\end{proof}

\begin{definition}\label{def:Uibilinear}
We define a symmetric bilinear form $(\cdot , \cdot): \Uidot \times \Uidot \rightarrow \Q(q)$  as follows:
\begin{enumerate}
	\item	For $x \in \Uidot\one_{\zeta_\imath}$ and $y \in \Uidot\one_{\zeta_\imath'}$ with ${\zeta_\imath} \neq \zeta_\imath'$, we let $(x, y) =0$.
	\item	For $x, y \in \Uidot \one_{\zeta_\imath}$, we let 
			\[
			(x , y) = \lim_{(\lambda, \mu) \mapsto \infty}
			 \big( x(\eta^{\bullet}_\lambda \otimes \eta_{\mu}) , y(\eta^{\bullet}_\lambda \otimes \eta_{\mu}) \big)_{\lambda,\mu}.
			\]
			(Here $\lim\limits_{(\lambda, \mu) \mapsto \infty}$ is understood as in Lemma~\ref{lem:bilinearform}.)
\end{enumerate}
\end{definition}

We have the following corollary to Lemma~\ref{lem:bilinearform} and its proof.
\begin{cor} 
  \label{cor:formUi}
For all $x, y \in \Uidot$ and $u \in \Ui$, we have 
 $(ux,y) = (x, \wp(u) y)$.
\end{cor}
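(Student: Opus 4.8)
\textbf{Plan of proof for Corollary~\ref{cor:formUi}.}
The statement to prove is that $(ux,y) = (x,\wp(u)y)$ for all $x,y\in\Uidot$ and $u\in\Ui$. Since both sides are $\Qq$-bilinear in $x,y$ and the bilinear form on $\Uidot$ vanishes across distinct $\imath$-weight components (Definition~\ref{def:Uibilinear}(1)), I may assume $x\in\Uidot\one_{\zeta_\imath}$ and $y\in\Uidot\one_{\zeta_\imath'}$ for suitable ${\zeta_\imath},\zeta_\imath'\in X_\imath$. Multiplication by $u$ on the left and by $\wp(u)$ on the left shifts $\imath$-weights in a way compatible with the anti-involution, so the identity is trivially $0=0$ unless the relevant components match; I would first record this reduction so that the remaining work takes place inside a single weight block.

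First I would fix ${\zeta_\imath}\in X_\imath$ and choose $\lambda,\mu\in X^+$ with $\overline{\wb\lambda+\mu}={\zeta_\imath}$, and work in the based $\Ui$-module $L^\imath(\lambda,\mu)\subseteq L(\lambda)\otimes L(\mu)$, equipped with the restricted bilinear form $(\cdot,\cdot)_{\lambda,\mu}$. The key input is Proposition~\ref{prop:rho} (Proposition~\textbf{B}): the anti-involution $\wp$ on $\U$ restricts to an anti-involution on $\Ui$, and therefore the defining property (2) of the form on $L(\lambda)\otimes L(\mu)$—namely $(vm,m')_{\lambda,\mu}=(m,\wp(v)m')_{\lambda,\mu}$ for $v\in\U$—specializes to
\[
\big(u\,m,\ m'\big)_{\lambda,\mu} = \big(m,\ \wp(u)\,m'\big)_{\lambda,\mu},\qquad u\in\Ui,\ m,m'\in L^\imath(\lambda,\mu).
\]
(One checks the restricted form on $L^\imath(\lambda,\mu)$ inherits this adjunction since $\wp(\Ui)=\Ui$ and $L^\imath(\lambda,\mu)$ is a $\Ui$-submodule; this is exactly the ingredient already used in the proof of Lemma~\ref{lem:bilinearform}.) Applying this with $m=x(\etab_\lambda\otimes\eta_\mu)$ and $m'=y(\etab_\lambda\otimes\eta_\mu)$, and using that $\wp(u)y\in\Uidot$ acts on $L^\imath(\lambda,\mu)$, gives
\[
\big((ux)(\etab_\lambda\otimes\eta_\mu),\ y(\etab_\lambda\otimes\eta_\mu)\big)_{\lambda,\mu}
= \big(x(\etab_\lambda\otimes\eta_\mu),\ (\wp(u)y)(\etab_\lambda\otimes\eta_\mu)\big)_{\lambda,\mu}
\]
for every such $\lambda,\mu$. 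Taking the limit $(\lambda,\mu)\mapsto\infty$ with $\overline{\wb\lambda+\mu}$ fixed, and invoking Lemma~\ref{lem:bilinearform} to guarantee both sides converge in $\Q((q^{-1}))$ to elements of $\Qq$, the left-hand side converges to $(ux,y)$ and the right-hand side to $(x,\wp(u)y)$ by Definition~\ref{def:Uibilinear}(2). This yields the claimed identity.

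The only genuinely delicate point is the bookkeeping that the limiting procedure is legitimate for the \emph{shifted} elements $ux$ and $\wp(u)y$: one must know that $ux$ and $\wp(u)y$ again lie in a single $\imath$-weight component of $\Uidot$ (so Lemma~\ref{lem:bilinearform} applies to them) and that the family of values is indexed by the same cofinal set of $(\lambda,\mu)$'s used to define $(x,y)$. This is a routine consequence of the fact that $\Uidot=\bigoplus_{\zeta_\imath} \Uidot\one_{\zeta_\imath}$ as a right (and left) weight module and that $\wp$ preserves each $\Ui^0$-weight datum, together with the observation that $\overline{\nu+\wb\tau\nu}=0$ for all $\nu$ (already noted before Lemma~\ref{lem:contraction}), which is what makes "$(\lambda,\mu)\mapsto\infty$ with $\overline{\wb\lambda+\mu}$ fixed" a cofinal procedure insensitive to the finite shift caused by $u$. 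I expect this to be the main (though still short) obstacle; once it is dispatched, the corollary follows immediately from Proposition~\ref{prop:rho}, the defining adjunction of the form on $L(\lambda)\otimes L(\mu)$, and Lemma~\ref{lem:bilinearform}.
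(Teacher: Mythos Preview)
Your proposal is correct and follows essentially the same approach as the paper: the paper records Corollary~\ref{cor:formUi} as an immediate consequence of Lemma~\ref{lem:bilinearform} \emph{and its proof}, since that proof already uses the adjunction $\big(u(\etab_\lambda\otimes\eta_\mu),u'(\etab_\lambda\otimes\eta_\mu)\big)_{\lambda,\mu}=\big(\etab_\lambda\otimes\eta_\mu,\wp(u)u'(\etab_\lambda\otimes\eta_\mu)\big)_{\lambda,\mu}$ (via \cite[26.2.2]{Lu94} and Proposition~\ref{prop:rho}), and then one simply passes to the limit in Definition~\ref{def:Uibilinear}. Your write-up makes the $\imath$-weight bookkeeping and the legitimacy of the limit more explicit than the paper does, but the argument is the same.
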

Let ${\bf A} = \Q[[q^{-1}]] \cap \Qq$.
\begin{thm}
  \label{thm:orth}
	The $\imath$-canonical basis $\Bdot^\imath$ of $\Uidot$ is almost orthonormal in the following sense: 
	for ${\zeta_\imath}, \zeta_\imath' \in X_\imath$ and $(b_1, b_2), (b'_1, b'_2) \in \B_{\Iblack} \times \B$, we have 
\[
(b_1 \diamondsuit^\imath_{\zeta_\imath} b_2, b'_1 \diamondsuit^\imath_{\zeta_\imath'} b'_2) \equiv \delta_{{\zeta_\imath}, \zeta_\imath'} \delta_{b_1, b'_1} \delta_{b_2, b'_2}, \quad \text{ mod } q^{-1} {\bf A}.
\]
In particular, the bilinear form $(\cdot , \cdot)$ on $\Uidot$ is non-degenerate. 
\end{thm}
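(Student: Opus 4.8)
The strategy is to reduce the almost-orthonormality of $\Bdot^\imath$ to the corresponding almost-orthonormality of Lusztig's canonical basis $\B(\lambda,\mu)$ on the based module $L^\imath(\lambda,\mu)$ at the level of the projective system, using the definition of the bilinear form on $\Uidot$ as a limit (Definition~\ref{def:Uibilinear}) together with the stabilization results established in Proposition~\ref{prop:iCBtoiCB} and Theorem~\ref{thm:iCBUi}. First I would dispose of the easy case $\zeta_\imath \ne \zeta_\imath'$, which is zero by definition. For $\zeta_\imath = \zeta_\imath'$, fix $\lambda,\mu \gg 0$ with $\overline{\wb\lambda+\mu} = \zeta_\imath$, so that by Theorem~\ref{thm:iCBUi}(1) we have $(b_1 \diamondsuit^\imath_{\zeta_\imath} b_2)(\eta^\bullet_\lambda \otimes \eta_\mu) = (b_1 \diamondsuit_{\zeta_\imath} b_2)^\imath_{\wb\lambda,\mu}$, the $\imath$-canonical basis element of $L^\imath(\lambda,\mu)$, and similarly for $(b_1',b_2')$. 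Hence
\[
(b_1 \diamondsuit^\imath_{\zeta_\imath} b_2,\; b_1' \diamondsuit^\imath_{\zeta_\imath} b_2')
= \lim_{(\lambda,\mu)\mapsto\infty} \big( (b_1 \diamondsuit_{\zeta_\imath} b_2)^\imath_{\wb\lambda,\mu},\; (b_1' \diamondsuit_{\zeta_\imath} b_2')^\imath_{\wb\lambda,\mu} \big)_{\lambda,\mu},
\]
so it suffices to show that each inner product on the right is $\equiv \delta_{b_1,b_1'}\delta_{b_2,b_2'} \bmod q^{-1}{\bf A}$ for $\lambda,\mu \gg 0$.

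\textbf{Key steps.} The second and main step is almost-orthonormality of the $\imath$-canonical basis on the finite-dimensional based module $L^\imath(\lambda,\mu)$. Since $L^\imath(\lambda,\mu)$ is a based submodule of $L(\lambda)\otimes L(\mu)$ (Corollary~\ref{cor:based}), and since by Corollary~\ref{CF}/\cite[IV]{Lu94} Lusztig's canonical basis $\B(\lambda,\mu)$ on the tensor product is almost orthonormal with respect to the tensor product form $(\cdot,\cdot)_{\lambda,\mu}$, the canonical basis $\B(\lambda,\mu)\cap L^\imath(\lambda,\mu)$ is almost orthonormal. Now by Proposition~\ref{prop:iCBtensorproduct}(1), the transition matrix between the $\imath$-canonical basis $\B^\imath(\lambda,\mu)$ and the canonical basis $\B(\lambda,\mu)\cap L^\imath(\lambda,\mu)$ is uni-triangular with off-diagonal entries in $q^{-1}\Z[q^{-1}]$. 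A standard argument (cf. \cite[Lemma~14.2.2, \S26.2]{Lu94}): if $\{b_i\}$ is a basis that is almost orthonormal ($(b_i,b_j)\in\delta_{ij}+q^{-1}{\bf A}$) and $c_i = b_i + \sum_{j} t_{ij} b_j$ with $t_{ij}\in q^{-1}\Z[q^{-1}]$, then $(c_i,c_j) = \delta_{ij} + (\text{terms in } q^{-1}{\bf A})$, because every cross term $t_{ij}(b_j,b_k)$ and $t_{ij}\overline{t_{ik}}(b_j,b_k)$ lies in $q^{-1}{\bf A}$. This gives almost-orthonormality of $\B^\imath(\lambda,\mu)$ in $L^\imath(\lambda,\mu)$. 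Third, I pass to the limit: the entries $\big( (b_1 \diamondsuit_{\zeta_\imath} b_2)^\imath_{\wb\lambda,\mu},\; (b_1' \diamondsuit_{\zeta_\imath} b_2')^\imath_{\wb\lambda,\mu} \big)_{\lambda,\mu}$ lie in $\delta_{b_1,b_1'}\delta_{b_2,b_2'} + q^{-1}{\bf A}$ for all $\lambda,\mu\gg0$ with $\overline{\wb\lambda+\mu}=\zeta_\imath$; since this holds uniformly (the bound $q^{-1}{\bf A}$ is independent of $\lambda,\mu$) and the limit exists in $\Q((q^{-1}))$ by Lemma~\ref{lem:bilinearform}, the limit $(b_1\diamondsuit^\imath_{\zeta_\imath}b_2,\,b_1'\diamondsuit^\imath_{\zeta_\imath}b_2')$ also lies in $\delta_{b_1,b_1'}\delta_{b_2,b_2'}+q^{-1}{\bf A}$, and in particular (being in $\Qq$) is congruent to $\delta_{b_1,b_1'}\delta_{b_2,b_2'}$ mod $q^{-1}{\bf A}$. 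Finally, non-degeneracy follows formally: the Gram matrix of $\Bdot^\imath$ on each block $\Uidot\one_{\zeta_\imath}$ is congruent to the identity mod $q^{-1}{\bf A}$, hence invertible over $\Q((q^{-1}))$, hence the form restricted to that (finite-dimensional, for each fixed weight of $\Udot$) block is non-degenerate; since distinct $\zeta_\imath$-blocks are mutually orthogonal, the form on $\Uidot$ is non-degenerate.

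\textbf{Main obstacle.} The delicate point is the third step: ensuring that the congruence $(\cdot,\cdot)_{\lambda,\mu}\equiv\delta\delta\bmod q^{-1}{\bf A}$ holds with a bound on the error that is \emph{uniform in $\lambda,\mu$}, so that it survives the limit $(\lambda,\mu)\mapsto\infty$. This requires care because a priori the almost-orthonormality constant in \cite[IV]{Lu94} for $L(\lambda)\otimes L(\mu)$ could depend on $\lambda,\mu$; but in fact Lusztig's result gives the error term in $q^{-1}\Z[[q^{-1}]]$ independently of the highest weights, and the transition matrix entries from Proposition~\ref{prop:iCBtensorproduct} lie in $q^{-1}\Z[q^{-1}]$ with bounds controlled by the fixed data $(b_1,b_2),(b_1',b_2')$ and $\zeta_\imath$ via Lemma~\ref{lem:lefinite} (finitely many relevant $(b_1',b_2')\le_\imath(b_1,b_2)$), not by $\lambda,\mu$. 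One must also observe that for fixed weight data only finitely many basis elements contribute to any given Gram entry, so the "almost orthonormal" bookkeeping is a finite computation at each stage. Assembling these uniformity statements is the real content; the rest is the routine triangular-matrix manipulation recalled above and the formal deduction of non-degeneracy.
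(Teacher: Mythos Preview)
Your proposal is correct and follows essentially the same route as the paper: reduce to almost-orthonormality of the $\imath$-canonical basis on $L^\imath(\lambda,\mu)$ via Theorem~\ref{thm:iCBUi}(1), use the uni-triangular transition matrix from Proposition~\ref{prop:iCBtensorproduct} together with Lusztig's almost-orthonormality of $\B(\lambda,\mu)$ (the paper cites \cite[26.3.1(c)]{Lu94} directly rather than spelling out the triangular manipulation), and pass to the limit via Lemma~\ref{lem:bilinearform}. One small wording issue: $\Uidot\one_{\zeta_\imath}$ is \emph{not} finite-dimensional, so your parenthetical in the non-degeneracy step is misleading; but the conclusion still holds because a Gram matrix of the form $I+N$ with entries of $N$ in $q^{-1}{\bf A}$ forces non-degeneracy even for an infinite basis (choose a coefficient of maximal $q$-degree in a hypothetical null vector and derive a contradiction).
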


\begin{proof}
The equality is trivial if ${\zeta_\imath} \neq \zeta_\imath'$.  
Now assume ${\zeta_\imath} = \zeta_\imath'$.
For $\lambda, \mu \gg0$ such that $\overline{\wb\lambda+\mu} ={\zeta_\imath}$, we have 
\begin{align*}
& \big( (b_1 \diamondsuit^\imath_{\zeta_\imath} b_2) (\eta^\bullet_\lambda \otimes \eta_\mu), 
(b'_1 \diamondsuit^\imath_{{\zeta_\imath}} b'_2) (\eta^\bullet_\lambda \otimes \eta_\mu)
\big)_{\la,\mu} 
\\
& =  \big( \big( (b_1 \diamondsuit_{\wb \lambda+\mu} b_2)( \eta^\bullet_\lambda \otimes \eta_\mu) \big)^\imath, 
\big( (b_1' \diamondsuit_{\wb \lambda+\mu} b_2')( \eta^\bullet_\lambda \otimes \eta_\mu) \big)^\imath
\big)_{\la,\mu} 
\\
&\equiv  \delta_{b_1, b'_1} \delta_{b_2, b'_2}, \quad \text{ mod } q^{-1} {\bf A}.
\end{align*}
The first equation above follows by Theorem~\ref{thm:iCBUi}, while the second one follows
by Proposition~\ref{prop:iCBtensorproduct} and \cite[26.3.1(c)]{Lu94}. 
Hence by taking $\lim\limits_{(\lambda, \mu) \to \infty}$ for the above identity and applying Lemma~\ref{lem:bilinearform}  (see also Definition~\ref{def:Uibilinear})
we conclude that
\begin{align*}
\big( b_1 \diamondsuit^\imath_{\zeta_\imath} b_2, b'_1 \diamondsuit^\imath_{{\zeta_\imath}} b'_2
\big) 
\equiv  \delta_{b_1, b'_1} \delta_{b_2, b'_2}, \quad \text{ mod } q^{-1} {\bf A}.
\end{align*}
This proves the theorem.
\end{proof}

The $\imath$-canonical basis $\Bdot^\imath$ admits the following characterization, whose proof is identical to the proof of \cite[Theorem~26.3.1]{Lu94} and hence will be skipped. 
\begin{thm}
  \label{thm:orth2}
	Let $\beta \in \Uidot$. Then $\beta \in \Bdot^\imath \cup (- \Bdot^\imath)$ if and only if $\beta$ satisfies the following three conditions: $\beta \in {}_\mA \Uidot$, $\ipsi (\beta) = \beta$, and $(\beta, \beta) \equiv 1$ mod $q^{-1} {\bf A}$.
\end{thm}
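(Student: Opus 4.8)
\textbf{Proof proposal for Theorem~\ref{thm:orth2}.}

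The plan is to follow the well-trodden path of \cite[Theorem~26.3.1]{Lu94}, adapting it to the $\imath$-setting, where all three ingredients — integrality, $\psi_\imath$-invariance, almost orthonormality — are already in hand from Theorem~\ref{thm:iCBUi} and Theorem~\ref{thm:orth}. First I would observe the easy direction: every $\beta \in \Bdot^\imath \cup (-\Bdot^\imath)$ lies in ${}_\mA\Uidot$ and is $\psi_\imath$-invariant by Theorem~\ref{thm:iCBUi}(2)--(3), and satisfies $(\beta,\beta) \equiv 1 \bmod q^{-1}{\bf A}$ by the almost orthonormality of Theorem~\ref{thm:orth} (the diagonal case $\zeta_\imath = \zeta_\imath'$, $(b_1,b_2) = (b_1',b_2')$). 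The substance is the converse.

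For the converse, suppose $\beta$ satisfies the three conditions. Since $\beta \in {}_\mA\Uidot$ and $\Bdot^\imath$ is an $\mA$-basis (Theorem~\ref{thm:iCBUi}(3)), write $\beta = \sum_{b} c_b\, b$ with $c_b \in \mA$, the sum over $b = b_1 \diamondsuit^\imath_{\zeta_\imath} b_2$ ranging over $\Bdot^\imath$; moreover since each $b$ is supported in a single $\imath$-weight component $\Uidot\one_{\zeta_\imath}$ and the bilinear form is block-diagonal across distinct $\zeta_\imath$ (Definition~\ref{def:Uibilinear}(1)), I may restrict attention to a single block, where only finitely many $c_b$ are nonzero. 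The $\psi_\imath$-invariance of $\beta$ together with $\psi_\imath(b) = b$ (Theorem~\ref{thm:iCBUi}(2)) forces $\overline{c_b} = c_b$, i.e. $c_b \in \Z$ for all $b$. Now expand $(\beta,\beta) = \sum_{b,b'} c_b c_{b'} (b,b')$; by Theorem~\ref{thm:orth}, $(b,b') \equiv \delta_{b,b'} \bmod q^{-1}{\bf A}$, so $(\beta,\beta) \equiv \sum_b c_b^2 \bmod q^{-1}{\bf A}$. Since the left side is $\equiv 1 \bmod q^{-1}{\bf A}$ and $\sum_b c_b^2 \in \Z$ is a constant (no negative powers of $q$), we get $\sum_b c_b^2 = 1$ with $c_b \in \Z$, whence exactly one $c_b$ equals $\pm 1$ and all others vanish. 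Therefore $\beta = \pm b \in \Bdot^\imath \cup (-\Bdot^\imath)$.

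The argument is essentially formal once Theorem~\ref{thm:iCBUi} and Theorem~\ref{thm:orth} are available, so there is no serious obstacle here; the one point requiring a moment's care is the reduction to a single $\imath$-weight block so that the sums are finite and the Gram-matrix congruence $(b,b') \equiv \delta_{b,b'}$ can be applied termwise — this uses both the block-diagonality of $(\cdot,\cdot)$ from Definition~\ref{def:Uibilinear} and the fact that ${}_\mA\Uidot\one_{\zeta_\imath}$ is a free $\mA$-module on the finitely-supported-per-block basis $\Bdot^\imath \cap \Uidot\one_{\zeta_\imath}$. Since this is verbatim parallel to \cite[Theorem~26.3.1]{Lu94}, I would simply cite that proof and indicate the substitutions ($\psi \rightsquigarrow \psi_\imath$, $\dot\B \rightsquigarrow \dot\B^\imath$, Theorem~26.3.1(c) $\rightsquigarrow$ Theorem~\ref{thm:orth}), which is exactly what the excerpt already announces.
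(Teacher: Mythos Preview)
Your approach is the one the paper adopts---it simply cites \cite[Theorem~26.3.1]{Lu94} and omits the details. There is, however, one genuine gap in your write-up: the step ``$\overline{c_b} = c_b$, i.e.\ $c_b \in \Z$'' is false as stated, since bar-invariance of an element of $\mA$ does not force it to be constant (e.g.\ $q+q^{-1}$ is bar-invariant). Without $c_b \in \Z$ your next line also breaks, because $c_b c_{b'} \cdot q^{-1}\mathbf{A} \not\subset q^{-1}\mathbf{A}$ when $c_b c_{b'}$ carries positive $q$-powers.

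The fix uses the norm condition together with bar-invariance: let $N = \max_b \deg c_b$ and suppose $N>0$. Writing $a_{b,N}$ for the top coefficient of $c_b$, the coefficient of $q^{2N}$ in $(\beta,\beta)=\sum_{b,b'} c_b c_{b'}(b,b')$ equals $\sum_b a_{b,N}^2$, since each $(b,b') \in \delta_{b,b'}+q^{-1}\mathbf{A}$ contributes no positive $q$-powers; but $(\beta,\beta)\in 1+q^{-1}\mathbf{A}$ forces this coefficient to vanish, a contradiction. Hence $N=0$, so each $c_b \in \Z$, and from that point your argument goes through verbatim. (Incidentally, the reduction to a single $\zeta_\imath$-block is unnecessary: any $\beta \in \Uidot$ already has finite support in $\Bdot^\imath$, so the global expansion is a finite sum and Theorem~\ref{thm:orth} applies directly across blocks.)
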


\begin{rem}
For type AIII/AIV with $\Iblack=\emptyset$, 
a geometric construction of the $\imath$-canonical basis of $\Uidot$ was given in \cite{LW15} (built on the earlier construction in \cite{BKLW}),
which is almost orthonormal with respect to some geometric bilinear form.
The identification between the algebraic constructions (in this paper) and the geometric constructions of $\imath$-canonical basis and bilinear from on $\Uidot$
will be addressed elsewhere. 

We further expect various positivity properties for $\imath$-canonical bases for some classes of QSP, similar to Lusztig's canonical bases. 
\end{rem}

\appendix

\section{Integrality of the intertwiners of real rank one}
 \label{sec:Upsilonrank1}

The goal of this appendix is to provide a proof of Theorem~\ref{thm:intUpsilon}(1) 
that the intertwiner $\Upsilon$ lies in (the completion of) the integral form ${}_\mA \U^+$ 
for all quantum symmetric pairs of real rank one; see Table~\ref{table:localSatake}. 

We shall first establish the integrality of $\Upsilon$ in type AIV and then AIII$_{11}$, which has the involution $\tau |_{\Iwhite} \neq 1$. 
These two types are easy and similar to the special case treated in \cite{BW13} (denoted by $\U^\jmath$ therein). 
The integrality of $\Upsilon$ for type AI$_1$ was essentially known in \cite[Lemma~4.6]{BW13}.  
Then we will establish some general properties of $\Upsilon$ for the remaining types with $\tau|_{\Iwhite}=1$. Ultimately it requires 
a tedious type-by-type analysis to complete the proof for all types with $\tau|_{\Iwhite}=1$.

\subsection{Type AIV of rank $n$}

We recall the Satake diagram of type AIV from Table~\ref{table:localSatake}:
\[
\begin{tikzpicture}	[baseline=6]
		\node at (-0.5,0) {$\circ$};
		\draw[-] (-0.4,0) to (-0.1, 0);
		\node  at (0,0) {$\bullet$};
		\node at (2,0) {$\bullet$};
		\node at (2.5,0) {$\circ$};
		\draw[-] (0.1, 0) to (0.5,0);
		\draw[dashed] (0.5,0) to (1.4,0);
		\draw[-] (1.6,0)  to (1.9,0);
		\draw[-] (2.1,0) to (2.4,0);
		\draw[<->] (-0.5, 0.2) to[out=45, in=180] (0, 0.35) to (2, 0.35) to[out=0, in=135] (2.5, 0.2);
		\node at (-0.5,-.3) {\small 1};
		\node  at (0,-.3) {\small 2};
		\node at (2.5,-.3) {\small n};
	\end{tikzpicture}
\]

\begin{prop}
  \label{prop:AIV:integrality}
In type AIV, we have $\Upsilon_\mu \in {}_\mA \U^+$ for any $\mu \in \N[\I]$.
\end{prop}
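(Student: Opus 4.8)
The plan is to compute the intertwiner $\Upsilon$ explicitly for type AIV by solving the defining recursion that comes from the identity $\psi_\imath(u)\Upsilon = \Upsilon\psi(u)$ (Theorem~\ref{thm:Upsilon}), specialized to the single white-node generator $\ff_n$ (equivalently $\ff_1$, via the involution $\tau$). By Proposition~\ref{prop:wcirc}, $\Upsilon_\mu$ lies in the completion of $\U^+(w_\bullet w_0)$, which in type AIV is an extremely small subspace: $w_\bullet$ is the longest element of the sub-$A_{n-2}$ on nodes $\{2,\dots,n-1\}$, so $w_\bullet w_0$ has length $n-1$ and $\U^+(w_\bullet w_0)$ is spanned by PBW monomials supported on a single length-$(n-1)$ tail of a reduced word for $w_0$. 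Moreover Theorem~\ref{thm:Upsilon} forces $\mu^\inv = -\mu$, which (since $\inv = -w_\bullet\circ\tau$ interchanges the two white nodes and fixes the black block up to $w_\bullet$) pins down $\mu$ to be a multiple of the highest root $\alpha_1+\alpha_2+\dots+\alpha_n$. Hence $\Upsilon = \sum_{k\ge 0}\Upsilon_{k\vartheta}$ where $\vartheta$ is the highest root, and each $\Upsilon_{k\vartheta}$ is a scalar times a fixed monomial in $\U^+$ built from $E_1$ and the black-node braid-twisted root vectors.

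First I would record the commutation relation \eqref{eq;Eifj} together with Proposition~\ref{prop:TUpsilon} (which says $\Upsilon$ is fixed by $\T_i$ for $i\in\Iblack$) and Lemma~\ref{lem:TUpsilon}(1) ($r_i\Upsilon_\mu = {}_ir\Upsilon_\mu = 0$ for $i\in\Iblack$) to conclude that $\Upsilon_{k\vartheta}$ must be proportional to a single normalized PBW vector, say $c_k\cdot(\text{root vector for }\vartheta)^{(k)}$ in the appropriate divided-power normalization, with $c_0 = 1$. Then I would plug $\Upsilon = \sum_k c_k(\cdots)^{(k)}$ into the intertwiner equation \eqref{eq:Upsilon} applied to $u = \ff_n$, collect the weight-$(k\vartheta)$ component, and extract a two-term recursion relating $c_k$ to $c_{k-1}$. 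Because the parameter $\vs_n$ satisfies \eqref{eq:vs1n}, i.e.\ $\vs_1\vs_n = (-1)^nq^{n-1}$ with each $\vs_i\in\pm q^{\Z}$, the recursion coefficient will be a product of $\vs$'s, signs, powers of $q$, and a quantum-integer factor of the shape $[k]_i$ or $(q^a - q^b)$ coming from the $E_iF_i$ commutator; this is exactly the shape that makes $c_k$ land in $\mA = \Z[q,q^{-1}]$ after dividing by $[k]_i^!$ in the divided-power normalization — the same bookkeeping as in \cite[Lemma~4.6]{BW13}. I would then verify $c_k\in\mA$ by induction on $k$, which gives $\Upsilon_{k\vartheta}\in{}_\mA\U^+$ and proves the proposition.

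The main obstacle I anticipate is purely computational: correctly identifying the normalized root vector attached to $\vartheta$ inside $\U^+(w_\bullet w_0)$ and getting all the powers of $q_i$ and the parameter signs right in the recursion, so that one genuinely sees cancellation of denominators rather than a spurious factor of $q$ to a fractional power or a stray $[k]$ in the denominator that does not cancel. The fact that $\tau$ is nontrivial here (it swaps nodes $1$ and $n$) means the "input" generator $\ff_n$ involves $\T_{w_\bullet}(E_1)$, so one must track how $\T_{w_\bullet}$ acts on the relevant $E$'s and $\tK$'s; Corollary~\ref{cor:rhoTwb} and \eqref{eq:Tw0} should handle this, but the sign $(-1)^{\langle 2\rho^\vee_\bullet, \alpha_1'\rangle}$ and the power $q^{\langle\alpha_1,2\rho_\bullet\rangle}$ need to be matched precisely against condition \eqref{vs2} for the recursion to close with $\mA$-coefficients. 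Once AIV is done, the case AIII$_{11}$ is strictly easier ($\Iblack = \emptyset$, so no braid twisting and $\U^+(w_0)$ is just $\U^+$ in rank $2$), and AI$_1$ is already essentially in \cite{BW13}; the remaining real rank one types with $\tau|_{\Iwhite} = 1$ are then handled by the same strategy, type by type, using the explicit $\rho^\vee_\bullet, \rho_\bullet, w_\bullet\tau$ data computed in the proof of Lemma~\ref{lem:parameter vs}.
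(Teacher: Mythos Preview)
Your proposal takes a genuinely different route from the paper and contains a concrete error that would derail it as written.

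The paper does \emph{not} compute $\Upsilon$ by recursion in type AIV. Instead it exploits a special feature of types AIV and AIII$_{11}$ (and only these): since $\tau(1)=n\neq 1$, one has $F_1\cdot\T_{w_\bullet}(E_n)\tK_1^{-1} = q_1^{-2}\T_{w_\bullet}(E_n)\tK_1^{-1}\cdot F_1$ (and symmetrically for $\ff_n$), so the $q$-binomial theorem gives
\[
\ff_1^{(a)}=\sum_{s+t=a}q_1^{st}\,F_1^{(s)}\bigl(\T_{w_\bullet}(E_n)\tK_1^{-1}\bigr)^{(t)}\in{}_\mA\U.
\]
Thus the naive divided powers $\ff_i^{(a)}$ are already integral and $\psi_\imath$-invariant. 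One then argues indirectly: any $x\in{}_\mA L(\lambda)$ is an $\mA$-combination of monomials $\ff_{i_1}^{(a_1)}\cdots\ff_{i_k}^{(a_k)}\eta_\lambda$, so $\psi_\imath$ preserves ${}_\mA L(\lambda)$, hence so does $\Upsilon=\psi_\imath\psi$; applying $\Upsilon$ to the lowest weight vector with $\lambda\gg 0$ gives $\Upsilon_\mu\in{}_\mA\U^+$. No recursion, no explicit formula for $\Upsilon$.

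Your recursion approach is exactly what the paper uses for the \emph{remaining} real rank one types (those with $\tau|_{\Iwhite}=1$, namely AII$_3$, BII, CII, DII, FII), so it is not unreasonable in spirit. But you have a genuine error: $\ell(w_\bullet w_0)=\binom{n+1}{2}-\binom{n-1}{2}=2n-1$, not $n-1$; the roots supporting $\U^+(w_\bullet w_0)$ are all $\alpha_i+\cdots+\alpha_j$ with $i=1$ or $j=n$. Consequently the weight-$k\vartheta$ piece of $\U^+(w_\bullet w_0)$ is not one-dimensional (already for $k=1$ one has $E_\vartheta$, $E_{\alpha_1}\cdot E_{\alpha_2+\cdots+\alpha_n}$, $E_{\alpha_1+\alpha_2}\cdot E_{\alpha_3+\cdots+\alpha_n}$, \dots), so your claim that $\Upsilon_{k\vartheta}$ is a scalar times a single fixed monomial is unjustified. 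You could try to recover it from the $\T_i$-invariance and $r_i$-vanishing for $i\in\Iblack$, but that is extra work you have not outlined, and the resulting recursion would involve two white-node equations (from $\ff_1$ and $\ff_n$) rather than one. The paper's indirect argument sidesteps all of this.
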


\begin{proof}
Recall that $\ff_{1} = F_{1} + \vs_1 \T_{\wb} (E_{n}) \tK^{-1}_1$ and $\ff_{n} = F_{n} + \vs_n \T_{\wb} (E_{1}) \tK^{-1}_n$. Note that 
$$ F_{1} \T_{\wb} (E_{n}) \tK^{-1}_1= q_1^{-2}\T_{\wb} (E_{n}) \tK^{-1}_1 F_1,
\qquad F_{n} \T_{\wb} (E_{1}) \tK^{-1}_n= q_n^{-2}\T_{\wb} (E_{1}) \tK^{-1}_n F_n. 
$$
Introduce the divided powers $B_i^{(a)} = B_i^a /[a]!$, for all $i \in \I$ and $a \in \N$. 
Then we have 
\begin{equation}\label{eq:AIV:dividedpowers}
\ff_1^{(a)} = \sum_{s+t =a} q_1^{st}F^{(s)}_1 (\T_{\wb} (E_{n}) \tK^{-1}_1)^{(t)}, \qquad \ff_n^{(a)} = \sum_{s+t =a} q_n^{st}F^{(s)}_n (\T_{\wb} (E_{1}) \tK^{-1}_n)^{(t)}.
\end{equation}
So $\ff_i^{(a)}$ are $\ipsi$-invariant and integral (i.e. $\ff_i^{(a)} \in {}_\mA \U$), for all $i \in \I$. Now for any $\lambda \in X^+$ and $x \in {}_\mA L(\lambda)$, we can write 
\[
x =\sum c(a_1, \dots, a_k) \ff^{(a_1)}_{i_1} \cdots \ff^{(a_k)}_{i_k} \eta_{\lambda}, \quad \text{ for }  c(a_1, \dots, a_k) \in \mA.
\]
By Corollary~\ref{cor:ipsi-lambda} and using $\ipsi =\Upsilon \psi$ from \eqref{ibar},
we have $\Upsilon (x) =\ipsi  ( \psi(x) ) \in {}_\mA L(\lambda)$.    
Taking $x = \xi$ (the lowest weight vector) and $\lambda \gg 0$, we have $\Upsilon_\mu \in {}_\mA \U^+$ for any $\mu$. 
\end{proof}

\subsection{Type AIII$_{11}$}
Recall the Satake diagram of type AIII$_{11}$ of real rank one from Table~\ref{table:localSatake}:
\[
\begin{tikzpicture}[baseline = 12, scale =1.3]
		\node at (-0.5,0.1) {$\circ$};
		\node at (0.5,0.1) {$\circ$};
		\draw[<->] (-0.4, 0.3) to[out=45, in=180] (-0.15, 0.45) to (0.15, 0.45) to[out=0, in=135] (0.4, 0.3);
		\node at (-0.5,-0.2) {\small 1};
		\node at (0.5,-0.2) {\small 2};
	\end{tikzpicture}
\]
Note that the underline Dynkin diagram is not irreducible.  
We have $\ff_i = F_i + \vs_i E_{j} \tK^{-1}_i$ for $1\le i \neq j \le 2$. Defining the divided powers $f_i^{(a)} = f_i^a /[a]!$ as usual, we have 
\[
\ff_i^{(a)} = \sum_{s+t =a} q_i^{st}F^{(s)}_i (E_{j} \tK^{-1}_i)^{(t)} \in {}_\mA \U.
\]

Now we are in a position to use (and choose to omit) the same argument as for Proposition~\ref{prop:AIV:integrality} to obtain the following. 
 
 \begin{prop}
 In type AIII$_{11}$, we have $\Upsilon_\mu \in {}_\mA \U^{+}$ for all $\mu$.
 \end{prop}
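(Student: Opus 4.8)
The statement to prove is that for the quantum symmetric pair of type AIII$_{11}$ the intertwiner $\Upsilon$ is integral, i.e. $\Upsilon_\mu \in {}_\mA\U^+$ for all $\mu\in\N[\I]$. The plan is to mimic verbatim the argument already used for type AIV (Proposition~\ref{prop:AIV:integrality}) and for type AIII/AIV with $\Iblack=\emptyset$ in \cite{BW13}, which the paper has flagged as being ``easy and similar''. The essential input that has just been established is that the $\imath$-divided powers $\ff_i^{(a)}=\sum_{s+t=a}q_i^{st}F_i^{(s)}(E_j\tK_i^{-1})^{(t)}$ lie in ${}_\mA\U$ and are $\psi_\imath$-invariant; this is the AIII$_{11}$-analogue of \eqref{eq:AIV:dividedpowers}, and it is exactly the ingredient needed to run the module-theoretic integrality argument.

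First I would fix $\la\in X^+$ and recall that ${}_\mA L(\la)$ is spanned over $\mA$ by the vectors $\ff_{i_1}^{(a_1)}\cdots\ff_{i_k}^{(a_k)}\eta_\la$. This is the one point that requires a word of justification beyond citing AIV: one must know that the $\imath$-divided powers generate ${}_\mA L(\la)$ as an $\mA$-module. In the AIII$_{11}$ case, however, $\Iblack=\emptyset$, so every generator of $\Ui^-$ is some $\ff_i$, and the argument is the standard one (run the induction on $\mathrm{ht}$ exactly as in Claim~($\star$) in the proof of Theorem~\ref{thm:intUpsilon}: replace each $F_i^{(a)}$ at the left by $\ff_i^{(a)}$, note that the correction term $(\ff_i^{(a)}-F_i^{(a)})x'$ has strictly smaller height and is integral, and invoke the inductive hypothesis). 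Since $\eta_\la$ is $\psi_\imath$-invariant and each $\ff_i^{(a)}$ is $\psi_\imath$-invariant and preserves ${}_\mA L(\la)$, it follows that ${}_\mA L(\la)$ is stable under $\psi_\imath$, hence (since $\psi_\imath=\Upsilon\circ\psi$ by \eqref{ibar} and $\psi$ preserves ${}_\mA L(\la)$) stable under the action of $\Upsilon$.

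Next I would specialize: applying $\Upsilon$ to the lowest weight vector $\eta_{w_0\la}\in{}_\mA L(\la)$ gives $\Upsilon\,\eta_{w_0\la}\in{}_\mA L(\la)$. Taking $\la\gg 0$ (so that the weight spaces of $L(\la)$ in the relevant range are identified with the corresponding graded pieces of ${}_\mA\U^+$ acting on $\eta_{w_0\la}$, cf.\ the argument concluding the proof of Theorem~\ref{thm:intUpsilon}), one reads off $\Upsilon_\mu\in{}_\mA\U^+$ for every $\mu$. Note the convention issue flagged after Theorem~\ref{thm:Upsilon}: with the comultiplication used here $\Upsilon$ lies in a completion of $\U^+$, so it is the lowest weight vector $\eta_{w_0\la}$ (not the highest) that one feeds to $\Upsilon$; alternatively one may apply $\Upsilon$ to $\eta_\la$ inside ${}^\omega L(\la)$ as in Proposition~\ref{prop:AIV:integrality}, which is equivalent.

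The only genuine content beyond bookkeeping is the verification that $\ff_i^{(a)}\in{}_\mA\U$, which has been done in the display just above the statement, and the spanning claim for ${}_\mA L(\la)$; I do not expect either to be an obstacle here, precisely because $\Iblack=\emptyset$ makes the structure of $\Ui^-$ as simple as possible. Thus there is no real hard step: the proposition follows by ``the same argument as for Proposition~\ref{prop:AIV:integrality}'', and I would write the proof as a two-sentence pointer to that argument, exactly as the paper does.
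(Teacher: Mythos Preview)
Your proposal is correct and matches the paper's approach exactly: the paper omits the proof entirely and simply says one uses the same argument as for Proposition~\ref{prop:AIV:integrality}, which is precisely the module-theoretic integrality argument you outline (integral $\psi_\imath$-invariant $\imath$-divided powers $\Rightarrow$ ${}_\mA L(\lambda)$ is $\psi_\imath$-stable $\Rightarrow$ apply $\Upsilon$ to the lowest weight vector with $\lambda\gg 0$). Your added detail about the spanning claim for ${}_\mA L(\lambda)$ via the height induction is the implicit content of the line ``we can write $x=\sum c(a_1,\dots,a_k)\ff_{i_1}^{(a_1)}\cdots\ff_{i_k}^{(a_k)}\eta_\lambda$'' in the AIV proof, so nothing is missing.
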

 
\begin{rem}
The integrality of the standard divided powers $\ff_i^{(a)} \in {}_\mA \U$ for $i \in \Iwhite$ in types AIV and AIII$_{11}$ distinguishes 
these two types from the remaining ones. 
\end{rem}

\subsection{Type AI$_1$}
  \label{subsec:AI}
Recall the Satake diagram of type AI$_{1}$ from Table~\ref{table:localSatake}:
\[
\begin{tikzpicture}[baseline=0]
		\node  at (0,0) {$\circ$};
		\node  at (0,-.3) {\small 1};
	\end{tikzpicture}
\]
Since there is only one element in $\I$, we shall drop the index $1$ and write 
$\ff = F + q^{-1} E {K}^{-1} + \kappa {K}^{-1}$. This is the only real rank one case when $\kappa$ can be non-zero. 
Set $\Upsilon = \sum_{c \ge 0} \Upsilon_c$ where $\Upsilon_c= \gamma_c E^{(c)}$.
Proposition~\ref{prop:A:integralform} below and its proof are adapted from \cite[Lemma~4.6]{BW13} (where $\kappa =1$).

\begin{prop}\label{prop:A:integralform}
In type AI$_1$, we have $\Upsilon_c \in {}_\mA \U^+$ for all $c \ge 0$.
\end{prop}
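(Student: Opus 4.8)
\textbf{Proof plan for Proposition~\ref{prop:A:integralform}.}
The strategy is to derive an explicit recursion for the coefficients $\gamma_c$ in $\Upsilon = \sum_{c\ge 0}\gamma_c E^{(c)}$ from the defining intertwining identity $\ipsi(u)\Upsilon = \Upsilon\psi(u)$ (Theorem~\ref{thm:Upsilon}) applied to the single generator $u = \ff$, and then to prove by induction on $c$ that $\gamma_c \in \mA = \Z[q,q^{-1}]$. First I would write out $\ff = F + q^{-1}EK^{-1} + \kappa K^{-1}$ and $\psi(\ff) = \ff$, $\ipsi(\ff)=\ff$, so the identity becomes $\ff\,\Upsilon = \Upsilon\,\ff$ as an equation in the completion $\widehat{\U}$; this is the $\mathfrak{sl}_2$ computation underlying the whole theory. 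Expanding both sides in the PBW-type basis $F^{(a)}K^{\mu}E^{(c)}$ and collecting the coefficient of each monomial $K^{\mu}E^{(c)}$ (note $\Upsilon_c$ by Theorem~\ref{thm:Upsilon} only involves $E^{(c)}$, and the weight constraint $\mu^{\theta}=-\mu$ forces $c$ even, $\mu = -c\alpha/2$ in the appropriate normalization), the commutator $[E,F] = (\tilde K - \tilde K^{-1})/(q-q^{-1})$ and the relations moving $K^{\pm1}$ past $E^{(c)}$ produce a two-term (or three-term, because of the $\kappa K^{-1}$ summand) recursion expressing $\gamma_{c}$ in terms of $\gamma_{c-2}$ (and the one-step shift coming from the $E K^{-1}$ and $\kappa K^{-1}$ pieces) with coefficients that are explicit rational functions of $q$ — built from $q$-integers $[n]$, signs, and powers of $q$ and of $\kappa$.

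The key step is then to check that each such coefficient is actually a Laurent polynomial, i.e.\ lies in $\mA$, not merely in $\Qq$; concretely one must verify that the denominators $[c]$ or $[c]^!_{\,}$ that appear are cancelled by the $E^{(c)}$-divided-power normalization together with a factor $[c-1]$ or similar arising from the $\tilde K - \tilde K^{-1}$ term, exactly as in the quasi-$\mathcal R$-matrix computation of \cite[24.1.6]{Lu94} and in \cite[Lemma~4.6]{BW13}. I would set up the recursion so that $\gamma_c = (\text{Laurent polynomial}) \cdot \gamma_{c-2} + (\text{Laurent polynomial})\cdot\gamma_{c-1}$ — or reindex so only even $c$ contribute and the recursion is genuinely two-term — and then the induction is immediate from $\gamma_0 = 1 \in \mA$. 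Since AI$_1$ is literally rank-one $\U_q(\mathfrak{sl}_2)$ with a rescaled generator, one can either copy the computation of \cite[Lemma~4.6]{BW13} essentially verbatim (that reference handled $\kappa = 1$; the general $\kappa \in \Z[q,q^{-1}]$ case only changes the coefficients by monomials in $\kappa$, which stay in $\mA$), or invoke Proposition~\ref{prop:wcirc} to know a priori that $\Upsilon_c \in \U^+(w_0 w_\bullet) = \U^+$ has the stated shape and only the integrality of the single sequence $\gamma_c$ remains.

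The main obstacle I anticipate is purely bookkeeping rather than conceptual: keeping track of the powers of $q$ picked up when commuting $K^{-1}$ past $E^{(c)}$ and when applying the $q$-binomial identities $E^{(a)}E^{(c)} = \binom{a+c}{a}_q E^{(a+c)}$, so that the claimed cancellation of $[c]!$ against the divided power is visible. A secondary subtlety is making sure the three summands of $\ff$ (the $F$ part contributes the $[E,F]$ commutator that drops degree by one step in the natural $\Z$-grading, while $q^{-1}EK^{-1}$ and $\kappa K^{-1}$ shift by the other step), so that the resulting recursion closes up correctly on the sublattice of allowed exponents $c$; once the recursion is written in the correct variable the integrality is a one-line induction. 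Because this is a routine $\U_q(\mathfrak{sl}_2)$ calculation I would present the recursion, exhibit $\gamma_c$ (or $\gamma_{2m}$) in closed form if it is clean, observe membership in $\mA$, and conclude.
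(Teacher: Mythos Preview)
Your approach is the same as the paper's---derive a recursion for $\gamma_c$ from the intertwining identity applied to $\ff$, then induct---but two slips in your setup would derail the computation if taken literally.

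First, you write $\psi(\ff)=\ff$, hence $\ff\,\Upsilon=\Upsilon\,\ff$. This is wrong: $\psi$ is the bar involution on $\U$, and $\psi(q^{-1}EK^{-1})=qEK$, $\psi(\kappa K^{-1})=\kappa K$ (using $\overline{\kappa}=\kappa$). So $\psi(\ff)=F+qEK+\kappa K\neq\ff$, and the identity to expand is
\[
(F+q^{-1}EK^{-1}+\kappa K^{-1})\,\Upsilon=\Upsilon\,(F+qEK+\kappa K).
\]
The whole point of $\Upsilon$ is that the two bar maps differ; if $\psi(\ff)=\ff$ held, the relation would be $[\ff,\Upsilon]=0$, which gives no recursion at all.

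Second, the weight constraint $\mu^\theta=-\mu$ does \emph{not} force $c$ even here. In type AI$_1$ one has $\Iblack=\emptyset$, $w_\bullet=1$, $\tau=1$, so $\theta=-\mathrm{id}$ on $X$ and the condition $\mu^\theta=-\mu$ is vacuous: every $c\ge0$ contributes. (Only when $\kappa=0$ do the odd $\gamma_c$ vanish, and that comes out of the recursion, not the weight constraint.)

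Once the correct identity is used, the paper's computation gives the single recursion
\[
\gamma_{c+1}=-(q-q^{-1})\,q^{-c}\bigl(q\,[c]\,\gamma_{c-1}+\kappa\,\gamma_c\bigr),
\]
whose coefficients are visibly in $\mA$; there is no cancellation of $[c]!$-denominators to check, and the induction from $\gamma_0=1$ is immediate. Your worry about divided-power bookkeeping is unnecessary in this rank-one case.
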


\begin{proof}
Recall from \eqref{kappa2} we have $\overline{\kappa} =\kappa$. Equation \eqref{eq:Upsilon} implies that 
\[
 (F + q^{-1} E {K}^{-1} + \kappa {K}^{-1}) \Upsilon = \Upsilon ( F + q E {K} + \kappa {K}), 
\]
which can be rewritten as 
\[
\gamma_{c+1} = -(q-q^{-1}) q^{-c} (q [c] \gamma_{c-1} + \kappa \gamma_c).
\]
It follows by induction on $c$ that $\gamma_c \in \mA$, since by definition we know $\gamma_0 =1$.
\end{proof}


\subsection{Generalities when $\tau|_{\Iwhite}  = 1$} 
\label{sec:tau1}

In this subsection, we assume that the Satake diagrams are real rank one of types with $\tau|_{\Iwhite} =1$ and $\Iblack\neq \emptyset$ 
(i.e., of types AII$_3$, BII, CII, DII, FII); see Table~\ref{table:localSatake}.  
In all these cases, we have the parameters $\kappa_i=0$, for all $i\in \Iwhite$. 
Let 
$$\Iwhite = \{ \bf i \}. 
$$
Then following Theorem~\ref{thm:Upsilon}, we have 
\begin{equation}
  \label{eq:Upc}
\Upsilon = \sum_{c \in \N} \Upsilon_{c},
\end{equation}
where $\Upsilon_c = \Upsilon_{c(\wb {\bf i} +{\bf i})}$ has weight $c(\wb {\bf i} +{\bf i})$. 
Note that \eqref{eq:Upsilon} implies that
$
\ipsi(\ff_{\bf i}) \Upsilon = \Upsilon \psi(\ff_{\bf i})
$,
that is,
\[
(F_{{\bf i}} + \vs_{\bf i} \T_{\wb}(E_{  {\bf i}}) \widetilde{K}^{-1}_{\bf i} ) \Upsilon = \Upsilon (F_{{\bf i}} + \vs^{-1}_{\bf i} \psi (\T_{\wb}(E_{  {\bf i}})) \widetilde{K}_{\bf i} ), \qquad F_j \Upsilon = \Upsilon F_j (j \in \Iblack).
\]
Using \cite[Proposition~3.16]{Lu94}, we have (for $c \ge 1$ and $j \in \Iblack$)
\begin{align}
r_{\bf i} (\Upsilon_c) &= -(q_{\bf i}-q_{\bf i}^{-1}) \Upsilon_{c-1} \cdot \vs_{\bf i}^{-1} \cdot \psi (\T_{\wb}(E_{  {\bf i}})) \label{eq:rank1Upsilon:1} \\
&= -(q_{\bf i}-q_{\bf i}^{-1})(-q_{\bf i})^{\langle {\bf i}, 2 \rho_{\bullet} \rangle}  \cdot \vs_{\bf i}^{-1} \cdot  \Upsilon_{c-1}  \cdot  \T^{-1}_{\wb}(E_{  {\bf i}}) , \notag \\
{}_{\bf i}r (\Upsilon_c) &= -(q_{\bf i}-q_{\bf i}^{-1})\vs_{\bf i} \cdot q_{\bf i}^{\langle {\bf i}, \wb   {\bf i} \rangle} \cdot \T_{\wb}(E_{  {\bf i}})  \Upsilon_{c-1},\label{eq:rank1Upsilon:2}\\
{}_j r (\Upsilon_c) &= r_j (\Upsilon_c) = 0.\label{eq:rank1Upsilon:3}
\end{align}

Recall the shorthand notation
$w^{\bullet} = w_{\bullet} w_0.$ 
Let $\ell (w^{\bullet}) = k$. Note $w^{\bullet}$ is of the form
$$
w^{\bullet} = s_{i_1}   s_{i_2} \cdots s_{i_{k-1}} s_{i_k}, 
\quad i_1 = i_k ={\bf i} \in \Iwhite.
$$
Introduce a shorthand notation $\T_{i_1i_2 \cdots i_{k-1}} =\T_{i_1} \T_{i_2} \cdots \T_{i_{k-1}}$. 
Applying Proposition~\ref{prop:wcirc}, we have 
\begin{equation}\label{eq:Upsilon:c}
\Upsilon_c = \sum \gamma_c(c_1, c_2, \dots, c_k) E^{(c_1)}_{\bf i} \T_{\bf i} (E^{(c_2)}_{i_2}) \cdots \Big( \T_{i_1i_2 \cdots i_{k-1}}  (E^{(c_k)}_{{\bf i}}) \Big). 
\end{equation}
We adopt the convention that $E^{(a)}_{j} = 0$ for any $j \in \I$, with $a < 0$, and $\gamma_{c}(c_1, c_2, \dots, c_k) = 0$ unless all $c_j \ge 0$. We shall write $\gamma_c = \gamma_c(c_1, c_2, \dots, c_k)$ when there is no need to specify each component. 

The following lemma shall be used extensively in this section. 

\begin{lem} \cite[Proposition~8.20]{Jan} \label{lem:Jan820}
For any $w \in W$, if  $w (i') = j' \in X$ for $i, j \in \I$, then we have $ \T_w (E_i) = E_j$.
\end{lem}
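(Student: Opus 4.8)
The statement to prove is \lemref{lem:Jan820}, the assertion that for $w \in W$ with $w(i') = j' \in X$ for some $i,j \in \I$, one has $\T_w(E_i) = E_j$. This is cited to \cite[Proposition~8.20]{Jan}, so a full proof is not expected in the paper, but here is how I would argue it.

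\medskip

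\textbf{Plan.} The key observation is that the hypothesis $w(i') = j'$ forces a strong combinatorial constraint: $i'$ and $j'$ are both positive roots (in fact simple roots), and $w$ carries one to the other. I would proceed by induction on $\ell(w)$. If $\ell(w) = 0$, then $w = 1$, $i' = j'$, so $i = j$ and $\T_1(E_i) = E_i = E_j$ trivially. For the inductive step, since $w \neq 1$, choose a simple reflection $s_k$ with $\ell(s_k w) < \ell(w)$, i.e.\ $k$ such that $w^{-1}(\alpha_k) \in -R^+$ where I write $\alpha_k = k'$. The first subclaim is that $k \neq j$: indeed $w(i') = j' = \alpha_j$ is a positive root, and if $k = j$ then $w^{-1}(\alpha_j) = i'$ would have to be negative, contradicting $i' \in R^+$. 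Hence $s_k(\alpha_j) = \alpha_j - \langle k, j' \rangle \alpha_k$ is still a positive root, and $s_k w (i') = s_k(\alpha_j)$. Now set $w' = s_k w$; then $\ell(w') = \ell(w) - 1$, but $w'(i')$ need not be simple, so the naive induction on the pair $(i,j)$ does not immediately close. Instead I would induct on $\ell(w)$ alone and track the statement in the slightly more general form: \emph{if $w(i') = \beta$ for a root $\beta$, and $\T_{i_1}\cdots \T_{i_\ell}$ is any reduced expression for $w$, then $\T_w(E_i) \in \U^+_{\beta}$ is the canonical generator}, using the fact that $\U^+_{\beta}$ is one-dimensional when $\beta$ is a positive root and $\T_w(E_i)$ is (up to the sign/power conventions built into $\T''_{i,+1}$) a PBW root vector.

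\medskip

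\textbf{Cleaner approach via PBW root vectors.} Actually the most efficient route, and the one I would write up, avoids case analysis entirely. By \eqref{eq:braidgroup} and the braid relations (Proposition in \secref{subsec:PBW}), $\T_w$ for $w \in W$ sends $\U^+$-weight spaces around according to $w$ acting on $X$: more precisely, if $w(i') \in R^+$ and $s_{i_1}\cdots s_{i_m}$ is a reduced expression for $w$ such that $i' $ appears appropriately, then $\T_{i_1}\cdots \T_{i_{m-1}}(E_{i_m})$ is a PBW basis vector of weight $w(i')$, lying in ${}_\mA\U^+$. Since $w(i') = j' = \alpha_j$ is a \emph{simple} root, the weight space $\U^+_{\alpha_j}$ is one-dimensional, spanned by $E_j$. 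So $\T_w(E_i) = c\, E_j$ for some scalar $c \in \Qq$. To pin down $c = 1$, I would apply the bar involution: by Proposition in \secref{subsec:PBW}(2), $\overline{\T''_{i,+1}(u)} = \T''_{i,-1}(\bar u)$, and iterating, $\overline{\T_w(E_i)}$ is $\T'_{w,-1}$ applied to $E_i$ (using $\bar E_i = E_i$); one checks via the explicit formulas that both $\T_w(E_i)$ and its bar are the same PBW-type vector scaled by a monomial in $q$, and since $\overline{cE_j} = \bar c E_j$ must equal the result, combined with the leading-term normalization in the PBW basis \eqref{PBW} (whose structure constants are bar-invariant and upper-triangular), one forces $c = 1$. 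Alternatively — and this is cleanest — evaluate on the simple module: apply $\T_w$ as an operator on $L(\lambda)$ for $\lambda \gg 0$ and compare $\T_w(E_i)\cdot \eta_{w^{-1}\lambda}$ with $E_j \cdot \eta_{w^{-1}\lambda}$ using that $\T_w$ intertwines the $\U$-action up to the automorphism $\T_w$ and permutes the one-dimensional extreme weight spaces by $\eta_{v\lambda} \mapsto \eta_{wv\lambda}$ with coefficient $1$; the matching of coefficients then gives $c=1$.

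\medskip

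\textbf{Main obstacle.} The genuinely delicate point is the scalar normalization $c = 1$ rather than merely $c \in \Qq^\times$; the weight/one-dimensionality argument gives proportionality for free, but Lusztig's operators $\T''_{i,+1}$ carry sign conventions and powers of $q_i$ (visible already in $\T'_{i,e}(E_i) = -\widetilde K_{ei}F_i$), so one must be careful that the composite $\T_{i_1}\cdots\T_{i_{m-1}}(E_{i_m})$ lands on $E_j$ on the nose when $w(\alpha_i)$ happens to be simple. The resolution is that these sign/$q$ factors only appear when a reflection $s_{i_a}$ in the reduced word actually moves the weight \emph{through} a wall it is negating, i.e.\ they are governed by the inversion set of $w$; when $w(\alpha_i) = \alpha_j$ is simple the relevant product of factors telescopes to $1$. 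Making that telescoping precise — equivalently, verifying it on extreme weight vectors of $L(\lambda)$ where everything is forced — is the one computation I would actually carry out; it is short. Everything else (reduction to a one-dimensional weight space, integrality) is immediate from the material already in \secref{subsec:PBW} and \secref{sec:prelim}.
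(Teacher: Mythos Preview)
The paper gives no proof of this lemma; it is a direct citation of \cite[Proposition~8.20]{Jan}. Your weight-space argument is sound: since $w(\alpha_i) = \alpha_j$ is positive, one has $\ell(ws_i) = \ell(w) + 1$, so $\T_w(E_i)$ is the PBW root vector attached to $\alpha_j$ in the reduced expression $s_{i_1}\cdots s_{i_m}s_i$ of $ws_i$, hence lies in the one-dimensional space $\U^+_{\alpha_j}$; thus $\T_w(E_i) = cE_j$. The entire difficulty is indeed $c = 1$, but none of your proposed resolutions closes. The bar involution only yields $\bar c\,E_j = \T''_{w,-1}(E_i)$, introducing a second unknown scalar; combining integrality of PBW vectors with the identity $\T_w\T_i = \T_j\T_w$ (from $ws_i = s_jw$) applied to $E_i$ one can squeeze out $c^2 = 1$, but ruling out $c = -1$ is not free. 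Your module argument is circular: \cite[39.1.2]{Lu94} gives $\T_w^{-1}(\eta_\lambda) = \eta_{w^{-1}\lambda}$ only on extremal weight vectors, and extracting $c$ requires the action on an adjacent vector such as $F_j\eta_\lambda$, which unwinds straight back to $\T_w(E_i)$ or $\T_w(F_i)$. Your ``telescoping'' claim is precisely the content to be established.

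Jantzen's argument in fact salvages the induction you abandoned, by peeling off a rank-two parabolic factor rather than a single reflection. Pick $k$ with $s_kw < w$ (so $k \neq j$, as you noted), set $J = \{j,k\}$, and write $w = uv$ with $u \in W_J$, $v$ the minimal-length representative of $W_Jv$, and $\ell(w) = \ell(u) + \ell(v)$. From $s_jw > w$ and $s_kw < w$ one gets $\ell(u) \geq 1$. The key combinatorial step is that $v(\alpha_i) \in \{\alpha_j, \alpha_k\}$ is again \emph{simple}: since $ws_i = (s_ju)v$ is already the $W_J$-decomposition of $ws_i$, the element $vs_i$ cannot be $W_J$-minimal, forcing $v^{-1}(\alpha_j) = \alpha_i$ or $v^{-1}(\alpha_k) = \alpha_i$. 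By induction $\T_v(E_i) = E_{v(\alpha_i)}$, and $\T_w(E_i) = \T_u(E_{v(\alpha_i)})$ with $u \in W_J$ sending one simple root of $J$ to another --- a rank-two statement, verified by a short direct check in each dihedral type $A_1\times A_1$, $A_2$, $B_2$, $G_2$.
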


\begin{lem}
We have 
\[
\T^{-1}_{\wb} (E_{  {\bf i}}) = \T_{i_1i_2 \cdots i_{k-1}}  (E_{  {\bf i}}).
\]
\end{lem}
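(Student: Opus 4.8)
The claim is the identity $\T^{-1}_{\wb}(E_{\bf i}) = \T_{i_1 i_2 \cdots i_{k-1}}(E_{\bf i})$, where $w^\bullet = w_\bullet w_0 = s_{i_1} s_{i_2} \cdots s_{i_{k-1}} s_{i_k}$ with $i_1 = i_k = {\bf i}$. The natural approach is to work entirely inside the Weyl group and then invoke Lemma~\ref{lem:Jan820}. First I would observe that, since $\T_{i_1 i_2 \cdots i_{k-1}} = \T_{w'}$ for the element $w' = s_{i_1} \cdots s_{i_{k-1}}$ (this is a reduced word because it is an initial segment of the reduced word for $w^\bullet$, using that $\T''_{i,+1}$ satisfy the braid relations by \cite[Theorem~39.4.3]{Lu94}), the right-hand side equals $\T_{w'}(E_{\bf i})$. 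By Lemma~\ref{lem:Jan820} it suffices to prove the single statement $w'({\bf i}') = w_\bullet^{-1}({\bf i}')$ in $X$, because then both sides equal $E_j$ for the common node $j$ with $j' = w'({\bf i}')$ --- note $\T^{-1}_{\wb}(E_{\bf i}) = \T_{w_\bullet^{-1}}(E_{\bf i})$ and $w_\bullet^{-1} = w_\bullet$.

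The key Weyl-group computation is therefore: $w' s_{i_k} = w^\bullet = w_\bullet w_0$, so $w' = w_\bullet w_0 s_{\bf i}$ (using $i_k = {\bf i}$). Hence I must check $w_\bullet w_0 s_{\bf i}({\bf i}') = w_\bullet({\bf i}')$, i.e. $w_0 s_{\bf i}({\bf i}') = {\bf i}'$, i.e. $w_0(-{\bf i}') = {\bf i}'$, i.e. $w_0({\bf i}') = -{\bf i}'$. This is false in general for a single simple root, so I expect this naive reduction is too crude and the actual proof must use the real-rank-one hypothesis more carefully: the point is that ${\bf i}$ is the unique white node and $\theta = -w_\bullet \tau$ with $\tau|_{\Iwhite} = 1$, so $-w_\bullet({\bf i}') $ is again (plus a combination of black roots)... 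Let me instead argue directly. Since $w^\bullet = w_\bullet w_0$ and $\ell(w^\bullet) = \ell(w_0) - \ell(w_\bullet)$, we have $(w^\bullet)^{-1} = w_0 w_\bullet$, and I would compute $\T_{i_1 \cdots i_{k-1}}(E_{\bf i})$ by noting $s_{i_1} \cdots s_{i_{k-1}}({\bf i}') = w^\bullet s_{i_k}({\bf i}') = -w^\bullet({\bf i}') = -w_\bullet w_0({\bf i}')$; since $w_0({\bf i}') = -(\text{some }{\bf j}')$ for a node ${\bf j}$, this is $w_\bullet({\bf j}')$. Separately, $\T_{w_\bullet}^{-1}(E_{\bf i}) = \T_{w_\bullet}(E_{\bf i})$ and $w_\bullet({\bf i}')$ --- but in real rank one $w_0({\bf i}') \equiv -{\bf i}'$ modulo $\Z[\Iblack]$... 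Rather than force this, I would verify the positivity/weight bookkeeping: both $\T^{-1}_{\wb}(E_{\bf i})$ and $\T_{w'}(E_{\bf i})$ lie in $\U^+$ and are $\pm$ a PBW monomial; matching weights via the explicit list of real-rank-one Satake diagrams in Table~\ref{table:localSatake} pins down both to be $E_{\bf i}$ (up to the root-datum identifications), which is what \eqref{eq:eta-b} and \cite[Lemma~39.1.2]{Lu94} also suggest.

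The cleanest route, and the one I would actually carry out: use that $s_{i_1} \cdots s_{i_{k-1}}$ and $s_{\bf i} s_{i_{k-1}} \cdots s_{i_2}$ give, via the braid relations, $\T_{w'} = \T_{s_{i_1}} \T_{s_{i_2} \cdots s_{i_{k-1}}}$ and that $w' = w^\bullet s_{\bf i}$ has the property $w'^{-1}$ sends ${\bf i}'$ to a simple root --- precisely, $s_{\bf i}(w^\bullet)^{-1}({\bf i}')$. Since $(w^\bullet)^{-1} = w_0 w_\bullet$ and $w_\bullet({\bf i}') = {\bf i}' + (\text{black roots})$ while $w_0$ permutes simple roots up to sign, one checks case-by-case from Table~\ref{table:localSatake} that $w'({\bf i}') = w_\bullet({\bf i}')$ as elements of $X$; then Lemma~\ref{lem:Jan820} gives $\T_{w'}(E_{\bf i}) = \T_{w_\bullet}(E_{\bf i}) = \T^{-1}_{\wb}(E_{\bf i})$ (the last equality since $w_\bullet^2 = 1$). \textbf{The main obstacle} is the Weyl-group identity $w'({\bf i}') = w_\bullet({\bf i}')$: it is not formal and genuinely uses that the diagram has real rank one (so there is a unique white node and $-w_\bullet\tau$ acts on it in a controlled way). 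I would dispatch it by the short case-check over the eight local types, or preferably by the uniform argument that $w^\bullet = w_\bullet w_0$ maps ${\bf i}'$ to $-w'({\bf i}')$ combined with $w_\bullet w_0 = (w_0 w_\bullet)^{-1}$ and the fact that $w_0({\bf i}')$ differs from $-{\bf i}'$ only by black simple roots in each real-rank-one case.
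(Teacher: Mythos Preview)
Your overall strategy---reduce to a Weyl-group identity and invoke Lemma~\ref{lem:Jan820}---is the same as the paper's, but there are two concrete errors that prevent your argument from going through as written.

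First, the claim $\T^{-1}_{\wb}(E_{\bf i}) = \T_{w_\bullet^{-1}}(E_{\bf i})$ is false: although $w_\bullet^{-1} = w_\bullet$ in $W$, the braid-group operator $\T_{w_\bullet}$ is \emph{not} an involution, so $\T_{w_\bullet}^{-1} \neq \T_{w_\bullet}$. Second, Lemma~\ref{lem:Jan820} only applies when $w(i')$ is a \emph{simple} root $j'$; since $w_\bullet({\bf i}')$ is not simple (e.g.\ in type AII$_3$ it is $\alpha_1+\alpha_2+\alpha_3$), you cannot conclude $\T_{w'}(E_{\bf i}) = \T_{w_\bullet}(E_{\bf i})$ merely from $w'({\bf i}') = w_\bullet({\bf i}')$.

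The paper's fix for both issues is to multiply by $w_\bullet$ on the left and apply Lemma~\ref{lem:Jan820} to the product $w_\bullet w'$ instead. Since $w_\bullet \cdot s_{i_1}\cdots s_{i_{k-1}} s_{i_k} = w_\bullet w^\bullet = w_0$ is reduced, so is $w_\bullet w'$; and
\[
w_\bullet w'({\bf i}') = w_\bullet\cdot w_\bullet w_0 s_{\bf i}({\bf i}') = -w_0({\bf i}') = {\bf i}'.
\]
The last equality is exactly the identity you computed and then dismissed as ``false in general''---but it \emph{is} true in every one of the five types under consideration here ($\tau|_{\Iwhite}=1$, $\Iblack\neq\emptyset$): in $B_n$, $C_n$, $F_4$ the involution $-w_0$ is the identity; in $A_3$ it fixes the middle node ${\bf i}=2$; in $D_n$ it fixes node ${\bf i}=1$. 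Now Lemma~\ref{lem:Jan820} gives $\T_{w_\bullet w'}(E_{\bf i}) = E_{\bf i}$, i.e.\ $\T_{w_\bullet}\T_{i_1\cdots i_{k-1}}(E_{\bf i}) = E_{\bf i}$, and applying $\T_{w_\bullet}^{-1}$ yields the claim. No case-by-case verification of a non-simple-root identity is needed.
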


\begin{proof}
We have ${\wb}  s_{i_1} s_{i_2} \cdots s_{i_{k-1}} ({{\bf i}'}) = -w_0 ({\bf i}') ={\bf i}'$. The lemma follows by Lemma~\ref{lem:Jan820}.
\end{proof}

\begin{lem}\label{lem:c1neq0}
Let $c \ge 1$. If we have $\gamma_{c-1} (c_1, c_2, \dots, c_k) \in \mA$ for all $(c_1, c_2, \dots, c_k)$, 
then $(1-q_{\bf i}^{-2})^{-c_1}\gamma_{c} (c_1, c_2, \dots, c_k) \in \mA $ for all $(c_1, c_2, \dots, c_k)$ with $c_1 \ge 1$.
\end{lem}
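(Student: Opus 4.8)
\textbf{Proof strategy for Lemma~\ref{lem:c1neq0}.}

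The plan is to extract the coefficient $\gamma_c(c_1, c_2, \dots, c_k)$ with $c_1 \ge 1$ from a known quantity by applying the skew-derivation $r_{\bf i}$, and then to read off integrality from the recursion \eqref{eq:rank1Upsilon:1}. First I would recall from \cite[Proposition~38.1.6]{Lu94} (combined with Lemma~\ref{lem:TUpsilon}, i.e.\ $r_j(\Upsilon_c)={}_jr(\Upsilon_c)=0$ for $j\in\Iblack$) that the PBW-type expansion \eqref{eq:Upsilon:c} is precisely the expansion adapted to the reduced word $s_{i_1}\cdots s_{i_{k-1}}$ for $\wb$, so the coefficient $\gamma_c(c_1,c_2,\dots,c_k)$ is detected by iterated application of the operators $r_{i_j}$ in the appropriate twisted sense. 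Concretely, since $E^{(c_1)}_{\bf i}$ is the leftmost PBW factor, the element $r_{\bf i}(\Upsilon_c)$ has PBW expansion
\[
r_{\bf i}(\Upsilon_c) = \sum \gamma_c(c_1, c_2, \dots, c_k)\, \frac{[c_1]_{\bf i}\, q_{\bf i}^{\text{(shift)}}}{\cdots}\, E^{(c_1-1)}_{\bf i} \T_{\bf i}(E^{(c_2)}_{i_2}) \cdots,
\]
where the precise $q$-power and the factor $[c_1]_{\bf i}$ come from the Leibniz rule \eqref{eq:rr} for $r_{\bf i}$ applied to the divided power $E^{(c_1)}_{\bf i}$; the key point is that $r_{\bf i}$ annihilates each of the subsequent factors $\T_{i_1\cdots i_{j-1}}(E^{(c_j)}_{i_j})$ for $j\ge 2$ up to a controlled $q$-shift, because those factors lie in the subalgebra generated by the $E$'s whose weights are orthogonal to ${\bf i}$ in the relevant way (this is where one invokes that $i_1={\bf i}$ is the unique white node and uses the standard orthogonality properties of the PBW basis, cf.\ \cite[\S38.1]{Lu94}).

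Next I would combine this with the recursion \eqref{eq:rank1Upsilon:1}, which gives
\[
r_{\bf i}(\Upsilon_c) = -(q_{\bf i}-q_{\bf i}^{-1})(-q_{\bf i})^{\langle {\bf i}, 2\rho_\bullet\rangle} \vs_{\bf i}^{-1}\, \Upsilon_{c-1}\cdot \T^{-1}_{\wb}(E_{\bf i}).
\]
The factor $(q_{\bf i}-q_{\bf i}^{-1})$ is integral, $(-q_{\bf i})^{\langle {\bf i},2\rho_\bullet\rangle}\in\pm q^{\Z}$ is a unit in $\mA$, and $\vs_{\bf i}^{-1}\in\pm q^{\Z}$ is a unit in $\mA$ by \eqref{parameters}. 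By hypothesis $\Upsilon_{c-1}\in{}_\mA\U^+$ (its PBW coefficients $\gamma_{c-1}$ all lie in $\mA$), and $\T^{-1}_{\wb}(E_{\bf i})=\T_{i_1\cdots i_{k-1}}(E_{\bf i})$ is an integral PBW monomial; so the right-hand side lies in ${}_\mA\U^+$. Multiplying the two expressions for $r_{\bf i}(\Upsilon_c)$ and comparing PBW coefficients, I would solve for $\gamma_c(c_1,c_2,\dots,c_k)$: one finds
\[
[c_1]_{\bf i}\, q_{\bf i}^{\text{(shift)}}\, \gamma_c(c_1, c_2, \dots, c_k) \in \mA
\]
for all $c_1\ge 1$. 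The main obstacle — and the step requiring genuine care rather than bookkeeping — is controlling how $\T^{-1}_{\wb}(E_{\bf i})$ reshuffles the PBW monomials of $\Upsilon_{c-1}$ on the right, i.e.\ verifying that right multiplication by this monomial, re-expanded in the PBW basis \eqref{eq:Upsilon:c}, has integral structure constants; this uses that ${}_\mA\U^+$ is a subring and that the chosen PBW basis is an $\mA$-basis of ${}_\mA\U^+(w^\bullet)$ (the statement right after \eqref{PBW}).

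Finally, I would deal with the spurious $[c_1]_{\bf i}$: from $[c_1]_{\bf i}\gamma_c\in q_{\bf i}^{\text{(shift)}}\mA$ one wants $\gamma_c\in\mA$, which does not follow from integrality of $[c_1]_{\bf i}\gamma_c$ alone. The resolution is to carry the extra factor $(1-q_{\bf i}^{-2})^{-c_1}$ exactly as in the statement: the recursion produces, inductively, a factor of $(1-q_{\bf i}^{-2})^{-1}=\qqq$ each time the index $c_1$ is incremented, and $[c_1]_{\bf i}$ combines with one such factor to give $[c_1]_{\bf i}(1-q_{\bf i}^{-2})^{-1}\cdot q_{\bf i}^{\,\cdot}=(1-q_{\bf i}^{-2c_1})(1-q_{\bf i}^{-2})^{-1}\cdot(\text{unit})\in\mA$ via the identity $[c_1]_{\bf i}= q_{\bf i}^{1-c_1}(1-q_{\bf i}^{-2c_1})(1-q_{\bf i}^{-2})^{-1}$. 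Tracking this carefully (an induction on $c_1$ internal to the fixed $c$) yields $(1-q_{\bf i}^{-2})^{-c_1}\gamma_c(c_1,c_2,\dots,c_k)\in\mA$, which is exactly the asserted conclusion and is the natural bookkeeping device that makes the induction on $c$ (continued in the subsequent lemmas of the Appendix) go through.
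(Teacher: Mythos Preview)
Your overall strategy is exactly the paper's: compute $r_{\bf i}(\Upsilon_c)$ in two ways (once from the PBW expansion \eqref{eq:Upsilon:c}, once from the recursion \eqref{eq:rank1Upsilon:1}) and compare coefficients. However, two concrete computational points are off, and once they are corrected the argument becomes much simpler than you describe.

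First, there is no factor $[c_1]_{\bf i}$. For divided powers one has $r_{\bf i}\big(E_{\bf i}^{(c_1)}\big)=q_{\bf i}^{c_1-1}E_{\bf i}^{(c_1-1)}$; the $[c_1]_{\bf i}$ you expect from $r_{\bf i}(E_{\bf i}^{c_1})$ is absorbed by the factorial in the divided power. Combined with the Leibniz rule and the fact (which you correctly invoke) that $r_{\bf i}$ kills the remaining PBW factors $\T_{\bf i}(\cdots)$, one gets
\[
r_{\bf i}(\Upsilon_c)=\sum \gamma_c(c_1,\dots,c_k)\,q_{\bf i}^{c_1-1}\,q_{\bf i}^{\langle {\bf i},\,c(\wb{\bf i}+{\bf i})-c_1{\bf i}\rangle}\,E_{\bf i}^{(c_1-1)}\T_{\bf i}(E_{i_2}^{(c_2)})\cdots,
\]
with only unit $q$-powers in front.

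Second, there is no ``reshuffling'' to control on the other side. The key observation you are missing is that $\T^{-1}_{\wb}(E_{\bf i})=\T_{i_1\cdots i_{k-1}}(E_{\bf i})$ is \emph{precisely the last} PBW root vector in the basis \eqref{eq:Upsilon:c}. Hence right multiplication of $\Upsilon_{c-1}$ by it simply sends $c_k\mapsto c_k+1$ and contributes a factor $[c_k+1]_{\bf i}$. Equating the two expressions term by term yields the clean recursion
\[
\gamma_c(c_1,\dots,c_k)=-(q_{\bf i}-q_{\bf i}^{-1})\,q_{\bf i}^{1-c_1}\,q_{\bf i}^{-\langle {\bf i},\,c(\wb{\bf i}+{\bf i})-c_1{\bf i}\rangle}(-q_{\bf i})^{\langle {\bf i},2\rho_\bullet\rangle}\vs_{\bf i}^{-1}\,[c_k]_{\bf i}\,\gamma_{c-1}(c_1-1,\dots,c_k-1),
\]
which is exactly \eqref{eq:Upsilon:general:1}. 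Since $(q_{\bf i}-q_{\bf i}^{-1})=q_{\bf i}(1-q_{\bf i}^{-2})$, $[c_k]_{\bf i}\in\mA$, and everything else is a unit in $\mA$, each application of the recursion contributes one factor of $(1-q_{\bf i}^{-2})$; iterating on $c_1$ (using the integrality at lower levels of $c$) gives $(1-q_{\bf i}^{-2})^{-c_1}\gamma_c(c_1,\dots,c_k)\in\mA$. Your final paragraph's attempt to cancel a ``spurious $[c_1]_{\bf i}$'' is therefore addressing a non-issue.
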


\begin{proof}
Using Lemma~\ref{lem:Jan820} and \eqref{eq:rank1Upsilon:1}, we have 
\begin{align*}
 r_{\bf i} (\Upsilon_c) 
 =  &\sum \gamma_c(c_1, c_2, \dots, c_k) q_{\bf i}^{c_1-1} q_{\bf i}^{\langle {\bf i}, c(\wb {\bf i} + {\bf i}) - c_1 {\bf i} \rangle}E^{(c_1-1)}_{\bf i} \T_{\bf i} (E^{(c_2)}_{i_2}) \cdots \Big( \T_{i_1i_2 \cdots i_{k-1}}  (E^{(c_k)}_{{\bf i}}) \Big) \\
 = &-(q_{\bf i}-q_{\bf i}^{-1})(-q_{\bf i})^{\langle {\bf i}, 2 \rho_{\bullet} \rangle}  \cdot \vs_{\bf i}^{-1} \cdot  \Upsilon_{c-1}  \cdot  \T^{-1}_{\wb}(E_{  {\bf i}}) \\
 = &-(q_{\bf i}-q_{\bf i}^{-1})(-q_{\bf i})^{\langle {\bf i}, 2 \rho_{\bullet} \rangle}  \vs_{\bf i}^{-1}  \cdot \\
 &\sum \gamma_{c-1}(c_1, c_2, \dots, c_k) [c_k+1]_{\bf i} E^{(c_1)}_{\bf i} \T_{\bf i} (E^{(c_2)}_{i_2}) \cdots \Big( \T_{i_1i_2 \cdots i_{k-1}}  (E^{(c_k+1)}_{{\bf i}}) \Big).
\end{align*}

Therefore we have for weight reason that, for $c_1\ge 1$, 
\begin{align*}
& \gamma_c(c_1, c_2, \dots, c_k) q_{\bf i}^{c_1-1} q_{\bf i}^{\langle {\bf i}, c(\wb {\bf i} + {\bf i}) - c_1 {\bf i} \rangle} \\
= &-(q_{\bf i}-q_{\bf i}^{-1})(-q_{\bf i})^{\langle {\bf i}, 2 \rho_{\bullet} \rangle}  \cdot \vs_{\bf i}^{-1} \gamma_{c-1}(c_1-1, c_2, \dots, c_k-1) [c_k]_{\bf i},
\end{align*}
that is,
\begin{equation}\label{eq:Upsilon:general:1}
\begin{split}
& \gamma_c(c_1, c_2, \dots, c_k)   \\
= &-(q_{\bf i}-q_{\bf i}^{-1})q_{\bf i}^{- \langle {\bf i}, c(\wb {\bf i} + {\bf i}) - c_1 {\bf i} \rangle} q_{\bf i}^{1-c_1}(-q_{\bf i})^{\langle {\bf i}, 2 \rho_{\bullet} \rangle}  \vs_{\bf i}^{-1} \gamma_{c-1}(c_1-1, c_2, \dots, c_k-1) [c_k]_{\bf i}.
\end{split}
\end{equation}
It follows by an induction on $c_1$
that $(1-q_{\bf i}^{-2})^{-c_1}  \gamma_c(c_1, c_2, \dots, c_k)  \in \mA$ as long as $c_1 \ge 1$.
\end{proof}

\begin{rem}
We proved a stronger result  than just  $\gamma_c(c_1, c_2, \dots, c_k)  \in \mA$ under our assumption. The importance shall be clear later in this section.
\end{rem}
%

Our strategy is to prove that $ \Upsilon_c \in {}_\mA \U^+$ by induction on $c$. 
The base case at $c=0$ is always true since we have $\Upsilon_0 = 1$. 
For the induction step we shall compute the precise actions of $r_j$ on $\Upsilon_c$ for $j \in \I$ case by case. 
Then thanks to Lemma~\ref{lem:c1neq0}, it suffices to prove that 
\begin{equation}  \label{eq:Uck}
\gamma_c(c_1, c_2, \dots, c_k) \in \mA \text{ for all } c_i, 
 \text{ if } (1-q_i^{-2})^{-c_1}\gamma_{c} (c_1, c_2, \dots, c_k) \in \mA \text{ when } c_1 \ge 1.
\end{equation}
This is what we shall do later in this section case by case.

To facilitate the case-by-case analysis below, let us introduce some shorthand notations. 
For a sequence $i_1 i_2 \dots i_k$ with $i_j \in \I$, we shall often use the shorthand notation 
\[
\T_{i_1i_2\cdots i_k} = \T_{i_1}  \T_{i_2} \cdots \T_{i_k}.
\]
In concrete cases below (with labelings as in Table~\ref{table:localSatake}), the sequence $i_1  \dots i_k$
is naturally partitioned into increasing and decreasing subsequences, 
and we shall insert indices $i_l$ to indicate the local maxima/minima of the sequence. For example,  $\T_{i_1 \cdots i_l \cdots i_k}$ means 
the subsequences $i_1 \cdots i_l$ and $i_l \cdots i_k$ are monotone, and $\T_{i_1 \cdots i_l \cdots i_m \cdots i_k}$ means 
the subsequences $i_1 \cdots i_l$, $i_l \cdots i_m$,  and $i_m \cdots i_k$ are monotone, 
and so on.  Here is a concrete example which occurs in Type CII below:
the shorthand ${2 \cdots n \cdots 1 \cdots n \cdots k}$, for some $1\le k \le n$, means ${2 \ 3 \cdots n-1 \ n\ n-1 \cdots 2\ 1 2 \cdots n-1\ n \ n-1 \cdots k}$.

For $x, y \in \U^+$, we write 
\[
[x,y]_{q^{-1}_i} = xy - q_i^{-1}yx. 
\]

Since the ring $\mA$ is invariant under multiplication by $q^a$ for any $a \in \Z$, 
we shall often use the notation $q^*$ to indicate $q$-powers without computing the precise exponent when it is irrelevant. 

\subsection{Type AII of rank 3}
In this subsection we assume the quantum symmetric pair $(\U, \Ui)$ is of type AII. We label the Satake diagram as follows:
\[
 \begin{tikzpicture}[baseline=0]
		\node at (0,0) {$\bullet$};
		\draw (0.1, 0) to (0.4,0);
		\node  at (0.5,0) {$\circ$};
		\draw (0.6, 0) to (0.9,0);
		\node at (1,0) {$\bullet$};
		\node at (0,-.3) {\small 1};
		\node  at (0.5,-.3) {\small 2};
		\node at (1,-.3) {\small 3};
\end{tikzpicture}	
\]
Take the reduced expression $w^{\bullet}  = s_2 s_1 s_3 s_2$. 
Then we have 
\[
\Upsilon_{c} = \sum \gamma(c_1, c_2 , c_3, c_4) E^{(c_1)}_2 \cdot \T_2 (E^{(c_2)}_1) \cdot \T_2 (E^{(c_3)}_3) \cdot \T_{213} (E^{(c_4)}_2),
\]
where  $c = \hf (c_1 + c_2 + c_3 +c_4)$. (For weight reason $\Upsilon_c = 0$ if $c$ is not an integer.)

We then compute the actions of $r_{1}$ and $r_3$ on those root vectors. Note that we have 
\begin{align*}
\T_2 (E_1) & = E_2 E_1 - q^{-1} E_1 E_2;\\
\T_2 (E_3) & = E_2 E_3 - q^{-1} E_3 E_2;\\
\T_{213}  (E_2)& = (E_2E_1- q^{-1} E_1 E_2)E_3 - q^{-1} E_3 (E_2 E_1 - q^{-1}E_1 E_2).
\end{align*}

The following lemmas follow from straightforward computation.

\begin{lem}
We have 
\begin{align*}
E_1 \cdot \T_2(E_1) = q^{-1} \T_2(E_1) \cdot E_1 &\quad \text{ and } \quad E_3\cdot  \T_2(E_3) = q^{-1} \T_2(E_3)\cdot  E_3,\\
E_1 \cdot \T_1(E_2) = q \T_1(E_2) \cdot E_1 &\quad \text{ and } \quad E_3\cdot  \T_3(E_2) =  q \T_3(E_2)\cdot  E_3,
\end{align*}
\end{lem}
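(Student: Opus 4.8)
The statement to be proved is a collection of four $q$-commutation identities among the root vectors $E_1, E_3, \T_2(E_1), \T_2(E_3), \T_1(E_2), \T_3(E_2)$ in $\U^+$ for the rank 3 Cartan datum of type $A_3$ (appearing as an $A_2\times A_1$-type situation locally; here $2$ is the white node, $1$ and $3$ the black ones, with $\langle 1,2'\rangle=\langle 2,1'\rangle=\langle 2,3'\rangle=\langle 3,2'\rangle=-1$ and $\langle 1,3'\rangle=0$). The plan is to verify each identity by a direct computation using the explicit expansions
\begin{align*}
\T_2(E_1) &= E_2E_1 - q^{-1}E_1E_2, \qquad \T_2(E_3) = E_2E_3 - q^{-1}E_3E_2,\\
\T_1(E_2) &= E_1E_2 - q^{-1}E_2E_1, \qquad \T_3(E_2) = E_3E_2 - q^{-1}E_2E_3,
\end{align*}
together with the quantum Serre relation $E_i^2E_j - (q+q^{-1})E_iE_jE_i + E_jE_i^2 = 0$ for $(i,j)\in\{(1,2),(3,2),(2,1),(2,3)\}$ and the commutativity $E_1E_3=E_3E_1$.

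First I would treat $E_1\cdot\T_2(E_1) = q^{-1}\T_2(E_1)\cdot E_1$: expand the left side as $E_1E_2E_1 - q^{-1}E_1^2E_2$ and the right side as $q^{-1}(E_2E_1^2 - q^{-1}E_1E_2E_1)$, and observe that their difference is exactly $q^{-1}$ times the Serre relator $S(E_1,E_2) = E_1^2E_2 - (q+q^{-1})E_1E_2E_1 + E_2E_1^2$ (up to rearranging), hence vanishes in $\U^+$. The identity $E_3\cdot\T_2(E_3) = q^{-1}\T_2(E_3)\cdot E_3$ is identical after swapping $1\leftrightarrow 3$. For the third identity $E_1\cdot\T_1(E_2) = q\,\T_1(E_2)\cdot E_1$, I would expand both sides and again recognize the difference as a multiple of $S(E_1,E_2)$; here the $q$-power comes out as $q^{+1}$ rather than $q^{-1}$ because $\T_1(E_2)$ has weight $1'+2'$ and we are commuting with $E_1$ on the opposite side (one can also see this conceptually: $\T_1(E_2)$ is $\T_1$ applied to a weight vector, and $\T_1$-conjugation controls how $E_1$ moves past it). The fourth identity $E_3\cdot\T_3(E_2) = q\,\T_3(E_2)\cdot E_3$ is the $1\leftrightarrow 3$ mirror of the third.

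Since all four computations are short polynomial manipulations using only a single Serre relation each plus the obvious commutation $E_1E_3 = E_3E_1$ where needed, there is no real obstacle; the only thing to be careful about is bookkeeping the $q$-exponents correctly (distinguishing the $q^{-1}$ cases, where $E_i$ is commuted past $\T_i$ of a neighbor on the ``same side'' as a lowering-type move, from the $q^{+1}$ cases). I would simply record each verification in one or two lines. An alternative, slightly slicker approach would be to use Lusztig's formulas for the action of $\T_i$ on $\U^+$ together with $\T_i(E_i) = -F_i\widetilde K_{ei}$-type identities, but for such small rank the brute-force expansion is the cleanest and is what I would write up.
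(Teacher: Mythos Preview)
Your proposal is correct and matches the paper's approach exactly: the paper states that these lemmas ``follow from straightforward computation'' and gives no further proof, which is precisely the direct Serre-relation verification you outline. Your expansions of $\T_i(E_j)$ agree with those recorded in the paper, and the difference of the two sides in each case is indeed a scalar multiple of the Serre relator $S(E_i,E_j)$; note that the commutativity $E_1E_3=E_3E_1$ is in fact never used in any of the four identities, since each involves only a single pair of adjacent nodes.
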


\begin{lem}
We have  
	\begin{enumerate}
		\item	$r_1 (\T_2 (E^{(c_2)}_1)) = (1-q^{-2}) E_2 \cdot\T_2 (E^{(c_2-1)}_1)$;
		\item	$r_3 (\T_2 (E^{(c_2)}_1)) = 0$;
		\item	 $r_1 (\T_2 (E^{(c_3)}_3)) = 0$;
		\item	$r_3 (\T_2 (E^{(c_3)}_3)) = (1-q^{-2}) E_2  \cdot\T_2 (E^{(c_3-1)}_1)$;
		\item	$r_1 (\T_{213}  (E^{(c_4)}_2)) = (1-q^{-2}) \T_2 (E _3) \cdot \T_{213}  (E^{(c_4-1)}_2)$;
		\item	$r_3 (\T_{213}  (E^{(c_4)}_2)) =  (1-q^{-2}) \T_2 (E _1) \cdot \T_{213}  (E^{(c_4-1)}_2)$.
		\end{enumerate}
\end{lem}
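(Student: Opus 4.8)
The six formulas are all instances of a single principle: $r_1$ and $r_3$ are $q$-twisted derivations (see \eqref{eq:rr}), and the relevant root vectors are built, via the braid operators $\T_2$ and $\T_{213}$, out of the generators $E_1, E_2, E_3$. The plan is to compute $r_1$ and $r_3$ on the ``elementary'' root vectors $\T_2(E_1)$, $\T_2(E_3)$ and $\T_{213}(E_2)$ first, and then bootstrap to their divided powers using the Leibniz-type rule in \eqref{eq:rr} together with the two commutation lemmas just proved. Concretely, since $r_1$ and $r_3$ annihilate $E_2$ (as $r_\ell(E_j)=\delta_{\ell j}$) and respect the weight grading, I would expand each braided root vector in the PBW-type monomials listed above and apply $r_1$, $r_3$ term by term.

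First I would do the base cases $c_2 = 1$ and $c_3 = 1$. For $r_1(\T_2(E_1))$: writing $\T_2(E_1) = E_2 E_1 - q^{-1}E_1 E_2$, the rule $r_1(xx') = x r_1(x') + q^{1\cdot|x'|}r_1(x)x'$ gives $r_1(E_2E_1) = E_2\cdot 1 = E_2$ and $r_1(E_1 E_2) = q^{\langle 1, \alpha_2\rangle} E_2 = q^{-1}E_2$ (here $1\cdot 2 = -1$ for type $A$, so $q_1 = q$), whence $r_1(\T_2(E_1)) = E_2 - q^{-1}\cdot q^{-1}E_2 = (1-q^{-2})E_2$; and $r_3(\T_2(E_1))=0$ for weight reasons. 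The case $\T_2(E_3)$ is symmetric (exchange $1\leftrightarrow 3$). For $\T_{213}(E_2) = \T_2(E_1)E_3 - q^{-1}E_3\T_2(E_1)$, I would use $r_1(E_3) = 0$, the already-computed $r_1(\T_2(E_1)) = (1-q^{-2})E_2$, and the weight $|E_3| = \alpha_3$ with $\langle 1, \alpha_3\rangle = 0$, to get $r_1(\T_{213}(E_2)) = (1-q^{-2})E_2 E_3 - q^{-1}(1-q^{-2})E_3 E_2 = (1-q^{-2})\T_2(E_3)$; similarly $r_3(\T_{213}(E_2)) = (1-q^{-2})\T_2(E_1)$, using $r_3(\T_2(E_1)) = 0$ and $r_3(E_3) = 1$.

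Then I would pass from $c=1$ to general exponents. Using $r_\ell(x^{(c)})$: if $E_1$ commutes up to $q^{-1}$ with $\T_2(E_1)$ (first commutation lemma), the divided power $\T_2(E_1)^{(c_2)}$ behaves like a $q$-divided power, and iterating $r_1$ through it produces the factor $[c_2]$ which, combined with the normalization $1/[c_2]!$, leaves $\T_2(E_1)^{(c_2-1)}$ times the constant $(1-q^{-2})$ — more precisely one uses the identity $r_\ell(x^{(c)}) = r_\ell(x) x^{(c-1)} \cdot(\text{$q$-factor})$ valid whenever $r_\ell(x)$ is central-up-to-$q$-scalar relative to $x$, which is exactly what the commutation lemmas guarantee (note $r_1(\T_2(E_1)) = (1-q^{-2})E_2$ and $E_2$ $q$-commutes with $\T_2(E_1)$ since $E_2\cdot\T_1(E_2) = q\,\T_1(E_2)\cdot E_2$ rearranges to the needed relation). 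The same mechanism handles items (3)--(4) by the $1\leftrightarrow 3$ symmetry and items (5)--(6) for $\T_{213}(E_2)^{(c_4)}$.

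I expect the main obstacle to be purely bookkeeping rather than conceptual: correctly tracking the $q$-powers arising from the weight-shift terms $q^{i\cdot\mu'}$ in the twisted Leibniz rule when $r_1$ or $r_3$ is pushed past a divided power, and verifying that the quantum integers $[c_j]$ cancel cleanly against the divided-power denominators so that only the clean factor $(1-q^{-2}) = -q^{-1}(q - q^{-1}) = (q^{-1}-q)(-q^{-1})$ survives. This is the same type of computation as \cite[Proposition~3.16]{Lu94} and is routine once the commutation lemmas are in place, so no genuine difficulty is anticipated.
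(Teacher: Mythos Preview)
Your proposal is correct and is precisely the ``straightforward computation'' the paper alludes to: compute $r_1,r_3$ on the base root vectors via the twisted Leibniz rule \eqref{eq:rr}, then push through divided powers using the $q$-commutation relations from the preceding lemma. One small slip: the relation you cite as $E_2\cdot\T_1(E_2)=q\,\T_1(E_2)\cdot E_2$ is not the one you need (and in fact the correct relation there carries a $q^{-1}$); what you actually want is $E_2\cdot\T_2(E_1)=q\,\T_2(E_1)\cdot E_2$, which follows from the stated $E_1\cdot\T_1(E_2)=q\,\T_1(E_2)\cdot E_1$ by the $1\leftrightarrow 2$ symmetry of the type-$A_2$ subalgebra --- so the argument goes through unchanged.
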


\begin{prop}\label{prop:A:integralform}
In type AII, we have $\Upsilon_c \in {}_\mA \U^+$ for all $c \ge 0$.
\end{prop}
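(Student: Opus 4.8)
The plan is to establish the integrality $\Upsilon_c \in {}_\mA\U^+$ in type AII by the inductive strategy set out in Section~\ref{sec:tau1}. The base case $c=0$ holds trivially since $\Upsilon_0=1$. For the inductive step, I assume $\gamma_{c-1}(c_1,c_2,c_3,c_4)\in\mA$ for all indices; I must then deduce $\gamma_c(c_1,c_2,c_3,c_4)\in\mA$ for all indices. By Lemma~\ref{lem:c1neq0}, the stronger conclusion $(1-q^{-2})^{-c_1}\gamma_c(c_1,c_2,c_3,c_4)\in\mA$ is already available whenever $c_1\ge 1$; so the real content is to handle $c_1=0$, and more precisely to verify the implication \eqref{eq:Uck}. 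Here ${\bf i}=2$, and $k=4$ with $(i_1,i_2,i_3,i_4)=(2,1,3,2)$.

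The mechanism is to compute $r_1(\Upsilon_c)$ and $r_3(\Upsilon_c)$ in two ways. On one hand, by \eqref{eq:rank1Upsilon:3} we have $r_1(\Upsilon_c)=0$ is false — rather $r_j(\Upsilon_c)=0$ only for $j\in\Iblack$, so in fact $r_1(\Upsilon_c)=r_3(\Upsilon_c)=0$ since $1,3\in\Iblack$. Wait — this is the key point: in type AII the nodes $1$ and $3$ are black, so \eqref{eq:rank1Upsilon:3} gives $r_1(\Upsilon_c)=r_3(\Upsilon_c)=0$ directly. Thus I would expand $r_1(\Upsilon_c)$ using the Leibniz-type rule for $r_1$ together with the commutation relations and the formulas for $r_1$ on the PBW root vectors $\T_2(E_1^{(c_2)})$, $\T_2(E_3^{(c_3)})$, $\T_{213}(E_2^{(c_4)})$ from the two displayed lemmas above, collect the coefficient of each PBW monomial, and set it to zero. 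This yields linear recursions among the $\gamma_c(c_1,c_2,c_3,c_4)$ with varying $c_2$ (from $r_1$) and varying $c_3$ (from $r_3$), with coefficients that are Laurent polynomials in $q$ times factors of $(1-q^{-2})$ or quantum integers $[m]$. Reading these recursions appropriately (e.g., expressing $\gamma_c$ with $c_1=0$ in terms of $\gamma_c$ with $c_1\ge1$, or lowering $c_2$ or $c_3$), together with the already-known integrality for $c_1\ge1$ and a nested induction on $c_1+c_2+c_3+c_4$, one concludes $\gamma_c(c_1,c_2,c_3,c_4)\in\mA$ in all cases, establishing \eqref{eq:Uck} and hence the inductive step.

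The main obstacle I anticipate is bookkeeping: one must verify that the $r_1$- and $r_3$-recursions are compatible and that, read in the right direction, they genuinely express every $\gamma_c$ with $c_1=0$ in terms of quantities already known to lie in $\mA$ — the factors $(1-q^{-2})$ appearing on the ``wrong side'' of a recursion would be fatal, so the placement of these factors (traceable to parts (1)--(6) of the second lemma, where $r_1$ and $r_3$ each produce exactly one factor of $(1-q^{-2})$) must be checked carefully. This is precisely the phenomenon that Lemma~\ref{lem:c1neq0} and the remark following it are designed to control. Since all the structural input — the shape \eqref{eq:Upsilon:c} of $\Upsilon_c$ from Proposition~\ref{prop:wcirc}, the $r_j$-formulas, and the commutation lemmas — is already in place, what remains is a finite, if tedious, verification; I would carry it out by treating the $r_1$-recursion first to reduce $c_2$ to $0$, then the $r_3$-recursion to reduce $c_3$ to $0$, landing in the case $(c_1,0,0,c_4)$, and finally using \eqref{eq:Upsilon:general:1} (with its quantum-integer factor $[c_k]$, which is integral) once more to reduce $c_4$, completing the induction.
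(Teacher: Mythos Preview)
Your approach is correct and essentially the paper's: expand $r_j(\Upsilon_c)=0$ for $j\in\Iblack=\{1,3\}$ in the PBW basis to obtain recursions pushing the $c_1=0$ case back to $c_1\ge1$, which is already handled by Lemma~\ref{lem:c1neq0}. The paper's execution is leaner than your multi-step plan: a single use of the $r_1$-identity yields $\gamma_c(0,c_2{+}1,c_3{+}1,c_4)=-q^{\,c_3-c_4}[c_3{+}1]\,\gamma_c(1,c_2,c_3,c_4{+}1)\in\mA$, and the residual case $\gamma_c(0,0,c_3,c_4)$ vanishes outright by the weight constraint (which forces $c_2=c_3$ and $c_1=c_4$), so neither the $r_3$-recursion nor any further reduction of $c_4$ via \eqref{eq:Upsilon:general:1} is needed.
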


\begin{proof}
It suffices to prove the statement \eqref{eq:Uck} by the general discussion in \S \ref{sec:tau1}. Let us assume
\[
(1-q_i^{-2})^{-c_1}\gamma_{c} (c_1, c_2, \dots, c_k) \in \mA \text{ when } c_1 \ge 1.
\]

Since $r_1 (\Upsilon_c) = r_3 (\Upsilon_c) = 0$ by \eqref{eq:rank1Upsilon:3}, we have
\begin{align*}
0= & \frac1{1-q^{-2}}  r_1 (\Upsilon_c) \\
= & \sum \gamma_c (c_1, c_2, c_3, c_4)  [c_3+1] E^{(c_1)}_2 \cdot \T_2 (E^{(c_2)}_1) \cdot \T_2 (E^{(c_3+1)}_3) \cdot \T_{213}  (E^{(c_4-1)}_2)  \\
&  + \sum \gamma_c (c_1, c_2, c_3, c_4)  [c_1+1] q^{c_4 -c_3}E^{(c_1+1)}_2 \cdot \T_2 (E^{(c_2-1)}_1) \cdot \T_2 (E^{(c_3)}_3) \cdot \T_{213}  (E^{(c_4)}_2).
\end{align*}
It follows that
\[
\gamma_c (c_1, c_2+1, c_3+1, c_4)  [c_1+1] q^{c_4 -c_3}   =  -  \gamma_c (c_1+1, c_2, c_3, c_4+1)  [c_3+1] .
\]
Therefore we have
\[
\gamma_c (0, c_2+1, c_3+1, c_4) = -q^{c_3-c_4} \gamma_c  (1, c_2, c_3, c_4+1)  [c_3+1] \in \mA.
\]
It follows that  $\gamma_c (c_1, c_2, c_3, c_4) \in \mA$ if  $c_1, c_2$ are not all zero. On the other hand, we have $\gamma_c (0, 0, c_3, c_4) =0$ for weight reason. The proposition follows.
\end{proof}

\begin{cor}
We have $c_1 = c_4$ and $c_2 =c_3$ whenever $\gamma_c(c_1, c_2, c_3 ,c_4) \neq 0$.
\end{cor}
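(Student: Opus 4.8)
The plan is to obtain this at once from the homogeneity of $\Upsilon_c$, without invoking the recursions from the proof of the preceding proposition. First I would recall from \thmref{thm:Upsilon} and \eqref{eq:Upc} that in type AII we have $\Iwhite = \{{\bf i}\}$ with ${\bf i} = 2$, and that $\Upsilon_c = \Upsilon_{c(\wb {\bf i} + {\bf i})}$ is a weight vector in $\U^+$ of weight $c(\wb {\bf i} + {\bf i})$. Since $\wb = s_1 s_3$, one computes $\wb(\alpha_2) = s_1 s_3(\alpha_2) = \alpha_1 + \alpha_2 + \alpha_3$, so that $\wb {\bf i} + {\bf i}$ corresponds to $\alpha_1 + 2\alpha_2 + \alpha_3$ and the weight of $\Upsilon_c$ equals $c\alpha_1 + 2c\alpha_2 + c\alpha_3$.

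Next, the monomials $E^{(c_1)}_2\,\T_2(E^{(c_2)}_1)\,\T_2(E^{(c_3)}_3)\,\T_{213}(E^{(c_4)}_2)$ appearing in \eqref{eq:Upsilon:c} form, for $c_1,c_2,c_3,c_4 \ge 0$, a $\Qq$-basis of $\U^+(w^\bullet)$ (see \eqref{PBW} and \propref{prop:wcirc}), each of which is a weight vector. Hence $\gamma_c(c_1,c_2,c_3,c_4) \neq 0$ forces the corresponding monomial to have weight $c\alpha_1 + 2c\alpha_2 + c\alpha_3$. Using that $\T_w(E_i)$ has weight $w(\alpha_i)$, the monomial indexed by $(c_1,c_2,c_3,c_4)$ has weight
\[
c_1\alpha_2 + c_2(\alpha_1+\alpha_2) + c_3(\alpha_2+\alpha_3) + c_4(\alpha_1+\alpha_2+\alpha_3)
= (c_2+c_4)\alpha_1 + (c_1+c_2+c_3+c_4)\alpha_2 + (c_3+c_4)\alpha_3 .
\]

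Finally I would equate coefficients of $\alpha_1$, $\alpha_2$, $\alpha_3$, obtaining $c_2 + c_4 = c$, $c_3 + c_4 = c$, and $c_1 + c_2 + c_3 + c_4 = 2c$; the first two give $c_2 = c_3$, and substituting back into the third (together with $c_2 + c_4 = c$) gives $c_1 = c_4$, which is exactly the assertion. There is no real obstacle in this argument; the only points demanding a moment's care are the identity $\wb(\alpha_{\bf i}) = \alpha_1+\alpha_2+\alpha_3$ and the weights of the twisted root vectors $\T_2(E_1)$, $\T_2(E_3)$, $\T_{213}(E_2)$, all of which are immediate. (Alternatively, the same conclusion follows by combining the recursion in the preceding proof coming from $r_1(\Upsilon_c)=0$ with the analogous one coming from $r_3(\Upsilon_c)=0$, but the weight computation is cleaner.)
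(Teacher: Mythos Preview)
Your proof is correct. The paper states this corollary without proof, but your weight argument is exactly the kind of reasoning the paper invokes elsewhere in this subsection (``for weight reason''), and it is the cleanest route: matching the $\alpha_1$-, $\alpha_2$-, $\alpha_3$-coefficients of the PBW monomial against the weight $c(\alpha_1+2\alpha_2+\alpha_3)$ of $\Upsilon_c$ immediately yields $c_2+c_4=c=c_3+c_4$ and $c_1+c_2+c_3+c_4=2c$, hence $c_2=c_3$ and $c_1=c_4$. The alternative you mention in parentheses---pairing the recursion from $r_1(\Upsilon_c)=0$ derived in the preceding proof with its $r_3$-analogue---also works and is presumably what the placement as a ``Corollary'' hints at, but it is less direct than your homogeneity computation.
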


\subsection{Type DII of rank n $\ge 4$}
In this subsection we assume the quantum symmetric pair $(\U, \Ui)$ is of type DII.
We label the Satake diagram is as follows:
$$		
\begin{tikzpicture}[baseline=0]
		\node at (1,0) {$\circ$};
		\node at (1.5,0) {$\bullet$};
		\draw[-] (1.1,0)  to (1.4,0);
		\draw[-] (1.4,0) to (1.9, 0);
		\draw[dashed] (1.9,0) to (2.7,0);
		\draw[-] (2.7,0) to (2.9, 0);
		\node at (3,0) {$\bullet$};
		\node at (3.5, 0.4) {$\bullet$};
		\node at (3.5, -0.4) {$\bullet$};
		\draw (3.1, 0.1) to (3.4, 0.35);
		\draw (3.1, -0.1) to (3.4, -0.35);
		\node at (1,-.3) {\small 1};
		\node at (1.5,-.3) {\small 2};
		\node at (3.5, 0.7) {\small n-1};
		\node at (3.5, -0.6) {\small n};
	\end{tikzpicture}		
$$

We take the reduced expression $w^{\bullet} = s_1 \cdot s_2 \cdots s_{n-2} \cdot s_{n-1} \cdot s_n \cdot s_{n-2} \cdots s_1$. Therefore we can write $\Upsilon_c$ as 
\[
 \Upsilon_c =  \sum \gamma_c(c_1, \dots, c_{2n-2}) \, E^{(c_{1})}_{1} \cdot \T_{1}(E^{(c_{2})}_{2})\cdots (\T_{1\cdots 2} (E^{(c_{2n-2})}_{1})).
\]
For weight reason, we must have  $\gamma_c(c_1, \dots, c_{2n-2}) =0$ unless $\sum^{2n-2}_{i=1} c_i = c$.

\begin{lem}\label{lem:D:rkfirsthalf}
For $k \neq 1$, we have 
\[
{ r_{k}} (\T_{1 \cdots i} (E^{(a)}_{i+1}) ) = 
\begin{cases}
(1-q^{-2}) \, \T_{1 \cdots (i-1)}(E_i)\cdot (\T_{1 \cdots i} (E^{(a-1)}_{i+1}) ), &\text{if } k =i+1;\\
0, &\text{if } k \neq i+1.
\end{cases}
\]
\end{lem}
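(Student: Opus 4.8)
\textbf{Proof plan for Lemma~\ref{lem:D:rkfirsthalf}.}
The statement concerns the root vectors $\T_{1\cdots i}(E_{i+1})$ appearing in the PBW expansion of $\Upsilon_c$ in type DII, where the braid word is the initial segment $s_1 s_2 \cdots s_{n-2}s_{n-1}s_n s_{n-2}\cdots s_1$. The plan is to argue that, for $k\neq 1$, the vector $\T_{1\cdots i}(E_{i+1})$ is actually a genuine PBW root vector $E_{\beta}$ for the positive root $\beta = \alpha_{i+1}+\alpha_i+\cdots+\alpha_1$ (this is the image $s_1\cdots s_i(\alpha_{i+1})$, a root since the word $s_1\cdots s_i s_{i+1}$ is reduced), and then to compute $r_k$ of it directly.

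First I would record the explicit recursive description $\T_{1\cdots i}(E_{i+1}) = [\,\T_{1\cdots (i-1)}(E_i),\,E_{i+1}\,]_{q^{-1}}$ together with the fact that, along a Dynkin chain, $\T_j$ acting on $E_{j+1}$ produces the commutator $E_jE_{j+1}-q^{-1}E_{j+1}E_j$ and fixes $E_m$ for $|m-j|\geq 2$ or $m$ not adjacent (Lemma~\ref{lem:Jan820} handles the cases where the simple reflection sends a simple root to another simple root). Next I would apply Leibniz-type rule \eqref{eq:rr} for $r_k$, using the known values $r_k(E_m)=\delta_{km}$ on generators, and induct on $i$. The key bookkeeping point is that $\T_{1\cdots(i-1)}(E_i)$ has weight $\alpha_i+\cdots+\alpha_1$ which does not involve $\alpha_k$ for any $k\geq i+1$; hence $r_k$ kills it when $k\neq i$, while for $k=i$ one must check a cancellation (the $q^{-1}$-commutator is designed precisely so that $r_i$ annihilates $\T_{1\cdots i}(E_{i+1})$, consistent with $i\neq i+1$ in the statement). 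For $k=i+1$, only the ``new'' $E_{i+1}$ factor contributes, yielding the stated $(1-q^{-2})\T_{1\cdots(i-1)}(E_i)\cdot\T_{1\cdots i}(E_{i+1}^{(a-1)})$ after collecting the divided-power combinatorics $r_{i+1}(E_{i+1}^{(a)}) = E_{i+1}^{(a-1)}$ and the $q$-power from moving past $\T_{1\cdots(i-1)}(E_i)$ (which is $q^{i\cdot(i+1)}=q^{-1}$ on adjacent nodes in the simply-laced case, giving the factor $1-q^{-2}$).

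The main obstacle I anticipate is the $k=i$ cancellation: one needs $r_i\big([\,\T_{1\cdots(i-1)}(E_i),E_{i+1}\,]_{q^{-1}}\big)=0$, which requires knowing $r_i\big(\T_{1\cdots(i-1)}(E_i)\big)$ explicitly (it equals a $q$-scalar times $\T_{1\cdots(i-2)}(E_{i-1})$, by the $k=i+1$ case of the same induction applied one step down) and then verifying that the two terms in the Leibniz expansion of $r_i$ cancel by an exact match of $q$-powers coming from the weight pairing $\langle i,\,|\T_{1\cdots(i-1)}(E_i)|\rangle$. This is a finite, purely mechanical $\mathfrak{sl}_2$-type check, so I would state it cleanly and carry out the one-line verification rather than belabor it. The cases $k\neq i,i\pm 1$ are immediate since $r_k$ annihilates every generator occurring in $\T_{1\cdots i}(E_{i+1})$, and $k=i-1$ is handled by noting that $\T_{1\cdots(i-1)}(E_i)$ already has $r_{i-1}$-image of the wrong weight to survive the antisymmetrization (or directly, since $E_{i+1}$ commutes appropriately). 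Once Lemma~\ref{lem:D:rkfirsthalf} is in place I would pair it with the analogous computation for the second (descending) half of the word to extract the recursion on $\gamma_c$ and close the induction via \eqref{eq:Uck} and Lemma~\ref{lem:c1neq0}.
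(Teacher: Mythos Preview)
Your inductive strategy is sound and close in spirit to the paper's argument, but two of your stated reasons are incorrect as written.

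First, for $2\le k\le i-2$ your claim that ``$r_k$ annihilates every generator occurring in $\T_{1\cdots i}(E_{i+1})$'' is false: the Chevalley generator $E_k$ certainly appears in the expansion. What actually makes this case (and also $k=i-1$, which needs no separate treatment) immediate is your induction hypothesis: by the lemma at level $i-1$ one has $r_k\big(\T_{1\cdots(i-1)}(E_i)\big)=0$ for every $k\neq i$, and $r_k(E_{i+1})=0$ since $k\le i-1$, so the Leibniz rule on the commutator $[\T_{1\cdots(i-1)}(E_i),E_{i+1}]_{q^{-1}}$ gives zero directly.

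Second, the $k=i$ cancellation is not a ``match of $q$-powers'': the Leibniz rule yields $q^{-1}\big(r_i(x)\,E_{i+1}-E_{i+1}\,r_i(x)\big)$ with $r_i(x)=(1-q^{-2})\T_{1\cdots(i-2)}(E_{i-1})$, and this vanishes because $\T_{1\cdots(i-2)}(E_{i-1})$ is built from $E_1,\dots,E_{i-1}$, all of which \emph{commute} with $E_{i+1}$ (no adjacency). That commutativity, not a $q$-power identity, is the mechanism.

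The paper organizes the $k\le i$ case differently: rather than recursing on $i$, it factors at position $k$ via $\T_{1\cdots i}(E_{i+1})=[\T_{1\cdots(k-1)}(E_k),\,\T_{(k+1)\cdots i}(E_{i+1})]_{q^{-1}}$ and observes that $r_k$ of the first factor commutes with the second (disjoint, non-adjacent index sets). This handles all $k\le i$ at once without induction; your approach works too once the two points above are corrected. Also note the boundary case $i=n-1$ in type $D$: since nodes $n-1$ and $n$ are not adjacent, $\T_{n-1}(E_n)=E_n$ and the recursion degenerates to $\T_{1\cdots(n-1)}(E_n)=\T_{1\cdots(n-2)}(E_n)$, which the paper disposes of in one line.
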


\begin{proof}Let us first assume that $i \le n-2$. The proof is divided into three cases:
	\begin{enumerate} 
		\item	If $k \ge  i+2$, it is clear that ${ r_{k}} (\T_{1 \cdots i} (E_{i+1}) ) = 0$. 
		\item	For $k \le  i$, we have 
				\begin{align*}
				{r_k } \big(\T_{1 \cdots (k-1)}   \T_{k} \T_{(k+1) \cdots i} (E_{i+1}) \big) 
				= & \,{r_k } \big(\T_{1 \cdots (k-1)} [E_k, \T_{(k+1) \cdots i} (E_{i+1})]_{q^{-1}} \big)\\
				= & \,{r_k } \big( [ \T_{1 \cdots (k-1)} (E_k), \T_{(k+1) \cdots i}(E_{i+1})]_{q^{-1}} \big)\\
				= &\, q^{-1} {r_k } \big( \T_{1 \cdots (k-1)}(E_k) \big) \cdot   \T_{(k+1) \cdots i} (E_{i+1}) \\
				&\, - q^{-1} \, \T_{(k+1) \cdots i} (E_{i+1}) \cdot  {r_k } \big(\T_{1 \cdots (k-1)} (E_k) \big)
				= 0.
				\end{align*}
		\item 	For $k = i+1$, we have  (for $ i\leq n-2$)
				\[
				\T_{1\cdots i} (E_{i+1}) = \T_{1\cdots (i-1)} ([E_i, E_{i+1}]_{q^{-1}}) =   [\T_{1\cdots (i-1)}(E_i) , E_{i+1}]_{q^{-1}}.
				\]
				It follows that
				\[
				{ r_{i+1}} (\T_{1\cdots i} (E_{i+1}) ) =  (1-q^{-2}) \T_{1\cdots (i-1)}(E_i). 
				\] 
				Since $\T_i(E_{i+1}) \cdot E_i = q^{-1} E_i \cdot\T_{i} (E_{i+1})$, we have 
				\[
				{ r_{i+1}} \big(\T_{1\cdots i} (E^{(a)}_{i+1}) \big) =  (1-q^{-2}) \, \T_{1\cdots (i-1)}(E_i)\cdot \T_{1\cdots i}  (E^{(a-1)}_{i+1}).
				\]
	\end{enumerate}
The case $i = n-1$ is entirely similar to the case $i=n-2$, since $\T_{n-1} (E_{n}) = E_n$. The lemma follows.
\end{proof}

\begin{lem}\label{lem:D:rkright}
For $i \neq n, n-1$ and $k \neq 1$, we have 
\begin{align*}
{ r _{k}} \big(\T_{1 \cdots n \cdots i}  (E^{(a)}_{i-1}) \big) 
=&\, 
\begin{cases}
(1 -q^{-2} )\T_{1 \cdots n \cdots (i+1)} (E_{i}) \cdot \T_{1 \cdots n \cdots i} (E^{(a-1)}_{i-1}), &\text{ if } k = i \neq n-2;\\
(1 -q^{-2} )\T_{1 \cdots n}   (E_{n-2}) \cdot \T_{1 \cdots n(n-2)} (E^{(a-1)}_{n-3}), &\text{ if } k = i = n-2;\\
0, &\text{ if } k \neq i.
\end{cases}
\end{align*}
\end{lem}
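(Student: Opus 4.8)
The statement to be proved, Lemma~A.13 (the type~DII computation of $r_k$ on the ``right-hand'' root vectors $\T_{1\cdots n\cdots i}(E^{(a)}_{i-1})$ for $i\neq n,n-1$), is the natural continuation of Lemma~A.12, which already handles the ``left-hand'' half $\T_{1\cdots i}(E^{(a)}_{i+1})$. The plan is to mirror the case analysis in the proof of Lemma~A.12, now tracking which simple root vectors appear in the monotone decreasing tail of the word $w^\bullet = s_1 s_2\cdots s_{n-2}s_{n-1}s_n s_{n-2}\cdots s_1$. Write $w^\bullet = u\cdot v$ where $u = s_1\cdots s_{n-2}s_{n-1}s_n$ is the increasing part and $v = s_{n-2}\cdots s_1$ the decreasing part, so that $\T_{1\cdots n\cdots i}(E_{i-1}) = \T_u\T_{(n-2)\cdots i}(E_{i-1})$.

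\textbf{Key steps.} First I would record the commutation identities among the relevant root vectors, in parallel with the two commutation lemmas preceding Lemma~A.12: namely that $\T_{1\cdots n\cdots i}(E_i)$ and $\T_{1\cdots n\cdots i}(E_{i-1})$ satisfy a $q^{-1}$-commutation of the form $xy = q^{-1}yx$ (this is exactly what makes the divided-power formula clean, i.e. lets one pass from $E_{i-1}$ to $E^{(a)}_{i-1}$). Second, for $k\neq 1$, I would split into three cases exactly as before: (i) $k$ ``above'' the relevant node, where the braid operator $\T_k$ already commutes past everything and $r_k$ kills the vector; (ii) $k$ strictly ``below'', where one writes the vector as a $q^{-1}$-bracket $[\T_{\cdots}(E_k),\,\cdot\,]_{q^{-1}}$ and uses the Leibniz rule~\eqref{eq:rr} together with $r_k(\T_{\cdots}(E_k))=0$ to conclude the bracket is annihilated; (iii) $k = i$, the critical case, where one uses $\T_{1\cdots n\cdots i}(E_{i-1}) = [\T_{1\cdots n\cdots(i+1)}(E_i),\,E_{i-1}]_{q^{-1}}$ (obtained by applying $\T_{1\cdots n\cdots(i+1)}$ to the defining bracket $\T_i(E_{i-1})=[E_i,E_{i-1}]_{q^{-1}}$ and noting the outer braid operators fix $E_{i-1}$ since they involve only nodes $\geq i+1$ or $\leq i-2$, which are $\geq 2$ away from $i-1$ except for $i-2$; one must check the indices carefully). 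Applying $r_i$ via~\eqref{eq:rr} gives the factor $(1-q^{-2})$ and the vector $\T_{1\cdots n\cdots(i+1)}(E_i)$; then the $q^{-1}$-commutation from the first step upgrades this to the stated divided-power formula. The only genuinely separate subcase is $k=i=n-2$, where the node $i-1 = n-3$ is reached from the tail that has already gone through the fork $\{n-1,n\}$, so the ``predecessor'' vector is $\T_{1\cdots n}(E_{n-2})$ rather than $\T_{1\cdots n\cdots(i+1)}(E_i)$; this is a boundary bookkeeping issue, handled just as the $i=n-2$ versus $i=n-1$ distinction in Lemma~A.12.

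\textbf{Main obstacle.} The substantive content is not any single identity but the careful index juggling near the fork $\{n-1,n\}$: one must be sure that when $\T_u = \T_{1\cdots n}$ is applied to a bracket $[E_i,E_{i-1}]_{q^{-1}}$ with $i\leq n-2$, the result is genuinely $[\T_{1\cdots n\cdots(i+1)}(E_i),E_{i-1}]_{q^{-1}}$ with $E_{i-1}$ left untouched — this requires knowing that $s_1\cdots s_{n-2}s_{n-1}s_n s_{n-2}\cdots s_{i+1}$ sends $\alpha_{i-1}$ to itself (Lemma~A.11 / \cite[Proposition~8.20]{Jan}), which holds because all the reflections involved move only nodes $\geq i+1$ or at distance $\geq 2$ from $i-1$. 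I would verify this root-system fact explicitly for DII (it is where the Dynkin geometry enters), and then the three-case Leibniz argument runs on autopilot. A secondary, purely clerical obstacle is confirming the $q^{-1}$-commutation $\T_{1\cdots n\cdots i}(E_i)\cdot\T_{1\cdots n\cdots i}(E_{i-1}) = q^{-1}\,\T_{1\cdots n\cdots i}(E_{i-1})\cdot\T_{1\cdots n\cdots i}(E_i)$, which follows from applying a common braid operator to the rank-two relation $\T_i(E_{i-1})E_i = q^{-1}E_i\T_i(E_{i-1})$; I would state this as a preliminary lemma and cite it when promoting $E_{i-1}$ to its divided power, exactly parallel to the treatment in Lemma~A.12(3).
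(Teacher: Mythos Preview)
Your overall plan --- a case analysis on $k$ using $q^{-1}$-bracket decompositions and the Leibniz rule for $r_k$, mirroring Lemma~\ref{lem:D:rkfirsthalf} --- is exactly what the paper does (the paper splits into six cases rather than three, but the mechanism is the same). However, your key identity in the critical case $k=i$ is wrong, and the error is not cosmetic.

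You assert $\T_{1\cdots n\cdots i}(E_{i-1}) = [\T_{1\cdots n\cdots(i+1)}(E_i),\,E_{i-1}]_{q^{-1}}$, on the grounds that $\T_{1\cdots n\cdots(i+1)}$ fixes $E_{i-1}$ because ``the reflections involved move only nodes $\geq i+1$ or at distance $\geq 2$ from $i-1$''. This is false: the \emph{increasing} prefix of the word $1\cdots n\cdots(i+1)$ is $1,2,\ldots,n$, which passes straight through $i-2,\,i-1,\,i$. A direct root computation gives $s_1 s_2\cdots s_i(\alpha_{i-1}) = \alpha_i$, so by Lemma~\ref{lem:Jan820} one has $\T_{1\cdots n\cdots(i+1)}(E_{i-1}) = \T_{1\cdots i}(E_{i-1}) = E_i$, and the correct bracket (this is the paper's \eqref{eq:DII:1}) is
\[
\T_{1\cdots n\cdots i}(E_{i-1}) \;=\; \bigl[\,\T_{1\cdots n\cdots(i+1)}(E_i),\,E_i\,\bigr]_{q^{-1}}.
\]
This matters for the computation: with your second factor $E_{i-1}$, both $r_i(E_{i-1})=0$ and $r_i\bigl(\T_{1\cdots n\cdots(i+1)}(E_i)\bigr)=0$ (the latter is precisely the case $k=i-1$ of the lemma with $i\mapsto i+1$), so your bracket would force $r_i=0$, contradicting the statement. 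With the correct second factor $E_i$, one has $r_i(E_i)=1$ and the Leibniz rule yields $(1-q^{-2})\,\T_{1\cdots n\cdots(i+1)}(E_i)$ as required.

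A secondary gap: in your case~(ii) you claim the vanishing follows from $r_k(\T_{\cdots}(E_k))=0$. For $k\le i-2$ the factor in question is $\T_{1\cdots(k-1)}(E_k)$, and $r_k$ of this is \emph{not} zero --- by Lemma~\ref{lem:D:rkfirsthalf} it equals $(1-q^{-2})\,\T_{1\cdots(k-2)}(E_{k-1})$. The paper's argument for this range of $k$ is instead that $r_k(Y)=0$ for the other factor $Y=\T_{(k+1)\cdots n\cdots i}(E_{i-1})$, and that the surviving $r_k$-value $\T_{1\cdots(k-2)}(E_{k-1})$ \emph{commutes} with $Y$ (disjoint, non-adjacent supports), so the two Leibniz terms cancel.
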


\begin{proof}The computation is divided into six cases.
	\begin{enumerate}
		\item 	For $k \le i-2$, we have 
		\[
		\T_{1 \cdots n \cdots i}   (E_{i-1}) = [\T_{1 \cdots (k-1)} (E_{k}), \T_{(k+1) \cdots n   \cdots i} (E_{i-1}) ]_{q^{-1}}.\] Therefore we have 
		\begin{align*}
		r_k \Big( \T_{1 \cdots n \cdots i} (E_{i-1}) \Big) 
		= &r_{k} \Big( [\T_{1 \cdots (k-1)} (E_{k}), \T_{(k+1) \cdots n \cdots i}(E_{i-1}) ]_{q^{-1}} \Big) \\
		= & q^{-1} r_k (\T_{1 \cdots (k-1)}  (E_{k})) \cdot \Big(\T_{(k+1) \cdots n \cdots i}(E_{i-1})\Big) \\
		&- q^{-1}  \Big( \T_{(k+1) \cdots n \cdots i}(E_{i-1})\Big) \cdot  r_k (\T_{1 \cdots (k-1)} (E_{k}))
		= 0,
		\end{align*}
		since $r_k (\T_{1 \cdots (k-1)} (E_{k}))$ and $\T_{(k+1) \cdots n \cdots i}(E_{i-1})$ commute.

		\item	For $k=i-1$, we consider 
				\begin{equation}\label{eq:DII:1}
				\begin{split}
				 \T_{1 \cdots n \cdots i} (E_{i-1}) 
				= &\,\T_{1 \cdots n \cdots (i+1)} ([E_{i},E_{i-1}]_{q^{-1}})\\
				= &\, [\T_{1 \cdots n \cdots (i+1)} (E_{i}) , \T_{1 \cdots i} (E_{i-1}) ]_{q^{-1}} 
				=   [\T_{1 \cdots n \cdots (i+1)} (E_{i}) , E_i ]_{q^{-1}}.
				\end{split}
				\end{equation}
				Then since $i-1 \le i+1 -2$, by Case~ (1) we have 
				\begin{align*}
				{r_{i-1}} & \big(\T_{1 \cdots n \cdots i} (E_{i-1}) \big) \\
				&=\, q^{-1} {r_{i-1}} (\T_{1 \cdots n \cdots (i+1)} (E_{i})) \cdot E_{i} - q^{-1} E_{i}  \cdot  {r_{i-1}} (\T_{1 \cdots n \cdots (i+1)} (E_{i}))
				= 0.
				\end{align*}
		
		\item	For $k=i$, following \eqref{eq:DII:1} we have 
				\begin{align*}
				{r_{i}} (\T_{1 \cdots n \cdots i} (E_{i-1})) 
				=&\,  \T_{1 \cdots n \cdots (i+1)} (E_{i})  - q^{-2}     \T_{1 \cdots n \cdots (i+1)}  (E_{i}) \\
				=&\, (1-q^{-2}) \T_{1 \cdots n \cdots (i+1)}  (E_{i}).
				\end{align*}
				More generally we have
				\begin{align*}
				{r_{i}} \big(\T_{1 \cdots n \cdots i} (E^{(a)}_{i-1}) \big) 
				\,= & \,(1-q^{-2}) \T_{1 \cdots n \cdots (i+1)} (E_{i}) \cdot \T_{1 \cdots n \cdots i}(E^{(a-1)}_{i-1}).
				\end{align*}

		\item	For $n-3 \ge k \ge i+1$, we consider 
				\begin{align*}
				\T_{1 \cdots n \cdots i} (E_{i-1})
				= &\, \T_{1 \cdots n \cdots (k+1)} \big( [E_{k}, \T_{(k-1) \cdots i}(E_{i-1}) ]_{q^{-1}} \big)\\
				= &\, [\T_{1 \cdots n \cdots (k+1)}  (E_{k}), \T_{k \cdots (i+1)} (E_{i})]_{q^{-1}}.
				\end{align*}
				Note that $ { r_k} (\T_{k \cdots (i+1)}(E_{i})) = 0$ unless $k=i$. Therefore by Case~ (2) we have 
				\[
				r_{k} (\T_{1 \cdots n \cdots i} (E_{i-1})) =0.
				\]
		
		\item	For $k =n-2$, we consider that
				\begin{align*}
				\T_{1 \cdots n \cdots i} (E_{i-1})
				= &\, \T_{1 \cdots n } \big( [E_{k}, \T_{(k-1) \cdots i}(E_{i-1}) ]_{q^{-1}} \big)
				=  [\T_{1 \cdots n }  (E_{k}), \T_{k \cdots (i+1)} (E_{i})]_{q^{-1}}.
				\end{align*}
				Note that $ { r_k} \big(\T_{k \cdots (i+1)}(E_{i}) \big) = 0$ unless $k=i$. Therefore by Case~ (2) we have 
				\[
				r_{k} (\T_{1 \cdots n \cdots i} (E_{i-1})) =0.
				\]

		\item For $k=n-1$ (the case $k=n$ is similar), we have (true for $i = n-2$ as well)
				\begin{align*}
				\T_{1 \cdots n \cdots i}(E_{i-1})
				= &\, \T_{1 \cdots (n-1)} \Big( [E_{n}, \T_{(n-2) \cdots i} (E_{i-1}) ]_{q^{-1}} \Big)\\
				= &\, [ \T_{1 \cdots (n-1)} (E_{n}), \T_{(n-1) \cdots (i+1)} (E_{i})]_{q^{-1}}.
				\end{align*}
				Note that since by Lemma~\ref{lem:D:rkfirsthalf}
				\[
				r_{n-1} \Big(\T_{1 \cdots (n-1)}  (E_{n}) \Big) = r_{n-1} \Big(   \T_{(n-1) \cdots (i+1)}(E_{i}) \Big) = 0,
				\]
				 we have 
				\[
				r_{n-1} (\T_{1 \cdots n \cdots i} (E_{i-1})) =0.
				\]
		\end{enumerate}
		This completes the proof. 
\end{proof}

\begin{rem}
The computation for $(1)$-$(4)$ in the proof of Lemma~\ref{lem:D:rkright} is essentially a type $A$ computation, 
and will appear very often for the other cases as well.  
\end{rem}

\begin{lem}\label{lem:DII:3}
For $k \neq 1$, we have 
\begin{align*}
 { r _{k}} (\T_{1\cdots n}(E^{(a)}_{n-2})) = &
\begin{cases}
(1 -q^{-2} ) \T_{1\cdots (n-1)}(E_{n}) \cdot \T_{1\cdots n}(E^{(a-1)}_{n-2}), &\text{if } k = n-1;\\
(1 -q^{-2} ) \T_{1 \cdots(n-2) n}(E_{n-1}) \cdot \T_{1\cdots n}(E^{(a-1)}_{n-2}), &\text{if } k = n;\\
0, &\text{otherwise. }
\end{cases}
\end{align*}
\end{lem}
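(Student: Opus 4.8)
The plan is to compute $r_{n-1}(\T_{1\cdots n}(E_{n-2}))$ and $r_n(\T_{1\cdots n}(E_{n-2}))$ directly, following the same template used in the proofs of Lemma~\ref{lem:D:rkfirsthalf} and Lemma~\ref{lem:D:rkright}. First I would record the key commutator identity: since $\T_{1\cdots n} = \T_{1\cdots(n-2)}\T_{n-1}\T_n$ and $\T_{n-1}(E_{n-2})$, $\T_n(E_{n-2})$ are obtained from the rank-one and rank-two relations, I would write $\T_{1\cdots n}(E_{n-2})$ as an iterated $q^{-1}$-bracket. Explicitly, $\T_{n-1}\T_n(E_{n-2}) = [\T_{n-1}(E_n), [\ldots]]$ type expressions let one peel off the action of $\T_{1\cdots(n-3)}$ to reduce to a computation involving only the nodes $n-2, n-1, n$, which form a type $A_3$-like configuration (a central node $n-2$ joined to two commuting nodes $n-1$ and $n$). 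In that small rank situation the actions of $r_{n-1}$ and $r_n$ are immediate from \cite[3.1.6]{Lu94} and the defining relations \eqref{eq:rr}.

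Second, I would handle the divided-power version. The point (as in the previous two lemmas) is the commutation relation $\T_w(E_{n-1})\cdot \T_{1\cdots(n-3)}(E_{n-2}) = q^{-1} \T_{1\cdots(n-3)}(E_{n-2})\cdot \T_w(E_{n-1})$ and similarly for $E_n$, which allows one to pass $r_{n-1}$ (resp. $r_n$) through $E_{n-2}^{(a)}$ picking up the factor $(1-q^{-2})$ and lowering the power to $a-1$. Concretely, from $r_{n-1}(\T_{1\cdots n}(E_{n-2})) = (1-q^{-2})\T_{1\cdots(n-1)}(E_n)$ (the $a=1$ case) together with the Leibniz rule for $r_{n-1}$ on products and the above commutation relation, one deduces the stated formula for general $a$. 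The cases $k \neq n-1, n$ (and $k\neq 1$) all reduce to the type $A$ computation already carried out in Cases (1)--(5) of Lemma~\ref{lem:D:rkright}: one writes $\T_{1\cdots n}(E_{n-2}) = [\T_{1\cdots(k-1)}(E_k), (\cdots)]_{q^{-1}}$ when $k \le n-3$, or uses that $\T_{n-1}, \T_n$ commute with the relevant factors, to get $0$. I would also double-check the boundary interaction between node $n-2$ and the fork $\{n-1,n\}$ — this is the one spot where the Dynkin diagram is not of type $A$, so the identity $\T_{n-1}(E_n) = E_n$ (from Lemma~\ref{lem:Jan820}, since $s_{n-1}(\alpha_n) = \alpha_n$) needs to be invoked to keep the expressions under control.

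The main obstacle I anticipate is bookkeeping rather than conceptual: getting the precise $q$-powers right in the intermediate brackets $[\T_{1\cdots(n-1)}(E_n), E_{n-2}]_{q^{-1}}$ and confirming that the coefficient that emerges is exactly $1-q^{-2}$ (not some other power of $q$ times it). This is the same kind of delicate sign/exponent tracking that appears throughout Appendix~\ref{sec:Upsilonrank1}, and as elsewhere I would lean on the convention of writing $q^*$ for powers that turn out irrelevant, only pinning down the exponents that affect integrality. Once Lemma~\ref{lem:DII:3} is in hand, together with Lemmas~\ref{lem:D:rkfirsthalf} and~\ref{lem:D:rkright} it gives the full list of actions $r_k(\Upsilon_c)$ for $k \neq 1$ in type DII, which combined with $r_1(\Upsilon_c)$ controlled by \eqref{eq:rank1Upsilon:3} and the reduction \eqref{eq:Uck} will yield $\Upsilon_c \in {}_\mA\U^+$ for type DII by induction on $c$, exactly as in the type AII argument of Proposition~\ref{prop:A:integralform}.

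\begin{proof}[Proof sketch]
Since $\T_{1\cdots n} = \T_{1\cdots(n-3)}\circ\T_{n-2}\circ\T_{n-1}\circ\T_n$ and $s_{n-1}(\alpha_n) = \alpha_n$ gives $\T_{n-1}(E_n) = E_n$ by Lemma~\ref{lem:Jan820}, we may write
\[
\T_{1\cdots n}(E_{n-2}) = \big[ \T_{1\cdots(n-1)}(E_n), \; \T_{1\cdots(n-2)}(E_{n-2}) \big]_{q^{-1}}
= \big[ \T_{1\cdots(n-1)}(E_n), \; E_{n-2}\big]_{q^{-1}}
\]
after peeling off the (commuting) action of $\T_{1\cdots(n-3)}$ as in Lemma~\ref{lem:D:rkright}. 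Applying $r_{n-1}$ and using $r_{n-1}(\T_{1\cdots(n-1)}(E_n)) = 0$ (which holds because $\T_{1\cdots(n-1)}(E_n) = \T_{1\cdots(n-2)}(E_n)$ involves only nodes $\le n-2$ together with $n$), together with $r_{n-1}(E_{n-2}) = 0$, one finds
\[
r_{n-1}\big(\T_{1\cdots n}(E_{n-2})\big) = (1-q^{-2})\,\T_{1\cdots(n-1)}(E_n),
\]
and symmetrically $r_n\big(\T_{1\cdots n}(E_{n-2})\big) = (1-q^{-2})\,\T_{1\cdots(n-2)n}(E_{n-1})$. The divided-power formulas then follow from the Leibniz rule for $r_k$ and the commutation relation $\T_{1\cdots(n-1)}(E_n)\cdot E_{n-2} = q^{-1} E_{n-2}\cdot \T_{1\cdots(n-1)}(E_n)$ (and its $E_n$-analogue), exactly as in the proof of Lemma~\ref{lem:D:rkfirsthalf}(3). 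For $k\neq 1, n-1, n$, the expression $\T_{1\cdots n}(E_{n-2})$ can be written as a $q^{-1}$-bracket $[\T_{1\cdots(k-1)}(E_k), (\cdots)]_{q^{-1}}$ with the second factor $r_k$-closed, whence $r_k(\T_{1\cdots n}(E_{n-2})) = 0$, completing the proof.
\end{proof}
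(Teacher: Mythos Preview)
Your overall strategy (write the root vector as a $q^{-1}$-bracket and apply the Leibniz rule for $r_k$) is in the same spirit as the paper's argument, and with the correct bracket it would work. However, the specific bracket decomposition you wrote is wrong, and the error is not cosmetic: with your formula the computation of $r_{n-1}$ collapses to $0$, contradicting the lemma.

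Concretely, you claim
\[
\T_{1\cdots n}(E_{n-2}) = \big[\T_{1\cdots(n-1)}(E_n),\, \T_{1\cdots(n-2)}(E_{n-2})\big]_{q^{-1}} = \big[\T_{1\cdots(n-1)}(E_n),\, E_{n-2}\big]_{q^{-1}}.
\]
The middle expression is meaningless (it contains $\T_{n-2}(E_{n-2})\notin\U^+$), and the right-hand side has the wrong weight: $\T_{1\cdots(n-1)}(E_n)$ has weight $\alpha_1+\cdots+\alpha_{n-2}+\alpha_n$, so bracketing with $E_{n-2}$ gives weight $\alpha_1+\cdots+\alpha_{n-3}+2\alpha_{n-2}+\alpha_n$, not $\alpha_1+\cdots+\alpha_n$. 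Since both $\T_{1\cdots(n-1)}(E_n)$ and $E_{n-2}$ are annihilated by $r_{n-1}$ (as you correctly note), the Leibniz rule applied to your bracket gives $r_{n-1}=0$, which is not what the lemma asserts. The correct decomposition is
\[
\T_{1\cdots n}(E_{n-2}) = \T_{1\cdots(n-1)}\big([E_n,E_{n-2}]_{q^{-1}}\big) = \big[\T_{1\cdots(n-2)}(E_n),\, E_{n-1}\big]_{q^{-1}},
\]
using $\T_{1\cdots(n-1)}(E_{n-2})=E_{n-1}$ (Lemma~\ref{lem:Jan820}, since $s_1\cdots s_{n-1}(\alpha_{n-2})=\alpha_{n-1}$). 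With $E_{n-1}$ in the second slot, the Leibniz computation does give $(1-q^{-2})\T_{1\cdots(n-1)}(E_n)$ as required. The paper sidesteps the bracket bookkeeping altogether by expanding
\[
\T_{1\cdots n}(E_{n-2}) = \T_{1\cdots(n-3)}\big(q^{-2}E_nE_{n-1}E_{n-2}-q^{-1}E_nE_{n-2}E_{n-1}-q^{-1}E_{n-1}E_{n-2}E_n+E_{n-2}E_{n-1}E_n\big)
\]
and applying $r_{n-1}$ term by term via Lemma~\ref{lem:D:rkfirsthalf}; the divided-power case then follows from $E_n\cdot\T_n(E_{n-2})=q\,\T_n(E_{n-2})\cdot E_n$.
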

\begin{proof}
Note that 
\begin{align*}
& \T_{1\cdots n} (E_{n-2})\\
= & \T_{1\cdots (n-3)} ( q^{-2} E_{n} E_{n-1} E_{n-2} -q^{-1}E_{n} E_{n-2} E_{n-1} - q^{-1} E_{n-1} E_{n-2} E_{n} + E_{n-2} E_{n-1} E_{n}).
\end{align*}
Therefore  we have $r_k ( \T_{1\cdots n}  (E_{n-2})) = 0$ for $k < n-2$, thanks to Lemma~\ref{lem:D:rkfirsthalf}. 
It is easy to see $r_{n-2} ( \T_{1\cdots n}  (E_{n-2})) = 0$ as well by a direct computation using Lemma~\ref{lem:D:rkfirsthalf}. 

On the other hand, we have
\begin{align*}
 { r_{n-1}} ( \T_{1\cdots n} (E_{n-2}))
=&\, q^{-3} E_{n} \cdot   \T_{1\cdots (n-3)}  (E_{n-2}) -q^{-1}E_{n} \cdot  \T_{1\cdots (n-3)} (E_{n-2}) \\
&\, - q^{-2}  \T_{1\cdots (n-3)} (E_{n-2})  \cdot E_{n} + \T_{1\cdots (n-3)}(E_{n-2})  \cdot E_{n} \\
= &\, -q^{-2}  \T_{1\cdots (n-2)}(E_{n})  + \T_{1\cdots (n-2)} (E_{n})\\
= &\, (1-q^{-2}) \T_{1\cdots (n-1)}  (E_{n}).
\end{align*}
Now since $ E_n \cdot \T_n (E_{n-2}) = q \T_n (E_{n-2}) \cdot E_n$, we have 
\[
 { r _{n-1}} ( \T_{1\cdots n} (E^{(a)}_{n-2}))  = (1 -q^{-2} )  \T_{1\cdots (n-1)} (E_{n}) \cdot  \T_{1\cdots n} (E^{(a-1)}_{n-2}).
\]
The computation of ${ r_{n}} ( \T_{1\cdots n}  (E^{(a)}_{n-2}))$ is entirely similar. The lemma follows.
\end{proof}

\begin{prop}\label{prop:D:integralform}
For quantum symmetric pairs of type DII of rank n $\ge 4$, we have 
$\Upsilon_c \in {_\mathcal{A}\U^+}$ for all $c \ge 0$.
\end{prop}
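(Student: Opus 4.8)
\textbf{Proof proposal for Proposition~\ref{prop:D:integralform}.}
The plan is to prove by induction on $c$ that $\Upsilon_c \in {}_\mA \U^+$, following the general strategy laid out in \S\ref{sec:tau1}. The base case $c=0$ holds since $\Upsilon_0 = 1$. For the induction step, I assume $\Upsilon_{c-1} \in {}_\mA \U^+$, i.e., $\gamma_{c-1}(c_1, \dots, c_{2n-2}) \in \mA$ for all indices, and I must deduce the same for $\gamma_c$. By Lemma~\ref{lem:c1neq0} applied with ${\bf i} = 1$, the assumption already gives $(1-q_1^{-2})^{-c_1} \gamma_c(c_1, \dots, c_{2n-2}) \in \mA$ whenever $c_1 \ge 1$; so it remains only to handle the terms with $c_1 = 0$, and more precisely to verify condition \eqref{eq:Uck}: once we know $(1-q_1^{-2})^{-c_1}\gamma_c \in \mA$ for $c_1 \ge 1$, the full integrality $\gamma_c \in \mA$ follows.

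The engine for this last reduction is the vanishing equations $r_k(\Upsilon_c) = 0$ for $k \in \Iblack = \{2, \dots, n\}$ from \eqref{eq:rank1Upsilon:3}. The three lemmas above (Lemma~\ref{lem:D:rkfirsthalf}, Lemma~\ref{lem:D:rkright}, Lemma~\ref{lem:DII:3}) compute the action of each $r_k$ on each PBW root vector appearing in the expansion \eqref{eq:Upsilon:c} of $\Upsilon_c$ for type DII. Plugging these into $r_k(\Upsilon_c) = 0$ and extracting the coefficient of each PBW monomial yields, for each $k$, a two-term (or occasionally multi-term) linear recursion among the $\gamma_c(c_1, \dots, c_{2n-2})$ — each recursion having the schematic form
\[
[\,c_\ell + 1\,]\, q^{*} \,\gamma_c(\dots) \;=\; -\,[\,c_m\,]\, q^{*}\, \gamma_c(\dots),
\]
relating a coefficient with $c_1 = 0$ to one with $c_1 \ge 1$ (or to one already handled), up to a factor in $\mA$. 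By running these recursions systematically — first peeling off the index $c_1$, then propagating inward along the chain $1, 2, \dots, n-2, n-1, n, n-2, \dots, 1$ of the reduced word $w^\bullet$ — one expresses every $\gamma_c$ with $c_1 = 0$ in terms of $\gamma_c$'s with some strictly positive early index, times elements of $\mA$. The factor $(1-q_1^{-2})^{-1}$ picked up on the $c_1 \ge 1$ side is cancelled by the factor $(1-q_1^{-2})$ produced by $r_2$ acting on $\T_1(E_2^{(c_2)})$ (Lemma~\ref{lem:D:rkfirsthalf} with $k=2$), exactly as in the type AII argument (Proposition~\ref{prop:A:integralform}). Finally, the monomials with all early indices zero vanish for weight reasons, closing the induction.

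The main obstacle I anticipate is purely bookkeeping rather than conceptual: the reduced word $w^\bullet = s_1 s_2 \cdots s_{n-2} s_{n-1} s_n s_{n-2} \cdots s_1$ has length $2n-2$, so there are $2n-2$ PBW exponents and one must track how each $r_k$ ($k = 2, \dots, n$) interacts with the "fork" at the $s_{n-1} s_n$ end of the diagram; the branching there means the recursions coming from $r_{n-1}$ and $r_n$ couple the exponents $c_{n-1}, c_n$ (the two branch root vectors $\T_{1\cdots(n-1)}(E_n)$ and $\T_{1\cdots(n-2)n}(E_{n-1})$) in a slightly less transparent way than the linear type-$A$ part. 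One must check that these fork recursions are still triangular enough to be solved without circular dependence — concretely, that propagating from the $c_1$ end and from the fork end meet consistently in the middle. Once that compatibility is confirmed (it follows from the monotonicity of the exponent shifts in Lemmas~\ref{lem:D:rkright}--\ref{lem:DII:3}), the argument is a routine, if lengthy, descent. As with the other rank-one types, the detailed extraction of coefficients and verification of the recursions is carried out by direct computation, which I omit from this sketch.
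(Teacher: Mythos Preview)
Your proposal is correct and follows essentially the same approach as the paper's own proof. The paper's argument is in fact terser than your sketch: it sets up the same induction on $c$, invokes Lemma~\ref{lem:c1neq0} for the $c_1\ge 1$ case, and then for each $k\in\Iblack$ compares coefficients in $r_k(\Upsilon_c)=0$ of the PBW monomial with $c_{k-1}=1$ and $c_j=0$ for $j<k-1$, obtaining exactly the two-term recursion you describe (with matching $(1-q^{-2})$ factors, since type~D is simply laced); iterating this along the chain of positions gives $\gamma_c\in\mA$ whenever some early $c_k>0$, and the remaining case is empty for weight reasons---precisely your outline, including the fork bookkeeping you anticipated.
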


\begin{proof}
Recall by the general discussion in \S \ref{sec:tau1} 
it suffices to prove the following statement (which implies \eqref{eq:Uck}): 
\[
\gamma_{c}(c_1,\dots,c_{2n-2}) \in \mA \text{ for all } c_i, \text{ if }\gamma_{c}(c_1,\dots,c_{2n-2}) \in \mA \text{ when } c_1 >0.
\]
 
We compare the coefficient of the following terms in the identity $r_{k}(\Upsilon_c)=0$:
\[
E^{(c_{1})}_{1} \cdot \T_{1}(E^{(c_{2})}_{2})\cdots (\T_{1\cdots 2} (E^{(c_{2n-2})}_{1})) \text{ with } c_{k-1}=1, c_j=0 \text{ for } j<k-1.
\]
We obtain that
\[
(1-q^{-2}) \gamma_c (0,\dots, c_{k-1}=0, c_k,\dots) \in  (1-q^{-2})\sum \gamma_c (\dots, c_{k-1}=1, c_k-1,\dots)\cdot \mA.
\]
Therefore thanks to Lemma~\ref{lem:c1neq0} for the base case, we have inductively:
\begin{equation}\label{eq:D:int:1}
\gamma_c (c_1,\dots) \in \mA, \text{ if } c_{k} >0.
\end{equation}
The proposition follows.
\end{proof}


\subsection{Other types}
The proof of part (1) of Theorem~\ref{thm:intUpsilon} for QSP of type BII of rank  $n\ge 2$, type CII  of rank $n\ge 3$, and type FII follows from entirely similar computation as type DII of rank $n \ge 4$. The precise details can be found in the (longer) appendix of the arXiv Version~ 1 of this paper, and shall be omitted here.

\begin{table}[!h]
\caption{Satake diagrams of irreducible symmetric pairs}
\label{table:Satake}
\begin{tabular}{| c | c || c | c |}
\hline
{AI} & 
$	\begin{tikzpicture}[baseline=0]
		\node  at (0,0) {$\circ$};
		\node at (2,0) {$\circ$};
		\node at (0.5, 0) {$\circ$};
		\draw[-] (0.1, 0) to (0.4,0);
		\draw[dashed] (0.6,0) to (1.5,0);
		\draw[-] (1.5,0)  to (1.9,0);
	\end{tikzpicture}
$ 
&
\begin{tikzpicture}[baseline=0]
\node at (0, -0.4) {DIII};
\end{tikzpicture}
&
$	\begin{tikzpicture}[baseline=0]
		\node at (0,0) {$\bullet$};
		\draw (0.1, 0) to (0.4,0);
		\node  at (0.5,0) {$\circ$};
		\draw (0.6, 0) to (0.9,0);
		\node at (1,0) {$\bullet$};
		\draw[-] (1.1,0)  to (1.4,0);
		\draw[dashed] (1.4, 0) to (2.1, 0);
		\draw[-] (2.1,0) to (2.4,0);
		\node at (2.5, 0) {$\circ$};
		\draw[-] (2.6,0.1) to (2.9, 0.35);
		\node at (3, 0.4) {$\bullet$};
		\draw[-] (2.6, - 0.1) to (2.9, -0.35);
		\node at (3,-0.4) {$\circ$};
	\end{tikzpicture}	
$

\\
\cline{1-2} \cline{4-4} 
AII & 
$	\begin{tikzpicture}[baseline=0]
		\node at (-0.5,0) {$\bullet$};
		\draw[-] (-0.5,0) to (-0.1, 0);
		\node  at (0,0) {$\circ$};
		\node at (0.5,0) {$\bullet$};
		\node at (2,0) {$\circ$};
		\node at (2.5,0) {$\bullet$};
		\draw[-] (0.1, 0) to (0.5,0);
		\draw[-] (0.6,0) to (0.8,0);
		\draw[dashed] (0.8,0) to (1.6,0);
		\draw[-] (1.6,0)  to (1.9,0);
		\draw[-] (2.1,0) to (2.4,0);
	\end{tikzpicture}
$ &

& 
$	\begin{tikzpicture}[baseline=0]
		\node at (0,0) {$\bullet$};
		\draw (0.1, 0) to (0.4,0);
		\node  at (0.5,0) {$\circ$};
		\draw (0.6, 0) to (0.9,0);
		\node at (1,0) {$\bullet$};
		\draw[-] (1.1,0)  to (1.4,0);
		\draw[dashed] (1.4, 0) to (2.1, 0);
		\draw[-] (2.1,0) to (2.4,0);
		\node at (2.5, 0) {$\circ$};
		\draw[-] (2.6,0) to (2.9, 0);
		\node at (3,0) {$\bullet$};
		\draw[-] (3.1,0.1) to (3.4, 0.35);
		\node at (3.5,0.4) {$\circ$};
		\draw[<->] (3.6, 0.35) to[out=-45, in=45] (3.6, -0.35);
		\draw[-] (3.1, -0.1) to (3.4, -0.35);
		\node at (3.5,-0.4) {$\circ$};
	\end{tikzpicture}	
$

\\
\hline
\begin{tikzpicture}[baseline=0]
\node at (0, -2) {AIII};
\end{tikzpicture}
& 
$	\begin{tikzpicture}[baseline=0]
		\node at (-0.5,0) {$\circ$};
		\draw[-] (-0.4,0) to (-0.1, 0);
		\node  at (0,0) {$\circ$};
		\node at (2,0) {$\circ$};
		\draw[-] (0.1, 0) to (0.5,0);
		\draw[dashed] (0.5,0) to (1.4,0);
		\draw[-] (1.6,0)  to (1.9,0);
		\draw[-] (2.1,-0.05) to (2.4, -0.3);
		\node at (2.5, -0.4) {$\bullet$};
		\draw[-] (2.5, -0.5) to (2.5, -0.7);
		\draw[dashed] (2.5, -0.7) to (2.5,-1.6);
		\draw[-] (2.5,-1.6) to (2.5, -2);
		\node at (2.5, -2.1) {$\bullet$};
		\node at (2, -2.5) {$\circ$};
		\draw (2.4, -2.2) to (2.1, -2.45);
			\node at (-0.5,-2.5) {$\circ$};
		\draw[-] (-0.4,-2.5) to (-0.1, -2.5);
		\node  at (0,-2.5) {$\circ$};
		\draw[-] (0.1, -2.5) to (0.5, -2.5);
		\draw[dashed] (0.5, -2.5) to (1.4, -2.5);
		\draw[-] (1.6, -2.5)  to (1.9, -2.5);
		\draw[<->] (-0.6, -0.1) to[out=215, in=90] (-0.8, -0.5) to (-0.8, -2) to[out=-90,in=135] (-0.6, -2.4);
		\draw[<->] (-0.1, -0.1) to[out=215, in=90] (-0.3, -0.5) to (-0.3, -2) to[out=-90,in=135] (-0.1, -2.4);
		\draw[<->] (1.9, -0.1) to[out=215, in=90] (1.7, -0.5) to (1.7, -2) to[out=-90,in=135] (1.9, -2.4);
	\end{tikzpicture}
$
&  \begin{tikzpicture}[baseline=0]
\node at (0, -1) {EI};
\end{tikzpicture}
&
$
	\begin{tikzpicture}[baseline = 20, scale=1.5]
		\node at (-1,0) {$\circ$};
		\draw (-0.9,0) to (-0.6,0);
		\node at (-0.5,0) {$\circ$};
		\draw (-0.4,0) to (-0.1,0);
		\node at (0,0) {$\circ$};
		\draw (0.1,0) to (0.4,0);
		\node at (0.5,0) {$\circ$};
		\draw (0.6,0) to (0.9,0);
		\node at (1,0) {$\circ$};
		\draw (0,-0.1) to (0,-0.4);
		\node at (0,-0.5) {$\circ$};
	\end{tikzpicture}
$

\\
\cline{2-2} \cline{3-4}
 & 
$	\begin{tikzpicture}[baseline=0]
		\node at (-0.5,0) {$\circ$};
		\draw[-] (-0.4,0) to (-0.1, 0);
		\node  at (0,0) {$\circ$};
		\node at (2,0) {$\circ$};
		\draw[-] (0.1, 0) to (0.5,0);
		\draw[dashed] (0.5,0) to (1.4,0);
		\draw[-] (1.6,0)  to (1.9,0);
		\draw (2.1, -0.1) to (2.4, -0.4);
		\node at (2.5, -0.5) {$\circ$};
		\draw (2.1, -0.9) to  (2.4, -0.6);
		\node at (-0.5,-1) {$\circ$};
		\draw[-] (-0.4, -1) to (-0.1, -1);
		\node  at (0,-1) {$\circ$};
		\node at (2,-1) {$\circ$};
		\draw[-] (0.1, -1) to (0.5,-1);
		\draw[dashed] (0.5,-1) to (1.4,-1);
		\draw[-] (1.6,-1)  to (1.9,-1);
		\draw[<->] (-0.6, -0.1) to[out=215, in=90] (-0.8, -0.5) to (-0.8, -0.5) to[out=-90,in=135] (-0.6, -0.9);
		\draw[<->] (-0.1, -0.1) to[out=215, in=90] (-0.3, -0.5) to (-0.3, -0.5) to[out=-90,in=135] (-0.1, -0.9);
		\draw[<->] (1.9, -0.1) to[out=215, in=90] (1.7, -0.5) to (1.7, -0.5) to[out=-90,in=135] (1.9, -0.9);
\end{tikzpicture}
$&
\begin{tikzpicture}[baseline=0]
\node at (0, -0.4) {EII};
\end{tikzpicture}&
$
	\begin{tikzpicture}[baseline = 12, scale =1.3]
		\node at (-1,0) {$\circ$};
		\draw (-0.9,0) to (-0.6,0);
		\node at (-0.5,0) {$\circ$};
		\draw (-0.4,0) to (-0.1,0);
		\node at (0,0) {$\circ$};
		\draw (0.1,0) to (0.4,0);
		\node at (0.5,0) {$\circ$};
		\draw (0.6,0) to (0.9,0);
		\node at (1,0) {$\circ$};
		\draw (0,-0.1) to (0,-0.4);
		\node at (0,-0.5) {$\circ$};
		\draw[<->] (-0.9, 0.1) to[out=45, in=180] (-0.5, 0.4) to (0.5, 0.4) to[out=0, in=135] (0.9, 0.1);
		\draw[<->] (-0.4, 0.1) to[out=45, in=180] (-0.15, 0.2) to (0.15, 0.2) to[out=0, in=135] (0.4, 0.1);
	\end{tikzpicture}
$
\\
\hline
\begin{tikzpicture}[baseline=0]
\node at (0, -0.3) {AIV};
\end{tikzpicture} &
\begin{tikzpicture}	[baseline=6]
		\node at (-0.5,-0.3) {$\circ$};
		\draw[-] (-0.4,-0.3) to (-0.1, -0.3);
		\node  at (0,-0.3) {$\bullet$};
		\node at (2,-0.3) {$\bullet$};
		\node at (2.5,-0.3) {$\circ$};
		\draw[-] (0.1, -0.3) to (0.5,-0.3);
		\draw[dashed] (0.5,-0.3) to (1.4,-0.3);
		\draw[-] (1.6,-0.3)  to (1.9,-0.3);
		\draw[-] (2.1,-0.3) to (2.4,-0.3);
		\draw[<->] (-0.5, -0.1) to[out=45, in=180] (0, 0.1) to (2, 0.1) to[out=0, in=135] (2.5, -0.1);
	\end{tikzpicture}
	&
\begin{tikzpicture}[baseline=0]
\node at (0, -0.3) {EIII};
\end{tikzpicture}&
\begin{tikzpicture}[baseline = 8, scale =1.3]
		\node at (-1,0) {$\circ$};
		\draw (-0.9,0) to (-0.6,0);
		\node at (-0.5,0) {$\bullet$};
		\draw (-0.4,0) to (-0.1,0);
		\node at (0,0) {$\bullet$};
		\draw (0.1,0) to (0.4,0);
		\node at (0.5,0) {$\bullet$};
		\draw (0.6,0) to (0.9,0);
		\node at (1,0) {$\circ$};
		\draw (0,-0.1) to (0,-0.4);
		\node at (0,-0.5) {$\circ$};
		\draw[<->] (-0.9, 0.1) to[out=45, in=180] (-0.5, 0.3) to (0.5, 0.3) to[out=0, in=135] (0.9, 0.1);
	\end{tikzpicture}

\\
\hline
\begin{tikzpicture}[baseline=0]
\node at (0, -0.3) {BI};
\end{tikzpicture} & 
$	\begin{tikzpicture}[baseline=8]
		\node at (-0.5,0) {$\circ$};
		\draw[-] (-0.4,0) to (-0.1, 0);
		\draw[dashed] (-0.1, 0) to (0.6,0);
		\draw[-] (0.6, 0) to (0.9, 0);
		\node at (1,0) {$\circ$};
		\node at (1.5,0) {$\bullet$};
		\draw[-] (1.1,0)  to (1.4,0);
		\draw[-] (1.4,0) to (1.9, 0);
		\draw[dashed] (1.9,0) to (2.7,0);
		\draw[-] (2.7,0) to (2.9, 0);
		\node at (3,0) {$\bullet$};
		\draw[-implies, double equal sign distance]  (3.1, 0) to (3.7, 0);
		\node at (3.8,0) {$\bullet$};
	\end{tikzpicture}	
$
&
\begin{tikzpicture}[baseline=0]
\node at (0, -0.4) {EIV};
\end{tikzpicture}&
\begin{tikzpicture}[baseline = 6, scale =1.3]
		\node at (-1,0) {$\circ$};
		\draw (-0.9,0) to (-0.6,0);
		\node at (-0.5,0) {$\bullet$};
		\draw (-0.4,0) to (-0.1,0);
		\node at (0,0) {$\bullet$};
		\draw (0.1,0) to (0.4,0);
		\node at (0.5,0) {$\bullet$};
		\draw (0.6,0) to (0.9,0);
		\node at (1,0) {$\circ$};
		\draw (0,-0.1) to (0,-0.4);
		\node at (0,-0.5) {$\bullet$};
	\end{tikzpicture}

\\
\hline
\begin{tikzpicture}[baseline=0]
\node at (0, -0.1) {BII};
\end{tikzpicture} & 
$	\begin{tikzpicture}[baseline=0, scale=1.5]
		\node at (1,0) {$\circ$};
		\node at (1.5,0) {$\bullet$};
		\draw[-] (1.1,0)  to (1.4,0);
		\draw[-] (1.4,0) to (1.9, 0);
		\draw[dashed] (1.9,0) to (2.7,0);
		\draw[-] (2.7,0) to (2.9, 0);
		\node at (3,0) {$\bullet$};
		\draw[-implies, double equal sign distance]  (3.1, 0) to (3.7, 0);
		\node at (3.8,0) {$\bullet$};
	\end{tikzpicture}	
$
&
\begin{tikzpicture}[baseline=0]
\node at (0, -0.2) {EV};
\end{tikzpicture}&
\begin{tikzpicture}[baseline = 0, scale =1]
		\node at (-1.5,0) {$\circ$};
		\draw (-1.4, 0) to (-1.1, 0);
		\node at (-1,0) {$\circ$};
		\draw (-0.9,0) to (-0.6,0);
		\node at (-0.5,0) {$\circ$};
		\draw (-0.4,0) to (-0.1,0);
		\node at (0,0) {$\circ$};
		\draw (0.1,0) to (0.4,0);
		\node at (0.5,0) {$\circ$};
		\draw (0.6,0) to (0.9,0);
		\node at (1,0) {$\circ$};
		\draw (0,-0.1) to (0,-0.4);
		\node at (0,-0.5) {$\circ$};
	\end{tikzpicture}

\\
\hline
\begin{tikzpicture}[baseline=0]
\node at (0, -0.2) {CI};
\end{tikzpicture} & 
$	\begin{tikzpicture}[baseline=6, scale=1.5]
		\node at (1,0) {$\circ$};
		\draw[-] (1.1,0)  to (1.4,0);
		\draw[dashed] (1.4,0) to (2.7,0);
		\draw[-] (2.7,0) to (2.9, 0);
		\node at (3,0) {$\circ$};
		\draw[implies-, double equal sign distance]  (3.1, 0) to (3.7, 0);
		\node at (3.8,0) {$\circ$};
	\end{tikzpicture}	
$
&
\begin{tikzpicture}[baseline=0]
\node at (0, -0.15) {EIV};
\end{tikzpicture}&
\begin{tikzpicture}[baseline = 0, scale =1]
		\node at (-1.5,0) {$\bullet$};
		\draw (-1.4, 0) to (-1.1, 0);
		\node at (-1,0) {$\circ$};
		\draw (-0.9,0) to (-0.6,0);
		\node at (-0.5,0) {$\bullet$};
		\draw (-0.4,0) to (-0.1,0);
		\node at (0,0) {$\circ$};
		\draw (0.1,0) to (0.4,0);
		\node at (0.5,0) {$\circ$};
		\draw (0.6,0) to (0.9,0);
		\node at (1,0) {$\circ$};
		\draw (0,-0.1) to (0,-0.4);
		\node at (0,-0.5) {$\bullet$};
	\end{tikzpicture}
\\
\hline
 
  \begin{tikzpicture}[baseline=0]
\node at (0, -0.5) {CII};
\end{tikzpicture}  
& 
$	\begin{tikzpicture}[baseline=6]
		\node at (-1,0) {$\bullet$};
		\draw[-] (-0.9, 0) to (-0.6,0);
		\node at (-0.5,0) {$\circ$};
		\draw[-] (-0.4,0) to (-0.1, 0);
		\draw[-] (-0.1, 0) to (0.1,0);
		\draw[dashed] (0.1, 0) to (0.8, 0);
		\draw (0.8, 0) to (0.9, 0);
		\node  at (0,0) {$\bullet$};
		\node at (1,0) {$\circ$};
		\node at (1.5,0) {$\bullet$};
		\draw[-] (1.1,0)  to (1.4,0);
		\draw[-] (1.4,0) to (1.9, 0);
		\node at (2,0) {$\bullet$};
		\draw (1.9, 0) to (2.1, 0);
		\draw[dashed] (2.1,0) to (2.7,0);
		\draw[-] (2.7,0) to (2.9, 0);
		\node at (3,0) {$\bullet$};
		\draw[implies-, double equal sign distance]  (3.1, 0) to (3.7, 0);
		\node at (3.8,0) {$\bullet$};
	\end{tikzpicture}	
$
&
\begin{tikzpicture}[baseline=0]
\node at (0, -0.15) {EVII};
\end{tikzpicture} &
\begin{tikzpicture}[baseline = 0, scale =1]
		\node at (-1.5,0) {$\circ$};
		\draw (-1.4, 0) to (-1.1, 0);
		\node at (-1,0) {$\circ$};
		\draw (-0.9,0) to (-0.6,0);
		\node at (-0.5,0) {$\bullet$};
		\draw (-0.4,0) to (-0.1,0);
		\node at (0,0) {$\bullet$};
		\draw (0.1,0) to (0.4,0);
		\node at (0.5,0) {$\bullet$};
		\draw (0.6,0) to (0.9,0);
		\node at (1,0) {$\circ$};
		\draw (0,-0.1) to (0,-0.4);
		\node at (0,-0.5) {$\bullet$};
	\end{tikzpicture}
\\
\cline{2-4}
 &
$	\begin{tikzpicture}[baseline=4]
		\node at (0,0) {$\bullet$};
		\draw (0.1, 0) to (0.4,0);
		\node  at (0.5,0) {$\circ$};
		\draw (0.6, 0) to (0.9,0);
		\node at (1,0) {$\bullet$};
		\draw[-] (1.1,0)  to (1.4,0);
		\draw[dashed] (1.4, 0) to (2.1, 0);
		\draw[-] (2.1,0) to (2.4,0);
		\node at (2.5, 0) {$\circ$};
		\draw[-] (2.6,0) to (2.9, 0);
		\node at (3,0) {$\bullet$};
		\draw[implies-, double equal sign distance]  (3.1, 0) to (3.7, 0);
		\node at (3.8,0) {$\circ$};
	\end{tikzpicture}	
$
&
\begin{tikzpicture}[baseline=0]
\node at (0, -0.15) {EVIII};
\end{tikzpicture} &
\begin{tikzpicture}[baseline = 0, scale =1]
		\node at (-2,0) {$\circ$};
		\draw (-1.9,0) to (-1.6,0);
		\node at (-1.5,0) {$\circ$};
		\draw (-1.4, 0) to (-1.1, 0);
		\node at (-1,0) {$\circ$};
		\draw (-0.9,0) to (-0.6,0);
		\node at (-0.5,0) {$\circ$};
		\draw (-0.4,0) to (-0.1,0);
		\node at (0,0) {$\circ$};
		\draw (0.1,0) to (0.4,0);
		\node at (0.5,0) {$\circ$};
		\draw (0.6,0) to (0.9,0);
		\node at (1,0) {$\circ$};
		\draw (0,-0.1) to (0,-0.4);
		\node at (0,-0.5) {$\circ$};
	\end{tikzpicture}

\\
\hline 
&
$	\begin{tikzpicture}[baseline=0]
		\node at (-0.5,0) {$\circ$};
		\draw[-] (-0.4,0) to (-0.1, 0);
		\draw[dashed] (-0.1, 0) to (0.6,0);
		\draw[-] (0.6, 0) to (0.9, 0);
		\node at (1,0) {$\circ$};
		\node at (1.5,0) {$\bullet$};
		\draw[-] (1.1,0)  to (1.4,0);
		\draw[-] (1.4,0) to (1.9, 0);
		\draw[dashed] (1.9,0) to (2.7,0);
		\draw[-] (2.7,0) to (2.9, 0);
		\node at (3,0) {$\bullet$};
		\node at (3.5, 0.4) {$\bullet$};
		\node at (3.5, -0.4) {$\bullet$};
		\draw (3.1, 0.1) to (3.4, 0.35);
		\draw (3.1, -0.1) to (3.4, -0.35);
	\end{tikzpicture}	
$
&
\begin{tikzpicture}[baseline=0]
\node at (0, 0) {EIX};
\end{tikzpicture} &
\begin{tikzpicture}[baseline = -5, scale =1]
		\node at (-2,0) {$\circ$};
		\draw (-1.9,0) to (-1.6,0);
		\node at (-1.5,0) {$\circ$};
		\draw (-1.4, 0) to (-1.1, 0);
		\node at (-1,0) {$\circ$};
		\draw (-0.9,0) to (-0.6,0);
		\node at (-0.5,0) {$\bullet$};
		\draw (-0.4,0) to (-0.1,0);
		\node at (0,0) {$\bullet$};
		\draw (0.1,0) to (0.4,0);
		\node at (0.5,0) {$\bullet$};
		\draw (0.6,0) to (0.9,0);
		\node at (1,0) {$\circ$};
		\draw (0,-0.1) to (0,-0.4);
		\node at (0,-0.5) {$\bullet$};
	\end{tikzpicture}
\\
\cline{2-4}
\begin{tikzpicture}[baseline=0]
\node at (0, 0) {DI};
\end{tikzpicture}   & 
$	\begin{tikzpicture}[baseline=0]
		\node at (1.5,0) {$\circ$};
		\draw[-] (1.6,0) to (1.9, 0);
		\draw[dashed] (1.9,0) to (2.7,0);
		\draw[-] (2.7,0) to (2.9, 0);
		\node at (3,0) {$\circ$};
		\node at (3.5, 0.4) {$\circ$};
		\node at (3.5, -0.4) {$\circ$};
		\draw (3.1, 0.1) to (3.4, 0.35);
		\draw (3.1, -0.1) to (3.4, -0.35);
		\draw[<->] (3.6, 0.35) to[out=-45, in=45] (3.6, -0.35);
	\end{tikzpicture}	
$&
\begin{tikzpicture}[baseline=0]
\node at (0, 0) {FI};
\end{tikzpicture} &
\begin{tikzpicture}[baseline=0][scale=1.5]
	\node at (0,0) {$\circ$};
	\draw (0.1, 0) to (0.4,0);
	\node at (0.5,0) {$\circ$};
	\draw[-implies, double equal sign distance]  (0.6, 0) to (1.2,0);
	\node at (1.3,0) {$\circ$};
	\draw (1.4, 0) to (1.7,0);
	\node at (1.8,0) {$\circ$};
\end{tikzpicture}
\\
\cline{2-4}
&
$	\begin{tikzpicture}[baseline=0]
		\node at (1.5,0) {$\circ$};
		\draw[-] (1.6,0) to (1.9, 0);
		\draw[dashed] (1.9,0) to (2.7,0);
		\draw[-] (2.7,0) to (2.9, 0);
		\node at (3,0) {$\circ$};
		\node at (3.5, 0.4) {$\circ$};
		\node at (3.5, -0.4) {$\circ$};
		\draw (3.1, 0.1) to (3.4, 0.35);
		\draw (3.1, -0.1) to (3.4, -0.35);
	\end{tikzpicture}	
$
&
\begin{tikzpicture}[baseline=0]
\node at (0, 0) {FII};
\end{tikzpicture} &
\begin{tikzpicture}[baseline=0][scale=1.5]
	\node at (0,0) {$\bullet$};
	\draw (0.1, 0) to (0.4,0);
	\node at (0.5,0) {$\bullet$};
	\draw[-implies, double equal sign distance]  (0.6, 0) to (1.2,0);
	\node at (1.3,0) {$\bullet$};
	\draw (1.4, 0) to (1.7,0);
	\node at (1.8,0) {$\circ$};
\end{tikzpicture}
\\
\hline
\begin{tikzpicture}[baseline=0]
\node at (0, 0) {DII};
\end{tikzpicture} 
&
$	\begin{tikzpicture}[baseline=0]
		\node at (1,0) {$\circ$};
		\node at (1.5,0) {$\bullet$};
		\draw[-] (1.1,0)  to (1.4,0);
		\draw[-] (1.4,0) to (1.9, 0);
		\draw[dashed] (1.9,0) to (2.7,0);
		\draw[-] (2.7,0) to (2.9, 0);
		\node at (3,0) {$\bullet$};
		\node at (3.5, 0.4) {$\bullet$};
		\node at (3.5, -0.4) {$\bullet$};
		\draw (3.1, 0.1) to (3.4, 0.35);
		\draw (3.1, -0.1) to (3.4, -0.35);
	\end{tikzpicture}	
$
&
\begin{tikzpicture}[baseline=0]
\node at (0, 0) {G};
\end{tikzpicture} 
&
	\begin{tikzpicture}[baseline=0]
		\node at (0.1,0) {$\circ$};
		\node at (0.9,0) {$\circ$};
		\node at (0.5,0) {$\Rrightarrow$};
	\end{tikzpicture}
\\
\hline
\end{tabular}
\end{table}


\end{document}